\theoremstyle{plain}
\newtheorem{theorem}{Theorem}
\newtheorem{lemma}[theorem]{Lemma}
\newtheorem{proposition}[theorem]{Proposition}
\newtheorem{definition}{Definition}
\theoremstyle{remark}
\newtheorem{remark}[theorem]{Remark}
\title[Inhomogeneous kinetic limit of 3-wave systems]{On the wave turbulence theory for a stochastic  KdV type equation - Generalization for the inhomogeneous kinetic limit}
\author[A. Hannani]{Amirali Hannani
}
\address{Institute for Theoretical Physics, KU Leuven, 3001 Leuven, Belgium}
\email{amirali.hannani@kuleuven.be
} 
\thanks{A.H. is funded in part by the FWO grant
G098919N, the ANR grant LSD-15-CE40-0020-01, and the NSF Grant DMS-1929284.}
\author[M. Rosenzweig]{Matthew Rosenzweig
}
\address{Department of Mathematics, Massachusetts Institute of Technology, Cambridge, MA 02139, USA}
\email{ mrosenzw@mit.edu} 
\thanks{M.R. is funded in part by the NSF grant DMS-2206085 and the Simons Foundation through the Simons Collaboration on Wave Turbulence.}
\author[G. Staffilani]{Gigliola Staffilani
}
\address{Department of Mathematics, Massachusetts Institute of Technology, Cambridge, MA 02139, USA}
\email{gigliola@math.mit.edu} 
\thanks{G.S. is  funded in part by  the NSF grants DMS-1764403, DMS-2052651 and the Simons Foundation through the Simons Collaboration on Wave Turbulence.}
\author[M.-B. Tran]{Minh-Binh Tran}
\address{Department of Mathematics, Texas A\&M University College Station, TX 77843, USA}
\email{minhbinh@tamu.edu} 
\thanks{M.-B. T is  funded in part by  the  NSF Grant DMS-1854453,    Humboldt Fellowship,   NSF CAREER  DMS-2044626, and NSF Grant DMS-2204795.}
\begin{document}
\date{\today}

\begin{abstract} 
Starting from a {\it stochastic  Zakharov-Kuznetsov (ZK) equation} on a lattice, the  previous work \cite{staffilani2021wave} by the last two authors gave a derivation of the {\it homogeneous  3-wave kinetic equation} at the kinetic limit under very general assumptions: the initial condition is out of equilibrium, the dimension $d\ge 2$, the smallness of the nonlinearity $\lambda$ is allowed to be independent of the size of the lattice, the weak noise is chosen not to compete with the weak nonlinearity and  not to inject energy  into the equation. In the present work, we build on the framework of \cite{staffilani2021wave}, following the formal derivation of Spohn \cite{Spohn:TPB:2006} and inspired by previous work \cite{hannani2021stochastic} of the first author and Olla, so that the {\it inhomogeneous 3-wave kinetic equation} can also be obtained at the kinetic limit under analogous assumptions. Similar to the homogeneous case---and unlike  the cubic nonlinear Schr\"odinger equation---the {inhomogeneous} kinetic description of the deterministic lattice ZK equation is unlikely to happen due to the vanishing of the dispersion relation on a certain singular manifold on which not only $3$-wave interactions but also all $n$-wave interactions ($n\ge3$) are allowed to happen, a phenomenon first observed by Lukkarinen  \cite{lukkarinen2007asymptotics}. To the best of  our knowledge, our work provides the first rigorous derivation of a nonlinear inhomogeneous wave kinetic equation in the kinetic limit.
\end{abstract}

\maketitle

 \tableofcontents
 
\section{Introduction}\label{intro} 
Wave turbulence theory describes the statistical properties of ensembles of dispersives waves out of  thermal equilibrium with weakly nonlinear interactions using the paradigm of Boltzmann's kinetic theory for dilute gases. The theory has its origin in the works of  Peierls \cite{Peierls:1993:BRK,Peierls:1960:QTS}, Hasselmann \cite{hasselmann1962non,hasselmann1974spectral},  Benney et al. \cite{benney1969random,benney1966nonlinear},  Zakharov et al. \cite{zakharov2012kolmogorov}, and has been found in  a vast range of physical applications, a description of which can be found in the monograph \cite{Nazarenko:2011:WT}. This theory is intimately connected to the question of growth of high Sobolev norms of solutions to certain dispersive equations (in physical terms, the migration of energy to high frequencies), a line of investigation initiated by Bourgain \cite{bourgain1996growth}, which continues to be highly active to this day \cite{bourgain1999growth,
bourgain1999growtha,carles2012energy, colliander2010transfer,deng2017strichartz, hani2015modified,kuksin1997oscillations,  majda1997one,sohinger2011bounds, staffilani1997growth,staffilani2020stability,
berti2019long,guardia2015growth,haus2015growth}. 

A fundamental object in wave turbulence theory is the \emph{wave kinetic equation (WKE)}, which is the wave analogue of the Boltzmann equation for particles and describes the evolution of the system's energy density across waves of different frequencies. The precise form of this equation is deferred until later. Let $\lambda>0$ be the small parameter describing the strength of the interactions between waves in the system under consideration. The relevant time scale for the derivation of the associated wave kinetic equation in the {\it van Hove limit} or {\it the kinetic limit  } is
\begin{equation}
	\label{VanHove}
	t\backsim \lambda^{-2}.
\end{equation}

Rigorously deriving wave kinetic equations is one of the central questions in the mathematical treatment of wave turbulence theory and has become an active topic in recent years. The starting point in the derivation is a dispersive equation, which may be placed either on  a hypercubic lattices (lattice setting) or on a continuum domain (continuum setting), such as the flat torus, each of which has its own challenges (e.g., see \cite{basile2010energy,chen2005localization,chen2005localization2,lukkarinen2007asymptotics}). The pioneering work of Lukkarinen-Spohn \cite{LukkarinenSpohn:WNS:2011}  provided the first rigorous  derivation of the homogeneous 4-wave kinetic equation at the kinetic limit, starting from the  cubic nonlinear Schr\"odinger equation (NLS) on a lattice and with equilibrium initial condition. Later, work of Buckmaster et al. \cite{buckmaster2019onthe,buckmaster2019onset}, Deng-Hani \cite{deng2019derivation}, and  Collot-Germain \cite{collot2019derivation,collot2020derivation} considered the non-equilibrium problem for the continuum NLS but only for time scales that fall short of \eqref{VanHove}. We also mention that attempts to derive the homogeneous 4-wave kinetic equation from a stochastic NLS equation in the continuum setting have been carried out  by Dymov et al. in \cite{dymov2019formal,dymov2019formal2,dymov2020zakharov,dymov2021large}.

In \cite{deng2021full} and \cite{staffilani2021wave}, Deng-Hani and Staffilani-Tran respectively provided rigorous derivations out of equilibrium and at the kinetic limit \eqref{VanHove} of the homogeneous 4-wave equation from the continuum NLS and the homogeneous 3-wave equation from a stochastic variant of the lattice Zakharov-Kuznetsov (ZK), which is a higher-dimensional generalization of the Korteweg-de Vries (KdV) equation. Let us remark that \cite{staffilani2021wave} was inspired by previous work of Faou  \cite{faou2020linearized} for the KP equation. A propagation of chaos result was later obtained by Deng-Hani in \cite{deng2021propagation}, following earlier work \cite{RS2022physD} on this question by the second and third authors, and further extensions of the result of \cite{deng2021full} have been announced in \cite{deng2022rigorous}. In \cite{ampatzoglou2021derivation}, a derivation of the inhomogeneous 4-wave kinetic equation from a quadratic NLS equation in a limit short of the kinetic time has been obtained.  An extension to a nonlinear random matrix model has been done in \cite{dubach2022derivation}. Motivated by \cite{staffilani2021wave}, the recent preprint \cite{ma2022almost} proposes a  derivation of the homogeneous 3-wave kinetic equation in a limit before the kinetic timescale starting from a continuum ZK model in dimension $d \geq 3$, where diffusion has been added and the energy ($L^2$-norm) is no longer conserved.

In this paper, we substantially generalize the approach of \cite{staffilani2021wave} to rigorously derive in the kinetic limit \eqref{VanHove} the {\it inhomogeneous} 3-wave kinetic equation in the lattice setting. Our starting point is the ZK equation in dimension $d\geq 2$, which has many physical applications to drift waves in fusion plasmas  and
Rossby waves in geophysical fluids  \cite[Section 6.2]{Nazarenko:2011:WT}, as well as ionic-sonic waves in a magnetized plasma \cite{lannes2013cauchy,zakharov1974three}:
\begin{equation}\label{KleinGordonNoise}
	\begin{aligned}
		\mathrm{d}\psi(x,t) \ & = \ -\Delta\partial_{x_1} \psi(x,t) \mathrm{d}t  \ + \ \lambda\partial_{x_1}\Big(\psi^2(x,t)\Big)\mathrm{d}t,\\
		\psi(x,0) \ & = \ \psi_0(x).
	\end{aligned}
\end{equation}
Here, $1>|\lambda|>0$ is  a real-valued parameter measuring the weak interactions of the nonlinear wave system. The equation \eqref{KleinGordonNoise} is defined on a hypercubic lattice $\Lambda$ (i.e., $x\in\Lambda$), that will be precisely defined in the next section. {\it We choose to work with the ZK  equation as it  has normally served as the first example for which a 3-wave kinetic equation is derived \cite{Nazarenko:2011:WT}. In the sequel, equation \eqref{KleinGordonNoise} will be made stochastic by introducing various types of noise, which generalize the noise used in \cite{staffilani2021wave}. The exact form of the noise will be given in the next section, and the motivation for considering a stochastic equation is explained below.} 

\medskip
Let us now give an informal statement of the main theorem. A proper statement is given in \cref{TheoremMain} in \cref{Sec:Main} after all the necessary notations have been introduced. In the statement below, we refer to Assumptions (A), (B), and (C) that will be explained in the next section.

\begin{theorem}[Informal version of main result]\label{TheoremMainRough}
Suppose that $d\ge 2$. After randomization of the initial value problem \eqref{KleinGordonNoise} on a hypercubic lattice by a very weak noise, the associated Wigner distribution can be asymptotically expressed via a solution of the inhomogeneous 3-wave  kinetic equation at the kinetic time \eqref{VanHove}, under Assumptions (A), (B), and (C) on the initial data.  
\end{theorem}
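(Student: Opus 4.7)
\medskip

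\noindent\textbf{Proof plan (sketch).} My plan is to follow the Duhamel/Feynman-diagram expansion strategy of \cite{staffilani2021wave}, but organized around the Wigner distribution $W(x,k,t)$ rather than the power spectrum, since in the inhomogeneous setting one must track the full two-point correlator $\mathbb{E}[\widehat{\psi}(k_1,t)\overline{\widehat{\psi}(k_2,t)}]$ for $k_1\ne k_2$, not merely its diagonal. The first step is to pass to Fourier variables on $\Lambda$, write \eqref{KleinGordonNoise} with its noise as an infinite system of coupled SDEs with dispersion $\omega(k)$ and cubic-in-amplitude interaction kernel reflecting the $\partial_{x_1}(\psi^2)$ nonlinearity, and then set up the Wigner transform $W_\varepsilon(x,k,t)$ at the appropriate inhomogeneity scale $\varepsilon$ (chosen in relation to the kinetic scaling $t\sim\lambda^{-2}$ and the lattice size, following the Spohn/Hannani--Olla philosophy).

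Next, I would iterate Duhamel's formula a large but finite number of times $N=N(\lambda)$ in the interaction picture, producing a tree/forest expansion of $\widehat{\psi}(k,t)$ whose nodes encode binary merges $k\to(k',k'')$ with phase factors $e^{it\Omega}$, $\Omega=\omega(k)-\omega(k')-\omega(k'')$. Taking expectations and using the (nearly) Gaussian structure of the noise-randomized initial data together with Wick pairings produces a sum indexed by pairings of the leaves of two trees; these pairings are exactly the Feynman diagrams one must estimate. I would then classify the diagrams into: (i) \emph{ladder/leading} diagrams, which in the kinetic limit resum to the inhomogeneous 3-wave collision operator evaluated against $W$; (ii) \emph{non-crossing non-ladder} diagrams, which should be $o(1)$ after integration in the fast phases; and (iii) \emph{crossing} and higher-order remainder diagrams, controlled by stationary-phase/number-theoretic counting estimates on the lattice and by the smoothing effect of the noise.

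The leading-order analysis is where the inhomogeneous equation appears: for each ladder segment the ``collision integral'' is evaluated against the slow variable $x$ (or conjugate variable to $k_1-k_2$), and a discrete stationary-phase argument on the resonance manifold $\{\Omega=0\}$ produces the delta function enforcing $3$-wave conservation, yielding $\partial_t W +\nabla\omega\cdot\nabla_x W = \mathcal{C}_3[W]$ in the limit. This step requires quantitative counting bounds for near-resonant triples on the lattice (the ZK analogue of Lukkarinen's estimates), together with a Riemann-sum convergence statement that now picks up a transport term from the inhomogeneity; Assumptions (A)--(C) should feed in here to ensure regularity of the initial Wigner transform and summability of the moments.

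The hard part, as the abstract hints, will be controlling the degenerate behavior on the singular manifold where $\omega$ degenerates and arbitrary $n$-wave interactions become resonant, since this is what prevents a purely deterministic derivation. My strategy is to exploit the weak noise precisely as in \cite{staffilani2021wave, hannani2021stochastic}: the noise contributes a strictly positive dissipative phase that regularizes the singular denominators appearing in the diagram estimates without shifting the leading kinetic behavior, provided its strength is tuned below $\lambda^2$. The accompanying technical obstacle is to keep the remainder of the $N$-step Duhamel expansion under control, which amounts to proving that the $N$-th term is $o(1)$ uniformly on $[0,T\lambda^{-2}]$; here I would combine a Gr\"onwall-type a priori energy estimate for the stochastic ZK flow with a diagrammatic bound whose combinatorial factor $\sim C^N N!$ is beaten by the smallness coming from the noise-regularized propagators. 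Once all three pieces (leading convergence, non-leading smallness, and remainder control) are in place, the informal statement promoted to its formal version in \cref{TheoremMain} follows by a standard tightness/uniqueness argument for the limiting inhomogeneous 3-wave kinetic equation.
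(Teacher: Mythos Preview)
Your high-level architecture---Duhamel iteration, Wick pairings, Feynman-diagram classification, noise as regularizer on the ghost manifold, ladder resummation---matches the paper's strategy closely. But as a workable plan it has two genuine gaps that the paper identifies as the essential new difficulties of the inhomogeneous problem.

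First, and most seriously, you do not address the $\lambda^{-2d}$ blow-up specific to the inhomogeneous setting. The Fourier transform of the Wigner distribution is $\epsilon^{-d}\langle a_{k_1}a_{k_2}^*\rangle$ (see \eqref{Wigner2} and \eqref{Bkk1def}), so each two-point factor carries an $\mathcal{O}(\lambda^{-2d})$ weight in $l^\infty$. An $n$-layer Duhamel expansion with $n/2$ pairings then picks up a factor $\lambda^{-nd}$, and the ``$l^1$ clustering'' bounds you implicitly invoke (standard in the homogeneous Lukkarinen--Spohn framework) would make \emph{every} diagram---leading and non-leading---diverge. The paper's cure is a projection onto mixed $L^{\infty,\Im}$ norms together with Fourier-averaged crossing estimates (see Propositions \ref{lemma:BasicGEstimate1}, \ref{Propo:CrossingGraphs}, \ref{Propo:LeadingGraphs} and the $\bowtie_\Im$ exponent), which trade some $\epsilon^{-d}$ factors for integrability in the cluster momenta $\mathscr{K}_K$. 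Without this mechanism your diagram estimates simply do not close.

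Second, two smaller but concrete corrections. (i) You write that the noise is ``tuned below $\lambda^2$,'' but the paper takes $c_r=\mathfrak{C}_r\lambda^{\theta_r}$ with $\theta_r\le 1$ (Assumption (A)), so $c_r\gg\lambda^2$; over kinetic times $t\sim\lambda^{-2}$ the accumulated phase-regulator effect is $c_r t\sim\lambda^{\theta_r-2}\to\infty$, which is exactly what kills the residual (non-clustered) graphs in Proposition \ref{lemma:Residual1}. A noise weaker than $\lambda^2$ would do nothing. (ii) Your ``stationary-phase argument producing the delta function on $\{\Omega=0\}$'' does not quite work for ZK: the paper must pass through a resonance-broadened kinetic equation with $\delta_\ell$ (see \eqref{Eq:CollisionWT6}) because $\delta(\omega(k_3)+\omega(k_2)-\omega(k_1))$ is not well-defined as a measure here, and the final statement \eqref{TheoremMain:2} is a double limit $\ell\to0$, $\lambda\to0$ in measure rather than a direct convergence to the sharp equation.
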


\bigskip
To the best of our knowledge, our work provides the first rigorous derivation of a nonlinear inhomogeneous wave kinetic equation at the kinetic limit. Our method of proof is based on techniques originally introduced by Erd\"{o}s-Yau \cite{erdHos2000linear} (see also \cite{erdHos2002linear,erdHos2008quantum,yau1998scaling}) for the kinetic description of the Schr\"odinger with a random potential, and later developments by Lukkarinen-Spohn \cite{LukkarinenSpohn:WNS:2011} for the NLS.  Several novelties beyond these works are needed even for the homogeneous case, a detailed discussion of which may be found in \cite{staffilani2021wave}.
One of the key issues unique to the lattice ZK equation is the ``ghost manifold,'' on which the dispersion relation vanishes and which supports not just 3-wave but arbitrary n-wave interactions. This destroys the structure of 3-wave interactions; consequently, one does not  expect the emergence of either the homogeneous or inhomogeneous 3-wave kinetic equation from the ZK equation without  noise. This is in strong contrast to the nonlinear Schr\"odinger equation. {\it The noise in the homogeneous setting and its generalization to the inhomogeneous setting need to meet several criteria, as explained below:}

\begin{itemize}
	\item[(i)]\label{comm(i)}Since the 3-wave kinetic equation conserves energy and momentum, it is then important that the noise does not inject energy ($L^2$-norm) into the wave system and  that {\it the conservation laws of the dispersive equation under consideration are preserved under introduction of noise.}

	\item[(ii)]\label{comm(ii)} The introduction of the noise, which vanishes in the limit $\lambda\to0$, should only ameliorate the issue of the ghost manifold and should not affect the structure of the Feynman diagrams. In both the homogeneous and inhomogeneous cases, our noise acts only on the phases and not the amplitudes of the waves. The noise identifies those waves that  accidentally fall into the ghost manifold and pushes them out. {\it Besides the special role of controlling the singular dispersion relation  on the ghost manifold, the noise must not influence most of the---in particular, the most important---Feynman diagrams, including the leading diagrams, and the rest of the proof should hold independently of the noise.} We would like to mention the work \cite{hannani2021stochastic},  which also constructs a noise (for a different equation) that preserves all the conservation laws, and therefore  Criterion (i) is satisfied. However, the noise of \cite{hannani2021stochastic} affects both the phases and amplitudes of the waves. As a result, applying the type of noise from \cite{hannani2021stochastic} to the lattice ZK equation will change the structure of the Feynman diagrams and {\it additional terms will be introduced into the final form of the derived kinetic equation, which we would like to avoid}. Nevertheless, the noise used in our current work is inspired by the construction in \cite{hannani2021stochastic}.
	
	\item[(iii)]\label{comm(iii)} The effect of the weak noise must not compete with the effect of the weak nonlinearity.   It has shown for the homogeneous setting \cite{staffilani2021wave} that the influence of the noise almost disappears in all {\it pairing graphs}, as well as in all  {\it leading (ladder) diagrams}. As  those  diagrams are where the nonlinearity affects the most,  there is no competition between the weak nonlinearity and the weak noise. We will show in this work that a suitable generalization of this noise to the inhomogeneous setting satisfies the same constraint: {\it the weak noise does not compete with the weak nonlinearity, and consequently, the addition of the noise does not lead to any modifications in the derived kinetic equation. }

\end{itemize}

As commented above---and a point we cannot stress enough---the role of the noise is simply to handle the issue of the ghost manifold. Thus, if we replace the ``homogeneous'' noise of \cite{staffilani2021wave} by our current ``inhomogeneous'' noise, the rest of the analysis applies and we can obtain the equivalent result of \cite{staffilani2021wave}. On the other hand, the ``homogeneous noise'' used in \cite{staffilani2021wave} is intentionally designed to control the ghost manifold for homogeneous dynamics and cannot be used to derive the inhomogeneous equation, creating the need for the weaker noises constructed in this paper. Finding a suitable such weaker noise is one of the main challenges we have to overcome in this paper. Let us also remark that if one suppresses the nonlinearity while keeping the weak noise in the equation, the Wigner distribution simply follows the free transport equation in the kinetic limit.   

\medskip

We would also like to emphasize a novel difficulty in extending the analysis from the homogeneous case  to our current inhomogeneous case. In the homogeneous case, the kinetic equation reflects the dynamics of the two-point correlation function $\langle a_{k}a_{k}^*\rangle$. As a result, the $l^1$ clustering assumption used in \cite{LukkarinenSpohn:WNS:2011} would be sufficient to bound the cumulant expansions. Using  the $l^1$ clustering  estimates means that we bound the correlations in the $l^\infty$ norm in $k$. In the inhomogeneous case, one needs to investigate the dynamics of the Wigner distribution, whose Fourier transform is of the form $\lambda^{-2d}\langle a_{k_1}a_{k_2}^*\rangle$ (see \eqref{Wigner2}), which has a growth of order $\mathcal {O}(\lambda^{-2d})$ in the $l^\infty$ norm.  As a result, each $n$-layer Duhamel expansion will give a growth of order $\mathcal {O}(\lambda^{-nd})$ and simple $l^1$ clustering  estimates would lead to a divergence as $\lambda\rightarrow0$ of all leading and non-leading graphs.  In order to overcome this difficulty, several new strategies, using the projections of different norms, are then introduced in the current paper. Those techniques are based on completely generalizing the $l^2$ projection framework and the Fourier averaging crossing estimates first introduced in  \cite{staffilani2021wave}, in which the moments are projected onto the $l^2$ space  and the new crossing estimates are  formulated in an averaged sense rather than pointwise.

\medskip

The above remarks conclude our high-level introduction to this paper. In the next section, we introduce the framework for our problem, the complete statement of the main theorem, and provide some comments on the main result.

\bigskip

\bigskip

\bigskip

{\bf Acknowledgments:} The authors would like to express their gratitude to  Stefano Olla, Jani Lukkarinen, and Herbert Spohn  for several useful remarks, guidance, and instructions on the topic. This work was partially done in Spring 2022 while the first and the last authors were in residence at MIT, and they thank the institution for its hospitality. 
\medskip

\medskip

\section{Settings and main result}\label{Sec:Main} 

\subsection{The stochastic ZK equation}

We adopt the setting of \cite{staffilani2021wave} and only recall below the main points.  The ZK  equation in dimension $d\geq 2$ takes the form
\begin{equation}\label{KleinGordon}
	\begin{aligned}
		\frac{\partial\psi}{\partial t}(x,t) \ & = \ -\Delta\partial_{x_1} \psi(x,t)   \ + \ \lambda\partial_{x_1}\Big(\psi^2(x,t)\Big),\\
		\psi(x,0) \ & = \ \psi_0(x),
	\end{aligned} \qquad \forall (x,t)\in[0,1]^d\times \mathbb{R}_+,
\end{equation}
where $1>|\lambda|>0$ is  a real constant.


{For $D \in \mathbb{N}$, let $\Lambda= \Lambda(D) \coloneqq \{-D,\ldots, D \}^d$ with periodic boundary conditions.}
The dynamics for the discretized equation   read
\begin{equation}
	\label{LatticeDynamicsZ}\begin{aligned}
		\mathrm{d}\psi(x,t) \ = &  \ \ \sum_{y\in  \Lambda} O_1(x-y)\psi(y,t)\mathrm{d}t \ + \ \lambda\sum_{y\in  \Lambda} O_2(x-y)  \psi^2(y,t)\mathrm{d}t, \\
		\psi(x,0) \ & = \ \psi_0(x),\end{aligned}
		\qquad  \forall (x,t)\in\Lambda\times \mathbb{R}_+,
\end{equation}
where $O_1(x-y)$ and $O_2(x-y)$ are finite difference operators that we will express below in Fourier space. We now introduce  the  Fourier transform: 
\begin{equation}
	\label{Def:Fourier}\hat  \psi(k)\coloneqq  \sum_{x\in\Lambda} \psi(x) e^{-2\pi {\bf i} k\cdot x}, \quad k\in \Lambda_* = \Lambda_*(D) \coloneqq \left\{-\frac{D}{2D+1},\cdots,0,\cdots,\frac{D}{2D+1}\right\}^{d},
\end{equation}
where the mesh size $h$ is defined as 
\begin{equation} h\coloneqq\frac{1}{2D+1}.\end{equation}
{
We denote the elements of $\Lambda_*$ as $k=(k^1,\dots,k^d)$. Moreover we define 
\begin{equation} \label{positiveDomain}
\Lambda_*^{\pm}\coloneqq\{k =(k_1,\dots,k^d)\in \Lambda_* |  \pm k^1>0 \}.
\end{equation}
}
We set
\begin{equation}
	\label{Epsilon}\epsilon=\lambda^2.
\end{equation}
We randomize the dynamics of \eqref{KleinGordon} by introducing a  multiplicative noise term (cf. \cite[Equation 18]{staffilani2021wave}). More precisely,
\begin{equation}
	\label{LatticeDynamicsFourierZ+a}\begin{aligned}
		\mathrm{d}\hat\psi(k,t) \ & = \ {\bf i} \omega(k)\hat\psi(k,t)\mathrm{d}t \ + \ {\bf i}\sqrt{2c_r}
		\sum_{l \in \Lambda_*^+}
		\mathfrak{g}(k,l)\hat\psi(k,t)\circ\mathrm{d}W_l \\
		&\ \ \  \ +  {\bf i}\lambda \bar\omega(k)\sum_{\substack{ k_1,k_2\in\Lambda_* \\ k=k_1+k_2 \mod{\Lambda_*}}}\hat\psi(k_1,t)\hat\psi(k_2,t),\\
		\hat\psi(k,0) \ & = \ \hat\psi_0(k) , \end{aligned}
		\qquad  \forall k \in \Lambda_*^+,
\end{equation}
and {for $k' \in \Lambda_*^-$, we set $\hat{\psi}(k',t) \coloneqq \hat{\psi}^*(-k',t)$. In the above expression, the conservation of momentum is expressed modulo the lattice $\Lambda_*$, that is 
	\begin{equation}
		\label{ModeZ}
		k=k_1+k_2 \mod{\Lambda_*} \Longleftrightarrow \vec{\mathbb{V}}\in \mathbb{Z}^d \mbox{ such that } k=k_1+k_2 + |\Lambda_*|\vec{\mathbb{V}}.
	\end{equation}
In the sequel, we will abuse notation by not explicitly writing $\mod{\Lambda_*}$.
 Equivalently, the dynamics for $k' \in \Lambda_*^-$ reads
 \begin{equation}
  \label{LatticeDynamicsFourierZ-}\begin{aligned}
		\mathrm{d}\hat\psi(k',t) \ & = \ {\bf i} \omega(k')\hat\psi(k',t)\mathrm{d}t \ - \ {\bf i}\sqrt{2c_r}
		\sum_{l \in \Lambda_*^+}
		\mathfrak{g}(-k',l)\hat\psi(k',t)\circ\mathrm{d}W_l \\
		&\ \ \  \ +  {\bf i}\lambda \bar\omega(k')\sum_{\substack{k_1,k_2\in\Lambda_* \\ k'=k_1+k_2}}\hat\psi(k_1,t)\hat\psi(k_2,t),\\
		\hat\psi(k',0) \ & = \ \hat\psi_0(k') , \end{aligned}
\end{equation}}
Above, $\{ W_\ell\}_{\ell\in \Lambda^*_+}$ is a sequence of independent real standard Wiener processes on a some filtered probability space $(\tilde\Omega,{\bf{F}},({\bf{F}})_{t\ge 0},{\bf{P}})$. The product $\hat{\psi}\circ \mathrm{d}W_l$ 
should be understood in the Stratonovich sense, and 
 the values of $\mathfrak{g}(k,l)$ will be specified later.  The  dispersion relation  
 takes the  discretized form
\begin{equation}
	\label{NearestNeighbordZ}
	 \omega(k) \coloneqq  \sin(2\pi h k^1)\Big[\sin^2(2\pi h k^1) + \cdots + \sin^2(2\pi h k^d)\Big], \qquad \forall k=(k^1,\cdots,k^d) \in \Lambda_*.
\end{equation}
Moreover, we  simply set
\begin{equation}
	\label{omegaoZ}
	\bar\omega(k) \ = \sin(2\pi h k^1).
\end{equation}
We  restrict the frequency domain to
\begin{equation}\begin{aligned}
&	\Lambda^* =\Lambda^*(D) \coloneqq \Lambda_*^+ \cup \Lambda_*^-\\
 =& \ \left\{-\frac{D}{2D+1},\cdots,-\frac{1}{2D+1},\frac{1}{2D+1},\cdots,\frac{D}{2D+1}\right\}\times
\left\{-\frac{D}{2D+1},\cdots,0,\cdots,\frac{D}{2D+1}\right\}^{d-1}, \end{aligned}
\end{equation}
and the spatial domain to
\begin{equation}
	\Lambda^+  \coloneqq \{1,\cdots,D\}\times \{-D,\cdots,0,\cdots,D\}^{d-1}. 
\end{equation}

For any number $x$ in $\mathbb{R}$, we define $\lceil	 x \rceil		$ to be the integer that satisfies\footnote{Note that this definition does not coincide with the usual definition of the ceiling function.}
l\begin{equation}\label{ceilfunc}
|\lceil	 x \rceil		|\le |x| < |\lceil	 x \rceil		|+1.
\end{equation}
{
Recall the definition of $\epsilon$ \eqref{Epsilon}, and let $\complement$ to be a fixed constant. We tile $\mathbb{R}^d$ with boxes with length size $\complement \epsilon$:
\begin{equation}\begin{aligned} \label{BOXes}
 	\mathbb{R} ^d\ =  \ & \bigcup_{(n_1,\ldots,n_d)\in\mathbb{Z}^d }\underbrace{
 	[\complement \epsilon n_1,\complement \epsilon(n_1+1))\times 
 	[\complement \epsilon n_2,\complement \epsilon(n_2+1))\times\cdots\times 
 	[\complement \epsilon n_d,\complement \epsilon(n_d+1))}_{\eqqcolon\mathcal{B}_{n_1,\cdots,n_d} }.\end{aligned}
 \end{equation}
Let 
\begin{equation} \label{indiceBOX}
\mathcal{J}_{\pm}:=\{ \underline{n}  \in \mathbb{Z}^d | \mathcal{B}_{
\underline{n}} \cap\Lambda_*^{\pm} \neq \O  \}. 
\end{equation} 
Hence we tile $\Lambda_*^+$ with above disjoint boxes:
\begin{equation}
\Lambda_*^+ \subset \bigcup_{\underline{n} \in \mathcal{J}_+} \mathcal{B}_{\underline{n}}.
\end{equation}
We then define $\mathscr{E}: \Lambda^+_* \times \Lambda^+_* \to \mathbb{R}$ by
\begin{equation}\label{Matrix}
	\mathscr{E} (k,k')\coloneqq 
	\begin{cases}
 	1 \quad 	\text{if} \:\exists \underline{n} \in \mathcal{J}_+, \text{such that} \: k,k' \in \mathcal{B}_{\underline{n}}, \\
 	0 \quad \text{Otherwise}.
	\end{cases}
\end{equation}
Notice that $$\mathscr{E}(k_1,k_2) =\prod_{i=1}^d 
\delta\left([\frac{k_1^i}{\complement \epsilon}]-[\frac{k_2^i}{\complement \epsilon}]\right),$$
where $\delta$ is the Kronecker delta, and $[.]$ is the usual floor function.
It is straightforward to observe that $\mathscr{E}$ can be represented by a symmetric, 
block diagonal matrix where at each block all the entries are equal to one. In fact, each block is essentially $[D/\complement \epsilon]^d \times [D/\complement \epsilon]^d $ except for the corners. Therefore, since all the blocks are positive semi-definite, $\mathscr{E}$
is positive semi-definite.   
 }



Therefore, there exists a unique
$\mathfrak{g}: \Lambda_*^+ \times \Lambda^+_* \to \mathbb{R}$ characterized by the relation
\begin{equation}
	\sum_{l \in \Lambda_*^+} \mathfrak{g}(k,l)
	\mathfrak{g}(k',l) = \mathscr{E}(k,k'), 
\end{equation}
where $\mathscr{E}$ is the matrix corresponding to the function $\mathscr{E}(k,k')$. We define $\tilde{\mathfrak{g}}, \, \tilde{\mathscr{E}}: 
\Lambda^* \times \Lambda^* \to \mathbb{R}$
 as follows: 
 \begin{equation} \label{MatrixonLambda}
 \begin{aligned}
 	&\tilde{\mathfrak{g}}(k,l) \coloneqq \mathfrak{g}(k,l), \qquad k,l \in \Lambda^+_*,\\ \quad
 	&\tilde{\mathfrak{g}}(k,l) \coloneqq -\mathfrak{g}(-k,l), \qquad k \in \Lambda^-_*, 
 	l \in \Lambda^+_*, \quad \\
 	&\tilde{\mathfrak{g}}(k,l) \coloneqq -\mathfrak{g}(k,-l), \qquad k \in \Lambda_*^+, 
 	l \in \Lambda^-_*,\\ \quad
 	&\tilde{\mathfrak{g}}(k,l)\coloneqq \mathfrak{g}(-k,-l), \qquad k,l \in \Lambda^-_*,
 	\end{aligned}
 \end{equation}
and 
 \begin{equation} \label{Matrix3}
 \tilde{\mathscr{E}}(k,k')\coloneqq \sum_{l \in \Lambda^*} \tilde{\mathfrak{g}}(k,l)
 \tilde{\mathfrak{g}}(k',l).
 \end{equation}
	Thanks to the above definitions, we may rewrite \eqref{LatticeDynamicsFourierZ-} in the following compact form:
	\begin{equation}
	\label{LatticeDynamicsFourierZ}\begin{aligned}
		\mathrm{d}\hat\psi(k,t) \ & = \ {\bf i} \omega(k)\hat\psi(k,t)\mathrm{d}t \ + \ {\bf i}\sqrt{2c_r}
		\sum_{l \in \Lambda_*}
		\tilde{\mathfrak{g}}(k,l)\hat\psi(k,t)\circ\mathrm{d}W_l \\
		&\ \ \  \ +  {\bf i}\lambda \bar\omega(k)\sum_{\substack{k_1,k_2\in\Lambda^* \\ k=k_1+k_2}}\hat\psi(k_1,t)\hat\psi(k_2,t),\\
		\hat\psi(k,0) \ & = \ \hat\psi_0(k) , \end{aligned}
		\qquad k\in\Lambda^*,
\end{equation}
where $W_{-l}(t)\coloneqq-W_{l}(t)$ for all $l \in \Lambda_*^+$.

\medskip
We now come to Assumption (A) referred to in \cref{TheoremMainRough}.

\medskip

	{\bf Assumption (A) - Conditions on $c_r$:}{\it
	\begin{equation}\label{ConditionCr1}
		c_r = \mathfrak{C}_r \lambda^{\theta_r},
	\end{equation}
	for some universal constants $\mathfrak{C}_r>0 $ and $1\ge \theta_r>0$. Notice that $\theta_r$ is a small constant. 
}


%
%
%
%
%
\medskip
We renormalize the Fourier coefficient $\hat{\psi}(k)$ by defining
$$a_k \coloneqq \frac{\hat{\psi}(k)}{ \sqrt{|\bar\omega(k)|}},$$
	$${a}(k,1,t) \coloneqq a^*_k(t),$$ $$ {a}(k,-1,t) \coloneqq a_k(t).
$$
Out of convenience, we also sometimes write ${a}(k,\sigma,t)$  as ${a}_t(k,\sigma)$ or ${a}_{k,\sigma}$. With this notation, we rewrite the system \eqref{LatticeDynamicsFourierZ} as
\begin{equation}
	\begin{aligned}\label{StartPointZ}
			\mathrm{d}{a}(k,\sigma,t)\ & =  \ -{\bf i}\sigma\omega(k){a}(k,\sigma,t) \mathrm{d}t\ - \ {\bf i}\sigma\lambda\sum_{k_1,k_2\in\Lambda^*}\delta( k-k_1-k_2)\times\\
		& \ \  \ \ \  \ \times \mathcal{M}(k,k_1,k_2){a}(k_1,\sigma,t) {a}(k_2,\sigma,t)\mathrm{d}t\\ & - \ {\bf i}\sqrt{2c_r} \sigma\sum_{l \in \Lambda^*}
		\tilde{\mathfrak{g}}(k,l){a}(k,\sigma,t)\circ\mathrm{d}W_l,\\
		{a}(k,-1,0) \ & = \ a_0(k), \ \ \ \ \forall (k,t)\in\Lambda_*\times \mathbb{R}_+.
	\end{aligned}
\end{equation}
where
\begin{equation}
	\label{KernelZ}
	\mathcal{M}(k,k_1,k_2) \coloneqq 2\mathrm{sign}(k^1) \mathfrak{W}(k,k_1,k_2),\ \ \    \mathfrak{W}(k,k_1,k_2) \coloneqq \sqrt{{|\bar\omega(k)\bar\omega(k_1)\bar\omega(k_2)|}}.
\end{equation}
Thanks to the dispersion relation \eqref{NearestNeighbordZ}, the fact that 
$W_k(t)=-W_{-k}(t)$, and the form of the evolution equation \eqref{LatticeDynamicsFourierZ}, we observe that 
\begin{equation} \label{paritya}
a_k^*(t) = a_{-k}(t), \qquad \forall t\geq 0. 
\end{equation}

To filter out the term $-{\bf i}\sigma\omega(k){a}(k,\sigma,t)$ coming from the linear dynamics, we pass to the profile
\begin{equation}\label{HatA}
	\alpha(k,\sigma,t) \coloneqq {a}(k,\sigma,t)e^{{\bf i}\sigma \omega(k)t},
\end{equation}
which satisfies the system
\begin{equation}
	\begin{aligned}\label{StartPoint}
		\mathrm{d}\alpha_t(k,\sigma) \ & =  - \ {\bf i}\sigma\lambda \sum_{k_1,k_2\in\Lambda^*}\delta(k-k_1-k_2)\times\\
		& \ \  \ \ \  \ \times \mathcal{M}(k,k_1,k_2)\alpha_t(k_1,\sigma) \alpha_t(k_2,\sigma)e^{{\bf i}t\sigma(-\omega(k_1)-\omega(k_2)+\omega(k))}\mathrm{d}t\\
		& \ \  \ \ \  \ -\ {\bf i}\sqrt{2c_r} \sum_{l \in \Lambda^*}
		\tilde{\mathfrak{g}}(k,l)\alpha(k,\sigma,t)\circ\mathrm{d}W_l,
		\\
		\alpha_0(k,-1) \ & = \ a_0(k), \ \ \ \ \forall (k,t)\in\Lambda^*\times \mathbb{R}_+.
	\end{aligned}
\end{equation}




We now set ${a}_k=B_{1,k} + {\bf i}B_{2,k}$ and use the notation $b_{1,k}$,  $b_{2,k}$, and $\mathbf{a}_k=b_{1,k} +{\bf i}b_{2,k}$ for the values of $B_{1,k}$, $B_{2,k}$, and $a_k$, respectively. We also set  $B_1 \coloneqq (B_{1,k})_{k\in\Lambda^*}$, $B_2\coloneqq (B_{2,k})_{k\in\Lambda^*}$. By considering real and imaginary parts in \eqref{StartPoint}, we obtain the system
 
\begin{equation}\label{dtBk}
	\begin{aligned}
		\mathrm{d}B_{1,k} \ = \ & -\omega(k) B_{2,k}\mathrm{d}t  -\sqrt{2c_r}
		\sum_{l \in \Lambda^*}
		\tilde{\mathfrak{g}}(k,l)B_{2,k}\circ\mathrm{d}W_l- \lambda\frac{\partial \mathcal{H}_2(B_1,B_2)}{\partial B_{2,k}}\mathrm{d}t,\\ 
		\mathrm{d}B_{2,k} \ = \ & \omega(k) B_{1,k}\mathrm{d}t 	+\sqrt{2c_r}
		\sum_{l \in \Lambda^*}
		\tilde{\mathfrak{g}}(k,l)B_{1,k}\circ \mathrm{d}W_l + \lambda\frac{\partial \mathcal{H}_2(B_1,B_2)}{\partial B_{1,k}}\mathrm{d}t,
	\end{aligned}
\end{equation}
which is a stochastic Hamiltonian system. We will not explain here the form of $\mathcal{H}_2(B_1,B_2)$, which is the nonlinear term in the total Hamiltonian for \eqref{dtBk}; the interested reader may consult \cite[Equation (32)]{staffilani2021wave}). Instead, we will show below the full equation for the density $\rho(t,b_1,b_2)$ after a change of variable.

The initial data are chosen to be  random variables $(B_{1,k}(0,\varpi),B_{2,k}(0,\varpi))$ defined on the same probability space $(\tilde\Omega,\mathbf{F},{\mathscr{P}})$ on which the Wiener processes $\{W_k\}_{k\in \Lambda_{*}^+}$ are defined. The law of the random vector $(B_{1,k}(0,\varpi),B_{2,k}(0,\varpi))_{k\in\Lambda^*}$ is given by a probability density function $\rho(0,b_1,b_2)$, in which $(b_1,b_2)$ $=$ $(b_{1,k},b_{2,k})_{k\in\Lambda^*}$. 
The initial density function $\rho(0,b_1,b_2)$ is assumed to be smooth, non-negative, and therefore it satisfies
\begin{equation}
	\label{DensityOneZ}
	\int_{\mathbb{R}^{2|\Lambda^*|}}\mathrm{d}b_1\mathrm{d}b_2\varrho(0,b_1,b_2) \ = \ 1,
\end{equation}
with $ \mathrm{d}b_1\mathrm{d}b_2 \coloneqq\prod_{k\in \Lambda^*}\mathrm{d}b_{1,k}\mathrm{d}b_{2,k}$.

The law of the random variables $(B_1(t),B_2(t))$ is given by the probability density $\varrho(t,b_1,b_2)$, which satisfies the \emph{Liouville equation}\footnote{Also referred to as the \emph{Fokker-Planck equation} in kinetic theory or \emph{forward Kolmogorov equation} in probability.}
\begin{equation}
	\label{Liouville1Z}
	\partial_t\varrho \ =  \ c_r \mathcal{R}[\varrho] - \{\mathcal{H},\varrho\},
\end{equation}
where $\mathcal{H}$ is the Hamiltonian of the system \eqref{dtBk} and $\mathcal{R}$ is the generator of the noise in \eqref{dtBk}, the form of which will be specified momentarily. We now write in a more explicit form
\begin{equation}
	\label{LiouvilleZ}
	\begin{aligned}
		\partial_t\varrho \ =  \ c_r \mathcal{R}[\varrho]  + \sum_{k\in\Lambda^*}\omega(k)
		\left(b_{2,k}{\partial_{ b_{1,k}}}-b_{1,k}{\partial_{b_{2,k}}}\right)\varrho -\lambda\{\mathcal{H}_2,\varrho\}.\end{aligned}
\end{equation}
Changing to polar coordinates, 
\begin{equation}\label{ChangofVariable}
	b_{1,k}+{\bf i}b_{2,k}\ =\ \mathbf{a}_k \ = \ \sqrt{2c_{1,k}}e^{{\bf i}c_{2,k}},
\end{equation} with $c_{1,k}\in \mathbb{R}_+$ and $c_{2,k}\in [-\pi,\pi]$, we  transform \eqref{LiouvilleZ} to 
\begin{equation}
	\begin{aligned}\label{FokkerPlanckZ}
		\partial_t\varrho
		=\ &-\sum_{k\in\Lambda^*}\omega(k)\partial_{c_{2,k}}\varrho + c_r\mathbf{R}[\varrho]\
		+\ \sum_{k\in\Lambda^*}\lambda \mathfrak{H}_1(k)\partial_{c_{1,k}}\varrho+ \sum_{k\in\Lambda^*}\lambda\mathfrak{H}_2(k)\partial_{c_{2,k}}\varrho.
	\end{aligned}
\end{equation}
{Notice that thanks to \eqref{paritya}, we have $c_{2,k}=-c_{2,-k}$ and 
$\partial_{c_{2,k}}=-\partial_{c_{2,-k}}$. Therefore, recalling the definition \eqref{positiveDomain} of $\Lambda_*^+$, we specify $\mathbf{R}[\varrho]$, $\mathfrak{H}_1$, $\mathfrak{H}_2$ under the new variables as follows:
\begin{equation} \label{NOISEgenerator}
\begin{aligned}
\mathbf{R}[\varrho] &= 2 \sum_{k,k' \in \Lambda_*^+ } 
	\mathscr{E}(k,k')\frac{\partial^2}{\partial_{c_{2,k}} \partial_{c_{2,k'}}}\varrho\\ & =\ \frac12	\sum_{k,k' \in \Lambda_* } 
	\tilde{\mathscr{E}}(k,k')\frac{\partial^2}{\partial_{c_{2,k}} \partial_{c_{2,k'}}}\varrho,
\end{aligned}
\end{equation}}
and  
\begin{multline}
\mathfrak{H}_1({k}) \coloneqq \sum_{k_1,k_2\in\Lambda^*}\mathcal{M}(k,k_1,k_2)\sqrt{2c_{1,k_1}c_{1,k_2}c_{1,k}}\Big[\delta(k-k_1-k_2){\sin(c_{2,k_1}+c_{2,k_2}-c_{2,k})} \\
+\ 2\delta(k+k_1-k_2){\sin(-c_{2,k_1}+c_{2,k_2}-c_{2,k})}\Big],
\end{multline}
and
\begin{multline}
\mathfrak{H}_2({k})\coloneqq-\sum_{k_1,k_2\in\Lambda^*}\mathcal{M}(k,k_1,k_2)\sqrt{c_{1,k_1}c_{1,k_2}}\Big[\delta(k-k_1-k_2){\sin(c_{2,k_1}+c_{2,k_2})} \\
 + 2\delta(k+k_1-k_2){\sin(-c_{2,k_1}+c_{2,k_2})}\Big]\frac{\sin(c_{2,k})}{\sqrt{2c_{1,k}}} \\
-\sum_{k_1,k_2\in\Lambda^*}\mathcal{M}(k,k_1,k_2)\delta(k-k_1-k_2)\sqrt{c_{1,k_1}c_{1,k_2}}\Big[\delta(k-k_1-k_2){\cos(c_{2,k_1}+c_{2,k_2})} \\
 + 2\delta(k+k_1-k_2){\cos(-c_{2,k_1}+c_{2,k_2})}\Big]\frac{\cos(c_{2,k})}{\sqrt{2c_{1,k}}}.
\end{multline}
By defining the new Hamiltonian,
\begin{multline}
\mathfrak{H}(k) \coloneqq \lambda\int_{\Lambda^*}dk_1\int_{\Lambda^*}dk_2\mathcal{M}(k,k_1,k_2)\sqrt{2c_{1,k_1}c_{1,k_2}c_{1,k}}\Big[\delta(k-k_1-k_2)\cos(c_{2,k}-c_{2,k_1}-c_{2,k_2}) \\
+2\delta(k+k_1-k_2)\cos(c_{2,k}+c_{2,k_1}-c_{2,k_2})\Big] + \omega_kc_{1,k},
\end{multline}
we can rewrite the equation \eqref{FokkerPlanckZ} as
\begin{equation}
\partial_t\varrho + \sum_{k\in\Lambda^*} \left[\left[\mathfrak{H}(k),\varrho\right]\right]_k  - c_r\mathbf{R}[\varrho] =0,
\end{equation}
with Poisson bracket
\begin{equation}
\left[\left[\mathfrak{H}(k),\varrho\right]\right]_k \coloneqq \partial_{c_{2,k}}\mathfrak{H}(k)\partial_{c_{1,k}}\varrho - \partial_{c_{1,k}}\mathfrak{H}(k)\partial_{c_{2,k}}\varrho.
\end{equation}

Since the drift in \eqref{StartPointZ} is smooth, under our assumptions on $\rho(0)$ (see also \cref{averages} below), equation \eqref{LiouvilleZ}---and by implication equation \eqref{FokkerPlanckZ}---has a global classical solution. The well-posedness of the Liouville equation, which has the structure of a transport-diffusion equation, is addressed in more detail in forthcoming work of the authors \cite{RosenzweigStaffilaniTran}. 

We now define the standard Banach spaces that we use below
\begin{equation}\label{Def:Norm1}L^{p}(\mathbb{T}^d)\coloneqq \left\{F(k): \mathbb{T}^d\to \mathbb{R}\ \Big| \ \|F\|_{L^p}=\left[\int_{\mathbb{T}^d}\mathrm{d}k |F(k)|^p\right]^{\frac{1}{p}}<\infty\right\}, \ \ \ \mbox{ for } p\in[1,\infty),\end{equation}

\begin{equation}\label{Def:Norm2}L^{\infty}(\mathbb{T}^d)\coloneqq \left\{F(k): \mathbb{T}^d\to \mathbb{R}\ \Big| \ \|F\|_{L^\infty}=\mathrm{ess \ sup}_{k\in\mathbb{T}^d} |F(k)|<\infty\right\},\end{equation}
\begin{equation}\label{Def:Norm3} l^{p}(\mathbb{Z}^d) \coloneqq \left\{F(x): \mathbb{Z}^d\to \mathbb{R}\ \Big| \ \|F\|_{l^p}=\left[\sum_{x\in\mathbb{Z}^d} |F(x)|^p\right]^{\frac{1}{p}}<\infty\right\}, \ \ \ \mbox{ for } p\in[1,\infty),\end{equation}
and
\begin{equation}\label{Def:Norm4}l^{\infty}(\mathbb{Z}^d)\coloneqq \left\{F(x): \mathbb{Z}^d\to \mathbb{R}\ \Big| \ \|F\|_{l^\infty}=\sup_{x\in\mathbb{Z}^d} |F(x)|<\infty\right\}.\end{equation}
We let
\begin{equation}
	\label{Shorthand1a}
	\int_{\Lambda^*}\mathrm{d}k \coloneqq  \frac{1}{|\Lambda^*|}\sum_{k\in\Lambda^*}
\end{equation}
to denote the Lebesgue integral with respect to the uniform measure on $\Lambda^*$. For a function $F(k,k'):\mathbb{T}^d\times \mathbb{T}^d\to \mathbb{R}$, we can make a change of coordinates, $$F(k,k')=G(k'',k''')$$ with $k''=\frac{k-k'}{2}$ and  $k'''=\frac{k+k'}{2}$. We can then define the mixed norm Lebesgue space, for all $s\ge 1$
\begin{equation}\begin{aligned}\label{Def:Norm5}L^{\infty,s}(\mathbb{T}^d\times \mathbb{T}^d)\ := &\ \left\{F(k,k')=G(k'',k'''): \mathbb{T}^d\times \mathbb{T}^d\to \mathbb{R}, \ \Big|\right. \\
		& \ \ \left.\|F\|_{L^{\infty,s}}=\int_{\mathbb{T}^d}\mathrm{d}k'' \Big[\int_{\mathbb{T}^d}\mathrm{d}k''' |G(k'',k''')|^s\Big]^{\frac{1}{s}}<\infty\right\}.\end{aligned}\end{equation}
If $(a,b)$ is a bounded interval, then we define the standard Sobolev space  $H_{0}^{1}(a,b)$ consisting of continuous functions on $[a,b]$ of the form

$$f(x)=\int _{a}^{x}\mathrm {d} tf'(t)\,,\qquad x\in [a,b]$$
where the generalized derivative $f'\in L^{2}(a,b)$ and has $ 0$ integral, so that  $f(b)=f(a)=0.$

\subsubsection{The initial conditions and the Wigner distribution}

We now specify the class of initial conditions that we are interested in. Before proceeding further, we  need the following definitions.
{
\begin{definition}\label{MatrixNoise}
	Define  $\diamond(k)$ to be a one-to-one mapping from  $\Lambda^*$  to $\{1,\cdots,|\Lambda^*|\}\}$.  Define $\big[\mathbb{M}'\big]$ to be a $|\Lambda^+_*|\times |\Lambda^+_*|$ complex  Hermitian matrix whose components are written as $$\mathbb{M}'_{(\diamond(k),\diamond(k_1))}={B}(k,k_1)$$ for any $k,k_1 \in \Lambda^+_*$. We assume that $B$ has the following symmetric structure
\begin{equation}\label{Bkk1def}
{B}(k,k_1) \ = \ {B}(k_1,k) \ = \ \epsilon^{-d}\bar{B}(k_1/\epsilon,k/\epsilon)\ = \ \epsilon^{-d}\bar{B}(k/\epsilon,k_1/\epsilon), \ \ \ \ \ \forall k,k_1 \in \Lambda^+_*,
\end{equation}
in which $\bar{B}$ is some given function $\bar{B}:\mathbb{R} ^d\times \mathbb{R} ^{d}\to\mathbb{R}$ and $\bar{B}\in L^1(\mathbb{R} ^d\times \mathbb{R} ^{d})\cap  L^\infty(\mathbb{R} ^d\times \mathbb{R} ^{d})$. 
{
Recall the tiling in \eqref{BOXes}.
We assume further that $\bar{B}$ is given such that for any 
$\underline{n},\underline{m} \in \mathcal{J}_+$ with 
$\underline{n} \neq \underline{m}$,  
\begin{equation}\label{MatrixNoise:1}
 k\in \mathcal{B}_{\underline{n}}, k_1 \in \mathcal{B}_{\underline{m}} \Longrightarrow  B(k,k_1)=0.
\end{equation}}
\end{definition}}
Since $\big[\mathbb{M}'\big]$ is complex Hermitian, the spectral theorem implies that
\begin{equation}\label{MatrixNoise1}
	\big[\mathbb{M}'\big] \ = \ \big[\mathbb{M}_0\big]\big[\mathbb{M}_1 \big]\big[\mathbb{M}_0^{-1}\big],
\end{equation}
where $\big[\mathbb{M}_0\big]$ is a unitary matrix and $\big[\mathbb{M}_1\big]$ is a diagonal matrix whose nonzero entries are the real, repeated if necessary, eigenvalues $\iota_1,\cdots,\iota_{|\Lambda^*_+|}$ of $\big[\mathbb{M}'\big] $ with associated (necessarily orthonormal) eigenvectors $\mathbf{q}_1,\ldots,\mathbf{q}_{|\Lambda_+^*|}$.
 Note that the matrix $\big[\mathbb{M}_0\big]$ has the orthonormal columns created by $\mathbf{q}_1,\cdots,\mathbf{q}_{|\Lambda^*_+|}$.   We now define $\daleth(k)$ as
\begin{equation}\label{omega1}
\epsilon^{-d}\daleth(k) \coloneqq \iota_{\diamond(k)}.
\end{equation} 
The factor $\epsilon^{-d}$ is due to the scaling in \eqref{Bkk1def} and is consistent with the scaling of the Wigner transform  \eqref{Wigner1} below. {Initially, we let the system  be in a state described by a probability density $\rho(0)$, which will be specified below. Then at time $t$, the system is 	
described by the solution $\varrho(t)$  of \eqref{FokkerPlanckZ} with initial datum $\varrho(0)=\rho(0)$. Given the fact that $a_k=a_{-k}^*$ \eqref{paritya}, we define $\varrho(0), \varrho(t)$
	as probability densities over $\mathbb{R}^{2|\Lambda_*^+|}$, and we compute the average 
	of observable over $\mathbb{R}^{2|\Lambda^*|}$ by proper symmetrization. }
{
\begin{definition}[Averages and Circular Symmetric Random Variable Initial Condition] 
\label{averages}
	For any observable $F_+:\mathbb{R}^{2|\Lambda_*^+|}\to \mathbb{C}$,  and the random 
	variables $B_1=(B_{1,k})_{k\in\Lambda_*^+}$, $B_2=(B_{2,k})_{k\in\Lambda_*^+}$  on the probability space $(\tilde\Omega,\mathbf{F},{\mathscr{P}})$, we  define the average
	\begin{equation}\label{Averaget+}
		\langle F_+(B_1,B_2)\rangle^+ \ = \  \langle F_+(B_1,B_2)\rangle_{t}^+ = \Big\langle{F_+(B_1,B_2)}\Big\rangle_{\mathscr{P}}^+ \coloneqq \int_{\mathbb{R}^{2|\Lambda^
		+_*|}}
		\mathrm{d}b_1\mathrm{d}b_2 F_+(b_1,b_2)\varrho(t,b_1,b_2),
	\end{equation}	
  where $\langle\rangle_{{\mathscr{P}}}^+$ denotes the expectation with respect to $
{\mathscr{P}}$. We choose the initial probability density $\varrho(0)$ to be Gaussian with covariance matrix $\big[\mathbb{M}'\big]$, that is
\begin{equation}
	\label{InitialDensity}
	\varrho(0,b_1,b_2) \coloneqq \frac{1}{\pi^{|\Lambda_*^+|}\mathrm{det}\big[\mathbb{M}'\big]}\exp
	\Big(-\mathbf{a}^*\big[\mathbb{M}'\big]^{-1}\mathbf{a}\Big),
\end{equation}
where $\mathbf{a}$ is the vector whose   $\diamond(k)$-th  component is $\mathbf{a}_k$ (recall 
\eqref{ChangofVariable}),   $\mathbf{a}^*$ is the complex conjugate of $\mathbf{a}$, and $ 
\big[\mathbb{M}'\big]^{-1}$ is the inverse of $\big[\mathbb{M}'\big]$. Using the diagonalization \eqref{MatrixNoise1}, we can also write
\begin{equation}
	\label{InitialDensity2}
	\varrho(0,b_1,b_2) \ = \ \frac{1}{\pi^{|\Lambda^+_*|}}\prod_{i=1}^{|\Lambda^+_*|}\frac{1}{\iota_i}\exp\left(-\frac{|\mathbf{q}_i^*\mathbf{a}|^2}{\iota_i}\right).
\end{equation}
Thanks to the fact that $a_k=a_{-k}^*$, we extend the above definition to 
$\Lambda^*$ as follows:  for $B_1=(B_{1,k})_{k \in \Lambda^*}$, 
$B_2 = (B_{2,k})_{k \in \Lambda^*} $ denote
\begin{equation*}
 B_1^{\pm}\coloneqq(B_{1,k})_{k \in 
\Lambda^{\pm}_*} \quad \text{and} \quad B_2^{\pm} \coloneqq (B_{2,k})_{k \in 
\Lambda^{\pm}_*}.
\end{equation*}
Then for any observable $F: \mathbb{R}^{2 \Lambda^*} \to \mathbb{C}$,
 notice that $$F(B_1,B_2)=F(B_1^+,B_2^+,B_1^-,B_2^-),$$ and 
 we denote
\begin{equation}
		\label{Averaget}\begin{aligned}
	\langle F(B_1,B_2)\rangle_{t} \coloneqq
		\int_{\mathbb{R}^{2|\Lambda^
		+_*|}}
		\mathrm{d}b_1^+\mathrm{d}b_2^+ F(b_1^+,b_2^+,b_1^+,-b_2^+)\varrho(t,b_1,b_2).\end{aligned}
	\end{equation}
Thanks to the fact that  $\mathbf{a}^+$ is a circular, symmetric random vector, whose pseudo-covariance matrix is simply the zero matrix, as well as the above
definition, we have: 
\begin{equation}
	\label{InitialDensity1}
	\begin{aligned}
& \langle a_k(0) \rangle \ = \ 0, \: k \in \Lambda^*, \\
&\langle a_k(0) a^*_{k'}(0)\rangle \ = \ \big[\mathbb{M}\big]_{(\diamond(k),\diamond(k'))} = B(k,k'), \ \ \ \langle a_k(0) a_{k'}(0)\rangle \ = \ 0, \ \ \  \: k,k' \in \Lambda_*^+,\\
&\langle a_k(0) a^*_{k'}(0)\rangle \ = \ \big[\mathbb{M}\big]_{(\diamond(-k'),\diamond(-k))} = B(-k',-k), \ \ \ \langle a_k(0) a_{k'}(0)\rangle \ = \ 0, \ \ \  \: k,k' \in \Lambda_*^-,\\
&\langle a_k(0) a^*_{k'}(0)\rangle \ = 0, \ \ \ \langle a_k(0) a_{k'}(0)\rangle \ = \ \ \big[\mathbb{M}\big]_{(\diamond(k),\diamond(-k'))} = B(k,-k'), \ \ \  \: k \in \Lambda_*^+, k' \in \Lambda_*^-.
\end{aligned}
\end{equation}

\end{definition}}
Though $\varrho(t)$ is a probability density over $\mathbb{R}^{|\Lambda_*^+|}$, due to the above symmetrization, we may regard it as probability density over $\mathbb{R}^{|\Lambda_*|}$.

Next, we will define the Wigner distribution $\mathcal W_{\epsilon}$ (see \cite{bal2002self,basile2016thermal,basile2010energy,
bernardin2019hydrodynamic,hernandez2022quantum,hernandez2022rapidly,komorowski2006diffusion,ryzhik1996transport}) via its action on a test function $G\in\mathcal{S}(\mathbb{R}^d\times\mathbb{T}^d)$ 
\begin{equation}
	\label{Wigner1}
	\langle G,\mathcal W_{\epsilon}(t)\rangle \coloneqq \Big(\frac{\epsilon}{2}\Big)^d\sum_{z,z'\in\Lambda^*}\left\langle \tilde a(z,1)\tilde a(z',-1)\right\rangle_t\tilde{G}\Big(\epsilon\frac{z+z'}{2},z-z'\Big),
\end{equation}
where  
\begin{equation}\begin{aligned}
	\tilde a(z,\pm1)\coloneqq & \int_{\Lambda^*}\mathrm{d}k {a}(k,\pm1) e^{-2\pi {\bf i} z\cdot k},\ \ \ \\ \tilde{G}(z,z')\coloneqq & \int_{\mathbb{T}^d}\mathrm{d}k e^{-2\pi {\bf i} z'\cdot k}G (z,k), \ \ \ \  
	\forall (z,z')\in\mathbb{R}^d\times\mathbb{Z}^d.\end{aligned}
\end{equation}
The Fourier transform of the Wigner distribution is now computed as
\begin{equation}
	\label{Wigner2}
	\hat{\mathcal W}_{\epsilon}(\mho,k,t) \ = \ \Big(\frac{\epsilon}{2}\Big)^d\left\langle{a}\Big(k-\frac{\epsilon\mho}{2},-1)\Big){a}\Big(k+\frac{\epsilon\mho}{2},1)\Big)\right\rangle_t, \ \ \ (\mho,k,t)\in \Lambda^*_{2/\epsilon}\times\Lambda^*\times\mathbb{R}_+,
\end{equation}
where 
\begin{equation}
	\label{Wigner3}
	\Lambda^*_{2/\epsilon} \coloneqq  \left\{-\frac{2D}{\epsilon(2D+1)},\cdots,0,\cdots,\frac{2D}{\epsilon(2D+1)}\right\}^d.
\end{equation}
We then have by Plancherel's theorem that
\begin{equation}
	\label{Wigner4}
	\langle G,\mathcal W_{\epsilon}(t)\rangle \ = \ \int_{\mathbb{T}^d\times\mathbb{R}^d}\mathrm{d}\mho\mathrm{d}k\hat{\mathcal W}_{\epsilon}(\mho,k,t)\hat{G}^*(\mho,k),
\end{equation}
where
\begin{equation}
	 \hat{G}(\mho,k) \coloneqq \int_{\mathbb{R}^d}\mathrm{d}z e^{-2\pi {\bf i} z\cdot \mho}G (z,k), \ \ \ \  
	\forall (\mho,k)\in\mathbb{R}^d\times\mathbb{T}^d,
\end{equation}
and $\mathcal W_{\epsilon}(t)=\mathcal W_{\epsilon}(\Omega,k,t)$ with $\Omega\in (\epsilon\mathbb{Z}/2)^d$.
We now denote by $\mathscr A$ the completion of $\mathcal{S}(\mathbb{R}^d\times\mathbb{T}^d)$ with respect to the norm
\begin{equation}
	\label{Wigner5}
	\| G\|_{\mathscr A} \ = \ \int_{\mathbb{R}^d}\mathrm{d}\mho \sup_{k\in\mathbb{T}^d}|\hat{G}(\mho,k)|,
\end{equation}
and let $\mathscr A'$ denote its dual.

\medskip
We now come to Assumption (B) in the statement of \cref{TheoremMainRough}, which is motivated by similar assumptions in \cite{komorowski2020high,Spohn:TPB:2006}.

\medskip

	{\bf Assumption (B)}
		{\it We assume that the initial Wigner distribution $\hat{\mathcal W}_{\epsilon}(\mho,k,0)$    converges weakly in $\mathscr A'$ to a  function ${\bf W}_0 \in L^1(\mathbb{R}^d\times\mathbb{T}^d)\cap C(\mathbb{R}^d\times\mathbb{T}^d)$ in the limit of $h\to 0$ and $\epsilon\to 0$. 
}

\subsubsection{Properties of the matrices \eqref{Matrix}}
We record some properties of the matrices from \eqref{Matrix} used in the construction of the noise.
 
 
First, we observe some elementary differential identities for multinomials of the amplitudes $\mathbf{a}_k$. For $n,m \in \mathbb{N}$, elementary calculus yields
 \begin{equation} \label{Rak}
 	\begin{aligned}
 	&	\partial_{c_{2,k}} (\mathbf{a}_k)^n\ =\ {\bf i} n  \mathbf{a}_k^n,\ \ \ \ \ \ \ \ \ \ 
 		\quad \partial_{c_{2,k}} (\mathbf{a}_k^*)^m = -{\bf i} m 
 		(\mathbf{a}_k^*)^m, \\
 	 &	 \partial_{c_{2,k}}\left((\mathbf{a}_k)^n(\mathbf{a}_k^*)^m\right) \ =\ {\bf i} (n-m)(\mathbf{a}_k)^n(\mathbf{a}_k^*)^m, \\
 	&	\partial^2_{c_{2,k_1} c_{2,k_2}}\left((\mathbf{a}_{k_1})^{n_{k_1}} (\mathbf{a}_{k_1}^*)^{m_{k_1}}(\mathbf{a}_{k_2})^{n_{k_2}}(\mathbf{a}_{k_2}^*)^{n_{k_2}}\right) \\
 		& \ =\  
 		-(n_{k_1}-m_{k_1})(n_{k_2}-m_{k_2}) (\mathbf{a}_{k_1})^n_{k_1} 
 		(\mathbf{a}_{k_1}^*)^{m_{k_1}}
 		(\mathbf{a}_{k_2})^{n_{k_2}}(\mathbf{a}_{k_2}^*)^{n_{k_2}}.
 	\end{aligned}
 \end{equation}
 
{  
For multi-indices $\mathcal{V}_1, \mathcal{V}_2 \in \mathbb{N}^{|\Lambda^*|}$, we introduce the multinomial notation
\begin{equation} \label{defaV}
(\mathbf{a})^{\mathcal{V}_1} \coloneqq \prod_{k \in \Lambda_*} (\mathbf{a}_k)^{\mathcal{V}_{1,
\diamond(k)}}, \quad (\mathbf{a}^*)^{\mathcal{V}_2} \coloneqq \prod_{k \in \Lambda_*} (\mathbf{a}^*_k)^{\mathcal{V}_{2,
\diamond(k)}}.
\end{equation}
For $\mathcal{V} \in \mathbb{N}^{|\Lambda^*|}$ and $k \in \Lambda^*$, we denote $\mathcal{V}_k \coloneqq \mathcal{V}_{\diamond{(k)}}$ (note that in the right-hand side $k$ is a vector, while in the left-hand side $\diamond(k)$ is a positive integer; so there is no ambiguity). Then thanks to \eqref{Rak}, the definition \eqref{NOISEgenerator} of $\mathbf{R}$ , and the fact that $\mathbf{a}_k^*=\mathbf{a}_{-k}$, we have
\begin{multline}\label{Noiseaction0}
\mathbf{R}\left((\mathbf{a})^{\mathcal{V}_1} (\mathbf{a}^*)^{\mathcal{V}_2}\right) = - 2\sum_{k_1,k_2 \in \Lambda_*^+}\Big( \mathscr{E}(k_1,k_2)
 	(\mathcal{V}_{1,k_1}-\mathcal{V}_{1,-k_1}+\mathcal{V}_{2,-k_1}-\mathcal{V}_{2,k_1}) \\
 	\times 	(\mathcal{V}_{1,k_2}-\mathcal{V}_{1,-k_2}+\mathcal{V}_{2,-k_2}-\mathcal{V}_{2,k_2})
 	(\mathbf{a})^{\mathcal{V}_1} (\mathbf{a}^*)^{\mathcal{V}_2}\Big).
\end{multline}
 }

{For $\mathcal{V} \in \mathbb{R}^{|\Lambda^*|}$, we identify $\mathcal{V}$
by $\{\tilde{\mathcal{V}}_k \}_{k \in \Lambda^*}$, where $\tilde{\mathcal{V}}_k=
\tilde{\mathcal{V}}_{\diamond(k)}$. This means for $\mathcal{V} \in \mathbb{R}^{|\Lambda^*|}$ and $k \in \Lambda^*$, $\mathcal{V}_k$ should be understood in the above sense.} 
{Recall the definition of $\mathcal{J}_{\pm}$ \eqref{indiceBOX}.  For $\mathcal{V} \in \mathbb{N}^{|\Lambda^*|}$, $\underline{n} \in \mathcal{J}_{+}$ denote
 \begin{equation}
 	|\mathcal{V}|\coloneqq \sum_{k \in \Lambda^*} \mathcal{V}_k, \qquad 
 	\mathcal{V}^{\pm}\coloneqq\sum_{k \in \Lambda_*^{\pm}} \mathcal{V}_k, 
 	\qquad \mathcal{V}^{\pm}_{\underline{n}} = \sum_{k \in \mathcal{B}_{\underline{n}}}
 	\mathcal{V}_{\pm k}.
 \end{equation} }
Then we  have the following lemma 

{\begin{lemma}
	\label{Lemma:NoiseAction}
The  action of the noise $\mathbf{R}$ can be written as
	 \begin{equation} \label{genralactionR2}
	 	\begin{aligned}
	 		\mathbf{R}\left((\mathbf{a})^{\mathcal{V}_1} (\mathbf{a}^*)^{\mathcal{V}_2}\right) \ =\  C_{\mathbf{R}}(\mathcal{V}_1,
	 		\mathcal{V}_2)(\mathbf{a})^{\mathcal{V}_1} (\mathbf{a}^*)^{\mathcal{V}_2},
	 	\end{aligned}
	 \end{equation}
	 with real-valued coefficient $C_{\mathbf{R}}(\mathcal{V}_1,\mathcal{V}_2)$ given by
\begin{multline} \label{genralactionR2:a}
	 		C_{\mathbf{R}}(\mathcal{V}_1,
	 		\mathcal{V}_2)  \coloneqq 
	 		\\ -2\left(\sum_{k_1,k_2\in\Lambda_*^+} 	
	 		\mathscr{E}(k_1,k_2)
	 		(\mathcal{V}_{1,k_1}-\mathcal{V}_{1,-k_1}+\mathcal{V}_{2,-k_1}-\mathcal{V}
	 		_{2,k_1})(\mathcal{V}_{1,k_2}-\mathcal{V}_{1,-k_2}+\mathcal{V}_{2,-k_2}-
	 		\mathcal{V}
	 		_{2,k_2})\right)
	 	\\
	 	=-2\Bigg(\sum_{\underline{n},\underline{m} 
	 	\in \mathcal{J}_+}\sum_{k_1\in \mathcal{B}_{\underline{n}}, 
	 	k_2 \in \mathcal{B}_{\underline{m}} } \mathscr{E}(k_1,k_2)
	 		(\mathcal{V}_{1,k_1}-\mathcal{V}_{1,-k_1}+\mathcal{V}_{2,-k_1}-\mathcal{V}
	 		_{2,k_1})(\mathcal{V}_{1,k_2}-\mathcal{V}_{1,-k_2}+\mathcal{V}_{2,-k_2}-
	 		\mathcal{V}
	 		_{2,k_2})\Bigg) \\
	 		=-2\sum_{\underline{n} \in \mathcal{J}_+}\sum_{k_1\in \mathcal{B}_{\underline{n}}, 
	 	k_2 \in \mathcal{B}_{\underline{n}}} 
	 		(\mathcal{V}_{1,k_1}-\mathcal{V}_{1,-k_1}+\mathcal{V}_{2,-k_1}-\mathcal{V}
	 		_{2,k_1})(\mathcal{V}_{1,k_2}-\mathcal{V}_{1,-k_2}+\mathcal{V}_{2,-k_2}-
	 		\mathcal{V}
	 		_{2,k_2})\\
	 		= -2 \sum_{\underline{n} \in \mathcal{J}_+} 
	 		\bigg(\sum_{k \in \mathcal{B}_n} \left(\mathcal{V}_{1,k}-\mathcal{V}_{1,-k}+
	 		\mathcal{V}_{2,-k}-
	 		\mathcal{V}
	 		_{2,k} \right) \bigg)^2=-2\sum_{
	 		\underline{n}\in \mathcal{J}_+} \left( 
	 		\mathcal{V}_{1,\underline{n}}^+ - \mathcal{V}_{1,\underline{n}}^-
	 		-\mathcal{V}_{2,\underline{n}}^++ \mathcal{V}_{2,\underline{n}}^-\right)^2,
	 \end{multline}
	Then we have: 
	\begin{equation} \label{CR=0}
\sum_{k \in \mathcal{B}_{\underline{n}}} 
(\mathcal{V}_{1,k}-\mathcal{V}_{1,-k}+\mathcal{V}_{2,-k}-\mathcal{V}_{2,k}) = 0, \qquad \forall \underline{n} \in \mathcal{J}_+, \quad\Longrightarrow \quad C_{\mathbf{R}}=0.
\end{equation}
Otherwise, if there exists $\underline{n} \in \mathcal{J}_+$ such that
\begin{equation} \label{CR>1}
\sum_{k \in \mathcal{B}_{\underline{n}}} (\mathcal{V}_{1,k}-\mathcal{V}_{1,-k}+\mathcal{V}_{2,-k}-\mathcal{V}_{2,k}) \ \ne \ 0,
\end{equation}
then  $C_{\mathbf{R}}\le -1$.
\end{lemma}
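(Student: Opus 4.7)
The plan is to compute $C_{\mathbf{R}}(\mathcal{V}_1, \mathcal{V}_2)$ directly starting from the intermediate identity \eqref{Noiseaction0}, which itself follows by applying the differential identities \eqref{Rak} termwise. First I would confirm \eqref{Noiseaction0} by differentiating $(\mathbf{a})^{\mathcal{V}_1}(\mathbf{a}^*)^{\mathcal{V}_2}$ twice: the single derivative $\partial_{c_{2,k}}$ acts nontrivially only on factors $\mathbf{a}_l$ with $l = \pm k$, and since $\mathbf{a}_k^* = \mathbf{a}_{-k}$, each such factor contributes $\mathbf{i}$ times $\mathcal{V}_{1,k} - \mathcal{V}_{1,-k} + \mathcal{V}_{2,-k} - \mathcal{V}_{2,k}$ when all contributions are collected. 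Two derivatives together with the $\mathbf{i}^2 = -1$ then produce the pairwise product that appears in \eqref{Noiseaction0}, and the factor $2$ out front of the definition \eqref{NOISEgenerator} combined with the factor $\frac12$ difference between the two equivalent forms of $\mathbf{R}$ yield the overall $-2$ prefactor.

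Next, I would exploit the block-diagonal structure of $\mathscr{E}$ built into \eqref{Matrix}. By the tiling property \eqref{BOXes}, the boxes $\{\mathcal{B}_{\underline{n}}\}_{\underline{n}\in\mathcal{J}_+}$ partition $\Lambda_*^+$, and $\mathscr{E}(k_1,k_2)$ is $1$ when $k_1$ and $k_2$ lie in the same box $\mathcal{B}_{\underline{n}}$ and $0$ otherwise. Substituting this into the double sum over $(k_1, k_2) \in \Lambda_*^+ \times \Lambda_*^+$ collapses it to the sum $\sum_{\underline{n} \in \mathcal{J}_+} \sum_{k_1, k_2 \in \mathcal{B}_{\underline{n}}}$ with weight $1$. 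Within a fixed box the resulting expression is the product of two identical sums of the form $\sum_{k \in \mathcal{B}_{\underline{n}}}(\mathcal{V}_{1,k} - \mathcal{V}_{1,-k} + \mathcal{V}_{2,-k} - \mathcal{V}_{2,k})$, hence is a perfect square, which after rewriting in the $\mathcal{V}_{j,\underline{n}}^{\pm}$ notation gives the last line of \eqref{genralactionR2:a}.

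The dichotomy \eqref{CR=0}–\eqref{CR>1} is then immediate from the resulting formula $C_{\mathbf{R}} = -2 \sum_{\underline{n} \in \mathcal{J}_+} (\mathcal{V}_{1,\underline{n}}^+ - \mathcal{V}_{1,\underline{n}}^- - \mathcal{V}_{2,\underline{n}}^+ + \mathcal{V}_{2,\underline{n}}^-)^2$. Since each $\mathcal{V}_{j,\underline{n}}^{\pm}$ is a nonnegative integer, each summand is a square of an integer. If every block-sum vanishes, then $C_{\mathbf{R}} = 0$; otherwise at least one summand is the square of a nonzero integer, hence $\ge 1$, so $C_{\mathbf{R}} \le -2 \le -1$, establishing the stated bound.

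I do not expect any serious analytic obstruction here; the heart of the argument is purely algebraic, and the only real care required is bookkeeping. The main subtle point is correctly tracking the sign flips induced by the identification $\mathbf{a}_k^* = \mathbf{a}_{-k}$ (and the corresponding $\partial_{c_{2,k}} = -\partial_{c_{2,-k}}$) when collecting contributions from indices $k$ and $-k$, and confirming that the two representations of $\mathbf{R}$ in \eqref{NOISEgenerator} (over $\Lambda_*^+$ with factor $2$, and over $\Lambda_*$ with factor $\tfrac12$) give the same coefficient — a sanity check that justifies the factorization into a sum of squares indexed only by $\mathcal{J}_+$ rather than a redundant sum over all of $\mathcal{J}_+ \cup \mathcal{J}_-$.
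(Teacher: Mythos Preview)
Your proposal is correct and follows essentially the same route as the paper: use the block-diagonal structure of $\mathscr{E}$ from \eqref{Matrix} to collapse the double sum in \eqref{Noiseaction0} onto diagonal blocks, factor each block term as a perfect square, and then read off the dichotomy from the integrality of the $\mathcal{V}_{j,k}$. You supply more detail than the paper (in particular on deriving \eqref{Noiseaction0} from \eqref{Rak} and on the sign bookkeeping under $\mathbf{a}_k^*=\mathbf{a}_{-k}$), but the argument is the same.
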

\begin{proof}
Recall the definition of the tiling \eqref{BOXes}, and set of their 
corresponding indices $\mathcal{J}_{\pm}$ \eqref{indiceBOX}. For $\underline{n} \in 
\mathcal{J}_+$, recall the corresponding cell $\mathcal{B}_{\underline{n}}$ \eqref{BOXes}.
Thanks to these definitions, we have the first equality in \eqref{genralactionR2:a}. Then 
we can conclude \eqref{genralactionR2} thanks to the definition of $\mathscr{E}$ 
\eqref{Matrix}. Having the last line in \eqref{genralactionR2:a}, as well as the fact that $\mathcal{V}_{1,k},$ $\mathcal{V}_{2,k}$ $\in \mathbb{N}$, give \eqref{CR=0}, 
\eqref{CR>1} for free.   
\end{proof}
}

\subsubsection{Estimates on the Liouville equation}\label{proofs'} 
We now record some moment estimates for solutions of the Liouville equation \eqref{FokkerPlanckZ}. We defer the proofs until \cref{proofs}.

\begin{proposition}\label{Propo:ExpectationEqui}
	Fix $\epsilon>0$, setting  
		\begin{equation}\label{Gaussian}
		\mathbf{P} \ = \  \exp\left(c_{\mathbf{P} }\int_{\Lambda^*}
		\mathrm{d}k |k^1|c_{1,k}\right),	\end{equation}
	for any $c_{\mathbf{P}}=\epsilon^d c_{\mathbf{P}}^o\in\mathbb{R},$ and $k=(k^,\cdots,k^d)$ with any $k\in\Lambda^*$,
	we have 
	\begin{equation}\label{Propo:ExpectationEqui:1}
		\begin{aligned}
			\partial_t\int_{(\mathbb{R}_+\times[-\pi,\pi])^{|\Lambda^*|}}\mathrm{d}c_{1}\mathrm{d}c_{2}\, \varrho\, \mathbf{P}  \
			= \ & 0.
		\end{aligned}
	\end{equation}
	Setting  
	\begin{equation}\label{GaussianPol}
		\mathbf{P}_m \ = \ \left(\int_{\Lambda^*}
		\mathrm{d}k  |k^1|c_{1,k}\right)^m, \ \ \ \ \mbox{ with } m \in \mathbb{N},	\end{equation}
	we also have 
	\begin{equation}\label{Propo:ExpectationEqui:1a}
		\begin{aligned}
			\partial_t\int_{(\mathbb{R}_+\times[-\pi,\pi])^{|\Lambda^*|}}\mathrm{d}c_{1}\mathrm{d}c_{2}\, \varrho\, \mathbf{P}_m  \
			= \ & 0.
		\end{aligned}
	\end{equation}
	
	Let $n\in\mathbb{N}$, let $\{k_{i_1},\cdots,k_{i_n}\}$ be a subset of $\Lambda^*$, and let $\sigma_{i_j}\in \{\pm 1\}$ for $1\leq j\leq n$. The following bound holds true: 
	\begin{equation}\label{Propo:ExpectationEqui:2}
\int_{(\Lambda^*)^n}\prod_{j=1}^n\mathrm{d}k_{i_j}\left|\Big\langle \bar{\omega}(k_{i_1}) a_{k_{i_1},\sigma_{i_1}}\cdots\bar{\omega}(k_{i_n}) a_{k_{i_n},\sigma_{i_n}}\Big\rangle_t\right|^2  			\lesssim n! |\mathfrak{C}_{in,o}/c_{\mathbf{P}}|^{{n}} \left|\int_{(\mathbb{R}_+\times[-\pi,\pi])^{|\Lambda^*|}}\mathrm{d}c_{1}\mathrm{d}c_{2}\mathbf{P}\varrho(0)\right|^2,
	\end{equation}
	where the constant $\mathfrak{C}_{in,o}$ in the inequality \eqref{Propo:ExpectationEqui:2} is universal. 
	
	For given signs  $\{\sigma_{i_j}\}_{j=1}^n$, define the set
	$$\Lambda^*_{\{\sigma_{i_j}\}_{j=1}^n} \coloneqq \left\{\bar{k}=\sum_{j=1}^nk_{i_j}\sigma_{i_j}:  k_{i_1},\ldots,k_{i_n}\in \Lambda^*\right\}.$$
	Then for any $\epsilon k_*\in \Lambda^*_{\{\sigma_{i_j}\}_{j=1}^n} $  and $n\ge 2$, it holds that
	\begin{multline}\label{Propo:ExpectationEqui:2a}
			\left[\sum_{\epsilon k_*\in \Lambda^*_{\{\sigma_{i_j}\}_{j=1}^n}}\epsilon^d\left[\int_{(\Lambda^*)^{n-1}}\prod_{j=1}^n\mathrm{d}k_{i_j}\left|\Big\langle \bar{\omega}(k_{i_1})a_{k_{i_1},\sigma_{i_1}}\cdots \bar{\omega}(k_{i_n})a_{k_{i_n},\sigma_{i_n}}\Big\rangle_t\right|^2\right]^\frac12 \delta\left(\sum_{j=1}^nk_{i_j}\sigma_{i_j}=\epsilon k_*\right)\right]^2  \\
			\lesssim  n!	|\mathfrak{C}_{in,o}/c_{\mathbf{P}}|^{{n}} \left|\int_{(\mathbb{R}_+\times[-\pi,\pi])^{|\Lambda^*|}}\mathrm{d}c_{1}\mathrm{d}c_{2}\mathbf{P}\varrho(0)\right|^2.
		\end{multline}
We also have the bound
\begin{equation}\label{Propo:ExpectationEqui:2b}
	\begin{aligned}
		&\int_{(\Lambda^*)^n}\prod_{j=1}^n\mathrm{d}k_{i_j}\left|\Big\langle  \bar{\omega}(k_{i_1}) a_{k_{i_1},\sigma_{i_1}}\cdots  \bar{\omega}(k_{i_n}) a_{k_{i_n},\sigma_{i_n}}\Big\rangle_t\right|\ \le \ n!\mathscr{C}_{\mathbb{M}},
	\end{aligned}
\end{equation}
where $\mathscr{C}_{\mathbb{M}}$ is a universal constant independent of $t$, $\epsilon$, $h$. Moreover, for any $\Im\in(1,2)$, we have the bound
\begin{equation}\label{Propo:ExpectationEqui:2c}
	\begin{aligned}
		&\left(\int_{(\Lambda^*)^n}\prod_{j=1}^n\mathrm{d}k_{i_j}\left|\Big\langle \bar{\omega}(k_{i_1})a_{k_{i_1},\sigma_{i_1}}\cdots \bar{\omega}(k_{i_n})a_{k_{i_n},\sigma_{i_n}}\Big\rangle_t\right|^\Im\right)^\frac1\Im\ \le \ n!\mathscr{C}_{\mathbb{M}}'\epsilon^{-d(\Im-1)n/2},
	\end{aligned}
\end{equation}
 and for  $\complement>0$,
\begin{multline}\label{Propo:ExpectationEqui:2d}
		\sum_{\substack{|k_*|\le C \\ \epsilon k_*\in \Lambda^*_{\{\sigma_{i_j}\}_{j=1}^n}}}\epsilon^d\left[\int_{(\Lambda^*)^{n-1}}\prod_{j=1}^n\mathrm{d}k_{i_j}\left|\Big\langle \bar{\omega}(k_{i_1}) a_{k_{i_1},\sigma_{i_1}}\cdots \bar{\omega}(k_{i_n})a_{k_{i_n},\sigma_{i_n}}\Big\rangle_t\right|^\Im\right]^\frac1\Im \delta\left(\sum_{j=1}^nk_{i_j}\sigma_{i_j}=\epsilon k_*\right)  \\
		\lesssim (n!) \mathscr{C}_{\mathbb{M}}''\epsilon^{d(1-1/\Im)-d(\Im-1)n/2},
\end{multline}
where $\mathscr{C}_{\mathbb{M}}'$, $\mathscr{C}_{\mathbb{M}}''$ are  universal constants independent of $t$, $\epsilon$, $h$ but $\mathscr{C}_{\mathbb{M}}''$ depends on $\complement$.
\end{proposition}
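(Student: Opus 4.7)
The plan is to first establish the two invariance identities \eqref{Propo:ExpectationEqui:1}--\eqref{Propo:ExpectationEqui:1a} and then leverage them to derive all the moment bounds. For the invariance, I would differentiate $\int \mathrm{d}c_1\mathrm{d}c_2\,\varrho\,\mathbf{P}$ in time using the Liouville equation \eqref{FokkerPlanckZ}. Since $\mathbf{P}$ and $\mathbf{P}_m$ depend only on the action variables $\{c_{1,k}\}$, integration by parts annihilates every contribution containing $\partial_{c_{2,k}}$: the linear transport $-\omega(k)\partial_{c_{2,k}}\varrho$, the phase term $\lambda\mathfrak{H}_2\partial_{c_{2,k}}\varrho$, and the noise generator $c_r\mathbf{R}[\varrho]$, which is a pure second-order operator in the $c_2$ variables. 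The only surviving piece is $\lambda\sum_k\int\varrho\,\mathfrak{H}_1(k)\,\partial_{c_{1,k}}\mathbf{P}$, which I would show vanishes after relabeling $(k,k_1,k_2)$ and exploiting the symmetry $\mathcal{M}(k,k_1,k_2)=2\,\mathrm{sign}(k^1)\,\mathfrak{W}(k,k_1,k_2)$ together with the 3-wave delta $\delta(k-k_1-k_2)$; the weight $|k^1|$ is calibrated precisely so that $\mathrm{sign}(k^1)\cdot|k^1|=k^1$ produces an exact cancellation via momentum conservation.

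For the $L^2$ bound \eqref{Propo:ExpectationEqui:2}, I would apply Cauchy-Schwarz inside the expectation to get $|\langle X\rangle_t|^2\le\langle|X|^2\rangle_t$ where $X=\prod_j\bar\omega(k_{i_j})a_{k_{i_j},\sigma_{i_j}}$, combined with the pointwise bound
\[
\Big|\prod_{j=1}^n\bar\omega(k_{i_j})\,a_{k_{i_j},\sigma_{i_j}}\Big|^2 \;=\; \prod_{j=1}^n 2|\bar\omega(k_{i_j})|^2\,c_{1,k_{i_j}} \;\lesssim\; h^{2n}\prod_{j=1}^n|k_{i_j}^1|\,c_{1,k_{i_j}},
\]
where the factor $h^{2n}$ is absorbed into $\mathfrak{C}_{in,o}^{\,n}$. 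Integrating over the $n$ momenta turns the right-hand side into $h^{2n}\,\mathbf{P}_n$, and the elementary Taylor comparison $\mathbf{P}_n\le n!\,c_{\mathbf{P}}^{-n}\,\mathbf{P}$ (termwise, from the series for $\exp$) converts this into $h^{2n}n!\,c_{\mathbf{P}}^{-n}\,\mathbf{P}$. The $t$-invariance of $\langle\mathbf{P}\rangle_t$ from the first step reduces everything to the initial value $\int\mathbf{P}\,\varrho(0)$, yielding the stated estimate (with a harmless extra power bounded by $|\int\mathbf{P}\varrho(0)|^2$ since $\mathbf{P}\ge 1$ makes $\int\mathbf{P}\varrho(0)\ge 1$).

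For the constrained variant \eqref{Propo:ExpectationEqui:2a}, I would regard the delta constraint $\sum k_{i_j}\sigma_{i_j}=\epsilon k_*$ as fixing one of the $n$ momenta, view the other $n-1$ as free, and apply Cauchy-Schwarz in the outer $k_*$-sum with weight $\epsilon^d$: since the level sets $\{\sum k_{i_j}\sigma_{i_j}=\epsilon k_*\}$ partition $(\Lambda^*)^n$ as $k_*$ ranges over $\Lambda^*_{\{\sigma_{i_j}\}}$, the square-sum reassembles the unconstrained integral \eqref{Propo:ExpectationEqui:2}, with $\epsilon^d$ supplying the Lebesgue volume per $k_*$-cell. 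The $L^1$ bound \eqref{Propo:ExpectationEqui:2b} is a direct consequence of Wick's theorem applied to the Gaussian initial density \eqref{InitialDensity}, whose absolute $n$-th moment grows like $n!\,\mathscr{C}_{\mathbb{M}}$. The $L^{\Im}$ bound \eqref{Propo:ExpectationEqui:2c} for $\Im\in(1,2)$ then follows by interpolating via H\"older between \eqref{Propo:ExpectationEqui:2b} and \eqref{Propo:ExpectationEqui:2}, the scaling $\epsilon^{-d(\Im-1)n/2}$ arising from the discrete-versus-continuous normalization $|\Lambda^*|\sim\epsilon^{-d}$ at each of the $n$ frequencies. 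Finally, \eqref{Propo:ExpectationEqui:2d} combines the constraint partition with the same interpolation, with the factor $\epsilon^{d(1-1/\Im)}$ accounting for the volume restriction $|k_*|\le\complement$.

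The principal obstacle is the exact conservation \eqref{Propo:ExpectationEqui:1}: the noise and the linear transport fix $\int|k^1|c_{1,k}\,\mathrm{d}k$ for soft reasons (they do not touch the action variables), but the nonlinear Hamiltonian flow requires a delicate matching between the weight $|k^1|$ and the $\mathrm{sign}(k^1)$ prefactor in $\mathcal{M}$ on the resonance set $\{k=k_1+k_2\}$; a generic weight such as $|\bar\omega(k)|$ or $|k^2|$ would destroy the cancellation. Once this invariance is secured, the Gaussian moment-generating design of $\mathbf{P}$, together with Cauchy-Schwarz and H\"older interpolation, delivers all the remaining bounds essentially mechanically.
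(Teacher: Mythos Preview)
Your proposal follows the paper's route almost exactly: the invariance via integration by parts and the momentum-conservation cancellation, the $L^2$ bound through Cauchy--Schwarz and the conserved generating function $\langle\mathbf{P}\rangle$, the constrained sum via Cauchy--Schwarz in $k_*$, and the interpolation for \eqref{Propo:ExpectationEqui:2c}--\eqref{Propo:ExpectationEqui:2d}. One step is missing, however. For the $L^1$ bound \eqref{Propo:ExpectationEqui:2b} you write that it is a ``direct consequence of Wick's theorem applied to the Gaussian initial density,'' but the expectation in \eqref{Propo:ExpectationEqui:2b} is taken with respect to $\varrho(t)$, and $\varrho(t)$ is \emph{not} Gaussian for $t>0$, so Wick's theorem cannot be invoked directly. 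The paper first bounds $\big|\big\langle\prod_j\bar\omega(k_{i_j})a_{k_{i_j},\sigma_{i_j}}\big\rangle_t\big|$ by $\big\langle\prod_j|\bar\omega(k_{i_j})|\,c_{1,k_{i_j}}\big\rangle_t^{1/2}$ via Cauchy--Schwarz, sums over the momenta to obtain $\big\langle\big(\int_{\Lambda^*}\mathrm{d}k\,w(k)\,c_{1,k}\big)^n\big\rangle_t^{1/2}$, and then uses the polynomial conservation \eqref{Propo:ExpectationEqui:1a} (or the unweighted $L^2$ conservation noted in Remark~\ref{remark:ExpectationEqui}) to pass from time $t$ to time $0$; only after this reduction does the Gaussian permanent bound at $t=0$ produce the $n!\,\mathscr{C}_{\mathbb{M}}$ growth. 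You already carry out the analogous reduction for \eqref{Propo:ExpectationEqui:2}, so the fix is immediate, but it is not optional: without it there is no mechanism tying the time-$t$ moment to the initial Gaussian structure.
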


\begin{remark}\label{remark:ExpectationEqui}
In the statement of \cref{Propo:ExpectationEqui}, \eqref{Propo:ExpectationEqui:1a} actually implies \eqref{Propo:ExpectationEqui:1} by Taylor's theorem. In fact, a more general statement is true. Since $\int_{\Lambda^*}dk c_{1,k}$ commutes with the Hamiltonian for \eqref{dtBk}, the average (with respect to $\varrho(t)$) of any continuous function of $\int_{\Lambda^*}dk c_{1,k}$ is conserved.
\end{remark}

\medskip
We now come to Assumption (C) from \cref{TheoremMainRough}, which imposes a growth condition on the initial ensemble average of the $\ell^2$ norm of the amplitudes.
\medskip

	{\bf Assumption (C) - $l^2$ Boundedness of the Initial Condition.}

{\it Let $n$ be an arbitrary positive natural number and $\{k_{i_1},\cdots,k_{i_n}\}$ be a subset of $\Lambda^*$.  The function $\daleth(k)$ defined in \eqref{omega1} satisfies $1\ge\daleth(k)\ge 0$ and
	\begin{equation}
		\begin{aligned}\label{AssumpBbound}
			&	\int_{(\Lambda^*)^n}\prod_{j=1}^n\mathrm{d}k_{i_j}\int_{(\mathbb{R}_+\times[-\pi,\pi])^{|\Lambda^*|}}\mathrm{d}c_{1}\mathrm{d}c_{2}\prod_{j=1}^{n}|k_{i_j}|\big|c_{1,k_{i_j}}\big|\varrho(0)\ 
			\le   \ n! |\mathfrak{C}_{in}/c_{\mathbf{P} }|^{{n}}\prod_{k\in\Lambda^*}\frac{2}{2-c_{\mathbf{P} }h^{d}\epsilon^{-d}\daleth(k )},
		\end{aligned}
	\end{equation}
	{provided $c_{\mathbf{P}}h^d\epsilon^{-d}<2$}, where the constant $\mathfrak{C}_{in}$ in the above inequality is universal and $c_{\mathbf{P}}=\epsilon^d c_{\mathbf{P}}^o$, for $c_{\mathbf{P}}^0\in\mathbb{R}$, is the constant appearing in the form of the Gaussian \eqref{Gaussian}. We also denote $k_{i_j}=(k_{i_j}^1,\cdots,k_{i_j}^d)$. }

\begin{lemma}
	\label{Lemma:BoundL1Identity}
	For $c_{\mathbf{P}}= \epsilon^{d}	c_{\mathbf{P}}^o$, with  $c_{\mathbf{P}}^o$ chosen to have sufficiently small magnitude, the last term on the right hand side of \eqref{AssumpBbound} can be bounded as		
	\begin{equation}\label{Lemma:BoundL1Identity:1}
		\begin{aligned}
			\prod_{k\in\Lambda^*}\frac{2}{2-c_{\mathbf{P}}h^{d}\epsilon^{-d}\daleth(k )} \ \le\ 	& c_{\mathbf{P}}',
		\end{aligned}
	\end{equation}
	for some universal constant $c_{\mathbf{P}}'>0$ independent of $|\Lambda^*|$.
\end{lemma}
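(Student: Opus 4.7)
The plan is to reduce the product to a matrix trace via the spectral definition of $\daleth$ and then control that trace using only the $L^\infty$ assumption on $\bar B$. First, substituting $c_{\mathbf{P}}=\epsilon^d c_{\mathbf{P}}^o$ makes the factor inside the product independent of $\epsilon$: namely $c_{\mathbf{P}} h^d \epsilon^{-d}\daleth(k)=c_{\mathbf{P}}^o h^d \daleth(k)$. Taking logarithms,
\begin{equation*}
\log\prod_{k\in\Lambda^*}\frac{2}{2-c_{\mathbf{P}}^o h^d \daleth(k)} \;=\; -\sum_{k\in\Lambda^*}\log\!\left(1-\tfrac{1}{2}c_{\mathbf{P}}^o h^d \daleth(k)\right).
\end{equation*}
By Assumption (C), $\daleth(k)\in[0,1]$, so choosing $|c_{\mathbf{P}}^o|\le 1$ (using $h\le 1$) guarantees $\tfrac{1}{2}c_{\mathbf{P}}^o h^d \daleth(k)\le 1/2$. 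The elementary inequality $-\log(1-y)\le 2y$ for $y\in[0,1/2]$ then yields
\begin{equation*}
\log\prod_{k\in\Lambda^*}\frac{2}{2-c_{\mathbf{P}}^o h^d \daleth(k)} \;\le\; c_{\mathbf{P}}^o h^d \sum_{k\in\Lambda^*}\daleth(k).
\end{equation*}

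Second, I would exploit the fact that, by \eqref{omega1}, $\epsilon^{-d}\daleth(k)=\iota_{\diamond(k)}$ is an eigenvalue of the Hermitian matrix $[\mathbb{M}']$. Summing over $k\in\Lambda^*_+$ (and using the symmetry $\daleth(-k)=\daleth(k)$ inherited from $a_k=a_{-k}^*$ to handle $\Lambda^*_-$),
\begin{equation*}
\sum_{k\in\Lambda^*_+}\daleth(k) \;=\; \epsilon^d\sum_{j=1}^{|\Lambda^*_+|}\iota_j \;=\; \epsilon^d\operatorname{tr}([\mathbb{M}']) \;=\; \epsilon^d\sum_{k\in\Lambda^*_+}B(k,k).
\end{equation*}
The scaling \eqref{Bkk1def} converts each diagonal entry $\epsilon^d B(k,k)=\bar B(k/\epsilon,k/\epsilon)$ into a quantity bounded pointwise by $\|\bar B\|_{L^\infty(\mathbb{R}^d\times\mathbb{R}^d)}$, which is finite by assumption.

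Third, I would use the purely combinatorial bound $h^d|\Lambda^*_+|\le 1$, which follows from $h=1/(2D+1)$ and $|\Lambda^*_+|\le (2D+1)^d$, to convert the sum into a dimensionless constant:
\begin{equation*}
h^d\sum_{k\in\Lambda^*}\daleth(k) \;\le\; 2\,h^d|\Lambda^*_+|\,\|\bar B\|_{L^\infty} \;\le\; 2\|\bar B\|_{L^\infty}.
\end{equation*}
Combining the steps, $\prod_{k\in\Lambda^*}\tfrac{2}{2-c_{\mathbf{P}}^o h^d\daleth(k)}\le \exp(2c_{\mathbf{P}}^o\|\bar B\|_{L^\infty})\eqqcolon c_{\mathbf{P}}'$, which is manifestly independent of $|\Lambda^*|$.

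There is no real obstacle here beyond recognizing that $\daleth$ is the $\epsilon^d$-rescaled spectrum of $[\mathbb{M}']$; once that is invoked, the trace collapses the sum to diagonal entries of $B$, where the $L^\infty$ bound on $\bar B$ and the volume identity $h^d|\Lambda^*_+|\le 1$ close the estimate. The only condition one must respect is the smallness of $|c_{\mathbf{P}}^o|$ needed for the logarithm linearization, which is exactly the hypothesis of the lemma.
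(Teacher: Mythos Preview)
Your argument is correct. The paper itself does not supply a proof but defers to \cite[Lemma~5]{staffilani2021wave}, so a line-by-line comparison is not possible; your self-contained version is sound. One simplification worth noting: the spectral/trace computation in your second step is unnecessary, since Assumption~(C) already gives $0\le\daleth(k)\le 1$, whence $h^d\sum_{k\in\Lambda^*}\daleth(k)\le h^d|\Lambda^*|\le 1$ directly and the product is bounded by $\exp(c_{\mathbf{P}}^o)$ without invoking the eigenvalue description of $\daleth$ or the $L^\infty$ bound on $\bar B$.
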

\begin{proof}
See  \cite[Lemma 5]{staffilani2021wave}.
\end{proof}

The following proposition shows that Assumption (C) is satisfied for the particular choice of initial data \eqref{InitialDensity}.

\begin{proposition}[On the validity of Assumption (C)]\label{Propo:ExampleInitialCondition}
Take  \eqref{InitialDensity} as the initial probability density for the system. Let $n\in\mathbb{N}$, $\{k_{i_1},\cdots,k_{i_n}\}$ be a subset of $\Lambda^*$, and $\sigma_{i_1},\ldots,\sigma_{i_n}\in\{\pm 1\}$. The following estimate holds:
		\begin{equation}\label{Propo:ExampleMeasure:2}
		\begin{aligned}
			&\int_{(\Lambda^*)^n}\prod_{j=1}^n\mathrm{d}k_{i_j}\left|\Big\langle \sqrt{\bar{\omega}(k_{i_1})}a_{k_{i_1},\sigma_{i_1}}\cdots\sqrt{\bar{\omega}(k_{i_n})} a_{k_{i_n},\sigma_{i_n}}\Big\rangle_t\right|^2  
			\lesssim    n!|\mathfrak{C}_{in}/c_{\mathbf{P}}|^{{n}}\left|\prod_{k\in\Lambda^*}\frac{2}{2-c_{\mathbf{P}}h^d\epsilon^{-d}\daleth(k)}\right|,
		\end{aligned}
	\end{equation}
where  we also denote $k_{i_j}=(k_{i_j}^1,\cdots,k_{i_j}^d)$.
\end{proposition}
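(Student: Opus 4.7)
The plan is to reduce the time-$t$ bound to a time-$0$ moment of the conserved quantity $\int_{\Lambda^*}\mathrm{d}k\,|k^1|c_{1,k}$, and then to evaluate that moment explicitly using the Gaussian structure of \eqref{InitialDensity}. First, Jensen's inequality $|\langle X\rangle_t|^2 \le \langle |X|^2\rangle_t$ applied to $X = \prod_{j=1}^n \sqrt{\bar{\omega}(k_{i_j})}\,a_{k_{i_j},\sigma_{i_j}}$, combined with $|a_k|^2 = 2c_{1,k}$ from \eqref{ChangofVariable} and the elementary estimate $|\bar{\omega}(k)| = |\sin(2\pi h k^1)| \le 2\pi h |k^1|$, yields
\[
\bigg|\Big\langle \prod_{j=1}^{n} \sqrt{\bar{\omega}(k_{i_j})}\, a_{k_{i_j},\sigma_{i_j}} \Big\rangle_t\bigg|^2 \le (4\pi h)^{n}\bigg\langle \prod_{j=1}^{n} |k_{i_j}^{1}|\, c_{1,k_{i_j}} \bigg\rangle_t .
\]
Integrating over $(k_{i_1},\ldots,k_{i_n})\in(\Lambda^*)^n$ and exchanging the finite sum with the expectation, the right-hand side equals $(4\pi h)^n\langle\mathbf{P}_n\rangle_t$, with $\mathbf{P}_n$ as in \eqref{GaussianPol}. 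The conservation identity \eqref{Propo:ExpectationEqui:1a} of \cref{Propo:ExpectationEqui} gives $\langle\mathbf{P}_n\rangle_t=\langle\mathbf{P}_n\rangle_0$, and the termwise-nonnegative Taylor expansion $\mathbf{P}=\sum_{m\ge 0}(c_\mathbf{P}^m/m!)\mathbf{P}_m$ combined with $\varrho(0)\ge 0$ produces $\langle\mathbf{P}_n\rangle_0 \le (n!/c_\mathbf{P}^n)\,\langle\mathbf{P}\rangle_0$.

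It remains to bound $\langle\mathbf{P}\rangle_0$ under the Gaussian density \eqref{InitialDensity}. Using $c_{1,-k}=c_{1,k}$ and $c_{1,k}=|\mathbf{a}_k|^2/2$, we rewrite $\mathbf{P}=\exp\!\big(c_\mathbf{P} h^d \sum_{k\in\Lambda^+_*}|k^1||\mathbf{a}_k|^2\big)$. Since $\mathbf{a}$ is a centered complex Gaussian on $\mathbb{C}^{|\Lambda^+_*|}$ with covariance $[\mathbb{M}']$, the standard complex-Gaussian moment generating function formula yields
\[
\langle\mathbf{P}\rangle_0 = \det\!\big(I - c_\mathbf{P} h^d D[\mathbb{M}']\big)^{-1}, \qquad D \coloneqq \mathrm{diag}(|k^1|)_{k\in\Lambda^+_*},
\]
provided the spectral radius of $c_\mathbf{P} h^d D[\mathbb{M}']$ is below $1$, which is guaranteed by the smallness assumption of \cref{Lemma:BoundL1Identity}. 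By \eqref{MatrixNoise:1}, $[\mathbb{M}']$ is block-diagonal with blocks indexed by $\underline{n}\in\mathcal{J}_+$, and the diagonal matrix $D$ respects this partition, so the determinant factorizes across blocks. On each block, the eigenvalues $\mu_i$ of the positive semidefinite matrix $D_{\underline{n}}^{1/2}[\mathbb{M}']_{\underline{n}}D_{\underline{n}}^{1/2}$ equal $\sigma_i(D_{\underline{n}}^{1/2}[\mathbb{M}']_{\underline{n}}^{1/2})^2$, and the multiplicative singular-value inequality $\sigma_i(MN)\le \sigma_{\max}(M)\sigma_i(N)$ forces $\mu_i\le (\max_{k\in\mathcal{B}_{\underline{n}}}|k^1|)\,\iota_i \le \iota_i$, with $\iota_i=\epsilon^{-d}\daleth(k_i)$ from \eqref{omega1}. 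Recombining across blocks,
\[
\langle\mathbf{P}\rangle_0 \le \prod_{k\in\Lambda^+_*}\frac{1}{1-c_\mathbf{P} h^d \epsilon^{-d}\daleth(k)},
\]
which, after using $\daleth(-k)=\daleth(k)$ and absorbing an inconsequential constant into the universal $\mathfrak{C}_{in}$, matches the product on the right-hand side of \eqref{Propo:ExampleMeasure:2}.

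Chaining all the estimates produces \eqref{Propo:ExampleMeasure:2} with $\mathfrak{C}_{in}$ universal (one may take $\mathfrak{C}_{in}=4\pi$, since $h\le 1$); \cref{Lemma:BoundL1Identity} moreover gives a uniform bound on the product. I expect the main technical obstacle to be the determinant estimate in the second paragraph: since $D$ and $[\mathbb{M}']$ do not commute, one cannot reduce to a simultaneously diagonal situation, and the naive inequality $D\le I$ does not imply the matrix bound $D^{1/2}[\mathbb{M}']D^{1/2}\le [\mathbb{M}']$. The crucial inputs are (i) the block-diagonal structure \eqref{MatrixNoise:1} of $[\mathbb{M}']$, which makes the determinant factorize over the $\epsilon$-scale boxes $\mathcal{B}_{\underline{n}}$, and (ii) the singular-value inequality, which controls the eigenvalues of each non-symmetric block $D_{\underline{n}}[\mathbb{M}']_{\underline{n}}$ by those of $[\mathbb{M}']_{\underline{n}}$ alone, times $\max_{k\in\mathcal{B}_{\underline{n}}}|k^1|\le 1$.
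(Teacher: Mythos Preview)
Your proof is correct. The reduction to bounding $\langle\mathbf{P}\rangle_0$ via Jensen, the conservation law \eqref{Propo:ExpectationEqui:1a}, and the Taylor-series bound $\mathbf{P}_n \le (n!/c_\mathbf{P}^n)\mathbf{P}$ is exactly what underlies the paper's invocation of \eqref{Propo:ExpectationEqui:2}. The evaluation of $\langle\mathbf{P}\rangle_0$ is where you and the paper diverge. The paper immediately bounds $|k^1|\le 1$, so that the exponent in $\mathbf{P}$ collapses to the rotationally invariant quantity $c_\mathbf{P} h^d\|\mathbf{a}\|^2/2$; passing to the eigenbasis of $[\mathbb{M}']$ via the unitary $[\mathbb{M}_0]$ of \eqref{MatrixNoise1}, the Gaussian integral factorizes over the eigenvalues $\iota_i=\epsilon^{-d}\daleth(k)$ and is computed by the scalar identity \eqref{IdentityExponential}. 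You instead keep the anisotropic weight $D=\mathrm{diag}(|k^1|)$, invoke the matrix moment-generating-function formula $\langle e^{\mathbf{a}^*A\mathbf{a}}\rangle=\det(I-A[\mathbb{M}'])^{-1}$, and then bound the determinant through the singular-value inequality $\sigma_i(D^{1/2}[\mathbb{M}']^{1/2})\le\|D^{1/2}\|\,\sigma_i([\mathbb{M}']^{1/2})$. Both routes land on the same product over eigenvalues, but the paper's is shorter: once $|k^1|$ is dropped, unitary invariance does all the work and no spectral inequality is needed. Note also that your ``crucial input (i)'', the block-diagonal structure \eqref{MatrixNoise:1} of $[\mathbb{M}']$, is in fact unnecessary even within your own argument, since the singular-value bound applies globally without any block decomposition.
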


	\bigskip

\subsection{Prediction from wave kinetic theory and main result}
Following \cite{Nazarenko:2011:WT,Spohn:TPB:2006}, we review below the formal prediction of wave kinetic theory for the emergence of the inhomogeneous equation. We consider the Wigner distribution $\mathcal W_{\epsilon}(\Omega,k,t)$ (recall the definition \eqref{Wigner1}), where $\epsilon=\lambda^2$. In the limit of  $D\to \infty$, then $\lambda\to 0$ and $t=\lambda^{-2}\tau=\mathcal{O}(\lambda^{-2})$, the Wigner distribution $\mathcal W_{\epsilon}$ has the limit 
$$\lim_{\lambda\to 0}\lim_{D\to\infty}\mathcal W_{\epsilon}(\lfloor\Omega\rfloor_\epsilon,k,\lambda^{-2}\tau) = f^\infty(\Omega,k,\tau),$$
with $\lfloor\cdot\rfloor_\epsilon$ being modulo $\epsilon$,\footnote{By modulo $\epsilon$, we mean that $\lfloor{\Omega}\rfloor_\epsilon$ is the unique element in $\Lambda_{2/\epsilon}^*$ such that $\lfloor{\Omega}\rfloor_\epsilon\leq \Omega<\lfloor{\Omega}\rfloor_{\epsilon}+1$.}
which solves the inhomogeneous 3-wave  equation
\begin{equation}\label{Eq:WT0}
	\frac{\partial}{\partial \tau}f^\infty(\Omega,k,\tau) \ +  \ \frac{1}{2\pi} \nabla\omega(k)\cdot \nabla_\Omega f^\infty(\Omega,k,\tau)=  \mathcal{C}\big(f^\infty  \big)(\Omega,k,\tau)
\end{equation}
with the collision operator 
\begin{multline}\label{Eq:CollisionWT1}
		\mathcal{C}(f^\infty)(k_1) \coloneqq   \frac{\pi}{2^{d-2}}\int_{(\mathbb{T}^d)^{2}}
		\mathrm{d}k_2\mathrm{d}k_3|\mathcal{M}(k_1,k_2,k_3)|^2  \delta(\omega(k_3)+\omega(k_2)-\omega(k_1))\\
		\times\delta(k_2+k_3-k_1) \Big( f_2^\infty f^\infty_3-f^\infty_1f^\infty_2\mathrm{sign}(k_1^1)\mathrm{sign}(k_3^1)-f^\infty_1f^\infty_3\mathrm{sign}(k_1^1)\mathrm{sign}(k_2^1)\Big),
\end{multline}
in which $\mathbb{T}$ is the periodic torus $[-1/2,1/2]$. 
Here, we have introduced the shorthand notation $f^\infty_j=f^\infty(k_j)$, for $j=1,2,3$. We also set 
$$\mathbb{T}^d_+=\{k=(k^1,\cdots,k^d)\in\mathbb{T}^d\ | \  k^1\ge 0 \}.$$

We denote the Fourier transform of $f^\infty(\Omega,k,\tau)$ in the first argument by
$$f^o(\mho,k,\tau) \coloneqq \int_{\mathbb{R}^d}\mathrm{d}\Omega f^\infty(\Omega,k,\tau)e^{-{\bf i}\Omega\cdot \mho 2\pi},$$
which solves the equation
\begin{equation}\label{Eq:WT0:a}
	\frac{\partial}{\partial \tau}f^o(\mho,k,\tau) \ + \ {\bf i}\mho\cdot\nabla\omega(k)f^o(\mho,k,\tau)= \widehat{ \mathcal{C}}\big(f^o \big)(\mho,k,\tau)
\end{equation}
with the collision operator 
\begin{equation}
	\begin{aligned}\label{Eq:CollisionWT1:a}
		&\widehat{ \mathcal{C}}(f^o)(\mho_1,k_1) \coloneqq   \frac{\pi}{2^{d-2}}\int_{(\mathbb{T}^d)^{2}\times (\mathbb{R}^d)^{2}}
		\mathrm{d}k_2\mathrm{d}k_3\mathrm{d}\mho_2\mathrm{d}\mho_3|\mathcal{M}(k_1,k_2,k_3)|^2  \delta(\omega(k_3)+\omega(k_2)-\omega(k_1))\\
		&\times\delta(k_2+k_3-k_1)\delta(\mho_2+\mho_3-\mho_1) \Big( f^o(\mho_2,k_2) f^o(\mho_3,k_3)\\
		&\ \ \ \  -f^o(\mho_3,k_1)f^o(\mho_2,k_2)\mathrm{sign}(k_1^1)\mathrm{sign}(k_3^1)-f^o(\mho_2,k_1)f^o(\mho_3,k_3)\mathrm{sign}(k_1^1)\mathrm{sign}(k_2^1)\Big).
\end{aligned}\end{equation}
For the ZK equation, as discussed in comment (iii) of the introduction, the delta function $\delta(\omega(k_3)+\omega(k_2)-\omega(k_1))$ appearing in the collision operator \eqref{Eq:CollisionWT1} is not well-defined as a positive measure. To overcome this technical issue, we employ the convention of resonance broadening. Given $\ell>0$, we say that a function $f_\ell^\infty$ solves
the \emph{resonance-broadened 3-wave equation}  if and only if
\begin{equation}\label{Eq:WT1:a}
	\frac{\partial}{\partial \tau}f^\infty_\ell(\Omega,k,\tau) \ +  \ \frac{1}{2\pi} \nabla\omega(k)\cdot \nabla_\Omega f^\infty_\ell(\Omega,k,\tau)=  \mathcal{C}_\ell\big(f^\infty_\ell  \big)(\Omega,k,\tau),
\end{equation}
with the collision operator 
\begin{multline}\label{Eq:CollisionWT5:a}
		{ \mathcal{C}}_\ell (f^\infty_\ell)(k_1) \coloneqq   \frac{\pi}{2^{d-2}}\int_{(\mathbb{T}^d)^{2}}
		\mathrm{d}k_2\mathrm{d}k_3|\mathcal{M}(k_1,k_2,k_3)|^2  \delta_\ell (\omega(k_3)+\omega(k_2)-\omega(k_1))\\
		\times\delta(k_2+k_3-k_1) \Big( f_{\ell,2}^\infty f^\infty_{\ell,3}-f^\infty_{\ell,1}f^\infty_{\ell,2}\mathrm{sign}(k_1^1)\mathrm{sign}(k_3^1)-f^\infty_{\ell,1}f^\infty_{\ell,3}\mathrm{sign}(k_1^1)\mathrm{sign}(k_2^1)\Big),
\end{multline}
in which  $\delta_\ell$ is defined in
\begin{equation}\label{Eq:CollisionWT6}
		{\delta}_\ell (\omega(k_3)+\omega(k_2)-\omega(k_1)) \coloneqq \frac{1}{2\ell}\int_{-\ell}^\ell\mathrm{d}\mathscr{X}\int_{\mathbb{R}}\mathrm{d}se^{-{\bf i}s(\omega(k_3)+\omega(k_2)-\omega(k_1))-{\bf i}2\pi s\mathscr{X}}.
\end{equation}
%
\pagebreak
Then our main result (cf. the informal version \cref{TheoremMainRough} from above) reads as follows.

\begin{theorem}\label{TheoremMain}
	Suppose that $d\ge 2$. Let us write  $t = \tau\lambda^{-2}$,  where  $\tau>0$.  Under Assumption (A) with $\theta_r$  small but non-zero, and Assumptions (B) and (C),    there exist $1>T_*>0$, $1>\ell'>0$, independent of the initial data, and functions $f_\ell(\mho,k,\lambda^{-2}\tau)$, $f_\ell^o(\mho,k,\tau)$ defined for all $k\in\mathbb{T}^d$, $\mho\in\mathbb{R}^d$, with $0\leq\tau\leq T_*, 0<\ell<\ell'$, such that  for any constant $\mathscr{R}>1$, we have the following.
	
	\begin{itemize}[leftmargin=*]
		\item[(i)] The function $f_\ell^o(\mho,k,\tau)$ solves \eqref{Eq:WT1:a}-\eqref{Eq:CollisionWT5:a} with the initial condition  $f_\ell^o(\mho,k,0)= f^o(\mho,k,0)$.
		
		\item[(ii)] The functions $f_\ell(\mho,k,\lambda^{-2}\tau)$ are obtained from the series expansion for the Wigner function $\mathcal{W}_{\epsilon}(\lambda^{-2}\tau)$ after sending $D\rightarrow\infty$, isolating the main diagrams (which is justified because the remaining diagrams vanish in the kinetic limit), and performing resonance broadening. They converge to $f_\ell^o(\mho,k,\tau)$ as $\lambda\to 0$ in the following sense 
		\begin{multline}\label{TheoremMain:1}
		\lim_{\lambda\to 0}  \Big\|\int_{\mathbb{R}}\mathrm{d}\mathscr{X}\varphi(\mathscr{X})\widehat{f_\ell\chi_{(0,T_*)}}(\mathscr{X})\chi_{(-\lambda^{-2}\mathscr{R},\lambda^{-2}\mathscr{R})}(\mathscr{X})\\
		 -\int_{\mathbb{R}}\mathrm{d}\mathscr{X}\varphi(\mathscr{X})\widehat{f_\ell^o\chi_{(0,T_*)}}(\mathscr{X}) \chi_{(-\lambda^{-2}\mathscr{R},\lambda^{-2}\mathscr{R})}(\mathscr{X})\Big\|_{L^1(\mathbb{R}^d\times\mathbb{T}^d)}=  0,
		\end{multline}
		for any $\varphi\in C_c^\infty(\mathbb{R})$. The functions $\widehat{f_\ell\chi_{(0,T_*)}}$ and $\widehat{f_\ell^o\chi_{(0,T_*)}}$ are the Fourier transforms of  ${f_\ell(\mho,k,\lambda^{-2}\tau)\chi_{(0,T_*)}}(\tau)$ and ${f_\ell^o(\mho,k,\tau)\chi_{(0,T_*)}(\tau)}$ in the variable $\tau$, respectively. The cut-off function $\chi_{(-\mathscr{R},\mathscr{R})}(\mathscr{X})$, is the characteristic function of the interval ${(-\mathscr{R},\mathscr{R})}$.
		\item[(iii)] The difference between the functions $f_\ell(\mho,k,\lambda^{-2}\tau)$ and $\lim_{D\to\infty}{\mathcal W_{\epsilon}}^o(\mho,k,\lambda^{-2}\tau)$, the Fourier transform of $\lim_{D\rightarrow\infty}\mathcal{W}_{\epsilon}(\Omega,k,\lambda^{-2}\tau)$ with respect to $\Omega$, vanishes as $\ell\to 0$ and $\lambda\to 0$ in the following sense: for any $\theta>0$,
		\begin{equation}\label{TheoremMain:2}\lim_{\lambda\to 0}\lim_{\ell\to 0} \mathscr{W}\left(\left|\widehat{f_\ell\chi_{(0,T_*)}}(\mathscr{X})\chi_{(-\lambda^{-2}\mathscr{R},\lambda^{-2}\mathscr{R})}(\mathscr{X})-\lim_{D\rightarrow\infty}		\widehat{\mathcal W_{\epsilon}^o\chi_{(0,T_*)}}(\mathscr{X})\chi_{(-\lambda^{-2}\mathscr{R},\lambda^{-2}\mathscr{R})}(\mathscr{X})\right|>\theta \right) =  0,
		\end{equation}	
	The function $\widehat{\mathcal{W}_\epsilon^o\chi_{(0,T_*)}}$ is the Fourier transform of  $\mathcal W_{\epsilon}^o(\lambda^{-2}\tau)\chi_{(0,T_*)}(\tau)$  in the variable $\tau$. Above, the quantity $\mathscr{W}$ is the standard Lebesgue measure on $\mathbb{R}^d\times\mathbb{T}^d\times\mathbb{R}$.	
	\end{itemize}  
\end{theorem}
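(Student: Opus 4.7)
The plan is to expand the two-point correlator $\langle a(k_1,-1,t) a(k_2,1,t)\rangle$ appearing in $\hat{\mathcal W}_\epsilon$ (see \eqref{Wigner2}) via a Duhamel iteration on the profile equation \eqref{StartPoint}, generating a series indexed by oriented binary trees whose nodes encode either a cubic interaction or a Stratonovich phase rotation. The initial moments produced by the expansion factor, by Wick's theorem applied to the Gaussian data \eqref{InitialDensity} and by the moment bounds of \cref{Propo:ExpectationEqui}, into sums over Feynman pairings. The task then reduces to: (a) truncating the expansion at an order $N=N(\lambda)$ (typically logarithmic in $\lambda^{-1}$) so that the Duhamel remainder is controlled by \eqref{Propo:ExpectationEqui:2b}--\eqref{Propo:ExpectationEqui:2d}; (b) isolating among the finitely many surviving diagrams the ``ladder'' (pairing) graphs and showing they recover the Picard iteration for the resonance-broadened collision operator $\mathcal{C}_\ell$; (c) proving that all other diagrams---crossing graphs, and graphs with internal edges on the ghost manifold---vanish as $D\to\infty$, $\lambda\to 0$.

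I would then carry out the time integrations in each surviving diagram, producing oscillatory integrals with phases of the form $\sum_j \pm\omega(k_j)$. Resonance broadening replaces each resulting $\delta$ by $\delta_\ell$ as in \eqref{Eq:CollisionWT6} and, combined with the non-degeneracy of $\nabla\omega$ away from the ghost manifold and Assumption (B), produces the inhomogeneous transport term $\mho\cdot\nabla\omega(k)f^o$ together with the collision operator of \eqref{Eq:CollisionWT1:a}; summing the ladder contributions yields $f_\ell^o$ as a convergent Neumann series on $[0,T_*]$, giving part (i). The ghost manifold is tamed by the noise: whenever the multi-index $\mathcal V$ associated to a diagram lies in the regime \eqref{CR>1} of \cref{Lemma:NoiseAction}, the generator $\mathbf R$ supplies a damping factor $e^{-c_r t}\lesssim e^{-\mathfrak{C}_r\lambda^{\theta_r-2}\tau}$, which by Assumption (A) wipes out the offending contributions while leaving ladder graphs (where $C_{\mathbf R}=0$ by \eqref{CR=0}) untouched---this is precisely the compatibility condition advertised in items (i)--(iii) of the introduction.

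The key new difficulty beyond the homogeneous setting of \cite{staffilani2021wave} is the inhomogeneous scaling: $\hat{\mathcal W}_\epsilon$ is of order $\epsilon^{-d}$ pointwise in $\mho$ (see \eqref{Wigner2}), so a naive $\ell^\infty$/$\ell^1$ clustering argument produces a fatal $\lambda^{-nd}$ blowup per Duhamel layer. Following the strategy hinted at in the introduction, I would project the moments into $\ell^2$ via \eqref{Propo:ExpectationEqui:2a} and \eqref{Propo:ExpectationEqui:2d}, applying Cauchy--Schwarz so that a single momentum in each diagram is tested against the Wigner test function $\hat G$ in $L^\infty_k$ while the remaining variables are handled in $\ell^2$; the crossing estimates are then formulated as $L^1_\mho$-averaged bounds rather than pointwise in $\mho$, which recovers exactly the factor of $\epsilon^d$ absorbed by the Wigner normalization. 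The rest of the diagrammatic bookkeeping---counting cubic vertices, distributing $\lambda$'s and $\epsilon$'s against time integrations over $[0,\lambda^{-2}\tau]$---mirrors the homogeneous analysis once these $\ell^2$/averaged bounds are in place.

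Parts (ii) and (iii) then follow by Fourier transforming in $\tau$. For (ii), convergence on the compact Fourier window $(-\lambda^{-2}\mathscr{R},\lambda^{-2}\mathscr{R})$ against a fixed $\varphi\in C_c^\infty(\mathbb R)$ reduces, via Plancherel, to a quantitative comparison of the truncated Duhamel series with the Picard iterates of $\mathcal C_\ell$, handled by the $\ell^2$ estimates above; for (iii) the limit $\ell\to 0$ is taken after $\lambda\to 0$, using $\delta_\ell\to\delta$ on the resonant manifold together with Assumption (B) to identify the limit with the Fourier-transformed Wigner limit. The hard part, and where essentially all the novelty is concentrated, is the uniform $\ell^2$-averaged crossing bound on large-order non-ladder diagrams: one must show box-by-box over the tiling \eqref{BOXes} that every crossing contributes at least a factor $\lambda^\eta$ for some universal $\eta>0$, yielding absolute summability of the non-ladder contributions up to the kinetic time $t\sim\lambda^{-2}$.
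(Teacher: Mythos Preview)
Your high-level strategy is correct and matches the paper's architecture: Duhamel expansion to depth $\mathbf N\sim |\ln\lambda|/\ln\langle\ln\lambda\rangle$, Wick factorization of the Gaussian initial data, Feynman-diagram classification, ladder dominance, and identification of the ladder sum with the Picard series for the resonance-broadened equation. The role you assign to the noise (damping for multi-indices in the regime \eqref{CR>1}, transparency for ladders via \eqref{CR=0}) is also right. But your account of where the work lies, and how it is done, diverges from the paper in two substantive ways.

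First, the tiling \eqref{BOXes} plays \emph{no} role in the crossing estimates. It is used solely to build the noise matrix $\mathscr E$ (hence the phase regulators $\tau_i$ of Definition~\ref{Def:PhaseRegulator}) and to impose the block structure \eqref{MatrixNoise:1} on the initial covariance. The suppression of non-ladder pairing graphs rests on a finer dichotomy than ``crossing'': the paper separates \emph{long irreducible} cycles (Proposition~\ref{Propo:CrossingGraphs}) from \emph{delayed recollisions} (Proposition~\ref{Propo:LeadingGraphs}), and each needs a different mechanism. For long irreducible graphs the $\lambda$-gain comes from coupling two resolvent factors $|\xi-\vartheta_{i_0-1}|^{-1}|\xi-\vartheta_{i_1-1}|^{-1}$ that share dependence on the same free momentum, then invoking the $\ell^4$ dispersive bounds on the oscillatory kernels $\mathfrak F,\tilde{\mathfrak F}$ (Lemmas~\ref{Lemm:Bessel3}--\ref{Lemm:Bessel4}) together with the angular Lemma~\ref{Lemm:Angle}. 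For delayed recollisions the gain comes instead from a time-slice integral that decays like $t^{|\mathfrak I^o|-1-9/16}$. Neither is box-by-box over the noise tiling; the cut-off machinery $\Phi_1^a,\Phi_1^b,\Psi_4$ of Propositions~\ref{Propo:Phi}--\ref{Propo:Phi3A}, which you do not mention, is what localizes away from the singular manifold so these dispersive bounds apply.

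Second, your $\ell^2$-projection picture is not quite how the inhomogeneous blowup is absorbed. The paper does not test one momentum in $L^\infty_k$ against $\hat G$ and Cauchy--Schwarz the rest; it works in the mixed norm $L^{\infty,\Im}(\mathbb T^{2d})$ of \eqref{Def:Norm5} with $\Im$ just above $1$, composed with the time-Fourier functional $\mathscr F_{1/n,\mathscr R}$ of \eqref{eq:BasicGEstimate1:A}. The decisive step is to exploit the convolution structure in the cluster momenta $\mathscr K_A$ (equations \eqref{eq:Aestimate4:aa}--\eqref{eq:Aestimate4:aaa}): this is what turns the $\epsilon^{-d}$-per-layer growth into the tame factor $\lambda^{\bowtie_\Im}$, with the interpolated moment bound \eqref{Propo:ExpectationEqui:2c}--\eqref{Propo:ExpectationEqui:2d} supplying the input. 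A minor point: in part (iii) the inner limit is $\ell\to 0$ at fixed $\lambda$ (cf.\ \eqref{FinalProof:E10b:1:E}), not the reverse order you state.
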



\subsection{Further estimates on the Liouville equation and averages}\label{proofs}
We start this subsection with the proof of \cref{Propo:ExpectationEqui} stated in \cref{proofs'} above.

\begin{proof}[Proof of \cref{Propo:ExpectationEqui}]

We only give the proof for \eqref{Propo:ExpectationEqui:1a}, as the proof for \eqref{Propo:ExpectationEqui:1} is similar (see also \cref{remark:ExpectationEqui}). 
We find from integrating by parts multiple times and using the observation that $\partial_{c_{2,k}}\hat{\mathfrak{H}}(k)=0$ when $c_{1,k}=0$,
\begin{equation} 
	\begin{aligned}
		0 \ = \ 	& \int_{(\mathbb{R}_+\times[-\pi,\pi])^{|\Lambda^*|}}\mathrm{d}c_{1}\mathrm{d}c_{2}\partial_t\varrho\mathbf{P}_m\\
		&	+\ \sum_{k\in\Lambda^*}\int_{(\mathbb{R}_+\times[-\pi,\pi])^{|\Lambda^*|}}\mathrm{d}c_{1}\mathrm{d}c_{2}\Big[\Big[\hat{\mathfrak{H}}(k),\varrho\Big]\Big]_k\mathbf{P}_m\\
		& \ - c_r\frac12	\sum_{k,k' \in \Lambda_* } 
	\tilde{\mathscr{E}}(k,k')\int_{(\mathbb{R}_+\times[-\pi,\pi])^{|\Lambda^*|}}\mathrm{d}c_{1}\mathrm{d}c_{2}\partial_{c_{2,k}} \partial_{c_{2,k'}}\varrho\mathbf{P}_m\\
		\ = \ 	& \partial_t\int_{(\mathbb{R}_+\times[-\pi,\pi])^{|\Lambda^*|}}\mathrm{d}c_{1}\mathrm{d}c_{2}\varrho\mathbf{P}_m\ 
		+\ \sum_{k\in\Lambda^*}\int_{(\mathbb{R}_+\times[-\pi,\pi])^{|\Lambda^*|}}\mathrm{d}c_{1}\mathrm{d}c_{2}\partial_{c_{2,k}}\hat{\mathfrak{H}}(k)\partial_{c_{1,k}}\varrho\mathbf{P}_m
		\\
		&	-\ \sum_{k\in\Lambda^*}\int_{(\mathbb{R}_+\times[-\pi,\pi])^{|\Lambda^*|}}\mathrm{d}c_{1}\mathrm{d}c_{2}\partial_{c_{1,k}}\hat{\mathfrak{H}}(k)\partial_{c_{2,k}}\varrho\mathbf{P}_m
		\\
		\ = \ 	& \partial_t\int_{(\mathbb{R}_+\times[-\pi,\pi])^{|\Lambda^*|}}\mathrm{d}c_{1}\mathrm{d}c_{2}\varrho\mathbf{P}_m\ 
		-\ \sum_{k\in\Lambda^*}\int_{(\mathbb{R}_+\times[-\pi,\pi])^{|\Lambda^*|}}\mathrm{d}c_{1}\mathrm{d}c_{2}\partial_{c_{2,k}}\hat{\mathfrak{H}}(k)\varrho\partial_{c_{1,k}}\mathbf{P}_m
		\\
		& + \sum_{k\in\Lambda^*}\int_{(\mathbb{R}_+)^{|\Lambda^*|-1}\times[-\pi,\pi]^{|\Lambda^*|}}\prod_{k'\in\Lambda^*\backslash\{k\}}\mathrm{d}c_{1,k'}\mathrm{d}c_{2}\partial_{c_{2,k}}\hat{\mathfrak{H}}(k)\varrho\mathbf{P}_m\Big|_{c_{1,k}=0}^{c_{1,k}=\infty}\\
		&	-\ \sum_{k\in\Lambda^*}\int_{(\mathbb{R}_+\times[-\pi,\pi])^{|\Lambda^*|}}\mathrm{d}c_{1}\mathrm{d}c_{2}\partial_{c_{1,k}c_{2,k}}\hat{\mathfrak{H}}(k)\varrho\mathbf{P}_m
		\ 	+\ \sum_{k\in\Lambda^*}\int_{(\mathbb{R}_+\times[-\pi,\pi])^{|\Lambda^*|}}\mathrm{d}c_{1}\mathrm{d}c_{2}\partial_{c_{1,k}c_{2,k}}\hat{\mathfrak{H}}(k)\varrho\mathbf{P}_m\\
		&
		\ 	-\ \sum_{k\in\Lambda^*}\int_{(\mathbb{R}_+\times[-\pi,\pi])^{|\Lambda^*|}}\mathrm{d}c_{1}\prod_{k'\in\Lambda^*\backslash\{k\}}\mathrm{d}c_{2,k'}\partial_{c_{1,k}}\hat{\mathfrak{H}}(k)\varrho\mathbf{P}_m\Big|_{c_{2,k}=-\pi}^{c_{2,k}=\pi}\\
		\ = \ 	& \partial_t\int_{(\mathbb{R}_+\times[-\pi,\pi])^{|\Lambda^*|}}\mathrm{d}c_{1}\mathrm{d}c_{2}\varrho\mathbf{P}_m\ 
		-\ \sum_{k\in\Lambda^*}\int_{(\mathbb{R}_+\times[-\pi,\pi])^{|\Lambda^*|}}\mathrm{d}c_{1}\mathrm{d}c_{2}\partial_{c_{2,k}}\hat{\mathfrak{H}}(k)\varrho\partial_{c_{1,k}}\mathbf{P}_m
		\\
		&	-\ \sum_{k\in\Lambda^*}\int_{(\mathbb{R}_+\times[-\pi,\pi])^{|\Lambda^*|}}\mathrm{d}c_{1}\mathrm{d}c_{2}\partial_{c_{1,k}c_{2,k}}\hat{\mathfrak{H}}(k)\varrho\mathbf{P}_m
		\ 	+\ \sum_{k\in\Lambda^*}\int_{(\mathbb{R}_+\times[-\pi,\pi])^{|\Lambda^*|}}\mathrm{d}c_{1}\mathrm{d}c_{2}\partial_{c_{1,k}c_{2,k}}\hat{\mathfrak{H}}(k)\varrho\mathbf{P}_m \\
		\ = \ & \partial_t\int_{(\mathbb{R}_+\times[-\pi,\pi])^{|\Lambda^*|}}\mathrm{d}c_{1}\mathrm{d}c_{2}\varrho\mathbf{P}_m\
		-\ \sum_{k\in\Lambda^*}\int_{(\mathbb{R}_+\times[-\pi,\pi])^{|\Lambda^*|}}\mathrm{d}c_{1}\mathrm{d}c_{2}\partial_{c_{2,k}}\hat{\mathfrak{H}}(k)\varrho\partial_{c_{1,k}}\mathbf{P}_m.
		\end{aligned}
\end{equation}
We next develop
\begin{equation}
	\begin{aligned}
		&	 \sum_{k\in\Lambda^*}\int_{(\mathbb{R}_+\times[-\pi,\pi])^{|\Lambda^*|}}\mathrm{d}c_{1}\mathrm{d}c_{2}\partial_{c_{2,k}}\hat{\mathfrak{H}}(k)\varrho\partial_{c_{1,k}}\mathbf{P}_m\\
		= \ &\lambda \int_{(\mathbb{R}_+\times[-\pi,\pi])^{|\Lambda^*|}}\mathrm{d}c_{1}\mathrm{d}c_{2}\varrho\left\{\sum_{k\in\Lambda^*}\partial_{c_{1,k}}\mathbf{P}_m \int_{\Lambda^*}\mathrm{d}k_1'\int_{\Lambda^*}\mathrm{d}k_2'\mathcal{M}(k,k_1',k_2')\right.\\
		&\times 2 \sqrt{2c_{1,k_1'}c_{1,k_2'}c_{1,k}}\Big[\delta(k-k_1'-k_2'){\sin(c_{2,k_1'}+c_{2,k_2'}-c_{2,k})} \Big]\Big\}\\
		= \ & \lambda \int_{(\mathbb{R}_+\times[-\pi,\pi])^{|\Lambda^*|}}\mathrm{d}c_{1}\mathrm{d}c_{2}\varrho\left\{\int_{\Lambda^*}\mathrm{d}km\mathbf{P}_{m-1} \int_{\Lambda^*}\mathrm{d}k_1'\int_{\Lambda^*}\mathrm{d}k_2'\mathcal{M}(k,k_1',k_2')\right.\\
		&\times 2 \sqrt{2c_{1,k_1'}c_{1,k_2'}c_{1,k}}\Big[\delta(k-k_1'-k_2'){\sin(c_{2,k_1'}+c_{2,k_2'}-c_{2,k})} |k^1|\Big]\Big\}\\
		= \ & \lambda \int_{(\mathbb{R}_+\times[-\pi,\pi])^{|\Lambda^*|}}\mathrm{d}c_{1}\mathrm{d}c_{2}\varrho m\mathbf{P}_{m-1}\Big\{\int_{\Lambda^*}\mathrm{d}k \int_{\Lambda^*}\mathrm{d}k_1'\int_{\Lambda^*}\mathrm{d}k_2'|\mathcal{M}(k,k_1',k_2')|\mathrm{sign}k^1\\
		&\times 2 \sqrt{2c_{1,k_1'}c_{1,k_2'}c_{1,k}}\Big[\delta(k-k_1'-k_2'){\sin(c_{2,k_1'}+c_{2,k_2'}-c_{2,k})} |k^1|\Big]\Big\},
	\end{aligned}
\end{equation}
with $k=(k^1,\cdots,k^d)$, and the indentity $\partial_{c_{1,k}}\mathbf{P}_m=h^{d}m\mathbf{P}_{m-1}$ has been used. Using the invariance of $|\mathcal{M}(k,k_1',k_2')|$ under rotations of $k,k_1',k_2'$, we find
\begin{equation}
	\begin{aligned}
		& \int_{(\Lambda^*)^3}\mathrm{d}k \mathrm{d}k_1\mathrm{d}k_2'|\mathcal{M}(k,k_1',k_2')|\mathrm{sign}k^1 2\sqrt{2c_{1,k_1'}c_{1,k_2'}c_{1,k}}\Big[\delta(k-k_1'-k_2'){\sin(c_{2,k_1'}+c_{2,k_2'}-c_{2,k})} |k^1|\Big]\\
		=\		& \int_{(\Lambda^*)^3}\mathrm{d}k \mathrm{d}k_1\mathrm{d}k_2'|\mathcal{M}(k,k_1',k_2')|k^1 2\sqrt{2c_{1,k_1'}c_{1,k_2'}c_{1,k}}\Big[\delta(k-k_1'-k_2'){\sin(c_{2,k_1'}+c_{2,k_2'}-c_{2,k})} \Big]\\
		=\  	& \frac23\int_{(\Lambda^*)^3}\mathrm{d}k \mathrm{d}k_1\mathrm{d}k_2'|\mathcal{M}(k,k_1',k_2')|(k^1-k_1'^1-k_2'^1) \sqrt{2c_{1,k_1'}c_{1,k_2'}c_{1,k}}\\
		&\times\Big[\delta(k-k_1'-k_2'){\sin(c_{2,k_1'}+c_{2,k_2'}-c_{2,k})} \Big] \\
		= \ & 0.
	\end{aligned}
\end{equation}
After a little bookkeeping, we obtain \eqref{Propo:ExpectationEqui:1a}.

We now prove \eqref{Propo:ExpectationEqui:2}. We introduce the notation $(b_{1,k_{i_j}}+{\bf i}b_{2,k_{i_j}})_{\pm 1} \coloneqq b_{1,k_{i_j}}\pm {\bf i}b_{2,k_{i_j}}$. We use the polar change of variable, Cauchy-Schwarz, and \eqref{Propo:ExpectationEqui:1} to compute
	\begin{equation}\label{Propo:ExpectationEqui:EA}
		\begin{aligned}
			&\sum_{k_{i_1},\cdots,k_{i_n}\in\Lambda^*}\left|\Big\langle \sqrt{|\bar\omega(k_{i_1})|} a_{k_{i_1},\sigma_{i_1}}\cdots  \sqrt{|\bar\omega(k_{i_n})|} a_{k_{i_n},\sigma_{i_n}}\Big\rangle_t\right|^2 \\
			= & \ \sum_{k_{i_1},\cdots,k_{i_n}\in\Lambda^*} \left|\int_{\mathbb{R}^{2n}}\prod_{j=1}^{n}\mathrm{d}b_{1,k_{i_j}}\mathrm{d}b_{2,k_{i_j}}\sqrt{|\bar\omega(k_{i_j})|}\big[(b_{1,k_{i_j}}+{\bf i}b_{2,k_{i_j}})_{\sigma_{i_j}}\big] \right.\\
			&\times\left.\int_{\mathbb{R}^{2(|\Lambda^*|-n)}}\prod_{k\in\Lambda^*\backslash\{k_{i_1},\cdots,k_{i_n}\}}\mathrm{d}b_{1,k}\mathrm{d}b_{2,k}\varrho(t)\right|^2 \\
			\lesssim &  \ \mathfrak{C}_{in,o}^n\sum_{{ k_{i_1},\cdots,k_{i_n}\in\Lambda^*}} \left|\int_{\mathbb{R}^{2n}}\prod_{j=1}^{n}\mathrm{d}b_{1,k_{i_j}}\mathrm{d}b_{2,k_{i_j}}\sqrt{|k_{i_j}|}\big|b_{1,k_{i_j}}+{\bf i}b_{2,k_{i_j}}\big| \int_{\mathbb{R}^{2(|\Lambda^*|-n)}}\prod_{k\in\Lambda^*\backslash\{k_{i_1},\cdots,k_{i_n}\}}\mathrm{d}b_{1,k}\mathrm{d}b_{2,k}\varrho(t)\right|^2 \\
			\lesssim  &\ \mathfrak{C}_{in,o}^n\sum_{{k_{i_1},\cdots,k_{i_n}\in\Lambda^*}}	\left|\int_{(\mathbb{R}_+\times[-\pi,\pi])^{|\Lambda^*|}}\mathrm{d}c_{1}\mathrm{d}c_{2}\prod_{j=1}^{n}\sqrt{|k_{i_j}|}\big|2 c_{1,k_{i_j}}\big|^
			{\frac{1}{2}}\varrho(t)\right|^2\\
			\lesssim  &\ \mathfrak{C}_{in,o}^n\sum_{{k_{i_1},\cdots,k_{i_n}\in\Lambda^*}}\left|	\int_{(\mathbb{R}_+\times[-\pi,\pi])^{|\Lambda^*|}}\mathrm{d}c_{1}\mathrm{d}c_{2}\prod_{j=1}^{n}|k_{i_j}|\big|2 c_{1,k_{i_j}}\big|\varrho(t)\right|\left|	\int_{(\mathbb{R}_+\times[-\pi,\pi])^{|\Lambda^*|}}\mathrm{d}c_{1}\mathrm{d}c_{2}\varrho(t)\right|\\
			\lesssim  &\ \mathfrak{C}_{in,o}^n\sum_{{ k_{i_1},\cdots,k_{i_n}\in\Lambda^*}}\left|	\int_{(\mathbb{R}_+\times[-\pi,\pi])^{|\Lambda^*|}}\mathrm{d}c_{1}\mathrm{d}c_{2}\prod_{j=1}^{n}\big|2 |k_{i_j}|c_{1,k_{i_j}}\big|\varrho(t)\right|  \\
			\lesssim  &\ 	\mathfrak{C}_{in,o}^n \left|\int_{(\mathbb{R}_+\times[-\pi,\pi])^{|\Lambda^*|}}\mathrm{d}c_{1}\mathrm{d}c_{2}\Big|\sum_{k\in\Lambda^*} |k_{i_j}|c_{1,k}\Big|^
			{{n}}\varrho(t)\right|\\
			\
			\lesssim   &  \  n!	|\mathfrak{C}_{in,o}/c_{\mathbf{P}}|^{{n}}h^{-dn} \left|\int_{(\mathbb{R}_+\times[-\pi,\pi])^{|\Lambda^*|}}\mathrm{d}c_{1}\mathrm{d}c_{2}\mathbf{P}\varrho(t)\right|\\
			\
			\lesssim    &  \   n!	|\mathfrak{C}_{in,o}/c_{\mathbf{P}}|^{{n}}h^{-dn} \left|\int_{(\mathbb{R}_+\times[-\pi,\pi])^{|\Lambda^*|}}\mathrm{d}c_{1}\mathrm{d}c_{2}\mathbf{P}\varrho(0)\right|,
		\end{aligned}
	\end{equation}
in which $\mathfrak{C}_{in,o}$ is a universal constant. This yields \eqref{Propo:ExpectationEqui:2}.

Next, we prove \eqref{Propo:ExpectationEqui:2a}. By the change of variable $k^*=\epsilon k_*,$ we bound, for $n\ge 2$
\begin{equation}\label{Propo:ExpectationEqui:EA:aa}
	\begin{aligned}
		&h^{d}\sum_{\epsilon k_*\in \Lambda^*_{\{\sigma_{i_j}\}_{j=1}^n}}\epsilon^d\left[\sum_{k_{i_1},\cdots,k_{i_n}\in\Lambda^*}\left|\Big\langle \sqrt{|\bar\omega(k_{i_1})|} a_{k_{i_1},\sigma_{i_1}}\cdots  \sqrt{|\bar\omega(k_{i_n})|} a_{k_{i_n},\sigma_{i_n}}\Big\rangle_t\right|^2 \right]^\frac12\\
		&\times \delta\left(\sum_{j=1}^nk_{i_j}\sigma_{i_j}=\epsilon k_*\right) \\
= &\ \sum_{\epsilon k_*\in \Lambda^*_{\{\sigma_{i_j}\}_{j=1}^n}}\left[\sum_{k_{i_1},\cdots,k_{i_n}\in\Lambda^*}\left|\Big\langle \sqrt{|\bar\omega(k_{i_1})|} a_{k_{i_1},\sigma_{i_1}}\cdots  \sqrt{|\bar\omega(k_{i_n})|} a_{k_{i_n},\sigma_{i_n}}\Big\rangle_t\right|^2 \right]^\frac12\\
&\times \delta\left(\sum_{j=1}^nk_{i_j}\sigma_{i_j}= k^*\right)\\
\lesssim &\ \left[h^{d}\sum_{ k^*\in \Lambda^*_{\{\sigma_{i_j}\}_{j=1}^n}}\sum_{\substack{k_{i_1},\cdots,k_{i_n}\in\Lambda^* \\ \sum_{j=1}^nk_{i_j}\sigma_{i_j}= k^*}}\left|\Big\langle\sqrt{|\bar\omega(k_{i_1})|} a_{k_{i_1},\sigma_{i_1}}\cdots  \sqrt{|\bar\omega(k_{i_n})|} a_{k_{i_n},\sigma_{i_n}}\Big\rangle_t\right|^2 \right]^\frac12\\
&\times \left[h^{d}\sum_{ k^*\in \Lambda^*_{\{\sigma_{i_j}\}_{j=1}^n}}1 \right]^\frac12 \\
\lesssim &\  \left[h^{d}\sum_{\substack{k^*\in \Lambda^*_{\{\sigma_{i_j}\}_{j=1}^n}}}\sum_{\substack{k_{i_1},\cdots,k_{i_n}\in\Lambda^*\\\sum_{j=1}^nk_{i_j}\sigma_{i_j}= k^*}}\left|\Big\langle \sqrt{|\bar\omega(k_{i_1})|} a_{k_{i_1},\sigma_{i_1}}\cdots  \sqrt{|\bar\omega(k_{i_n})|} a_{k_{i_n},\sigma_{i_n}}\Big\rangle_t\right|^2 \right]^\frac12.
	\end{aligned}
\end{equation}
Again using a change of variable and Cauchy-Schwarz, this implies
	\begin{equation}\label{Propo:ExpectationEqui:EA:a}
	\begin{aligned}
		&\left[h^{d}\sum_{k_*\in\Lambda^*}\epsilon^d\left[h^{d(n-1)}\sum_{k_{i_1},\cdots,k_{i_n}\in\Lambda^*}\left|\Big\langle \sqrt{|\bar\omega(k_{i_1})|} a_{k_{i_1},\sigma_{i_1}}\cdots  \sqrt{|\bar\omega(k_{i_n})|} a_{k_{i_n},\sigma_{i_n}}\Big\rangle_t\right|^2 \right]^\frac12 \right.\\
		&\times\left.\delta\left(\sum_{j=1}^nk_{i_j}\sigma_{i_j}=\epsilon k_*\right)\right]^2 \\
		\lesssim &  \ h^{dn}\mathfrak{C}_{in,o}^n\sum_{ k^*\in \Lambda^*_{\{\sigma_{i_j}\}_{j=1}^n}}\sum_{\substack{k_{i_1},\cdots,k_{i_n}\in\Lambda^* \\ \sum_{j=1}^nk_{i_j}\sigma_{i_j}= k^*}} \left|\int_{\mathbb{R}^{2n}}\prod_{j=1}^{n}\mathrm{d}b_{1,k_{i_j}}\mathrm{d}b_{2,k_{i_j}}\sqrt{|\bar\omega(k_{i_j})|}\big|b_{1,k_{i_j}}+{\bf i}b_{2,k_{i_j}}\big|\right.\\
		&\left.\times \int_{\mathbb{R}^{2(|\Lambda^*|-n)}}\prod_{k\in\Lambda^*\backslash\{k_{i_1},\cdots,k_{i_n}\}}\mathrm{d}b_{1,k}\mathrm{d}b_{2,k}\varrho(t) \right|^2 \\
		\lesssim  &\ h^{dn}\mathfrak{C}_{in,o}^n\sum_{k^*\in\Lambda^*}\sum_{\substack{k_{i_1},\cdots,k_{i_n}\in\Lambda^*\\ \sum_{j=1}^nk_{i_j}\sigma_{i_j}= k^*}} 	\left|\int_{(\mathbb{R}_+\times[-\pi,\pi])^{|\Lambda^*|}}\mathrm{d}c_{1}\mathrm{d}c_{2} \prod_{j=1}^{n}\sqrt{|\bar\omega(k_{i_j})|}\big|2c_{1,k_{i_j}}\big|^
		{\frac{1}{2}}\varrho(t) \right|^2\\
		\lesssim  &\ h^{dn}\mathfrak{C}_{in,o}^n\sum_{ k^*\in \Lambda^*_{\{\sigma_{i_j}\}_{j=1}^n}}\sum_{\substack{k_{i_1},\cdots,k_{i_n}\in\Lambda^* \\ \sum_{j=1}^nk_{i_j}\sigma_{i_j}= k^*}}	\left|\int_{(\mathbb{R}_+\times[-\pi,\pi])^{|\Lambda^*|}}\mathrm{d}c_{1}\mathrm{d}c_{2}\prod_{j=2}^{n}|\bar\omega(k_{i_j})|\big|2c_{1,k_{i_j}}\big|\varrho(t)\right| \\
		\
		\lesssim   &  \   n!	|\mathfrak{C}_{in,o}/c_{\mathbf{P}}|^{{n}} \left|\int_{(\mathbb{R}_+\times[-\pi,\pi])^{|\Lambda^*|}}\mathrm{d}c_{1}\mathrm{d}c_{2}\mathbf{P}\varrho(t)\right|^2\\
		\
		=    &  \  n! 	|\mathfrak{C}_{in,o}/c_{\mathbf{P}}|^{{n}} \left|\int_{(\mathbb{R}_+\times[-\pi,\pi])^{|\Lambda^*|}}\mathrm{d}c_{1}\mathrm{d}c_{2}\mathbf{P}\varrho(0)\right|^2,
	\end{aligned}
\end{equation}
where the final line is by \eqref{Propo:ExpectationEqui:1}. Hence, 
\begin{multline}\label{Propo:ExpectationEqui:EA:b}
		\left[\sum_{\epsilon k_*\in \Lambda^*_{\{\sigma_{i_j}\}_{j=1}^n}}\epsilon^d\left[\int_{(\Lambda^*)^{n-1}}\prod_{j=1}^n\mathrm{d}k_{i_j}\left|\Big\langle \sqrt{|\bar\omega(k_{i_1})|} a_{k_{i_1},\sigma_{i_1}}\cdots  \sqrt{|\bar\omega(k_{i_n})|} a_{k_{i_n},\sigma_{i_n}}\Big\rangle_t\right|^2\right]^\frac12 \right.\\
		\left.\times\delta\left(\sum_{j=1}^nk_{i_j}\sigma_{i_j}=\epsilon k_*\right)\right]^2   \\
		\lesssim  n!	|\mathfrak{C}_{in,o}/c_{\mathbf{P}}|^{{n}} \left|\int_{(\mathbb{R}_+\times[-\pi,\pi])^{|\Lambda^*|}}\mathrm{d}c_{1}\mathrm{d}c_{2}\mathbf{P}\varrho(0)\right|^2.
\end{multline}
The preceding inequality    yields \eqref{Propo:ExpectationEqui:2a}.

Next, we compute  
	\begin{equation}\label{Propo:ExpectationEqui:EA:BIS1}
	\begin{aligned}
		&h^{dn}\sum_{k_{i_1},\cdots,k_{i_n}\in\Lambda^*}\left|\Big\langle \sqrt{|\bar\omega(k_{i_1})|} a_{k_{i_1},\sigma_{i_1}}\cdots  \sqrt{|\bar\omega(k_{i_n})|} a_{k_{i_n},\sigma_{i_n}}\Big\rangle_t\right|\\ 		= & \int_{(\Lambda^*)^n}\mathrm{d}k_{i_1}\cdots\mathrm{d}k_{i_n}\left|\Big\langle \sqrt{|\bar\omega(k_{i_1})|} a_{k_{i_1},\sigma_{i_1}}\cdots  \sqrt{|\bar\omega(k_{i_n})|} a_{k_{i_n},\sigma_{i_n}}\Big\rangle_t\right|\\  \lesssim & \  \int_{(\Lambda^*)^n}\mathrm{d}k_{i_1}\cdots\mathrm{d}k_{i_n}\Big\langle | \bar{\omega}(k_{i_1})| |c_{1,k_{i_1}}\cdots \bar{\omega}(k_{i_n})|c_{1,k_{i_n}}\Big\rangle_t^\frac12 \\ \lesssim & \ \left[ \int_{(\Lambda^*)^n}\mathrm{d}k_{i_1}\cdots\mathrm{d}k_{i_n}\Big\langle | \bar{\omega}(k_{i_1})| c_{1,k_{i_1}}\cdots |\bar{\omega}(k_{i_n})|c_{1,k_{i_n}}\Big\rangle_t\right]^\frac12\left[ \int_{(\Lambda^*)^n}\mathrm{d}k_{i_1}\cdots\mathrm{d}k_{i_n}1\right]^\frac12\\	\lesssim & \ \left\langle h^{dn}\sum_{k_{i_1},\cdots,k_{i_n}\in\Lambda^*}|\bar{\omega}(k_{i_1})|c_{1,k_{i_1}}\cdots |\bar{\omega}(k_{i_n})|c_{1,k_{i_n}}\right\rangle_t^\frac12  \\		
		\lesssim &\   \left\langle h^{dn}\left( \sum_{k\in\Lambda^*}|\bar{\omega}(k)|c_{k}\right)^n\right\rangle_t^\frac12  \ 
		\lesssim  \  \left\langle h^{dn}\left( \sum_{k\in\Lambda^*}|\bar{\omega}(k)|c_{k}\right)^n\right\rangle_0^\frac12  \ 
		\lesssim  \   \left\langle h^{dn} \left( \sum_{k\in\Lambda^*}c_{k}\right)^n\right\rangle_0^\frac12  \\
		\lesssim  &\ \left(h^{dn}\sum_{k_{i_1},\cdots,k_{i_n}\in\Lambda^*}\Big\langle c_{1,k_{i_1}}\cdots c_{1,k_{i_n}}\Big\rangle_0\right)^\frac12 ,
	\end{aligned}
\end{equation}
where we have used \eqref{Propo:ExpectationEqui:1a}. From \eqref{Propo:ExpectationEqui:EA:BIS1}, following the standard Gaussian inequality for   products (see, for instance \cite{li2012gaussian}), we bound
	\begin{equation}\label{Propo:ExpectationEqui:EA:BIS2}
	\begin{aligned}
		&h^{dn}\sum_{k_{i_1},\cdots,k_{i_n}\in\Lambda^*}\left|\Big\langle \sqrt{|\bar\omega(k_{i_1})|} a_{k_{i_1},\sigma_{i_1}}\cdots \sqrt{|\bar\omega(k_{i_n})|} a_{k_{i_n},\sigma_{i_n}}\Big\rangle_t\right|\\ \lesssim \ & \left(h^{dn}\sum_{k_{i_1},\cdots,k_{i_n}\in\Lambda^*}\Big\langle \big|a_{k_{i_1},\sigma_{i_1}}\cdots a_{k_{i_n},\sigma_{i_n}}\big|^2\Big\rangle_0\right)^\frac12\\
		\ \lesssim \ &   \left(h^{dn}\sum_{k_{i_1},\cdots,k_{i_n}\in\Lambda^*}{\mathrm{PER}(a_{k_{i_1},\sigma_{i_1}}(0),\cdots, a_{k_{i_n},\sigma_{i_n}}(0))}\right)^\frac12\\
		\ \lesssim \ &   \left(\int_{(\Lambda^*)^n}\mathrm{d}k_{i_1}\cdots\mathrm{d}k_{i_n}{\mathrm{PER}(a_{k_{i_1},\sigma_{i_1}}(0),\cdots, a_{k_{i_n},\sigma_{i_n}}(0))}\right)^\frac12,
	\end{aligned}
\end{equation}
in which the permanent is defined by
	\begin{equation}\label{Propo:ExpectationEqui:EA:BIS3}
	\begin{aligned}
	\mathrm{PER}(a_{k_{i_1},\sigma_{i_1}}(0),\cdots, a_{k_{i_n},\sigma_{i_n}}(0)) \ \coloneqq \	& \sum_{\pi\in \mathscr{S}(n)}\prod_{j=1}^n \left\langle a_{k_{i_j},\sigma_{i_j}}a_{k_{i_{\pi(j)}},\sigma_{i_{\pi(j)}}}\right\rangle_0.
	\end{aligned}
\end{equation}
In the above definition, $\mathscr{S}$ is the symmetric group of permutations on $\{1,\cdots,n\}$. Using \cref{averages}, we bound \eqref{Propo:ExpectationEqui:EA:BIS2} by
\begin{equation}\label{Propo:ExpectationEqui:EA:BIS3}
	\begin{aligned}
		&\sum_{k_{i_1},\cdots,k_{i_n}\in\Lambda^*}h^d\left|\Big\langle \sqrt{|\bar\omega(k_{i_1})|} a_{k_{i_1},\sigma_{i_1}}\cdots \sqrt{|\bar\omega(k_{i_n})|} a_{k_{i_n},\sigma_{i_n}}\Big\rangle_t\right|\ \le \ n!\mathscr{C}_{\mathbb{M}},
	\end{aligned}
\end{equation}
in which $\mathscr{C}_{\mathbb{M}}$ is a universal constant independent of $\epsilon$ and $h$. We have proved \eqref{Propo:ExpectationEqui:2b}. Inequality \eqref{Propo:ExpectationEqui:2c} follows by a standard interpolation argument between the two inequalities \eqref{Propo:ExpectationEqui:2} and \eqref{Propo:ExpectationEqui:2b}. Inequality \eqref{Propo:ExpectationEqui:2d} can be proved by the same argument used to prove \eqref{Propo:ExpectationEqui:2a}. Thus, the proof of \cref{Propo:ExpectationEqui} is complete.

\end{proof}

Lastly, we give the proof of \cref{Propo:ExampleInitialCondition}.

\begin{proof}[Proof of \cref{Propo:ExampleInitialCondition}]

By \cref{Propo:ExpectationEqui}, we have
\begin{multline}\label{Propo:ExampleMeasure:E2}
 \int_{(\Lambda^*)^n}\prod_{j=1}^n\mathrm{d}k_{i_j}\left|\Big\langle \sqrt{|\bar\omega(k_{i_1})|} a_{k_{i_1},\sigma_{i_1}}\cdots \sqrt{|\bar\omega(k_{i_n})|} a_{k_{i_n},\sigma_{i_n}}\Big\rangle_t\right|^2 \\ 			
 \lesssim  n! |\mathfrak{C}_{in}/c_{\mathbf{P}}|^{{n}}\left|\int_{(\mathbb{R}_+\times[-\pi,\pi])^{|\Lambda^*|}}\mathrm{d}c_{1}\mathrm{d}c_{2}\mathbf{P}\varrho(0)\right|.
\end{multline}
	Substituting \eqref{InitialDensity} for $\varrho(0)$  in \eqref{Propo:ExampleMeasure:E2}, we see that
	\begin{equation}\label{Propo:ExampleMeasure:E3}
		\begin{aligned}
			&\int_{(\Lambda^*)^n}\prod_{j=1}^n\mathrm{d}k_{i_j}\left|\Big\langle \sqrt{|\bar\omega(k_{i_1})|} a_{k_{i_1},\sigma_{i_1}}\cdots \sqrt{|\bar\omega(k_{i_n})|} a_{k_{i_n},\sigma_{i_n}}\Big\rangle_t\right|^2  \\
			\lesssim  &\ \	 n!|\mathfrak{C}_{in}/c_{\mathbf{P}}|^{{n}}\left|\int_{(\mathbb{R}_+\times[-\pi,\pi])^{|\Lambda^*|}}\mathrm{d}b_{1}\mathrm{d}b_{2}\exp\left(h^dc_{\mathbf{P} }\sum_{k\in\Lambda^*}\frac{b^2_{1,k}+b^2_{2,k}}{2}\right)\frac{1}{\pi^{|\Lambda_*|}}\prod_{i=1}^{|\Lambda_*|}\frac{1}{\iota_i}\exp\left(-\frac{|\mathbf{q}_i^*\mathbf{a}|^2}{\iota_i}\right)\right|\\
			\lesssim &\ n!|\mathfrak{C}_{in}/c_{\mathbf{P}}|^{{n}} \left[(2\pi)^{|\Lambda^*|}\int_{\mathbb{R}_+^{|\Lambda^*|}}\mathrm{d}c_{1}\left(\prod_{k\in\Lambda^*}\frac{e^{ h^dc_{\mathbf{P}}c_{1,k}-\frac{2c_{1,k}}{	\epsilon^{-d}\daleth(k) }}}{\pi \epsilon^{-d}	\daleth(k) }\right)\right],
		\end{aligned}
	\end{equation}
where we have used the identity \eqref{omega1}. Recalling the elementary calculus identity 
	\begin{equation}\label{IdentityExponential}
		\int_0^\infty\mathrm{d}x x^ne^{-ax} \ = \ \frac{n!}{a^{n+1}},
	\end{equation} we compute the last term \eqref{Propo:ExampleMeasure:E3}  explicitly
	as
	\begin{equation*}\begin{aligned}
			& (2\pi)^{|\Lambda^*|}\int_{\mathbb{R}_+^{|\Lambda^*|}}\mathrm{d}c_{1}\left(\prod_{k\in\Lambda^*}\frac{e^{ h^dc_{\mathbf{P}}c_{1,k}-\frac{2c_{1,k}}{\epsilon^{-d}	\daleth(k) }}}{\pi\epsilon^{-d}	\daleth(k) }\right)\\
			= &\ \int_{\mathbb{R}_+^{|\Lambda^*|}}\prod_{k\in\Lambda^*}\mathrm{d}(2c_{1,k})\prod_{k\in\Lambda^*}\left(\frac{e^{2c_{1,k}\big(\frac{h^dc_{\mathbf{P}}}{2}-\frac{1}{	\epsilon^{-d}\daleth(k) }\big)}}{\big(-\frac{h^dc_{\mathbf{P}}}{2}+\frac{1}{\epsilon^{-d}	\daleth(k) }\big)^{-1}}\frac{1}{	\epsilon^{-d}	\daleth(k) \big(-\frac{h^dc_{\mathbf{P}}}{2}+\frac{1}{\epsilon^{-d}	\daleth(k) }\big)}\right)\\ 
			\lesssim &  \ \prod_{k\in\Lambda^*}\frac{2}{2-h^dc_{\mathbf{P}}	\epsilon^{-d}\daleth(k) },
		\end{aligned}
	\end{equation*}
which completes the proof of the proposition.
\end{proof}

\section{Duhamel expansions and moment bounds} \label{Sec:Duha}
\subsection{Kolmogorov equation} Let ${a},{a}^*$ denote the vectors $(a_k)_{k\in\Lambda^*}$, $(a_k^*)_{k\in\Lambda^*}$. Note that the vector $(a_k)_{k\in\Lambda^*}$ is comprised of all $a_k$, in which the index $k$ is fixed and runs over all of the points  of the lattice $\Lambda^*$. For two vectors $\mathcal{V}_1=(\mathcal{V}_{1,k})_{k\in\Lambda^*}\in\mathbb{N}^{|\Lambda^*|}, $ and $\mathcal{V}_2=(\mathcal{V}_{2,k})_{k\in\Lambda^*}\in\mathbb{N}^{|\Lambda^*|}$, using Definition \ref{averages}, we denote
\begin{equation}\label{AverageVectorShorthand}
  \quad \left\langle {a}^{\mathcal{V}_{1}} ({a}^*)^{\mathcal{V}_{2}}\right\rangle_t \ = \ \left\langle \prod_{k\in \Lambda^* }a_k^{\mathcal{V}_{1,k}} (a_k^*)^{\mathcal{V}_{2,k}}\right\rangle_t.
\end{equation}
We will study the evolution of the momenta $\mathbf{a}^{\mathcal{V}_1}(\mathbf{a}^*)^{\mathcal{V}_2}$, following precisely the same procedure of \cite{staffilani2021wave}, we find
$$\left\{\mathcal{H}_1,\mathbf{a}^{\mathcal{V}_1}(\mathbf{a}^*)^{\mathcal{V}_2}\right\}  =  \sum_{k\in\Lambda^*}\omega_k\left(b_{1,k}\partial_{b_{2,k}}-b_{2,k}\partial_{b_{1,k}}\right)\mathbf{a}^{\mathcal{V}_1}(\mathbf{a}^*)^{\mathcal{V}_2}$$ $$  =  {\bf i}\sum_{k\in\Lambda^*} \omega_k(\mathcal{V}_{1,k}-\mathcal{V}_{2,k})\mathbf{a}^{\mathcal{V}_1}(\mathbf{a}^*)^{\mathcal{V}_2},$$
and (recalling \eqref{NOISEgenerator}){
\begin{equation}\label{NoiseAction}\begin{aligned}
&\mathcal{R}
\left(\mathbf{a}^{\mathcal{V}_1}(\mathbf{a}^*)^{\mathcal{V}_2}\right)\\
 \ = \  & -2\Big[\sum_{k_1,k_2\in\Lambda_*^+}(\mathcal{V}_{1,k_1}+ \mathcal{V}_{2,-k_1}-\mathcal{V}_{2,k_1}-\mathcal{V}_{1,-k_1})\\
 &\times(\mathcal{V}_{1,k_2}+
 \mathcal{V}_{2,-k_2}-\mathcal{V}_{2,k_2}-\mathcal{V}_{1,-k_2})\mathscr{E}(k_1,k_2)\Big]\mathbf{a}^{\mathcal{V}_1}(\mathbf{a}^*)^{\mathcal{V}_2}.
\end{aligned}
\end{equation}}
{Notice that since $\tilde{\mathscr{E}}$ is  positive 
semi-definite then the  coefficient  in front of $\mathbf{a}^{\mathcal{V}_1}(\mathbf{a}^*)^{\mathcal{V}_2}$ is less than or equal to zero.} 
We set $\mathfrak{M}=\mathcal{M}/2$ and derive  an  equation for $\langle {a}^{\mathcal{V}_1}({a}^*)^{\mathcal{V}_2}\rangle$
{
\begin{equation}
	\label{KolmogorovEquation2}
	\begin{aligned}
		&\frac{\partial}{\partial t}\left\langle {a}^{\mathcal{V}_1}({a}^*)^{\mathcal{V}_2}\right\rangle \ = \  \Big\{{\bf i}\sum_{k\in\Lambda^*} \omega_k(\mathcal{V}_{1,k}-\mathcal{V}_{2,k})\\
		&\  -2c_r\Big[\sum_{k_1,k_2\in\Lambda_*^+}(\mathcal{V}_{1,k_1}+ \mathcal{V}_{2,-k_1}-\mathcal{V}_{2,k_1}-\mathcal{V}_{1,-k_1})\\
		&\times(\mathcal{V}_{1,k_2}+
 \mathcal{V}_{2,-k_2}-\mathcal{V}_{2,k_2}-\mathcal{V}_{1,-k_2})\mathscr{E}(k_1,k_2)\Big]\Big\}\left\langle {a}^{\mathcal{V}_1}({a}^*)^{\mathcal{V}_2}\right\rangle\\
		& + \lambda\,  \sum_{\mathcal{U}_1,\mathcal{U}_2}\mathcal{M}^o_{\mathcal{U}_1,\mathcal{U}_2}\left\langle {a}^{\mathcal{U}_1}({a}^*)^{\mathcal{U}_2}\right\rangle,
	\end{aligned}
\end{equation}}
in which  $\mathcal{M}^o_{\mathcal{U}_1,\mathcal{U}_2}$ is a coefficient depending on $\mathcal{M}$, and $\mathcal{U}_1,\mathcal{U}_2$ are  multi-indices depending on $\mathcal{V}_1$ and $\mathcal{V}_2$. 

Let us now define the new average 
{
\begin{equation}
	\label{Def:NewAverA}
	\langle\langle a^{\mathcal{V}_1}(a^*)^{\mathcal{V}_2}\rangle\rangle \ = \ e^{-{\bf i}t\sum_{k\in\Lambda^*} \omega_k(\mathcal{V}_{1,k}-\mathcal{V}_{2,k})+t c_r C_{\mathbf{R}}(\mathcal{V}_1,\mathcal{V}_2)} \langle a^{\mathcal{V}_1}(a^*)^{\mathcal{V}_2}\rangle.
\end{equation}
Equivalently, the new average for $\alpha$,  in \eqref{HatA}, takes the form 
{
\begin{equation}
	\label{Def:NewAverAlpha}
	\begin{aligned}
		& \langle \alpha(-1)^{\mathcal{V}_1} \alpha(1)^{\mathcal{V}_2}\rangle_*\ := \ \langle \alpha(-1)^{\mathcal{V}_1} \alpha(1)^{\mathcal{V}_2}\rangle_{*,t} \ := \ \langle\langle a^{\mathcal{V}_1}(a^*)^{\mathcal{V}_2}\rangle\rangle\\
		& \ = \ e^{tc_r C_{\mathbf{R}}(\mathcal{V}_1,\mathcal{V}_2)}\langle \alpha(-1)^{\mathcal{V}_1}\alpha(1)^{\mathcal{V}_2}\rangle.
	\end{aligned}
\end{equation}}

%

Let us now compute the ``inhomogeneous phase regulators'', which comes from

\begin{equation}
\begin{aligned}\label{InhomogePhaseReg1}
&\mathcal{R}\Big[\mathbf{a}_{k_{1}}\cdots \mathbf{a}_{k_{n}}\mathbf{a}_{k_{1}'}^*\cdots \mathbf{a}_{k_{{m}}'}^*\Big],
\end{aligned}
\end{equation}
for some natural numbers $n,m> 0$. { Here $k_i$s, and $k_j'$s are not necessarily distinct. 
For $k \in \Lambda^*$ define $\mathscr{U}(k)$ as follows: 
\[ \mathscr{U}(k)=
\begin{cases}
1, \text{if} \quad \quad k \in \Lambda^+_*, \\
-1, \text{if} \quad \quad k \in \Lambda^-_*.
\end{cases}
\]
Thanks to \eqref{NoiseAction}, for any $k_1,\dots, k_n,k_1',\dots, k_m'\in\Lambda^*$ we have:
\begin{equation}
\begin{aligned}\label{InhomogePhaseReg11}
\mathcal{R}\Big[\mathbf{a}_{k_{1}}\cdots \mathbf{a}_{k_{n}}\mathbf{a}_{k_{1}'}^*\cdots \mathbf{a}_{k_{{m}}'}^*\Big]\ =\  &-2 \Big( \sum_{i,j=1}^n \mathscr{E}(\mathscr{U}(k_i)k_i,
\mathscr{U}(k_j)k_j) \mathscr{U}(k_i)\mathscr{U}(k_j)\\
&   + \sum_{i,j=1}^m \mathscr{E}(\mathscr{U}(k_i')k_i',
\mathscr{U}(k_j')k_j') \mathscr{U}(k_i')\mathscr{U}(k_j') \\ &-2 \sum_{i=1}^n\sum_{j=1}^m 
\mathscr{E}(\mathscr{U}(k_i)k_i,
\mathscr{U}(k_j')k_j') \mathscr{U}(k_i)\mathscr{U}(k_j')\Big)\\
&\times \Big[\mathbf{a}_{k_{1}}\cdots \mathbf{a}_{k_{n}}\mathbf{a}_{k_{1}'}^*\cdots \mathbf{a}_{k_{{m}}'}^*\Big].
\end{aligned}
\end{equation}
Let us emphasize that in the case where all $k_i$s and $k_j$s are distinct, above 
expression is an obvious consequence of \eqref{NoiseAction}. On the other hand, in 
case where $k_i$s and $k_j$s \textit{are not} distinct the above identity is still true. 
This can be observed by a straightforward computation taking advantage of 
\eqref{NoiseAction} and identities in \eqref{Rak}. 
 }

 In general, we set
 {
\begin{equation}\begin{aligned}\label{InhomogePhaseReg3}
	\tau_{k_{1}\cdots k_{n},k_{1}'\cdots k_{m}'}\ =\ & -2 \Big( \sum_{i,j=1}^n \mathscr{E}(\mathscr{U}(k_i)k_i,
\mathscr{U}(k_j)k_j) \mathscr{U}(k_i)\mathscr{U}(k_j) \\
&  + \sum_{i,j=1}^m \mathscr{E}(\mathscr{U}(k_i')k_i',
\mathscr{U}(k_j')k_j') \mathscr{U}(k_i')\mathscr{U}(k_j') \\ &-2 \sum_{i=1}^n\sum_{j=1}^m 
\mathscr{E}(\mathscr{U}(k_i)k_i,
\mathscr{U}(k_j')k_j') \mathscr{U}(k_i)\mathscr{U}(k_j')\Big),
		\end{aligned}
\end{equation}
which will play the role of  ``inhomogeneous phase regulators'' in Definition \ref{Def:PhaseRegulator} below.}

 For vectors $k',k''\in\Lambda^*$, we also set $k'=k-\epsilon \mho/2$ and $k''=k+\epsilon \mho/2$ for $\mho\in \Lambda^*_{2/\epsilon}$, we expand, following the   Wigner distribution \eqref{Wigner1}
 
 \begin{equation}
	\begin{aligned}\label{StartPointAverageAlpha}
		& \frac{\partial}{\partial t}\langle \alpha_t(k',1),\alpha_t(k'',-1)\rangle_* \  = \    \ -{\bf i}\lambda \int_{(\Lambda^*)^2}\mathrm{d}k_1\mathrm{d}k_2\ \delta(-k'+k_1'+k_2') \mathcal{M}(k',k_1',k_2')\\
		& \times e^{{\bf i}t(-\omega(k_1')-\omega(k_2')+\omega(k'))} e^{[\tau_{k'',k'}-\tau_{k'',k_1'k_2'}]c_r t}\langle\alpha_t(k'',-1) \alpha_t(k_1',1)\alpha_t(k_2',1)\rangle_*\\
		&\ + {\bf i}\lambda\int_{(\Lambda^*)^2}\mathrm{d}k_1\mathrm{d}k_2\delta(k''-k_1''-k_2'')\mathcal{M}(k'',k_1'',k_2'')e^{[\tau_{k'',k'}-\tau_{k_1''k_2'',k'}]c_r t}\\
		& \ \times   e^{{\bf i}t(\omega(k_1'')+\omega(k_2'')-\omega(k''))}\langle\alpha_t(k_1'',-1) \alpha_t(k',1)\alpha_t(k_2'',-1)\rangle_*,
	\end{aligned}
\end{equation}

We follows \cite{staffilani2021wave} and continue to expand \eqref{StartPointAverageAlpha} using the Duhamel expansions. The expansion strategy will lead to a technical difficulty in treating the wave numbers near the singular manifold $\mathbf{S}$, which appears when performing  the estimates on the oscillatory integrals.  Therefore,    appropriate  cut-off functions that will isolate the singular manifold $\mathbf{S}$ are then needed and we have the following propositions, proved in \cite{staffilani2021wave}, about the existence of such cut-off functions. 
\subsection{Cut-off functions}

We first define the singular manifolds.
\begin{definition}\label{SingularManifold}
	We define the one-dimensional singular manifold $\mathbf{S}_{one}$ to be the subset $\{0,\pm\frac12,\pm\frac14\}$ of $\mathbb{T}$. The $d$-dimensional manifold $\mathbf{S}$ is then the union $\Big(\mathbf{S}_{one}\times\mathbb{T}^{d-1}\Big)\cup \Big(\mathbb{T}\times\mathbf{S}_{one}\times\mathbb{T}^{d-2}\Big)\cup\cdots\cup\Big(\mathbb{T}^{d-1}\times\mathbf{S}_{one}\Big)$. 
\end{definition}
\begin{proposition}[See \cite{staffilani2021wave}]\label{Propo:Phi}
	\begin{itemize}
		\item[(i)] For any $\eth>0$, there is a constant  $\lambda_1>0$ such that for any $(\sigma_1,\sigma_2 )\in\{\pm1\}^2$, $0<\lambda<\lambda_1$, there exist  cut-off functions $$\Psi^a_0(\sigma_1,k_1,\sigma_2,k_2 ), \Psi^a_1(\sigma_1,k_1,\sigma_2,k_2 ) : (\Lambda^*)^2\to [0,1],$$ $$\Psi^a_1(\sigma_1,k_1,\sigma_2,k_2 ) = 1- \Psi^a_0(\sigma_1,k_1,\sigma_2,k_2),$$ 
		and the smooth functions, which are the smooth rescaled versions of  $\Psi^a_0(\sigma_1,k_1,\sigma_2,k_2 ),$ $ \Psi^a_1(\sigma_1,k_1,\sigma_2,k_2 )$
		$$\tilde{\Psi}^a_0(\sigma_1,\tilde k_1,\sigma_2,\tilde k_2 ), \tilde{\Psi}^a_1(\sigma_1,\tilde k_1,\sigma_2,\tilde k_2 ) : \mathbb{T}^{2d}\to [0,1],$$ $$\tilde{\Psi}^a_1(\sigma_1,\tilde k_1,\sigma_2,\tilde k_2 ) = 1- \tilde{\Psi}^a_0(\sigma_1,\tilde k_1,\sigma_2,\tilde k_2 ),$$ such that $\Psi^a_0(\sigma_1,k_1,\sigma_2,k_2)= \tilde\Psi^a_0(\sigma_1,k_1,\sigma_2,k_2 ),$ $ \Psi^a_1(\sigma_1,k_1,\sigma_2,k_2 )= \tilde\Psi^a_1(\sigma_1,k_1,\sigma_2,k_2 ),$ on $(\Lambda^*)^2$ for all mesh size $h$.  Moreover,
		\begin{itemize}
			\item[(A)]$\tilde\Psi_1^a(\sigma_1,\tilde k_1,\sigma_2,\tilde k_2 ) =0$ if $\sigma_1\tilde k_1=\pm\sigma_2\tilde k_2$ or $\tilde k_i=0$, for $i\in\{1,2\}$. 
			\item[(B)] For any $i\in\{1,2\}$
			\begin{equation}\tilde\Psi^a_1(\sigma_1,\tilde k_1,\sigma_2,\tilde k_2 )\lesssim  \langle\ln\lambda\rangle^{\eth/3}d(\tilde k_i,\mathbf{S}),		\end{equation}
			and
			\begin{equation}\tilde\Psi^a_1(\sigma_1,\tilde k_1,\sigma_2,\tilde k_2)\lesssim  \langle\ln\lambda\rangle^{\eth/3}d(\sigma_1\tilde k_1\pm\sigma_2\tilde k_2,\mathbf{S}),\end{equation}			
			where $\mathbf{S}$ is defined in Definition \ref{SingularManifold} and $d(\tilde k_i,\mathbf{S})$, $d(\sigma_1\tilde k_1\pm\sigma_2\tilde k_2,\mathbf{S})$ are the distances between $\tilde k_i$, $\sigma_1\tilde k_1 \pm\sigma_2\tilde k_2$ and the set $\mathbf{S}$;
			\item[(C)] In addition, 
			\begin{equation}
				\label{Propo:Phi:1}\begin{aligned}
					&	0\le \tilde\Psi^a_0\le  \mathbf{1}\Big(d(\sigma_1\tilde k_1+\sigma_2\tilde k_2,\mathbf{S})<\langle\ln\lambda\rangle^{-\eth/3}\Big)+\mathbf{1}\Big(d(\sigma_1\tilde k_1-\sigma_2\tilde k_2,\mathbf{S})<\langle\ln\lambda\rangle^{-\eth/3}\Big)\\
					& +\sum_{i=1}^2\mathbf{1}\Big(d(\tilde k_i,\mathbf{S})<\langle\ln\lambda\rangle^{-\eth/3}\Big).\end{aligned}
			\end{equation}
		\end{itemize}
		\item[(ii)] Let $\eth'_l$, $(l=1,2)$  be either $\eth'_1$ or $\eth'_2$ to be given in Section \ref{Sec:KeyPara}. There is a constant  $\lambda_1'>0$ such that for any $(\sigma_1,\sigma_2 )\in\{\pm1\}^2$, $0<\lambda<\lambda_1'$, there exist  cut-off functions $$\Psi^b_0(\eth'_l,\sigma_1,k_1,\sigma_2,k_2),\Psi^b_1(\eth'_l,\sigma_1,k_1,\sigma_2,k_2): (\Lambda^*)^2\to [0,1],$$ $$\Psi^b_1(\eth'_l,\sigma_1,k_1,\sigma_2,k_2) = 1- \Psi^b_0(\eth'_l,\sigma_1,k_1,\sigma_2,k_2),$$ and the smooth functions, which are the smooth rescaled versions of  $\Psi^b_0(\eth'_l,\sigma_1,k_1,\sigma_2,k_2),$ $\Psi^b_1(\eth'_l,\sigma_1,k_1,\sigma_2,k_2)$ $$\tilde{\Psi}^b_0(\eth'_l,\sigma_1,\tilde k_1,\sigma_2,\tilde k_2): \mathbb{T}^{2d}\to [0,1],$$ $$\tilde{\Psi}^b_1(\eth'_l,\sigma_1,\tilde k_1,\sigma_2,\tilde k_2) = 1- \tilde{\Psi}^b_0(\eth'_l,\sigma_1,\tilde k_1,\sigma_2,\tilde k_2),$$ such that $\Psi^b_0(\eth'_l,\sigma_1,k_1,\sigma_2,k_2)= \tilde\Psi^b_0(\eth'_l,\sigma_1,k_1,\sigma_2,k_2)$, $\Psi^b_1(\eth'_l,\sigma_1,k_1,\sigma_2,k_2)= \tilde\Psi^b_1(\eth'_l,\sigma_1,k_1,\sigma_2,k_2)$  on $(\Lambda^*)^2$ for all mesh size $h$. Moreover,
		\begin{itemize}
			\item[(A')]$\tilde\Psi_1^b(\eth'_l,\sigma_1,\tilde k_1,\sigma_2,\tilde k_2) =0$ if $\sigma_i\tilde k_i=\sigma_j\tilde k_j$ or $\tilde k_i=0$, for $i,j\in\{1,2,3\}$. 
			\item[(B')] For any $i\in\{1,2\}$
			\begin{equation}\tilde\Psi^b_1(\eth'_l,\sigma_1,\tilde k_1,\sigma_2,\tilde k_2)\lesssim  \lambda^{-\eth'_l/3}d(\tilde k_i,\mathbf{S}),		\end{equation}
			and
			\begin{equation}\tilde\Psi^b_1(\eth'_l,\sigma_1,\tilde k_1,\sigma_2,\tilde k_2)\lesssim  \lambda^{-\eth'_l/3}d(\sigma_1\tilde k_1\pm\sigma_2\tilde k_2,\mathbf{S}),\end{equation}

			where $\mathbf{S}$ is defined in Definition \ref{SingularManifold};
			\item[(C')] In addition, 
			\begin{equation}
				\label{Propo:Phi:1:a}\begin{aligned}
					&	0\le \tilde\Psi^b_0(\eth'_l)\le  \mathbf{1}\Big(d(\sigma_1\tilde k_1+\sigma_2\tilde k_2,\mathbf{S})<\lambda^{\eth'_l/3}\Big)+\mathbf{1}\Big(d(\sigma_1\tilde k_1-\sigma_2\tilde k_2,\mathbf{S})<\lambda^{\eth'_l/3}\Big)\\
					& +\sum_{i=1}^2\mathbf{1}\Big(d(\tilde k_i,\mathbf{S})<\lambda^{\eth'_l/3}\Big).\end{aligned}
			\end{equation}
		\end{itemize}
	\end{itemize}
	
\end{proposition}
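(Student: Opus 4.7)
The plan is to construct the cut-off functions explicitly as products of one-dimensional smooth bumps adapted to the scale $\delta=\langle \ln \lambda\rangle^{-\eth/3}$ in part (i) and $\delta'=\lambda^{\eth'_l/3}$ in part (ii). The one-dimensional singular set $\mathbf{S}_{one}\subset \mathbb{T}$ is finite, so I would first fix an auxiliary profile $\phi\in C^\infty(\mathbb{R}_{\ge0};[0,1])$ with $\phi(r)=0$ for $r\le 1$, $\phi(r)=1$ for $r\ge 2$, and $|\phi'|\le C$. Then set, for each scale $\delta>0$ and coordinate $x\in\mathbb{T}$,
\begin{equation*}
\phi_\delta(x)\coloneqq \phi\!\bigl(d(x,\mathbf{S}_{one})/\delta\bigr),
\end{equation*}
which is smooth (because $\phi$ is constant equal to $1$ near the cut locus at midpoints between consecutive points of $\mathbf{S}_{one}$), is $1$ at distance $\ge 2\delta$ from $\mathbf{S}_{one}$, and vanishes at distance $\le \delta$. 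Because $\mathbf{S}=\bigcup_{i=1}^d \mathbb{T}^{i-1}\times \mathbf{S}_{one}\times \mathbb{T}^{d-i}$, setting $\Phi_\delta(\tilde k)\coloneqq \prod_{i=1}^d \phi_\delta(\tilde k^i)$ gives a smooth $\mathbb{T}^d$-periodic cut-off that vanishes exactly on a tubular neighborhood of $\mathbf{S}$ and satisfies the Lipschitz bound $\Phi_\delta(\tilde k)\lesssim \delta^{-1}\,d(\tilde k,\mathbf{S})$.

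Next I would define
\begin{equation*}
\tilde{\Psi}_1^a(\sigma_1,\tilde k_1,\sigma_2,\tilde k_2)\coloneqq \Phi_\delta(\tilde k_1)\,\Phi_\delta(\tilde k_2)\,\Phi_\delta(\sigma_1\tilde k_1+\sigma_2\tilde k_2)\,\Phi_\delta(\sigma_1\tilde k_1-\sigma_2\tilde k_2),
\end{equation*}
take $\tilde{\Psi}_0^a\coloneqq 1-\tilde{\Psi}_1^a$, and define $\Psi_j^a$ as their restrictions to $(\Lambda^*)^2$ (which is automatic since each $\tilde\Psi_j^a$ is already defined on the continuum and agrees with its lattice restriction for every mesh size $h$). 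The analogous construction with scale $\delta'$ yields $\tilde{\Psi}_j^b$, $\Psi_j^b$ for part (ii).

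Verifying properties (A)--(C) would then be a matter of bookkeeping. Property (A)/(A') is immediate: if $\tilde k_i\in\mathbf{S}$ (in particular if $\tilde k_i=0$) the corresponding $\Phi_\delta$ factor vanishes, and similarly if $\sigma_1\tilde k_1=\pm\sigma_2\tilde k_2$ one of the last two factors vanishes. Property (B)/(B') follows from the Lipschitz bound on each $\Phi_\delta$: using that all four factors lie in $[0,1]$, one has
\begin{equation*}
\tilde{\Psi}_1^a \;\le\; \Phi_\delta(\tilde k_i)\;\lesssim\; \delta^{-1}\,d(\tilde k_i,\mathbf{S}) \;=\; \langle \ln\lambda\rangle^{\eth/3}\,d(\tilde k_i,\mathbf{S}),
\end{equation*}
and similarly for the sum/difference arguments. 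Property (C)/(C') uses the elementary inequality $1-\prod_j f_j\le \sum_j (1-f_j)$ for $f_j\in[0,1]$, combined with $1-\phi_\delta(x)\le \mathbf{1}(d(x,\mathbf{S}_{one})\le 2\delta)$, which gives exactly the union of four indicator functions in the claimed form (after absorbing the factor $2$ into the implicit constant, or rescaling $\delta$).

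The main technical issue I expect is ensuring genuine smoothness of $\Phi_\delta$ on all of $\mathbb{T}^d$: the $d$-dimensional distance $d(\cdot,\mathbf{S})$ fails to be smooth at points equidistant from several components of $\mathbf{S}$, which is why I would prefer the product construction coordinate by coordinate, where each $\phi_\delta(\tilde k^i)$ only sees the finite set $\mathbf{S}_{one}\subset\mathbb{T}$ and is smooth everywhere because $\phi$ is constant near the midpoints between neighboring singularities. Once this point is cleared, all remaining estimates reduce to one-dimensional calculus on $\mathbb{T}$, and the arguments in part (i) and (ii) are identical except for the choice of scale.
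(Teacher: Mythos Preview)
Your construction is correct and complete. The paper itself does not give a proof of this proposition; it is imported verbatim from \cite{staffilani2021wave} (note the attribution ``[See \cite{staffilani2021wave}]'' in the header), so there is no in-paper argument to compare against. Your explicit product-of-bumps construction adapted to the scale $\delta$ (respectively $\delta'$) is exactly the natural way to build such cut-offs, and is essentially what the cited reference does. The only cosmetic point is the factor of $2$ in property~(C)/(C'); as you note, choosing $\delta=\tfrac12\langle\ln\lambda\rangle^{-\eth/3}$ fixes the threshold exactly while only changing the implicit constant in~(B), and the smallness condition $2\delta<\tfrac18$ (so that $\phi$ is constant near the midpoints of $\mathbf{S}_{one}$) is what produces the threshold $\lambda_1$.
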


Let $k_1,k_2$ be the two momenta in the delta function $\delta(k_0-k_1-k_2)$ and suppose that $k_3$ is a momentum attached to a different delta function.   Now, we denote $V=2\pi(k_2-k_1)$ and $W=2\pi(k_3-k_1)$.  Using the definition of $\tilde{r}_1,\tilde{r}_2,\tilde{r}_3$, of Lemma \ref{Lemm:Angle}  and Lemma \ref{Lemm:Bessel3} of Section \ref{Sec:Techical}, for $V=2\pi(k_2-k_1)$ and $W=2\pi(k_3-k_1)$, we define, for $i=1,2,3$

\begin{equation}
	\begin{aligned}\label{Lemm:Bessel2:2bb8:1}
		\cos(\Upsilon_*(\tilde{r}_i,\aleph^*,\aleph_*))
		\ = \ & {\Big|\aleph^*\tilde{r}_i+\aleph_*+\tilde{r}_i\cos(V_1)e^{{\bf i}2V_j}+\cos(W_1)e^{{\bf i}2W_j}\Big|}\\
		&\ \times \Big[\Big|\aleph^*\tilde{r}_i+\aleph_*+\tilde{r}_i\cos(V_1)e^{{\bf i}2V_j}+\tilde{r}_i\cos(W_1)e^{{\bf i}2W_j}\Big|^2\\
		& \  +\Big|\tilde{r}_i\sin(V_1)e^{{\bf i}2V_j}+\sin(W_1)e^{{\bf i}2W_j}\Big|^2\Big]^{-\frac12},\\
		\sin(\Upsilon_*(\tilde{r}_i,\aleph^*,\aleph_*))\
		= \ & {\Big|\tilde{r}_i\sin(V_1)e^{{\bf i}2V_j}+\sin(W_1)e^{{\bf i}2W_j}\Big|}\\
		&\ \times \Big[\Big|\aleph^*\tilde{r}_i+\aleph_*+\tilde{r}_i\cos(V_1)e^{{\bf i}2V_j}+\cos(W_1)e^{{\bf i}2W_j}\Big|^2\\
		& \ +\Big|\tilde{r}_i\sin(V_1)e^{{\bf i}2V_j}+\sin(W_1)e^{{\bf i}2W_j}\Big|^2\Big]^{-\frac12},
	\end{aligned}
\end{equation}
for $\aleph^*,\aleph_*=\pm 1.$ We set \begin{equation}\label{Setstar}\mathbf{S}_*=\Big\{k=(k^1,\cdots,k^d) \Big| 2\pi k^1= \Upsilon_*(\tilde{r}_i,\aleph^*,\aleph_*)\pm \pi; i=1,2,3; \aleph^*,\aleph_*=\pm 1\Big\}.\end{equation}

We have the following proposition, whose proof is  the same as  Proposition \ref{Propo:Phi} and hence we will omit it.
\begin{proposition}\label{Propo:Phi3A} Let $k_1,k_2$ be two momenta in the delta function $\delta(k_0-k_1-k_2)$. Let $k_0'$ be a momentum belonging to a different delta function   that is written as $\delta(k_0'-k_1'-k_2')$.    We denote $V^1=2\pi(k_2-k_1)=(V^1_1,\cdots,V^1_d)$, $V^2=2\pi(k_2+k_1)=(V^2_1,\cdots,V^2_d)$ and $W^1=2\pi(k_1'-k_1)=(W^1_1,\cdots,W^1_d),$ $W^2=2\pi(k_1'+k_1)=(W^2_1,\cdots,W^2_d),$
	
	For any $\eth>0$, there is a constant $\lambda_1>0$ such that for  $0<\lambda<\lambda_1$, there exists a  cut-off function $$\Psi _2( k_0,k_1,k_2,k_0',k_1',k_2'): (\Lambda^*)^6\to [0,1],$$  
	and the smooth rescaled version   
	$$\tilde{\Psi}_2(\tilde k_0,\tilde k_1,\tilde k_2,\tilde k_0',\tilde k_1',\tilde k_2'): \mathbb{T}^{6d}\to [0,1],$$ such that $\Psi_2(k_0,k_1,k_2,k_0',k_1',k_2')= \tilde\Psi_2( k_0,k_1,k_2,k_0',k_1',k_2')$ on $ (\Lambda^*)^6$ for all mesh size $h$ and $\tilde{\Psi}_2(\tilde k_1,\tilde k_0,$ $\tilde k_3,\tilde k_0',\tilde k_1',\tilde k_2')=1$ when $	|V^i_j|, \Big|V^i_j-\frac{\pi}{2}\Big|, |V^i_j-{\pi}|,|W^i_j|, \Big|W_j^i-\frac{\pi}{2}\Big|, |V_j^i\pm W_j^{i'}|, |W_j^i-{\pi}|, |k_0-k_0'|>|\ln\lambda|^{-\eth}$ and $\Psi_2(\tilde k_1,\tilde k_0,\tilde k_3,\tilde k_0',\tilde k_1',\tilde k_2')=0$ when either $	|V^i_j|, \Big|V^i_j-\frac{\pi}{2}\Big|, |V^i_j-{\pi}|,|W^i_j|, \Big|W^i_j-\frac{\pi}{2}\Big|, |V^i_j\pm W^{i'}_j|, |k_0-k_0'|$ or $ |W^i_j-{\pi}|<|\ln\lambda|^{-\eth}/2$, for $j=1,\cdots,d$, $i,i'=1,2$.

	Moreover, there also exists a  cut-off function
	$$\Psi _3( k_0,k_1,k_2,k_0',k_1',k_2'): (\Lambda^*)^6\to [0,1],$$  
	and the smooth rescaled version     
	$$\tilde{\Psi}_3(\tilde k_0,\tilde k_1,\tilde k_2,\tilde k_0',\tilde k_1',\tilde k_2'): \mathbb{T}^{6d}\to [0,1],$$ such that $\Psi_3( k_0,k_1,k_2,k_0',k_1',k_2')= \tilde\Psi_3(\tilde k_0,\tilde k_1,\tilde k_2,\tilde k_0',\tilde k_1',\tilde k_2')$  on $ (\Lambda^*)^6$ for all mesh size $h$. Moreover, $\tilde{\Psi}_3(\tilde k_0,\tilde k_1,\tilde k_2,\tilde k_0',\tilde k_1',\tilde k_2')=1$ when $d(\tilde k^1_1,\mathbf{S}_*)>|\ln\lambda|^{-\eth}$ and $\tilde{\Psi}_3(\tilde k_0,\tilde k_1,\tilde k_2,\tilde k_0',\tilde k_1',\tilde k_2')=0$ when $d(\tilde k^1_1,\mathbf{S}_*)<|\ln\lambda|^{-\eth}/2.$
	
	We finally set \begin{equation}\begin{aligned}\bar{\Psi}_4( k_0,k_1,k_2,k_0',k_1',k_2')\ =\ &\Psi_3( k_0,k_1,k_2,k_0',k_1',k_2')\Psi_2( k_0,k_1,k_2,k_0',k_1',k_2')\\
	&\times \Psi_3( k_0,k_1,k_2,k_0',k_2',k_1')\Psi_2( k_0,k_1,k_2,k_0',k_2',k_1')\\
&\times \Psi_3( k_0,k_2,k_1,k_0',k_2',k_1')\Psi_2( k_0,k_2,k_1,k_0',k_2',k_1')\\
&\times \Psi_3( k_0,k_2,k_1,k_0',k_1',k_2')\Psi_2( k_0,k_2,k_1,k_0',k_1',k_2'),
\end{aligned}\end{equation}
and 
\begin{equation}\begin{aligned}
{\Psi}_4( k_0,k_1,k_2,k_0',k_1',k_2') \ = \ &	\bar{\Psi}_4( k_0,k_1,k_2,k_0',k_1',k_2')\mathbf{1}_{\Big\{\int_{(\Lambda^*)^6}\mathrm{d}k_0\mathrm{d}k_1\mathrm{d}k_2\mathrm{d}k_0'\mathrm{d}k_1'\mathrm{d}k_2'\bar{\Psi}_4( k_0,k_1,k_2,k_0',k_1',k_2')\ne 0\Big\}}\\
& + \mathbf{1}_{\Big\{\int_{(\Lambda^*)^6}\mathrm{d}k_0\mathrm{d}k_1\mathrm{d}k_2\mathrm{d}k_0'\mathrm{d}k_1'\mathrm{d}k_2'\bar{\Psi}_4( k_0,k_1,k_2,k_0',k_1',k_2')= 0\Big\}}.\end{aligned}
\end{equation}
The smooth rescaled versions     
$$\bar{\Psi}_4^o(\tilde k_0,\tilde k_1,\tilde k_2,\tilde k_0',\tilde k_1',\tilde k_2'): \mathbb{T}^{6d}\to [0,1],$$

$$\tilde{\Psi}_4(\tilde k_0,\tilde k_1,\tilde k_2,\tilde k_0',\tilde k_1',\tilde k_2'): \mathbb{T}^{6d}\to [0,1]$$
	of $\bar{\Psi}_4( k_0,k_1,k_2,k_0',k_1',k_2') $, ${\Psi}_4( k_0,k_1,k_2,k_0',k_1',k_2') $ are defined such that  $\Psi_4( k_0,k_1,k_2,k_0',k_1',k_2')= \tilde\Psi_4(\tilde k_0,\tilde k_1,\tilde k_2,\tilde k_0',\tilde k_1',\tilde k_2')$, $\bar{\Psi}_4( k_0,k_1,k_2,k_0',k_1',k_2')= \bar{\Psi}^o_4(\tilde k_0,\tilde k_1,\tilde k_2,\tilde k_0',\tilde k_1',\tilde k_2')$  on $ (\Lambda^*)^6$ for all mesh size $h$.

\end{proposition}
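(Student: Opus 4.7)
The plan is to follow the construction template of Proposition \ref{Propo:Phi} essentially verbatim, adapting only the loci to which the cut-offs are attached. Fix once and for all a universal smooth non-decreasing scalar bump $\chi : [0,\infty) \to [0,1]$ with $\chi(s)=0$ for $s\le 1/2$ and $\chi(s)=1$ for $s\ge 1$. In every case below, a cut-off will be obtained as a finite product of factors of the form $\chi(\rho/|\ln\lambda|^{-\eth})$, where $\rho$ is an explicit smooth (or mollified-smooth) torus distance to one of the exceptional hypersurfaces. Smoothness, membership in $[0,1]$, the lower bound $|\ln\lambda|^{-\eth}/2$ on the support, and the upper bound $|\ln\lambda|^{-\eth}$ on the set where the function equals $1$, all then follow automatically, because the list of factors is finite and independent of $\lambda$.

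For $\Psi_2$, I would enumerate the scalar quantities listed in the hypothesis, namely $V_j^i$, $V_j^i-\pi/2$, $V_j^i-\pi$, $W_j^i$, $W_j^i-\pi/2$, $W_j^i-\pi$, $V_j^i\pm W_j^{i'}$, and $k_0-k_0'$ for $i,i'\in\{1,2\}$, $j\in\{1,\ldots,d\}$, as a finite family $\{Q_a\}_{a \in A}$. Then define $\tilde\Psi_2 \coloneqq \prod_{a\in A}\chi(|Q_a|/|\ln\lambda|^{-\eth})$, with $|\cdot|$ denoting torus distance to $0$, and restrict to $(\Lambda^*)^6$ to obtain $\Psi_2$ with the stated properties.

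For $\Psi_3$, the set $\mathbf{S}_*$ of \eqref{Setstar} is a finite union, indexed by $(i,\aleph^*,\aleph_*)$, of level sets of the smooth defining functions $k^1 \mapsto 2\pi k^1 - \Upsilon_*(\tilde r_i,\aleph^*,\aleph_*) \mp \pi$ computed via \eqref{Lemm:Bessel2:2bb8:1}. On each stratum where the distance to $\mathbf{S}_*$ is realized by a single branch, that distance function is smooth; a standard partition of unity argument patches these local distances into a globally smooth function $\tilde d(\cdot,\mathbf{S}_*)$ comparable to the ordinary torus distance up to a constant factor. Set $\tilde\Psi_3 \coloneqq \chi(\tilde d(\tilde k_1^1,\mathbf{S}_*)/|\ln\lambda|^{-\eth})$. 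The product $\bar\Psi_4$ then inherits its smoothness and $[0,1]$-valuedness from its $\Psi_2$ and $\Psi_3$ factors, and the indicator-based construction of $\Psi_4$ simply covers the degenerate case in which $\bar\Psi_4$ integrates to zero; since the dichotomy is decided before the smooth structure is used, the resulting $\Psi_4$ is smooth on $(\Lambda^*)^6$ in either alternative.

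The primary technical obstacle I anticipate is the regularity of $\tilde d(\cdot,\mathbf{S}_*)$ near the codimension-$2$ loci where two branches $(i,\aleph^*,\aleph_*)$ coincide and the pointwise minimum of branch-wise Lipschitz distances fails to be $C^\infty$; this is handled, exactly as in Proposition \ref{Propo:Phi}, by restricting the partition of unity to neighborhoods of single branches and using that the product of $\chi$-factors over all branches suppresses the bad loci at the desired scale $|\ln\lambda|^{-\eth}$. The remaining verifications, in particular that $\tilde\Psi_2$ and $\tilde\Psi_3$ descend to functions on $(\Lambda^*)^6$ independent of the mesh size $h$, are routine and identical to the corresponding step in the proof of Proposition \ref{Propo:Phi}.
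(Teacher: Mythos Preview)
Your proposal is correct and matches the paper's approach: the paper explicitly omits the proof, stating only that it ``is the same as Proposition \ref{Propo:Phi},'' and your product-of-bump-functions construction $\prod_a \chi(|Q_a|/|\ln\lambda|^{-\eth})$ is exactly the standard template that proposition uses. The one point worth making explicit in your write-up is that $\mathbf{S}_*$ depends on the other momenta through $\Upsilon_*(\tilde r_i,\aleph^*,\aleph_*)$ and the roots $\tilde r_i$ of the quadratic \eqref{Lemm:Angle:2}, so smoothness of $\tilde\Psi_3$ in \emph{all} variables requires that the singular loci of these roots (discriminant zero, leading coefficient zero) are already excised by $\Psi_2$---which they are, since $\Psi_2$ cuts off near $|V_j^i|,|W_j^i|,|V_j^i\pm W_j^{i'}|$ small.
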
 

\begin{definition}\label{def:Phi3}
	For any momenta $k_1,k_2\in\Lambda^*$ that are associated to a delta function $\delta(k_0-k_1-k_2)$ of a vertex $v_i$, we define $\mathcal{S}^*(k_0)$ to be the set of all triple $(k_0',k_1',k_2')$ that are associated to a vertex $v_j$ with $j>i$ with the delta function  $\delta(k_0'-k_1'-k_2')$. We now set a new cut-off function that combines all of the previous ones defined above as follows
	\begin{equation}\begin{aligned}		\label{def:Phi3:1}\Phi_1^a(\sigma_1, k_1,\sigma_2, k_2 ) \ = \ & \Psi_1^a(\sigma_1, k_1,\sigma_2, k_2 )\prod_{(k_0',k_1',k_2')\in\mathcal{S}^*(k_0)}\Psi_4( k_0,k_1,k_2,k_0',k_1',k_2'),\end{aligned}	
	\end{equation}
	$$\Phi^a_0(\sigma_1,k_1,\sigma_2,k_2 ) = 1- \Phi^a_1(\sigma_1,k_1,\sigma_2,k_2),$$ 
	and 
	
	\begin{equation}
		\label{def:Phi3:2}\Phi_1^b(\eth'_l,\sigma_1, k_1,\sigma_2, k_2 ) \ = \ \Psi_1^b(\eth'_l,\sigma_1, k_1,\sigma_2, k_2 ),
	\end{equation}
	$$\Phi^b_0(\eth'_l,\sigma_1,k_1,\sigma_2,k_2 ) = 1- \Phi^b_0(\eth'_l,\sigma_1,k_1,\sigma_2,k_2).$$ 
	Moreover,  we also have the smooth rescaled
	functions 
	$$\tilde{\Phi}^a_0(\sigma_1,\tilde k_1,\sigma_2,\tilde k_2 ), \tilde{\Phi}^a_1(\sigma_1,\tilde k_1,\sigma_2,\tilde k_2 ): \mathbb{T}^{2d}\to [0,1],$$ $$ \tilde{\Phi}^a_1(\sigma_1,\tilde k_1,\sigma_2,\tilde k_2 ) = 1- \tilde{\Phi}^a_0(\sigma_1,\tilde k_1,\sigma_2,\tilde k_2 ),$$ such that $\Phi^a_0(\sigma_1,k_1,\sigma_2,k_2)= \tilde\Phi^a_0(\sigma_1,k_1,\sigma_2,k_2 ),$ $\Phi^a_1(\sigma_1,k_1,\sigma_2,k_2 )= \tilde\Phi^a_1(\sigma_1,k_1,\sigma_2,k_2 )$ on $(\Lambda^*)^2$ for all mesh size $h$;
	as well as
	$$\tilde{\Phi}^b_0(\eth'_l,\sigma_1,\tilde k_1,\sigma_2,\tilde k_2 ), \tilde{\Phi}^b_1(\eth'_l,\sigma_1,\tilde k_1,\sigma_2,\tilde k_2 ) : \mathbb{T}^{2d}\to [0,1],$$ $$\tilde{\Phi}^b_1(\eth'_l,\sigma_1,\tilde k_1,\sigma_2,\tilde k_2 ) = 1- \tilde{\Phi}^b_0(\eth'_l,\sigma_1,\tilde k_1,\sigma_2,\tilde k_2 ),$$ such that $\Phi^b_0(\eth'_l,\sigma_1,k_1,\sigma_2,k_2)= \tilde\Phi^b_0(\eth'_l,\sigma_1,k_1,\sigma_2,k_2 ),$ $ \Phi^b_1(\eth'_l,\sigma_1,k_1,\sigma_2,k_2 )= \tilde\Phi^b_1(\eth'_l,\sigma_1,k_1,\sigma_2,k_2 )$ on $(\Lambda^*)^2$ for all mesh size $h$.
	
\end{definition}  

\subsection{Duhamel expansions}

Plugging the expression $1=\Phi_0^b+\Phi_1^b$ into \eqref{StartPointAverageAlpha}, we find 

\begin{equation}
	\begin{aligned}\label{StartPointAverageAlpha2}
		& \frac{\partial}{\partial t}\epsilon^d\langle \alpha_t(k',1),\alpha_t(k'',-1)\rangle_* \  = \    \ -{\bf i}\lambda \epsilon^d\int_{(\Lambda^*)^2}\mathrm{d}k_1\mathrm{d}k_2\ \delta(-k'+k_1'+k_2') \mathcal{M}(k',k_1',k_2')\\
		& \times [\Phi^b_0(\eth_1',1, k_1',1, k_2')+\Phi^b_1(\eth_1',1, k_1',1, k_2')]e^{{\bf i}t(-\omega(k_1')-\omega(k_2')+\omega(k'))}\\
		&\times e^{[\tau_{k'',k'}-\tau_{k'',k_1'k_2'}]c_r t}\langle\alpha_t(k'',-1) \alpha_t(k_1',1)\alpha_t(k_2',1)\rangle_*\\
		&\ + {\bf i}\lambda\epsilon^d\int_{(\Lambda^*)^2}\mathrm{d}k_1\mathrm{d}k_2\delta(k''-k_1''-k_2'')\mathcal{M}(k'',k_1'',k_2'')e^{[\tau_{k'',k'}-\tau_{k_1''k_2'',k'}]c_r t}\\
		& \ \times   [\Phi^b_0(\eth_1',1, k_1'',1, k_2'')+\Phi^b_1(\eth_1',1, k_1'',1, k_2'')]e^{{\bf i}t(\omega(k_1'')+\omega(k_2'')-\omega(k''))}\langle\alpha_t(k_1'',-1) \alpha_t(k',1)\alpha_t(k_2'',-1)\rangle_*.
	\end{aligned}
\end{equation}
In shortening the notations, we set
\begin{equation}
	\label{Def:Omega}\begin{aligned}
		\mathbf{X}(\sigma,k,\sigma',k_1,\sigma'',k_2) \ = & \ \sigma\omega(k) \ + \ \sigma' \omega(k_1) \ + \ \sigma'' \omega(k_2).
	\end{aligned}
\end{equation}

We then expand \eqref{StartPointAverageAlpha2}   to obtain a multilayer Duhamel expansion. In this expansion procedure, we expand  terms containing $\Phi_1^b(\eth_1')$ and then $\Phi_1^b(\eth_2)$, while terms that contain $\Phi_0^b$ are left unchanged. In other words,  the  Duhamel expansions are performed only on terms that have $\Phi_1^b$. Starting from the third iterations, we will use the same strategy, except that $\Phi_0^b,\Phi_1^b$ are replace by $\Phi_0^a,\Phi_1^a$.

Now, we follow the soft partial time integration technique.
Using \eqref{StartPoint} and the definition in \eqref{Def:Omega} we obtain
\begin{equation}
	\begin{aligned} \label{eq:duhamelproduct}
		\mathrm{d}\prod_{i=1}^2 & \alpha_t(k_i,\sigma_i) \ = \  \\
		=  &\ \varsigma\int_0^t\mathrm{d}s e^{-\varsigma (t-s)}\prod_{i=1}^2 \alpha_s(k_i,\sigma_i)\ + \ e^{-\varsigma t}\prod_{i=1}^2\alpha_0(k_i,\sigma_i)  \ -\ {\bf i}\sqrt{2c_r}\int_0^t\mathrm{d}s e^{\varsigma t}\alpha_s(k_j,\sigma_j)\circ\mathrm{d}W(t).\\
		& \ - \ {\bf i}\lambda \sum_{j=1}^2\int_0^t\mathrm{d}s\sigma_j\prod_{i=1,i\ne j}^{2} \alpha_t(k_i,\sigma_i) \Big[  \iint_{(\Lambda^*)^2} \mathrm{d}k'_1 \mathrm{d}k'_2\delta(-k_j+k_1'+k_2') \mathcal{M}(k_j,k_1',k_2') \\
		&\  \times \exp\Big[-\varsigma (t-s)-{\bf i}s\mathbf{X}(\sigma_j,k_j,\sigma_j,k_1',\sigma_j,k_2')\Big]\alpha_s(k_1',\sigma_j) \alpha_s(k_2',\sigma_j)\Big],
\end{aligned}\end{equation}
where $\varsigma>0$ is a control parameter to  be specified later. The solf partial time integration will be used below in the full Duhamel expansion.

\medskip
{\it The full Duhamel expansion.} 
By repeating the expansion process $\mathbf{N}$ times, we have a multi-layer expression  in which  the time interval $[0,t]$ is divided into $\mathbf{N}+1$ time slices $[0,s_0],$ $[s_0,s_0+s_1]$, $\dots$, $[s_0+\cdots+s_{\mathfrak{N}-1},t]$ and $t=s_0+\cdots+s_{\mathbf{N}}.$ The final result of this process can be written as follows
\begin{equation}
	\begin{aligned} \label{eq:fullDuhamel}
		& \langle a_t(k',1)a_t(k'',-1)\rangle_*\  
		=  \  \sum_{n=0}^{\mathbf{N}-1}\mathcal{F}_{n}^0(t,k',1,k'',-1,\Gamma)[\alpha_0]\\
		& \ + \ \sum_{n=0}^{\mathbf{N}-1}\varsigma_n\int_0^t\mathrm{d}s \mathcal{F}_{n}^1(s,t,k',1,k'',-1,\Gamma)[\alpha_s]\ +\ \sum_{n=1}^{\mathbf{N}}\int_0^t\mathrm{d}s \mathcal{F}_{n}^2(s,t,k',1,k'',-1,\Gamma)[\alpha_s]\\
		& \ + \ \int_0^t\mathrm{d}s \mathcal{F}_{\mathbf{N}}^3(s,t,k',1,k'',-1,\Gamma)[\alpha_s],
\end{aligned}\end{equation} 
in which   $\Gamma$ is the soft partial time integration vector 
\begin{equation}
	\label{SolfPartTimeVector}\Gamma \ = \ (\varsigma_0,\cdots,\varsigma_{\mathbf N-1}) \mbox{ in } \mathbb{R}_+^{\mathbf N}.
\end{equation}
Note that in \eqref{eq:fullDuhamel}, we expand $\langle a_t(k',1)a_t(k'',-1)\rangle_*$ instead of $\langle \alpha_t(k',1)\alpha_t(k'',-1)\rangle_*$.

We start by presenting the definition of the regulator parameters. 
\begin{definition}[Inhomogeneous Phase Regulators]\label{Def:PhaseRegulator} Let us consider the signs $\sigma_{i,j}$, with $j\in\{1,\cdots,n-i
	+2\}$. Suppose that $\sigma_{i,j_1}=\cdots= \sigma_{i,j_{n'}}=1$ and $\sigma_{i,j_{n'}}=\cdots= \sigma_{i,j_{n-i+2}}=-1$.
	We define the inhomogeneous phase regulators to be the quantities
	\begin{equation}
		\label{Def:TauGen}
		\tau_i \ = \ \tau_{k_{i,j_{n'}}\cdots k_{i,j_{n-i+2}},k_{i,j_1}\cdots k_{i,j_{n'}}}c_r \quad \mbox{ for } i\in\{0,\cdots,n-1\},\ \ \ \tau_n=\tau_{k'',k'}.
	\end{equation}
\end{definition}

The first quantity takes the following explicit form   (for $n\ge 1$)
\begin{equation}
	\begin{aligned} \label{eq:DefE0}
		&\mathcal{F}_{n}^0(t,k_{n,2},\sigma_{n,2},k_{n,1},\sigma_{n,1},\Gamma)[\alpha]\\
		\ = \  &  
		(-{\bf i}\lambda)^n\sum_{\substack{\rho_i\in\{1,\cdots,n-i+2\},\\ i\in\{0,\cdots,n-1\}}}\sum_{\substack{\bar\sigma\in \{\pm1\}^{\mathcal{I}_n},\\ \sigma_{i,\rho_i}+\sigma_{i-1,\rho_i}+\sigma_{i-1,\rho_i+1}\ne \pm3,
				\\ \sigma_{i-1,\rho_i}\sigma_{i-1,\rho_i+1}= 1}} \int_{(\Lambda^*)^{\mathcal{I}_n}}\mathrm{d}\bar{k}\Delta_{n,\rho}(\bar{k},\bar\sigma)\\
		&\ \ \ \ \ \ \ \ \ \ \ \ \ \times \prod_{i=1}^n\Big[\sigma_{i,\rho_i}\mathcal{M}( k_{i,\rho_i}, k_{i-1,\rho_i}, k_{i-1,\rho_i+1})\left\langle\prod_{i=1}^{n+2}\alpha(k_{0,i},\sigma_{0,i})\right\rangle_*\\
		&\ \ \ \ \ \ \ \ \ \ \ \ \ \times \Phi_{1,i}( \sigma_{i-1,\rho_i},k_{i-1,\rho_i}, \sigma_{i-1,\rho_i+1},k_{i-1,\rho_i+1})\Big]\\
		&\ \ \ \ \ \ \ \ \ \ \ \ \ \times \int_{(\mathbb{R}_+)^{\{0,\cdots,n\}}}\mathrm{d}\bar{s} \delta\left(t-\sum_{i=0}^ns_i\right)\prod_{i=0}^{n}e^{-s_i[\varsigma_{n-i}+\tau_i]}\\
		&\ \ \ \ \ \ \ \ \ \ \ \ \   \times\prod_{i=1}^{n}e^{-{\bf i}t_i(s)\mathbf{X}(\sigma_{i,\rho_i}, k_{i,\rho_i},\sigma_{i-1,\rho_i},k_{i-1,\rho_i},\sigma_{i-1,\rho_i},k_{i-1,\rho_i+1})},
\end{aligned}\end{equation} 
in which 
\begin{equation}
	\label{CutoffPhii}\begin{aligned}
		& \Phi_{1,1} \ = \ \Phi_{1,1}(n) \ = \  \Phi_{1}^b(\eth'_1)  \mbox{ for } n=1,\\
		& \Phi_{1,2} \ = \ \Phi_{1,2}(n) \ = \  \Phi_{1}^b(\eth'_1)  \mbox{ for } n=2,\\
		& \Phi_{1,1} \ = \ \Phi_{1,1}(n) \ = \  \Phi_{1}^b(\eth'_2)  \mbox{ for } n=2,\\
		&  \Phi_{1,n} \ = \ \Phi_{1,n}(n) \ = \ \Phi_{1}^b(\eth'_1), \mbox{ for } n>2,\\
		& \Phi_{1,n-1} \ = \ \Phi_{1,n-1}(n) \ = \ \Phi_{1}^b(\eth'_2), \mbox{ for } n>2, \\
		&  \Phi_{1,i} \ = \ \Phi_{1,i}(n) \ = \ \Phi_{1}^a \mbox{ for } 1\le i\le n-2, n>2, \\
		& \Phi_{1,n} + \Phi_{0,n} \ = \ 1.  \end{aligned}
\end{equation}
The total phases can be written as
\begin{equation}\label{GraphSec:E1}
	\begin{aligned}
		& \prod_{i=0}^{n}e^{-s_i\varsigma_{n-i}}
		\prod_{i=1}^{n}e^{-\mathbf{i}t_i(s)\mathbf{X}(\sigma_{i,\rho_i}, k_{i,\rho_i},\sigma_{i-1,\rho_i},k_{i-1,\rho_i},\sigma_{i-1,\rho_i},k_{i-1,\rho_i+1})}\\
		\ =\ &\prod_{i=0}^{n}e^{-s_i\varsigma_{n-i}}
		\prod_{i=1}^{n}e^{-\mathbf{i}\left(\sum_{j=1}^{i-1}s_j\right)\mathbf{X}(\sigma_{i,\rho_i}, k_{i,\rho_i},\sigma_{i-1,\rho_i},k_{i-1,\rho_i},\sigma_{i-1,\rho_i},k_{i-1,\rho_i+1})}
		\ =\   \prod_{i=0}^n e^{-\mathbf{i} s_i \vartheta_i},\end{aligned}
\end{equation}
in which 
\begin{equation}\label{GraphSec:E2}
	\begin{aligned}
		\vartheta_i \ := \ & \sum_{l=i+1}^n \mathbf{X}(\sigma_{l,\rho_l},k_{l,\rho_l},\sigma_{l-1,\rho_l},  k_{l-1,\rho_l},\sigma_{l-1,\rho_l+1}, k_{l-1,\rho_l+1}) - {\bf i}\varsigma_{n-i}.
	\end{aligned}
\end{equation} 
  We set $\vartheta_n=0$. 
Moreover, we have used the notations for the set of indices
\begin{equation}
	\label{IndexSet2}\mathcal{I}_{n}  \ := \ \{(j,l) \  | \ 0\le j\le n-1, 1\le l \le n-j+2\}. \end{equation}

The quantities $\tau_i$ below are the generalization of \eqref{InhomogePhaseReg3}, and we call them the ``inhomogeneous phase regulators''.

We also set
\begin{equation}
	\label{Def:Ti}t_i(s)=\sum_{j=1}^{i-1}s_j,\end{equation}
$\bar{s}$, $\bar{k}$ and $\bar\sigma$ are vectors representing all of the quantities $s_i$, $k_{i,j},\sigma_{i,j}$ appearing in the integration. The number $\rho_i$ encodes the position where the delta function
\begin{equation}
	\label{Def:Rho}\delta\left(\sigma_{i,\rho_i}k_{i,\rho_i}+\sigma_{i-1,\rho_i}k_{i-1,\rho_i}+\sigma_{i-1,\rho_i+1}k_{i-1,\rho_i+1}\right)
\end{equation}
appears.
In the above summation, we only allow $$(\sigma_{i,\rho_i},\sigma_{i-1,\rho_i},\sigma_{i-1,\rho_i+1})\ne (+1,+1,+1),(-1,-1,-1),$$
$$\mbox{ and }\sigma_{i-1,\rho_i}\sigma_{i-1,\rho_i+1}= 1.$$ Moreover, $$\mathcal{F}_{0}^0(t,k'',-1,k',1,\Gamma)[\alpha]=e^{-\varsigma_0 t}\langle\alpha(k',1)\alpha(k'',-1)\rangle_*$$
and finally
the function $\Delta_{n,\rho}$ contains all of the $\delta$-functions, which reads
\begin{equation}
	\begin{aligned} \label{eq:DefDelta}
		\Delta_{n,\rho}(\bar{k},\bar\sigma) 
		=  &\  
		\prod_{i=1}^{n}\Big\{\prod_{l=1}^{\rho_i-1}\Big[\delta(k_{i,l}-k_{i-1,l})\mathbf{1}(\sigma_{i,l}=\sigma_{i-1,l})\Big] \\
		&\times \mathbf{1}(\sigma_{i,\rho_i}=-\sigma_{i-1,\rho_i}) \delta\left(\sigma_{i,\rho_i}k_{i,\rho_i}+\sigma_{i-1,\rho_i}k_{i-1,\rho_i}+\sigma_{i-1,\rho_i+1}k_{i-1,\rho_i+1}\right)\\
		&\times\prod_{l=\rho_i+1}^{n-i+2}\Big[\delta(k_{i,l}-k_{i-1,l+1})\mathbf{1}(\sigma_{i,l}=\sigma_{i-1,l
			+1})\Big]\mathbf{1}(k_{n,1}=k_{n,2})\mathbf{1}(\sigma_{n,1}+\sigma_{n,2}=0)\Big\},
\end{aligned}\end{equation} 
where $\bar{k}$ and $\bar\sigma$ represent all of the quantities $k_{i,j},\sigma_{i,j}$ appearing in the formula.

We define the last  quantity $\mathcal{F}_n^3$  as:
\begin{equation}
	\begin{aligned} \label{eq:DefE3}
		& \mathcal{F}^3_{n}(s_0,t,k_{n,2},\sigma_{n,2},k_{n,1},\sigma_{n,1},\Gamma)[\alpha]\\ 
		= & \  
		(-{\bf i}\lambda)^n\sum_{\substack{\rho_i\in\{1,\cdots,n-i+2\},\\ i\in\{0,\cdots,n-1\}}}\sum_{\substack{\bar\sigma\in \{\pm1\}^{\mathcal{I}_n},\\ \sigma_{i,\rho_i}+\sigma_{i-1,\rho_i}+\sigma_{i-1,\rho_i+1}\ne \pm3,
				\\ \sigma_{i-1,\rho_i}\sigma_{i-1,\rho_i+1}= 1}} \int_{(\Lambda^*)^{\mathcal{I}_n}}\mathrm{d}\bar{k}\Delta_{n,\rho}(\bar{k},\bar\sigma)\\
		&\ \ \ \ \ \ \ \ \ \ \ \ \ \times \prod_{i=1}^n\Big[\sigma_{i,\rho_i}\mathcal{M}( k_{i,\rho_i}, k_{i-1,\rho_i}, k_{i-1,\rho_i+1})\Phi_{1,i}(\sigma_{i,\rho_i},k_{i,\rho_i}, \sigma_{i-1,\rho_i},k_{i-1,\rho_i}, \sigma_{i-1,\rho_i+1},k_{i-1,\rho_i+1})\Big]\\
		&\ \ \ \ \ \ \ \ \ \ \ \ \ \times \left\langle\prod_{i=1}^{n+2}\alpha(k_{0,i},\sigma_{0,i})\right\rangle_* \int_{(\mathbb{R}_+)^{\{1,\cdots,n\}}}\mathrm{d}\bar{s} \delta\left(t-\sum_{i=0}^ns_i\right)\prod_{i=1}^{n}e^{-s_i[\varsigma_{n-i}+\tau_i]}e^{-s_0\tau_0}\\
		&\ \ \ \ \ \ \ \ \ \ \ \ \   \times\prod_{i=1}^{n}e^{-{\bf i}t_i(s)\mathbf{X}(\sigma_{i,\rho_i}, k_{i,\rho_i},\sigma_{i-1,\rho_i},k_{i-1,\rho_i},\sigma_{i-1,\rho_i},k_{i-1,\rho_i+1})}.
\end{aligned}\end{equation} 
Note that in \eqref{eq:DefE0} the integration is taken over $(\mathbb{R}_+)^{\{0,\cdots,n\}}$ and in \eqref{eq:DefE3} it is over $(\mathbb{R}_+)^{\{1,\cdots,n\}}.$ 

\smallskip
The second quantity $\mathcal{F}_n^1$ is defined, using the last quantity, as follows. We set, 
$$\mathcal{F}^1_{0}(s,t,k'',-1,k',1,\Gamma)[\alpha]=e^{-\varsigma_0 (t-s)}\langle\alpha(k',1) \alpha(k'',-1)\rangle_*,$$ 
and for $n>0$, we set
\begin{equation}
	\label{eq:DefE1}
	\mathcal{F}^1_n(s,t,k_{n,2},\sigma_{n,2},k_{n,1},\sigma_{n,1},\Gamma)[\alpha] \ = \ \int_0^{t-s}\mathrm{d}r e^{-r\varsigma_n}\mathcal{F}^3_{n}(s+r,t,k_{n,1},\sigma_{n,1},\Gamma)[\alpha].
\end{equation}

And, finally,
\begin{equation}
	\begin{aligned} \label{eq:DefE2}
		&\mathcal{F}^2_{n}(s_0,t,k_{n,2},\sigma_{n,2},k_{n,1},\sigma_{n,1},\Gamma)[\alpha]\\
		=  \ &
		(-{\bf i}\lambda)^n\sum_{\substack{\rho_i\in\{1,\cdots,n-i+2\},\\ i\in\{0,\cdots,n-1\}}}\sum_{\substack{\bar\sigma\in \{\pm1\}^{\mathcal{I}_n},\\ \sigma_{i,\rho_i}+\sigma_{i-1,\rho_i}+\sigma_{i-1,\rho_i+1}\ne \pm3,
				\\ \sigma_{i-1,\rho_i}\sigma_{i-1,\rho_i+1}= 1}} \int_{(\Lambda^*)^{\mathcal{I}_n}}\mathrm{d}\bar{k}\Delta_{n,\rho}(\bar{k},\bar\sigma)\\
		&  \ \times \sigma_{1,\rho_1}\mathcal{M}( k_{1,\rho_1}, k_{0,\rho_1}, k_{0,\rho_1+1})\Phi_{0,1}(\sigma_{0,\rho_1}, k_{0,\rho_1},\sigma_{0,\rho_1+1},k_{0,\rho_1+1})\\
		& \ \times \prod_{i=2}^n\Big[\sigma_{i,\rho_i}\mathcal{M}( k_{i,\rho_i}, k_{i-1,\rho_i}, k_{i-1,\rho_i+1})\Phi_{1,i}(\sigma_{i-1,\rho_i}, k_{i-1,\rho_i},\sigma_{i-1,\rho_i+1},k_{i-1,\rho_i+1})\Big] \\
		&\ \times   \left\langle\prod_{i=1}^{n+2}\alpha(k_{0,i},\sigma_{0,i})\right\rangle_*\int_{(\mathbb{R}_+)^{\{1,\cdots,n\}}}\mathrm{d}\bar{s} \delta\left(t-\sum_{i=0}^ns_i\right)\prod_{i=1}^{n}e^{-s_i[\varsigma_{n-i}+\tau_i]}\\
		& \ \times\prod_{i=1}^{n}e^{-{\bf i}t_i(s)\mathbf{X}(\sigma_{i,\rho_i}, k_{i,\rho_i},\sigma_{i-1,\rho_i}, k_{i-1,\rho_i},\sigma_{i-1,\rho_i+1},  k_{i-1,\rho_i+1})}e^{-s_0\tau_0}.
\end{aligned}\end{equation}

The multi-layer Duhamel expansion can then be expressed via Feynman diagrams, that will be described in Section \ref{Duhamel} below.

\subsection{Wick's Theorem and Applications}\label{Sec:Cum}

\begin{definition}
	\label{Def:Truncation}
	For any finite, non-empty set $J$, we denote by $\mathcal{P}(J)$ the set of its  partition: $S\in \mathcal{P}(J)$ if and only if  each $B\in S$ is  non-empty, $\cup_{B\in S} B=J$,  and if $B, B'\in S$, $B\ne B'$ then $B'\cap B=\emptyset$. Moreover, $\mathcal{P}(\emptyset)=\emptyset$. We also set $\bowtie(S)$ to be either $1$ or $-1$ according to whether the partition is even or odd. 
	
\end{definition}
\begin{definition}
Let us recall that the permanent \cite{ryser1963combinatorial} of an $n\times n$ complex matrix ${M}= \Big[{M}_{i,j}\Big]_{i,j=1}^n$ is 
\begin{equation}
	\label{Permanent}
	\mbox{per}(M) \ = \ \sum_{\pi\in \mathscr{S}(n)}\prod_{i=1}^nM_{i,\pi(i)},
\end{equation}
where $\mathscr{S}$ is the symmetric group of permutations on $\{1,\cdots,n\}$.
\end{definition}

\begin{proposition}[Wick's Theorem]\label{Propo:Wick}
Let $(\tilde{a}_{1},\cdots, \tilde{a}_{n})$ is a circular symmetric $n$-random variable, whose pseudo-covariance matrix is simply the zero matrix, then
\begin{equation}
	\label{Propo:Wick:1}
	\Big\langle \tilde{a}_{1},\cdots, \tilde{a}_{n},\tilde{a}_{1}^*,\cdots, \tilde{a}_{n}^*\Big\rangle_0 \ = \ \mbox{per} (M_a),
\end{equation}
where $M_a=\Big[\langle\tilde{a}_{i}\tilde{a}_{j}^* \rangle\Big]_{i,j=1}.$
\end{proposition}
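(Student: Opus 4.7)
The plan is to deduce this moment identity from the standard moment-generating function of a circular symmetric complex Gaussian vector, and then read off the permanent by a combinatorial differentiation. Concretely, I would proceed in three steps.

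First, I would record the MGF identity: for a circular symmetric complex Gaussian vector $\tilde{a}=(\tilde{a}_1,\ldots,\tilde{a}_n)$ with covariance matrix $M_a=[\langle \tilde{a}_i\tilde{a}_j^*\rangle]_{i,j=1}^n$ and vanishing pseudo-covariance (i.e.\ $\langle \tilde{a}_i\tilde{a}_j\rangle=0$ for all $i,j$), one has, for formal complex parameters $u=(u_i)$ and $v=(v_j)$,
\begin{equation*}
\Phi(u,v) \coloneqq \Big\langle \exp\Big(\sum_{i=1}^n u_i\tilde{a}_i+\sum_{j=1}^n v_j\tilde{a}_j^*\Big)\Big\rangle_0 = \exp\Big(\sum_{i,j=1}^n u_iv_j\,[M_a]_{ij}\Big).
\end{equation*}
This is a direct consequence of the explicit Gaussian density, analogous to \eqref{InitialDensity}--\eqref{InitialDensity2}, by completing the square; the vanishing of the pseudo-covariance is exactly what kills the ``diagonal'' quadratic terms $u_iu_j$ and $v_iv_j$, leaving only the mixed $u_iv_j$ form.

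Second, I would differentiate once in each $u_i$ and each $v_j$ and evaluate at zero, which by exchange of sum/derivative and direct computation yields
\begin{equation*}
\Big\langle \tilde{a}_1\cdots\tilde{a}_n\,\tilde{a}_1^*\cdots\tilde{a}_n^*\Big\rangle_0 = \frac{\partial^{2n}\Phi}{\partial u_1\cdots\partial u_n\,\partial v_1\cdots\partial v_n}(0,0).
\end{equation*}
Expanding the exponential on the right-hand side as $\sum_{k\ge 0}\frac{1}{k!}\big(\sum_{i,j}u_iv_j[M_a]_{ij}\big)^k$, only the $k=n$ term survives after taking exactly $n$ derivatives in the $u$'s and $n$ in the $v$'s and setting $u=v=0$, since lower-order terms vanish upon over-differentiation and higher-order terms vanish at $0$. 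The multinomial expansion of the $n$-th power then forces each $u_i$ to pair with a unique $v_{\pi(i)}$ for some bijection $\pi\in\mathscr{S}(n)$, and the $n!$ cancels against the Leibniz combinatorial factor, producing
\begin{equation*}
\Big\langle \tilde{a}_1\cdots\tilde{a}_n\,\tilde{a}_1^*\cdots\tilde{a}_n^*\Big\rangle_0 = \sum_{\pi\in\mathscr{S}(n)}\prod_{i=1}^n [M_a]_{i,\pi(i)} = \mathrm{per}(M_a),
\end{equation*}
which is precisely \eqref{Propo:Wick:1}.

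The only care point---and the reason the permanent appears with all-positive signs rather than any signed Pfaffian---is the vanishing of the pseudo-covariance used in Step 1; without it, extra contractions $\langle\tilde{a}_i\tilde{a}_j\rangle$ and $\langle\tilde{a}_i^*\tilde{a}_j^*\rangle$ would create additional terms in the MGF and hence additional pairings in the moment. I do not anticipate any genuine obstacle here: the combinatorial bookkeeping in Step~2 is routine, and the MGF identity is classical (and can alternatively be recovered from the Gaussian density \eqref{InitialDensity} via a direct Gaussian integration, as was used implicitly in the proof of \cref{Propo:ExpectationEqui} when invoking the standard Gaussian product inequality from \cite{li2012gaussian}).
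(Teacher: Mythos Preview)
Your proof is correct and supplies exactly the standard details that the paper defers to a reference: the paper's own proof is a one-line citation to \cite{barvinok2007integration} together with the remark that the pseudo-covariance vanishes, and your MGF/differentiation argument is precisely the classical route such a citation points to. Nothing is missing or different in spirit.
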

\begin{proof}
The proof follows standard arguments (see \cite{barvinok2007integration}) and the fact that the pseudo-covariance matrix is zero. 
\end{proof}

\begin{definition}
	\label{Def:Pairing}
	Let us consider the momenta $\{k_{0,1},\sigma_{0,1},\cdots, k_{0,n},\sigma_{0,n}\}$ of $\mathcal{F}_n^0$, $\mathcal{F}_n^1$, $\mathcal{F}_n^2$, $\mathcal{F}_n^3$. 
 We also denote $\mathcal{P}_{pair}(\{1,\cdots,n\})$ to be  the set of all partition $S$ of $\{1,\cdots,n\}$, such that if  $A=\{i,j\}\in S$ then $|A|= 2$ and  $\sigma_{0,i}+\sigma_{0,j}=0.$
	
\end{definition}

A straightforward application of the  Proposition \eqref{Propo:Wick} leads to the following propositions.

\begin{proposition}[First term]\label{Proposition:Ampl1} For  $\mathbf{N}\ge 1$, the terms associated to $\mathcal{F}_n^0$ can be written as
	\begin{equation}\label{Proposition:Ampl:1}
	Q^1 \ := \ 	Q^{1}_* 
		\ := \  \sum_{n=0}^{\mathbf{N}-1}\sum_{S\in\mathcal{P}_{pair}(\{1,\cdots,n+2\})}\mathcal{G}^{0}_{n,*}(S,t,k_{n,2},\sigma_{n,2},k_{n,1},\sigma_{n,1},\Gamma)
	\end{equation}
	where
	\begin{equation}
		\begin{aligned} \label{eq:DefFmain}
			&\mathcal{G}^{0}_{n,*}(S,t,k_{n,2},\sigma_{n,2},k_{n,1},\sigma_{n,1},\Gamma)\  
			=  \  
			(-{\bf i}\lambda)^n\sum_{\substack{\rho_i\in\{1,\cdots,n-i+2\},\\ i\in\{0,\cdots,n-1\}}}\sum_{\substack{\bar\sigma\in \{\pm1\}^{\mathcal{I}_n},\\ \sigma_{i,\rho_i}+\sigma_{i-1,\rho_i}+\sigma_{i-1,\rho_i+1}\ne \pm3,
					\\ \sigma_{i-1,\rho_i}\sigma_{i-1,\rho_i+1}= 1}}  \\
			&\   \int_{(\Lambda^*)^{\mathcal{I}_n}}\mathrm{d}\bar{k}\Delta_{n,\rho}(\bar{k},\bar\sigma)\prod_{A=\{i,j\}\in S}\left\langle  a(k_{0,i},\sigma_{0,i})a(k_{0,j},\sigma_{0,j})\right\rangle_{0} \\
			&\ \times\prod_{i=1}^n\Big[\sigma_{i,\rho_i}\mathcal{M}( \sigma_{i,\rho_i}, k_{i,\rho_i},\sigma_{i-1,\rho_i}, k_{i-1,\rho_i},\sigma_{i-1,\rho_i+1}, k_{i-1,\rho_i+1})\\
			& \ \times \Phi_{1,i}(\sigma_{i-1,\rho_i}, k_{i-1,\rho_i},\sigma_{i-1,\rho_i+1},k_{i-1,\rho_i+1})\Big]  \int_{(\mathbb{R}_+)^{\{0,\cdots,n\}}}\mathrm{d}\bar{s} \delta\left(t-\sum_{i=0}^ns_i\right)\prod_{i=0}^{n}e^{-s_i[\varsigma_{n-i}+\tau_i]}\\
			&\ \times \prod_{i=1}^{n}e^{-{\bf i}t_i(s)\mathbf{X}( \sigma_{i,\rho_i}, k_{i,\rho_i},\sigma_{i-1,\rho_i}, k_{i-1,\rho_i},\sigma_{i-1,\rho_i+1}, k_{i-1,\rho_i+1})}.
	\end{aligned}\end{equation}
We also set  $Q^1_{res}:=0$.
\end{proposition}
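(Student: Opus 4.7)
The plan is to establish the identity by applying Wick's theorem (Proposition \ref{Propo:Wick}) directly to the $(n+2)$-point initial-data expectation
\[
\left\langle \prod_{i=1}^{n+2}\alpha_0(k_{0,i},\sigma_{0,i})\right\rangle_*
\]
appearing inside the definition \eqref{eq:DefE0} of $\mathcal{F}_n^0$, and then rearranging the resulting sum over pairings so that it matches the structure of \eqref{eq:DefFmain}.

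First I would reduce the $*$-average to the ordinary average at time zero. By \eqref{Def:NewAverAlpha} the exponential regulator $\exp(tc_r C_{\mathbf{R}})$ evaluates to unity at $t=0$, and by \eqref{HatA} one has $\alpha_0(k,\sigma)=a_0(k,\sigma)$. Consequently the inner expectation reduces to $\langle\prod_{i=1}^{n+2} a_0(k_{0,i},\sigma_{0,i})\rangle_0$.

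Second, I would invoke Wick's theorem. By Definition \ref{averages}, the initial random vector $\mathbf{a}^+$ is a Gaussian, circular symmetric random vector whose pseudo-covariance matrix vanishes; see \eqref{InitialDensity1}. Proposition \ref{Propo:Wick} then expresses the $(n+2)$-point moment as a permanent, which is equivalent to the sum over pairings
\[
\Big\langle \prod_{i=1}^{n+2} a_0(k_{0,i},\sigma_{0,i})\Big\rangle_0 \;=\; \sum_{S\in\mathcal{P}_{pair}(\{1,\ldots,n+2\})} \prod_{A=\{i,j\}\in S} \langle a(k_{0,i},\sigma_{0,i})\,a(k_{0,j},\sigma_{0,j})\rangle_0.
\]
The restriction to $\mathcal{P}_{pair}$ (namely pairs with $\sigma_{0,i}+\sigma_{0,j}=0$) is automatic, because $\langle a a\rangle_0=\langle a^*a^*\rangle_0=0$ by the vanishing of the pseudo-covariance, so any pairing containing a same-sign block contributes zero.

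Finally, I would substitute this pairing decomposition into \eqref{eq:DefE0}, interchange the finite sum over $S\in\mathcal{P}_{pair}$ with the remaining finite sums over $\rho_i$ and $\bar{\sigma}$ and with the integrals over $\bar{k}$ and $\bar{s}$, and identify the resulting integrand with \eqref{eq:DefFmain}; summing in $n$ from $0$ to $\mathbf{N}-1$ produces the formula for $Q^1$. Since the Wick expansion exhausts the initial-data moment, nothing is left over, consistent with the definition $Q^1_{res}:=0$. I do not anticipate any substantive obstacle here: the proposition is essentially a reorganization lemma, and the only point that requires care is verifying that the sign condition $\sigma_{0,i}+\sigma_{0,j}=0$ in Definition \ref{Def:Pairing} precisely selects the non-vanishing Wick contractions, which is immediate from the explicit second-moment identities in \eqref{InitialDensity1}.
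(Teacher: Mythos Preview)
Your proposal is correct and follows the same line as the paper, which simply states that Propositions \ref{Proposition:Ampl1}--\ref{Proposition:Ampl4} are ``a straightforward application of the Proposition \eqref{Propo:Wick}.'' Your write-up is in fact more detailed than the paper's one-line justification, and the three steps you outline (reducing the $*$-average at time zero to the ordinary average, applying Wick's theorem via the vanishing pseudo-covariance, and reorganizing the sums) are exactly what is needed.
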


\begin{proposition}[Third term]\label{Proposition:Ampl2} For  $\mathbf{N}\ge 1$, the terms associated to $\mathcal{F}_n^2$ can be written as 
	\begin{equation}\label{Def:OperatorQErrors:Bis2}
		\begin{aligned}
			Q^3_*: = & \ \sum_{n=1}^{\mathbf N}\int_0^t\mathrm{d}s \mathcal{F}_{n,*}^2(s,t,k_{n,2},\sigma_{n,2},k_{n,1},\sigma_{n,1},\Gamma)[\alpha_s] \\ 
			=  &   \ \sum_{n=1}^{\mathbf N}\sum_{S\in \mathcal{P}(\{1,\cdots,n+2\})}\int_0^t\mathrm{d}s\mathcal{G}^{2}_{n,*}(S,s,t,k_{n,2},\sigma_{n,2},k_{n,1},\sigma_{n,1},\Gamma),
		\end{aligned}
	\end{equation}
	where
	\begin{equation}
		\begin{aligned} \label{eq:DefF2}
			&\mathcal{G}^{2}_{n,*}(S,s_0,t,k_{n,2},\sigma_{n,2},k_{n,1},\sigma_{n,1},\Gamma)\  
			=  \ (-{\bf i}\lambda)^n\sum_{\substack{\rho_i\in\{1,\cdots,n-i+2\},\\ i\in\{0,\cdots,n-1\}}}\sum_{\substack{\bar\sigma\in \{\pm1\}^{\mathcal{I}_n},\\ \sigma_{i,\rho_i}+\sigma_{i-1,\rho_i}+\sigma_{i-1,\rho_i+1}\ne \pm3,
					\\ \sigma_{i-1,\rho_i}\sigma_{i-1,\rho_i+1}= 1}} \\
			&\ \times \int_{(\Lambda^*)^{\mathcal{I}_n}}\mathrm{d}\bar{k}\Delta_{n,\rho}(\bar k,\bar\sigma)\sigma_{1,\rho_1}\mathcal{M}( k_{1,\rho_1}, k_{0,\rho_1}, k_{0,\rho_2}) \Phi_{0,1}(\sigma_{0,\rho_1} ,k_{0,\rho_1},\sigma_{0,\rho_2}, k_{0,\rho_2})\\
			&\ \times\prod_{i=2}^n\Big[\sigma_{i,\rho_i}\mathcal{M}( k_{i,\rho_i}, k_{i-1,\rho_i}, k_{i-1,\rho_i+1})\Phi_{1,i}(\sigma_{i-1,\rho_i} k_{i-1,\rho_i},\sigma_{i-1,\rho_i+1}k_{i-1,\rho_i+1})\Big]\\
			&\ \times e^{s_0\tau_0+{\bf i}s_0\vartheta_0} \left\langle\prod_{i=1}^{n+2}  a(k_{0,i},\sigma_{0,i})\right\rangle_{s_0}  \int_{(\mathbb{R}_+)^{\{1,\cdots,n\}}}\mathrm{d}\bar{s} \delta\left(t-\sum_{i=0}^ns_i\right)\prod_{i=1}^{n}e^{-s_i[\varsigma_{n-i}+\tau_i]}e^{-s_0\tau_0}\\
			&\  \times \prod_{i=1}^{n}e^{-{\bf i}t_i(s)\mathbf{X}( \sigma_{i,\rho_i},k_{i,\rho_i},\sigma_{i-1,\rho_i}, k_{i-1,\rho_i},\sigma_{i-1,\rho_i+1},k_{i-1,\rho_i+1})}.
	\end{aligned}\end{equation}
 Setting
	\begin{equation}\begin{aligned}
	&	\left\langle\prod_{i=1}^{n+2}  a(k_{0,i},\sigma_{0,i})\right\rangle_{s_0}\\  \ - \ & \sum_{S\in\mathcal{P}(\{1,\cdots,n+2\})}\left\langle\prod_{A=\{J_1,\dots,J_{|A|}\}\in S}   \square\left(\sum_{l=1}^{|A|}k_{0,J_l}\sigma_{0,J_l}\right)\prod_{i=1}^{n+2}  a(k_{0,i},\sigma_{0,i})\right\rangle_{s_0} 
		:= \  \mathfrak{E}^2_{n,res},\end{aligned}\end{equation} 
	in which 
	\begin{equation}\begin{aligned}\label{SquareFunction}
		\square\left(\sum_{l=1}^{|A|}k_{0,J_l}\sigma_{0,J_l}\right) \ = \ & \sum_{l=1}^{|A|}k_{0,J_l}\sigma_{0,J_l}, \mbox{ if } \sum_{l=1}^{|A|}k_{0,J_l}\sigma_{0,J_l}=\mathscr{K}_A 
		\mbox{ with } |\mathscr{K}_A |\le \lambda^{2}\complement
		 ,\\
		& \mbox{ and there exist at least one }  \sigma_{0,J_l}=1 \mbox{   and  one }  \sigma_{0,J_{l'}}=-1 \mbox{ with }l\ne l', \\
	\square\left(\sum_{l=1}^{|A|}k_{0,J_l}\sigma_{0,J_l}\right) \ = \ & 0, \mbox{ otherwise}.\end{aligned}
	\end{equation}
	We  call the quantities associated to $\mathfrak{E}^2_{n,res}$ the residual of $Q^3_*$.  The sum of all quantities that do not contain  $\mathfrak{E}^2_{n,res}$ is defined to be $Q^3$. The trivial case that concerns the  function $\square\left(\sum_{j\in A} k_{0,j}\sigma_{0,j}\right)$ with $A=\{1,\cdots,n+2\}$ is moved into $\mathfrak{C}^2_{n,res}$. The residual $Q^3_{res}$ of $Q^3_*$ is then $Q^3_*-Q^3$. 
\end{proposition}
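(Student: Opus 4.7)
The plan is to interpret the proposition as a definitional identity obtained by inserting a Wick-type partition decomposition of the $(n+2)$-point correlator $\langle \prod_{i=1}^{n+2} a(k_{0,i}, \sigma_{0,i})\rangle_{s_0}$ into the definition \eqref{eq:DefE2} of $\mathcal{F}_n^2$. This mirrors the structure of Proposition \ref{Proposition:Ampl1}, but since the correlator is evaluated at $s_0>0$ and not at time zero, the underlying random variables are no longer Gaussian and Proposition \ref{Propo:Wick} does not directly apply. One must therefore expand over general partitions $S\in\mathcal{P}(\{1,\ldots,n+2\})$ rather than only over pair partitions in $\mathcal{P}_{pair}$.

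Concretely, I would define the residual $\mathfrak{E}^2_{n,res}$ exactly as in the statement: it is the full correlator $\langle \prod a(k_{0,i},\sigma_{0,i})\rangle_{s_0}$ minus the sum over all partitions of the ``clustered'' contributions, where each block $A=\{J_1,\ldots,J_{|A|}\}$ carries the indicator $\square\bigl(\sum_{l} k_{0,J_l}\sigma_{0,J_l}\bigr)$ from \eqref{SquareFunction} that enforces the near-resonance constraint $\bigl|\sum_{l} k_{0,J_l}\sigma_{0,J_l}\bigr|\le \complement\lambda^{2}$ together with mixed signs within the block. The trivial single-block partition $A=\{1,\ldots,n+2\}$ is moved into $\mathfrak{E}^2_{n,res}$ by fiat. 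With this definition in hand, substituting the resulting identity back into \eqref{eq:DefE2} and identifying the contribution of each non-trivial partition $S$ with $\mathcal{G}^{2}_{n,*}(S,s_0,t,\ldots,\Gamma)$ from \eqref{eq:DefF2} yields the claimed rearrangement \eqref{Def:OperatorQErrors:Bis2}; the remaining terms by construction assemble into $Q^3_{res}$.

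The main obstacle is essentially combinatorial bookkeeping rather than genuine analysis: one must verify that the vertex delta functions collected in $\Delta_{n,\rho}(\bar k,\bar\sigma)$, the sign patterns $(\sigma_{0,i})$ on the external legs, the cut-off $\Phi_{0,1}$ at the first vertex, the cut-offs $\Phi_{1,i}$ at subsequent vertices, and the phase regulators $\tau_0,\ldots,\tau_n$ from Definition \ref{Def:PhaseRegulator} all transport consistently under the partition decomposition, and that the $s_0$-integration and the time-slice delta $\delta(t-\sum_i s_i)$ are untouched by the insertion of the $\square$ factors. The analytically substantive task, namely showing that $Q^3_{res}$ is negligible in the kinetic limit, is separate from the present definitional identity and is to be carried out elsewhere, using the $\lambda^2$-smallness built into the support of $\square$ together with the moment bounds of Proposition \ref{Propo:ExpectationEqui}.
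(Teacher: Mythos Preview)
Your reading is correct and matches the paper's approach: the proposition is essentially a definitional rearrangement, and the paper offers no proof beyond the one-line remark preceding all four propositions that they are ``a straightforward application of Proposition \ref{Propo:Wick}.'' You are right to flag that Wick's Theorem does not literally apply at time $s_0>0$, which is exactly why the sum here runs over all partitions $\mathcal{P}(\{1,\ldots,n+2\})$ with a residual, rather than over $\mathcal{P}_{pair}$ as in Proposition \ref{Proposition:Ampl1}; this is a point the paper leaves implicit.
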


\begin{proposition}[Last term]\label{Proposition:Ampl3}
	The component associated to $\mathcal{F}_{\mathbf{N}}^3$ can be expressed as
	\begin{equation}\label{Def:OperatorQErrors:Bis3}
		\begin{aligned}
			Q^4_*: = & \ \int_0^t\mathrm{d}s \mathcal{F}_{\mathbf{N},*}^3(S,s,t,k_{n,2},\sigma_{n,2},k_{n,1},\sigma_{n,1},\Gamma)[\alpha_s] \\
			=  & \ \sum_{S\in \mathcal{P}(\{1,\cdots,\mathbf{N}+2\})} \int_0^t\mathrm{d}s\mathcal{G}^{3}_{\mathbf{N},*}(S,s,t,k_{n,2},\sigma_{n,2},k_{n,1},\sigma_{n,1},\Gamma),
		\end{aligned}
	\end{equation}
	where
	\begin{equation}
		\begin{aligned} \label{eq:DefF3}
			&\mathcal{G}^{3}_{n,*}(S,s_0,t,k_{n,2},\sigma_{n,2},k_{n,1},\sigma_{n,1},\Gamma)\  
			=  \  (-{\bf i}\lambda)^n\sum_{\substack{\rho_i\in\{1,\cdots,n-i+2\},\\ i\in\{0,\cdots,n-1\}}}\sum_{\substack{\bar\sigma\in \{\pm1\}^{\mathcal{I}_n},\\ \sigma_{i,\rho_i}+\sigma_{i-1,\rho_i}+\sigma_{i-1,\rho_i+1}\ne \pm3,
					\\ \sigma_{i-1,\rho_i}\sigma_{i-1,\rho_i+1}= 1}} \\
			& \times \int_{(\Lambda^*)^{\mathcal{I}_n}}\mathrm{d}\bar{k}\Delta_{n,\rho}(\bar{k},\bar\sigma) e^{s_0\tau_0+{\bf i}s_0\vartheta_0}\left\langle\prod_{i=1}^{n+2}  a(k_{0,i},\sigma_{0,i})\right\rangle_{s_0}\\
			& \times  \prod_{i=1}^n\Big[\sigma_{i,\rho_i}\mathcal{M}( k_{i,\rho_i}, k_{i-1,\rho_i}, k_{i-1,\rho_i+1}) \Phi_{1,i}(\sigma_{i-1,\rho_i}, k_{i-1,\rho_i}, \sigma_{i-1,\rho_i+1},k_{i-1,\rho_i+1}) \Big] \\
			&  \times\int_{(\mathbb{R}_+)^{\{1,\cdots,n\}}}\mathrm{d}\bar{s} \delta\left(t-\sum_{i=0}^ns_i\right)\prod_{i=1}^{n}e^{-s_i[\varsigma_{n-i}+\tau_i]}e^{-s_0\tau_0}\\
			&  \times \prod_{i=1}^{n}e^{-{\bf i}t_i(s)\mathbf{X}(\sigma_{i,\rho_i},k_{i,\rho_i},\sigma_{i-1,\rho_i}, k_{i-1,\rho_i}, \sigma_{i-1,\rho_i+1}, k_{i-1,\rho_i+1})  }.
	\end{aligned}\end{equation}

	Setting
	
	\begin{equation}\begin{aligned}
		&\left\langle\prod_{i=1}^{n+2}  a(k_{0,i},\sigma_{0,i})\right\rangle_{s_0} \\ \ - \ & \sum_{S\in\mathcal{P}(\{1,\cdots,n+2\})}\left\langle\prod_{A=\{J_1,\dots,J_{|A|}\}\in S}   \square\left(\sum_{l=1}^{|A|}k_{0,J_l}\sigma_{0,J_l}\right)\prod_{i=1}^{n+2}  a(k_{0,i},\sigma_{0,i})\right\rangle_{s_0} 
		:= \  \mathfrak{E}^3_{n,res},\end{aligned}\end{equation}
	where we follow the notation of \eqref{SquareFunction}. 	We  call the quantities associated to $\mathfrak{E}^3_{n,res}$ the residual of $Q^4_*$. The sum of all quantities that do not contain  $\mathfrak{E}^3_{n,res}$ is defined to be $Q^4$. 	The trivial case that concerns the  function $\square\left(\sum_{j\in A} k_{0,j}\sigma_{0,j}\right)$ with $A=\{1,\cdots,n+2\}$ is moved into $\mathfrak{C}^3_{n,res}$. The residual $Q^4_{res}$ of $Q^4_*$ is then $Q^4_*-Q^4$.  
\end{proposition}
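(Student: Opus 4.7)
The plan is to prove this proposition as an essentially structural decomposition of $\int_0^t\mathrm{d}s\,\mathcal{F}^3_{\mathbf{N},*}(s,t,\ldots)[\alpha_s]$, isolating Wick-type near-pairing contributions from a residual, in direct parallel with what was done for $Q^3$ in Proposition~\ref{Proposition:Ampl2}. First I would insert the explicit definition \eqref{eq:DefE3} of $\mathcal{F}^3_{\mathbf{N}}$ into $Q^4_*$ and convert the $\alpha$-moment $\langle\prod_{i=1}^{\mathbf{N}+2}\alpha_{s_0}(k_{0,i},\sigma_{0,i})\rangle_*$ into an $a$-moment via the definition \eqref{Def:NewAverAlpha} of the twisted average. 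This step produces exactly the phase $e^{{\bf i}s_0\vartheta_0}$ and the damping factor $e^{s_0\tau_0}$ appearing in the formula \eqref{eq:DefF3}, matched by the $e^{-s_0\tau_0}$ already present in the soft-time weight of \eqref{eq:DefE3}.

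Next, I would apply the elementary algebraic identity
\begin{equation*}
\left\langle\prod_{i=1}^{\mathbf{N}+2} a(k_{0,i},\sigma_{0,i})\right\rangle_{s_0} \;=\; \sum_{S\in\mathcal{P}(\{1,\ldots,\mathbf{N}+2\})}\left\langle\prod_{A\in S}\square\Bigl(\textstyle\sum_{l=1}^{|A|}k_{0,J_l}\sigma_{0,J_l}\Bigr)\prod_i a(k_{0,i},\sigma_{0,i})\right\rangle_{s_0} + \mathfrak{E}^3_{\mathbf{N},res},
\end{equation*}
which is simply the definition of $\mathfrak{E}^3_{\mathbf{N},res}$. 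The $\square$ projector, per \eqref{SquareFunction}, localizes each block $A$ of $S$ to near-resonant configurations where the signed momentum sum has magnitude $\mathcal{O}(\lambda^2)$ with both signs $\pm 1$ present. This encodes, at the intermediate time $s_0$, the near-pairing structure dictated by Wick's theorem (Proposition~\ref{Propo:Wick}) for the Gaussian initial datum \eqref{InitialDensity}.

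Substituting this decomposition back into $Q^4_*$ and exchanging the sum over $S$ with the $s$-integral, I would then identify $Q^4$ as the collection of integrals $\sum_{S}\int_0^t\mathrm{d}s\,\mathcal{G}^3_{\mathbf{N},*}(S,s,t,\ldots)$ whose integrands carry the $\square$-projected moments, and move into $Q^4_{res}$ both the contributions generated by $\mathfrak{E}^3_{\mathbf{N},res}$ and, by convention, the trivial one-block partition $\{1,\ldots,\mathbf{N}+2\}$. The equality $Q^4_*=Q^4+Q^4_{res}$ is then transparent, which finishes the decomposition claim.

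The main subtlety I anticipate is conceptual rather than computational: at intermediate time $s_0$ one cannot literally apply Wick's theorem as in the initial-data case, so the $\square$ projection only imposes a resonant momentum-sum condition rather than a genuine factorization into two-point functions. Consequently, one must later verify that the $\square$-projected moments inside $Q^4$ are analyzable through the Feynman-diagram framework (leading ladders, crossings) developed for the homogeneous case in~\cite{staffilani2021wave}, while the complementary $\mathfrak{E}^3_{\mathbf{N},res}$ is controlled via the $\ell^2$-type moment bounds of Proposition~\ref{Propo:ExpectationEqui}. The convention of placing the trivial single-block partition into the residual is made for the same diagrammatic reason. Since the present proposition is essentially a bookkeeping identity, the actual estimates are deferred to the later sections.
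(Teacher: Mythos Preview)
Your proposal is correct and matches the paper's approach: the paper simply states (before Proposition~\ref{Proposition:Ampl1}) that ``a straightforward application of Proposition~\ref{Propo:Wick} leads to the following propositions'' and gives no separate proof for Proposition~\ref{Proposition:Ampl3}, treating it as a definitional decomposition exactly as you do. Your observation that Wick's theorem cannot literally be applied at the intermediate time $s_0$ (so that the $\square$-decomposition is a momentum-localization device rather than a genuine factorization) is a useful clarification that the paper leaves implicit.
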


\begin{proposition}[Second term]\label{Proposition:Ampl4} For  $\mathbf{N}\ge 1$, the terms associated to $\mathcal{F}_n^1$ can be written as \begin{equation}\label{Def:OperatorQErrors:Bis3}
		\begin{aligned}
			Q^2_*: = & \sum_{n=0}^{\mathbf{N}-1}\int_0^t\mathrm{d}s\varsigma_n \mathcal{F}_{n}^1(s,t,k_{n,2},\sigma_{n,2},k_{n,1},\sigma_{n,1},\Gamma)[\alpha_s] \\
			= \ & \sum_{n=0}^{\mathbf{N}-1} \sum_{S\in \mathcal{P}(\{1,\cdots,n+2\})}\int_0^t\mathrm{d}s\mathcal{G}^{1}_{n}(S,s,t,k_{n,2},\sigma_{n,2},k_{n,1},\sigma_{n,1},\Gamma),
		\end{aligned}
	\end{equation}
	where
	\begin{equation}
		\begin{aligned} \label{eq:DefF3}
			&\mathcal{G}^{1}_{n,*}(S,s,t,k_{n,2},\sigma_{n,2},k_{n,1},\sigma_{n,1},\Gamma)\  
			= \varsigma_n\int_0^{t-s}\mathrm{d}r e^{-r\varsigma_n}\mathcal{G}^3_{n,*}(s+r,t,k_{n,2},\sigma_{n,2},k_{n,1},\sigma_{n,1},\Gamma)[\alpha_r].
		\end{aligned}
	\end{equation}
	The quantities associated to $\mathfrak{E}^3_{n,res}$, which above appeared in $\mathcal{G}^3_{n,*}$, are called the residual of $Q^2$. The sum of all quantities that do not contain  $\mathfrak{E}^3_{n,res}$ is defined to be $Q^2$. 	The trivial case that concerns the  function $\square\left(\sum_{j\in A} k_{0,j}\sigma_{0,j}\right)$ with $A=\{1,\cdots,n+2\}$ is moved into $\mathfrak{C}^0_{n,res}$. The residual $Q^2_{res}$ of $Q^2_*$ is then $Q^2_*-Q^2$.
\end{proposition}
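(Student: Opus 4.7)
The plan is to reduce the statement directly to \cref{Proposition:Ampl3} by exploiting the defining relation \eqref{eq:DefE1}, which expresses $\mathcal{F}^1_n$ as a one-parameter integral transform of $\mathcal{F}^3_n$. Substituting \eqref{eq:DefE1} into the definition of $Q^2_*$ yields
\[
Q^2_* \ = \ \sum_{n=0}^{\mathbf{N}-1}\int_0^t \mathrm{d}s\,\varsigma_n \int_0^{t-s}\mathrm{d}r\, e^{-r\varsigma_n}\mathcal{F}^3_n(s+r,t,k_{n,2},\sigma_{n,2},k_{n,1},\sigma_{n,1},\Gamma)[\alpha_{s+r}],
\]
so that, for each fixed pair $(s,r)$, the inner factor is precisely the object already decomposed in \cref{Proposition:Ampl3}, evaluated at time $s_0 = s+r$ instead of $s_0$.

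Next, I would invoke the generalized Wick-type expansion used in the proofs of \cref{Proposition:Ampl2,Proposition:Ampl3}, splitting the mixed moment $\bigl\langle \prod_{i=1}^{n+2} a(k_{0,i},\sigma_{0,i})\bigr\rangle_{s+r}$ into a sum over partitions $S\in\mathcal{P}(\{1,\ldots,n+2\})$, with each block contributing a restricted correlator modulated by the square-function $\square(\cdot)$ defined in \eqref{SquareFunction}, plus the residual error $\mathfrak{E}^3_{n,res}$. Inserting this decomposition into the display above, and interchanging the partition sum with both time integrations (justified by Fubini, since the integrand is bounded uniformly on $[0,t]^2$ by the moment bounds of \cref{Propo:ExpectationEqui}, and since $e^{-r\varsigma_n}$ is integrable on $[0,t-s]$), produces
\[
Q^2_* \ = \ \sum_{n=0}^{\mathbf{N}-1}\sum_{S\in\mathcal{P}(\{1,\ldots,n+2\})}\int_0^t \mathrm{d}s\,\mathcal{G}^1_{n,*}(S,s,t,k_{n,2},\sigma_{n,2},k_{n,1},\sigma_{n,1},\Gamma)\ +\ R,
\]
where $\mathcal{G}^1_{n,*}$ is obtained from $\mathcal{G}^3_{n,*}$ precisely via the integral kernel $\varsigma_n\int_0^{t-s}\mathrm{d}r\,e^{-r\varsigma_n}(\cdot)$ stipulated in the statement, and $R$ collects all contributions coming from $\mathfrak{E}^3_{n,res}$.

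The final, essentially bookkeeping, step is to carve $R$ in accordance with the same conventions used in \cref{Proposition:Ampl2,Proposition:Ampl3}: the trivial partition $S = \{\{1,\ldots,n+2\}\}$ is absorbed into $\mathfrak{C}^0_{n,res}$, while the remaining contributions containing $\mathfrak{E}^3_{n,res}$ constitute $Q^2_{res}$, and the non-residual partitions assemble into $Q^2$. The only mildly subtle point, which I would flag as the main obstacle, is verifying that the square-function indicator in \eqref{SquareFunction} is well defined uniformly in the auxiliary time variable $r$; this is immediate because $\square(\cdot)$ depends only on the momenta $\{k_{0,J_l},\sigma_{0,J_l}\}$ and not on the time at which the expectation is taken, so the partition decomposition commutes cleanly with the $r$-integral. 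Consequently, the argument is a direct transcription of \cref{Proposition:Ampl3} through the kernel $\varsigma_n e^{-r\varsigma_n}$, with no new analytic content beyond what has been used for $\mathcal{F}^3_n$.
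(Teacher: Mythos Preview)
Your proposal is correct and matches the paper's approach: the paper states that all four propositions (\cref{Proposition:Ampl1,Proposition:Ampl2,Proposition:Ampl3,Proposition:Ampl4}) are ``a straightforward application of the Proposition \eqref{Propo:Wick}'' and gives no further argument, so your reduction to \cref{Proposition:Ampl3} via the defining relation \eqref{eq:DefE1} is exactly the intended route, just spelled out in more detail than the paper bothers with. The only minor slip is notational---in your first display the moment should be evaluated at the outer time variable $s$ (matching $[\alpha_s]$ in \eqref{eq:fullDuhamel}) rather than at $s+r$, though this does not affect the structure of the partition decomposition you describe.
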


\section{Technical lemmata}\label{Sec:Techical}
\subsection{Key parameters}\label{Sec:KeyPara}
Following \cite{staffilani2021wave}, we define the ``stopping rule''
\begin{equation}
	\label{Def:Para1}
	\mathbf{N}\coloneqq\max\left(1,	\left[\frac{\mathbf{N}_0|\ln\lambda|}{\ln\langle\ln\lambda\rangle}	\right]\right),
\end{equation}
in which $[x]$ is the integer part that satisfies $[x]\le x <[x]+1$ and $\mathbf{N}_0$ is any number in $\mathbb{R}_+$. The quantity $\mathbf{N}$ is the number of Duhamel expansions to be  performed to actually obtain the kinetic equation.   


The parameters $\eth'_1,\eth'_2>0$ of Lemma \ref{lemma:degree1vertex} have the following constraints
\begin{equation}
	\label{ethprime}\eth'_1+\eth'_2<\frac14,\ \ \ \  \eth'_1>\frac{3}{d},\ \ \ \  \eth'_2>\frac{\theta_r}{d},
\end{equation}
in which $\theta_r$ is defined in \eqref{ConditionCr1}. The constant $\eth>0$ of Lemma \ref{lemma:degree1vertex} is chosen arbitrary.  
The parameter that controls the partial time integration is set to be
\begin{equation}
	\label{Def:Para2}
	\varsigma' = \lambda^2\mathbf{N}^{\wp	},
\end{equation}
where $\wp$ is a positive constant. The components of the soft partial time integration vector, introduced in \eqref{SolfPartTimeVector},  are set to be
\begin{equation}
	\label{Def:Para3}
	\varsigma_n = 0 \mbox{ when } 0\le n<[\mathbf{N}/4], \mbox{ and }\varsigma_n = \varsigma'   \mbox{ when } [\mathbf{N}/4]\le n\le \mathbf{N}. 
\end{equation}

Following \cite{staffilani2021wave}, the graph bounds fall into   one of the two following types:

\begin{itemize}
	\item Type 1: 
	\begin{equation}
		\label{Def:Para4}
		\theta_0^\mathbf{N}\lambda^{\theta_1} \mathbf{N}^{\theta_2\mathbf{N}+\theta_4}((\theta_5\mathbf{N})!)^{\theta_6}\langle \ln\lambda\rangle^{\theta_7\mathbf{N}+\theta_8},
	\end{equation}
	in which $\theta_0,\theta_1,\cdots,\theta_8>0$ are parameters appearing in  the graph bounds. This quantity tends to $0$ as $\lambda$ tends to $0$ if
	\begin{equation}
		\label{Def:Para5}
		\theta_1 \ > \ (\theta_2+\theta_5\theta_6+\theta_7)\mathbf{N}_0.
	\end{equation}
	\item Type 2: 
	\begin{equation}
		\label{Def:Para4a}
		\theta_0^\mathbf{N}\lambda^{-\eth'_1-\eth'_2} \mathbf{N}^{-\wp\theta_3\mathbf{N}+\theta_2\mathbf{N}+\theta_4}((\theta_5\mathbf{N})!)^{\theta_6}\langle \ln\lambda\rangle^{\theta_7\mathbf{N}+\theta_8},
	\end{equation}
	in which $\theta_0,\cdots,\theta_8>0$ are parameters appearing in our estimates.  This quantity tends to $0$ as $\lambda$ tends to $0$ under when
	\begin{equation}
		\label{Def:Para5a}
		\wp \ > \ \frac{\eth'_1+\eth'_2+(\theta_2+\theta_5\theta_6+\theta_7)\mathbf{N}_0}{\theta_3\mathbf{N}_0}.
	\end{equation}
\end{itemize}

\subsection{Some important estimates}
Our current work inherits several   estimates from the previous work \cite{staffilani2021wave}. We will list those estimates below, without their proofs.
\subsubsection{Dispersive estimates} 
First, let us consider the dispersion relation
\begin{equation}
\label{Nearest}
\omega(k) \ = \sin(2\pi k_1)\Big[\sin^2(2\pi k^1) + \cdots + \sin^2(2\pi k^d)\Big],
\end{equation}
where $k=(k^1,\cdots,k^d)\in\mathbb{T}^d$.
 By defining $\xi=(\xi_1,\cdots,\xi_d)=2\pi k\in [-\pi,\pi]^d$, we rewrite \eqref{Nearest} as
\begin{equation}
\label{Nearest1}
\omega_\infty(\xi) \ = \sin(\xi_1)\Big[\sin^2(\xi_1) + \cdots + \sin^2(\xi_d)\Big].
\end{equation}
Under the new rescaled variables, the function $\bar{\Psi}_4^o(\tilde k_0,\tilde k_1,\tilde k_2,\tilde k_0',\tilde k_1',\tilde k_2')$ defined in Proposition \ref{Propo:Phi3A} can be identified with, by an abuse of notation, to be $\bar{\Psi}_4^o(2\pi\tilde k_0,2\pi\tilde k_1,2\pi\tilde k_2,2\pi\tilde k_0',2\pi\tilde k_1',2\pi\tilde k_2')$. From the construction of $\bar{\Psi}_4^o$, we can simplify  $\bar{\Psi}_4^o(2\pi\tilde k_0,2\pi\tilde k_1,2\pi\tilde k_2,2\pi\tilde k_0',2\pi\tilde k_1',2\pi\tilde k_2')$ to $\bar{\Psi}_4^o(2\pi\tilde k_1,2\pi\tilde k_2,2\pi\tilde k_1')$, which is a function of 3 variables $2\pi\tilde k_1,2\pi\tilde k_2,2\pi\tilde k_1'$.

We now consider the  functional
\begin{equation}
\label{extendedBesselfunctions1a}
\mathfrak{F}(m,t_0,t_1,t_2)  =  \int_{[-\pi,\pi]^d} \mathrm{d}\xi  e^{{\bf i}m\cdot \xi} e^{{\bf i}t_0\omega_\infty(\xi)+{\bf i}t_1\omega_\infty(\xi+V)+{\bf i}t_2\omega_\infty(\xi+W)},
\end{equation}
and
\begin{equation}
	\label{extendedBesselfunctions1a:1}
	\tilde{\mathfrak{F}}(m,t_0,t_1,t_2)  =  \int_{[-\pi,\pi]^d} \mathrm{d}\xi  \bar{\Psi}_4^o(\xi,\xi+V,\xi+W)e^{{\bf i}m\cdot \xi} e^{{\bf i}t_0\omega_\infty(\xi)+{\bf i}t_1\omega_\infty(\xi+V)+{\bf i}t_2\omega_\infty(\xi+W)},
\end{equation}
for $m\in\mathbb{Z}^d$. The two vectors $V=(V_1,\cdots,V_d)$, $W=(W_1,\cdots,W_d)$  are assumed to be fixed in $\mathbb{R}^d$.

The equations \eqref{extendedBesselfunctions1a}-\eqref{extendedBesselfunctions1a:1} are now rewritten as 

\begin{equation}
\label{extendedBesselfunctions1}
\begin{aligned}
&\tilde{\mathfrak{F}}(m,t_0,t_1,t_2)  =   \int_{-\pi}^\pi\mathrm{d}\xi_1 \exp\Big({{\bf i}m_1\xi_1}\Big)\\
&\ \ \times\exp\Big({\bf i}t_0\sin^3(\xi_1)+{\bf i}t_1\sin^3(\xi_1+V_1)+{\bf i}t_2\sin^3(\xi_1+W_1)\Big)\times\\
&\ \  \times\Big[\prod_{j=2}^d\int_{-\pi}^\pi\mathrm{d}\xi_j \bar{\Psi}_4^o(\xi,\xi+V,\xi+W)\exp\Big({{\bf i}m_j\xi_j}\Big) \exp\Big({\bf i}t_0\sin(\xi_1)\sin^2(\xi_j)\\
&\ \ +{\bf i}t_1\sin(\xi_1+V_1)\sin^2(\xi_j+V_j) +{\bf i}t_2\sin(\xi_1+W_1)\sin^2(\xi_j+W_j)\Big)\Big],
\end{aligned}
\end{equation}
and
\begin{equation}
	\label{extendedBesselfunctions1}
	\begin{aligned}
		&\mathfrak{F}(m,t_0,t_1,t_2)  =   \int_{-\pi}^\pi\mathrm{d}\xi_1 \exp\Big({{\bf i}m_1\xi_1}\Big)\\
		&\ \ \times\exp\Big({\bf i}t_0\sin^3(\xi_1)+{\bf i}t_1\sin^3(\xi_1+V_1)+{\bf i}t_2\sin^3(\xi_1+W_1)\Big)\times\\
		&\ \  \times\Big[\prod_{j=2}^d\int_{-\pi}^\pi\mathrm{d}\xi_j \exp\Big({{\bf i}m_j\xi_j}\Big) \exp\Big({\bf i}t_0\sin(\xi_1)\sin^2(\xi_j)\\
		&\ \ +{\bf i}t_1\sin(\xi_1+V_1)\sin^2(\xi_j+V_j) +{\bf i}t_2\sin(\xi_1+W_1)\sin^2(\xi_j+W_j)\Big)\Big].
	\end{aligned}
\end{equation}

We also need the notations 
\begin{equation}\label{Lemm:Bessel}
\begin{aligned}
& {t_0}+{t_1}\cos(V_1)e^{{\bf i}2V_j}+{t_2}\cos(W_1)e^{{\bf i}2W_j} \
= \ \Big|{t_0}+{t_1}\cos(V_1)e^{{\bf i}2V_j}+{t_2}\cos(W_1)e^{{\bf i}2W_j}\Big|e^{{\bf i}\aleph
	_1^j},\\
& \mbox{ and }\\
& {t_1}\sin(V_1)e^{{\bf i}2V_j}+{t_2}\sin(W_1)e^{{\bf i}2W_j} 
= \  \ \Big|{t_1}\sin(V_1)e^{{\bf i}2V_j}+{t_2}\sin(W_1)e^{{\bf i}2W_j}\Big|e^{{\bf i}\aleph_2^j},
\end{aligned}
\end{equation}
with $\aleph_1^j=\aleph_1^j(V,W),\aleph_2^j=\aleph_2^j(V,W)\in [-\pi,\pi]$, $j=2,\cdots,d$.

\begin{lemma}[See \cite{staffilani2021wave}]\label{Lemm:Angle}
	Suppose that \begin{equation}
		\label{Lemm:Angle:0} t_0=t_1\aleph' + t_2 \aleph''\end{equation} with $\aleph',\aleph''\in\mathbb{R}\backslash\{0\}$. 
	The following estimate then holds true
	\begin{equation}
	\label{Lemm:Angle:00}\begin{aligned}
	 \frac{1}{|1-|\cos(\aleph_1^j-\aleph_2^j)||^\frac12} 
	\ \lesssim \ & \frac{t_1^2+t_2^2}{|\mathfrak{C}_{\aleph_1^j,\aleph_2^j}^1t_1^2+\mathfrak{C}_{\aleph_1^j,\aleph_2^j}^2t_1t_2+\mathfrak{C}_{\aleph_1^j,\aleph_2^j}^3t_2^2|}+1,\end{aligned}
	\end{equation}
	where the constants on the right hand side are universal and
	\begin{equation}
	\label{Lemm:Angle:1}\begin{aligned}
	 \mathfrak{C}_{\aleph_1^j,\aleph_2^j}^1 \ = \ & -\aleph'\sin(V_1)\sin(2V_j),
	\\
	 \mathfrak{C}_{\aleph_1^j,\aleph_2^j}^3 \ = \ &  -\aleph''\sin(W_1)\cos(2W_j),\\
	  \mathfrak{C}_{\aleph_1^j,\aleph_2^j}^2 \ = \  & -\sin(2V_j-2W_j)\sin(V_1-W_1)  \ - \aleph'\sin(W_1)\sin(2W_j) \\
	& - \ \aleph''\sin(V_1)\sin(2V_j).
	\end{aligned}
	\end{equation}
	Setting $r_*=t_1/t_2$, we consider the equation
	\begin{equation}
		\label{Lemm:Angle:2}\begin{aligned}
		\mathfrak{C}_{\aleph_1^j,\aleph_2^j}^1r_*^2+\mathfrak{C}_{\aleph_1^j,\aleph_2^j}^2r_*+\mathfrak{C}_{\aleph_1^j,\aleph_2^j}^3 \ = \ 0.
		\end{aligned}
	\end{equation}
	In the case that \eqref{Lemm:Angle:2} has two real roots (or a double real root), we denote these roots by $\tilde{r}_1,\tilde{r}_2$. In the case that \eqref{Lemm:Angle:2} has two complex roots, we denote the real part of the complex root by $\tilde{r}_3$. Let $\epsilon_{r_*}\in(-1,1)$ be a small number. We consider the case when $r_*=(1+\epsilon_{r_*})\tilde r_i$, $i=1,2,3$ and define the function

		\begin{equation}
		\label{Lemm:Angle:3}\begin{aligned}
			& f_{t_1,t_2}(r_*) \ = \ \\
			 &  \frac{\Big|{r_*}\sin(V_1)e^{{\bf i}2V_j}+ \sin(W_1)e^{{\bf i}2W_j}\Big|}{\Big[\Big|r_*\Big(\cos(V_1)e^{{\bf i}2V_j}+\aleph'\Big)+ \Big(\cos(W_1)e^{{\bf i}2W_j}+\aleph'\Big)\Big|^2+\Big|{r_*}\sin(V_1)e^{{\bf i}2V_j}+ \sin(W_1)e^{{\bf i}2W_j}\Big|^2\Big]^\frac12},
		\end{aligned}
	\end{equation}
and
\begin{equation}
	\label{Lemm:Angle:3a}\begin{aligned}
		& g_{t_1,t_2}(r_*) \ = \ \\
		&  \frac{\Big|r_*\Big(\cos(V_1)e^{{\bf i}2V_j}+\aleph'\Big)+ \Big(\cos(W_1)e^{{\bf i}2W_j}+\aleph'\Big)\Big|}{\Big[\Big|r_*\Big(\cos(V_1)e^{{\bf i}2V_j}+\aleph'\Big)+ \Big(\cos(W_1)e^{{\bf i}2W_j}+\aleph'\Big)\Big|^2+\Big|{r_*}\sin(V_1)e^{{\bf i}2V_j}+ \sin(W_1)e^{{\bf i}2W_j}\Big|^2\Big]^\frac12}.
	\end{aligned}
\end{equation}
We suppose that 
\begin{equation}
	\label{Lemm:Angle:4}
	|V_j|, \Big|V_j-\frac{\pi}{2}\Big|, |V_j-{\pi}|,|W_j|, \Big|W_j-\frac{\pi}{2}\Big|, |W_j-{\pi}|\ge \langle\ln\lambda\rangle^{-c_{V,W}},
\end{equation}
and
\begin{equation}
	\label{Lemm:Angle:5}
	|V_j\pm W_j| \ge \langle\ln\lambda\rangle^{-c_{V,W}},
\end{equation}
for some constant $c_{V,W}>0$ and for all $j=1,\cdots,d$. We thus have
\begin{equation}
	\label{Lemm:Angle:6}\begin{aligned}
		&\Big|\frac{\mathrm{d}}{\mathrm{d} r_*}	|f_{t_1,t_2}(r_*)|^2\Big|	 \ \lesssim \langle\ln\lambda\rangle^{\mathfrak{C}_{\aleph_1^j,\aleph_2^j}^4},
	\end{aligned}
\end{equation} 
\begin{equation}
	\label{Lemm:Angle:7}\begin{aligned}
		&\Big|\frac{\mathrm{d}}{\mathrm{d} r_*}	|g_{t_1,t_2}(r_*)|^2\Big|	 \ \lesssim \langle\ln\lambda\rangle^{\mathfrak{C}_{\aleph_1^j,\aleph_2^j}^4},
	\end{aligned}
\end{equation} 
and
\begin{equation}
	\label{Lemm:Angle:8}\begin{aligned}
		&\Big|\frac{\mathrm{d}}{\mathrm{d} r_*}	[f_{t_1,t_2}(r_*)g_{t_1,t_2}(r_*)]\Big|	 \ \lesssim \langle\ln\lambda\rangle^{\mathfrak{C}_{\aleph_1^j,\aleph_2^j}^4},
	\end{aligned}
\end{equation} 
for some explicit constant $\mathfrak{C}_{\aleph_1^j,\aleph_2^j}^4>0$.

In the special case that $t_2=0,$ $t_1=-t_0$, we bound\begin{equation}
	\label{Lemm:Angle:9}
	[1-|\cos(\aleph_1^j-\aleph_2^j)|]^{-\frac12}\ 
	\lesssim  \ \frac{1}{|\sin(2V_j)|}+1.
\end{equation}
\end{lemma}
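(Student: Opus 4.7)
The plan rests on a single algebraic identity plus explicit expansion. Let
\[
A \coloneqq t_0 + t_1\cos(V_1)e^{{\bf i}2V_j} + t_2\cos(W_1)e^{{\bf i}2W_j}, \qquad
B \coloneqq t_1\sin(V_1)e^{{\bf i}2V_j} + t_2\sin(W_1)e^{{\bf i}2W_j},
\]
so that by \eqref{Lemm:Bessel} we have $e^{{\bf i}\aleph_1^j} = A/|A|$ and $e^{{\bf i}\aleph_2^j} = B/|B|$. Then $|\cos(\aleph_1^j-\aleph_2^j)| = |\mathrm{Re}(A\bar B)|/(|A||B|)$, and rationalizing together with the Pythagorean identity $|A\bar B|^2 = (\mathrm{Re}(A\bar B))^2 + (\mathrm{Im}(A\bar B))^2$ yields the master formula
\[
1-|\cos(\aleph_1^j-\aleph_2^j)| \;=\; \frac{(\mathrm{Im}(A\bar B))^2}{|A||B|\bigl(|A||B|+|\mathrm{Re}(A\bar B)|\bigr)}.
\]
Since $|A|, |B| \lesssim |t_1|+|t_2|$ and $|A||B|+|\mathrm{Re}(A\bar B)| \le 2|A||B|$, this immediately gives $[1-|\cos(\aleph_1^j-\aleph_2^j)|]^{-1/2} \lesssim (t_1^2+t_2^2)/|\mathrm{Im}(A\bar B)|$ in the regime where $1-|\cos|<1/2$, while in the complementary regime the left-hand side is $O(1)$ and is absorbed by the explicit ``$+1$'' in \eqref{Lemm:Angle:00}.

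The second step is to expand $\mathrm{Im}(A\bar B)$ and substitute the hypothesis $t_0 = t_1\aleph'+t_2\aleph''$. A direct multiplication shows that the diagonal terms $t_j^2\cos(\cdot)\sin(\cdot)$ are real, so only six cross terms contribute. The two involving $t_0$ give $-t_0 t_1 \sin(V_1)\sin(2V_j) - t_0 t_2 \sin(W_1)\sin(2W_j)$, while the two mixed terms $t_1t_2\cos(V_1)\sin(W_1)\sin(2V_j-2W_j)$ and $-t_1t_2\cos(W_1)\sin(V_1)\sin(2V_j-2W_j)$ collapse via the angle-subtraction identity to $-t_1t_2\sin(V_1-W_1)\sin(2V_j-2W_j)$. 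Substituting $t_0=t_1\aleph'+t_2\aleph''$ and collecting monomials in $(t_1,t_2)$ identifies exactly the coefficients $\mathfrak{C}^1, \mathfrak{C}^2, \mathfrak{C}^3$ of \eqref{Lemm:Angle:1}, completing \eqref{Lemm:Angle:00}. The quadratic \eqref{Lemm:Angle:2} is then nothing but $\mathrm{Im}(A\bar B)=0$ after dividing by $t_2^2$, whose real roots $\tilde r_1, \tilde r_2$ (or real part $\tilde r_3$ of a complex conjugate pair) pinpoint the degenerate directions of $r_* = t_1/t_2$ and thus appear in \eqref{Setstar}.

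For the derivative bounds \eqref{Lemm:Angle:6}--\eqref{Lemm:Angle:8}, I would rewrite $f_{t_1,t_2}(r_*)^2$ and $g_{t_1,t_2}(r_*)^2$ as ratios of trigonometric polynomials in $r_*$ with coefficients built from $\sin/\cos$ of $V_1,V_j,W_1,W_j$ and $\aleph', \aleph''$, then differentiate using the quotient rule. The common denominator is the explicit real quadratic $|r_*(\cos V_1 e^{{\bf i}2V_j}+\aleph')+(\cos W_1 e^{{\bf i}2W_j}+\aleph'')|^2 + |r_*\sin V_1 e^{{\bf i}2V_j}+\sin W_1 e^{{\bf i}2W_j}|^2$ in $r_*$, and the key point is that the non-degeneracy assumptions \eqref{Lemm:Angle:4}--\eqref{Lemm:Angle:5} force every trigonometric factor that could cause either numerator or denominator to degenerate to remain bounded below by a negative power of $\langle\ln\lambda\rangle$; this is precisely the content of the exponent $\mathfrak{C}^4_{\aleph_1^j,\aleph_2^j}$. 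Finally, the special case \eqref{Lemm:Angle:9} is an immediate specialization of the master identity: setting $t_2=0$ and $t_1=-t_0$ reduces $A$ to $t_1(\cos(V_1)e^{{\bf i}2V_j}-1)$ and $B$ to $t_1\sin(V_1)e^{{\bf i}2V_j}$, so $\mathrm{Im}(A\bar B) = -t_1^2\sin(V_1)\sin(2V_j)$ and the claimed bound follows.

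The main obstacle is the trigonometric bookkeeping in Step~2 and in the derivative computations: the Step~1 identity is clean, but matching the exact constants in \eqref{Lemm:Angle:1} and verifying that \eqref{Lemm:Angle:4}--\eqref{Lemm:Angle:5} is strong enough to rule out every possible vanishing of the denominators of $f_{t_1,t_2}^2$ and $g_{t_1,t_2}^2$ requires patient case analysis, since those denominators are sums of two squares of trigonometric polynomials, each of which can degenerate on a different subset of parameter space.
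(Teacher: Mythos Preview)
The paper does not supply its own proof of this lemma; it is listed among the ``inherited estimates'' from \cite{staffilani2021wave} and carries only the parenthetical ``See \cite{staffilani2021wave}''. So there is nothing to compare against in the present paper, and your proposal has to be judged on its own.

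Your Steps~1--2 are correct and in fact constitute a clean proof of \eqref{Lemm:Angle:00} and \eqref{Lemm:Angle:9}. The master identity
\[
1-|\cos(\aleph_1^j-\aleph_2^j)| \;=\; \frac{(\mathrm{Im}(A\bar B))^2}{|A||B|\bigl(|A||B|+|\mathrm{Re}(A\bar B)|\bigr)}
\]
is exactly the right object, and your expansion of $\mathrm{Im}(A\bar B)$ is accurate. One byproduct worth noting: your computation gives the $t_2^2$--coefficient as $-\aleph''\sin(W_1)\sin(2W_j)$, whereas the statement records $\mathfrak{C}^3=-\aleph''\sin(W_1)\cos(2W_j)$; by the obvious $V\leftrightarrow W$ symmetry with $\mathfrak{C}^1$, your value is the correct one and the printed $\cos(2W_j)$ appears to be a typo. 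Similarly, in the definitions \eqref{Lemm:Angle:3}--\eqref{Lemm:Angle:3a} the second additive constant should be $\aleph''$ rather than $\aleph'$, as you implicitly use. For \eqref{Lemm:Angle:9}, the cancellation of the $\sin(V_1)$ factor works because $|B|=|t_1||\sin(V_1)|$ exactly, so $|A||B|/|\mathrm{Im}(A\bar B)|\lesssim 1/|\sin(2V_j)|$; your sign in $\mathrm{Im}(A\bar B)=-t_1^2\sin(V_1)\sin(2V_j)$ is off by a sign but that is immaterial.

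The only part that is not yet a proof is the derivative block \eqref{Lemm:Angle:6}--\eqref{Lemm:Angle:8}. Your outline (write $f^2,g^2$ as explicit rational functions of $r_*$, differentiate, and use \eqref{Lemm:Angle:4}--\eqref{Lemm:Angle:5} to bound every trigonometric factor appearing in numerator and denominator from below by a power of $\langle\ln\lambda\rangle^{-1}$) is the right shape, but as you yourself flag, the denominator $|A(r_*)|^2+|B(r_*)|^2$ is a real quadratic in $r_*$ whose positive lower bound must be tracked explicitly through the case analysis near $r_*=(1+\epsilon_{r_*})\tilde r_i$. That is genuine work, not just bookkeeping, and would need to be written out to make the argument complete.
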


\begin{lemma}[See \cite{staffilani2021wave}]\label{Lemm:Bessel2} There exists a universal constant $\mathfrak{C}_{\mathfrak{F},1}$ independent of $t_0,t_1,t_2,\aleph_1,\aleph_2$, such that 
	\begin{equation}
	\label{Lemm:Bessel2:1}
	\|\mathfrak{F}(\cdot,t_0,t_1,t_2) \|_{l^2} \ = \ \left( \sum_{m\in\mathbb{Z}^d}|\mathfrak{F}(m,t_0,t_1,t_2) |^2\right)^\frac12 \ \le \ \mathfrak{C}_{\mathfrak{F},1},
	\end{equation}
and similarly
\begin{equation}
	\label{Lemm:Bessel2:1:1}
	\|\tilde{\mathfrak{F}}(\cdot,t_0,t_1,t_2) \|_{l^2}  \ \le \ \mathfrak{C}_{\mathfrak{F},1},
\end{equation}
\end{lemma}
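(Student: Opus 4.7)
The plan is to recognize that both quantities $\mathfrak{F}(m,t_0,t_1,t_2)$ and $\tilde{\mathfrak{F}}(m,t_0,t_1,t_2)$ are simply the Fourier coefficients (up to a normalization constant $(2\pi)^{-d}$) of a specific bounded function on the torus $[-\pi,\pi]^d$, and then invoke Parseval's identity. Specifically, define
\begin{equation*}
\phi(\xi) \coloneqq e^{{\bf i}t_0\omega_\infty(\xi)+{\bf i}t_1\omega_\infty(\xi+V)+{\bf i}t_2\omega_\infty(\xi+W)}, \qquad \tilde\phi(\xi) \coloneqq \bar{\Psi}_4^o(\xi,\xi+V,\xi+W)\phi(\xi),
\end{equation*}
so that $\mathfrak{F}(m,t_0,t_1,t_2)$ and $\tilde{\mathfrak{F}}(m,t_0,t_1,t_2)$ coincide with the $m$-th Fourier coefficient of $\phi$ and $\tilde\phi$ respectively, on the torus $[-\pi,\pi]^d$.

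The first key observation is that $\phi$ is a pure phase, so $|\phi(\xi)| = 1$ pointwise, whence $\|\phi\|_{L^2([-\pi,\pi]^d)}^2 = (2\pi)^d$. The second observation is that the cutoff function $\bar{\Psi}_4^o$ from Proposition~\ref{Propo:Phi3A} takes values in $[0,1]$, so that $|\tilde\phi(\xi)| \leq 1$ pointwise, and similarly $\|\tilde\phi\|_{L^2([-\pi,\pi]^d)}^2 \leq (2\pi)^d$.

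Applying Plancherel's theorem on the torus $[-\pi,\pi]^d$, I obtain
\begin{equation*}
\sum_{m \in \mathbb{Z}^d} |\mathfrak{F}(m,t_0,t_1,t_2)|^2 = (2\pi)^d \|\phi\|_{L^2([-\pi,\pi]^d)}^2 = (2\pi)^{2d},
\end{equation*}
and the analogous inequality (with $=$ replaced by $\leq$) for $\tilde{\mathfrak{F}}$. The bound then follows with $\mathfrak{C}_{\mathfrak{F},1} = (2\pi)^{d}$, which is independent of $t_0, t_1, t_2, V, W$ (and hence of the angles $\aleph_1, \aleph_2$).

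There is essentially no obstacle here — the claim is an immediate consequence of Plancherel once one recognizes the structure of the integrand. The elaborate machinery developed in Lemma~\ref{Lemm:Angle} and involving the angles $\aleph_1^j, \aleph_2^j$ is not needed for this particular $L^2$-bound; it will rather be used in the subsequent refined (weighted or pointwise) estimates on $\mathfrak{F}$ where the phase structure must actually be exploited via stationary phase analysis. The only mild subtlety is the normalization convention for Fourier coefficients on $[-\pi,\pi]^d$ (a factor of $(2\pi)^d$), which affects the explicit value of the universal constant $\mathfrak{C}_{\mathfrak{F},1}$ but not the fact that it is universal.
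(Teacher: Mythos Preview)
Your proof is correct: $\mathfrak{F}(m,t_0,t_1,t_2)$ and $\tilde{\mathfrak{F}}(m,t_0,t_1,t_2)$ are the (unnormalized) Fourier coefficients on $[-\pi,\pi]^d$ of a unimodular function and of a function bounded in modulus by $1$, respectively, so Parseval's identity immediately gives the universal $l^2$-bound $(2\pi)^d$. The paper does not reproduce a proof here; this lemma is simply quoted from \cite{staffilani2021wave} (see the sentence preceding the block of lemmas: ``We will list those estimates below, without their proofs''), and your argument is exactly the expected one.
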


\begin{lemma}[See \cite{staffilani2021wave}]\label{Lemm:Bessel3}
	There exists a universal constant $\mathfrak{C}_{\mathfrak{F},4}>0$ independent of $t_0,t_1,t_2,\aleph_1,\aleph_2$, such that 
	\begin{equation}
	\label{Lemm:Bessel3:1}\begin{aligned}
	&\|\mathfrak{F}(\cdot,t_0,t_1,t_2) \|_{l^4}  \le  \mathfrak{C}_{\mathfrak{F},4}\prod_{j=2}^{d}\Big\langle\min\Big\{\Big|{t_0}+{t_1}\cos(V_1)e^{{\bf i}2V_j}+{t_2}\cos(W_1)e^{{\bf i}2W_j}\Big|,\\
	& \ \Big|{t_1}\sin(V_1)e^{{\bf i}2V_j}+{t_2}\sin(W_1)e^{{\bf i}2W_j}\Big|\Big\}[1-|\cos(\aleph^j
	_1-\aleph^j
	_2)|]^{\frac{1}{2}}\Big\rangle^{-(\frac{1}{8}-)}\\
	& \ \times \min\Big\{1,\Big[\Big|{t_0}+{t_1}\cos(V_1)e^{{\bf i}2V_j}  +{t_2}\cos(W_1)e^{{\bf i}2W_j}\Big|\\
	&\ \ \ +\Big|{t_1}\sin(V_1)e^{{\bf i}2V_j}+{t_2}\sin(W_1)e^{{\bf i}2W_j}\Big|\Big]\\
	&\ \ \ \times\Big[\Big|{t_0}+{t_1}e^{{\bf i}3V_1}+{t_2}e^{{\bf i}3W_1}\Big|+(2d+1)\Big|{t_0}+{t_1}e^{{\bf i}V_1}+{t_2}e^{{\bf i}W_1}\Big|\Big]^{-1}\Big\}^\frac14.\end{aligned}
	\end{equation}
In addition \eqref{Lemm:Bessel3:1} also holds true for $\tilde{\mathfrak{F}}$.
Suppose further that \eqref{Lemm:Angle:0},\eqref{Lemm:Angle:4} and \eqref{Lemm:Angle:5} hold true, and 	\begin{equation}
	\label{Lemm:Bessel3:1:a}r_*=t_1/t_2=(1+\epsilon_{r_*})\tilde{r}_l\end{equation} for $l=1,2,3$ in which $\tilde{r}_l$ are defined in Lemma \ref{Lemm:Angle}. When \begin{equation}
	\label{Lemm:Bessel3:1:b} |\epsilon_{r_*}|=|\epsilon_{r_*}'|\langle
\ln\lambda\rangle^{-c}\end{equation}  for an explicit constant $c>0$ depending only on the cut-off functions, and $\epsilon_{r_*}'$ is sufficiently small but independent of $\lambda$ and the cut-off functions,  then we have the estimate \begin{equation}
	\label{Lemm:Bessel3:2}\begin{aligned}
		&\|\tilde{\mathfrak{F}}(\cdot,t_0,t_1,t_2) \|_{l^4}  \le  \mathfrak{C}_{\mathfrak{F},4}\langle\ln\lambda\rangle^{\mathfrak{C}'_{\mathfrak{F},4}}\prod_{j=2}^{d}\Big\langle\Big\{\Big|{t_0}+{t_1}\cos(V_1)e^{{\bf i}2V_j}+{t_2}\cos(W_1)e^{{\bf i}2W_j}\Big|+\\
		& + \ \Big|{t_1}\sin(V_1)e^{{\bf i}2V_j}+{t_2}\sin(W_1)e^{{\bf i}2W_j}\Big|\Big\}|\cos(\aleph^j
		_1-\aleph^j
		_2)|^{\frac{1}{2}}\Big\rangle^{-(\frac18-)},\end{aligned}
\end{equation}
for universal constants $\mathfrak{C}_{\mathfrak{F},4},\mathfrak{C}_{\mathfrak{F},4}'>0.$
\end{lemma}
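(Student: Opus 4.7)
\medskip

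\noindent\textbf{Proof proposal for Lemma \ref{Lemm:Bessel3}.}
The starting observation is that after expanding $\omega_\infty(\xi+V)$, $\omega_\infty(\xi+W)$ and using the identity $\sin^2(\xi_j+u)=\tfrac12(1-\cos(2\xi_j+2u))$, the phase in \eqref{extendedBesselfunctions1} factorizes: the $\xi_1$-variable carries only cubic sines while, for each $j\ge 2$, the $\xi_j$-dependence enters through a single sinusoid
\[
\mathrm{Re}\!\left[\bigl(t_0\sin\xi_1+t_1\sin(\xi_1+V_1)e^{{\bf i}2V_j}+t_2\sin(\xi_1+W_1)e^{{\bf i}2W_j}\bigr)e^{{\bf i}2\xi_j}\right].
\]
Therefore the first step is to write $\mathfrak F(m)=\int d\xi_1\, e^{{\bf i}m_1\xi_1}e^{{\bf i}\Phi_1(\xi_1)}\prod_{j=2}^{d}\mathcal J_j(\xi_1,m_j)$, where each $\mathcal J_j$ is, by Jacobi--Anger, essentially a Bessel coefficient $J_{m_j/2}\bigl(\mathcal A_j(\xi_1)\bigr)e^{{\bf i}m_j\phi_j(\xi_1)/2}$ with amplitude $\mathcal A_j(\xi_1)$ exactly of the form whose magnitude is lower-bounded by the quantities appearing in \eqref{Lemm:Bessel3:1} at the stationary points in $\xi_1$. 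The same decomposition applies verbatim to $\tilde{\mathfrak F}$, with the additional factor $\bar\Psi_4^o$ playing no role in this algebraic step.

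For \eqref{Lemm:Bessel3:1}, the plan is to interpolate between an $l^2$ estimate and an $l^\infty$ estimate: write $\|\mathfrak F\|_{l^4}\le\|\mathfrak F\|_{l^\infty}^{1/2}\|\mathfrak F\|_{l^2}^{1/2}$ (or a variant of Hausdorff--Young), invoke Lemma \ref{Lemm:Bessel2} for the $l^2$ half, and control $\|\mathfrak F\|_{l^\infty}$ by stationary phase applied coordinate-wise. Standard estimates for Bessel functions $J_n$ yield $|\mathcal J_j(\xi_1,m_j)|\lesssim\langle\mathcal A_j\rangle^{-1/2}$ uniformly in $m_j$ on the bulk of its support, producing the product of $\langle\,\cdot\,\rangle^{-(1/8-)}$ factors in \eqref{Lemm:Bessel3:1}. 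The minimum over the two alternatives (amplitude vs. the $\aleph_1^j-\aleph_2^j$ combination) comes from choosing which stationary-phase bound to apply depending on whether the critical points are degenerate: when $[1-|\cos(\aleph_1^j-\aleph_2^j)|]^{1/2}$ is small, the derivative of the $\xi_1$-dependent amplitude vanishes, forcing us to use the coarser alternative bound; the minimum records this dichotomy. The additional $\min\{1,\ldots\}^{1/4}$ factor is the global van der Corput correction for the $\xi_1$-integration using the cubic phase generated by $t_0e^{{\bf i}3V_1}+t_1e^{{\bf i}3W_1}+t_2$ and its first harmonic.

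For the refined bound \eqref{Lemm:Bessel3:2}, the idea is to use the cut-off $\bar\Psi_4^o$ together with the hypothesis \eqref{Lemm:Bessel3:1:a}--\eqref{Lemm:Bessel3:1:b}. Under those assumptions, $r_*=t_1/t_2$ lies in a neighborhood of width $\langle\ln\lambda\rangle^{-c}$ of one of the roots $\tilde r_l$ of \eqref{Lemm:Angle:2}; precisely at such a root the quantity $1-|\cos(\aleph_1^j-\aleph_2^j)|$ vanishes, which is what made the weaker bound useless. The plan is to exploit the angular regularity estimates \eqref{Lemm:Angle:6}--\eqref{Lemm:Angle:8}: because the functions $f_{t_1,t_2}$, $g_{t_1,t_2}$ have derivatives in $r_*$ bounded by $\langle\ln\lambda\rangle^{\mathfrak C_{\aleph}^4}$ on the support of $\bar\Psi_4^o$, one may perform a controlled change of variables in $\xi_1$ (or equivalently integrate by parts once) that replaces the $[1-|\cos|]^{1/2}$ factor by $|\cos|^{1/2}$ at the cost of the multiplicative loss $\langle\ln\lambda\rangle^{\mathfrak C'_{\mathfrak F,4}}$. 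The cut-off $\bar\Psi_4^o$ simultaneously ensures that none of $V_j,W_j,V_j\pm W_j$ ever approach the exceptional values listed in \eqref{Lemm:Angle:4}--\eqref{Lemm:Angle:5}, so that the coefficients $\mathfrak C^{1,2,3}_{\aleph_1^j,\aleph_2^j}$ are safely non-degenerate.

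The hardest step will be the refined estimate near $r_*=\tilde r_l$: the naive application of stationary phase in $\xi_j$ loses control precisely in the regime where two critical points coalesce (the roots of \eqref{Lemm:Angle:2}), so the technical work is to show that the cut-off $\bar\Psi_4^o$ restricts the $\xi_1$-integration region to an arc of length $\langle\ln\lambda\rangle^{-c}$ where the $\xi_j$-phase admits a uniform non-degenerate Airy/van der Corput bound with constants depending only polynomially on $\langle\ln\lambda\rangle$. Matching this $\xi_1$-localization with the Bessel expansion in $\xi_j$, and then summing in $m_j$ by Parseval to recover an $l^4$ rather than an $l^2$ norm, is the main technical obstacle and is where the exponent $\mathfrak C'_{\mathfrak F,4}$ in \eqref{Lemm:Bessel3:2} is generated.
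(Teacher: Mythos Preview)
The paper does not contain a proof of this lemma: it is listed in Section~\ref{Sec:Techical} among the ``estimates from the previous work \cite{staffilani2021wave}'' that are stated ``without their proofs.'' There is therefore nothing in the present paper to compare your argument against. The only related hint is the short proof of Lemma~\ref{Lemm:Bessel4}, which obtains the $l^3$ bound by interpolating between the $l^4$ bound of Lemma~\ref{Lemm:Bessel3} and the $l^2$ bound of Lemma~\ref{Lemm:Bessel2}; this is consistent in spirit with your strategy of interpolating $l^4$ between $l^2$ and $l^\infty$.

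Your outline is plausible at the level of heuristics --- the factorization of the phase, the identification of the $\xi_j$-integrals with Bessel-type oscillatory integrals, and the role of the angular degeneracy encoded in $\aleph_1^j-\aleph_2^j$ are all correct observations. But as written it is a sketch rather than a proof: the passage from the pointwise Bessel decay $|\mathcal J_j|\lesssim\langle\mathcal A_j\rangle^{-1/2}$ to an $l^\infty$ bound on $\mathfrak F$ still requires controlling the $\xi_1$-integral uniformly in $m_1$, and your treatment of the refined estimate \eqref{Lemm:Bessel3:2} (the ``change of variables in $\xi_1$'' that swaps $[1-|\cos|]^{1/2}$ for $|\cos|^{1/2}$) is asserted rather than carried out. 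Whether these steps go through with only logarithmic losses, and whether the exponents $-(\tfrac18-)$ and $\tfrac14$ emerge as stated, would have to be checked against the detailed argument in \cite{staffilani2021wave}.
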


\begin{lemma}[See \cite{staffilani2021wave}]\label{Lemm:Bessel4} There exists a universal constant $\mathfrak{C}_{\mathfrak{F},3}>0$ independent of $t_0,t_1,t_2,\aleph_1,\aleph_2$, such that 
	\begin{equation}
	\label{Lemm:Bessel4:1}\begin{aligned}
&	\|\mathfrak{F}(\cdot,t_0,t_1,t_2) \|_{l^3} \ \le  \mathfrak{C}_{\mathfrak{F},3}\prod_{j=2}^{d}\Big\langle\min\Big\{\Big|{t_0}+{t_1}\cos(V_1)e^{{\bf i}2V_j}+{t_2}\cos(W_1)e^{{\bf i}2W_j}\Big|,\\
& \ \Big|{t_1}\sin(V_1)e^{{\bf i}2V_j}+{t_2}\sin(W_1)e^{{\bf i}2W_j}\Big|\Big\}[1-|\cos(\aleph^j
_1-\aleph^j
_2)|]^{\frac{1}{2}}\Big\rangle^{-(\frac{1}{12}-)}\\
& \ \times \min\Big\{1,\Big[\Big|{t_0}+{t_1}\cos(V_1)e^{{\bf i}2V_j}  +{t_2}\cos(W_1)e^{{\bf i}2W_j}\Big|\\
&\ \ \ +\Big|{t_1}\sin(V_1)e^{{\bf i}2V_j}+{t_2}\sin(W_1)e^{{\bf i}2W_j}\Big|\Big]\\
&\ \ \ \times\Big[\Big|{t_0}+{t_1}e^{{\bf i}3V_1}+{t_2}e^{{\bf i}3W_1}\Big|+(2d+1)\Big|{t_0}+{t_1}e^{{\bf i}V_1}+{t_2}e^{{\bf i}W_1}\Big|\Big]^{-1}\Big\}^\frac16.\end{aligned}
	\end{equation}
In addition, \eqref{Lemm:Bessel4:1} also holds true for $\tilde{\mathfrak{F}}$.
Suppose further that \eqref{Lemm:Angle:0}-\eqref{Lemm:Angle:4}-\eqref{Lemm:Angle:5} hold true, and 	\begin{equation}
	\label{Lemm:Bessel4:1:a}r_*=t_1/t_2=(1+\epsilon_{r_*})\tilde{r}_l\end{equation} for $l=1,2,3$ in which $\tilde{r}_l$ are defined in Lemma \ref{Lemm:Angle}. When \begin{equation}
	\label{Lemm:Bessel4:1:b} |\epsilon_{r_*}|=|\epsilon_{r_*}'|\langle
	\ln\lambda\rangle^{-c}\end{equation}  for an explicit constant $c>0$ depending only on the cut-off functions, and $\epsilon_{r_*}'$ is sufficiently small but independent of $\lambda$ and the cut-off functions,  then we have the estimate 
	\begin{equation}
	\label{Lemm:Bessel4:2}\begin{aligned}
		&	\|\tilde{\mathfrak{F}}(\cdot,t_0,t_1,t_2) \|_{l^3} \ \le  \mathfrak{C}_{\mathfrak{F},3}\langle \ln\lambda\rangle^{\mathfrak{C}_{\mathfrak{F},3}'}\prod_{j=2}^{d}\Big\langle\Big\{\Big|{t_0}+{t_1}\cos(V_1)e^{{\bf i}2V_j}+{t_2}\cos(W_1)e^{{\bf i}2W_j}\Big|+\\
		&+ \ \Big|{t_1}\sin(V_1)e^{{\bf i}2V_j}+{t_2}\sin(W_1)e^{{\bf i}2W_j}\Big|\Big\}|\cos(\aleph^j
		_1-\aleph^j
		_2)|^{\frac{1}{2}}\Big\rangle^{-(\frac{1}{12}-)},\end{aligned}
\end{equation}
for universal constants $\mathfrak{C}_{\mathfrak{F},3},\mathfrak{C}_{\mathfrak{F},3}'>0.$
\end{lemma}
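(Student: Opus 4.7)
The plan is to obtain the $\ell^3$ bound by interpolating between the $\ell^2$ bound of \cref{Lemm:Bessel2} and the $\ell^4$ bound of \cref{Lemm:Bessel3}. Concretely, Cauchy-Schwarz applied to $|\mathfrak{F}(m,t_0,t_1,t_2)|^3 = |\mathfrak{F}(m,t_0,t_1,t_2)|^2 \cdot |\mathfrak{F}(m,t_0,t_1,t_2)|$ and summed over $m \in \mathbb{Z}^d$ yields
\begin{equation*}
\|\mathfrak{F}(\cdot,t_0,t_1,t_2)\|_{\ell^3}^3 \ \le \ \|\mathfrak{F}(\cdot,t_0,t_1,t_2)\|_{\ell^4}^2 \, \|\mathfrak{F}(\cdot,t_0,t_1,t_2)\|_{\ell^2},
\end{equation*}
equivalently $\|\mathfrak{F}\|_{\ell^3} \le \|\mathfrak{F}\|_{\ell^4}^{2/3}\, \|\mathfrak{F}\|_{\ell^2}^{1/3}$. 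This is just the Lyapunov/log-convexity inequality for $\ell^p$ norms with $\frac{1}{3} = \frac{2}{3}\cdot\frac{1}{4} + \frac{1}{3}\cdot\frac{1}{2}$; the same inequality applies verbatim to $\tilde{\mathfrak{F}}$.

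Substituting the bound $\|\mathfrak{F}\|_{\ell^2} \le \mathfrak{C}_{\mathfrak{F},1}$ from \eqref{Lemm:Bessel2:1} and the bound \eqref{Lemm:Bessel3:1} from \cref{Lemm:Bessel3} (raised to the power $2/3$) into the above inequality, the universal constants combine into a new universal constant $\mathfrak{C}_{\mathfrak{F},3} \coloneqq \mathfrak{C}_{\mathfrak{F},1}^{1/3}\mathfrak{C}_{\mathfrak{F},4}^{2/3}$, and the exponents transform as expected: the product over $j=2,\dots,d$ receives the power $\tfrac{2}{3}\cdot(\tfrac{1}{8}-) = (\tfrac{1}{12}-)$, while the $\min\{1,\ldots\}^{1/4}$ factor from \eqref{Lemm:Bessel3:1} becomes $\min\{1,\ldots\}^{1/6}$ after raising to the $2/3$ power. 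This is exactly the form claimed in \eqref{Lemm:Bessel4:1}, and the identical calculation with $\tilde{\mathfrak{F}}$ in place of $\mathfrak{F}$ gives the analogous bound for $\tilde{\mathfrak{F}}$ using \eqref{Lemm:Bessel2:1:1}.

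For the refined estimate \eqref{Lemm:Bessel4:2} in the near-resonant regime $r_*=t_1/t_2 = (1+\epsilon_{r_*})\tilde r_l$ with $|\epsilon_{r_*}|= |\epsilon_{r_*}'|\langle\ln\lambda\rangle^{-c}$, the same interpolation scheme is applied, but now pairing the $\ell^2$ bound \eqref{Lemm:Bessel2:1:1} with the sharper $\ell^4$ bound \eqref{Lemm:Bessel3:2} from \cref{Lemm:Bessel3}. The logarithmic prefactor $\langle\ln\lambda\rangle^{\mathfrak{C}'_{\mathfrak{F},4}}$ becomes $\langle\ln\lambda\rangle^{2\mathfrak{C}'_{\mathfrak{F},4}/3}$, which we absorb into a new universal constant $\mathfrak{C}'_{\mathfrak{F},3}$; the product factor, after the $2/3$-power, precisely becomes the $(\tfrac{1}{12}-)$ power appearing on the right-hand side of \eqref{Lemm:Bessel4:2}.

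Because the two ingredients (\cref{Lemm:Bessel2,Lemm:Bessel3}) have already been proved in the referenced companion work and are simply quoted here, the only step in this lemma is the interpolation. No genuine obstacle is anticipated: the main content is purely bookkeeping of exponents, and the verification that $\tfrac{2}{3}\cdot(\tfrac{1}{8}-) = (\tfrac{1}{12}-)$ and $\tfrac{2}{3}\cdot\tfrac{1}{4} = \tfrac{1}{6}$ guarantees that the resulting bound is of the form displayed in the statement.
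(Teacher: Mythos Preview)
Your proof is correct and takes essentially the same approach as the paper: the paper's proof is the single observation that $\frac{1}{3} = \frac{\theta}{4} + \frac{1-\theta}{2}$ with $\theta=\frac{2}{3}$, followed by interpolation between the $\ell^4$ and $\ell^2$ bounds of \cref{Lemm:Bessel3} and \cref{Lemm:Bessel2}. Your write-up is in fact more detailed than the paper's, which does not even spell out the exponent bookkeeping or the treatment of the refined estimate \eqref{Lemm:Bessel4:2}.
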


\begin{proof}
	Observe that $\frac{1}{3} =\frac{\theta}{4} +\frac{1-\theta}{2}$ with $\theta=\frac{2}{3}$. By interpolating between $l_4$ and  $l_2$,  with the  use  of $\theta$,  we  get  \eqref{Lemm:Bessel4:1}.

\end{proof}

Next, we define a similar function as in  \eqref{extendedBesselfunctions1}, where $t_1=t_2=0$, but with the addition of the cut-off function $\tilde{\Phi}_1$, which is   $\tilde{\Psi}_1^a=1-\tilde{\Psi}_0^a$, defined in Proposition \ref{Propo:Phi} 
\begin{equation}
\label{Improved:extendedBesselfunctions1}
\mathfrak{F}^{O}(m,t_0) \ = \ \int_{[-\pi,\pi]^d} \mathrm{d}\xi (\tilde\Phi_1)^N(\sigma_1,k_1,\sigma_2 k_2)e^{{\bf i}m\cdot \xi} e^{{\bf i}t_0\omega_\infty(\xi)}\mathfrak{K}(\xi),
\end{equation}
for $m=(m_1,\cdots,m_d)\in\mathbb{Z}^d$, $N>0$ is an arbitrary positive power. The kernel $\mathfrak{K}(\xi)$ can be either $1$ or $|\sin(\xi_1)|$, with $\xi=(\xi_1,\cdots,\xi_d)\in\mathbb{R}^d$. Note that $\tilde{\Psi}_1^a$ is a function of $2$ variables $k_1,k_2$. As a result, we can set    $\xi$ to be equal to one of the variables $\xi=2\pi k_1$ or $\xi=2\pi k_2$. 
For the sake of simplicity, we denote  $(\tilde\Phi_1)^N(\sigma_1,k_1,\sigma_2, k_2)$ by $ \check{\Phi}(\xi)$ and re-express \eqref{Improved:extendedBesselfunctions1} in the following form

\begin{equation}
\label{Improved:extendedBesselfunctions1a}
\begin{aligned}
&\mathfrak{F}^{O}(m,t_0)  =   \int_{-\pi}^\pi\mathrm{d}\xi_1 \mathfrak{K}(\xi)\check{\Phi}(\xi) \exp\Big({{\bf i}m_1\xi_1}\Big)\exp\Big({\bf i}t_0\sin^3(\xi_1)\Big)\\
&\ \  \times\Big[\prod_{j=2}^d\int_{-\pi}^\pi\mathrm{d}\xi_j \exp\Big({{\bf i}m_j\xi_j}\Big) \exp\Big({\bf i}t_0\sin(\xi_1)\sin^2(\xi_j)\Big)\Big].
\end{aligned}
\end{equation}
\begin{lemma}[See \cite{staffilani2021wave}]\label{Lemm:Improved:Bessel2} There exists a universal constant $\mathfrak{C}_{\mathfrak{F}^{cut},2}$ independent of $t_0$ and $\lambda$, such that 
	\begin{equation}
	\label{Lemm:Improved:Bessel2:1}
	\|\mathfrak{F}^{O}(\cdot,t_0) \|_{l^2} \ = \ \left( \sum_{m\in\mathbb{Z}^d}|\mathfrak{F}^{O}(m,t_0) |^2\right)^\frac12 \ \le \ \mathfrak{C}_{\mathfrak{F}^{cut},2}.
	\end{equation}
\end{lemma}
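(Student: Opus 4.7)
The plan is to reduce this estimate to a direct application of Parseval's identity on the torus. First, I would recognize $\mathfrak{F}^{O}(\cdot, t_0)$ as essentially the Fourier coefficients on $[-\pi,\pi]^d$ of the bounded function
\begin{equation*}
g(\xi) \coloneqq \check{\Phi}(\xi)\,\mathfrak{K}(\xi)\,e^{\mathbf{i} t_0 \omega_\infty(\xi)}.
\end{equation*}
Indeed, writing $\xi = (\xi_1,\dots,\xi_d)$ and collapsing the iterated integral displayed in \eqref{Improved:extendedBesselfunctions1a} back into a single integral over $[-\pi,\pi]^d$, one sees that the integrand is precisely $g(\xi)\,e^{\mathbf{i} m\cdot\xi}$, so that $\mathfrak{F}^{O}(m,t_0)$ is, up to the standard normalization $(2\pi)^d$ and a sign in the Fourier index, a Fourier coefficient of $g$ on the torus. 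Parseval's identity then immediately yields
\begin{equation*}
\|\mathfrak{F}^{O}(\cdot, t_0)\|_{l^2}^2 \;=\; \sum_{m\in\mathbb{Z}^d}|\mathfrak{F}^{O}(m,t_0)|^2 \;=\; (2\pi)^d \int_{[-\pi,\pi]^d}|g(\xi)|^2\,\mathrm{d}\xi.
\end{equation*}

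Next, I would estimate the integrand pointwise. Since $\check{\Phi}(\xi) = (\tilde\Phi_1)^N(\sigma_1,k_1,\sigma_2,k_2)$ with $\tilde\Phi_1 = \tilde\Psi_1^a \in [0,1]$ by \cref{Propo:Phi}, we have $|\check{\Phi}(\xi)| \leq 1$ uniformly in $\xi$, in the exponent $N$, and, crucially, in $\lambda$. The kernel $\mathfrak{K}(\xi)$ equals either $1$ or $|\sin(\xi_1)|$, each bounded by $1$. Finally, the exponential factor has unit modulus. Hence $|g(\xi)|^2 \leq 1$ pointwise, and we conclude
\begin{equation*}
\|\mathfrak{F}^{O}(\cdot, t_0)\|_{l^2}^2 \;\leq\; (2\pi)^{2d},
\end{equation*}
which is the desired bound with the universal constant $\mathfrak{C}_{\mathfrak{F}^{cut},2} = (2\pi)^d$.

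There is essentially no obstacle here: the lemma is a trivial consequence of the unitarity of the Fourier transform combined with an $L^\infty$ bound on the integrand. The only point that merits comment is the $\lambda$-independence of the final constant, which is preserved because the pointwise bound $0 \leq \tilde\Phi_1 \leq 1$ does not degenerate as $\lambda \to 0$, even though the \emph{support} of $\tilde\Phi_1$ concentrates away from the singular manifold $\mathbf{S}$ in that limit. This stands in contrast to the $l^3$ and $l^4$ estimates in \cref{Lemm:Bessel3,Lemm:Bessel4}, where genuine oscillatory-integral analysis is required and $\lambda$-dependent logarithmic factors are unavoidable because derivatives of the cut-off are involved.
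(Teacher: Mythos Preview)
Your proof is correct and is the natural argument: recognize $\mathfrak{F}^{O}(m,t_0)$ as the Fourier coefficients of a function on $[-\pi,\pi]^d$ that is pointwise bounded by $1$, and apply Parseval. The paper does not include its own proof of this lemma but simply cites \cite{staffilani2021wave}; the argument there is the same Parseval-plus-$L^\infty$-bound reasoning you give, so there is nothing to compare.
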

 
\begin{lemma}[See \cite{staffilani2021wave}]\label{Lemm:Improved:Bessel3}
	There exist  universal constants $\mathfrak{C}_{\mathfrak{F}^{O},4},\mathfrak{C}_{\mathfrak{F}^{O},4'}>0$ independent of $t_0$ and $\lambda$, such that
	\begin{equation}
	\label{Lemm:Improved:Bessel3:1}\begin{aligned}
	&\|\mathfrak{F}^{O}(\cdot,t_0) \|_{l^4}  \le |\ln|\lambda||^{\mathfrak{C}_{\mathfrak{F}^{O},4'}}\mathfrak{C}_{\mathfrak{F}^{O},4}\langle t_0\rangle^{-{\big(\frac{
				d-1}{8}-\big)}}.\end{aligned}
	\end{equation}
\end{lemma}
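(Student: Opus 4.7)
The plan is to follow the $TT^*$ / Plancherel strategy that underlies Lemma \ref{Lemm:Bessel3}, adapted to exploit the cut-off $\check{\Phi}$. First, setting $F(\xi,t_0)\coloneqq \mathfrak{K}(\xi)\check{\Phi}(\xi)e^{{\bf i} t_0\omega_\infty(\xi)}$ on $\mathbb{T}^d$, I would use Plancherel's identity twice to rewrite
\begin{equation*}
\|\mathfrak{F}^O(\cdot,t_0)\|_{l^4}^4 \ = \ \bigl\||\mathfrak{F}^O(\cdot,t_0)|^2\bigr\|_{l^2}^2 \ = \ \|F\ast \tilde F\|_{L^2(\mathbb{T}^d)}^2,
\end{equation*}
where $\tilde F(\xi)\coloneqq \overline{F(-\xi)}$. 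This reduces matters to bounding the $L^2_\eta$ norm of the convolution kernel
\begin{equation*}
G(\eta,t_0) \ = \ \int_{\mathbb{T}^d} \mathfrak{K}(\xi)\mathfrak{K}(\xi-\eta)\check{\Phi}(\xi)\check{\Phi}(\xi-\eta)\, e^{{\bf i} t_0[\omega_\infty(\xi)-\omega_\infty(\xi-\eta)]}\,d\xi.
\end{equation*}

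Next, I would apply stationary phase / van der Corput to the inner $\xi$-integral. Decomposing the integral in the slab structure already exploited in \eqref{Improved:extendedBesselfunctions1a}, the $\xi_j$ ($j\ge 2$) integrals have phase coefficient $t_0\sin(\xi_1)$ and a non-degenerate second derivative away from $\xi_j\in \{0,\pm\tfrac{\pi}{2},\pm\pi\}$; this is exactly the region excluded by the one-dimensional factor of $\mathbf{S}$ in the cut-off. Since $\check{\Phi}=(\tilde\Phi_1)^N$ with $\tilde\Phi_1$ localized away from a $\langle\ln\lambda\rangle^{-\eth/3}$-neighborhood of $\mathbf{S}$ (Proposition \ref{Propo:Phi}(i)(B)), the second-derivative lower bound degrades only by a factor $|\ln\lambda|^{c}$. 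Combining this with van der Corput in each of the $d-1$ tangential directions yields pointwise decay of the form
\begin{equation*}
|G(\eta,t_0)| \ \lesssim \ |\ln\lambda|^{c'}\langle t_0\rangle^{-(d-1)/4 +},
\end{equation*}
uniformly in $\eta$ on the support of the double cut-off. The ``lost'' power relative to the full dimension $d$ comes from the longitudinal direction $\xi_1$: the phase factorizes as $\sin(\xi_1)\cdot[\sin^2\xi_1+\sum_{j\ge 2}\sin^2\xi_j]$, and its normal derivative in $\xi_1$ is degenerate on $\mathbf{S}$, so only $d-1$ transverse stationary-phase gains are available.

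Plugging the pointwise bound on $G$ into $\|F\ast \tilde F\|_{L^2_\eta}^2$ and combining with the uniform $l^2$ bound from Lemma \ref{Lemm:Improved:Bessel2} by Cauchy--Schwarz then gives
\begin{equation*}
\|\mathfrak{F}^O(\cdot,t_0)\|_{l^4}^4 \ \lesssim \ \|G(\cdot,t_0)\|_{L^\infty}\cdot \|F\|_{L^2}^2\|F\|_{L^\infty}^2 \ \lesssim \ |\ln\lambda|^{4\mathfrak{C}_{\mathfrak{F}^O,4'}}\langle t_0\rangle^{-(d-1)/2+},
\end{equation*}
whence the claim follows after taking a fourth root. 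The main obstacle will be making the van der Corput bounds on $G$ quantitative with the precise $|\ln\lambda|$-dependence, which requires tracking how the $\langle\ln\lambda\rangle^{-\eth/3}$ cut-off interacts with the zeros of $\partial_{\xi_j}^2(\omega_\infty(\xi)-\omega_\infty(\xi-\eta))$; this is precisely the mechanism developed in \cite{staffilani2021wave} for the unperturbed version, and here one merely needs to verify that the Hessian lower bound established there survives under the subtraction $\omega_\infty(\xi)-\omega_\infty(\xi-\eta)$ on the intersection of the two cut-off supports.
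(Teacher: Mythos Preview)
Your $TT^*$ reduction $\|\mathfrak{F}^O\|_{l^4}^4=\|F\ast\tilde F\|_{L^2}^2$ with $F=\mathfrak{K}\check{\Phi}\,e^{{\bf i}t_0\omega_\infty}$ is correct, and the idea of exploiting the cut-off to control the Hessian degeneration is the right instinct. However, the argument breaks at the step where you assert a pointwise bound $|G(\eta,t_0)|\lesssim |\ln\lambda|^{c'}\langle t_0\rangle^{-(d-1)/4+}$ \emph{uniform in $\eta$}. This cannot hold: at $\eta=0$ the phase $\omega_\infty(\xi)-\omega_\infty(\xi-\eta)$ vanishes identically and $G(0,t_0)=\|F\|_{L^2}^2\sim 1$ has no decay in $t_0$. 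More generally, for $|\eta|$ small the effective oscillation scale is $t_0|\eta|$, not $t_0$, so no $\eta$-independent bound can carry the decay. Consequently your final chain $\|\mathfrak{F}^O\|_{l^4}^4\lesssim\|G\|_{L^\infty}\cdot\|F\|_{L^2}^2\|F\|_{L^\infty}^2$ yields nothing, since $\|G\|_{L^\infty}\ge G(0)\sim 1$.

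The $TT^*$ route can be repaired, but you must prove an $\eta$-dependent bound on $G(\eta,t_0)$ (tracking how the stationary points of $\xi\mapsto\omega_\infty(\xi)-\omega_\infty(\xi-\eta)$ and the associated Hessian depend on $\eta$) and then integrate the resulting bound over $\eta\in\mathbb{T}^d$; the small-$\eta$ region where the phase degenerates contributes the dominant term and must be handled separately. The approach in \cite{staffilani2021wave} avoids this by exploiting the near-product structure of $\mathfrak{F}^O$ over the transverse variables $\xi_2,\dots,\xi_d$: one passes the $\xi_1$-integral outside the $l^4_m$ norm via Minkowski, and for fixed $\xi_1$ the remaining integral factorizes into $d-1$ one-dimensional oscillatory integrals of the form $\int\check{\Phi}\,e^{{\bf i}t_0\sin(\xi_1)\sin^2(\xi_j)+{\bf i}m_j\xi_j}\,d\xi_j$, whose $l^4_{m_j}$ norms are estimated directly using Bessel-type asymptotics; the cut-off guarantees $|\sin(\xi_1)|\gtrsim\langle\ln\lambda\rangle^{-\eth/3}$, which is where the logarithmic loss enters. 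This sidesteps the convolution kernel $G$ altogether.
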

   
\begin{lemma}[See \cite{staffilani2021wave}]\label{Lemm:Improved:Bessel4} There exist universal constants $\mathfrak{C}_{\mathfrak{F}^{O},3}, \mathfrak{C}_{\mathfrak{F}^{O},3'}>0$ independent of $t_0$ and $\lambda$, such that
	\begin{equation}
	\label{Lemm:Improved:Bessel4:1}\begin{aligned}
	&	\|\mathfrak{F}^{O}(\cdot,t_0) \|_{l^3} \ \le  |\ln|\lambda||^{\mathfrak{C}_{\mathfrak{F}^{O},3'}}\mathfrak{C}_{\mathfrak{F}^{O},3}\langle{t_0}\rangle^{-{\big(\frac{
				d-1}{12}-\big)}}.\end{aligned}
	\end{equation}
\end{lemma}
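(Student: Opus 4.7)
The plan is to prove Lemma \ref{Lemm:Improved:Bessel4} by standard H\"older/log-convexity interpolation between the $l^2$ bound in Lemma \ref{Lemm:Improved:Bessel2} and the $l^4$ bound in Lemma \ref{Lemm:Improved:Bessel3}, mirroring exactly the short interpolation argument that was used for Lemma \ref{Lemm:Bessel4}. The point is that the only new estimate at play here compared to Lemma \ref{Lemm:Bessel4} is the presence of the cut-off $\check{\Phi}(\xi)$ and the possible extra kernel factor $\mathfrak{K}(\xi)$ in the definition \eqref{Improved:extendedBesselfunctions1a} of $\mathfrak{F}^{O}$, but both of these have already been incorporated in the $l^2$ and $l^4$ bounds just recalled; so all that remains is to combine them.

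Concretely, I would write $\tfrac{1}{3} = \tfrac{\theta}{4} + \tfrac{1-\theta}{2}$ which yields $\theta = \tfrac{2}{3}$, and then apply the log-convexity of $l^p$ norms on the discrete space $\mathbb{Z}^d$ to get
\begin{equation*}
\|\mathfrak{F}^{O}(\cdot,t_0)\|_{l^3} \ \le \ \|\mathfrak{F}^{O}(\cdot,t_0)\|_{l^4}^{2/3}\,\|\mathfrak{F}^{O}(\cdot,t_0)\|_{l^2}^{1/3}.
\end{equation*}
Substituting Lemma \ref{Lemm:Improved:Bessel2} and Lemma \ref{Lemm:Improved:Bessel3} gives
\begin{equation*}
\|\mathfrak{F}^{O}(\cdot,t_0)\|_{l^3} \ \le \ \bigl(|\ln|\lambda||^{\mathfrak{C}_{\mathfrak{F}^{O},4'}}\mathfrak{C}_{\mathfrak{F}^{O},4}\langle t_0\rangle^{-(\frac{d-1}{8}-)}\bigr)^{2/3}\bigl(\mathfrak{C}_{\mathfrak{F}^{cut},2}\bigr)^{1/3},
\end{equation*}
and setting $\mathfrak{C}_{\mathfrak{F}^{O},3'} \coloneqq \tfrac{2}{3}\mathfrak{C}_{\mathfrak{F}^{O},4'}$ and $\mathfrak{C}_{\mathfrak{F}^{O},3} \coloneqq \mathfrak{C}_{\mathfrak{F}^{O},4}^{2/3}\,\mathfrak{C}_{\mathfrak{F}^{cut},2}^{1/3}$ produces exactly \eqref{Lemm:Improved:Bessel4:1}. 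The exponent $\tfrac{d-1}{12}$ in the decay factor matches because $\tfrac{2}{3}\cdot\tfrac{d-1}{8} = \tfrac{d-1}{12}$, and the arbitrary loss in the "$-$" in the exponents carries through.

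There is essentially no obstacle here: both ingredient bounds are already stated, they are pointwise-in-$t_0$ and independent of the lattice spacing $h$, and the interpolation is purely formal. The only minor point to verify is that the logarithmic loss $|\ln|\lambda||^{\mathfrak{C}_{\mathfrak{F}^{O},4'}}$ on the $l^4$ side is preserved after raising to the $2/3$ power, which it is (with a smaller exponent), and that the $l^2$ bound has no such log factor so it contributes only a harmless constant. Thus the conclusion follows, and the two constants $\mathfrak{C}_{\mathfrak{F}^{O},3},\mathfrak{C}_{\mathfrak{F}^{O},3'}$ are indeed universal, i.e.\ independent of $t_0$ and $\lambda$, because the constants on the right-hand sides of Lemmas \ref{Lemm:Improved:Bessel2} and \ref{Lemm:Improved:Bessel3} are so.
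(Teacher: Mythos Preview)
Your proposal is correct and is exactly the same interpolation argument the paper uses for the parallel Lemma~\ref{Lemm:Bessel4}; the paper does not write out a proof of Lemma~\ref{Lemm:Improved:Bessel4} at all but simply refers to \cite{staffilani2021wave}, where the argument is the same $l^2$--$l^4$ interpolation you give.
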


Now, we define a similar function as   \eqref{Improved:extendedBesselfunctions1}, where the cut-off function $\tilde{\Phi}_1$ is replaced by the kernel $|\sin(\xi_1)|^n$, $n\ge 0, 2n+5<d, n\in\mathbb{N}$, with $\xi=(\xi_1,\cdots,\xi_d)\in\mathbb{R}^d$
\begin{equation}
	\label{Ker:extendedBesselfunctions1}
	\mathfrak{F}^{Ker}(m,t_0) \ = \ \int_{[-\pi,\pi]^d} \mathrm{d}\xi |\sin(\xi_1)|^ne^{{\bf i}m\cdot \xi} e^{{\bf i}t_0\omega_\infty(\xi)},
\end{equation}
for $m=(m_1,\cdots,m_d)\in\mathbb{Z}^d$.  We recall that in the whole paper, our assumption on the dimension is $d\ge 2$.
\begin{lemma}[See \cite{staffilani2021wave}]\label{Lemm:Ker:Bessel2} There exists a universal constant $\mathfrak{C}_{\mathfrak{F}^{Ker},2}$ independent of $t_0$ and $\lambda$, such that 
	\begin{equation}
		\label{Lemm:Ker:Bessel2:1}
		\|\mathfrak{F}^{Ker}(\cdot,t_0) \|_{l^2} \ = \ \left( \sum_{m\in\mathbb{Z}^d}|\mathfrak{F}^{Ker}(m,t_0) |^2\right)^\frac12 \ \le \ \mathfrak{C}_{\mathfrak{F}^{Ker},2}.
	\end{equation}
\end{lemma}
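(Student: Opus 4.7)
The plan is simply to observe that $\mathfrak{F}^{Ker}(m,t_0)$ is, up to Fourier sign conventions, the $m$-th Fourier coefficient on $[-\pi,\pi]^d$ of the bounded function
\[
f_{t_0}(\xi) \coloneqq |\sin(\xi_1)|^n e^{{\bf i}t_0\omega_\infty(\xi)},
\]
so the $\ell^2$ bound is immediate from Parseval's identity plus the pointwise estimate $|f_{t_0}(\xi)|\leq 1$.

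More concretely, I would write
\[
\sum_{m\in\mathbb{Z}^d} |\mathfrak{F}^{Ker}(m,t_0)|^2 \;=\; (2\pi)^d \int_{[-\pi,\pi]^d} |f_{t_0}(\xi)|^2\, \mathrm{d}\xi
\]
by Parseval's identity on the $d$-dimensional torus. Since $|e^{{\bf i}t_0\omega_\infty(\xi)}|=1$ for every $\xi \in [-\pi,\pi]^d$, the entire dependence on $t_0$ (and hence on $\lambda$, through the argument of $\omega_\infty$) drops out of $|f_{t_0}|^2 = |\sin(\xi_1)|^{2n}$. Therefore
\[
\int_{[-\pi,\pi]^d} |\sin(\xi_1)|^{2n}\, \mathrm{d}\xi \;=\; (2\pi)^{d-1}\int_{-\pi}^{\pi}|\sin(\xi_1)|^{2n}\, \mathrm{d}\xi_1 \;\leq\; (2\pi)^d,
\]
so one obtains the universal bound with, for example, $\mathfrak{C}_{\mathfrak{F}^{Ker},2} = (2\pi)^d$, independent of $t_0$, $\lambda$, and even of $n\in\mathbb{N}$. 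The auxiliary hypothesis $2n+5<d$ plays no role at this stage; it is presumably required only where Lemma \ref{Lemm:Ker:Bessel2} is invoked together with higher $\ell^p$ counterparts.

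The main ``obstacle'' is really the absence of one: unlike the $\ell^3$ and $\ell^4$ estimates of Lemmas \ref{Lemm:Bessel3}, \ref{Lemm:Bessel4}, \ref{Lemm:Improved:Bessel3}, \ref{Lemm:Improved:Bessel4}, no genuine dispersive decay in $t_0$ must be extracted from the oscillatory phase $e^{{\bf i}t_0\omega_\infty(\xi)}$. In the $\ell^2$ case, the phase is unimodular and can simply be discarded under Parseval, which is precisely what makes this lemma a one-line consequence of the Plancherel/Parseval theorem on the torus. The only bookkeeping needed is to make the Fourier-coefficient normalization explicit so that the constant on the right-hand side is purely geometric.
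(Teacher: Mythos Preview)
Your argument is correct and is precisely the standard Parseval/Plancherel proof; the paper itself defers to \cite{staffilani2021wave} for this lemma, where the proof is exactly this one-line computation. There is nothing to add.
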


\begin{lemma}[See \cite{staffilani2021wave}]\label{Lemm:Ker:Bessel3}
	There exists a  universal constant $\mathfrak{C}_{\mathfrak{F}^{Ker},4}>0$ independent of $t_0$ and $\lambda$, such that
	\begin{equation}
		\label{Lemm:Ker:Bessel3:1}\begin{aligned}
			&\|\mathfrak{F}^{Ker}(\cdot,t_0) \|_{l^4}  \le \mathfrak{C}_{\mathfrak{F}^{Ker},4}\langle t_0\rangle^{-{\frac{2n}{7}-\frac{13}{30}}}.\end{aligned}
	\end{equation}
\end{lemma}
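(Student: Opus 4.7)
The plan is to prove the claimed $l^4$ bound by first obtaining a sharp pointwise (i.e., $l^\infty_m$) dispersive estimate via van der Corput / Bessel-type bounds, and then interpolating with the $l^2$ bound from Lemma \ref{Lemm:Ker:Bessel2}. The structure is entirely analogous to that of Lemmas \ref{Lemm:Improved:Bessel3} and \ref{Lemm:Improved:Bessel4}, adapted to the kernel $|\sin(\xi_1)|^n$ in place of the cut-off $\check\Phi(\xi)$.

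First, I would factor the phase using the explicit form
\begin{equation*}
\omega_\infty(\xi) \;=\; \sin^3(\xi_1) \;+\; \sin(\xi_1)\sum_{j=2}^d \sin^2(\xi_j),
\end{equation*}
so that the transverse integrals decouple from $\xi_1$:
\begin{equation*}
\mathfrak{F}^{Ker}(m,t_0) \;=\; \int_{-\pi}^\pi d\xi_1\,|\sin(\xi_1)|^n\, e^{{\bf i}m_1\xi_1+{\bf i}t_0\sin^3(\xi_1)}\prod_{j=2}^{d}I_j(\xi_1,m_j),
\end{equation*}
where $I_j(\xi_1,m_j) \coloneqq \int_{-\pi}^\pi d\xi_j\,e^{{\bf i}m_j\xi_j+{\bf i}t_0\sin(\xi_1)\sin^2(\xi_j)}$. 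Via the identity $\sin^2(\xi_j)=\tfrac{1-\cos(2\xi_j)}{2}$ and the change of variable $\eta_j=2\xi_j$, each $I_j$ is (up to a trivial shift in $m_j$) a Bessel function of parameter $\tfrac{t_0\sin(\xi_1)}{2}$. Standard van der Corput / stationary phase for a phase with a degenerate second-order critical point then gives the pointwise bound
\begin{equation*}
|I_j(\xi_1,m_j)|\;\lesssim\;\langle t_0\sin(\xi_1)\rangle^{-1/4}
\end{equation*}
uniformly in $m_j$, so that multiplying over the $d-1$ transverse directions yields $|\prod_{j\ge 2}I_j|\lesssim \langle t_0\sin(\xi_1)\rangle^{-(d-1)/4}$.

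Next, I would analyze the remaining $\xi_1$ integral by a dyadic decomposition in $|\sin(\xi_1)|$. In the region $|\sin(\xi_1)|\lesssim t_0^{-1}$, one uses the trivial bound together with the weight $|\sin(\xi_1)|^n$ to integrate out the singularity; in the region $|\sin(\xi_1)|\gtrsim t_0^{-1}$, one uses the full transverse gain $\langle t_0\sin(\xi_1)\rangle^{-(d-1)/4}$ combined with a further van der Corput estimate in $\xi_1$ (exploiting the nondegenerate $\xi_1$-phase $t_0\sin^3(\xi_1)+m_1\xi_1$). This produces an $l^\infty_m$ bound $\|\mathfrak{F}^{Ker}(\cdot,t_0)\|_{l^\infty}\lesssim \langle t_0\rangle^{-\gamma(n,d)}$, after which interpolating against the $l^2$ bound from Lemma \ref{Lemm:Ker:Bessel2} through $\|f\|_{l^4}\le \|f\|_{l^2}^{1/2}\|f\|_{l^\infty}^{1/2}$ yields the final $l^4$ estimate.

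The main obstacle will be extracting the precise exponent $\frac{2n}{7}+\frac{13}{30}$ (after grouping signs) claimed in the lemma. The unusual rational numbers strongly suggest that a single two-step interpolation is not enough: one will likely need either a three-endpoint (complex) interpolation between $l^2$, an intermediate $l^4$ bound coming from the Bessel estimate directly, and a degenerate $l^\infty$ bound, or a careful accounting of how the weight $|\sin(\xi_1)|^n$ interacts with the multi-scale decomposition described above — in particular keeping track of whether the dyadic cutoffs $|\sin(\xi_1)|\sim 2^{-k}$ contribute to the gain via the Bessel bound or via the Plancherel-type $l^2$ estimate. Finally, once the scheme is in place the whole argument is essentially a transcription of the corresponding proof in \cite{staffilani2021wave}, with the kernel $|\sin(\xi_1)|^n$ replacing the cut-off function of Lemma \ref{Lemm:Improved:Bessel3}.
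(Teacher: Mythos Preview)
The paper does not give a proof of this lemma; it is stated with the attribution ``[See \cite{staffilani2021wave}]'' and no argument is supplied, so there is no in-paper proof to compare against. Your sketch---factoring out the transverse Bessel integrals, using van der Corput on each, handling the $\xi_1$ integral by a dyadic decomposition near the zeros of $\sin(\xi_1)$ with the weight $|\sin(\xi_1)|^n$, and then interpolating against the $l^2$ bound of Lemma \ref{Lemm:Ker:Bessel2}---is exactly the template used in \cite{staffilani2021wave} for this family of estimates, and you correctly identify that the specific exponent $\frac{2n}{7}+\frac{13}{30}$ will require tracking the contributions from the various dyadic scales carefully rather than a single naive interpolation.
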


\begin{lemma}[See \cite{staffilani2021wave}]\label{Lemm:Ker:Bessel4} There exists a universal constant $\mathfrak{C}_{\mathfrak{F}^{Ker},3}>0$ independent of $t_0$ and $\lambda$, such that
\begin{equation}
	\label{Lemm:Ker:Bessel4:1}\begin{aligned}
		&	\|\mathfrak{F}^{Ker}(\cdot,t_0) \|_{l^3} \ \le \mathfrak{C}_{\mathfrak{F}^{Ker},3}\langle{t_0}\rangle^{{-{\frac{4n}{21}-\frac{13}{45}}}}.\end{aligned}
\end{equation}
\end{lemma}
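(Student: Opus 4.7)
The claim is an $l^3$ bound on $\mathfrak{F}^{Ker}(\cdot,t_0)$, and the natural approach is to interpolate between the already-established $l^2$ bound (Lemma \ref{Lemm:Ker:Bessel2}) and the $l^4$ bound (Lemma \ref{Lemm:Ker:Bessel3}), following the exact template of the proof of Lemma \ref{Lemm:Bessel4} given above.

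First I would choose the interpolation parameter. Writing
\[
\frac{1}{3} \ = \ \frac{\theta}{4} + \frac{1-\theta}{2}
\]
and solving gives $\theta = 2/3$. Log-convexity of $l^p$ norms on counting measure (i.e.\ the elementary interpolation inequality $\|f\|_{l^3} \le \|f\|_{l^4}^{2/3}\|f\|_{l^2}^{1/3}$, which follows by a direct application of H\"older's inequality with exponents $(3/2,3)$ applied to $|f|^3 = |f|^2 \cdot |f|$) then yields
\[
\|\mathfrak{F}^{Ker}(\cdot,t_0)\|_{l^3} \ \le \ \|\mathfrak{F}^{Ker}(\cdot,t_0)\|_{l^4}^{2/3}\,\|\mathfrak{F}^{Ker}(\cdot,t_0)\|_{l^2}^{1/3}.
\]

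Next I would substitute the two input bounds. Lemma \ref{Lemm:Ker:Bessel2} contributes the factor $\mathfrak{C}_{\mathfrak{F}^{Ker},2}^{1/3}$ with no dependence on $t_0$. Lemma \ref{Lemm:Ker:Bessel3} contributes
\[
\bigl(\mathfrak{C}_{\mathfrak{F}^{Ker},4}\bigr)^{2/3}\,\langle t_0\rangle^{-\frac{2}{3}\left(\frac{2n}{7}-\frac{13}{30}\right)} \ = \ \bigl(\mathfrak{C}_{\mathfrak{F}^{Ker},4}\bigr)^{2/3}\,\langle t_0\rangle^{-\left(\frac{4n}{21}-\frac{13}{45}\right)}.
\]
Combining gives exactly the claimed bound, with the universal constant $\mathfrak{C}_{\mathfrak{F}^{Ker},3} \coloneqq \mathfrak{C}_{\mathfrak{F}^{Ker},4}^{2/3}\,\mathfrak{C}_{\mathfrak{F}^{Ker},2}^{1/3}$.

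There is no real obstacle here, as both input bounds are already established in the paper and the interpolation is formally identical to the one written out in the proof of Lemma \ref{Lemm:Bessel4}. The only minor bookkeeping points are (i) verifying that the resulting constant is indeed independent of $t_0$ and $\lambda$, which is immediate since both $\mathfrak{C}_{\mathfrak{F}^{Ker},2}$ and $\mathfrak{C}_{\mathfrak{F}^{Ker},4}$ have this property by hypothesis, and (ii) checking that the arithmetic on the exponent is correct, i.e.\ $\frac{2}{3}\cdot\frac{2n}{7}=\frac{4n}{21}$ and $\frac{2}{3}\cdot\frac{13}{30}=\frac{13}{45}$, which is straightforward. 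Consequently the proof is a one-line interpolation argument that can simply refer back to the method used for Lemma \ref{Lemm:Bessel4}.
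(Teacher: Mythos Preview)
Your proposal is correct and follows exactly the template the paper uses for the analogous Lemma~\ref{Lemm:Bessel4}: interpolate between the $l^2$ bound (Lemma~\ref{Lemm:Ker:Bessel2}) and the $l^4$ bound (Lemma~\ref{Lemm:Ker:Bessel3}) with $\theta=2/3$, and the arithmetic on the exponent is right. The paper itself does not write out a separate proof for this lemma (it is cited from \cite{staffilani2021wave}), but your argument is precisely the intended one.
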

Finally, we provide an estimate of the following quantity
\begin{equation}
	\label{AnotherKernel:1}	\begin{aligned} 
		{\mathfrak{F}}^{Kern}(x,s) \ = \ & \int_{\mathbb{T}^d} \mathrm{d}k |\sin(2\pi k_0^1)|^2  |\sin(2\pi k^1)|^2|\sin(2\pi (k_0^1+k^1)|^2 e^{{\bf i}2\pi x\cdot k}e^{{\bf i}t\omega(k) +{\bf i}t\omega(-k_0-k)}, 	\end{aligned}
\end{equation}
which can be rewritten as
\begin{equation}
	\label{AnotherKernel:2}	\begin{aligned} 
		{\mathfrak{F}}^{Kern}(x,s) \ = \ & \int_{[-\pi,\pi]^d} \mathrm{d}\xi |\sin(\xi^*_1)|^2  |\sin(\xi_1)|^2|\sin(\xi^*_1+\xi_1)|^2 e^{{\bf i}x\cdot \xi}e^{{\bf i}t\omega(\xi) +{\bf i}t\omega(-\xi^*-\xi)}. 	\end{aligned}
\end{equation}
We then obtain the estimate.
\begin{lemma}[See \cite{staffilani2021wave}]\label{Lemm:AnotherKernel2} There exists a universal constant $\mathfrak{C}_{\tilde{\mathfrak{F}}^{Kern},2}>0$ independent of $t$, such that 
	\begin{equation}
		\label{Lemm:AnotherKernel2:1}
		\|{\mathfrak{F}}^{Kern}(\cdot,t) \|_{l^4}  \ \le \ \mathfrak{C}_{\tilde{\mathfrak{F}}^{Kern},2}  \frac{|\sin(\xi^*_1)|^2}{\langle |t||\sin(\xi_1^*)|\rangle^{\frac12-}}.
	\end{equation}
\end{lemma}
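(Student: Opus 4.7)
The plan is, first, to extract the $\xi$-independent factor $|\sin(\xi^*_1)|^2$ outside the integral in \eqref{AnotherKernel:2}. Using the odd symmetry $\omega_\infty(-\eta) = -\omega_\infty(\eta)$ of the dispersion relation defined in \eqref{Nearest1}, we rewrite
\begin{equation*}
\mathfrak{F}^{Kern}(x,t) = |\sin(\xi^*_1)|^2\,G(x,t),\quad G(x,t) := \int_{[-\pi,\pi]^d}|\sin\xi_1|^2|\sin(\xi^*_1+\xi_1)|^2 e^{ix\cdot\xi}e^{it[\omega_\infty(\xi)-\omega_\infty(\xi+\xi^*)]}\,d\xi.
\end{equation*}
It then remains to establish $\|G(\cdot,t)\|_{l^4} \lesssim \langle t|\sin\xi^*_1|\rangle^{-1/2+}$.

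The core idea is to reduce $G$ to a finite combination of oscillatory integrals of the form $\mathfrak{F}(\cdot, t, 0, -t, 0, \xi^*)$ appearing in Lemma \ref{Lemm:Bessel3}, by expanding the weight $|\sin\xi_1|^2|\sin(\xi^*_1+\xi_1)|^2$ as a trigonometric polynomial in $\xi_1$ with coefficients uniformly bounded in $\xi^*_1$. Each such term contributes a shift in the $x_1$-variable and fits into the framework of Lemma \ref{Lemm:Bessel3} with the parameter choice $t_0 = t$, $t_1 = 0$, $t_2 = -t$, $V = 0$, $W = \xi^*$. Crucially, for this choice we compute $t_1\sin(V_1)e^{i2V_j} + t_2\sin(W_1)e^{i2W_j} = -t\sin(\xi^*_1)e^{i2\xi^*_j}$, whose magnitude is exactly $|t\sin\xi^*_1|$ and supplies the desired decay factor in each of the $d-1$ transverse directions through Lemma \ref{Lemm:Bessel3}.

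A direct application of Lemma \ref{Lemm:Bessel3} then produces $\|G\|_{l^4} \lesssim \langle t|\sin\xi^*_1|\rangle^{-(d-1)/8+}$, which already settles the claim when $d\geq 5$. For smaller $d$ I would complement this bound with additional decay extracted from the $\xi_1$-direction via one-dimensional Van der Corput. Indeed, using the cubic structure of $\omega_\infty$ in $\sin\xi_1$, one verifies that $\partial^2_{\xi_1}[\omega_\infty(\xi)-\omega_\infty(\xi+\xi^*)]$ has magnitude $\gtrsim|\sin\xi^*_1|$ on the support of the weight (away from a measure-zero exceptional set), giving $|G(x,t)|\lesssim\langle t|\sin\xi^*_1|\rangle^{-1/2}$ uniformly in $x$. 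Combining this $l^\infty$ bound with the trivial Plancherel bound $\|G\|_{l^2}\lesssim 1$ and performing a mixed-norm interpolation adapted to the product structure then yields the sharper exponent $\tfrac{1}{2}-$.

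The main obstacle is executing the interpolation sharply: naive Cauchy-Schwarz between $l^2$ and $l^\infty$ produces only $\langle t|\sin\xi^*_1|\rangle^{-1/4}$. To gain the missing factor, I would use a $TT^*$/Strichartz-type argument, writing $\|G\|_{l^4}^4 = \|\,|G|^2\,\|_{l^2}^2$ and identifying $|G|^2$ with the Fourier transform of the autocorrelation of $|\sin\xi_1|^2|\sin(\xi^*_1+\xi_1)|^2 e^{it[\omega_\infty(\xi)-\omega_\infty(\xi+\xi^*)]}$. A second Van der Corput in $\xi_1$ applied to this autocorrelation, together with the Plancherel bound in the transverse variables, produces the required $\langle t|\sin\xi^*_1|\rangle^{-1}$ decay on $\||G|^2\|_{l^2}$. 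Throughout this argument the weight $|\sin\xi_1|^2|\sin(\xi^*_1+\xi_1)|^2$ is essential, since it suppresses contributions from the degenerate set $\{\sin\xi_1 = 0\}\cup\{\sin(\xi^*_1+\xi_1)=0\}$ on which the second derivative of the phase can vanish; this is entirely parallel to the weighted kernel analysis carried out in Lemmas \ref{Lemm:Improved:Bessel3} and \ref{Lemm:Ker:Bessel3}, and the full proof follows the scheme developed in the corresponding section of \cite{staffilani2021wave}.
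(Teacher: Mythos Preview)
The paper does not contain its own proof of this lemma; it is quoted directly from \cite{staffilani2021wave}. So there is no in-paper argument to compare against, and what follows addresses only the internal soundness of your outline.

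Your reduction is correct and natural: pulling out $|\sin\xi_1^*|^2$, expanding the remaining weight $|\sin\xi_1|^2|\sin(\xi_1+\xi_1^*)|^2$ as a trigonometric polynomial in $\xi_1$ with bounded coefficients, and thereby writing $G$ as a finite sum of shifts of $\mathfrak{F}(\cdot,t,0,-t)$ with $W=\xi^*$ is exactly the right first move. The identification $t_2\sin(W_1)e^{i2W_j}=-t\sin(\xi_1^*)e^{i2\xi_j^*}$ is correct, and Lemma~\ref{Lemm:Bessel3} then delivers $\langle t|\sin\xi_1^*|\rangle^{-(d-1)/8+}$ (modulo the angle factor, which you have not checked is harmless here), enough for $d\ge 5$.

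The gap is in your proposed fix for $d\in\{2,3,4\}$. The claim that $\partial_{\xi_1}^2[\omega_\infty(\xi)-\omega_\infty(\xi+\xi^*)]\gtrsim|\sin\xi_1^*|$ on the support of the weight is not correct as stated. A direct computation gives
\[
\partial_{\xi_1}^2\omega_\infty(\xi)=\sin\xi_1\Big[6\cos^2\xi_1-3\sin^2\xi_1-\textstyle\sum_{j\ge 2}\sin^2\xi_j\Big],
\]
which depends on \emph{all} transverse variables. The second difference $\partial_{\xi_1}^2\Phi$ is therefore $-\xi_1^*\,g'(\xi_1)+O((\xi_1^*)^2)$ with $g'(u)=\cos u\,[6\cos^2u-21\sin^2u-\sum_{j\ge2}\sin^2\xi_j]$, and $g'$ vanishes on a codimension-one set where neither $\sin\xi_1$ nor $\sin(\xi_1+\xi_1^*)$ is forced to vanish. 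Hence the weight does not suppress this degeneracy, a uniform second-order Van der Corput in $\xi_1$ is unavailable, and your $TT^*$ step inherits the same defect. The actual route (as carried out in \cite{staffilani2021wave} for this family of weighted kernels) extracts the decay from the \emph{transverse} variables $\xi_j$, $j\ge 2$, where after freezing $\xi_1$ the phase is genuinely of Bessel type $A_j\cos(2\xi_j+\phi_j)$ with $|A_j|$ controlled in terms of $|\sin\xi_1^*|$; the $\xi_1$-integration then handles the residual dependence via the weight, in the spirit of Lemmas~\ref{Lemm:Ker:Bessel3}--\ref{Lemm:Ker:Bessel4}.
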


\subsubsection{Resolvent-identity, integrals over free momenta and collision operator estimates} 
 
\begin{lemma}[See \cite{staffilani2021wave}]\label{Lemma:Kidentity} Let $I$ be a non-empty finite index set and $s>0$. Suppose that $A$ is a non-empty subset of $I,$ $\vartheta_j\in \ss$, $j\in I$, with $\ss$ being a compact subset of $\mathbb{C}$. We choose an additional time index label $J$, i.e. suppose $J\notin I$, and denote $A^c=I\backslash A$, $A'=A^c\cup\{J\}$. Then for any path $\Gamma_{\ss}$ going once counterclockwise around $\ss$, the following holds true
	\begin{equation}\label{Lemma:PhaseResolve:1}
		\begin{aligned}
			&\int_{(\mathbb{R}_+)^I}\prod_{j\in I}\mathrm{d}s_j\delta\left(s-\sum_{j\in I}s_j\right)\prod_{j\in I} e^{- {\bf i}\vartheta_j s_j}\\
			&\indent = \ -\oint_{\Gamma_{\ss}}\frac{\mathrm{d}z}{2\pi}\int_{(\mathbb{R}_+)^{A'}}\prod_{j\in A'}\mathrm{d}s_j\delta\left(s-\sum_{j\in A'}s_j\right)\prod_{j\in A'} e^{- {\bf i}\vartheta_j s_j}\Big|_{\vartheta_J=z}\prod_{j\in A}\frac{{\bf i}}{z-\vartheta_j}.\end{aligned}\end{equation}
\end{lemma}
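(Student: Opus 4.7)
The cleanest route is to pass to the Laplace transform in $s$. Applying $\int_0^\infty e^{-\eta s}\,\mathrm{d}s$ (for $\eta>0$ sufficiently large) to the left-hand side of \eqref{Lemma:PhaseResolve:1} removes the $\delta$-constraint and factorizes the simplex integral into the product $\prod_{j\in I}(\eta+{\bf i}\vartheta_j)^{-1}$. Doing the same on the right-hand side and pulling the $A^c$-factors outside the contour integral reduces the lemma to proving
\[
\oint_{\Gamma_{\ss}}\frac{\mathrm{d}z}{2\pi}\,\frac{1}{\eta+{\bf i}z}\prod_{j\in A}\frac{{\bf i}}{z-\vartheta_j}=-\prod_{j\in A}\frac{1}{\eta+{\bf i}\vartheta_j}.
\]
Injectivity of the Laplace transform, together with continuity of both sides of \eqref{Lemma:PhaseResolve:1} in $s\ge 0$, then promotes this to the full identity.

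The contour identity is a residue calculation. Since $\Gamma_{\ss}$ is traversed once counterclockwise around $\ss$ and the auxiliary pole $z={\bf i}\eta$ lies outside $\ss$ whenever $\eta$ exceeds the diameter of $\ss$, the only poles captured are $z=\vartheta_k$ for $k\in A$. Using $\oint\mathrm{d}z/(2\pi)={\bf i}\sum\mathrm{Res}$, I would obtain
\[
{\bf i}^{|A|+1}\sum_{k\in A}\frac{1}{\eta+{\bf i}\vartheta_k}\prod_{\substack{j\in A\\j\neq k}}\frac{1}{\vartheta_k-\vartheta_j}.
\]
The partial-fraction expansion $\prod_{j\in A}(w-\vartheta_j)^{-1}=\sum_{k\in A}(w-\vartheta_k)^{-1}\prod_{j\neq k}(\vartheta_k-\vartheta_j)^{-1}$, evaluated at $w={\bf i}\eta$ and combined with $({\bf i}\eta-\vartheta_k)^{-1}=-{\bf i}(\eta+{\bf i}\vartheta_k)^{-1}$, recognizes this sum as $(-{\bf i})^{|A|-1}\prod_{j\in A}(\eta+{\bf i}\vartheta_j)^{-1}$. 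The identity ${\bf i}^{|A|+1}(-{\bf i})^{|A|-1}=-1$ yields exactly the desired $-\prod_{j\in A}(\eta+{\bf i}\vartheta_j)^{-1}$. By analyticity in $\eta$, the identity extends from large $\eta$ to all $\eta>0$, which is enough for Laplace inversion.

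The only subtlety is when the $\vartheta_j$ for $j\in A$ are not pairwise distinct, since the partial-fraction step above is then degenerate. I would handle this by a density argument: the distinct case is proved as above, and both sides of \eqref{Lemma:PhaseResolve:1} depend continuously on the family $(\vartheta_j)_{j\in I}\in\ss^{|I|}$ (the left-hand side obviously; the right-hand side because the integrand of the contour integral is a holomorphic function of $z$ whose poles remain interior to $\Gamma_{\ss}$, hence the integral depends continuously on the data). Alternatively, one can dispense with partial fractions and compute higher-order residues directly via Taylor expansion of the regular factor $(\eta+{\bf i}z)^{-1}$, which produces the correct answer uniformly. The main (mild) obstacle is the careful sign and ${\bf i}$ bookkeeping, together with ensuring that all manipulations — closing the contour, pulling the $A^c$-product outside, and applying Fubini between $\mathrm{d}z$ and the $s_j$-integrations — are legitimate for large $\eta$ before we analytically continue.
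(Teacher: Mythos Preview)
The paper does not give its own proof of this lemma; it is stated with the attribution ``See \cite{staffilani2021wave}'' and no argument is supplied. Your Laplace-transform-plus-residues approach is correct and is a standard way to establish such resolvent identities: the Laplace transform in $s$ factorizes both sides, the $A^c$-factors cancel, and the remaining contour identity is exactly the Cauchy residue computation you carry out, with the partial-fraction step (or continuity in the $\vartheta_j$ for the degenerate case) closing the argument. One small imprecision: what you need is that $z={\bf i}\eta$ lies outside the fixed contour $\Gamma_{\ss}$, not merely outside $\ss$; since $\Gamma_{\ss}$ is a fixed compact path, this still holds for all sufficiently large $\eta$, so the argument goes through unchanged.
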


\begin{lemma}[Degree-one vertex estimate]\label{lemma:degree1vertex} Let $H(k)$ be any function  in $L^2(\mathbb{T}^d)$. For any $k^*\in\mathbb{T}^d$, $d\ge 2$, $\alpha,
	\lambda\in\mathbb{R}$, $\mathscr{R}>0$ and $\sigma,\sigma',\sigma''\in\{\pm 1\}$, we have 
	\begin{equation}
		\label{eq:degree1vertex}
		\left\|\int_{\mathbb{T}^d}\mathrm{d}k\frac{\tilde\Psi_1^a(\sigma', k,\sigma'',k^*)|H(k)|}{|\omega (k)+\sigma \omega (k^*+k) -2\pi\mathscr{X}- \alpha + {\bf i}\lambda^2|}\right\|_{L^2(-\mathscr{R},\mathscr{R})}\ \lesssim \ \langle \ln|\lambda| \rangle^{2+\eth}\|H\|_{L^\Im(\mathbb{T}^d)},
	\end{equation}
	and
	\begin{equation}
		\label{eq:degree1vertex:a}
		\left\|\int_{\mathbb{T}^d}\mathrm{d}k\frac{\tilde\Psi_1^b(\eth'_l,\sigma', k,\sigma'',k^*)|H(k)|}{|\omega (k)+\sigma \omega (k^*+k) -2\pi\mathscr{X}- \alpha + {\bf i}\lambda^2|}\right\|_{L^2(-\mathscr{R},\mathscr{R})}\ \lesssim \ \langle \ln|\lambda| \rangle^{2}\lambda^{-\eth'_l}\|H\|_{L^\Im(\mathbb{T}^d)},
	\end{equation}
	in which the constant on the right hand side is independent of $k,k^*,\sigma,\sigma',\alpha$ and $\lambda$. Moreover $\Im=\frac{2\Im_o}{2\Im_o-1}$ with $\Im_o\in\mathbb{N}$. The norm $L^2(-\mathscr{R},\mathscr{R})$ is with respect to the variable $\mathscr{X}$. The two cut-off functions $\tilde\Psi_1^a$ and $\tilde\Psi_1^b$ are are the components of $\tilde\Phi_1^a$ and $\tilde\Phi_1^b$ in \eqref{def:Phi3:1}-\eqref{def:Phi3:2} (see Proposition \ref{Propo:Phi},  Proposition \ref{Propo:Phi3A}, Definition \ref{def:Phi3}).
	\begin{proof} We follow the proof of Lemma 45 of \cite{staffilani2021wave}.
			We first set $\tilde\Psi_1^a(\sigma', k,\sigma'',k^*)$ or $\tilde\Psi_1^b(\eth'_l,\sigma' ,k,\sigma'',k^*)$ to be  $\mu_1(k)$ and $\langle\ln|\lambda| \rangle^{\eth}$ or $\lambda^{-\eth'_l}$ by $d(\mathfrak{S})^3$. 
		We will prove the inequality
		\begin{equation}
			\label{eq:degree1vertex:E1}
			\left\|	\int_{\mathbb{T}^d}\mathrm{d}k\frac{\mu_1|H|}{|\omega(k)+\sigma \omega(k^*+k)  +2\pi\mathscr{X}- \alpha + {\bf i}\lambda^2|}\right\|_{L^2(-\mathscr{Q},\mathscr{Q})}\ \lesssim \ \frac{\langle\mathrm{ln}\lambda\rangle^2}{d(\mathfrak{S})^3}\|H\|_{L^2(\mathbb{T}^d)},
		\end{equation}
		which eventually leads to both \eqref{eq:degree1vertex} and \eqref{eq:degree1vertex:a}.

		We first define $K_1(s)$ to be the modified Bessel function of the second kind 
			\begin{equation}
			\label{eq:degree1vertex:E34} K_1(\lambda^2 \nu)=\int _{0}^{\infty }\mathrm{d}\zeta e^{-\lambda^2 |\nu|\cosh \zeta},
		\end{equation}
	and obtain  the identity (see \cite{watson1995treatise}) 
		\begin{equation}
			\label{eq:degree1vertex:E3}\begin{aligned}
				\frac{1}{
					|\eta+{\bf i}\lambda^2|}= & \frac{1}{\sqrt {\eta^{2}+\lambda^4}}
				=  2\int _{-\infty }^{\infty }\mathrm{d}\nu {e^{-{\bf i}2\pi \eta \nu}} \,K_{1}(2\pi\lambda^2 \nu)
				=  \frac{1}{\pi}\int _{-\infty }^{\infty }\mathrm{d}\nu {e^{-{\bf i}\eta \nu}} \,K_{1}(\lambda^2 \nu),\end{aligned}
		\end{equation}
		in which we have used the change of variables $2\pi \nu\to \nu$ in the last step. This leads to
		\begin{equation}
			\label{eq:degree1vertex:E5}\begin{aligned}
				\frac{1}{
					|\eta+{\bf i}\lambda^2|}
				= &\ \frac{1}{\pi}\int _{-\infty }^{\infty }\mathrm{d}\nu {e^{-{\bf i} \eta \nu}} \,\int _{0}^{\infty }\mathrm{d}\zeta e^{-\lambda^2 |\nu|\cosh \zeta}.\end{aligned}
		\end{equation}
		Following \cite{watson1995treatise}, we have the standard estimate
		\begin{equation}
			\label{eq:degree1vertex:E6}
			\tilde{K}_1(\nu) := \frac{1}{\pi}\int _{0}^{\infty }\mathrm{d}\zeta e^{-\lambda^2 |\nu|\cosh \zeta} \lesssim \langle \mathrm{ln}\lambda^2\rangle\left( e^{\lambda^2|\nu|} \ + \ \mathbf{1}_{|\nu|\le 1}|\mathrm{ln}|\nu||^{-1}\right).
		\end{equation}
		Next,  replacing $\eta$ in \eqref{eq:degree1vertex:E5} by $\omega(k)+\sigma \omega(k^*+k) - \alpha $, we find
		\begin{equation}
			\label{eq:degree1vertex:E7}\begin{aligned}
				&		\int_{\mathbb{T}^d}\mathrm{d}k\frac{\mu_1H}{|\omega(k)+\sigma \omega(k^*+k)-2\pi\mathscr{X} - \alpha + {\bf i}\lambda^2|}\\
				=  \ & \int _{-\infty }^{\infty }\mathrm{d}\nu \int_{\mathbb{T}^d}\mathrm{d}k \mu_1H{\tilde{K}_1(\nu)} e^{{\bf i}\nu(\omega(k)+\sigma\omega(k^*+k)-2\pi\nu{\bf i}\mathscr{X}}.\end{aligned}
		\end{equation}
		Now, define $\chi_{(-\mathscr{R},\mathscr{R})}(\nu)$ to be the cut-off function of the variable $\nu$ on $(-\mathscr{R},\mathscr{R})$, for a  small constant $\mathscr{R}>0$, we bound
		\begin{equation}
			\begin{aligned}\label{eq:degree1vertex:E7:a}
				&\left(\int_{(-\mathscr{Q},\mathscr{Q})}\mathrm{d}\mathscr{X}\left|\int _{-\infty }^{\infty }\mathrm{d}\nu \int_{\mathbb{T}^d}\mathrm{d}k \mu_1H{\tilde{K}_1(\nu)} e^{{\bf i}\nu(\omega(k)+\sigma\omega(k^*+k))-2\pi\nu{\bf i}\mathscr{X}}\right|^2\right)^\frac12 \\
				\lesssim \ & \left(\int_{(-\mathscr{Q},\mathscr{Q})}\mathrm{d}\mathscr{X}\left|\int _{-\infty }^{\infty }\mathrm{d}\nu \chi_{(-\mathscr{R},\mathscr{R})}(\nu) \int_{\mathbb{T}^d}\mathrm{d}k \mu_1H{\tilde{K}_1(\nu)} e^{{\bf i}\nu(\omega(k)+\sigma\omega(k^*+k))-2\pi\nu{\bf i}\mathscr{X}}\right|^2\right)^\frac12\\
				& + \left(\int_{(-\mathscr{Q},\mathscr{Q})}\mathrm{d}\mathscr{X}\left|\int _{-\infty }^{\infty }\mathrm{d}\nu [1-\chi_{(-\mathscr{R},\mathscr{R})}(\nu)] \int_{\mathbb{T}^d}\mathrm{d}k \mu_1H{\tilde{K}_1(\nu)} e^{{\bf i}\nu(\omega(k)+\sigma\omega(k^*+k))-2\pi\nu{\bf i}\mathscr{X}}\right|^2\right)^\frac12\\
				\lesssim \ & \left|\int _{-\mathscr{R} }^{\mathscr{R} }\mathrm{d}\nu  \left| \int_{\mathbb{T}^d}\mathrm{d}k \mu_1H{\tilde{K}_1(\nu)} e^{{\bf i}\nu(\omega(k)+\sigma\omega(k^*+k))}\right|^2\right|^\frac12\\
				& + \left(\int_{(-\mathscr{Q},\mathscr{Q})}\mathrm{d}\mathscr{X}\left|\int _{-\infty }^{\infty }\mathrm{d}\nu [1-\chi_{(-\mathscr{R},\mathscr{R})}(\nu)] \int_{\mathbb{T}^d}\mathrm{d}k \mu_1H{\tilde{K}_1(\nu)} e^{{\bf i}\nu(\omega(k)+\sigma\omega(k^*+k)-2\pi\nu{\bf i}\mathscr{X}}\right|^2\right)^\frac12\\
				\lesssim \ & \left|\int _{-\infty }^{\infty}\mathrm{d}\nu  \left| \int_{\mathbb{T}^d}\mathrm{d}k \mu_1H{\tilde{K}_1(\nu)} e^{{\bf i}\nu(\omega(k)+\sigma\omega(k^*+k))}\right|^q\right|^\frac1q\\
				& + \left(\int_{(-\mathscr{Q},\mathscr{Q})}\mathrm{d}\mathscr{X}\left|\int _{-\infty }^{\infty }\mathrm{d}\nu [1-\chi_{(-\mathscr{R},\mathscr{R})}(\nu)] \int_{\mathbb{T}^d}\mathrm{d}k \mu_1H{\tilde{K}_1(\nu)} e^{{\bf i}\nu(\omega(k)+\sigma\omega(k^*+k)-2\pi\nu{\bf i}\mathscr{X}}\right|^2\right)^\frac12
			\end{aligned}
		\end{equation}
		where $q>2$ is a constant to be specified later.
		We now set
		\begin{equation}
			\begin{aligned}\label{eq:degree1vertex:E7:b}
				& \mathscr{B}_{0,\mathscr{Q}}(\mathscr{X})\ :=\ \int _{-\infty }^{\infty }\mathrm{d}\nu [1-\chi_{(-\mathscr{R},\mathscr{R})}(\nu)] \int_{\mathbb{T}^d}\mathrm{d}k \mu_1H{\tilde{K}_1(\nu)} e^{{\bf i}\nu(\omega(k)+\sigma\omega(k^*+k)-2\pi\nu{\bf i}\mathscr{X}},\\
				& \mathscr{B}_{-1,\mathscr{Q}}(\mathscr{X})\ :=\ \int _{-\infty }^{\infty }\mathrm{d}\nu [1-\chi_{(-\mathscr{R},\mathscr{R})}(\nu)] \int_{\mathbb{T}^d}\mathrm{d}k \mu_1H{\tilde{K}_1(\nu)} e^{{\bf i}\nu(\omega(k)+\sigma\omega(k^*+k)-2\pi\nu{\bf i}\mathscr{X}}[-{\bf i}2\pi \nu]^{-1},\\
				& \mathscr{B}_{1,\mathscr{Q}}(\mathscr{X})\ :=\ \int _{-\infty }^{\infty }\mathrm{d}\nu [1-\chi_{(-\mathscr{R},\mathscr{R})}(\nu)] \int_{\mathbb{T}^d}\mathrm{d}k \mu_1H{\tilde{K}_1(\nu)} e^{{\bf i}\nu(\omega(k)+\sigma\omega(k^*+k)-2\pi\nu{\bf i}\mathscr{X}}[-{\bf i}2\pi \nu].
			\end{aligned}
		\end{equation}
		We observe that $\frac{\mathrm{d}^2}{\mathrm{d}\mathscr{X}^2 }\mathscr{B}_{-1,\mathscr{Q}}=\frac{\mathrm{d}}{\mathrm{d}\mathscr{X} }\mathscr{B}_{0,\mathscr{Q}}=\mathscr{B}_{1,\mathscr{Q}}$. 	We pick $c		>0$ to be a small constant and $\mathscr{Q}'>>\mathscr{Q}$ be a sufficiently large constant. There exists a smooth cut-off function $\mathscr{A}(\mathscr{X})$ that satisfies  $\mathscr{A}(\mathscr{X})=1$ for $-\mathscr{Q}'+2c	<\mathscr{X}<\mathscr{Q}'-2c	$, $\mathscr{A}(\mathscr{X})=0$ for $\mathscr{X}<-\mathscr{Q}'+c	$ or $\mathscr{X}>\mathscr{Q}'-c	$ and $0\le \mathscr{A}(\mathscr{X})\le 1$ elsewhere. We define
		\begin{equation}
			\begin{aligned}\label{eq:degree1vertex:E7:c}
				&  \overline{\mathscr{B}_{-1,\mathscr{Q}}}(\mathscr{X})\ :=\  \mathscr{A}(\mathscr{X})\int _{-\mathscr{R}' }^{\mathscr{R}' }\mathrm{d}\nu [1-\chi_{(-\mathscr{R},\mathscr{R})}(\nu)] \int_{\mathbb{T}^d}\mathrm{d}k \mu_1H{\tilde{K}_1(\nu)} e^{{\bf i}\nu(\omega(k)+\sigma\omega(k^*+k)-2\pi\nu{\bf i}\mathscr{X}}[-{\bf i}2\pi \nu]^{-1}, \\
				& \ \ \frac{\mathrm{d}^2}{\mathrm{d}\mathscr{X}^2 }\overline{\mathscr{B}_{-1,\mathscr{Q}}}=\frac{\mathrm{d}}{\mathrm{d}\mathscr{X} }\overline{\mathscr{B}_{0,\mathscr{Q}}}=\overline{\mathscr{B}_{1,\mathscr{Q}}},
			\end{aligned}
		\end{equation}
		for some sufficiently large constant $\mathscr{R}'>0.$
		Let  $\mathscr{E}(\mathscr{X})$  be the solution of the Laplace equation $\frac{\mathrm{d}^2}{\mathrm{d}\mathscr{X}^2 }\mathscr{E}(\mathscr{X})=\widetilde{\mathscr{B}_{1,\mathscr{Q}}}$ in $L^2(-\mathscr{Q}',\mathscr{Q}')$, and $\mathscr{E}(-\mathscr{Q}')=\mathscr{E}(\mathscr{Q}')=0.$ We take $\phi$ to be an arbitrary function in $L^2(-\mathscr{Q},\mathscr{Q})$. Let $\bar\phi$ be a solution of the Laplace equation $\bar\phi''=\phi$ in $(-\mathscr{Q},\mathscr{Q})$ and $\bar\phi(-\mathscr{Q})=\bar\phi(\mathscr{Q})=0.$ Since
		$$\int_{-\mathscr{Q}}^{\mathscr{Q}}\mathrm{d}\mathscr{X}\mathscr{E}'\phi=\int_{-\mathscr{Q}}^{\mathscr{Q}}\mathrm{d}\mathscr{X}\overline{\mathscr{B}_{0,\mathscr{Q}}}\phi,\ \ \ \int_{-\mathscr{Q}}^{\mathscr{Q}}\mathrm{d}\mathscr{X}\mathscr{E}''\bar\phi'=\int_{-\mathscr{Q}}^{\mathscr{Q}}\mathrm{d}\mathscr{X}\overline{\mathscr{B}_{1,\mathscr{Q}}}\bar\phi',$$
		we infer $\|\mathscr{E}'\|_{L^2(-\mathscr{Q},\mathscr{Q})}=\|\overline{\mathscr{B}_{0,\mathscr{Q}}}\|_{L^2(-\mathscr{Q},\mathscr{Q})}$. As $\overline{\mathscr{B}_{1,\mathscr{Q}}}\in H_0^1(-\mathscr{Q}',\mathscr{Q}')$, we obtain \begin{equation}
			\label{eq:degree1vertex:E7:d}\|\mathscr{E}'\|_{L^2(-\mathscr{Q},\mathscr{Q})}\le \|\mathscr{E}\|_{L^2(-\mathscr{Q}',\mathscr{Q}')}.\end{equation}
		For any $\varphi\in H_0^1(-\mathscr{Q}',\mathscr{Q}')$, we have
		$\int_{-\mathscr{Q}'}^{\mathscr{Q}'}\mathrm{d}\mathscr{X}\mathscr{E}'\varphi=\int_{-\mathscr{Q}'}^{\mathscr{Q}'}\mathrm{d}\mathscr{X}\overline{\mathscr{B}_{0,\mathscr{Q}}}\varphi,$
		yielding
		$$\left|\int_{-\mathscr{Q}'}^{\mathscr{Q}'}\mathrm{d}\mathscr{X}\mathscr{E}\phi\right|=\left|\int_{-\mathscr{Q}'}^{\mathscr{Q}'}\mathrm{d}\mathscr{X}\overline{\mathscr{B}_{-1,\mathscr{Q}}}\phi\right|\le \|\overline{\mathscr{B}_{-1,\mathscr{Q}}}\|_{L^2(-\mathscr{Q}',\mathscr{Q}')}\|\varphi\|_{H_0^1(-\mathscr{Q}',\mathscr{Q}')}.$$
		We now deduce, \begin{equation}
			\label{eq:degree1vertex:E7:e} \|\mathscr{D}\|_{L^2(-\mathscr{R},\mathscr{R})}=\|\mathscr{D}\|_{H^{-1}(-\mathscr{R},\mathscr{R})} \le \|\overline{\mathscr{U}_{-1,\mathscr{R}}}\|_{L^2(-\mathscr{R},\mathscr{R})} \ \lesssim  \left|\int _{-\infty }^{\infty}\mathrm{d}\nu  \left| \int_{\mathbb{T}^d}\mathrm{d}k \mu_1H{\tilde{K}_1(\nu)} e^{{\bf i}\nu(\omega(k)+\sigma\omega(k^*+k)}\right|^q\right|^\frac1q .\end{equation}
		Combining \eqref{eq:degree1vertex:E7:a}, \eqref{eq:degree1vertex:E7:d}, \eqref{eq:degree1vertex:E7:e}, we obtain
		\begin{equation}
			\label{eq:degree1vertex:E7:a:1}\begin{aligned}
				&\left(\int_{(-\mathscr{Q},\mathscr{Q})}\mathrm{d}\mathscr{X}\left|\int _{-\mathscr{R}' }^{\mathscr{R}' }\mathrm{d}\nu \int_{\mathbb{T}^d}\mathrm{d}k \mu_1H{\tilde{K}_1(\nu)} e^{{\bf i}\nu(\omega(k)+\sigma\omega(k^*+k)-2\pi\nu{\bf i}\mathscr{X}}\right|^2\right)^\frac12 \\
				\lesssim\ &  \left[\int _{-\infty }^{\infty }\mathrm{d}\nu \left|\int_{\mathbb{T}^d}\mathrm{d}k \mu_1H{\tilde{K}_1(\nu)} e^{{\bf i}\nu(\omega(k)+\sigma\omega(k^*+k))}\right|^q\right]^\frac1q.\end{aligned}
		\end{equation}
		As this inequality holds for all $\mathscr{R}'>0$ and thus, we can let $ \mathscr{R}'\to \infty$  while \eqref{eq:degree1vertex:E7:a:1} still holds true. 
		We next provide an  estimate of the integral  on the right hand side of \eqref{eq:degree1vertex:E7:a:1}. Let ${G}(\nu)$  be a test function in $L^p(\mathbb{R})$ with $\frac1p+\frac1q=1$, we will try to develop
		
		\begin{equation}
			\label{eq:degree1vertex:E7:a:2}\begin{aligned}
				&\  \Big|\int_{\mathbb{R}} {\mathrm{d}\nu}	 \int_{\mathbb{T}^d}\mathrm{d}k \mu_1H(k){\tilde{K}_1(\nu)} e^{{\bf i}\nu(\omega(k)+\sigma\omega(k^*+k))}{G}(\nu)\Big|.
			\end{aligned}
		\end{equation}
	As $H\in L^{\Im}(\mathbb{T}^d)$, we   study the dual norm $L^{\frac{\Im}{\Im-1}}$ with $\frac{\Im}{\Im-1}=2{\Im_o}$
		\begin{equation}
			\label{eq:degree1vertex:E7:a:3}\begin{aligned}
				& \Big\|\int_{\mathbb{R}} {\mathrm{d}\nu}	 \mu_1{\tilde{K}_1(\nu)} e^{{\bf i}\nu(\omega(k)+\sigma\omega(k^*+k))}{G}(\nu)\Big\|_{L^{2{\Im_o}}}^{2{\Im_o}},
			\end{aligned}
		\end{equation}
		which we will expand and bound as follows
		\begin{equation}
			\label{eq:degree1vertex:E7:a:4}\begin{aligned}
				& \left|\int_{\mathbb{R}^{\Im_o}} \prod_{j=1}^{\Im_o}{\mathrm{d}\nu_j}\int_{\mathbb{R}^{\Im_o}} \prod_{j=1}^{\Im_o}{\mathrm{d}\nu_j'}	\int_{\mathbb{T}^d}\mathrm{d}k\mu_1^{2\Im_o}\prod_{j=1}^{\Im_o}{\tilde{K}_1(\nu_j)} e^{{\bf i}\big(\sum_{j=1}^{\Im_o}\nu_j\big)(\omega(k)+\sigma\omega(k^*+k))}\prod_{j=1}^{\Im_o}{G}(\nu_j)\right.\\
				&\left.\times \prod_{j=1}^{\Im_o}{\tilde{K}_1(\nu'_j)} e^{-{\bf i}\big(\sum_{j=1}^{\Im_o}\nu_j'\big)(\omega(k)+\sigma\omega(k^*+k))}\prod_{j=1}^{\Im_o}\overline{{G}(\nu'_j)}\right|\\
				\lesssim\	& \left[\int_{\mathbb{R}} {\mathrm{d}\nu}|{G}(\nu)|^p\right]^\frac{2\Im_o}{p} \left[\int_{\mathbb{R}} {\mathrm{d}\nu}e^{-q\lambda^2 |\nu|/(2\Im_o)}|\mathfrak{F}_\omega(\nu)|^\frac{q}{2\Im_o}\right]^\frac{2\Im_o}{q},
			\end{aligned}
		\end{equation}
		where
		\begin{equation}
			\label{eq:degree1vertex:E7:a:5}\begin{aligned}
				\mathfrak{F}_\omega(\nu) \ = \ & \int_{\mathbb{T}^d}\mathrm{d}k \mu_1^2(k)\  e^{{\bf i}\nu(\omega(k)+\sigma\omega(k^*+k))}.
			\end{aligned}
		\end{equation}
		
	Now, the same estimates used in the proof of Lemma 45 of \cite{staffilani2021wave} can be repeated, we finally obtain

\begin{equation}
	\label{eq:de1}
	\begin{aligned}
		\left\|	\int_{\mathbb{T}^d}\mathrm{d}k\frac{\mu_1|H|}{|\omega(k)+\sigma \omega(k^*+k)  +2\pi\mathscr{X}- \alpha + {\bf i}\lambda^2|}\right\|_{L^2(-\mathscr{Q},\mathscr{Q})}\
		\lesssim \ 	& \frac{{\langle\mathrm{ln}\lambda\rangle}^2\|H\|_{L^\Im}}{|d(\mathfrak{S})|^3}.
	\end{aligned}
\end{equation}
	\end{proof}
\end{lemma}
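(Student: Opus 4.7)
The plan is to derive both \eqref{eq:degree1vertex} and \eqref{eq:degree1vertex:a} from a single master inequality by absorbing the cut-off function into a scalar $\mu_1(k)$ and the effective ``distance-to-singular-manifold'' factor ($\langle\ln|\lambda|\rangle^{\eth}$ or $\lambda^{-\eth'_l}$) into a single quantity $d(\mathfrak{S})$, reducing the problem to bounding
\[
\left\|\int_{\mathbb{T}^d}\mathrm{d}k\,\frac{\mu_1|H|}{|\omega(k)+\sigma\omega(k^*+k)+2\pi\mathscr{X}-\alpha+{\bf i}\lambda^2|}\right\|_{L^2(-\mathscr{R},\mathscr{R})} \lesssim \frac{\langle\ln\lambda\rangle^2}{d(\mathfrak{S})^3}\|H\|_{L^\Im(\mathbb{T}^d)} .
\]
The first step is to linearize the modulus by the Bessel representation \eqref{eq:degree1vertex:E3}, which writes $|\eta+{\bf i}\lambda^2|^{-1}=\pi^{-1}\int_\mathbb{R}\mathrm{d}\nu\,e^{-{\bf i}\eta\nu}\tilde K_1(\nu)$ with the logarithmic pointwise control \eqref{eq:degree1vertex:E6}. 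Substituting $\eta=\omega(k)+\sigma\omega(k^*+k)-\alpha$ turns the $k$-integral into a Fourier integral in $\nu$.

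Next I would split the $\nu$-integration with a cutoff $\chi_{(-\mathscr R,\mathscr R)}(\nu)$ into a low-frequency and a high-frequency piece. The low-frequency piece contributes via Plancherel directly in $\mathscr{X}$ and is harmless thanks to the compact support in $\nu$. For the high-frequency piece, I would follow the duality argument already sketched in the proof: introduce the primitives $\mathscr{B}_{-1,\mathscr{Q}},\mathscr{B}_{0,\mathscr{Q}},\mathscr{B}_{1,\mathscr{Q}}$ with $\mathrm{d}^2/\mathrm{d}\mathscr{X}^2\mathscr{B}_{-1}=\mathrm{d}/\mathrm{d}\mathscr{X}\mathscr{B}_0=\mathscr{B}_1$, solve an auxiliary Laplace equation $\mathscr{E}''=\overline{\mathscr{B}_{1,\mathscr{Q}}}$ on $(-\mathscr{Q}',\mathscr{Q}')$ with zero boundary values, and use the chain of identifications $\|\mathscr{E}'\|_{L^2}=\|\overline{\mathscr{B}_{0,\mathscr{Q}}}\|_{L^2}$ together with $\|\mathscr{E}\|_{L^2}\leq \|\overline{\mathscr{B}_{-1,\mathscr{Q}}}\|_{L^2}$ via $H_0^1$--$H^{-1}$ duality. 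Passing $\mathscr{R}'\to\infty$ then yields \eqref{eq:degree1vertex:E7:a:1}, i.e.\ the $L^2(-\mathscr{Q},\mathscr{Q})$ norm in $\mathscr{X}$ is controlled by an $L^q_\nu$ norm of the pure Fourier integral $\int_{\mathbb{T}^d}\mu_1 H\,\tilde K_1(\nu)e^{{\bf i}\nu(\omega(k)+\sigma\omega(k^*+k))}\mathrm{d}k$ for any $q>2$ of our choosing.

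The heart of the argument is then the dispersive estimate for this $L^q_\nu$ norm: by duality against a test $G\in L^p(\mathbb{R})$ with $p=q/(q-1)$ and Hölder, I can reduce to estimating, for $\Im=2\Im_o/(2\Im_o-1)$, the $L^{2\Im_o}_k$ norm of $\int_\mathbb{R}\mu_1\tilde K_1(\nu)e^{{\bf i}\nu(\omega(k)+\sigma\omega(k^*+k))}G(\nu)\mathrm{d}\nu$. Opening the $2\Im_o$-th power by a $TT^*$-type Fubini produces the oscillatory integral
\[
\mathfrak{F}_\omega(\nu)=\int_{\mathbb{T}^d}\mu_1^2(k)\,e^{{\bf i}\nu(\omega(k)+\sigma\omega(k^*+k))}\,\mathrm{d}k,
\]
evaluated at the combined time variable $\sum_j\nu_j-\sum_j\nu'_j$; here the presence of $\tilde\Psi_1^a$ or $\tilde\Psi_1^b$ in $\mu_1$ cuts off the dangerous part of $\mathbb{T}^d$ near the singular manifold $\mathbf{S}$ from Definition \ref{SingularManifold}, so that the quantitative decay of $\mathfrak{F}_\omega(\nu)$ can be extracted from the $l^3$/$l^4$ Bessel bounds of Lemmas \ref{Lemm:Improved:Bessel3}--\ref{Lemm:Improved:Bessel4} at the price of one power of $d(\mathfrak{S})^{-3}$. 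Combining this with $\int e^{-q\lambda^2|\nu|/(2\Im_o)}\,\mathrm{d}\nu\lesssim \lambda^{-2}$ absorbed into at most two $\langle\ln\lambda\rangle$ factors (one from $\tilde K_1$, one from Bessel interpolation) yields the claimed $\langle\ln\lambda\rangle^2$ bound. The main obstacle is thus the sharp tracking of the powers of $\langle\ln\lambda\rangle$ and $d(\mathfrak{S})$ through the interpolation between the $l^2$, $l^3$ and $l^4$ estimates: I would handle this exactly as in Lemma 45 of \cite{staffilani2021wave}, choosing $q$ slightly above $2$ so that the weighted $\nu$-integral converges with only a logarithmic cost while the dispersive decay of $\mathfrak{F}_\omega$ still beats the degeneracy near $\mathbf{S}$.
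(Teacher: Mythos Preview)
Your proposal is correct and follows essentially the same approach as the paper: the Bessel-kernel linearization of $|\eta+{\bf i}\lambda^2|^{-1}$, the low/high-$\nu$ splitting, the Laplace-equation/$H^{-1}$ duality trick for the high-frequency piece to pass to an $L^q_\nu$ norm, the $TT^*$ expansion in $k$ producing $\mathfrak{F}_\omega$, and the final appeal to the dispersive estimates and Lemma~45 of \cite{staffilani2021wave} for the bookkeeping of $\langle\ln\lambda\rangle$ and $d(\mathfrak{S})$ powers are exactly the steps the paper carries out. The only cosmetic difference is that the paper phrases the low-$\nu$ piece via the same $L^q$ bound rather than invoking Plancherel explicitly, but this changes nothing.
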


	In the  definition of $Q_1^{colli}$ below, $F_1 $ is a function of $k_1$, while in the definition of $Q_2^{colli}$, $F_1 $ is a function of $k_0$. 

\begin{lemma}
	\label{Lemma:Resonance1}
	Let $1>\upsilon	>0$ be a positive constant, suppose that $d\ge 2$, we define the following operators, for all $k_0,k_1,k_2,k_0',k_1',k_2'\in \mathbb{T}^d$, $\sigma_0,\sigma_1,\sigma_2,\sigma_0',\sigma_1',\sigma_2'\in\{\pm1\}$,  and for any constant $T_o>0$
	\begin{equation}\label{Lemma:Resonance1:a}
		\begin{aligned} 
			& 
			F_1,F_2\in L^2(\mathbb{T}^{2d}) \longrightarrow \mathcal{Q}^{colli}_1[F_1,F_2,\sigma_0,\sigma_1,\sigma_2,\sigma_0',\sigma_1',\sigma_2',s](k_0,k_0'):=\\\
			& e^{{\bf i}s\sigma_0 \omega(k_0)}\iiiint_{(\mathbb{T}^d)^4}\!\! \mathrm{d} k_1  \mathrm{d} k_2\mathrm{d} k_1'  \mathrm{d} k_2' \,
			\delta(\sigma_0k_0+\sigma_1k_1+\sigma_2k_2) \delta(\sigma_0'k_0'+\sigma_1'k_1'+\sigma_2'k_2') 
			e^{{\bf i}s\sigma_1 \omega(k_1)}\\
			&\times  {F}_1(k_1,k_1') F_2(k_2,k_2')e^{{\bf i} s \sigma_2 \omega(k_2)}\sqrt{ |\sin(2\pi k_1^1)||\sin(2\pi k_2^1)||\sin(2\pi k_1'^1)||\sin(2\pi k_2'^1)|},
		\end{aligned}
	\end{equation}
	\begin{equation}\label{Lemma:Resonance1:b}
		\begin{aligned} 
			& 
			F_1\in L^\infty(\mathbb{T}^{2d}),F_2\in L^2(\mathbb{T}^{2d}) \longrightarrow \mathcal{Q}^{colli}_2[F_1,F_2,\sigma_0,\sigma_1,\sigma_2,\sigma_0',\sigma_1',\sigma_2',s](k_0,k_0'):=\\\
			& e^{{\bf i}s\sigma_0 \omega(k_0)}\iiiint_{(\mathbb{T}^d)^4}\!\! \mathrm{d} k_1  \mathrm{d} k_2\mathrm{d} k_1'  \mathrm{d} k_2' 
			\delta(\sigma_0k_0+\sigma_1k_1+\sigma_2k_2) \delta(\sigma_0k_0'+\sigma_1k_1'+\sigma_2k_2') 
			e^{{\bf i}s\sigma_1 \omega(k_1)}|\sin(2\pi k_1^1)|\\
			&\times \sqrt{|\sin(2\pi k_0^1)||\sin(2\pi k_0'^1)|}  {F}_1(k_0,k_1') F_2(k_2,k_2')e^{{\bf i} s \sigma_2 \omega(k_2)} \sqrt{|\sin(2\pi k_2^1)||\sin(2\pi k_2'^1)|},
		\end{aligned}
	\end{equation}
	\begin{equation}\label{Lemma:Resonance1:b}
		\begin{aligned} 
			& 
			F_1\in L^2(\mathbb{T}^{2d}) \longrightarrow \mathcal{Q}^{colli}_3[F_1,\sigma_0,\sigma_1,\sigma_2,\sigma_0',\sigma_1',\sigma_2',s](k_0,k_0'):=\\\
			& e^{{\bf i}s\sigma_0 \omega(k_0)}\iiiint_{(\mathbb{T}^d)^4}\!\! \mathrm{d} k_1  \mathrm{d} k_2\mathrm{d} k_1'  \mathrm{d} k_2' 
			\delta(\sigma_0k_0+\sigma_1k_1+\sigma_2k_2) \delta(\sigma_0k_0'+\sigma_1k_1'+\sigma_2k_2') \\
			&\times
			e^{{\bf i}s\sigma_1 \omega(k_1)}\sqrt{|\sin(2\pi k_0^1)||\sin(2\pi k_1^1)| |\sin(2\pi k_0'^1)||\sin(2\pi k_1'^1)|}\\
			&\times  F_1(k_1,k_1')e^{{\bf i} s \sigma_2 \omega(k_2)} \sqrt{|\sin(2\pi k_2^1)||\sin(2\pi k_2'^1)|}.
		\end{aligned}
	\end{equation}
	Moreover, we also define
	
	\begin{equation}
		\label{Lemma:Resonance1:1}\begin{aligned}
			&	F_1,F_2\in L^2(\mathbb{T}^{2d}), \phi\in C^\infty_c(\mathbb{T}^{2d}\times(-\infty,\infty))\longrightarrow  \ \mathbb{O}^1_{\upsilon, \lambda}[F_1,F_2][\phi] :=\\
			&  \int_{-T_o\lambda^{-2}}^{T_o\lambda^{-2}} \frac{\mathrm{d}s}{\pi}e^{-\upsilon |s|}\iiint_{(\mathbb{T}^d)^3}\!\! \mathrm{d} k_0  \mathrm{d} k_1  \mathrm{d} k_2\iiint_{(\mathbb{T}^d)^3}\!\! \mathrm{d} k_0'  \mathrm{d} k_1'  \mathrm{d} k_2'\\
			&\times \Big[\delta(\sigma_0k_0+\sigma_1k_1+\sigma_2k_2)\delta(\sigma_0k_0'+\sigma_1k_1'+\sigma_2k_2')
			e^{{\bf i}s[\sigma_0 \omega(k_0)+\sigma_1 \omega(k_1)+\sigma_2 \omega(k_2)]}\\
			&\times \ \ \sqrt{|\sin(2\pi k_1^1)||\sin(2\pi k_2^1)||\sin(2\pi k_1'^1)||\sin(2\pi k_2'^1)|}	{F}_1(k_1,k_1') F_2(k_2,k_2')\phi(k_0,k_0',s)\Big],
		\end{aligned}	
	\end{equation}
	and
	\begin{equation}
		\label{Lemma:Resonance1:1}\begin{aligned}
			&	F_2\in L^2(\mathbb{T}^{2d}), F_1\in  L^\infty(\mathbb{T}^{2d}), \phi\in C^\infty_c(\mathbb{T}^{2d}\times(-\infty,\infty))\longrightarrow  \ \mathbb{O}^2_{\upsilon, \lambda}[F_1,F_2][\phi] :=\\
			& \int_{-T_o\lambda^{-2}}^{T_o\lambda^{-2}} \frac{\mathrm{d}s}{\pi}e^{-\upsilon |s|} \iiint_{(\mathbb{T}^d)^3}\!\! \mathrm{d} k_0  \mathrm{d} k_1  \mathrm{d} k_2\iiint_{(\mathbb{T}^d)^3}\!\! \mathrm{d} k_0' \mathrm{d} k_1'  \mathrm{d} k_2' \,\\
			&\times \Big[\delta(\sigma_0k_0+\sigma_1k_1+\sigma_2k_2)\delta(\sigma_0k_0'+\sigma_1k_1'+\sigma_2k_2')
			e^{{\bf i}s[\sigma_0 \omega(k_0)+\sigma_1 \omega(k_1)+\sigma_2 \omega(k_2)]}\\
			&\times \sqrt{|\sin(2\pi k_0^1)||\sin(2\pi k_1^1)||\sin(2\pi k_2^1)||\sin(2\pi k_0'^1)||\sin(2\pi k_1'^1)||\sin(2\pi k_2'^1)|}\\
			&\times	{F}_1(k_0,k_1') F_2(k_2,k_2')\phi(k_0,k_0',s)\Big],
		\end{aligned}	
	\end{equation}
	where $k_0=(k_0^1,\cdots,k_0^d),k_1=(k_1^1,\cdots,k_1^d),k_2=(k_2^1,\cdots,k_2^d),$ $k_0'=(k_0'^1,\cdots,k_0'^d),$ $k_1'=(k_1'^1,\cdots,k_1'^d),$ $k_2'=(k_2'^1,\cdots,k_2'^d)\in\mathbb{T}^d.$

	The following claims then hold true.
	\begin{itemize}
		\item[(a)]  
		There exists a constant $\mathscr{M}>1$ such that we  have
		\begin{equation}
			\label{Lemma:Resonance1:2:bis}\begin{aligned}
				& \lim_{\upsilon\to 0} \int_{-T_o\lambda^{-2}}^{T_o\lambda^{-2}} \frac{\mathrm{d}s}{\pi}e^{-\upsilon |s|}\Big\|\mathcal {Q}^{colli}_1[F_1,F_2,\sigma_0,\sigma_1,\sigma_2,s]\Big\|_{L^2}^\mathscr{M}\\
				\ = \ & \int_{-T_o\lambda^{-2}}^{T_o\lambda^{-2}} \frac{\mathrm{d}s}{\pi}\Big\|\mathcal {Q}^{colli}_1[F_1,F_2,\sigma_0,\sigma_1,\sigma_2,s]\Big\|_{L^2}^\mathscr{M},\end{aligned}	
		\end{equation}	
		\begin{equation}
			\label{Lemma:Resonance1:2:bis1}\begin{aligned}
				&\ \int_{-T_o\lambda^{-2}}^{T_o\lambda^{-2}} {\mathrm{d}s}\Big\|\mathcal {Q}^{colli}_1[F_1,F_2,\sigma_0,\sigma_1,\sigma_2,s]\Big\|_{L^2}^\mathscr{M} \lesssim \|F_1\|_{L^2}^\mathscr{M}  \|F_2\|_{L^2}^\mathscr{M}. 
			\end{aligned}
		\end{equation}
		and 	\begin{equation}
			\label{Lemma:Resonance1:2:bis1:a}\begin{aligned}
				\lim_{\upsilon\to 0}\mathbb{O}^1_{\upsilon, \lambda}[F_1,F_2][\phi]
				\ =  &\ \mathbb{O}^1_{0, \lambda}[F_1,F_2][\phi].
			\end{aligned}
		\end{equation}	
		\item[(b)] 
		Moreover, we also have
		
		\begin{equation}
			\label{Lemma:Resonance1:2}\begin{aligned}
				& \lim_{\upsilon\to 0} \int_{-T_o\lambda^{-2}}^{T_o\lambda^{-2}} \frac{\mathrm{d}s}{\pi}e^{-\upsilon |s|}\Big\|\mathcal {Q}^{colli}_2[F_1,F_2,\sigma_0,\sigma_1,\sigma_2,s]\Big\|_{L^2}^\mathscr{M}\\
				\ = \ & \int_{-T_o\lambda^{-2}}^{T_o\lambda^{-2}} \frac{\mathrm{d}s}{\pi}\Big\|\mathcal {Q}^{colli}_2[F_1,F_2,\sigma_0,\sigma_1,\sigma_2,s]\Big\|_{L^2}^\mathscr{M},\end{aligned}	
		\end{equation}	
		\begin{equation}
			\label{Lemma:Resonance1:2:bis2}\begin{aligned}
				&\ \int_{-T_o\lambda^{-2}}^{T_o\lambda^{-2}} {\mathrm{d}s}\Big\|\mathcal {Q}^{colli}_2[F_1,F_2,\sigma_0,\sigma_1,\sigma_2,s]\Big\|_{L^2}^\mathscr{M} \\
				\lesssim & \|\sqrt{ |\sin(2\pi k^1)\sin(2\pi k'^1)}|F_1(k,k')\|_{L^\infty\times L^2}^\mathscr{M}\|F_2\|_{L^2}^\mathscr{M}, 
			\end{aligned}
		\end{equation}
		with $k=(k^1,\cdots,k^d),k'=(k'^1,\cdots,k'^d)\in\mathbb{T}^d,$
		and 	\begin{equation}
			\label{Lemma:Resonance1:2:bis2:a}\begin{aligned}
				\lim_{\upsilon\to 0}\mathbb{O}^2_{\upsilon, \lambda}[F_1,F_2][\phi]
				\ =  &\ \mathbb{O}^2_{0, \lambda}[F_1,F_2][\phi].
			\end{aligned}
		\end{equation}	
		\item[(c)] The following bounds hold true for the collision operators $\mathcal{Q}^{colli}_1$  \begin{equation}\label{Lemma:Resonance1:7}
			\begin{aligned} 
				& 
				\Big\|\mathcal {Q}^{colli}_1[F_1,F_2,\sigma_0,\sigma_1,\sigma_2,s]\Big\|_{L^\infty}\\
				\ \lesssim\	&	\Big\|\sqrt{|\sin(2\pi k^1)||\sin(2\pi k'^1)|} F_1(k,k')\Big\|_{L^2}\Big\|\sqrt{|\sin(2\pi k^1)||\sin(2\pi k'^1)|}  F_2(k,k')\Big\|_{L^2},
			\end{aligned}
		\end{equation}
		and $\mathcal{Q}^{colli}_2$
		
		\begin{equation}\label{Lemma:Resonance1:8}
			\begin{aligned} 
			&\Big\|\mathcal {Q}^{colli}_2[F_1,F_2,\sigma_0,\sigma_1,\sigma_2,s]\Big\|_{L^\infty}\\
				\lesssim\			&	\Big\|\sqrt{|\sin(2\pi k^1)||\sin(2\pi k'^1)|} F_1(k,k')\Big\|_{L^\infty}\Big\|\sqrt{|\sin(2\pi k^1)||\sin(2\pi k'^1)|} F_2(k,k') \Big\|_{L^2\times L^\infty}.
			\end{aligned}
		\end{equation}
		\item[(d)] 	For any $\infty\ge p> 2$
		\begin{equation}
			\label{Lemma:Resonance1:9}\begin{aligned}
				&\ \Big[\int_{-T_o\lambda^{-2}}^{T_o\lambda^{-2}} {\mathrm{d}s}\Big\|\mathcal {Q}^{colli}_3[F_1,\sigma_0,\sigma_1,\sigma_2,s]\Big\|_{L^p}^\mathscr{M}\Big]^\frac{1}{\mathscr{M}} \lesssim \|F_1\|_{L^2}. 
			\end{aligned}
		\end{equation}
		
	\end{itemize}
\end{lemma}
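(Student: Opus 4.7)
The four claims split naturally into two groups: the dominated-convergence statements (a)--(b) which replace $e^{-\upsilon|s|}$ by $1$ in the limit $\upsilon\to 0$, and the pure size estimates (c)--(d) together with the $L^{\mathscr{M}}_s$ control that feeds into (a)--(b). The plan is to establish the size estimates first, and then use them as dominating functions for the vanishing-regularization limit.

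I would begin with part (c), which is the quickest. For fixed $(k_0,k_0')$ the two momentum $\delta$'s in the definition of $\mathcal{Q}_1^{colli}$ and $\mathcal{Q}_2^{colli}$ leave only two free variables, say $k_1$ and $k_1'$. Writing the integrand in $\mathcal{Q}_1^{colli}$ as the product of $\sqrt{|\sin(2\pi k_1^1)||\sin(2\pi k_1'^1)|}F_1(k_1,k_1')$ and $\sqrt{|\sin(2\pi k_2^1)||\sin(2\pi k_2'^1)|}F_2(k_2,k_2')$, Cauchy--Schwarz in $(k_1,k_1')$ together with the translation-invariance of $L^2(\mathbb{T}^{2d})$ gives \eqref{Lemma:Resonance1:7}. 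The bound \eqref{Lemma:Resonance1:8} for $\mathcal{Q}_2^{colli}$ is the same argument: pull the $L^\infty$ factor $F_1(k_0,k_1')$ out with $\sqrt{|\sin(2\pi k_0^1)||\sin(2\pi k_1'^1)|}$, keep the $L^\infty_{k_0}L^2_{k_1'}$ norm on it after Cauchy--Schwarz in $k_1'$, and bound the remaining factor in $L^2$. The $|\sin|$ weights travel through unchanged, which is why the norm on the right-hand side is a mixed one.

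Next I would treat the $L^{\mathscr{M}}_s$ bounds \eqref{Lemma:Resonance1:2:bis1}, \eqref{Lemma:Resonance1:2:bis2} and the $L^p$-type bound \eqref{Lemma:Resonance1:9} of part (d). The key identity is Plancherel in the time variable $s$: for fixed $(k_0,k_0')$ the function $s\mapsto \mathcal{Q}_1^{colli}(k_0,k_0',s)$ is the Fourier transform (in $\tau$) of the measure
\[
\iiiint \delta(\sigma_0k_0+\sigma_1k_1+\sigma_2k_2)\,\delta(\sigma_0'k_0'+\sigma_1'k_1'+\sigma_2'k_2')\,\delta\!\left(\tau-\sum_{j=0}^2\sigma_j\omega(k_j)\right)\sqrt{\textstyle\prod|\sin|}\,F_1F_2\,\mathrm{d}k_1\mathrm{d}k_2\mathrm{d}k_1'\mathrm{d}k_2'.
\]
Integrating in $(k_0,k_0')$ and in $s$, Plancherel converts $\int_{\mathbb{R}}\|\mathcal{Q}_1^{colli}(\cdot,\cdot,s)\|_{L^2}^2\,\mathrm{d}s$ into an $L^2$ norm of this constrained integral. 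Resolving the momentum $\delta$'s eliminates $k_2$ and $k_2'$, and the energy $\delta$ together with Cauchy--Schwarz in the remaining free variables (using the coarea formula on a level set of $\omega$, which is nondegenerate away from the singular manifold $\mathbf{S}$ already excluded in the upstream cut-offs) produces the clean bound $\|F_1\|_{L^2}^2\|F_2\|_{L^2}^2$. This yields \eqref{Lemma:Resonance1:2:bis1} in the case $\mathscr{M}=2$; for general $\mathscr{M}>1$ one interpolates between this $L^2_s$ bound and the $L^\infty_s$ bound coming from part (c). The bound \eqref{Lemma:Resonance1:2:bis2} for $\mathcal{Q}_2^{colli}$ is entirely analogous, with the $L^\infty$ factor on $F_1$ extracted before Plancherel. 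Part (d) for $\mathcal{Q}_3^{colli}$ follows the same Plancherel scheme: the $L^2_s$ bound reduces to $\|F_1\|_{L^2}^2$ by the energy-$\delta$ argument, and interpolation with the straightforward $L^\infty$ bound yields the $L^p$ estimate for all $p>2$.

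Finally, with the $L^{\mathscr{M}}_s$ bounds in hand, the vanishing-regularization identities \eqref{Lemma:Resonance1:2:bis}, \eqref{Lemma:Resonance1:2}, \eqref{Lemma:Resonance1:2:bis1:a}, \eqref{Lemma:Resonance1:2:bis2:a} follow by dominated convergence: the family $\{e^{-\upsilon|s|}\}_{\upsilon>0}$ is uniformly bounded by $1$, converges pointwise to $1$, and the $s$-integrand in all four statements is absolutely integrable on $[-T_o\lambda^{-2},T_o\lambda^{-2}]$ thanks precisely to the bounds proved in the previous step (together, for \eqref{Lemma:Resonance1:2:bis1:a}--\eqref{Lemma:Resonance1:2:bis2:a}, with the observation that $\phi\in C_c^\infty$ can be absorbed in $L^\infty_{k_0,k_0',s}$). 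I expect the main obstacle to be verifying the coarea-type bound on the energy surface $\{\sum\sigma_j\omega(k_j)=\tau\}$: $\omega$ has critical points on $\mathbf{S}$, and one must check that the cut-offs built into the definitions (inherited from \cref{Propo:Phi,Propo:Phi3A}) are compatible with the estimate and do not reintroduce mass near the ghost manifold, so that the surface measure can be controlled uniformly in $\tau$ and the final constants in \eqref{Lemma:Resonance1:2:bis1}--\eqref{Lemma:Resonance1:9} are finite and universal.
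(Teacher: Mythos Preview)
The paper omits the argument and refers to Lemma 50 of \cite{staffilani2021wave}, so we must compare your plan to the machinery the paper has set up for exactly this purpose, namely the oscillatory-integral decay estimates of Lemmas~\ref{Lemm:Ker:Bessel2}--\ref{Lemm:AnotherKernel2}.

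Your treatment of part (c) via Cauchy--Schwarz is fine and is surely what is intended. The substantive gap is in your plan for the $L^{\mathscr{M}}_s$ bounds \eqref{Lemma:Resonance1:2:bis1}, \eqref{Lemma:Resonance1:2:bis2}, \eqref{Lemma:Resonance1:9}. You propose Plancherel in $s$ followed by a coarea bound on the level sets of $\sum_j\sigma_j\omega(k_j)$, and you justify the coarea step by saying that the singular manifold $\mathbf{S}$ is ``already excluded in the upstream cut-offs.'' That is not correct: the operators $\mathcal{Q}_j^{colli}$ in the statement carry \emph{no} cut-offs $\Phi_1^a,\Phi_1^b$ from Proposition~\ref{Propo:Phi} or Definition~\ref{def:Phi3}; the only weights present are the factors $\sqrt{|\sin(2\pi k_j^1)|}$. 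These vanish at $k_j^1\in\{0,\pm\tfrac12\}$ but not at $k_j^1=\pm\tfrac14$, and they impose nothing at all on the remaining components $k_j^2,\ldots,k_j^d$, whereas $\mathbf{S}$ in Definition~\ref{SingularManifold} involves all of $\{0,\pm\tfrac14,\pm\tfrac12\}$ in each coordinate. So your coarea step is not supplied with the nondegeneracy you assume, and the argument as written does not close.

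The route that matches the infrastructure of the paper is different: one proves a \emph{pointwise-in-$s$} decay estimate $\|\mathcal{Q}_j^{colli}(\cdot,\cdot,s)\|\lesssim \langle s\rangle^{-\beta}$ for some $\beta>0$ and then integrates in $s$ (choosing $\mathscr{M}$ large enough that $\beta\mathscr{M}>1$). The decay comes from writing the $k_1$-integral (after resolving the momentum $\delta$) as an oscillatory integral of exactly the type handled in Lemmas~\ref{Lemm:Ker:Bessel3}, \ref{Lemm:Ker:Bessel4} and especially Lemma~\ref{Lemm:AnotherKernel2}, where the $|\sin(2\pi k^1)|$ factors play the role of the kernel $|\sin(\xi_1)|^n$ in $\mathfrak{F}^{Ker}$ and $\mathfrak{F}^{Kern}$; it is precisely these lemmas (not any coarea argument) that supply the needed time decay without assuming a full excision of $\mathbf{S}$. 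Once this decay is in hand, your dominated-convergence argument for the $\upsilon\to 0$ limits in (a) and (b) is correct as stated.
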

\begin{proof}
	The proof is quite similar as that of Lemma 50 of \cite{staffilani2021wave} and  is hence omitted. 
\end{proof}
\begin{remark}\label{Remark:Resonance} 	
	Let us remark that the problem of analyzing resonance manifolds, as well as the near-resonance consideration, is an interesting research topic by itself and have been studied in  \cite{CraciunBinh,CraciunSmithBoldyrevBinh,GambaSmithBinh,germain2020optimal,nguyen2017quantum,ToanBinh,PomeauBinh,rumpf2021wave,Binh1}. 
\end{remark}
\section{Feynman diagrams}
\label{Duhamel}

\begin{figure}
	\centering
	\includegraphics[width=.99\linewidth]{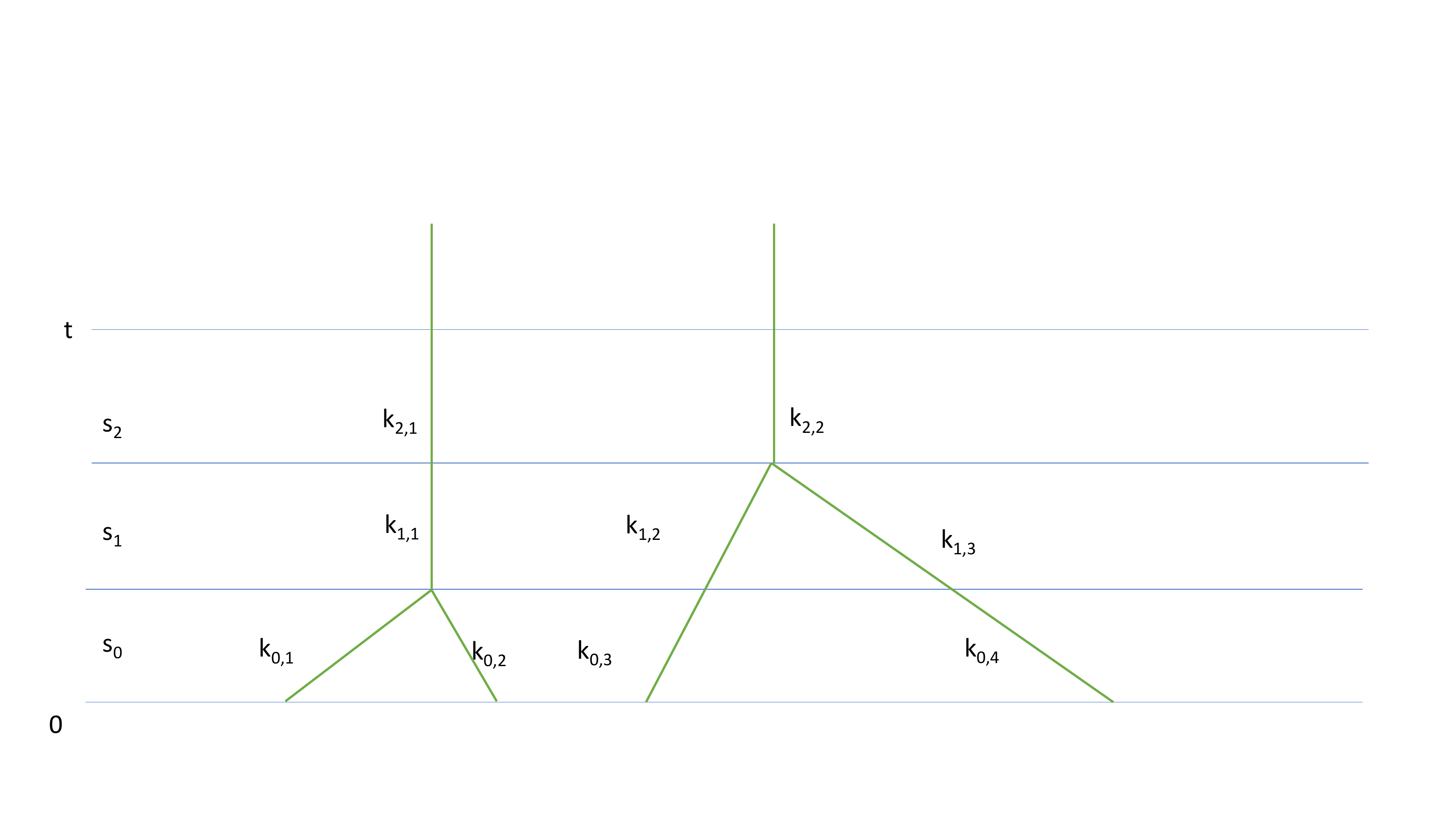}
	\caption{An example of a Feynman diagram. At time slice $s_0$, the edges are $k_{0,1},k_{0,2},k_{0,3},k_{0,4}$, with the signs $-,-,+,+$. At time slice $s_1$, the edges are $k_{1,1},k_{1,2},k_{1,3}$, with the signs $-,+,+$.  At time slice $s_2$, the edges are $k_{2,1},k_{2,2}$, with the signs $-,+$. We have $-k_{1,1}+k_{0,1}+k_{0,2}=0$ and $k_{2,2}-k_{1,2}-k_{1,3}=0$.}
	\label{Fig1}
\end{figure}

In this section, we will follow the construction in \cite{staffilani2021wave} for our  Feynman diagrams. The construction in \cite{staffilani2021wave} is as follows.
\begin{itemize}
	\item The time interval $[0,t]$ is divided into $n+1$ time slices $s_0$, $s_1$, $\dots$, $s_n$. We represent the time slices $s_0$, $s_1$, $\dots$, $s_n$ from the bottom to the top of the diagram.
	\item At each time slice $s_i$, only one Duhamel expansion is allowed, with the term containing $\Phi_{1,i}$.  At time slice $s_i$, we have a combination of one couple of the segments of time slice $s_{i-1}$ into one segment of time slice $s_i$. Those segments of time slice $s_{i-1}$ are denoted  by $k_{i-1,\rho_i},k_{i-1,\rho_i+1}$ and the one at time slice $s_i$ is denoted by $k_{i,\rho_i}$. 
	\item  If  $i$ is the index of time slice $s_i$, the  combination happens at the segment  $\rho_i$.
	\item Moreover, the signs for $k_{i-1,\rho_i},k_{i-1,\rho_i+1}$  are denoted by $\sigma_{i-1,\rho_i},\sigma_{i-1,\rho_i+1}$. We then have $\sigma_{i,\rho_i}k_{i,\rho_i}+\sigma_{i-1,\rho_i}k_{i-1,\rho_i}+\sigma_{i-1,\rho_i+1}k_{i-1,\rho_i+1}=0.$
	\item The number of segments at time slice $s_{i}$ is $2+n-i$. They are indexed by $k_{i,1},\cdots k_{i,2+n-i}$.  
	\item We index the segments in two consecutive time slices $s_{i-1}$ and $s_i$ as follows: $k_{i,l}=k_{i-1,l}, \sigma_{i,l}=\sigma_{i-1,l}, $ for  $l\in\{1,\cdots,\rho_i-1\}$ and $k_{i,l}=k_{i-1,l+1}, $ $ \sigma_{i,l}=\sigma_{i-1,l+1}, $ for  $l\in\{\rho_i+1,\cdots,2+{n-i}\}$.  
	\item In the sequel, we will identify an edge with its attached momentum.
\end{itemize}

\subsection{General diagrams}\label{Sec:FirstDiagrams} Below we construct the diagrams corresponding to the expressions   $\mathcal{G}^0_n$, $\mathcal{G}^1_n$, $\mathcal{G}^2_n$, $\mathcal{G}^3_n$ following \cite{staffilani2021wave} (see Section \ref{Sec:Cum}).
\begin{figure}
	\centering
	\includegraphics[width=.99\linewidth]{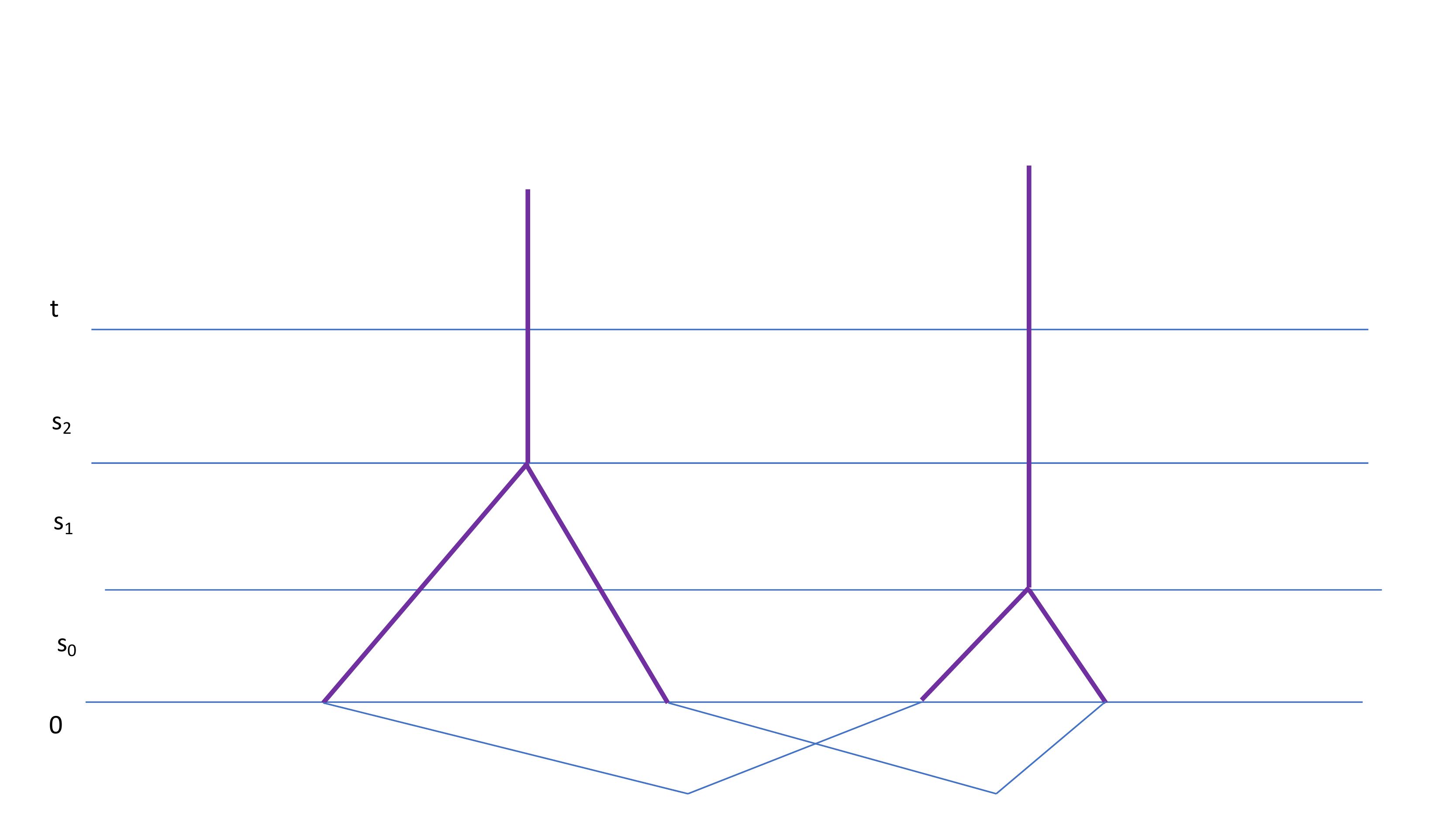}
	\caption{An example of a Feynman diagram with clusters. One new cluster vertex is added at the bottom of the diagram. The arrows correspond to the {\it first assigned orientation.}}
	\label{Fig2}
\end{figure}
\begin{itemize}
	\item For each  function $\square\Big(\sum_{j\in K} k_{0,j}\sigma_{0,j}=\mathscr{K}_K\Big)$ in $\mathcal{G}^0_n$, $\mathcal{G}^1_n$, $\mathcal{G}^2_n$, $\mathcal{G}^3_n$, we assume that  \begin{equation}
		\label{ClusterMomentaEps}\sum_{j\in K} k_{0,j}\sigma_{0,j}=\mathscr{K}_K \mbox{ with } |\mathscr{K}_K |\le \lambda^{2}\complement,
	\end{equation}   where $\mathscr{K}_K$ are   new momenta. We present this on the diagram by using an extra ``cluster vertex'' at the bottom of the graph, for each $K$. The role of the cluster vertex is to connect all of the vertices $ k_{0,j}$, $j\in K$, and we assume that each cluster vertex also carries a new ``cluster momenta'' $$\mathscr{K}_K=\sum_{j\in K} k_{0,j}\sigma_{0,j}$$  (see Figure \ref{Fig2}). For each pair $K=\{i,j\}\in S\in \mathcal{P}_{pair}(\{1,\cdots,n+2\}) $ appeared in the presentation of $Q_*^1$, we also make a cluster vertex that connect $k_{0,i}$ and $k_{0,j}$ and also set
\begin{equation}
	\label{ClusterMomentaEps}\sum_{l\in K=\{i,j\}} k_{0,l}\sigma_{0,l}=\mathscr{K}_K \mbox{ with } |\mathscr{K}_K |\le \lambda^{2}\complement.
\end{equation}  
 Those cluster momenta will be integrated at the end of our process.
	Graphs that have clusters consisting of only one element, which connects all of the vertices at the bottom, are defined to be  ``residual graphs''.

	\item The two identities   $\mathbf{1}(\sigma_{n,2}=1)$ and $\mathbf{1}(\sigma_{n,1}=-1)$ can be represented  by  setting $\sigma_{n,1}=-1$ and $\sigma_{n,2}=1$ for the signs of the two topmost segments, where the value of $k_{n,1}$, the momentum of the topmost segment on the left, is the same with $k_{n,2}$, the momentum of the topmost segment on the right.    The remaining signs  are assigned to the diagrams  using the same rule, from the top to the bottom. 
	\item The total phase of the diagram are written as  $\prod_{i=0}^n e^{-\mathrm{i} s_i \vartheta_i}$, where
	\begin{equation}\label{GraphSec:E3}\begin{aligned}
			\mathrm{Re}\vartheta_i\ =\  &\sum_{l=1}^{n-i+2}\sigma_{i,l}\omega(k_{i,l})-2\omega(k_{n,1}),\end{aligned}\end{equation}

	\item  In each diagram, there are two components. The  one on the left is  associated to the momentum $k_{n,1}$ and  the one on the right is associated to the momentum $k_{n,2}$. The left component is called  the ``minus diagram'' and the right one is called the ``plus diagram''. 
	We assume that the first splitting always happens on the plus diagram.

\end{itemize}

Next, we will introduce the way to integrate the diagrams, by adding an extra orientation to these diagrams, that indicates which edges are integrated first, which edges are integrated next. Diagrams that can be  integrated are called integrated graphs.
An integrated graph is denoted by $\mathfrak{G}=(\mathfrak{V},\mathfrak{E})$, in which $\mathfrak{V}$ and $\mathfrak{E}$ are the sets of vertices and edges.  The construction of an integrated graph is as follows.

\begin{itemize}
	\item We suppose that $\mathfrak{V}^0$ contains two initial vertices $v_{n+2}$ and $v_{n+1}$ and start with $\mathfrak{G}^0=(\mathfrak{V}^0,\mathfrak{E}^0)$. We set that $v_{n+1}$ to be in  the minus tree. At the first iteration, a new edge $e_1$ is attached to $v_{n+2}$. This edge belongs to the plus tree and contains two vertices, one of its vertices is $v_{n+2}$ and the other  vertex is denoted by $v_{n}$.   In the next iteration, a new edge $e_2$ is attached to $v_{n+1}$. This edge is in the minus tree and  its vertices are $v_{n+1}$ and $v_{n-1}$.  In our construction, the first combination is on the plus diagram, thus, the edge $e_1$ will be decomposed into another two edges. This leads to our construction for the next step: we attach  two edges to  $v_{n}$, from left to right.   The parameter $\rho_{n-1}$ contains the location of the next  vertex where the expansion happens. This vertex  is either $v_{n-1}$ or $v_{n-2}$. We keep attaching two new edges to either $v_{n-1}$ or $v_{n-2}$ in the next step. The above procedure can be repeated and all of the     vertices   can be labeled.  Those vertices are  $v_{n}, v_{n-1}, \cdots v_1$ from the top to the bottom of the diagrams. We denote $\mathfrak{V}_H=\{v_1,\cdots,v_{n+2}\}$. The set $\mathfrak{V}_H$ is called ``higher time  vertex set''. The vertices from $v_1$ to $v_n$ are called ``interacting vertices'' and the set of all of them is called ``interacting vertex set'' $\mathfrak{V}_I=\mathfrak{V}_H\backslash \mathfrak{V}_T=\mathfrak{V}_H\backslash \{v_{n+1},v_{n+2}\}.$ The set of the other vertices is called ``zero time vertex set'' and denoted by $\mathfrak{V}_0$. For each cluster $K$ in the definition of  $\mathcal{G}^0_n$, $\mathcal{G}^1_n$, $\mathcal{G}^2_n$, $\mathcal{G}^3_n$, we call the associated vertex by ``cluster vertex''. The  set $\mathfrak{V}_C$ of all cluster vertices is set to be the  ``cluster vertex set''. The vertex set of the final graph $\mathfrak{G}$ is then $\mathfrak{V}=\mathfrak{V}_T\cup\mathfrak{V}_H\cup\mathfrak{V}_0\cup\mathfrak{V}_C.$ 
	\item For any  pair of vertices $(v,v')$, if there exists a set of vertices $\{v_{i_j}\}_{j=1}^m$ such that $v_{i_1}=v$, $v_{i_m}=v'$, and $v_{i_l}$ is connected to $v_{i_{l+1}}$ by an edge, we define $\{v_{i_j}\}_{j=1}^m$ to be a ``path'' connecting $v$ and $v'$. For any $v\in\mathfrak{V}$,  the set of all edges attached to $v$ is denoted by $\mathfrak{E}(v) = \{e\in\mathfrak{E} \  | \ v\in e\}$.  For each vertex $v\in\mathfrak{V}_I$, we define two new sets $\mathfrak{E}_+(v)$ and $\mathfrak{E}_-(v)$, that satisfy $\mathfrak{E}(v)=\mathfrak{E}_+(v)\cup \mathfrak{E}_-(v)$, in which $\mathfrak{E}_+(v)=\{e\}$, where $e$ is the first edge attached to $v$ in the construction  and $\mathfrak{E}_-(v)=\mathfrak{E}(v)\backslash\{e\}$. Next, we define the cluster $\mathfrak{V}_c^{(j)}$ as a cluster  of  edges intersecting with the time slice $j$. The construction of $\mathfrak{V}_c^{(j)}$ is interatively done as follows. We suppose that the additional vertex $v_j$ combines the two edges in $\mathfrak{E}_-(v_j)$ into the new one in $\mathfrak{E}_+(v_j)$ and the two edges of $\mathfrak{E}_-(v_j)$ belong the cluster constructed in the previous iteration $\mathfrak{V}_c^{(j-1)}$. The cluster $\mathfrak{V}_c^{(j)}$ is constructed by joining all clusters of $\mathfrak{V}_c^{(j-1)}$ and replacing the three edges by the one in $\mathfrak{E}_+(v_j)$. The bottom one   $\mathfrak{V}_c^{(0)}$ is precisely $\mathfrak{V}_c$. We also define the function $\mathcal{T}:\mathfrak{V}\to [0,n+2]$, in which $\mathcal{T}(v)=j$ if $v=v_j$ for $j\in\{1,\cdots,n+2\}$,  and $\mathcal{T}(v)=0$ in the other cases. 
\item For an interacting vertex $v_i$, denoting by $k_0,k_1,k_2$ and $\sigma_0,\sigma_1,\sigma_2$ the  momenta associated to $v_i$ and the corresponding signs,  the total phase of $v_i$ is defined to be
$
\label{Def:Xvi}
\mathbf{X}_i \ = \ \mathbf{X}(v_i) \ = \ \sigma_0\omega(k_0)+\sigma_1\omega(k_1)+
\sigma_2\omega(k_2).
$ We denote with $\mathfrak{E}_{in}(v)$ by the set of all the edges $e$ in $\mathfrak{E}(v)$ such that the sign of their momenta $k_e$ is always $+1$, and  $\mathfrak{E}_{out}(v)$  the set of all the edges $e$ in $\mathfrak{E}(v)$ such that the sign of their momenta $k_e$ is always $-1$.
\end{itemize} 
The above process gives us an unoriented graph $(\mathfrak{V},\mathfrak{E}).$   Next, we define an orientation for our graphs, called the {\it first assigned orientation.}

\begin{itemize}
	\item For a segment $e$ associated to an interacting vertex $v$ and belongs to $\mathfrak{E}_-(v)$, the orientation of  this edge is going toward the vertex $v$ if the sign $\sigma_e$ of the associated  momentum $k_e$ is $+1$, otherwise, the orientation is going outward the vertex. Similarly, if  $e$ belongs to $\mathfrak{E}_+(v)$,  this edge has the orientation of going toward the vertex $v$ if the sign $\sigma_e$ of the associated  momentum $k_e$ is $-1$, otherwise,  the orientation is going outward the vertex.  The edges of the cluster vertices are also assigned directions, as follows. Let us consider  $e$ being a segment associated  to some cluster vertex. We suppose that  the vertex of $e$  belonging to $\mathfrak{V}_0$  is $v'$, which should belong to another segment $e'$ connecting to another interacting vertex $v''\in\mathfrak{V}_I$. As  $e'$ has already an orientation, if  the orientation  is going inward the vertex $v'$, then the orientation of $e$ is going outward the vertex $v'$ and if the orientation $e'$ is going outward the vertex $v'$, then the orientation of $e$ is going inward the vertex $v'$. 
	\item For any $v\in\mathfrak{V}_I$, let $e$ be an edge in $\mathfrak{E}_-(v)$,  $v'$ be the other vertex of the edge $e$ then   $\sigma_e$ is the sign associated to the edge $e$, with respect to the vertex $v$ and the sign associated to the edge $e$, with respect to the vertex $v'$ is the opposite of $\sigma_e$, which is $-\sigma_e$. We then define   $\sigma_v \ : \ \mathfrak{E}(v) \to \{-1,1\}$ as follow
	\begin{equation}\label{GraphSec:E5}
		\begin{aligned}
			\sigma_v (e) \  = & \ \sigma_e,\
			\sigma_{v'} (e) \  =  \ -\sigma_e.
		\end{aligned}
	\end{equation}
For any $v_K\in\mathfrak{V}_C$, let $e$ be an edge attached to   $v_K$  then   $\sigma_e$ is the sign associated to the edge $e$. Suppose that $v'$ is the other vertex in $\mathfrak{V}_0$ of $e$. We then define   $\sigma_v \ : \ \mathfrak{E}(v) \to \{-1,1\}$ as follow
\begin{equation}\label{GraphSec:E5bis}
	\begin{aligned}
		\sigma_{v_K}(e) \  = & \ \sigma_e,\
		\sigma_{v'} (e) \  =  \ -\sigma_e.
	\end{aligned}
\end{equation}
\end{itemize}

Our aim is to integrate out all the delta functions.

\begin{definition}[Cycles \cite{staffilani2021wave}] Let us consider a graph of $\mathfrak{N}$ interacting vertices and a set of vertices $\{v_{i}\}_{i\in\mathfrak{I}}$ of $\mathfrak{V}_H\cup\mathfrak{V}_0$. We consider two cases.
	
	{\it Case 1: The set of vertices $\{v_{i}\}_{i\in\mathfrak{I}}$  does not contain both of the two top vertices $v_{n+1}$ and $v_{n+2}$.} If there exists a set of cluster vertices $\{u_K\}_{K\in S'\subset S}$ such that we could go from one vertex $v_{i_1}$ of the set $\{v_{i}\}_{i\in\mathfrak{I}}$  to all of the vertices of $\{v_{i}\}_{i\in\mathfrak{I}} \cup \{u_K\}_{K\in S'\subset S}$ and back to $v_{i_1}$ via the edges  of the graph, we call it a cycle. Suppose that $v_l$ is the top most interacting vertex in the cycle, we say that this is a cycle of the vertex $v_l$.
	
	{\it Case 2: The set of vertices $\{v_{i}\}_{i\in\mathfrak{I}}$   contains both of the two top vertices $v_{n+1}$ and $v_{n+2}$.}
	We ignore $v_{n+1}$ and $v_{n+2}$ and introduce a ``virtual vertex'' $v_*$, that connects both of the two  vertices $v_n$ and $v_{n-1}$.  If there exists a set of cluster vertices $\{u_K\}_{K\in S'\subset S}$ such that we could go from $v_{*}$ to all of the vertices of $\{v_*\}\cup \{v_{i}\}_{i\in\mathfrak{I}}\cup \{u_K\}_{K\in S'\subset S}$ and back to $v_{*}$ via the edges  of the graph, we call it a cycle. Suppose that $v_l$ is the top most  vertex in the cycle, we say that this is a cycle of the vertex $v_l$. Thus, $v_l$ can be either the virtual vertex $v_*$ or an interacting vertex. 
\end{definition}

Using the definition of cycles,  we have the following scheme to to obtain free edges below. 
\begin{itemize}
	\item We start with the vertices in $\mathfrak{V}_0$ and the cluster vertices in $\mathfrak{V}_C$ and all of the edges that are connecting them. Those edges are non-free edges or integrated edges. The momenta $\mathscr{K}_K$ are left until the end of our process and they are not free and not integrated either.  For  the first vertex $v_{1}$, among the two edges attached to it, the right edge is set to be  an integrated edge. If the other one forms a cycle with $v_{1}$ and the vertices  in $\mathfrak{V}_C$, we set it a free-edge. Otherwise, we set it  an integrated edge.    We continue the procedure in a recursive manner as follows. At step $i$, we consider the two edges associated to $v_{i}$ that connect $v_{i}$ with the vertices of $\mathfrak{V}_0$ and $\{v_j\}_{j=1}^{i-1}$. If the right one forms a cycle with $\mathfrak{V}_0$ and $\{v_j\}_{j=1}^{i}$, we set it free, otherwise we set it non-free, which then becomes an integrated edge. Then we continue the same procedure with the left one. The procedure is carried on until $i$ reaches $i=n$.   The momenta associated to the free edges are called ``free momenta'' and we will use the terminologies ``free momenta'' and ``free edges'' for the same purpose. 
	\item  	In this construction, there could exist a free edge attached to the virtual vertex $v_*$. If this free edge is on the minus diagram, we call it the ``virtually free edge attached to $v_{n+1}$'' or the ``virtually free edge attached to $v_{n-1}$''. The associated momentum is called the ``virtually free momentum attached to $v_{n+1}$'' and, equivalently, $v_{n-1}$.  If this free edge is on the minus diagram, we call it the ``virtually free edge attached to $v_{n+2}$''  and equivalently,  $v_{n}$.   The associated momentum is called the ``virtually free momentum attached to $v_{n+2}$ and, equivalently,  $v_{n}$. 
	\item {\it The second assigned orientation of the diagram: } For any integrated edge $e=\{v',v\}$, the orientation of the edge is from $v'$ to $v$ if $v$ belongs to the path which does not contain any free edge going from $v'$ to the virtual vertex $v_*$. We the assign the  order $v'\succ v$ and $v' 	\prec v$. The graph $\mathfrak{G}$  with the partial order $\succ $ is now defined to be  $\mathfrak{G}^\succ$. 
	We also define $
		\mathfrak{P}(v)=\{v' \in\mathfrak{V}_I | v' 	\succ v, v'\ne v_*,v_{n+1},v_{n+2}\}. 
$ We also extend the  order to the cluster vertices as follows. Suppose  $v_K$ is a cluster vertex, if $v_K$ belongs to the path $v'\prec v$, where $v'$ and $v$ are two interacting vertices, then we assign $v'  \prec v_K  \prec v$. It is then clear that this extension is unique. We also define $
\mathfrak{P}^{clus}(v)=\{v' \in\mathfrak{V}_C| v' 	\succ v\}. 
$ The union of $
\mathfrak{P}(v)$ and $\mathfrak{P}^{clus}(v)$ is denoted by $\mathfrak{P}^{uni}(v)$. Let $\mathfrak{F}$ denote the set of free edges and $\mathfrak{E}'$ denote the set of integrated edges.  The second orientation of the diagram allows us to  integrate all the delta functions, based on the  integrated edges in $\mathfrak{E}'$.  
\end{itemize}

We also recall the following definition.

\begin{definition}[Degree of a Vertex \cite{staffilani2021wave}]
	For any $v\in\mathfrak{V}_I$, let $\mathfrak{F}(v)$ denote the set of free edges attached to $v$, then $\mathfrak{F}(v)=\mathfrak{E}(v)\cap \mathfrak{F}$. We call the number of free edges in $\mathfrak{F}(v)$ the degree of the interacting vertex $v$ and denote it by $\mathrm{deg}v$.
\end{definition}

Let $k_{i_1}\in\mathfrak{E}_-(v_i)$ be a momenta of an interacting vertex $v_i$, $k_{i_2}$ be the momentum in $ \mathfrak{E}_+(v_i)$ and $v_j$ be the vertex on the other end of $k_{i_2}$. We have $k_{i_2}\in \mathfrak{E}_-(v_j)$ and we suppose that $k_{i_3}$ is the other momentum in $\mathfrak{E}_-(v_j)$. The  momentum $k_{i_3}$ is defined to be related to $k_{i_1}$.

\subsection{Properties of integrated graphs}\label{Sec:PropMometum}

\begin{lemma}[see \cite{staffilani2021wave}] \label{Lemma:ProductSignFirstDirection} The following properties hold true.
	
	\begin{itemize}
		\item 	If $e=\{v,v'\}\in \mathfrak{E}$ does not intersect $\mathfrak{V}_T$, then $\sigma_v(e)\sigma_{v'}(e)=-1.$
		\item 	Let $e=\{v,v'\}$ be a free edge and suppose that $\mathcal{T}(v)>\mathcal{T}(v')$. Let $v''$ be a vertex belonging to the cycle of the vertex $v$, then either $v\succ v''$ or $v'\succ v''$. 	Moreover, let $e'=\{v,v''\}$, $e''=\{v,v'''\}$ be two integrated edges attached to an arbitrary vertex $v$. Then it cannot happen that $v\succ v''$ and $v\succ v'''$. 
		\item 	A Feynman diagram with $n$ interacting vertices has totally $n+2-|S|$ free momenta. 
		\item 	Let $v_j$ be an interacting vertex , $1\le j \le n$. Let  $n_l(j)$ with $l=0,1$  be the number of interacting vertices of degree $l$ below and including $v_j$.   The following relation between $n_l(j)$ and  $\mathfrak{V}_c^{(j)}$, which is defined above to be the clustering of the edges intersecting with the time slice $j$, holds true
		\begin{equation}\label{lemma:n2jn0j:1}
			|\mathfrak{V}_c^{(j)}| \ =  \ |S| \ - \ j \ + \ n_1(j),
		\end{equation}
		and
		\begin{equation}\label{lemma:n2jn0j:2} |\mathfrak{V}_c^{(j)}| \ = \ |S| \ + \ n_0(j).\end{equation}
		Moreover, we also have
		\begin{equation}\label{lemma:n2jn0j:3} n_1(j) \le n+2-2|S| + n_0(j),\end{equation}
		and 
		\begin{equation}\label{lemma:n2jn0j:4} \frac{j-(n+2-2|S|)}{2} \le n_0(j).\end{equation}
	\end{itemize}

\end{lemma}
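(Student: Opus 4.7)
The statement combines four rather different claims about the Feynman diagrams built in Section~\ref{Sec:FirstDiagrams}, and my plan is to handle them in increasing order of difficulty, leaning on the explicit iterative construction of the integrated graph $\mathfrak{G}^\succ$.

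For the \emph{first bullet} I would simply unfold the definitions \eqref{GraphSec:E5} and \eqref{GraphSec:E5bis}. Any edge $e=\{v,v'\}$ with both endpoints in $\mathfrak V_I\cup\mathfrak V_0\cup\mathfrak V_C$ is by construction an element of $\mathfrak E_+(v)\cup\mathfrak E_-(v)$ with the matching endpoint on the other side; the two formulas assign $\sigma_v(e)=\sigma_e$ and $\sigma_{v'}(e)=-\sigma_e$, so the product is $-1$. The hypothesis $e\cap\mathfrak V_T=\emptyset$ is used only to avoid the exceptional sign assignments at the two top segments.

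For the \emph{second bullet}, my approach is to exploit the fact that the second assigned orientation turns the set of integrated edges (together with $\mathfrak V_0$ and the cluster vertices $\mathfrak V_C$) into a rooted spanning forest whose root is the virtual vertex $v_*$. First I would argue that such a free edge $e=\{v,v'\}$ closes exactly one cycle, and this cycle is $e$ together with the unique path from $v$ to $v'$ in the forest. Walking this forest-path from $v_*$ outward, every intermediate vertex $v''$ either lies on the sub-path ending at $v$ or on the one ending at $v'$, which by the definition of $\succ$ is exactly the statement $v\succ v''$ or $v'\succ v''$. The second half of the bullet is the tree-property of the forest: if $v$ had two integrated neighbours $v''$ and $v'''$ both satisfying $v\succ v''$, $v\succ v'''$, the forest would contain two distinct upward paths from $v$ to $v_*$, producing a cycle among integrated edges, a contradiction to the step-by-step construction that marks an edge integrated only when it does not close a cycle.

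For the \emph{third bullet} I would run a cyclomatic count. Summing degrees gives $|E|=3n+4$ (using $\deg=3$ at interacting vertices, $\deg=1$ at top vertices, $\deg=2$ at $\mathfrak V_0$-vertices and $\sum_K|K|=n+2$ for the cluster vertices), while $|\mathfrak V|=2n+4+|S|$. The free/integrated algorithm, extended to process $v_*$, precisely removes a spanning forest, so the number of free edges equals $|E|-|\mathfrak V|+C=n-|S|+C$ where $C$ is the number of connected components of $\mathfrak G$. A short case analysis shows that, for the non-residual diagrams produced by our pairing rules, $C=2$, yielding $n+2-|S|$. Alternatively this can be proved by induction on the processing step, observing that each of the $n+1$ left-child decisions (including $v_*$) either produces a free edge or a cluster merge and that exactly $|S|-1$ merges occur.

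The \emph{fourth bullet}, which I expect to be the main obstacle, is proved by induction on $j$ using the iterative cluster update. At step $j$, the two children of $v_j$ lie either in the same cluster of $\mathfrak V_c^{(j-1)}$ (no merge, left child is free, $n_1$ increases by $1$) or in different clusters (merge, left child is integrated, $n_0$ increases by $1$); this dichotomy together with $n_0(j)+n_1(j)=j$ immediately yields \eqref{lemma:n2jn0j:1} and \eqref{lemma:n2jn0j:2}. Combining the two expressions for $|\mathfrak V_c^{(j)}|$ gives $n_0(j)-n_1(j)=j-(|\mathfrak V_c^{(j)}|-|S|+|\mathfrak V_c^{(j)}|-|S|)/\ldots$ and, after bounding the terminal cluster count $|\mathfrak V_c^{(n)}|\ge 2$ by the number of top-vertex components, the inequalities \eqref{lemma:n2jn0j:3} and \eqref{lemma:n2jn0j:4} drop out. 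The subtle point is the careful bookkeeping of which ``degree'' is meant in $n_l(j)$: it is the degree coming only from the child edges processed at or before step $j$, so the parent-edge freeness is \emph{not} included until later. Making this convention explicit and consistent with the earlier Definition of vertex degree is where I anticipate the most care will be required.
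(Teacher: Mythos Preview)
The paper does not supply its own proof of this lemma; it is stated with the attribution ``[see \cite{staffilani2021wave}]'' and no argument is given in the present text, so there is nothing here to compare your proposal against line by line.

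That said, two remarks on your outline are worth making. For the third bullet, your Euler--characteristic count is the right idea, but the claim $C=2$ is not quite the correct bookkeeping: the free/integrated algorithm is run relative to the \emph{virtual} vertex $v_*$ (which replaces $v_{n+1},v_{n+2}$), and it is the resulting graph that is connected, giving cyclomatic number $(3n+4)-(2n+3+|S|)+1=n+2-|S|$. Without $v_*$ the component count depends on how $S$ straddles the plus/minus trees and is not uniformly $2$.

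For the fourth bullet, your inductive dichotomy is exactly the mechanism behind \eqref{lemma:n2jn0j:1}, and \eqref{lemma:n2jn0j:4} follows from \eqref{lemma:n2jn0j:3} together with $n_0(j)+n_1(j)=j$. Note, however, that as printed the two identities \eqref{lemma:n2jn0j:1} and \eqref{lemma:n2jn0j:2} are mutually inconsistent under $n_0(j)+n_1(j)=j$ (they would force $n_0(j)\equiv 0$); the version compatible with your induction and with the later use in Proposition~\ref{lemma:Q4SecondEstimate} is $|\mathfrak V_c^{(j)}|=|S|-n_0(j)$, so you should flag \eqref{lemma:n2jn0j:2} as a likely sign typo inherited from the cited source rather than try to prove it as written. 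The step you rightly single out as delicate --- that the free edge contributing to $\deg v_i$ is always a \emph{child} edge, so that the parent edge never pushes the degree to $2$ --- is handled in this paper only by reference to Lemma~\ref{Lemma:SumFreeEdges2}(b), whose proof is likewise deferred to \cite{staffilani2021wave}.
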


\begin{lemma}\label{Lemma:SumFreeEdges}
	Let $k_e$ be an integrated momentum with $e$ being an edge in $\mathfrak{G}^\succ$, which is the graph $\mathfrak{G}$ associated with the orientation $\succ$ defined above.   Let $v_1$, $v_2$ be the two vertices of $k_e$ and the second orientation of $k_e$ is from $v_1$ to $v_2$, that means $v_1\succ v_2$.   Let  $\mathfrak{P}(v_1), \mathfrak{P}^{clus}(v_1)$ be the sets defined in the previous section.  The following identity then holds true
	\begin{equation}
	\label{Lemma:SumFreeEdges:1}
	k_e \ =  \ \sum_{v\in\mathfrak{P}(v_1)}\sum_{e'\in\mathfrak{F}(v)}\left(-\sigma_{v_1}(e)\sigma_v(e')\right)k_{e'} + \sum_{v_K\in\mathfrak{P}^{clus}(v_1)}\sigma_{v_1}(e)\mathscr{K}_K,
	\end{equation}
	in which $k_{e'}$ is the momentum of $e'$ and $\mathscr{K}_K$ is the momentum attached to the cluster vertex $v_K$.
	
	In the  formula \eqref{Lemma:SumFreeEdges:1}, the edge $e$ is said to ``depend'' on the edges $e'$ and $k_e$ is said to ``depend'' on $k_{e'}$. If the edge $e$ does not depend on $e'$, we say that $e$ is ``independent'' of $e'$.
\end{lemma}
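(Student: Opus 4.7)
The plan is to prove the identity by a telescoping argument: sum the momentum-conservation constraints attached to an appropriate subset of vertices, observe that internal edges cancel pairwise, and solve for $k_e$. Specifically, I will consider the subgraph $\mathfrak{G}_{v_1}$ induced by the vertex set $\{v_1\} \cup \mathfrak{P}(v_1) \cup \mathfrak{P}^{clus}(v_1)$. Each interacting vertex $v$ in this set contributes the constraint $\sum_{e'\in\mathfrak{E}(v)}\sigma_v(e')k_{e'} = 0$ from its delta function in $\Delta_{n,\rho}$, while each cluster vertex $v_K$ contributes $\sum_{e'\in\mathfrak{E}(v_K)}\sigma_{v_K}(e')k_{e'} = \mathscr{K}_K$ by \eqref{ClusterMomentaEps}.

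Summing all of these identities, any edge whose two endpoints both belong to $\{v_1\}\cup\mathfrak{P}(v_1)\cup\mathfrak{P}^{clus}(v_1)$ appears twice with opposite signs and cancels, by the first bullet of \cref{Lemma:ProductSignFirstDirection} for interacting-interacting edges and by \eqref{GraphSec:E5bis} for edges incident to cluster vertices. What survives are precisely the contributions from edges crossing the boundary of $\mathfrak{G}_{v_1}$. I will argue that these boundary edges are exactly: (a) the edge $e$ itself, contributing $\sigma_{v_1}(e)k_e$; and (b) every free edge $e' \in \mathfrak{F}(v')$ with $v'\in\mathfrak{P}(v_1)$, together with the free edges attached to $v_1$ if any, contributing $\sigma_{v'}(e')k_{e'}$. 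Claim (b) follows immediately from the definition of free edges: one endpoint lies at $v'\in\mathfrak{V}_I$ while the other endpoint sits in $\mathfrak{V}_0$ and hence outside $\mathfrak{G}_{v_1}$.

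Claim (a) is the delicate point. The construction of the second orientation, combined with the second bullet of \cref{Lemma:ProductSignFirstDirection}, guarantees that at every vertex $v\in\mathfrak{V}_I$ there is a unique integrated edge pointing \emph{away} from $v$ toward $v_*$, and all other integrated edges incident to $v$ point \emph{toward} $v$. Since $\mathfrak{P}(v_1)$ is, by its definition via $\succ$, downward closed under moving toward $v_*$, the unique outgoing integrated edge at any $v'\in\mathfrak{P}(v_1)$ stays inside the subgraph $\mathfrak{G}_{v_1}$; consequently no integrated edge leaves $\mathfrak{G}_{v_1}$ except $e$, whose outgoing endpoint $v_2$ is, by hypothesis, outside $\mathfrak{P}(v_1)$. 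Assembling the surviving contributions yields
\begin{equation*}
\sigma_{v_1}(e)\,k_e \;+\; \sum_{v'\in\mathfrak{P}(v_1)}\sum_{e'\in\mathfrak{F}(v')}\sigma_{v'}(e')\,k_{e'} \;=\; \sum_{v_K\in\mathfrak{P}^{clus}(v_1)}\mathscr{K}_K,
\end{equation*}
and multiplying through by $\sigma_{v_1}(e)\in\{\pm 1\}$ (so that $\sigma_{v_1}(e)^2=1$) produces exactly \eqref{Lemma:SumFreeEdges:1}.

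The principal obstacle I foresee is the verification of claim (a), namely that $e$ is the \emph{unique} integrated edge exiting $\mathfrak{G}_{v_1}$. This requires a careful bookkeeping of how the second orientation propagates: one must confirm that $\mathfrak{P}(v_1)$, as defined by $v'\succ v_1$, coincides with the set of interacting vertices from which every second-oriented path eventually reaches $v_1$ before exiting, so that no integrated edge can escape the subgraph except through $v_1$. This is essentially combinatorial and follows from the inductive construction of free versus integrated edges via the cycle rule, but it is the only non-routine step; every other piece is an algebraic consequence of the delta-function identities and the sign conventions \eqref{GraphSec:E5}–\eqref{GraphSec:E5bis}.
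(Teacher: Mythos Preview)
Your telescoping argument is correct and gives a valid alternative to the paper's proof. The paper proceeds instead by induction on $\mathscr{N}=|\mathfrak{P}^{uni}(v_1)|$: the base case $\mathscr{N}=1$ uses the single delta function at $v_1$, and the inductive step expresses $k_e$ via the delta function at $v_1$ and then applies the hypothesis to each non-free edge $e'=\{v_e,v_1\}\in\mathfrak{E}(v_1)\setminus\{e\}$, after observing (via the ``no two outgoing integrated edges'' clause of \cref{Lemma:ProductSignFirstDirection}) that necessarily $v_e\succ v_1$ and hence $|\mathfrak{P}^{uni}(v_e)|\le\mathscr{N}$. Your global sum unwinds this induction in one shot; the ``only $e$ exits'' claim you single out is precisely what the paper verifies incrementally step by step. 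Both arguments rest on the same structural fact---the integrated edges with the orientation $\succ$ form a tree rooted at $v_*$, so $\mathfrak{P}^{uni}(v_1)$ is a subtree and $e$ is its unique outgoing integrated edge---but the paper's induction makes this explicit locally, whereas you must assert it globally.

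One inaccuracy worth correcting: in claim (b) you assert that the second endpoint of a free edge always lies in $\mathfrak{V}_0$. That need not hold---a free edge $e'\in\mathfrak{F}(v')$ may connect $v'$ to another interacting vertex $v''$ with $\mathcal{T}(v'')<\mathcal{T}(v')$. If $v''\in\mathfrak{P}(v_1)$ as well, then $e'$ is internal to your subgraph and contributes at both endpoints with opposite signs, hence cancels. Your displayed identity is therefore literally the ``restricted'' formula of \cref{Lemma:SumFreeEdges2} (sum over free edges exiting $\mathfrak{P}(v_1)$), and one must then observe---as the paper does when passing between \eqref{Lemma:SumFreeEdges:1} and \eqref{Lemma:SumFreeEdges2:1}---that adding back the internal free edges changes nothing, yielding \eqref{Lemma:SumFreeEdges:1}. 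This is a cosmetic fix; the argument stands.
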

\begin{proof}
 We will perform an  induction proof with respect to the number $\mathscr{N}=|\mathfrak{P}^{uni}(v_1)|=|\mathfrak{P}(v_1)\cup \mathfrak{P}^{clus}(v_1)|$. We first consider the case  that $\mathscr{N}=1$, then $\mathfrak{P}(v_1)=
	\{v_1\}$ and $\mathfrak{P}^{clus}(v_1)=\emptyset$. Hence, $\mathcal{F}(v_1)=\mathfrak{E}(v_1)\backslash\{e\}$. Therefore, there is one  delta function associated to $v_1$, and $\mathcal{F}(v_1)=\mathfrak{E}(v_1)\backslash\{e\}$, leading to
	$\sum_{e'\in \mathfrak{E}(v_1)} \sigma_{v_1}(e')k_{e'} \ = \ 0,$ that yields
	$k_e \ = \ -\sigma_{v_1}(e)\sum_{e'\in\mathfrak{E}({v_1})\backslash\{e\}}\sigma_{v_1}(e')k_{e'} \ = \ \sum_{e'\in\mathfrak{E}({v_1})\backslash\{e\}}(-\sigma_{v_1}(e)\sigma_{v_1}(e'))k_{e'}.$
	Thus \eqref{Lemma:SumFreeEdges:1} holds for $\mathscr{N}=1$. Suppose by induction that \eqref{Lemma:SumFreeEdges:1} holds for any $\mathscr{N}$ up to $\mathscr{N}_0\ge 1$, we next consider the edge $e=\{v_1,v_2\}$ with $|\mathfrak{P}^{uni}(v_1)|=\mathscr{N}+1$ and $v_1\succ v_2$. As $v_1\notin\mathfrak{V}_T$, we deduce
	$k_e \ = \ \sum_{e'\in\mathfrak{E}(v_1)\backslash\{e\}}(-\sigma_{v_1}(e)\sigma_{v_1}(e'))k_{e'}.$
	In this sum, for any edge $e'=\{v_e,v_1\}$,  in $\mathfrak{E}({v_1})\backslash\{e\}$, there are two possibilities. If the edge $e'$ is free, we are done. If $e'$ is not free, we also have two possibilities, either $v_e\succ v_1$ or $v_1\succ v_e$.   If $v_e\succ v_1$, then the set $\mathfrak{P}^{uni}(v_e)$ contains at most $\mathscr{N}$ elements due to the fact that $v_1$ is on the path from $v_e$ to the virtual vertex $v_*$.  The induction hypothesis can now be applied to  $e'$. In the second scenario,  $v_1\succ v_e$, recalling that we also have $v_1\succ v_2$, this yields to a contradiction with the conclusion of Lemma \ref{Lemma:ProductSignFirstDirection}. 
	By induction,  the identity \eqref{Lemma:SumFreeEdges:1} also holds for $\mathscr{N}+1$. This completes our induction proof. 
\end{proof}

\begin{lemma}\label{Lemma:SumFreeEdges2}
(a)	For any integrated edge $e=(v_1,v_2)\in\mathfrak{E}'$, $v_1\succ v_2$, the following identities hold true
	\begin{equation}
	\begin{aligned}\label{Lemma:SumFreeEdges2:1}
	k_e \ = & \ \sum_{v\in\mathfrak{P}(v_1)}\sum_{e'=(v,v_{e'})\in \mathfrak{F}(v)}\mathbf{1}_{v_{e'}\notin\mathfrak{P}(v_1)}\left(-\sigma_{v_1}(e)\sigma_v(e') \right)k_{e'} + \sum_{v_K\in\mathfrak{P}^{clus}(v_1)}\sigma_{v_1}(e)\mathscr{K}_K\\
	\ = & \ -\sigma_{v_1}(e)\sum_{e'\in\mathfrak{F}}\mathbf{1}(\exists v\in e' \cap \mathfrak{P}(v_1) \mbox{ and } e' \cap \mathfrak{P}(v_1)^c \neq \emptyset)\sigma_v(e')k_{e'} + \sum_{v_K\in\mathfrak{P}^{clus}(v_1)}\sigma_{v_1}(e)\mathscr{K}_K,
	\end{aligned}\end{equation}
	in which $k_{e},k_{e'}$ are the momenta of $e$, $e'$. 
	Moreover, if $e'=(v,v')$, such that $e'\neq e$, $v\in\mathfrak{P}(v_1)$ and $v'\notin \mathfrak{P}(v_1)$, then $e'$ is free. 
	
	For any edge $e=(v_1,v_2)\in\mathfrak{E}$. If $e$ is integrated,  we suppose $v_1\succ v_2$ and define  
	\begin{equation}
	\label{Lemma:SumFreeEdges2:0}
	\mathfrak{F}_e \ = \ \{e'\in\mathfrak{F}|\exists v\in e' \cap \mathfrak{P}(v_1) \mbox{ and } e' \cap \mathfrak{P}(v_1)^c \neq \emptyset)\},
	\end{equation}
	then formula \eqref{Lemma:SumFreeEdges2:1} can be expressed under the following form, in which  $\sigma_{e,e'}\in\{\pm 1\}$,
	\begin{equation}
	\label{Lemma:SumFreeEdges2:2}
	k_e \ = \ \sum_{e'\in \mathfrak{F}_e}\sigma_{e,e'}k_{e'}+ \sum_{v_K\in\mathfrak{P}^{clus}(v_1)}\sigma_{v_1}(e)\mathscr{K}_K. 
	\end{equation}
	We also denote 
	\begin{equation}
	\label{Lemma:SumFreeEdges2:0:1}
	\mathfrak{F}_{k_e} \ = \ \{k_{e'}\ | \ e'\in\mathfrak{F}, \ \exists v\in e' \cap \mathfrak{P}(v_1) \mbox{ and } e' \cap \mathfrak{P}(v_1)^c \neq \emptyset)\}.
	\end{equation}
	If $e$ is free, we set $\mathfrak{F}_e=\{e\}$ and $\sigma_{e,e}=1$. We have the following version of formula \eqref{Lemma:SumFreeEdges2:2}
	\begin{equation}
	\label{Lemma:SumFreeEdges2:3}
	k_e \ = \ \sum_{e\in \mathfrak{F}_e}\sigma_{e,e}k_{e}+ \sum_{v_K\in\mathfrak{P}^{clus}(v_1)}\sigma_{v_1}(e)\mathscr{K}_K. 
	\end{equation}

(b)	Let $v$ be any interacting vertex. Then $\mathrm{deg}(v)\in\{0,1\}$. 
If $v\in\mathfrak{V}_I$ and $\mathrm{deg}(v) = 1$, then $\mathfrak{E}_-(v)=\{e,e'\}$, where $e$ is a free edge. Denote by $k_e$ and $k_{e'}$ the momenta associated to $e$ and $e'$. We have $k_{e'} = \pm k_{e} + q$, where  $q$ is independent of $k_e$ and can depend on  the cluster momenta $\mathscr{K}_K$. Moreover, the  dependence on $k_e$ of the other integrated edges is given below.

\begin{itemize}
	\item[(i)] Consider the integrated edge $e'=(v_1,v_2)\in\mathfrak{E}'$ and denote by $k_{e'}$ its momentum. Then one of the following $3$ possibilities should happen
	$k_{e'} = \pm k_e + \tilde{k}_{e'}$,  or
	$k_{e'}  =   \tilde{k}_{e'}$,
	where $ \tilde{k}_{e'} $ is independent of $k_e$ and can depend on  the cluster  momenta $\mathscr{K}_K$ in all 3 cases. Moreover, the component that depends on the  cluster  momenta is written as $\sum_{K\in S}\sigma_{\tilde{k}_{e'},K}\mathscr{K}_K$, with $\sigma_{\tilde{k}_{e'},K}\in\{0,\pm1\}$.
	\item[(ii)] If $v_1\in\mathfrak{V}_I$, and suppose $e',e''\in\mathfrak{E}(v_1)$, $e'\neq e''$, and denote by $k_{e'},k_{e''}$ their momenta,  then  one of the following  $5$ possibilities should happen
	$k_{e'} + k_{e''}   =   \pm k_e + \tilde{k}_{e',e''}$, or $k_{e'} + k_{e''}  =  \pm 2k_e\ + \tilde{k}_{e',e''},$ or $k_{e'} + k_{e''}  =  \tilde{k}_{e',e''},$
	where $ \tilde{k}_{e',e''}$ is independent of $k_e$ and  $k_{e'}$ and can depend on  the cluster momenta $\mathscr{K}_K$ in all 5 cases.  Moreover, the component that depends on the  cluster  momenta is written as $\sum_{K\in S}\sigma_{\tilde{k}_{e',e''},K}\mathscr{K}_K$, with $\sigma_{\tilde{k}_{e',e''},K}\in\{0,\pm1\}$.
\end{itemize}
\end{lemma}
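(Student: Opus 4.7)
The plan is to treat parts (a) and (b) separately, with (a) being a refinement of Lemma \ref{Lemma:SumFreeEdges} via a cancellation argument, and (b) being a delta-function analysis at a single vertex that leverages (a).

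For the first formula of (a), I would start from the identity \eqref{Lemma:SumFreeEdges:1} of Lemma \ref{Lemma:SumFreeEdges} and show that the contributions of those free edges with \emph{both} endpoints in $\mathfrak{P}(v_1)$ cancel in pairs. Indeed, if $e'=\{v,v''\}$ is free and $v,v''\in\mathfrak{P}(v_1)$, then $e'$ appears twice in the double sum of \eqref{Lemma:SumFreeEdges:1}: once from the summand at $v$, with coefficient $-\sigma_{v_1}(e)\sigma_v(e')$, and once from the summand at $v''$, with coefficient $-\sigma_{v_1}(e)\sigma_{v''}(e')$. Since $\mathfrak{P}(v_1)\subset\mathfrak{V}_I$, we have $e'\cap\mathfrak{V}_T=\emptyset$, and the first item of Lemma \ref{Lemma:ProductSignFirstDirection} gives $\sigma_v(e')\sigma_{v''}(e')=-1$, so these two contributions cancel. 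What remains are precisely the free edges with exactly one endpoint in $\mathfrak{P}(v_1)$, which yields \eqref{Lemma:SumFreeEdges2:1}. The second equality in \eqref{Lemma:SumFreeEdges2:1} is just a rewriting using the indicator that $e'$ crosses $\partial\mathfrak{P}(v_1)$.

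For the second assertion of (a) — that any edge $e'=(v,v')\neq e$ with $v\in\mathfrak{P}(v_1)$, $v'\notin\mathfrak{P}(v_1)$ must be free — I would argue by contradiction, splitting on the orientation of $e'$ under $\succ$. If $v'\succ v$, transitivity with $v\succ v_1$ gives $v'\succ v_1$, hence $v'\in\mathfrak{P}(v_1)$, contradiction. If instead $v\succ v'$, note that $v\in\mathfrak{P}(v_1)$ means there is an integrated path from $v$ to $v_*$ passing through $v_1$; by the second item of Lemma \ref{Lemma:ProductSignFirstDirection}, the first edge of this path from $v$ is the unique integrated edge attached to $v$ in the $\succ$ direction, which must be $e'$. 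The rest of the path then begins at $v'$, forcing $v'\succeq v_1$ and hence $v'\in\mathfrak{P}(v_1)$, again a contradiction. With both cases ruled out, $e'$ is free. Formulas \eqref{Lemma:SumFreeEdges2:2}--\eqref{Lemma:SumFreeEdges2:3} are then cosmetic repackagings of \eqref{Lemma:SumFreeEdges2:1}.

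For part (b), the bound $\mathrm{deg}(v)\in\{0,1\}$ and the fact that the free edge (when it exists) lies in $\mathfrak{E}_-(v)$ follow from the construction of the integrated graph in Section \ref{Sec:FirstDiagrams} together with Lemma \ref{Lemma:ProductSignFirstDirection}, exactly as in \cite{staffilani2021wave}. Granting this, write $\mathfrak{E}(v)=\{e,e',e''\}$ with $e,e'\in\mathfrak{E}_-(v)$, $e''\in\mathfrak{E}_+(v)$, and $e$ free. The momentum conservation at $v$ reads
\begin{equation*}
\sigma_v(e)\,k_e+\sigma_v(e')\,k_{e'}+\sigma_v(e'')\,k_{e''}=0,
\end{equation*}
so $k_{e'}=\pm k_e\pm k_{e''}$. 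Now apply part (a) to the integrated edge $e''$ to express $k_{e''}$ as a signed sum of free momenta (from $\mathfrak{F}_{e''}$) and cluster momenta. Since $e$ is a single free edge, it can appear at most once in this expansion, with coefficient in $\{0,\pm 1\}$, which gives $k_{e'}=\pm k_e+q$ for $q$ independent of $k_e$, of the claimed form. Items (i) and (ii) are proved similarly: apply \eqref{Lemma:SumFreeEdges2:2} to each integrated momentum appearing, and track the coefficient of $k_e$. For (i) this coefficient lies in $\{0,\pm 1\}$; for (ii), since we are summing two integrated momenta attached to the same vertex $v_1$, the possible coefficients of $k_e$ lie in $\{0,\pm 1,\pm 2\}$, producing the five listed cases.

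The main obstacle I anticipate is the orientation argument in the second claim of (a): one must correctly interpret the partial order $\succ$ and use the tree-like structure of integrated edges (via Lemma \ref{Lemma:ProductSignFirstDirection}) to rule out integrated crossings of $\partial\mathfrak{P}(v_1)$. Once this is done, (b) is essentially routine bookkeeping, as the cases listed there are exactly the combinatorial possibilities for how a single free momentum $k_e$ can propagate through the delta function at $v$ and into a neighboring integrated momentum already expanded via (a).
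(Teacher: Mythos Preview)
Your proposal is correct and follows essentially the same approach as the paper: for part (a) you start from Lemma \ref{Lemma:SumFreeEdges} and cancel the contributions of free edges with both endpoints in $\mathfrak{P}(v_1)$ using the sign identity from Lemma \ref{Lemma:ProductSignFirstDirection}, which is exactly what the paper does. You supply more detail than the paper in two places---the orientation/uniqueness argument showing that an edge crossing $\partial\mathfrak{P}(v_1)$ (other than $e$) must be free, and the sketch of part (b)---both of which the paper either states without proof or defers entirely to \cite{staffilani2021wave}.
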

\begin{proof}
	We only provide  the first identity, since the second one is an easy a straightforward corollary.
	By \eqref{Lemma:SumFreeEdges}, we have
$
	k_e \ =  \ \sum_{v\in\mathfrak{P}(v_1)}\sum_{e'\in\mathfrak{F}(v)}\left(-\sigma_{v_1}(e)\sigma_v(e')\right)k_{e'} + \sum_{v_K\in\mathfrak{P}^{clus}(v_1)}\sigma_{v_1}(e)\mathscr{K}_K.
$
	For any $v,v'\in\mathfrak{P}(v_1)$, such that $e'=(v,v')$ is free, we have  that $-\sigma_{v_1}(e)\sigma_{v}(e')-\sigma_{v_1}(e)\sigma_{v'}(e')=0$. Thus, these pair of edges cancel each other and summing over edges in $\mathfrak{F}(v)$ and $\mathfrak{F}(v')$ gives the  identity.
	For any edge $e'=(v,v')$, such that $e'\neq e$, $v\in\mathfrak{P}(v_1)$ and $v'\notin \mathfrak{P}(v_1)$, then $e'$ is free. The two formulas \eqref{Lemma:SumFreeEdges2:2} and \eqref{Lemma:SumFreeEdges2:3} then follow in a straightforward manner. The proof of part (b) is   a modification of  the homogeneous case discussed in \cite{staffilani2021wave} and is omitted.
\end{proof}

\begin{definition}[Singular Graph]\label{Def:SingGraph} A graph  $\mathfrak{G}$ is said to be singular if there  is  an edge $e$ such that its momentum $k_e$ is either zero or depends only on the cluster momenta.  The edge $e$ is called the singular edge and the momentum $k_e$ is the singular momentum.
\end{definition}
We also need the following definition of  1-Separation and 2-Separation by Tutte  \cite{tutte2019connectivity,tutte1984graph}. 
\begin{definition}[1-Separation and 2-Separation]\label{Def:1Sep}
	A graph $\mathfrak{G}$ is said to have a 1-separation if $\mathfrak{G}$ is the union   of two components  $\mathfrak{G}_1\cup\mathfrak{G}_2=\mathfrak{G}$ such that the intersection $\mathfrak{G}_1\cap\mathfrak{G}_2$ contains only one edge. 
		A graph $\mathfrak{G}$ is said to have a 2-separation if $\mathfrak{G}$ is the union   of two components $\mathfrak{G}_1$ and $\mathfrak{G}_2$: $\mathfrak{G}_1\cup\mathfrak{G}_2=\mathfrak{G}$ such that the intersection $\mathfrak{G}_1\cap\mathfrak{G}_2$ contains exactly two  edges. 
\end{definition}
\begin{lemma}\label{Lemma:ZeroMomentum}
	For any edge $e\in\mathfrak{E}$, $k_e$ is either $0$ or depends only on the cluster momenta if and only if $\mathfrak{F}_e=\emptyset$, or equivalently the graph is singular with $e$ being the singular edge, which is also equivalent with the fact that the graph  has a 1-separation: it can be decomposed into two components $\mathfrak{G}_1\cup\mathfrak{G}_2=\mathfrak{G}$ such that they are connected only via the edge $e$. Moreover, when $k_e$ depends only on the cluster momenta, it is written as $$k_e=\sum_{K\in S}\sigma_{k_e,K}\mathscr{K}_K,$$ with $\sigma_{k_e,K}\in\{0,\pm1\}$. As a consequence, consider a cycle of a vertex $v_i$, if one of the vertices of this cycle belongs to $\mathfrak{G}_j$, $j$ can be either $1$ or $2$, then all of the vertices of the cycle belong to $\mathfrak{G}_j$.
	
		Suppose that $e,e'\in\mathfrak{E}$, $e\neq e'$, we then have:
	\begin{itemize}
		\item[(i)] Suppose that $\mathfrak{F}_e=\mathfrak{F}_{e'}\ne \emptyset,$ then  if one removes both $e$ and $e'$, the graph $\mathfrak{G}$ is split into  disconnected components. As a result, the graph has a 2-separation. 
		\item[(ii)] $\mathfrak{F}_e=\mathfrak{F}_{e'}$ if and only if there is $\sigma\in\{\pm 1\}$ such that $k_e=\sigma k_{e'}+q$, where $k_e,k_{e'}$ are the momenta associated to $e,e'$ and $q$ is either zero or depends only on the cluster momenta.  Moreover, the component that depends on the  cluster  momenta is written as $\sum_{K\in S}\sigma_{q,K}\mathscr{K}_K$, with $\sigma_{q,K}\in\{0,\pm1\}$.
	\end{itemize}

\end{lemma}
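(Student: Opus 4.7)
The approach is to read the combinatorial/topological claims directly from the explicit algebraic representation of integrated momenta furnished by Lemma \ref{Lemma:SumFreeEdges2}. Because the free momenta $\{k_{e''}\}_{e''\in\mathfrak{F}}$ and the cluster momenta $\{\mathscr{K}_K\}_{K\in S}$ enter the delta-function integrals as independent integration variables, any linear combination of them vanishes iff each coefficient does; this converts the geometric conditions in the statement into the vanishing of explicit coefficients.

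First I would dispose of the main equivalence chain. By formula \eqref{Lemma:SumFreeEdges2:2},
\begin{equation*}
k_e \;=\; \sum_{e''\in\mathfrak{F}_e}\sigma_{e,e''}k_{e''} \;+\; \sum_{v_K\in\mathfrak{P}^{clus}(v_1)}\sigma_{v_1}(e)\mathscr{K}_K ,
\end{equation*}
so $k_e$ is either zero or involves only cluster momenta iff the first sum vanishes iff $\mathfrak{F}_e=\emptyset$; in that case the second sum directly gives the claimed representation $k_e=\sum_{K\in S}\sigma_{k_e,K}\mathscr{K}_K$ with $\sigma_{k_e,K}\in\{0,\pm 1\}$. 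To then identify this with a $1$-separation, I would let $\mathfrak{V}_1$ be the subgraph spanned by $v_1$, $\mathfrak{P}(v_1)$, $\mathfrak{P}^{clus}(v_1)$ and their attached zero-time vertices, and $\mathfrak{V}_2$ be its complement (note $v_2\in\mathfrak{V}_2$). By definition of $\mathfrak{F}_e$, the hypothesis $\mathfrak{F}_e=\emptyset$ says precisely that no free edge crosses between $\mathfrak{V}_1$ and $\mathfrak{V}_2$. For integrated edges $\tilde e\ne e$, the claim that no such edge can straddle the cut follows from Lemma \ref{Lemma:ProductSignFirstDirection}: any integrated edge $\tilde e=(u_1,u_2)$ with $u_1\succ u_2$ lies on a cycle through $v_*$, and if that cycle meets both sides of the cut, it must close via some free edge attached to a vertex of $\mathfrak{P}(v_1)$ on one side and outside on the other, producing an element of $\mathfrak{F}_e$, a contradiction. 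The converse direction is immediate: a $1$-separation at $e$ forbids any free boundary edge.

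The statement about cycles is then immediate: a cycle meeting both $\mathfrak{G}_1$ and $\mathfrak{G}_2$ would have to cross the cut via the single integrated edge $e$, but cycles cross any cut an even number of times. Part (i) proceeds by the same topological argument with the cut set $\{e,e'\}$ in place of $\{e\}$: the equality $\mathfrak{F}_e=\mathfrak{F}_{e'}\ne\emptyset$ is symmetric in $e$ and $e'$ and means the ``boundary free edges'' of the two upper sets $\mathfrak{P}(v_1),\mathfrak{P}(v_1')$ coincide, so removing both $e$ and $e'$ leaves no edge connecting the resulting two components, which is a $2$-separation.

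For part (ii), in the forward direction I would subtract: using \eqref{Lemma:SumFreeEdges2:2} for both $k_e$ and $k_{e'}$, the $\mathfrak{F}_e=\mathfrak{F}_{e'}$ hypothesis lets me pair up terms indexed by the same free edge $e''$. I would then show that the ratio $\sigma_{e,e''}/\sigma_{e',e''}$ is constant in $e''\in\mathfrak{F}_e$: by the cycle characterization of the coefficients $\sigma_{e,e''}$, this ratio records the parity of the concatenated cycle through $e$, $e'$, and any free edge of $\mathfrak{F}_e$, and this parity does not depend on the choice of $e''$ since all such cycles differ by cycles internal to one of the two components produced by the $2$-separation. Choosing $\sigma\in\{\pm 1\}$ equal to this common ratio, the difference $q:=k_e-\sigma k_{e'}$ contains only cluster momenta with coefficients in $\{0,\pm 1\}$. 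The reverse direction is routine linear independence: matching coefficients of each free momentum on both sides of $k_e=\sigma k_{e'}+q$ forces $\mathfrak{F}_e=\mathfrak{F}_{e'}$.

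The main obstacle I expect is the constant-ratio claim in part (ii), which requires a careful analysis of how the sign coefficients $\sigma_{e,e''}$ behave along cycles. The cleanest way around it is to use the $2$-separation already established in (i), which forces any two free edges of $\mathfrak{F}_e$ to be related by a cycle entirely contained in either $\mathfrak{G}_1$ or $\mathfrak{G}_2$, making the constancy of the ratio a direct consequence of the vanishing of signed sums around internal cycles (Lemma \ref{Lemma:ProductSignFirstDirection}).
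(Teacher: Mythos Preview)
The paper does not actually prove this lemma: its entire proof reads ``The proof is a modification of the homogeneous case discussed in \cite{staffilani2021wave} and is omitted.'' Your proposal therefore cannot be compared against any argument in this paper, but it is the natural reconstruction of what that omitted modification should look like: you correctly ground everything in the explicit representation \eqref{Lemma:SumFreeEdges2:2} and in the independence of the free and cluster momenta as integration variables.

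Two remarks. First, your $1$-separation argument can be shortened: the last sentence of Lemma~\ref{Lemma:SumFreeEdges2}(a) already states that any edge $e'\neq e$ with one endpoint in $\mathfrak{P}(v_1)$ and one outside is free, so once $\mathfrak{F}_e=\emptyset$ the only edge crossing that boundary is $e$ itself, and no cycle argument via Lemma~\ref{Lemma:ProductSignFirstDirection} is needed. Second, your sketch for part~(i) is the weakest link: saying that the boundary free edges of $\mathfrak{P}(v_1)$ and $\mathfrak{P}(v_1')$ ``coincide'' does not by itself produce a two-component decomposition after deleting $\{e,e'\}$, because you have not located $e'$ relative to $\mathfrak{P}(v_1)$ (or vice versa), nor handled the case where one of $e,e'$ is itself free. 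The clean way is to first observe that $e'$ must itself cross the boundary of $\mathfrak{P}(v_1)$ (otherwise the symmetric difference of the two boundary sets would be nonempty, contradicting $\mathfrak{F}_e=\mathfrak{F}_{e'}$), and then the cut $\{e\}\cup\mathfrak{F}_e$ from the previous paragraph specializes, since every edge of $\mathfrak{F}_e$ except possibly $e'$ is accounted for on the $e'$ side. Your constant-ratio argument in (ii) is fine once (i) is in hand.
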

\begin{proof}
	
	The proof  is a modification of  the homogeneous case discussed in \cite{staffilani2021wave} and is omitted.
\end{proof}

\subsection{Pairing graphs}

\begin{definition}[Pairing and Non-Pairing Graphs] A graph  $\mathfrak{G}$ is called ``pairing'' if for every $K\in S$, then $|K|=2$. Otherwise, it is ``non-pairing''.
\end{definition}


 We  have the following classical definition of the size of a cycle, following  Estrada  (cf. \cite{estrada2012structure}).
\begin{definition}[Size of a Cycle and Interacting Size of a Cycle]\label{Def:EstradaSizecycle}
	Let $v_{i}$ be a degree-one vertex. Define $\{v_{\mathscr{M}}\}_{\mathscr{M}\in\mathfrak{I}}$ to be the set of all of the vertices in the cycle of $v_i$, including $v_i$. Then the number of elements $\mathscr{N}=|\mathfrak{I}|$ of $\mathfrak{I}$ is defined to be the size of the cycle of $v_i$. Those cycles are  called $C_{\mathscr{N}}$ cycles.

	Let $v_{i}$ be a degree-one vertex. Define $\{v_{\mathscr{M}}\}_{\mathscr{M}\in\mathfrak{I}}$ to be the set of all of the interacting vertices in the cycle of $v_i$, including $v_i$. Then the number of elements $\mathscr{N}=|\mathfrak{I}|$ of $\mathfrak{I}$ is defined to be the interacting size of the cycle of $v_i$. The cycles are then called $\mathrm{iC}_{\mathscr{N}}$ cycles.
\end{definition}

We have the following lemmas. 
\begin{lemma}\label{Lemma:Size1cycle}
	Let $v_{i}$ be a degree-one vertex and  $k\in\mathfrak{E}_+(v_{i})$.  Suppose that the interacting size of the cycle of $v_i$ is $1$, then $k=\pm\mathscr{K}_K$, where $\mathscr{K}_K$ is one of the cluster momenta, and the graph containing the cycle of $v_{i}$ has a 1-separation and is singular. Therefore, any cycle in a non-singular graph has an interacting size bigger than $1$. 
\end{lemma}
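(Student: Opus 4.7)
The plan is to first pin down the topology of the cycle by combining the valence constraints on zero-time vertices with the pairing assumption, and then read off the momentum identity from the vertex and cluster constraints. Any zero-time vertex $z\in\mathfrak{V}_0$ has exactly two incident edges: its bottom edge joining it to an interacting vertex, and its cluster edge joining it to a cluster vertex. Hence if $z$ lies on the cycle then both of its edges are in the cycle, so the interacting neighbor of $z$ is also in the cycle; since $v_i$ is the only interacting vertex on the cycle, every zero-time vertex on it is a bottom neighbor of $v_i$. Now $v_i$ has only the two edges $e_1,e_2\in\mathfrak{E}_-(v_i)$ going down to zero-time vertices $z_1,z_2$, whereas its third edge $e_+\in\mathfrak{E}_+(v_i)$ (to which $k$ is attached) goes up to a different interacting vertex or to a top vertex; in either case the other endpoint of $e_+$ cannot lie on the cycle. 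Thus the cycle uses precisely $e_1,e_2$ together with $z_1,z_2$, and the remainder of the cycle must run from $z_1$ to $z_2$ through the cluster layer only. Since we are in the pairing regime, every $K\in S$ has $|K|=2$, so the only way to join $z_1$ to $z_2$ through the clusters is by a single cluster vertex $u_K$ with $K=\{J_1,J_2\}$ and $z_l$ corresponding to index $J_l$.

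Next, I would read off the identity $k=\pm\mathscr{K}_K$ from momentum conservation. The Duhamel sign constraints (in particular $\sigma_{i,\rho_i}+\sigma_{i-1,\rho_i}+\sigma_{i-1,\rho_i+1}\ne \pm 3$ together with $\sigma_{i-1,\rho_i}\sigma_{i-1,\rho_i+1}=1$) force the three edge signs at $v_i$ to satisfy $\sigma_{v_i}(e_1)=\sigma_{v_i}(e_2)=-\sigma_{v_i}(e_+)$, so that the delta function at $v_i$ reads $\sigma_{v_i}(e_+)k+\sigma_{v_i}(e_1)(k_1+k_2)=0$, that is $k=\pm(k_1+k_2)$. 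By \cref{Lemma:ProductSignFirstDirection}, since neither $z_1$ nor $z_2$ belongs to $\mathfrak{V}_T$, one has $\sigma_{z_l}(e_l)=-\sigma_{v_i}(e_l)$, and so the cluster constraint at $u_K$ becomes $\sigma_{z_1}(e_1)k_1+\sigma_{z_2}(e_2)k_2=\mathscr{K}_K$, that is $k_1+k_2=\pm\mathscr{K}_K$. Combining these two identities gives $k=\pm\mathscr{K}_K$, as claimed.

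Since $k$ is then a pure function of the cluster momentum $\mathscr{K}_K$ with no dependence on any free momentum, \cref{Lemma:ZeroMomentum} yields $\mathfrak{F}_{e_+}=\emptyset$, so that $e_+$ is a singular edge: its removal splits $\mathfrak{G}$ into the small block $\{v_i,z_1,z_2,u_K\}$ together with its three internal edges and the rest of $\mathfrak{G}$, giving a $1$-separation across $e_+$ and singularity in the sense of \cref{Def:SingGraph}. The last assertion of the lemma is the contrapositive: any non-singular graph cannot contain a cycle of interacting size $1$. The main obstacle is the topological step identifying the exact shape of the cycle --- ruling out longer cluster-layer paths between $z_1$ and $z_2$ --- which relies crucially on the degree-$2$ valence of zero-time vertices and the pairing hypothesis $|K|=2$; once this is in hand, the sign bookkeeping that yields $k=\pm\mathscr{K}_K$ and the appeal to \cref{Lemma:ZeroMomentum} are essentially automatic.
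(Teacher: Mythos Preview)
Your proof is correct and follows essentially the same route as the paper's, with the logical flow reversed: the paper first argues the $1$-separation topologically (``if we remove the edge associated to $k$, the cycle is split from the graph'') and then invokes \cref{Lemma:ZeroMomentum} to read off $k=\pm\mathscr{K}_K$, whereas you first compute $k=\pm\mathscr{K}_K$ by explicit sign bookkeeping at $v_i$ and $u_K$ and then invoke \cref{Lemma:ZeroMomentum} to obtain singularity and the $1$-separation.

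One small remark: you invoke the pairing hypothesis $|K|=2$ to pin down the cluster-layer segment of the cycle, but in fact your own degree-$2$ argument for zero-time vertices already forces the path from $z_1$ to $z_2$ to pass through a single cluster vertex $u_K$ with $z_1,z_2\in K$, regardless of $|K|$. The pairing assumption only enters in your sign step, where you need $\mathscr{K}_K$ to involve exactly $k_1,k_2$; the paper sidesteps this by going through the $1$-separation first and letting \cref{Lemma:ZeroMomentum} produce the formula for $k$. Since the lemma sits in the pairing-graphs section and its contrapositive is what is actually used downstream, this is not a substantive gap.
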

\begin{proof}
	We assume  that $k_1,k_2\in\mathfrak{E}_-(v_{i})$. Recalling that the cycle containing $v_{i}$ has only one interacting vertex, we deduce the two edges $k_1,k_2$ are directly connected. Thus, if we remove the edge associated to $k$, the cycle is split from the graph. Using Lemma \ref{Lemma:ZeroMomentum}, we infer  $k=\pm\mathscr{K}_K$. Moreover, the graph has a 1-separation and is singular.
\end{proof}

The following lemma illustrates the structure of vertices inside a cycle.

\begin{lemma}\label{Lemma:VerticeOrderInAcycle}
	Let $v_\mathscr{N}$ be a degree-one vertex and define $\{v'_{\mathscr{M}}\}_{\mathscr{M}\in\mathfrak{I}}$ to be the set of all of the interacting vertices in the cycle of $v_\mathscr{N}$, including $v_\mathscr{N}$. Then for all $\mathscr{M}\in\mathfrak{I}$, we always have $\mathcal{T}(v'_\mathscr{M})\le \mathscr{N}$.
\end{lemma}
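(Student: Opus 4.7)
The plan is to extract the conclusion directly from the iterative construction of free edges in the Feynman diagram recalled in Section \ref{Duhamel}. Since $v_\mathscr{N}$ is a degree-one interacting vertex, exactly one free edge is attached to $v_\mathscr{N}$, and the ``cycle of $v_\mathscr{N}$'' is, by definition, the unique fundamental cycle associated with this free edge, for which $v_\mathscr{N}$ is the topmost interacting vertex.

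First I would argue that the unique free edge attached to $v_\mathscr{N}$ must lie in $\mathfrak{E}_-(v_\mathscr{N})$ rather than in $\mathfrak{E}_+(v_\mathscr{N})$. Suppose instead the free edge were of the form $e=\{v_\mathscr{N},v_j\}$ with $j>\mathscr{N}$. Then $e\in\mathfrak{E}_-(v_j)$, so it would be processed at step $j$ of the iterative construction, and the cycle it closes would contain $v_j$ as an interacting vertex with $\mathcal{T}(v_j)=j>\mathscr{N}$. But then $v_j$, not $v_\mathscr{N}$, would be the topmost interacting vertex of the cycle, contradicting the hypothesis that we are dealing with the cycle of $v_\mathscr{N}$. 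Hence the free edge attached to $v_\mathscr{N}$ must belong to $\mathfrak{E}_-(v_\mathscr{N})$ and is therefore declared free at step $\mathscr{N}$ of the iteration.

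Finally, I would invoke the declaration rule: at step $\mathscr{N}$, an edge in $\mathfrak{E}_-(v_\mathscr{N})$ is set free precisely when it closes a cycle using only the edges already processed, all of which lie among the vertex set $\mathfrak{V}_0\cup\mathfrak{V}_C\cup\{v_j\}_{j=1}^{\mathscr{N}}$. Consequently every interacting vertex $v'_\mathscr{M}$ of the cycle belongs to $\{v_j\}_{j=1}^{\mathscr{N}}$, which immediately yields $\mathcal{T}(v'_\mathscr{M})\le\mathscr{N}$. The only substantive step is the exclusion of the $\mathfrak{E}_+(v_\mathscr{N})$ scenario in the previous paragraph; once that is ruled out, the conclusion is an immediate unpacking of the iterative rule and requires no further estimates.
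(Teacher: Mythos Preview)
Your proposal is correct in spirit and supplies precisely the kind of argument the paper defers to the homogeneous predecessor \cite{staffilani2021wave}; the paper itself omits the proof here entirely. Your second paragraph, invoking the declaration rule at step $\mathscr{N}$, is exactly the right mechanism.

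One small quibble concerns the contradiction argument in your first paragraph. You appeal to ``the hypothesis that we are dealing with the cycle of $v_\mathscr{N}$'' to rule out the $\mathfrak{E}_+(v_\mathscr{N})$ case, but this is close to assuming what you want to prove: if ``cycle of $v_\mathscr{N}$'' merely means the fundamental cycle of $v_\mathscr{N}$'s free edge, there is no contradiction yet; if it already means $v_\mathscr{N}$ is topmost, the lemma is tautological. A cleaner route is simply to invoke Lemma~\ref{Lemma:SumFreeEdges2}(b), proved earlier in the paper, which asserts directly that for a degree-one interacting vertex the unique free edge lies in $\mathfrak{E}_-(v)$. With that in hand, your final paragraph completes the proof without any circularity. (You should also note in passing that the edge in $\mathfrak{E}_+(v_\mathscr{N})$ might terminate at one of the top vertices $v_{n+1},v_{n+2}$ rather than an interacting $v_j$; this is handled by the virtual-vertex convention but deserves a sentence.)
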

\begin{proof}

	The proof  is a modification of  the homogeneous case discussed in \cite{staffilani2021wave} and is omitted.
\end{proof}

Below, we present a classification of cycles for a non-singular graph.

\begin{definition}[Classification of Cycles in Non-singular Graphs]\label{Def:Classificationcycles}
	In a non-singular graph $\mathfrak{G}$, let $v_i$ be a degree-one vertex and $\{v_j\}_{j\in\mathfrak{I}}$ be the sets of all the interacting vertices in its cycle. The cycle of $v_i$ is then an $\mathrm{iC}_{|\mathfrak{I}|}$ cycle.
	\begin{itemize}
		\item  If the size $|\mathfrak{I}|$ of this $\mathrm{iC}_{|\mathfrak{I}|}$ cycle is bigger than or equal to  $3$, the cycle is called a ``long collision''. 
		\item  If the size $|\mathfrak{I}|$ of this $\mathrm{iC}_{|\mathfrak{I}|}$ cycle is exactly  $2$, the cycle is called a ``short collision''.
	\end{itemize}
\end{definition}

\begin{definition}[Classification of Short Collisions in Non-singular Graphs]\label{Def:ClassificationShortCollision}
	In a non-singular graph $\mathfrak{G}$, let $v_\mathscr{N}$ be a degree-one vertex, whose cycle is a short collision and contains two interacting vertices $v_\mathscr{N},v_\mathscr{M}$ with $\mathscr{M}<\mathscr{N}$.
	\begin{itemize}
		\item If $\mathscr{M}=\mathscr{N}-1$, the cycle is called a ``recollision'' (see Figures \ref{Fig10}-\ref{Fig11}). There are two types of recollision. If a recollision uses only one cluster vertex, we call it a ``single-cluster recollision''. A single-cluster recollision is an $\mathrm{iC}_2$ and a $C_5$ cycle. If a recollision uses two cluster vertices, we call it a ``double-cluster recollision''. A double-cluster recollision is an $\mathrm{iC}_2$ and a $C_8$ cycle. A recollision is called an ``$\mathrm{iC}^r_2$ cycle.''
		\item If $\mathscr{M}\ne \mathscr{N}-1$, the cycle is called a ``short delayed recollision''. By Lemma \ref{Lemma:VerticeOrderInAcycle}, it follows that $\mathscr{M}<\mathscr{N}-1$. A short delayed recollision  is called an ``$\mathrm{iC}^d_2$ cycle.''
		\item For both recollisions and short delayed recollisions, each cycle has the same number of interacting vertices (see Figures \ref{Fig10},\ref{Fig11}). 
	\end{itemize}
\end{definition}
\begin{figure}
	\centering
	\includegraphics[height=.70\linewidth]{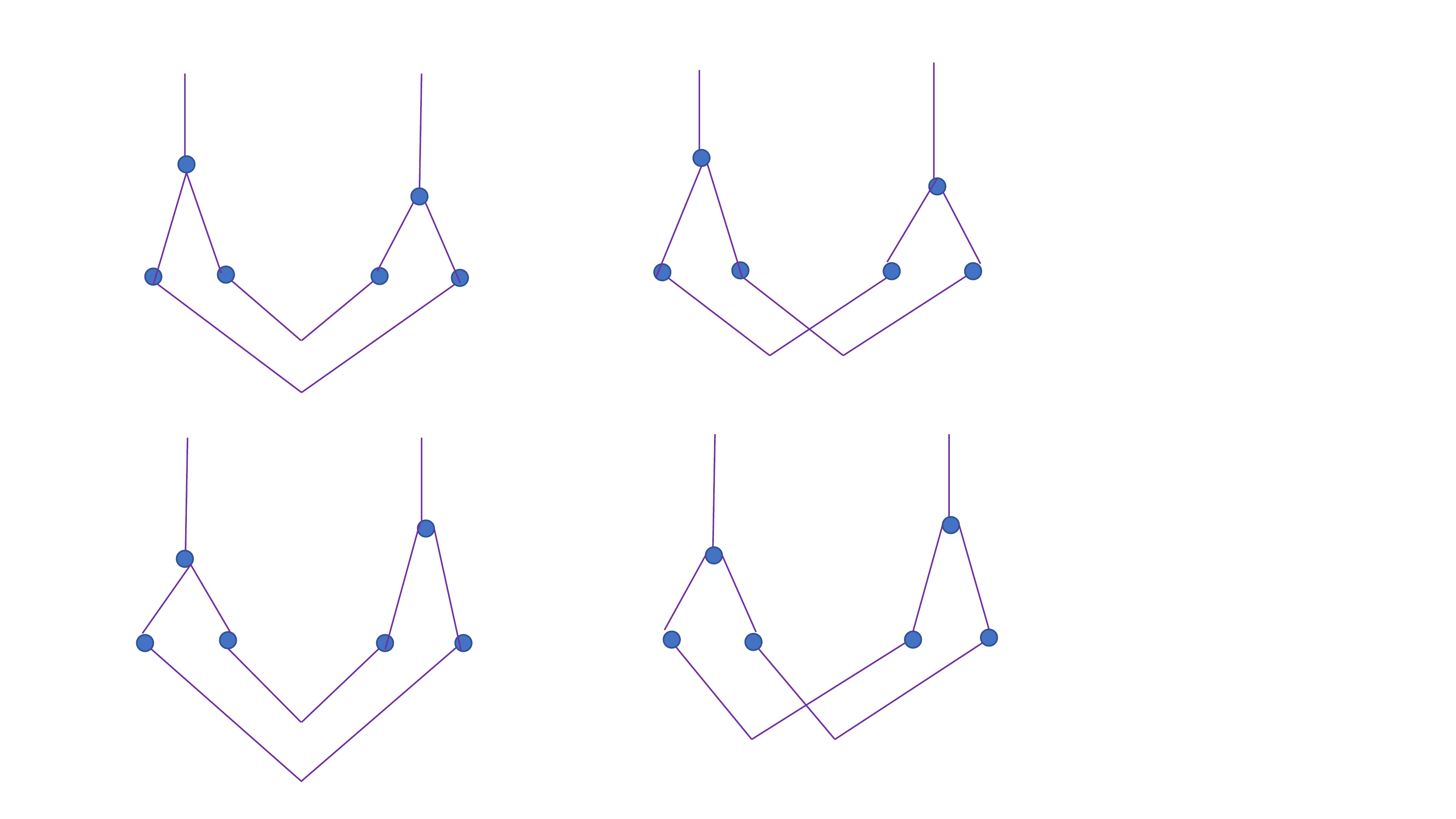}
	\caption{The 4 double-cluster recollisions. Each of them uses two cluster vertices.   They are $\mathrm{iC}_2^r$ and $C_8$ cycles, the skeletons of  $\mathrm{iCL}_2$ ladder graphs.}
	\label{Fig10}
\end{figure}
\begin{figure}
	\centering
	\includegraphics[height=.65\linewidth]{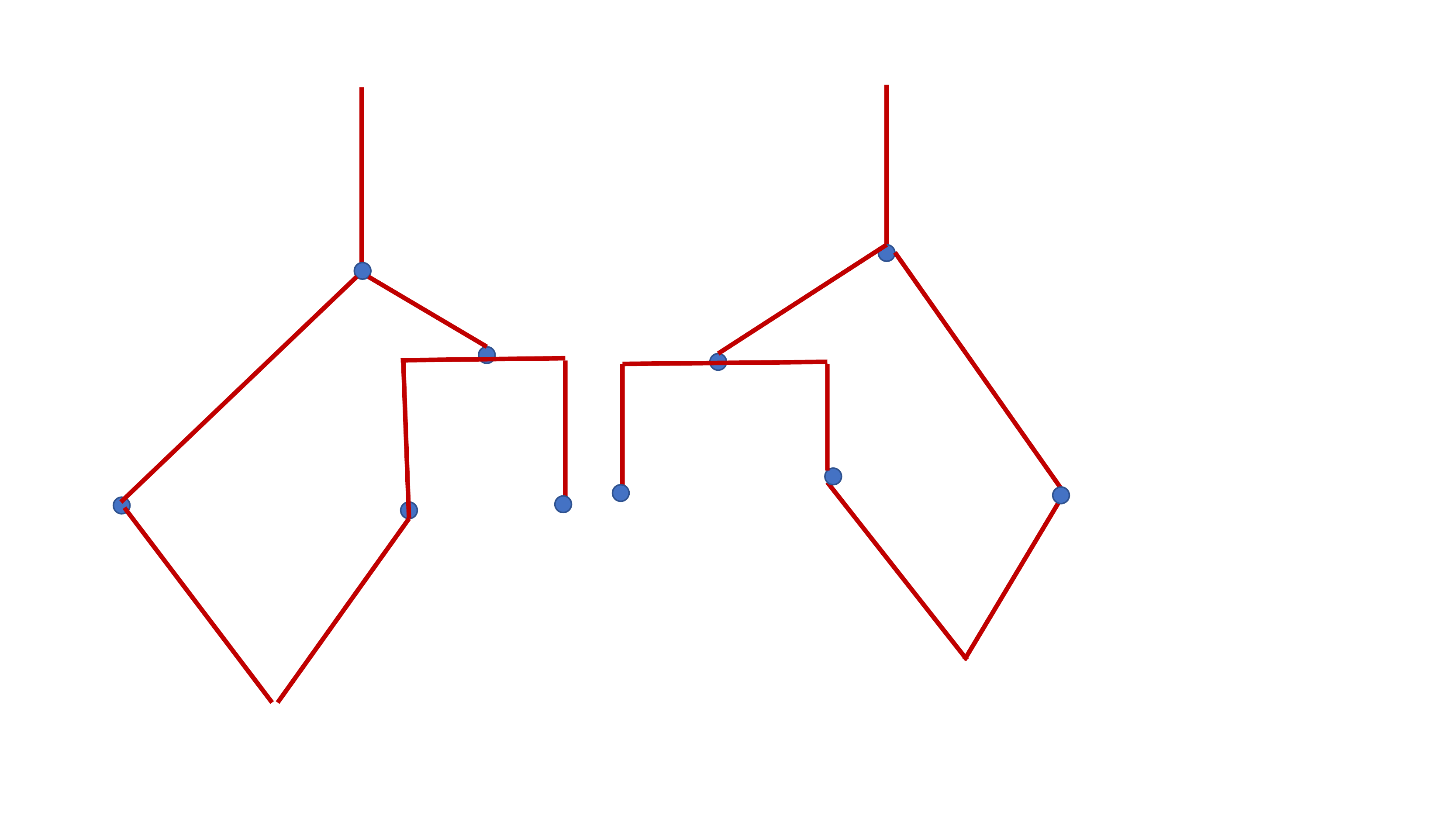}
	\caption{The 2 single-cluster recollisions. Each of them uses only one cluster vertex. They are $\mathrm{iC}_2^r$ and $C_5$ cycles, the skeletons of  $\mathrm{iCL}_2$ ladder graphs.}
	\label{Fig11}
\end{figure}

We will need to further classify long collisions. To this end, we  prove the following Lemma.

\begin{lemma}\label{Lemma:VerticesLongCollisions}
	Let $v_{\mathscr{M}}$ be a degree-one vertex in a non-singular graph. Denote by $k_1,k_2$ its edges in $\mathfrak{E}_-(v_{\mathscr{M}})$ and $k_0$ the edge in $\mathfrak{E}_+(v_{\mathscr{M}})$. Denote their signs by $\sigma_{k_0},\sigma_{k_1},\sigma_{k_2}$. Suppose that $k_1$ is the momentum of the free edge attached to $v_{\mathscr{M}}$. Assume that there exists a vertex $v_{l_0}$ in the cycle of $v_{\mathscr{M}}$ such that $\mathbf{X}(v_{\mathscr{N}})+\mathbf{X}({v_{\mathscr{M}}})$ is not a function of $k_1$. Denote the three edges of $v_{l_0}$ by $k_0',k_1',k_2'$, in which $k_0'\in\mathfrak{E}_+(v_{\mathscr{N}})$ and $k_1',k_2'\in\mathfrak{E}_-(v_{\mathscr{N}})$.  The signs of those edges are denoted by $\sigma_{k_0'},\sigma_{k_1'},\sigma_{k_2'}$.  Then, the followings hold true.
	\begin{itemize}
		\item[(i)] One of the following identities is satisfied
		\begin{equation}
		\label{Lemma:VerticesLongCollisions:1}
		\sigma_{k_2}k_2+\sigma_{k_2'}k_2'=\sum_{K\in S}\sigma_{k_2,K}\mathscr{K}_K, \mbox{ and }\sigma_{k_1}k_1+\sigma_{k_1'}k_1'=\sum_{K\in S}\sigma_{k_1,K}\mathscr{K}_K,
		\end{equation} 
		\begin{equation}
		\label{Lemma:VerticesLongCollisions:2}
		\sigma_{k_2}k_2+\sigma_{k_1'}k_1'=\sum_{K\in S}\sigma_{k_2,K}\mathscr{K}_K, \mbox{ and }\sigma_{k_1}k_1+\sigma_{k_2'}k_2'=\sum_{K\in S}\sigma_{k_1,K}\mathscr{K}_K,
		\end{equation} 
		\begin{equation}
		\label{Lemma:VerticesLongCollisions:3}
		\sigma_{k_1}k_1+\sigma_{k_0'}k_0'=\sum_{K\in S}\sigma_{k_1,K}\mathscr{K}_K, \mbox{ and } \sigma_{k_2}k_2+\sigma_{k_2'}k_2'=\sum_{K\in S}\sigma_{k_2,K}\mathscr{K}_K,
		\end{equation}
		and
		\begin{equation}
		\label{Lemma:VerticesLongCollisions:4}
		\sigma_{k_1}k_1+\sigma_{k_0'}k_0'=\sum_{K\in S}\sigma_{k_1,K}\mathscr{K}_K, \mbox{ and }\sigma_{k_2}k_2+\sigma_{k_1'}k_1'=\sum_{K\in S}\sigma_{k_2,K}\mathscr{K}_K.
		\end{equation}
		\item[(ii)] There does not exist another vertex $v_j$ in the cycle of $v_{\mathscr{M}}$ such that $\mathbf{X}(v_{j})+\mathbf{X}({v_{\mathscr{M}}})$  is not a function of $k_1$.  Therefore, $v_{\mathscr{N}}$ is the only vertex with this property.
		
	\end{itemize}

\end{lemma}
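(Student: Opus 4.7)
The plan is to reduce the $k_1$-independence hypothesis to a pairing of $\omega$-terms, exploiting that the dispersion relation $\omega(k)=\sin(2\pi k^1)[\sin^2(2\pi k^1)+\cdots+\sin^2(2\pi k^d)]$ is a trigonometric polynomial in $2\pi k^1$ whose distinct Fourier frequencies are determined by the linear coefficient of $k_1$ in its argument and by the shift modulo the cluster momenta.

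\textbf{Step 1 (Linear structure at $v_{\mathscr{M}}$ and $v_{l_0}$).} Because $v_{\mathscr{M}}$ is a degree-one vertex with free edge carrying momentum $k_1$, Lemma~\ref{Lemma:SumFreeEdges2}(b)(i) writes each of $k_0$ and $k_2$ in the form $\alpha k_1+q$ with $\alpha\in\{0,\pm 1\}$ and $q$ depending only on cluster momenta and on free momenta other than $k_1$. The delta relation $\sigma_{k_0}k_0+\sigma_{k_1}k_1+\sigma_{k_2}k_2=0$ combined with an integer balance $\sigma_{k_0}\alpha_0+\sigma_{k_1}+\sigma_{k_2}\alpha_2=0$ with $\alpha_i\in\{0,\pm 1\}$ forces exactly one of $k_0,k_2$ to be $k_1$-dependent with coefficient $\pm 1$, the other being $k_1$-independent (any other configuration would contradict Lemma~\ref{Lemma:ZeroMomentum}, i.e.\ the non-singularity of $\mathfrak{G}$). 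The same dichotomy applied to $v_{l_0}$ shows that among $k_0',k_1',k_2'$, exactly two are $k_1$-dependent with coefficient $\pm 1$, or exactly zero are $k_1$-dependent (the latter is ruled out by the hypothesis that $\mathbf{X}(v_{l_0})+\mathbf{X}(v_{\mathscr{M}})$ is $k_1$-independent while $\mathbf{X}(v_{\mathscr{M}})$ manifestly is not).

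\textbf{Step 2 (Pairwise $\omega$-cancellation).} Expanding each $\omega(\alpha k_1+q)$ in the variable $2\pi k_1^1$, two such summands cancel, i.e.\
\[
\sigma\,\omega(\alpha k_1+q)+\sigma'\,\omega(\alpha' k_1+q')\quad\text{is $k_1$-independent},
\]
if and only if $\alpha=\alpha'$, $\sigma=-\sigma'$, and $q\equiv q'$ modulo cluster momenta (using that the $\sin^3(2\pi k_1^1)$ and $\sin(2\pi k_1^1)\sin^2(2\pi k_1^j)$ monomials for distinct shifts are linearly independent). Applying this to $\mathbf{X}(v_{\mathscr{M}})+\mathbf{X}(v_{l_0})$, the two $k_1$-dependent terms from $v_{\mathscr{M}}$ must pair bijectively with the two $k_1$-dependent terms from $v_{l_0}$, yielding two independent identities of the form $\sigma k+\sigma' k'=\sum_{K\in S}\sigma_{k,K}\mathscr{K}_K$.

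\textbf{Step 3 (Enumeration of the four cases).} The term $\sigma_{k_1}\omega(k_1)$ of $\mathbf{X}(v_{\mathscr{M}})$ is paired with exactly one of the three edges at $v_{l_0}$; the other $k_1$-dependent term of $\mathbf{X}(v_{\mathscr{M}})$, namely $\sigma_{k_2}\omega(k_2)$ (after the labelling convention that $k_1$ is the free edge), is paired with one of the two remaining $k_1$-dependent edges at $v_{l_0}$. The delta relation at $v_{l_0}$, $\sigma_{k_0'}k_0'+\sigma_{k_1'}k_1'+\sigma_{k_2'}k_2'=0$, restricts which combinations of coefficients in $\{0,\pm 1\}$ are admissible and rules out the spurious pairings, producing exactly the four identities \eqref{Lemma:VerticesLongCollisions:1}--\eqref{Lemma:VerticesLongCollisions:4}.

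\textbf{Step 4 (Uniqueness).} Suppose some $v_j\neq v_{l_0}$ in the cycle of $v_{\mathscr{M}}$ also satisfied $\mathbf{X}(v_j)+\mathbf{X}(v_{\mathscr{M}})$ independent of $k_1$. Subtracting the two cancellation identities gives $\mathbf{X}(v_{l_0})-\mathbf{X}(v_j)$ independent of $k_1$, and the same pairing argument of Step 2 forces the two $k_1$-dependent momenta at $v_{l_0}$ to pair with those at $v_j$ modulo cluster momenta, with opposite signs. By Lemma~\ref{Lemma:ZeroMomentum}(ii) this supplies two integrated edges $e\in\mathfrak{E}(v_{l_0})$, $e'\in\mathfrak{E}(v_j)$ with $\mathfrak{F}_e=\mathfrak{F}_{e'}$, hence by Lemma~\ref{Lemma:ZeroMomentum}(i) a $2$-separation of $\mathfrak{G}$ cutting both $e$ and $e'$. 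Checking that this $2$-separation severs the cycle of $v_{\mathscr{M}}$ (since both $v_{l_0}$ and $v_j$ lie on that cycle, hence are connected to $v_{\mathscr{M}}$ by two edge-disjoint arcs within the cycle) contradicts the fact that the cycle remains intact. This last step --- producing the right $2$-separation and verifying its incompatibility with the cycle structure through $v_{\mathscr{M}}$ --- is expected to be the main technical obstacle, since it requires tracking, vertex by vertex along the cycle, how the paired free-edge sets propagate through the second orientation $\succ$.
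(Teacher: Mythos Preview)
The paper does not actually prove this lemma; it states only that ``the proof is a modification of the homogeneous case discussed in \cite{staffilani2021wave} and is omitted.'' So there is no argument in the present paper to compare against, and your attempt must be judged on its own.

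Your route through the trigonometric structure of $\omega$ is the right idea for part (i), and Steps~1--3 are essentially sound. One omission in Step~2: because $\omega$ is odd, the pair $\sigma\,\omega(k_1+q)+\sigma\,\omega(-k_1-q)$ also vanishes identically in $k_1$, so your biconditional should include the branch $\alpha=-\alpha'$, $\sigma=\sigma'$, $q\equiv -q'$. This extra branch still produces an identity of the form $\sigma_k k+\sigma_{k'}k'=(\text{cluster})$, so the enumeration of the four cases is unaffected, but the characterization as written is incomplete.

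Part (ii) has a genuine gap, and it is not merely the bookkeeping you flag. Your Step~4 subtracts to get $\mathbf{X}(v_{l_0})-\mathbf{X}(v_j)$ independent of $k_1$, then invokes Lemma~\ref{Lemma:ZeroMomentum}(ii) to conclude $\mathfrak{F}_e=\mathfrak{F}_{e'}$ for suitable edges at $v_{l_0}$ and $v_j$. But the $\omega$-cancellation only matches the \emph{$k_1$-component} of the paired momenta; the shifts $q$ may depend on other free edges whose coefficients are unconstrained by cancellation in $k_1$ alone, so you do not obtain $k_e=\pm k_{e'}+(\text{cluster only})$ and hence not $\mathfrak{F}_e=\mathfrak{F}_{e'}$. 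Without that, Lemma~\ref{Lemma:ZeroMomentum}(i) does not produce a $2$-separation. Even if it did, ``contradicts the fact that the cycle remains intact'' is not a contradiction: graphs routinely carry both cycles and $2$-separations. The uniqueness in (ii) needs a different mechanism---most likely a direct analysis of how the dependence on $k_1$ propagates along the two arcs of the cycle from $v_{\mathscr M}$, showing that it can enter and exit at only one interior vertex.
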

\begin{proof}  	The proof  is a modification of  the homogeneous case discussed in \cite{staffilani2021wave} and is omitted.
\end{proof}
By Lemma \ref{Lemma:VerticesLongCollisions}, we then have a classification for long collisions. 
\begin{definition}[Classification of Cycles in Non-singular Graphs, see \cite{staffilani2021wave}]\label{Def:ClassificationLongCollisions}
	Let $v_{\mathscr{M}}$ be a degree-one vertex in a non-singular graph and suppose that its $\mathrm{iC}_l$ cycle is long $l>2$. Denote by $k_1$ its free momentum. 
	
	\begin{itemize}
		\item[(i)] If there exists a vertex $v_{\mathscr{N}}$ in the cycle of $v_{\mathscr{M}}$ such that $\mathbf{X}(v_{\mathscr{N}})+\mathbf{X}({v_{\mathscr{M}}})$ is not a function of $k_1$, and $\mathscr{M}-\mathscr{N}>1$, the cycle is called a ``long delayed recollision''  and denoted by $\mathrm{iC}_l^d$.
		\item[(ii)] If there does not exist a vertex $v_{\mathscr{N}}$ in the cycle of $v_{\mathscr{M}}$ such that $\mathbf{X}(v_{\mathscr{N}})+\mathbf{X}({v_{\mathscr{M}}})$ is not a function of $k_1$, the cycle is called an ``irreducible long collision''and denoted by $\mathrm{iC}_l^i$.
	\end{itemize}
	Let $v_{\mathscr{M}}$ be a degree-one vertex in a non-singular graph. Denote by $k_1$ its free edges. 
	If there exists a vertex $v_{\mathscr{N}}$ in the $\mathrm{iC}_l$ cycle of $v_{\mathscr{M}}$ such that $\mathbf{X}(v_{\mathscr{N}})+\mathbf{X}({v_{\mathscr{M}}})$ is not a function of $k_1$, and if the $\mathrm{iC}_l$ cycle is not a recollision, it is called a ``delayed recollision''. In this case, there are two possibilities.
	\begin{itemize}
		\item[(iii)] If the cycle is long $l>2$, it is a ``long delayed recollision''
		\item[(iv)] If the cycle is short $l=2$, it is a ``short delayed recollision''.
	\end{itemize}
\end{definition}
We have the following observation for a recollision.
\begin{lemma}\label{Lemma:RecollisionXiXi-1}
	Let $v_{i}$ be a degree-one vertex in a non-singular graph and suppose that it is either a  recollision or a delayed recollision. Denote by $k_1$ its free edges. Let $v_{j}$ be the another   vertex in the cycle. Then $\mathbf{X}(v_{i})+\mathbf{X}({v_{j}})$ is not a function of $k_1$.
	
\end{lemma}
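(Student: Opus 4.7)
The plan is to split the argument according to the type of cycle of $v_i$. The case of a delayed recollision (short or long) is essentially tautological: Definition \ref{Def:ClassificationLongCollisions}(iii)-(iv) defines a delayed recollision as precisely a cycle equipped with a distinguished vertex $v_{\mathscr{N}}$ for which $\mathbf{X}(v_i) + \mathbf{X}(v_{\mathscr{N}})$ is not a function of $k_1$, and the uniqueness clause of Lemma \ref{Lemma:VerticesLongCollisions}(ii) identifies the other vertex $v_j$ in the cycle with this $v_{\mathscr{N}}$.

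The substantive case is a recollision, in which $v_j = v_{i-1}$ and the cycle consists of $v_i$, $v_{i-1}$, and either one or two cluster vertices, according to the configurations of Figures \ref{Fig11} and \ref{Fig10}. I would first apply Lemma \ref{Lemma:SumFreeEdges2}(b) to the degree-one vertex $v_i$: writing $\mathfrak{E}_-(v_i) = \{e_1, e_2\}$ with $e_1$ carrying the free momentum $k_1$, one has $k_{e_2} = \pm k_1 + q_2$ with $q_2$ depending only on cluster momenta, and by momentum conservation at $v_i$ the third momentum $k_0 \in \mathfrak{E}_+(v_i)$ takes the form $\alpha k_1 + q_0$ with $\alpha \in \{0, \pm 2\}$ and $q_0$ depending only on cluster momenta.

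Next I would trace the $k_1$-coefficients around the cycle. Since $k_1$ is the only free momentum entering the cycle, \eqref{Lemma:SumFreeEdges2:2} implies that every edge momentum in the cycle is of the form $\beta k_1 + q$ with $\beta \in \{0, \pm 1, \pm 2\}$ and $q$ a combination of cluster momenta. The routing through the cluster vertex (or pair thereof) produces a bijection $\pi : \mathfrak{E}(v_i) \to \mathfrak{E}(v_{i-1})$ in which matched momenta satisfy one of the four pairings of Lemma \ref{Lemma:VerticesLongCollisions}(i) (or its analogue for single-cluster recollisions), namely $\sigma_{v_i}(e)\, k_e + \sigma_{v_{i-1}}(\pi(e))\, k_{\pi(e)}$ depends only on cluster momenta. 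Using the oddness $\omega(-k) = -\omega(k)$ of the dispersion \eqref{NearestNeighbordZ}, the contributions $\sigma_{v_i}(e)\, \omega(k_e) + \sigma_{v_{i-1}}(\pi(e))\, \omega(k_{\pi(e)})$ cancel pairwise modulo cluster-momentum terms in $\mathbf{X}(v_i) + \mathbf{X}(v_{i-1})$, yielding the claimed $k_1$-independence.

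The main obstacle will be establishing the pairing $\pi$ with the required sign compatibility across all six recollision configurations of Figures \ref{Fig10}-\ref{Fig11}. Rather than an exhaustive enumeration, I intend to derive it abstractly via Lemma \ref{Lemma:ZeroMomentum}(ii): two edges sharing the same $\mathfrak{F}_e$ set are forced to differ by a sign and cluster momenta, and in a recollision cycle with the single free edge $k_1$ this forces edges of $\mathfrak{E}(v_i)$ to coincide (modulo cluster momenta) with edges of $\mathfrak{E}(v_{i-1})$; the sign rule \eqref{GraphSec:E5} of the first assigned orientation, propagated through the cluster vertices, then furnishes the correct sign compatibility with $\sigma_{v_i}$ and $\sigma_{v_{i-1}}$, bypassing the need for a case-by-case figure inspection.
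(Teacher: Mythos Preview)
Your proposal is correct in substance and arrives at the same mechanism the paper has in mind, but it is far more elaborate than the paper's own argument, which is literally the single sentence ``The proof follows straightforwardly from the forms of $\mathbf{X}(v_i)$ and $\mathbf{X}(v_j)$.'' In other words, the paper regards this as an immediate inspection: one writes out the six signed dispersion terms in $\mathbf{X}(v_i)+\mathbf{X}(v_{i-1})$, observes that the edge shared directly between $v_i$ and $v_{i-1}$ (in the single-cluster case) contributes with opposite signs by \eqref{GraphSec:E5} and hence cancels exactly, and that the paired edges through the cluster vertex(es) satisfy $k_e = \pm k_{\pi(e)} + (\text{cluster momenta})$ with the sign forced by the pairing constraint $\sigma_{0,i}+\sigma_{0,j}=0$ of Definition~\ref{Def:Pairing}, so that the remaining $\omega$-terms also match up. No auxiliary lemmas are invoked.

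Your route differs in that you formalize the pairing $\pi:\mathfrak{E}(v_i)\to\mathfrak{E}(v_{i-1})$ abstractly via Lemma~\ref{Lemma:ZeroMomentum}(ii), which is a legitimate and more structural way to avoid enumerating the six configurations of Figures~\ref{Fig10}--\ref{Fig11}. This buys you uniformity, at the cost of length. One caution: your intermediate reference to ``the four pairings of Lemma~\ref{Lemma:VerticesLongCollisions}(i)'' is potentially circular, since that lemma takes as \emph{hypothesis} the very $k_1$-independence you are proving; you should rely solely on the cycle structure and Lemma~\ref{Lemma:ZeroMomentum}(ii), as your final paragraph indicates. Also note that your phrase ``cancel pairwise modulo cluster-momentum terms'' is the honest statement here: in the inhomogeneous setting the residual $\omega(k_1)-\omega(k_1+\mathscr{K}_K)$ is not identically zero, and the paper's lemma is really a leading-order assertion (consistent with how the analogous near-cancellations are handled explicitly in the proof of Proposition~\ref{Propo:CrossingGraphs}).
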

\begin{proof}
	The proof follows straightforwardly from the forms of $\mathbf{X}(v_{i})$ and $\mathbf{X}({v_{j}})$.
\end{proof}

Below, we give a definition for different types of graphs.
\begin{definition}[Different types of graphs, see \cite{staffilani2021wave}]
	For a non-singular, pairing graph, we consider all of the degree-one vertices from the bottom to the top of the graph.
	
	 If, from the bottom to the top, the first degree-one vertex,  which does not  correspond to an $\mathrm{iC}_2^r$ recollision or to a cycle formed by iteratively applying the recollisions), is associated to an $\mathrm{iC}_m^i$  long irreducible  collision $(m> 2)$, we call the graph ``long irreducible''. 
	
	If, from the bottom to the top, the first degree-one vertex, which does not  correspond to a recollision or to a cycle formed by iteratively applying the recollisions, is associated to an $\mathrm{iC}_m^d$ delayed recollision $(m\ge 2)$, we call the graph ``delayed recollisional''. 
	
		A graph that can be obtained by iteratively adding $M$  recollisions ($\mathrm{iC}_2^r$ cycles) is an ``$\mathrm{iCL}_2$ recollision ladder graph'' . We call the $\mathrm{iC}_2^r$ cycles the ``skeletons'' of the $\mathrm{iCL}_2$ ladder graph. 
	
\end{definition}

We then have the lemma. 

\begin{lemma}\label{Lemma:Recollisions}
	Given a non-singular pairing graph, suppose that all of its cycles are $\mathrm{iC}_2^r$ recollisions. Then the graph is a $\mathrm{iCL}_2$ ladder. 
\end{lemma}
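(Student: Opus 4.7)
I will argue by strong induction on the number $n$ of interacting vertices, which is necessarily even because the graph is pairing ($|S|=(n+2)/2$). The base case $n=2$ consists of exactly one interacting $\mathrm{iC}_2^r$ cycle, which by definition of the recollision ladder is a $\mathrm{iCL}_2$ ladder with a single skeleton. For the inductive step, my strategy is to identify the topmost recollision, peel it off, and apply the induction hypothesis to the remaining subgraph.

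\textbf{Step 1: Locating a peelable recollision at the top.} Consider the topmost interacting vertex $v_n$. If $\mathrm{deg}(v_n)=1$, then by hypothesis its cycle is an $\mathrm{iC}_2^r$ recollision, so by Definition \ref{Def:ClassificationShortCollision} the other interacting vertex in the cycle is $v_{n-1}$. If instead $\mathrm{deg}(v_n)=0$, I would first show, using \cref{Lemma:SumFreeEdges2}(b) and the fact that $v_n$ merges the edges coming from $v_{n+1}$ and $v_{n+2}$, that some degree-one vertex $v_{\mathscr{N}}$ must have a cycle passing through $v_n$; by \cref{Lemma:VerticeOrderInAcycle} this forces $\mathscr{N}=n$, a contradiction. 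Hence $v_n$ itself is a degree-one vertex and forms a recollision with $v_{n-1}$.

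\textbf{Step 2: Peeling the topmost bubble.} Let $\mathfrak{B}$ denote the subgraph consisting of $v_n$, $v_{n-1}$, the five or eight edges of the recollision cycle (depending on whether it is a single-cluster or double-cluster recollision, see Figures \ref{Fig10}--\ref{Fig11}), together with the one or two cluster vertices it uses. I will define the reduced graph $\mathfrak{G}'$ by deleting $\mathfrak{B}$ and keeping the unique edge that bridged $\mathfrak{B}$ to the rest of the diagram as a new pair of top edges of $\mathfrak{G}'$. The resulting object has $n-2$ interacting vertices; the new partition $S'$ is obtained from $S$ by removing the clusters wholly contained in $\mathfrak{B}$, and one checks directly that $|S'|=(n-2+2)/2$, so $\mathfrak{G}'$ remains pairing.

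\textbf{Step 3: Preservation of the inductive hypotheses.} I next need to verify that $\mathfrak{G}'$ is still non-singular and that every cycle in $\mathfrak{G}'$ is an $\mathrm{iC}_2^r$ recollision. For non-singularity, \cref{Lemma:ZeroMomentum} says a singular edge is equivalent to a $1$-separation of the graph where the edge has momentum depending only on cluster momenta; any such edge in $\mathfrak{G}'$ would yield a corresponding singular structure in $\mathfrak{G}$, since the momentum assignments of surviving edges in $\mathfrak{G}'$ coincide with those in $\mathfrak{G}$ modulo the removed cluster momenta $\mathscr{K}_K$ which live inside $\mathfrak{B}$ (by the explicit structure in Figures \ref{Fig10}--\ref{Fig11}, these cluster momenta appear only on edges internal to the bubble). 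For the cycle structure, every cycle of $\mathfrak{G}'$ lifts to a cycle of $\mathfrak{G}$ not involving $v_n$ or $v_{n-1}$, and such a cycle was already an $\mathrm{iC}_2^r$ recollision by hypothesis.

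\textbf{Step 4: Closing the induction and the main obstacle.} Applying the induction hypothesis yields that $\mathfrak{G}'$ is a $\mathrm{iCL}_2$ ladder; reinserting the peeled bubble $\mathfrak{B}$ as a new skeleton on top gives $\mathfrak{G}$ the structure of a $\mathrm{iCL}_2$ ladder, completing the induction. The main obstacle is the bookkeeping in Step 3: one must argue that the cluster momenta $\mathscr{K}_K$ attached to the bubble $\mathfrak{B}$ really do not leak out into the rest of the graph, for otherwise removing them could create a singular edge in $\mathfrak{G}'$ or destroy the pairing condition. Here I would invoke \cref{Lemma:ZeroMomentum}(i)--(ii): if a cluster momentum of $\mathfrak{B}$ appeared in an edge outside $\mathfrak{B}$, then the two edges of the bubble joined to that cluster vertex would have momenta differing only by cluster momenta, which by \cref{Lemma:ZeroMomentum}(ii) implies a $2$-separation of $\mathfrak{G}$ incompatible with $\mathfrak{G}$ being non-singular and having only $\mathrm{iC}_2^r$ cycles below. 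A careful case split between the single-cluster recollisions of Figure \ref{Fig11} and the double-cluster recollisions of Figure \ref{Fig10} will be required to make this airtight.
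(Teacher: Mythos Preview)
Your overall inductive strategy is natural and is in the spirit of the argument the paper defers to (the proof is omitted here and pointed to the homogeneous case in \cite{staffilani2021wave}). However, Step~1 as written does not go through, for two reasons.

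First, there is a structural error: by the construction in Section~\ref{Sec:FirstDiagrams}, the edge $e_1$ joins $v_{n+2}$ to $v_n$ (plus tree) and the edge $e_2$ joins $v_{n+1}$ to $v_{n-1}$ (minus tree). So $v_n$ is \emph{not} the vertex ``merging the edges coming from $v_{n+1}$ and $v_{n+2}$''; the two topmost interacting vertices $v_n$ and $v_{n-1}$ sit on different trees, linked only through the virtual vertex $v_*$. Your contradiction argument via \cref{Lemma:VerticeOrderInAcycle} therefore does not apply as stated.

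Second, and more seriously, the claim that $\deg(v_n)=1$ and that $(v_n,v_{n-1})$ form an $\mathrm{iC}_2^r$ bubble is not automatic. Ladders in this paper are not linear stacks: as the explicit operator decomposition \eqref{FinalProof:product1}--\eqref{FinalProof:product2} in Section~\ref{Sec:SecondLadderDiagram} shows, recollisions can be nested in a tree-like fashion across both the plus and minus diagrams, and the topmost pair $(v_n,v_{n-1})$ need not be a single $\mathrm{iC}_2^r$ skeleton. In particular, if the first recollision (in the iterative construction) is inserted on the minus side, then $v_n$ and $v_{n-1}$ lie in \emph{different} skeletons, and your peeling step removes something that is not a recollision bubble.

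The fix is to peel from the \emph{bottom} rather than the top: take the lowest degree-one vertex, which is necessarily $v_2$ (since $v_1$ always has degree zero and, by the hypothesis that all cycles are $\mathrm{iC}_2^r$, the cycle of $v_2$ must pair it with $v_1$). The bubble $(v_2,v_1)$ is then a genuine leaf of the recollision tree, its cluster momenta are confined to it by the explicit structure of Figures~\ref{Fig10}--\ref{Fig11}, and removing it leaves a pairing graph with $n-2$ interacting vertices satisfying the same hypotheses. Your Steps~2--4, with this modification, then close the induction cleanly and your invocation of \cref{Lemma:ZeroMomentum} for the non-singularity check is correct.
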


\begin{proof}
	The proof  is a modification of  the homogeneous case discussed in \cite{staffilani2021wave} and is omitted.

\end{proof}

\section{Graph estimates}
\label{Sec:GeneralGraph}
This section is devoted to the estimates of all the terms defined
Propositions \ref{Proposition:Ampl1}, \ref{Proposition:Ampl2}, \ref{Proposition:Ampl3}, including  $Q^1_*$, $Q^2_*$, $Q^3$, $Q^4_*$, $ Q^1$, $Q^2$, $Q^3$, $Q^4$. Section \ref{Sec:FirstDiagram} is devoted to the estimates of $ Q^1$. The section contains several propositions treating different types of graphs appearing in $Q^1$. Section \ref{Sec:SecondDiagram} is devoted to the estimates of various terms appearing in  $ Q^3$. Section \ref{Sec:ThirdDiagram} is devoted to the estimates  of various terms appearing in $ Q^4$. And the estimates of terms in $Q^2$ are done in Section \ref{Sec:FourthDiagram}. In the  two Sections  \ref{Sec:FifthDiagram} and \ref{Sec:SixthDiagram}, we provide estimates for two special types of pairing graphs: long irreducible and delayed recollisional. The estimates of all the terms in $Q^1_*$, $Q^2_*$, $Q^3$, $Q^4_*$ are done in Section \ref{Sec:Res}. The dominance of ladder diagrams are proved in Section \ref{Sec:SecondLadderDiagram}.

\subsection{First type of diagram estimates}\label{Sec:FirstDiagram} In this subsection, we will provide estimates on $Q^1$, which is defined in Proposition \ref{Proposition:Ampl1}. 

\begin{definition}\label{Def:Q1pair}
	We write
	\begin{equation}
	\label{lemma:Q1FinalEstimate:2}
	Q^1 \ = \ {Q}^{1,pair}
	\end{equation}
	where
	\begin{equation}\label{lemma:Q1FinalEstimate:3}
	{Q}^{1,pair} 
	\ = \ \sum_{n=0}^{\mathfrak{N}-1}\sum_{S\in\mathcal{P}_{pair}^o(\{1,\cdots n+2\})}\mathcal{G}^{0}_{n}(S,t,k'',-1,k',1,\Gamma),
	\end{equation}
	in which $\mathcal{P}_{pair}^o(\{1,\cdots n+2\}) $ denotes the subset of $\mathcal{P}_{pair} (\{1,\cdots n+2\})$ such that the graph is non-singular.
\end{definition}
We will present below five different estimates for $Q^1$.

\begin{proposition}[The first estimate on   ${Q}^1$]\label{lemma:BasicGEstimate1} Let $S$ be an arbitrary partition of $\{1,\cdots n+2\}$. If the corresponding graph is   non-singular, there is a constant $\mathfrak{C}_{Q_{1,1}}>0$ such that for $1\le n\le \mathbf{N}$ and $t=\tau\lambda^{-2}>0$, we consider one generalization of an element of $Q^1$, which is a function of $k_{n,1}$ and $k_{n,2}$
	\begin{equation}
		\begin{aligned} \label{eq:BasicGEstimate1:0}
			&\tilde{Q}_1^1(\tau):= 
			\lambda^{n}\mathbf{1}({\sigma_{n,1}=-1})\mathbf{1}({\sigma_{n,2}=1}) \sum_{\substack{\bar\sigma\in \{\pm1\}^{\mathcal{I}_n},\\ \sigma_{i,\rho_i}+\sigma_{i-1,\rho_i}+\sigma_{i-1,\rho_i+1}\ne \pm3,
					\\ \sigma_{i-1,\rho_i}\sigma_{i-1,\rho_i+1}= 1}} \int_{(\Lambda^*)^{\mathcal{I}_n}}  \mathrm{d}\bar{k}\Delta_{n,\rho}(\bar{k},\bar\sigma) \mathscr{Z}  \\
			&\times \prod_{i=1}^n\Big[\sigma_{i,\rho_i}\mathcal{M}( k_{i,\rho_i}, k_{i-1,\rho_i}, k_{i-1,\rho_i+1})  \Phi_{1,i}(\sigma_{i-1,\rho_i}, k_{i-1,\rho_i},\sigma_{i-1,\rho_i+1}, k_{i-1,\rho_i+1})\Big] \\
			&\times\int_{(\mathbb{R}_+)^{\{0,\cdots,n\}}}\mathrm{d}\bar{s} \delta\left(t-\sum_{i=0}^ns_i\right)\prod_{i=0}^{n}e^{-s_i[\varsigma_{n-i}+\tau_i]} \prod_{i=1}^{n} e^{-{\bf i}t_i(s)\mathbf{X}_i},
	\end{aligned}\end{equation}
	then, for any constants $1>T_*>0$ and $\mathscr{R}>0$,

	\begin{equation}
		\begin{aligned} \label{eq:BasicGEstimate1}
			& \Big\|\limsup_{D\to\infty} \mathscr{F}_{1/n,\mathscr{R}}\Big[\widehat{\tilde{Q}_1^1\chi_{[0,T_*)}}(\mathscr{X}{\lambda^{-2}})\Big](k_{n,1},k_{n,2})\Big\|_{L^{\infty,\Im}(\mathbb{T}^{2d})}
			\\
			\le & 4^n e^{T_*}\frac{T_*^{1+{n}_0(n)-{n}_0(n-[\mathfrak{N}/4])}}{({n}_0(n)-{n}_0(n-[\mathbf{N}/4]))!}\lambda^{\bowtie_\Im+n-2{n}_0(n)-\eth'_1-\eth'_2}\mathbf{N}^{-\wp	n_0(n-[\mathbf{N}/4])}\\
			&\times \mathfrak{C}_{Q_{1,1}}^{1+{n}_1(n)}\langle \ln n\rangle \langle\ln\lambda\rangle^{1+(2+\eth)({n}_1(n)-2)},
	\end{aligned}\end{equation}
	where $\mathbf{N}$ is the number of Duhamel expansions we need,  ${n}_j(n)$ is the number of interacting vertices of degree $j$ and $n_0(n-[\mathbf{N}/4])$ is the number of degree $0$ interacting vertices $v_l$ with $0<l\le n-[\mathbf{N}/4]$,  $1<\Im=\frac{2\Im_o}{2\Im_o-1}<2$ for some   integer $\Im_o>0$
	$$\bowtie_\Im \ = \ d(\Im-1)\left[\frac{2(n+2-n_0)}{\Im}-n-2\right]-c_{\mathbf{N}_0},$$ and $\widehat{\tilde{Q}_1^1\chi_{[0,T_*)}}$ is the Fourier transform of $\tilde{Q}_1^1(\tau)\chi_{[0,T_*)}(\tau)$ in the variable $\tau$. The quantity $\mathscr{Z}, \mathscr{F}_{1/n,\mathscr{R}}$ are defined to be 
	\begin{equation}
		\label{eq:BasicGEstimate1:A:1}
		\mathscr{Z} \  = \ \mathscr{U}(0), \mbox{ or } \mathscr{W}_1(0), \mbox{ or } \mathscr{W}_2(0).
	\end{equation}
	The function $\mathscr{W}(s)$, $\mathscr{U}(s)$ are defined for all $s\ge 0$ as 
	  \begin{equation}
		\begin{aligned}\label{eq:BasicGEstimate1:A}\mbox{ } \mathscr{U}(s)\ := \ &  \prod_{A=\{i,j\}\in S}\left\langle  a(k_{0,i},\sigma_{0,i})a(k_{0,j},\sigma_{0,j})\right\rangle_{s}, \mbox{  when  } S\in \mathcal{P}_{pair}^o (\{1,\cdots n+2\}),\\ 
\mbox{ } \mathscr{W}_1(s)\ := \ 	&	\left\langle\prod_{A=\{J_1,\dots,J_{|A|}\}\in S}   \square\left(\sum_{l=1}^{|A|}k_{0,J_l}\sigma_{0,J_l}\right)\prod_{i=1}^{n+2}  a(k_{0,i},\sigma_{0,i})\right\rangle_{s},\\
\mbox{or } \mathscr{W}_2(s)\ := \ & 	\left\langle\prod_{i=1}^{n+2}  a(k_{0,i},\sigma_{0,i})\right\rangle_{s}, 
\\
			\ \mathscr{F}_{1/n,\mathscr{R}}[F] \ := \ &	\Big|\int_{(-\mathscr{R},\mathscr{R})}\mathrm{d}\mathscr{X}|F(\mathscr{X})|^{21/n}\Big|^{n},	\end{aligned}\end{equation}
	where $\mathscr{R}>0$ is an arbitrary constant and $F$ is any function that makes the integral finite. The time integration is defined to be $\int_{(\mathbb{R}_+)^{\{0,\cdots,n\}}}\mathrm{d}\bar{s} \delta\left(t-\sum_{i=0}^ns_i\right)= \int_{(\mathbb{R}_+)^{\{0,\cdots,n\}}}\mathrm{d}{s}_0\cdots \mathrm{d}{s}_n \delta\left(t-\sum_{i=0}^ns_i\right).$ The parameters $\varsigma_{n-i}$ are defined in \eqref{Def:Para3} and $\eth_1',\eth_2'$ are  defined in Section \ref{Sec:KeyPara}. We have used the notation
$		\mathbf{X}_i = \mathbf{X}(v_i)   =  \mathbf{X}(\sigma_{i,\rho_i},  k_{i,\rho_i},\sigma_{i-1,\rho_i}, k_{i-1,\rho_i},\sigma_{i-1,\rho_i+1}, k_{i-1,\rho_i+1}).
$	The three notations $\bar{k}$, $\bar\sigma$ and $\bar{s}$ always stand for the vectors that are composed of all of the possible $k_{i,j}$, $\sigma_{i,j}$ and $s_i$. 
\end{proposition}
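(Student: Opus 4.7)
\textbf{Proof strategy for Proposition \ref{lemma:BasicGEstimate1}.}

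The plan is to combine the resolvent identity with the degree-one vertex estimate and the moment bounds from Section \ref{proofs'}, following the scheme of \cite{staffilani2021wave}, but carefully adapted to the mixed-norm topology $L^{\infty,\Im}$ required by the inhomogeneous Wigner distribution. First, after extending the inner product structure to $D\to\infty$ (so that sums over $\Lambda^*$ become Lebesgue integrals over $\mathbb{T}^d$ at the price of an $\mathcal{O}(1)$ loss), I would take the Fourier transform in $\tau$ of $\tilde{Q}_1^1(\tau)\chi_{[0,T_*)}(\tau)$, which converts the time constraint $\delta(t-\sum_i s_i)$ into a product of factors whose Laplace/Fourier variable is $\mathscr{X}$. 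Applying Lemma \ref{Lemma:Kidentity} with a judicious choice of $A$ then turns the simplex integration in $\bar s$ into a contour integral whose integrand is the product of resolvents $(z-\vartheta_i+{\bf i}\lambda^2)^{-1}$, with the $\vartheta_i$'s given by \eqref{GraphSec:E2}. This is the standard pivot to the $\mathscr{X}$-frequency side.

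Next, I would use the second assigned orientation to integrate out all the delta functions encoded in $\Delta_{n,\rho}(\bar k,\bar\sigma)$: by Lemma \ref{Lemma:SumFreeEdges2}, each integrated momentum $k_e$ becomes an explicit signed linear combination of the free momenta $\{k_{e'}\}_{e'\in\mathfrak{F}}$ and the cluster momenta $\{\mathscr{K}_K\}_{K\in S}$. The cut-off $\Phi_{1,i}$ at each interacting vertex allows us to localize away from the singular manifold $\mathbf{S}$, at the price of a logarithmic loss $\langle\ln\lambda\rangle^{2+\eth}$ when the outer factor is $\Phi^a$ (valid for $i\in\{1,\dots,n-2\}$) and a polynomial loss $\lambda^{-\eth_l'}$ when the outer factor is $\Phi^b(\eth_l')$ with $l\in\{1,2\}$. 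Non-singularity of the graph is what ensures each degree-one vertex contributes a genuine free momentum controlled by Lemma \ref{lemma:degree1vertex}, whose output norm is $L^2$ in $\mathscr{X}$ evaluated on an $L^\Im$ input in $k$; this is precisely where the exponent $\bowtie_\Im$ enters through the interpolation of \eqref{Propo:ExpectationEqui:2c} and \eqref{Propo:ExpectationEqui:2b}, together with the cluster scaling $\epsilon^{-d}$ inherent in the Wigner variable.

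Having iterated Lemma \ref{lemma:degree1vertex} once for each of the $n_1(n)$ degree-one vertices, the remaining integrations correspond to degree-zero vertices: these contribute the simplex factor $T_*^{1+n_0(n)-n_0(n-[\mathbf{N}/4])}/(n_0(n)-n_0(n-[\mathbf{N}/4]))!$ (coming from the free-time slices in $[s_0+\dots+s_{[\mathbf{N}/4]},t]$), while the soft partial time integration through $\varsigma_n=\varsigma'=\lambda^2\mathbf{N}^\wp$ in \eqref{Def:Para3} is responsible for the polynomial gain $\mathbf{N}^{-\wp n_0(n-[\mathbf{N}/4])}$. Applying the moment bound \eqref{Propo:ExpectationEqui:2c} to $\mathscr{U}(0)$, $\mathscr{W}_1(0)$, or $\mathscr{W}_2(0)$ to close the estimate on the initial datum---with the choice $\Im=\frac{2\Im_o}{2\Im_o-1}\in(1,2)$ finely tuned to absorb the $\epsilon^{-d(\Im-1)n/2}$ scaling---and collecting the combinatorial $4^n$ bound for the sum over signs, the choice of pivots $\rho_i$, and the pairings $S$ then yields the claimed estimate.

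The main obstacle is the bookkeeping of the exponent of $\lambda$: each of the three contributing mechanisms (the bare $\lambda^n$ from the coupling, the $\lambda^{-2n_0(n)}$ from opening up the resolvents at the degree-zero vertices, and the interpolation loss $\lambda^{\bowtie_\Im}$ from passing to $L^{\infty,\Im}$) must align so that the combined exponent $\bowtie_\Im+n-2n_0(n)-\eth'_1-\eth'_2$ remains positive for non-singular pairing graphs. The cleanest way to implement this is to separate the analysis into the \emph{two} special degree-one vertices at which the $\Phi^b(\eth_l')$ cutoff is active (picking up the $\lambda^{-\eth_1'-\eth_2'}$), the \emph{remaining} degree-one vertices at which $\Phi^a$ is active (picking up only $\langle\ln\lambda\rangle^{2+\eth}$, hence the exponent $n_1(n)-2$ in the final bound), and the degree-zero vertices handled via the dispersive bounds of Section \ref{Sec:Techical}; the resulting product structure immediately gives the factors $\mathfrak{C}_{Q_{1,1}}^{1+n_1(n)}\langle\ln\lambda\rangle^{1+(2+\eth)(n_1(n)-2)}$, closing the estimate.
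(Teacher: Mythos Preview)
Your strategy is essentially the same as the paper's and is correct in its main lines: split the time slices by vertex degree, apply the resolvent identity (Lemma \ref{Lemma:Kidentity}) on the degree-one part, harvest the simplex factor and the soft partial time gain $\mathbf{N}^{-\wp n_0(n-[\mathbf{N}/4])}$ from the degree-zero part, then iterate Lemma \ref{lemma:degree1vertex} over the degree-one vertices (two of them with $\Phi^b$, the rest with $\Phi^a$), and close with the moment bounds.

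Two points where the paper is more explicit than your outline, and which you should sharpen. First, the passage to the $L^{\infty,\Im}$ norm is not a straight interpolation of \eqref{Propo:ExpectationEqui:2b}--\eqref{Propo:ExpectationEqui:2c}: the paper writes $\tilde{Q}_1^*$ as a product $\prod_{A\in S}\tilde{Q}_{1,A}^*(\mathscr{K}_A,(k_{0,j})_{j\in A})$ over clusters, applies a convolution inequality in the cluster momenta $\mathscr{K}_A$ (this is where the factor $\epsilon^{|S|d(1-1/\Im)}$ first appears), and only then invokes \eqref{Propo:ExpectationEqui:2d}---the version with the cluster constraint built in---rather than \eqref{Propo:ExpectationEqui:2c}. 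Second, the iterative step at each degree-one vertex requires propagating $L^\Im$ information to the next vertex; the paper does this through the explicit estimate \eqref{eq:Aestimate5}, which shows how $\mathscr{H}_1$ decouples in $L^\Im$ while the resolvent factor with $\mathscr{H}_0$ is bounded in $L^\infty$. Your phrase ``degree-zero vertices handled via the dispersive bounds of Section \ref{Sec:Techical}'' is a slip: in this proposition the degree-zero vertices contribute only through the time simplex; the dispersive (Bessel-type) bounds enter only in the later crossing and delayed-recollision estimates.
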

\begin{proof} We suppose that $$\mathscr{Z} =  	\left\langle\prod_{A=\{J_1,\dots,J_{|A|}\}\in S}   \square\left(\sum_{l=1}^{|A|}k_{0,J_l}\sigma_{0,J_l}\right)\prod_{i=1}^{n+2}  a(k_{0,i},\sigma_{0,i})\right\rangle_{0}.$$ The other cases can be estimated by the same method. 
	By the definition, there
	are in total $n$ interacting vertices and   $n$ time slices. We now divide  the set $\{0,1,\cdots,n\}$ into two sets. One set contains    time slice indices $0\le i<n$ satisfying $\mathrm{deg}(v_{i+1})=0$. The set therefore contains the first time slice $i=0$ since $v_1$ is always a degree-zero vertex. This set is then denoted by $I''$. The second set  contains all time slice indices $0\le i<n$ satisfying $\mathrm{deg}(v_{i+1})=1$ and the new index $n$. This set is labeled by $I$.  We next define the set $\mathscr{Y}=\{i\in \{0,1,\cdots,n\} ~~|~~ l\le n-[\mathbf{N}/4], \ \ \  \mathrm{deg}(v_{i+1})=0\}.$ Following \eqref{GraphSec:E1}, the phase factor can be rewritten as $$\prod_{i=0}^{n}e^{-s_i\varsigma_{n-i}} \prod_{i=1}^{n} e^{-{\bf i}t_i(s)\mathbf{X}_i}=\prod_{i=0}^{n} e^{-{\bf i}s_i\vartheta_i}.$$
	
	We define the new time slice $s_{n+1}=\bar{s}_{n+1}\lambda^{-2}$, and rewrite the time integration as
	
	\begin{equation}
		\begin{aligned}
			\label{eq:Aestimate0:1}
			&	\int_{{(\mathbb{R}_+)^{{\{0,\cdots,n\}}}}}\mathrm{d}\bar{s}\delta\left(t-\sum_{i=0}^{n}s_i\right) \ = \ \int_{(\mathbb{R}_+)^{I'}}\mathrm{d}\vec{s'}\delta\left(t-\sum_{i\in I'}s_i\right)\int_{(\mathbb{R}_+)^{I}}\mathrm{d}\vec{s''}\delta\left(s_{n+1}-\sum_{i\in I}s_i\right),
		\end{aligned}
	\end{equation}
	in which $I'=\{n+1\}\cup I''$,  $\vec{s'}$ is the vector whose components are $s_i$ with $i\in I'$,  $\vec{s''}$ is the vector whose components are $s_i$ with $i\in I$. As  a consequence, 
	 $\tilde{Q}_1^1(\tau)$ can be rewritten as $\int_0^{\tau\lambda^{-2}}\mathrm{d}{s}_{n+1} $ $\mathscr{Q}(\tau-\bar{s}_{n+1}) \mathscr{G}(s_{n+1})$, where $\mathscr{Q}(\tau-\bar{s}_{n+1})$ contains  $\delta\left(t-\sum_{i\in I'}s_i\right)$ and $\mathscr{G}(s_{n+1})$ contains $\delta\left(s_{n+1}-\sum_{i\in I}s_i\right),$ with

	\begin{equation}
		\begin{aligned}\label{eq:Aestimate0:1:A}
			\mathscr{Q}(\tau-\bar{s}_{n+1}) \ = \ 	\int_{(\mathbb{R}_+)^{\bar{I}''}}\mathrm{d}\vec{s'''}\delta\left(t-\sum_{i\in \bar{I}''}s_i-s_{n+1}\right)\prod_{i\in I''}e^{-{\bf i}s_i\vartheta_i},
	\end{aligned}\end{equation}
	where  $\vec{s'''}$ is the vector whose components are $s_i$ with $i\in \bar{I}''=I''\cup\{n+2\}$. Note that on the right hand side, we have a function of $s_{n+1}$ and on the left hand side, we have a function of $\bar{s}_{n+1}$ but $s_{n+1}=\bar{s}_{n+1}\lambda^{-2}$. Observe that for $\mathbf{N}/4\le n-i$, we have, following \eqref{Def:Para2}
	\begin{equation}\label{eq:Aestimate1:1}
		\mathrm{Im}(- \vartheta_i) \ge \varsigma_{n-i}=\varsigma'=\lambda^2 \mathbf{N}^{\wp}\ge 0.
	\end{equation}
	Using
	$
		\int_{\mathbb{R}_+^m}\mathrm{d}\bar{s}\delta\left(t-\sum_{i=1}^m s_i\right) \ = \ \frac{t^{m-1}}{(m-1)!},$
	we estimate
	\begin{equation}
		\begin{aligned}\label{eq:Aestimate2a}
		&\left|\int_{(\mathbb{R}_+)^{\bar{I}''}}\mathrm{d}\vec{s'''}\delta\left(t-\sum_{i\in \bar{I}''}s_i-s_{n+1}\right)\prod_{i\in I''}e^{-{\bf i}s_i\vartheta_i}\right|\\
 \le\ 			& \int_{(\mathbb{R}_+)^{\bar{I}''}}\mathrm{d}\vec{s'''}\delta\left(t-s_{n+1}-\sum_{i\in \bar{I}''}s_i\right)\prod_{i\in I''}e^{-\varsigma_{n-i}s_i}\\
			\quad \le\ &  \int_{(\mathbb{R}_+)^{\mathscr{Y}}}\mathrm{d}\bar{s}'''' \prod_{i\in \mathscr{Y}}e^{-\varsigma' s_i}\int_{(\mathbb{R}_+)^{\bar{I}''\backslash \mathscr{Y}}}\mathrm{d}\bar{s}'''\delta\left(t-s_{n+1}-\sum_{i\in \mathscr{Y}}s_i-\sum_{i\in \bar{I}''\backslash \mathscr{Y}}s_i\right)
			\\
			\quad \le\ & (\varsigma')^{-|\mathscr{Y}|}\frac{t^{{n}_0(n)-{n}_0(n-[\mathbf{N}/4])}}{({n}_0(n)-{n}_0(n-[\mathbf{N}/4]))!}, 
	\end{aligned}\end{equation}
	in which $\bar{s}'''$ is the vectors containing $s_i$ with $i\in \bar{I}''\backslash \mathscr{Y}$, $\bar{s}''''$ is the vectors containing $s_i$ with $i\in \mathscr{Y}$.  In the above computations, we split the vector $\vec{s'''}$ into two components $\bar{s}'''$ and $\bar{s}''''$.

	Next, we estimate 
	\begin{equation}
		\begin{aligned} \label{eq:Aestimate0:2}
			&\mathscr{F}_{1/n,\mathscr{R}}\Big[\widehat{\tilde{Q}_1^1\chi_{[0,T_*)}}(\mathscr{X}{\lambda^{-2}})\Big]\\
			&	\ = \   \mathscr{F}_{1/n,\mathscr{R}}\left[\int_{\mathbb{R}}\mathrm{d}\tau \chi_{[0,T_*]}(\tau)	\int_{\mathbb{R}}\mathrm{d}\bar{s}_{n+1}\lambda^{-2} \mathscr{G}(\lambda^{-2}\bar{s}_{n+1})\right.\\
			&\left.\ \ \ \ \ \ \ \times\chi_{[0,\infty)}(\bar{s}_{n+1}) \mathscr{Q}(\tau-\bar{s}_{n+1})\chi_{[0,\infty)}(\tau-\bar{s}_{n+1}) e^{-{\bf i}2\pi\tau\lambda^{-2}\mathscr{X}}\right]
			\\
			&	\ = \  \mathscr{F}_{1/n,\mathscr{R}}\left[\int_{0}^\infty\mathrm{d}\tau''	\int_{0}^\infty\mathrm{d}\bar{s}_{n+1}\lambda^{-2} \mathscr{G}(\lambda^{-2}\bar{s}_{n+1})e^{-{\bf i}2\pi\bar{s}_{n+1}\lambda^{-2}\mathscr{X}} \mathscr{Q}(\tau'')e^{-{\bf i}2\pi\tau''\mathscr{X}}\mathbf{1}\big(\bar{s}_{n+1}+\tau''\le T_*\big)\right]\\
			&	\ = \   \mathscr{F}_{1/n,\mathscr{R}}\left[\int_{0}^{T_*}\mathrm{d}\tau''	\int_{0}^{T_*-\tau''}\mathrm{d}\bar{s}_{n+1}\lambda^{-2} \mathscr{G}(\lambda^{-2}\bar{s}_{n+1})e^{-{\bf i}2\pi\bar{s}_{n+1}\lambda^{-2}\mathscr{X}} \mathscr{Q}(\tau'')e^{-{\bf i}2\pi\tau''\lambda^{-2}\mathscr{X}}\right]
			\\
			&	\ = \  \mathscr{F}_{1/n,\mathscr{R}}\left[\int_{0}^{T_*}\mathrm{d}\tau''	\int_{0}^{(T_*-\tau'')\lambda^{-2}}\mathrm{d}{s}_{n+1} \mathscr{G}({s}_{n+1})e^{-{\bf i}2\pi{s}_{n+1}\mathscr{X}} \mathscr{Q}(\tau'')e^{-{\bf i}2\pi\tau''\lambda^{-2}\mathscr{X}}\right]\\
			&	\ \lesssim \   \left[\int_{0}^{T_*}\mathrm{d}\tau \|\mathscr{Q}\|_{L^\infty}\right]\sup_{\tau''\in[0,T_*]} 	\mathscr{F}_{1/n,\mathscr{R}}\left[\int_{0}^{(T_*-\tau'')\lambda^{-2}}\mathrm{d}{s}_{n+1} \mathscr{G}({s}_{n+1})e^{-{\bf i}2\pi{s}_{n+1}\mathscr{X}}\right],
		\end{aligned}
	\end{equation}
	where we applied the change of variables $\bar{s}_{n+1}\lambda^{-2}=s_{n+1}$ and $\tau''=T_*-\bar{s}_{n+1}$.
		Defining    the contour 
	\begin{equation}
		\label{DepictContour}\begin{aligned}
			\Gamma_{n}\ :=\ &\left\{\xi\in\mathbb{C}\ \ \ \ \Big|\ \ \ \  \mathrm{Re}\xi=\pm\Big(1+2(n+2)\|\omega\|_{L^\infty}\Big), \ \ \ \  \mathrm{Im}\xi\in\Big[-1-2\varsigma',\lambda^2\Big]\right\}\\
			\ &\bigcup \left\{\xi\in\mathbb{C}\ \ \ \ \Big|\ \ \ \  \mathrm{Re}\xi\in\Big[-\Big(1+2(n+2)\|\omega\|_{L^\infty}\Big),1+2(n+2)\|\omega\|_{L^\infty}\Big],\right.\\
			&\ \ \ \  \ \ \left.\mathrm{Im}\xi\in\Big\{-1-2\varsigma',\lambda^2\Big\}\right\},
		\end{aligned}
	\end{equation}
	 using Lemma \ref{Lemma:Kidentity}, we find
	\begin{equation}
		\begin{aligned}\label{eq:Aestimate0}
			& \int_{(\mathbb{R}_+)^{I}}\mathrm{d}\vec{s''}\delta\left(s_{n+1}-\sum_{i\in I}s_i\right)\prod_{i\in I}e^{-{\bf i}s_i \mathrm{Re}\vartheta_i}e^{-{\bf i}2\pi{s}_{n+1}\mathscr{X}}\\
			= \ & {\bf i} \oint_{\Gamma_{n}}\frac{\mathrm{d}\xi}{2\pi}e^{-{\bf i}s_{n+1}\xi}\prod_{i\in I}\frac{1}{\xi-\mathrm{Re}\vartheta_i-2\pi\mathscr{X}},
		\end{aligned}
	\end{equation}
	which implies
	\begin{equation}
		\begin{aligned}\label{eq:Aestimate1}
			& \left|\int_{(\mathbb{R}_+)^{I}}\mathrm{d}\vec{s''}\delta\left(s_{n+1}-\sum_{i\in I}s_i\right)\prod_{i\in I}e^{-{\bf i}s_i \vartheta_i}e^{-{\bf i}2\pi{s}_{n+1}\mathscr{X}}\mathscr{Z} \prod_{i\in I}e^{-s_i\tau_{i}}  \right|\\
			= & \left|\int_{(\mathbb{R}_+)^{I}}\mathrm{d}\vec{s''}\delta\left(s_{n+1}-\sum_{i\in I}s_i\right)e^{-{\bf i}2\pi{s}_{n+1}\mathscr{X}}\prod_{i\in I}e^{-{\bf i}s_i \mathrm{Re}\vartheta_i}\prod_{i\in I}^{n}e^{s_i \mathrm{Im}\vartheta_i}\mathscr{Z} \prod_{i\in I}e^{-s_i\tau_{i}} \right|\\
			\le &  \oint_{\Gamma_{n}}\frac{|\mathrm{d}\xi|}{2\pi}\left|e^{-{\bf i}s_{n+1}\xi}\right|\prod_{i\in I}\left|e^{-{\bf i}s_i\mathrm{Re}\vartheta_i}\right|  \Big|\prod_{i\in I}e^{s_i \mathrm{Im}\vartheta_i}\Big|\prod_{i\in I}\frac{1}{|\xi-(\mathrm{Re}\vartheta_i+2\pi\mathscr{X}-{\bf i} \lambda^2)|}|\mathscr{Z}|.
		\end{aligned}
	\end{equation}

	We next bound the left hand side of \eqref{eq:BasicGEstimate1} by
	\begin{equation}
		\begin{aligned} \label{eq:Aestimate4}
			&
			\left\|\limsup_{D\to\infty}  \mathscr{F}_{1/n,\mathscr{R}}\left[\mathbf{1}({\sigma_{n,1}=-1})\mathbf{1}({\sigma_{n,2}=1})\sum_{\substack{\bar\sigma\in \{\pm1\}^{\mathcal{I}_n},\\ \sigma_{i,\rho_i}+\sigma_{i-1,\rho_i}+\sigma_{i-1,\rho_i+1}\ne \pm3,					\\ \sigma_{i-1,\rho_i}\sigma_{i-1,\rho_i+1}= 1}}\right.\right.  \\
			&\times  \int_{(\Lambda^*)^{\mathcal{I}_n}}  \mathrm{d}\bar{k}\Delta_{n,\rho}(\bar{k},\bar\sigma) \prod_{i=1}^n\Big[|\mathcal{M}( k_{i,\rho_i}, k_{i-1,\rho_i}, k_{i-1,\rho_i+1})|\\
			&\times \Phi_{1,i}(\sigma_{i-1,\rho_i}, k_{i-1,\rho_i},\sigma_{i-1,\rho_i+1}, k_{i-1,\rho_i+1})\Big]||\mathscr{Z}|\lambda^{n-2n_0(n)}\mathbf{N}^{-\wp n_0(n-[\mathbf{N}/4])} \\
			&\left.\left.\times \frac{T_*^{1+{n}_0(n)-n_0(n-[\mathfrak{N}/4])}}{({n}_0(n)-n_0(n-[\mathbf{N}/4]))!}
			\oint_{\Gamma_{n}}\frac{|\mathrm{d}\xi|}{2\pi}\frac{e^{T_*}}{|\xi|}\prod_{i\in I\backslash\{n\}}\frac{1}{|\xi-(\mathrm{Re}\vartheta_i+2\pi\mathscr{X}-{\bf i} \lambda^2)|}\right]\right\|_{L^{\infty,\Im}(\mathbb{T}^{2d})},
	\end{aligned}\end{equation}
where the norm is taken with respect to the two variables $k_{n,1}$ and $k_{n,2}$. 
	By H\"older's inequality applied to at most $\mathbf{N}/2$ terms that are associated to the $\mathscr{Z}$-variable  $\frac{1}{|\xi-(\mathrm{Re}\vartheta_i+2\pi\mathscr{X}-{\bf i} \lambda^2)|}$, we bound \eqref{eq:Aestimate4} by
	\begin{equation}
		\begin{aligned} \label{eq:Aestimate4:a}
			&
			\left\|\limsup_{D\to\infty}  \mathbf{1}({\sigma_{n,1}=-1})\mathbf{1}({\sigma_{n,2}=1})\sum_{\substack{\bar\sigma\in \{\pm1\}^{\mathcal{I}_n},\\ \sigma_{i,\rho_i}+\sigma_{i-1,\rho_i}+\sigma_{i-1,\rho_i+1}\ne \pm3,					\\ \sigma_{i-1,\rho_i}\sigma_{i-1,\rho_i+1}= 1}}\right. \\
			&\times  \int_{(\Lambda^*)^{\mathcal{I}_n}}  \mathrm{d}\bar{k}\Delta_{n,\rho}(\bar{k},\bar\sigma)\prod_{i=1}^n\Big[|\mathcal{M}( k_{i,\rho_i}, k_{i-1,\rho_i}, k_{i-1,\rho_i+1})|\Phi_{1,i}(\sigma_{i-1,\rho_i}, k_{i-1,\rho_i},\sigma_{i-1,\rho_i+1}, k_{i-1,\rho_i+1})\Big]\\
			&\times|\mathscr{Z}|  \lambda^{n-2n_0(n)}\mathbf{N}^{-\wp n_0(n-[\mathbf{N}/4])}\frac{T_*^{1+{n}_0(n)-n_0(n-[\mathbf{N}/4])}}{({n}_0(n)-n_0(n-[\mathbf{N}/4]))!}
			\oint_{\Gamma_{n}}\frac{|\mathrm{d}\xi|}{2\pi}\frac{e^{T_*}}{|\xi|}\\
			&\times\left.\prod_{i\in I\backslash\{n\}}\left[\int_{-\mathscr{R}}^{\mathscr{R}}\mathrm{d}\mathscr{X}\frac{1}{|\xi-(\mathrm{Re}\vartheta_i+2\pi\mathscr{X}-{\bf i} \lambda^2)|^2}\right]^\frac12\right\|_{L^{\infty,\Im}(\mathbb{T}^{2d})}\\
			 \ =: \ & \|\tilde{Q}_1^*(\tau,k_{n,2},\sigma_{n,2}, k_{n,1},\sigma_{n,1})\|_{L^{\infty,\Im}(\mathbb{T}^{2d})},
	\end{aligned}\end{equation}
in which $1<\Im=\frac{2\Im_o}{2\Im_o-1}<2$ for some  integer $\Im_o>0$.
First, we observe that for each partition $S$ of the  momenta at the bottom of the graph, each cluster momentum $\mathscr{K}_A$ in the structure of $\tilde{Q}_1^*(\tau,k_{n,2},\sigma_{n,2}, k_{n,1},\sigma_{n,1})$ is associated to a set of momenta $k_{0,j}$ with $j\in A$, that statisfies
$$\mathscr{K}_A=\sum_{j\in A} k_{0,j}\sigma_{0,j}.$$
We denote the vector whose components are $k_{0,j}$ with $j\in A$ by $\big(k_{0,j}\big)_{j\in A}.$
We can write the quantity $\tilde{Q}_1^*(\tau,k_{n,2},\sigma_{n,2}, k_{n,1},\sigma_{n,1})$  as the product of independent components $\tilde{Q}_{1,A}$, in which each $\tilde{Q}_{1,A}$ depends only on $\mathscr{K}_A$ and $k_{0,j}$ with $j\in A$
$$\tilde{Q}_1^*(\tau,k_{n,2},\sigma_{n,2}, k_{n,1},\sigma_{n,1}) \ = \ \prod_{A\in S} \tilde{Q}_{1,A}^*\left(\mathscr{K}_A,\big(k_{0,j}\big)_{j\in A}\right).$$
Noticing that $\sum_{A\in S}\mathscr{K}_A=k_{n,2}\sigma_{n,2}+k_{n,1}\sigma_{n,1}$,

\begin{equation}\label{eq:Aestimate4:aa}\begin{aligned} & \tilde{Q}_1^*(\tau,k_{n,2},\sigma_{n,2}, k_{n,1},\sigma_{n,1})\\
	= \ & \prod_{A\in S}\int_{-\epsilon  \complement}^{\epsilon  \complement}\mathrm{d}\mathscr{K}_A \delta\left(\mathscr{K}_A-\sum_{j\in A} k_{0,j}\sigma_{0,j}\right) \prod_{j\in A}\int_{\Lambda^*}\mathrm{d}k_{0,j} \tilde{Q}_{1,A}^*\Big(\mathscr{K}_A,\big(k_{0,j}\big)_{j\in A}\Big).\end{aligned}\end{equation}
We   bound  by the standard convolution  inequality 
\begin{equation}\label{eq:Aestimate4:aaa}\begin{aligned} &\|\tilde{Q}_1^*(\tau,k_{n,2},\sigma_{n,2}, k_{n,1},\sigma_{n,1})\|_{L^{\infty,\Im}(\mathbb{T}^{2d})}\\
	\lesssim \ & \epsilon^{|S|d(1-1/\Im)}\prod_{A\in S}\left[\prod_{j\in A}\int_{\Lambda^*}\mathrm{d}k_{0,j}\Big|\tilde{Q}_{1,A}^*\Big(\mathscr{K}_A,\big(k_{0,j}\big)_{j\in A}\Big)\Big|^\Im\right]^\frac1\Im.\end{aligned}\end{equation}
	 We can  pass to the limit $D\to \infty$ and   the integral $\int_{\Lambda^*}\mathrm{d}k$  after passing to the limit $D\to\infty$ by the dominated convergence theorem is be replaced   by $\int_{\mathbb{T}^d}\mathrm{d}k$.  In the same fashion as in   Proposition \ref{Propo:Phi},  Proposition \ref{Propo:Phi3A}, Definition \ref{def:Phi3},  the cut-off functions are replaced as $
		\Phi_{1,i}\to \tilde{\Phi}_{1,i}.
$ The component $\Psi_4$ of  $\tilde{\Phi}_{1,i}$ is always bounded by $1$. 
	The number of possibilities to assign signs to $\bar{\sigma}$ is always at most $4^{n+1}$.
Next, we will use \eqref{Propo:ExpectationEqui:2a} via a process of integrating the momenta to bound the components $\left[\prod_{j\in A}\int_{\Lambda^*}\mathrm{d}k_{0,j}\Big|\tilde{Q}_{1,A}^*\Big(\mathscr{K}_A,\big(k_{0,j}\big)_{j\in A}\Big)\Big|^\Im\right]^\frac1\Im$.	The process of integrating the momenta to bound the components $\left[\prod_{j\in A}\int_{\Lambda^*}\mathrm{d}k_{0,j}\Big|\tilde{Q}_{1,A}^*\Big(\mathscr{K}_A,\big(k_{0,j}\big)_{j\in A}\Big)\Big|^\Im\right]^\frac1\Im$ is done as follows.  All of the free momenta  are integrated from the bottom to the top, starting from time slice $0$ while the cluster momenta are left until the end of the process due to \eqref{eq:Aestimate4:aa}.  The factor $(n!)^{(\Im-1)}\le (\mathbf{N}!)^{(\Im-1)}$ in \eqref{Propo:ExpectationEqui:2d}   can be absorbed by $\lambda^{-c_{\mathbf{N}_0}}$ in which $c_{\mathbf{N}_0}$ can be chosen as small as we need by balancing $\mathbf{N}_0$.	At the bottom of the graph, the moment functions   are $L^\Im$ functions due to  the   bounds in Proposition \ref{Propo:ExpectationEqui}. 
   In the integrating process, we will introduce a way to estimate the quantities that are associated to the degree-one vertices. Let   $v_i$ be a degree one vertex and denote by $k,k'$ the two edges in $\mathfrak{E}_-(v_i)$, in which   $k$ is the free edge. We will need to estimate $$
		\int_{\mathbb{T}^d}\mathrm{d}k\frac{\tilde\Phi_{1,i}(k)\mathscr{H}_0(k)\mathscr{H}_1(k')}{|\xi-(\mathrm{Re}\vartheta_i+2\pi\mathscr{X}-{\bf i} \lambda^2)|},
$$
	where $\mathscr{H}_0$ and $\mathscr{H}_1$ come from the previous iterations and belong to $L^\Im(\mathbb{T}^d)$. As the momentum $k'$ depends on $k$, we have $k'=\sigma k + k''$, where $\sigma\in\{\pm1\}$. We suppose $k''$ depends on a free edge $k'''$ attached to a vertex $v_j$ which is above $k$ and $v_i$. Assuming that $\mathscr{H}_1(k')$ does not contain $\frac{\tilde\Phi_{1,j}(k)}{|\xi-(\mathrm{Re}\vartheta_j+2\pi\mathscr{X}-{\bf i} \lambda^2)|}$, we bound
	\begin{equation}
		\label{eq:Aestimate5}\begin{aligned}
			&  \int_{\mathbb{T}^d}\mathrm{d}k'''\Big|	\int_{\mathbb{T}^d}\mathrm{d}k\frac{\tilde\Phi_{1,i}(k)|\mathscr{H}_0(k)|\mathscr{H}_1(\sigma k +k'')}{|\xi-(\mathrm{Re}\vartheta_i+2\pi\mathscr{X}-{\bf i} \lambda^2)|}\Big|^\Im 
\ = \ \int_{\mathbb{T}^d}\mathrm{d}k''\Big|	\int_{\mathbb{T}^d}\mathrm{d}k\frac{\tilde\Phi_{1,i}(k)|\mathscr{H}_0(k)|\mathscr{H}_1(\sigma k +k'')}{|\xi-(\mathrm{Re}\vartheta_i+2\pi\mathscr{X}-{\bf i} \lambda^2)|}\Big|^\Im 			\\
			\ \le \ & \Big[\int_{\mathbb{T}^d}\mathrm{d}k'	\int_{\mathbb{T}^d}\mathrm{d}k\frac{\tilde\Phi_{1,i}(k)|\mathscr{H}_0(k)||\mathscr{H}_1(k')|^2}{|\xi-(\mathrm{Re}\vartheta_i+2\pi\mathscr{X}-{\bf i} \lambda^2)|}\Big]\int_{\mathbb{T}^d}\mathrm{d}k\frac{\tilde\Phi_{1,i}(k)|\mathscr{H}_0(k)|}{|\xi-(\mathrm{Re}\vartheta_i+2\pi\mathscr{X}-{\bf i} \lambda^2)|}\\\
			\ \le  \ & \Big\|\int_{\mathbb{T}^d}\mathrm{d}k\frac{\tilde\Phi_{1,i}(k)|\mathscr{H}_0(k)|}{|\xi-(\mathrm{Re}\vartheta_i+2\pi\mathscr{X}-{\bf i} \lambda^2)|}\Big\|_{L^\infty}^\Im\|\mathscr{H}_1\|^\Im_{L^\Im}.\end{aligned}
	\end{equation}
As	  $\mathscr{H}_1(k')$ does not contain $\frac{\tilde\Phi_{1,j}(k)}{|\xi-(\mathrm{Re}\vartheta_j+2\pi\mathscr{X}-{\bf i} \lambda^2)|}$, in the next iterations, the integrations $\Big\|\int_{\mathbb{T}^d}\mathrm{d}k''$ $\frac{\tilde\Phi_{1,j}(k'')|\tilde{H}(k'')|}{|\xi-(\mathrm{Re}\vartheta_j+2\pi\mathscr{X}-{\bf i} \lambda^2)|}\Big\|_\infty$, for some $\tilde{H}$ will be estimated.  In those estimates  $|\tilde{H}|$ already belongs to $L^\Im$ due to the estimates  \eqref{eq:Aestimate5} obtained in the previous iterations. 
	By  Lemma \ref{lemma:degree1vertex}, we obtain a factor of $\langle\ln\lambda\rangle^{2+\eth}$ (see Section \ref{Sec:KeyPara} for the definitions of the parameters). If one of the degree-one vertices is associated to the two cut-off functions $\Phi_{1}^b(\eth'_1), \Phi_{1}^b(\eth'_2),$ then by Lemma \ref{lemma:degree1vertex}, we obtain  $\lambda^{-\eth'_1-\eth'_2}$. Therefore, the  factor of $\lambda$ coming from Lemma \ref{lemma:degree1vertex} is bounded by $\lambda^{n-2{n}_0(n)-\eth'_1-\eth'_2}\langle\ln\lambda\rangle^{(2+\eth)(n_1(n)-2)}.$  Inequality \eqref{Propo:ExpectationEqui:2d} gives the $\lambda$ factor of $$\lambda^{2|S|d(1-1/\Im)-d(\Im-1)(n+2)} \ =\ \lambda^{d(\Im-1)\big[2|S|/\Im-n-2\big]} \ =\ \lambda^{d(\Im-1)\big[\frac{2(n+2-n_0)}{\Im}-n-2\big]}.$$
	The total $\lambda$ factor is now $$\lambda^{d(\Im-1)\big[\frac{2(n+2-n_0)}{\Im}-n-2\big]+n-2{n}_0(n)-\eth'_1-\eth'_2}.$$
	The final integral  is
	$
	\oint_{\Gamma_{n}}\frac{|\mathrm{d}\xi|}{2\pi}\frac{1}{|\xi|},
	$		which can be bounded  by $C\langle\ln\lambda^2\rangle.$  As a consequence, the result of the Proposition follows.

\end{proof}

\begin{proposition}[The second   estimate on ${Q}^1$]\label{lemma:BasicGEstimate1a} Let $S$ be an arbitrary partition of $\{1,\cdots n+2\}$. If the corresponding graph is non-pairing and non-singular,  there is a constant $\mathfrak{C}_{Q_{1,2}}>0$ such that for $1\le n\le \mathbf{N}$ and $t=\tau\lambda^{-2}>0$, we set 
	\begin{equation}
		\begin{aligned} \label{eq:BasicGEstimate1a:1}
			\tilde{Q}^1_2(\tau):=\	& 
			\lambda^{n}\mathbf{1}({\sigma_{n,1}=-1})\mathbf{1}({\sigma_{n,2}=1})\Big[ \sum_{\substack{\bar\sigma\in \{\pm1\}^{\mathcal{I}_n}, \sigma_{i,\rho_i}+\sigma_{i-1,\rho_i}+\sigma_{i-1,\rho_i+1}\ne \pm3,
					\\ \sigma_{i-1,\rho_i}\sigma_{i-1,\rho_i+1}= 1}}\\
			&\times\int_{(\Lambda^*)^{\mathcal{I}_n}}  \mathrm{d}\bar{k}\Delta_{n,\rho}(\bar{k},\bar\sigma) \mathscr{Z} \prod_{i=1}^n\Big[\sigma_{i,\rho_i}\mathcal{M}( k_{i,\rho_i}, k_{i-1,\rho_i}, k_{i-1,\rho_i+1})\prod_{i=0}^{n}e^{-s_i[\varsigma_{n-i}+\tau_i]}\prod_{i=1}^{n} e^{-{\bf i}t_i(s)\mathbf{X}_i}\\
			&\times \Phi_{1,i}(\sigma_{i-1,\rho_i}  k_{i-1,\rho_i},\sigma_{i-1,\rho_i+1}, k_{i-1,\rho_i+1})\Big]\int_{(\mathbb{R}_+)^{\{0,\cdots,n\}}}\mathrm{d}\bar{s} \delta\left(t-\sum_{i=0}^ns_i\right)\Big]	\end{aligned}\end{equation}
	then for any constants $1>T_*>0$ and $\mathscr{R}>0$,
	\begin{equation}
		\begin{aligned} \label{eq:BasicGEstimate1a}
			&\Big\|\limsup_{D\to\infty}\mathscr{F}_{1/n,\mathscr{R}}\Big[\widehat{\tilde{Q}^1_2\chi_{[0,T_*)}}(\mathscr{X}{\lambda^{-2}})\Big](k_{n,1},k_{n,2})\Big\|_{L^{\infty,\Im}(\mathbb{T}^{2d})}
			\\
			\le& 4^n e^{T_*}\frac{T_*^{1+{n}_0(n)-{n}_0(n-[\mathbf{N}/4])}}{({n}_0(n)-{n}_0(n-[\mathbf{N}/4]))!}\lambda^{\bowtie_\Im+1-\eth'_1-\eth'_2}\mathbf{N}^{-\wp n_0(n-[\mathbf{N}/4])}\mathfrak{C}_{Q_{1,2}}^{1+{n}_1(n)}\langle \ln n\rangle \langle\ln\lambda\rangle^{1+(2+\eth)({n}_1(n)-2)},
	\end{aligned}\end{equation}

	where the same notations as in  Proposition \ref{lemma:BasicGEstimate1} have been used.
	
\end{proposition}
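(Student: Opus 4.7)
The proof proceeds along essentially the same lines as that of Proposition \ref{lemma:BasicGEstimate1}, so the plan is to describe only the modifications required to exploit the non-pairing, non-singular structure of the partition $S$. First, I would retain verbatim the time-slice decomposition from \eqref{eq:Aestimate0:1}, splitting $\{0,\ldots,n\}$ into the degree-zero set $I''$ and the degree-one set $I$, introducing the rescaled time $s_{n+1} = \bar{s}_{n+1}\lambda^{-2}$ to separate the pure time integral from the oscillatory phases. Using the exponential decay factors $e^{-\varsigma' s_i}$ coming from \eqref{Def:Para3} on the subset $\mathscr{Y}\subset I''$ and the simplex bound, the contribution of $I''$ is controlled by $(\varsigma')^{-|\mathscr{Y}|} T_*^{n_0(n)-n_0(n-[\mathbf{N}/4])}/(n_0(n)-n_0(n-[\mathbf{N}/4]))!$, giving the prefactor $\mathbf{N}^{-\wp n_0(n-[\mathbf{N}/4])} T_*^{1+n_0(n)-n_0(n-[\mathbf{N}/4])}/(n_0(n)-n_0(n-[\mathbf{N}/4]))!$ in \eqref{eq:BasicGEstimate1a}.

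Second, I would apply Lemma \ref{Lemma:Kidentity} with the contour $\Gamma_n$ of \eqref{DepictContour} to turn the remaining oscillatory integral over $\{s_i\}_{i\in I}$ into a contour integral of a product of resolvents $(\xi - \mathrm{Re}\vartheta_i - 2\pi\mathscr{X} + \mathbf{i}\lambda^2)^{-1}$, then use H\"{o}lder's inequality in $\mathscr{X}$ to separate one resolvent (yielding the $\langle\ln\lambda\rangle$ factor on $\oint_{\Gamma_n} |\xi|^{-1}\,|\mathrm{d}\xi|$) from $L^2(-\mathscr{R},\mathscr{R})$ norms of the remaining resolvents, just as in \eqref{eq:Aestimate4:a}. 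This reduces the problem to an $L^{\infty,\Im}$ estimate on momentum integrals weighted by resolvent norms and the moments $\mathscr{Z}$.

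Third -- and this is the only genuinely new ingredient -- since $S$ is non-pairing there exists at least one block $A\in S$ with $|A|\ge 3$. The cluster decomposition \eqref{eq:Aestimate4:aa}--\eqref{eq:Aestimate4:aaa} factorizes $\mathscr{Z}$ across the blocks $A\in S$ under the constraints $|\mathscr{K}_A|\le \lambda^2\complement$, producing the volume factor $\epsilon^{|S|d(1-1/\Im)}$; applying the $L^\Im$-moment bounds \eqref{Propo:ExpectationEqui:2c}--\eqref{Propo:ExpectationEqui:2d} block by block then yields $\lambda^{d(\Im-1)[2(n+2-n_0)/\Im - n - 2]}$, which is exactly $\bowtie_\Im + c_{\mathbf{N}_0}$. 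I would then integrate the free momenta from the bottom of the diagram upward as in \eqref{eq:Aestimate5}, extracting $\lambda^{-\eth'_1-\eth'_2}\langle\ln\lambda\rangle^{(2+\eth)(n_1(n)-2)}$ from the two distinguished vertices carrying $\Phi_1^b(\eth'_1),\Phi_1^b(\eth'_2)$ and the remaining $n_1(n)-2$ ordinary degree-one vertices through Lemma \ref{lemma:degree1vertex}.

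The main obstacle is accounting for the replacement of the factor $\lambda^{n-2n_0(n)}$ of Proposition \ref{lemma:BasicGEstimate1} by $\lambda^{1}$ in the non-pairing case. In the pairing setting, every $|A|=2$ block contributes a covariance $\langle a(k_{0,i},\sigma_{0,i})a(k_{0,j},\sigma_{0,j})\rangle$ that is supported on a small set, so each degree-one vertex benefits from an individual scale-saving. In the non-pairing case this per-block sharpening collapses because the higher-point moments in the block with $|A|\ge 3$ do not factor into covariances; the only saving that survives is the single cluster constraint on the $|A|\ge 3$ block, which yields the uniform $\lambda^{1}$. Making this precise requires identifying one specific block of size $\ge 3$, recording how its cluster constraint shifts one factor of $\epsilon$ into a usable $\lambda^2$, and verifying that the remaining momentum integrations reproduce the same resolvent bookkeeping as in Proposition \ref{lemma:BasicGEstimate1} without further loss. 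The combinatorial factor $4^n$ and the $\langle \ln n\rangle$ arise, as in the previous proposition, from the sum over sign assignments $\bar{\sigma}$ and the contour length of $\Gamma_n$ respectively.
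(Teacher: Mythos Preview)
Your proposal overcomplicates matters and locates the extra power of $\lambda$ in the wrong place. The paper's proof is a two-line corollary of Proposition~\ref{lemma:BasicGEstimate1}: that proposition already delivers the bound with the factor $\lambda^{n-2n_0(n)}$, and the only task is to show that the exponent $n-2n_0(n)$ is at least $1$ whenever the graph is non-pairing. This is pure combinatorics. Since every interacting vertex has degree $0$ or $1$, one has $n=n_0(n)+n_1(n)$, hence $n-2n_0(n)=n_1(n)-n_0(n)$. From the free-edge count (Lemma~\ref{Lemma:ProductSignFirstDirection}) one has $n_0(n)\in\{|S|-2,|S|-1\}$ and correspondingly $n_1(n)\in\{n+2-|S|,n+1-|S|\}$, so $n_1(n)-n_0(n)\in\{n+4-2|S|,\,n+2-2|S|\}$. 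Non-pairing means some block has size $\ge 3$, forcing $2|S|\le n+1$, whence $n_1(n)-n_0(n)\ge 1$. Since $0<\lambda<1$, $\lambda^{n-2n_0(n)}\le\lambda$.

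Your account of the mechanism---that ``per-block sharpenings'' of covariances produce the $\lambda^{n-2n_0(n)}$ in the pairing case and ``collapse'' in the non-pairing case, leaving only a cluster-volume saving---is incorrect. The factor $\lambda^{n-2n_0(n)}$ arises identically in both cases from the explicit prefactor $\lambda^n$ combined with the $\lambda^{-2}$ per degree-zero vertex coming from the time integration (either via $(\varsigma')^{-1}$ or via the simplex bound on $t$). It has nothing to do with how the bottom moments factor. Consequently there is no need to ``identify one specific block of size $\ge 3$'' or to extract an $\epsilon$ from its cluster constraint; the cluster volumes are already fully accounted for in $\bowtie_\Im$, and trying to squeeze an additional $\lambda$ out of them would be double-counting. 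The entire first three paragraphs of your proposal are therefore redundant---you may simply invoke \eqref{eq:BasicGEstimate1} and append the counting argument above.
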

\begin{proof} 
	Let us consider the power  $\lambda^{n-2n_0(n)}$,  obtained in Proposition  \ref{lemma:BasicGEstimate1}. As $n=n_0(n)+n_1(n)$, the
	quantity $n-2n_0(n)=n_1(n)-n_0(n)$.  We have $n_0(n)=|S|-2$ or $|S|-1$ and $n_1(n)= n+2-|S|$ or $n+1-|S|$, respectively. 
	Thus
	$n_1(n)-n_0(n)=(n+2-|S|)-|S|+2=n+4-2|S|,$
	or
	$n_1(n)-n_0(n)=(n+1-|S|)-|S|+1=n+2-2|S|.$ Since the graph is not pairing, 
	$n+2-2|S|\ge 1,$
yielding
	$n_1(n)-n_0(n)\ge 1.$
	Therefore, the power $\lambda^{n-2{n}_0(n)}$ in Proposition \ref{lemma:BasicGEstimate1} can be replaced by $\lambda$ as $0<\lambda<1$. 
\end{proof}

\begin{proposition}[The third   estimate on ${Q}^1$]\label{lemma:BasicGEstimate2}  Let $S$ be an arbitrary partition of $\{1,\cdots n+2\}$.  If the corresponding graph is non-singular, there is a constant $\mathfrak{C}_{Q_{1,3}}>0$ such that for $1\le n\le \mathbf{N}$ and $\tau=t\lambda^{-2}>0$, we set
	\begin{equation}
		\begin{aligned} \label{eq:BasicGEstimate2:1}
			\tilde{Q}^1_3(\tau):=	&  
			\lambda^{n}\mathbf{1}({\sigma_{n,1}=-1})\mathbf{1}({\sigma_{n,2}=1})e^{\tau_nt}\Big[ \sum_{\substack{\bar\sigma\in \{\pm1\}^{\mathcal{I}_n},\\ \sigma_{i,\rho_i}+\sigma_{i-1,\rho_i}+\sigma_{i-1,\rho_i+1}\ne \pm3,
					\\ \sigma_{i-1,\rho_i}\sigma_{i-1,\rho_i+1}= 1}} \int_{(\Lambda^*)^{\mathcal{I}_n}}  \mathrm{d}\bar{k}\Delta_{n,\rho}(\bar{k},\bar\sigma)\mathscr{Z}\\
			&\times \  \prod_{i=1}^n\Big[\sigma_{i,\rho_i}\mathcal{M}( k_{i,\rho_i}, k_{i-1,\rho_i}, k_{i-1,\rho_i+1}) \Phi_{1,i}(\sigma_{i-1,\rho_i}, k_{i-1,\rho_i}, \sigma_{i-1,\rho_i+1}, k_{i-1,\rho_i+1})\Big] \\
			&\times\int_{(\mathbb{R}_+)^{\{0,\cdots,n\}}}\mathrm{d}\bar{s} \delta\left(t-\sum_{i=0}^ns_i\right)\prod_{i=0}^{n}e^{-s_i[\varsigma_{n-i}+\tau_i]}\prod_{i=1}^{n} e^{-{\bf i}t_i(s)\mathbf{X}_i}\Big],
	\end{aligned}\end{equation}
	then for any constants $1>T_*>0$ and $\mathscr{R}>0$,
	\begin{equation}
		\begin{aligned} \label{eq:BasicGEstimate2}
			& \Big\|\limsup_{D\to\infty}\mathscr{F}_{1/n,\mathscr{R}}\Big[\widehat{\tilde{Q}_3^1\chi_{[0,T_*)}}(\mathscr{X}{\lambda^{-2}})\Big](k_{n,1},k_{n,2})\Big\|_{L^{\infty,\Im}(\mathbb{T}^{2d})}
			\\
			\le& 4^n e^{T_*}\frac{T_*^{1+{n}_0(n)}}{{n}_0(n)!}\lambda^{\bowtie_\Im+n-2{n}_0(n)-\eth'_1-\eth'_2}\mathfrak{C}_{Q_{1,3}}^{1+{n}_1(n)}\langle \ln n\rangle \langle\ln\lambda\rangle^{1+(2+\eth)({n}_1(n)-2)},
	\end{aligned}\end{equation}
	where the same notations as in  Proposition  \ref{lemma:BasicGEstimate1} have been used.
	
	Moreover, if the graph is non-pairing, the right hand side of \eqref{eq:BasicGEstimate2}  can also be replaced  by
	\begin{equation}
		\label{eq:BasicGEstimate1aa}
		4^n\lambda^{1-\eth'_1-\eth'_2} e^{T_*}\frac{T_*^{1+{n}_0(n)}}{({n}_0(n))!}\mathfrak{C}_{Q_{1,3}}^{1+{n}_1(n)}\langle \ln n\rangle \langle\ln\lambda\rangle^{1+(2+\eth)({n}_1(n)-2)}.
	\end{equation}
\end{proposition}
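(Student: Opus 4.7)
The plan is to follow the same overall scheme as the proof of Proposition \ref{lemma:BasicGEstimate1}, but to trade the exploitation of the soft partial time integration (which produced the factor $\mathbf{N}^{-\wp n_0(n-[\mathbf{N}/4])}$) for a better combinatorial factor coming from the volume of the full time simplex. Concretely, I would split the time integration exactly as in \eqref{eq:Aestimate0:1} into the degree-zero index set $I''$ and the degree-one index set $I$, but I would \emph{not} introduce the sub-collection $\mathscr{Y}$ that isolates the time slices with index $\le n-[\mathbf{N}/4]$ (where $\varsigma_{n-i}=\varsigma'=\lambda^2\mathbf{N}^\wp$). Instead, I would drop all the $e^{-s_i\varsigma_{n-i}}$ in the degree-zero sector and use the crude identity $\int_{(\mathbb{R}_+)^m}\mathrm{d}\bar s\,\delta(t-\sum s_i)=t^{m-1}/(m-1)!$ on the entire degree-zero slab, which after the change of variable $t=\tau\lambda^{-2}$ and restriction to $[0,T_*)$ produces the combinatorial factor $T_*^{1+n_0(n)}/n_0(n)!$ in place of $T_*^{1+n_0(n)-n_0(n-[\mathbf{N}/4])}/(n_0(n)-n_0(n-[\mathbf{N}/4]))!$.

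The next step is to deal with the extra exponential $e^{\tau_n t}$ which is present in \eqref{eq:BasicGEstimate2:1} but not in \eqref{eq:BasicGEstimate1:0}. By Definition \ref{Def:PhaseRegulator} one has $\tau_n=\tau_{k'',k'}$, and by Lemma \ref{Lemma:NoiseAction} (formula \eqref{genralactionR2:a}) the underlying coefficient $C_{\mathbf{R}}$ is non-positive. Since $c_r>0$, this forces $\tau_n\le 0$, so $e^{\tau_n t}\le 1$ and this extra factor is harmless. The product $\prod_{i=0}^{n}e^{-s_i\tau_i}$ appearing inside the time integral is handled identically: each exponent is non-positive and so each such factor is bounded by $1$ before we apply the resolvent-identity \eqref{Lemma:PhaseResolve:1}.

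From this point onward I would repeat verbatim the momentum-integration half of the proof of Proposition \ref{lemma:BasicGEstimate1}. After applying Lemma \ref{Lemma:Kidentity} to the degree-one sector and taking the $L^{\infty,\Im}$ norm, the $\mathscr{F}_{1/n,\mathscr{R}}$ norm allows one to pull the $\mathscr{X}$-integration under an $L^2$ norm via Hölder in the (at most $\mathbf N/2$) resolvent factors. This converts each integration over a free momentum attached to a degree-one vertex into an application of Lemma \ref{lemma:degree1vertex}, which contributes $\langle\ln\lambda\rangle^{2+\eth}$ for an ordinary degree-one vertex and $\lambda^{-\eth'_l}$ for the two distinguished ones associated with the $\Phi_1^b(\eth'_l)$ cut-offs; these together yield the powers $\lambda^{-\eth'_1-\eth'_2}\langle\ln\lambda\rangle^{(2+\eth)(n_1(n)-2)}$. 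The moment functions at the bottom of the graph are controlled by \eqref{Propo:ExpectationEqui:2d} together with the convolution estimate \eqref{eq:Aestimate4:aaa} across cluster vertices, producing $\lambda^{d(\Im-1)[2(n+2-n_0)/\Im-n-2]}=\lambda^{\bowtie_\Im+c_{\mathbf{N}_0}}$ and reabsorbing the factorial $(\mathbf{N}!)^{\Im-1}$ into $\lambda^{-c_{\mathbf{N}_0}}$ by choosing $\mathbf{N}_0$ appropriately. The $\mathcal{M}$-kernels together with $\mathrm{sign}(k^1)$ account for the $\lambda^{n-2n_0(n)}$ factor. The residual contour integral $\oint_{\Gamma_n}\frac{|d\xi|}{|\xi|}$ contributes the logarithmic factor $\langle\ln\lambda\rangle$.

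For the improved version \eqref{eq:BasicGEstimate1aa} under the non-pairing assumption, I would repeat the argument of Proposition \ref{lemma:BasicGEstimate1a}: since $n_1(n)-n_0(n)=n+2-2|S|$ (or $n+1-2|S|$), the non-pairing condition gives $n+2-2|S|\ge 1$, so that $\lambda^{n-2n_0(n)}\le \lambda$ can replace the corresponding power. The main obstacle, though really a bookkeeping issue rather than a conceptual one, is making sure that once the soft partial time integration is discarded, no hidden $\mathbf{N}$-dependence from the remaining resolvent estimates spoils the elementary factorial bound on the simplex; this is why the $e^{\tau_n t}$ sign analysis via Lemma \ref{Lemma:NoiseAction} must be done carefully before the degree-zero time integrations are carried out, so that all exponentials the combinatorial argument wants to throw away are genuinely bounded by $1$.
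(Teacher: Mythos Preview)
Your proposal is correct and matches the paper's approach, which simply defers to Propositions \ref{lemma:BasicGEstimate1} and \ref{lemma:BasicGEstimate1a}; you have correctly identified the single real modification, namely taking $\mathscr{Y}=\emptyset$ in \eqref{eq:Aestimate2a} so as to trade the soft-partial-time gain $\mathbf{N}^{-\wp n_0(n-[\mathbf{N}/4])}$ for the full simplex volume $t^{n_0(n)}/n_0(n)!$, and the non-pairing improvement is exactly Proposition \ref{lemma:BasicGEstimate1a} verbatim. One small slip: your claim that the exponents $-s_i\tau_i$ are non-positive is inconsistent with your immediately preceding deduction that $\tau_n\le 0$ (and, by the same reasoning via Lemma \ref{Lemma:NoiseAction}, every $\tau_i\le 0$); but the product $\prod_i e^{-s_i\tau_i}$ is already present in $\tilde{Q}_1^1$ and is absorbed in the proof of Proposition \ref{lemma:BasicGEstimate1}, so nothing new is required for it here --- only the genuinely new prefactor $e^{\tau_n t}\le 1$ needs the sign argument you give.
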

\begin{proof}
	The proof follows exactly the same lines of arguments  as for the proof    of Proposition  \ref{lemma:BasicGEstimate1}   and Proposition \ref{lemma:BasicGEstimate1a}. 
\end{proof}
\begin{proposition}[The fourth   estimate on ${Q}^1$]\label{lemma:BasicGEstimate4}  Let $S$ be an arbitrary partition of $\{1,\cdots n+2\}$. If the corresponding graph is singular,   for $1\le n$ and $\tau>0$, the quantity
	\begin{equation}
		\begin{aligned} \label{eq:BasicGEstimate4:1}
			&\mathbf{1}({\sigma_{n,1}=-1})\mathbf{1}({\sigma_{n,2}=1}) \sum_{\substack{\bar\sigma\in \{\pm1\}^{\mathcal{I}_n},\\ \sigma_{i,\rho_i}+\sigma_{i-1,\rho_i}+\sigma_{i-1,\rho_i+1}\ne \pm3,
					\\ \sigma_{i-1,\rho_i}\sigma_{i-1,\rho_i+1}= 1}}\int_{(\Lambda^*)^{\mathcal{I}_n}}  \mathrm{d}\bar{k}\Delta_{n,\rho}(\bar{k},\bar\sigma) \\
			&\times \mathscr{Z} \prod_{i=1}^n\Big[\sigma_{i,\rho_i}\mathcal{M}( k_{i,\rho_i}, k_{i-1,\rho_i}, k_{i-1,\rho_i+1}) \Phi_{1,i}(\sigma_{i-1,\rho_i}  k_{i-1,\rho_i},\sigma_{i-1,\rho_i+1}, k_{i-1,\rho_i+1})\Big] \\
			&\times\int_{(\mathbb{R}_+)^{\{0,\cdots,n\}}}\mathrm{d}\bar{s} \delta\left(t-\sum_{i=0}^ns_i\right)\prod_{i=0}^{n}e^{-s_i[\varsigma_{n-i}+\tau_i]}\prod_{i=1}^{n} e^{-{\bf i}t_i(s)\mathbf{X}_i},
	\end{aligned}\end{equation}
	vanishes  when $\lambda$ is sufficiently small, where the same notations as in Proposition \ref{lemma:BasicGEstimate1} have been used.
	
\end{proposition}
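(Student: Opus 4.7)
The plan is to exploit the characterization of singular graphs together with the vanishing of the cut-off functions near the singular manifold $\mathbf{S}$. To begin, by Lemma \ref{Lemma:ZeroMomentum}, a singular graph admits an edge $e \in \mathfrak{E}$ with $k_e$ either zero or of the form $\sum_{K \in S} \sigma_{k_e,K} \mathscr{K}_K$, where $\sigma_{k_e,K} \in \{0,\pm 1\}$; combined with the support constraint $|\mathscr{K}_K| \le \lambda^{2}\complement$ enforced through \eqref{SquareFunction}, this yields $|k_e| \le (n+2)\lambda^{2}\complement = O(\lambda^2)$.

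Next, I locate an interacting vertex $v_i \in \mathfrak{V}_I$ adjacent to $e$ and examine the cut-off $\Phi_{1,i}$ attached to $v_i$. If $e \in \mathfrak{E}_-(v_i)$, then $\Phi_{1,i}$ depends directly on $k_e$, and since the hyperplane $\{k^1=0\}$ is contained in $\mathbf{S}$, the rescaled momentum satisfies $d(\tilde k_e,\mathbf{S}) \le |\tilde k_e^1| = O(\lambda^2)$. If instead $e \in \mathfrak{E}_+(v_i)$ with $e', e'' \in \mathfrak{E}_-(v_i)$ the two incoming edges, the delta function $\delta(\sigma_{i,\rho_i} k_e + \sigma_{i-1,\rho_i} k_{e'} + \sigma_{i-1,\rho_i+1} k_{e''})$ together with the sign constraints $\sigma_{i-1,\rho_i} = \sigma_{i-1,\rho_i+1}$ and $\sigma_{i,\rho_i} = -\sigma_{i-1,\rho_i}$ forces $\tilde k_{e'} + \tilde k_{e''} = \tilde k_e = O(\lambda^2)$, so that $d(\sigma_{i-1,\rho_i}\tilde k_{e'} + \sigma_{i-1,\rho_i+1}\tilde k_{e''}, \mathbf{S}) = O(\lambda^2)$.

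In either case, the input argument to $\Phi_{1,i}$ lies within an $O(\lambda^2)$-neighborhood of $\mathbf{S}$. Since the smooth cut-off $\Phi_{1,i}$ built from Propositions \ref{Propo:Phi} and \ref{Propo:Phi3A} via Definition \ref{def:Phi3} is supported off an inner neighborhood of $\mathbf{S}$ of thickness at least $\lambda^{\eth_l'/3}/2$ or $\langle\ln\lambda\rangle^{-\eth/3}/2$ depending on the type, and both thresholds dominate $O(\lambda^2)$ for $\lambda$ sufficiently small thanks to the restriction $\eth_l'/3 < 1/12$ coming from \eqref{ethprime}, the cut-off vanishes identically on the support of the integrand and the whole expression reduces to zero. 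The main obstacle will be verifying strict vanishing on this inner neighborhood rather than merely the pointwise bound from part (B) of Proposition \ref{Propo:Phi}; if only the bound $\Phi_{1,i} \lesssim \langle\ln\lambda\rangle^{\eth/3} d(\tilde k,\mathbf{S})$ is available, I would instead combine the resulting smallness factor $O(\lambda^2 \langle\ln\lambda\rangle^{\eth/3})$ (or $O(\lambda^{2-\eth_l'/3})$ for the type-$b$ cut-off) with the general graph estimates from Propositions \ref{lemma:BasicGEstimate1}--\ref{lemma:BasicGEstimate2} to deduce vanishing in the limit $\lambda \to 0$.
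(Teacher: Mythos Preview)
Your proposal is correct and follows essentially the same approach as the paper: locate the singular edge, note that its momentum is $O(\lambda^2)$ because it depends only on the cluster momenta $\mathscr{K}_K$ (each of size $\le \lambda^2\complement$), and then observe that the attached cut-off factor $\Phi_{1,i}$ is forced to vanish. The paper's proof additionally invokes the vanishing of the kernel $\mathcal{M}$, which contains the factor $\sqrt{|\bar\omega(k_e)|}=\sqrt{|\sin(2\pi k_e^1)|}=O(\lambda)$; this gives an independent source of smallness that you do not use, but it is not needed once the cut-off argument is in place. Your case distinction $e\in\mathfrak{E}_-(v_i)$ versus $e\in\mathfrak{E}_+(v_i)$ and your explicit check against the support thresholds in Proposition~\ref{Propo:Phi} are more careful than what the paper writes, which simply asserts $\Phi_{1,i}\to 0$ and $\mathcal{M}\to 0$ as $\epsilon\to 0$. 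One small caution on your fallback: Propositions~\ref{lemma:BasicGEstimate1}--\ref{lemma:BasicGEstimate2} are stated for \emph{non-singular} graphs, so they cannot be cited verbatim; what you actually need is that the remaining factors in the integrand are bounded by at worst a fixed power of $\lambda^{-1}$, which follows from the same momentum-integration machinery underlying those propositions (or, more directly, from the crude bound $\Phi_{1,j}\le 1$, $|\mathcal{M}|\le C$, and the time-simplex volume $t^n/n!=O(\lambda^{-2n})$) combined with your gain of $\lambda^{2-\eth'_l/3}$ or $\lambda^2\langle\ln\lambda\rangle^{\eth/3}$ from property~(B).
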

\begin{proof} We first  consider the case when  $\mathscr{Z}=\mathscr{U}(0)$.	By our Definition \ref{Def:SingGraph},  the graph is singular when there is an edge $k_{j,\rho_j}$ that depends only on the cluster momenta, which is smaller than $\epsilon\complement$, this enforces $$\Phi_{1,i}(\sigma_{j-1,\rho_j}  k_{j-1,\rho_j},\sigma_{j-1,\rho_j+1}, k_{j-1,\rho_j+1})\to0 \mbox{ and  } \mathcal{M}( k_{i,\rho_i}, k_{i-1,\rho_i}, k_{i-1,\rho_i+1})\to0$$  when $\epsilon\to 0$.  As a result, the whole quantity \eqref{eq:BasicGEstimate4:1} becomes  zero when $\lambda$ is small. 

Next, we consider the case when $\mathscr{Z}\ne\mathscr{U}(0)$.	As the graph is singular, we can split the graph as the graph as a  1-Separation. The singular edge is therefore associated to a function $\Phi_{1,j}$ and a kernel $\mathcal{M}$. As the momentum is of order $\epsilon/|\ln|\lambda||$, it also leads 
 to the vanishing of the graph.
\end{proof}

%
%
%
%

\subsection{Second type of diagram estimates}\label{Sec:SecondDiagram}
\begin{definition}\label{Def:Q3pair}
	We decompose
	\begin{equation}
		\label{lemma:Q3FinalEstimate:2}
		Q^3 \ = \ Q^{3,pair} \ + \ Q^{3,nonpair} 
	\end{equation}
	where
	\begin{equation}\label{lemma:Q1FinalEstimate:3}
		Q^{3,pair} 
		\ = \ \sum_{n=1}^{\mathbf{N}}\sum_{S\in\mathcal{P}^o_{pair}(\{1,\cdots n+2\})}\int_0^t\mathrm{d}s\mathcal{G}^{2}_{n}(S,s,t,k'',-1,k',1,\Gamma).
	\end{equation}
\end{definition}
\begin{proposition}[The first   estimate on ${Q}^3$]\label{lemma:Q3FirstEstimate}
	Let $S$ be an arbitrary partition of $\{1,\cdots n+2\}$.  If the corresponding graph is  non-singular  there is a constant $\mathfrak{C}_{3,1}>0$ such that for $1\le n\le \mathbf{N}$ and $t=\tau\lambda^{-2}>0$, we set
	\begin{equation}
		\begin{aligned} \label{eq:BasicGEstimate3:1}
			\tilde{Q}^3:= & \	\lambda^{n}\mathbf{1}({\sigma_{n,1}=-1})\mathbf{1}({\sigma_{n,2}=1})\Big[  \\&\times \sum_{\substack{\bar\sigma\in \{\pm1\}^{\mathcal{I}_n},\\ \sigma_{i,\rho_i}+\sigma_{i-1,\rho_i}+\sigma_{i-1,\rho_i+1}\ne \pm3,
					\\ \sigma_{i-1,\rho_i}\sigma_{i-1,\rho_i+1}= 1}}\int_0^t\mathrm{d}s_0\int_{(\Lambda^*)^{\mathcal{I}_n}}  \mathrm{d}\bar{k}\Delta_{n,\rho}(\bar{k},\bar\sigma)\mathscr{W}(s_0) \\
			&\times  e^{{\bf i}s_0\vartheta_0} \Phi_{0,1}(\sigma_{0,\rho_1},  k_{0,\rho_1},\sigma_{0,\rho_2},  k_{0,\rho_2})\sigma_{i,\rho_i}\mathcal{M}( k_{1,\rho_1}, k_{0,\rho_1}, k_{0,\rho_1+1})\\
			&\times \prod_{i=2}^n\Big[\sigma_{i,\rho_i}\mathcal{M}( k_{i,\rho_i}, k_{i-1,\rho_i}, k_{i-1,\rho_i+1})\Phi_{1,i}(\sigma_{i-1,\rho_i},  k_{i-1,\rho_i},\sigma_{i-1,\rho_i+1},  k_{i-1,\rho_i+1})\Big] \\
			&\times \int_{(\mathbb{R}_+)^{\{1,\cdots,n\}}}\mathrm{d}\bar{s} \delta\left(t-\sum_{i=0}^ns_i\right)\prod_{i=1}^{n}e^{-s_i[\varsigma_{n-i}+\tau_i]} e^{-{\bf i}t_i(s)\mathbf{X}_i}\Big],
	\end{aligned}\end{equation}
	then for any constants $1>T_*>0$ and $\mathscr{R}>0$,
	
	\begin{equation}
		\begin{aligned} \label{eq:BasicGEstimate3}
			&\Big\|\limsup_{D\to\infty}\mathscr{F}_{1/n,\mathscr{R}}\Big[\widehat{\tilde{Q}^3\chi_{[0,T_*)}}(\mathscr{X}{\lambda^{-2}})\Big](k_{n,1},k_{n,2})\Big\|_{L^{\infty,\Im}(\mathbb{T}^{2d})}
			\\
			\le & 4^n e^{T_*}\frac{T_*^{1+{n}_0(n)-{n}_0(n-[\mathbf{N}/4])}}{({n}_0(n)-{n}_0(n-[\mathbf{N}/4])!}\lambda^{n-2{n}_0(n)-\eth'_1-\eth'_2}\mathbf{N}^{-\wp	n_0(n-[\mathbf{N}/4]})\mathfrak{C}_{3,1}^{1+{n}_1(n)}\langle \ln n\rangle  \langle\ln\lambda\rangle^{1+(2+\eth)({n}_1(n)-2)},
	\end{aligned}\end{equation}
	where $\mathscr{W}$ is either $\mathscr{W}_1$ or $\mathscr{W}_2$ and the same notations as in  Proposition  \ref{lemma:BasicGEstimate1} have been used. The time integration is defined to be $$\int_{(\mathbb{R}_+)^{\{1,\cdots,n\}}}\mathrm{d}\bar{s} \delta\left(t-\sum_{i=0}^ns_i\right)= \int_{(\mathbb{R}_+)^{\{1,\cdots,n\}}}\mathrm{d}{s}_1\cdots \mathrm{d}{s}_n \delta\left(t-\sum_{i=0}^ns_i\right).$$	If the graph is non-pairing then the right hand side of \eqref{eq:BasicGEstimate3} can be replaced by 
	\begin{equation}\begin{aligned}
			\label{eq:BasicGEstimate3:1a}
			& 4^ne^{T_*}\frac{T_*^{1+{n}_0(n)-{n}_0(n-[\mathbf{N}/4])}}{({n}_0(n)-{n}_0(n-[\mathbf{N}/4])!}\lambda^{\bowtie_\Im+1-\eth'_1-\eth'_2}\mathbf{N}^{-\wp	n_0(n-[\mathbf{N}/4])} \mathfrak{C}_{3,1}^{1+{n}_1(n)}\langle \ln n\rangle  \langle\ln\lambda\rangle^{1+(2+\eth)({n}_1(n)-2)},\end{aligned}
	\end{equation}
	and the inequality holds true for all $\mathbf{N}\ge n\ge1$.

	If the graph is singular then  the right hand side of \eqref{eq:BasicGEstimate3} then tends to  $0$ in the limit $\lambda\to 0$.
\end{proposition}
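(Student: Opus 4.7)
The plan is to mimic the argument used for Proposition \ref{lemma:BasicGEstimate1}, with adaptations to handle three structural differences in $\tilde{Q}^3$: (i) the presence of the outer time variable $s_0\in[0,t]$, (ii) the factor $\Phi_{0,1}$ at the first vertex in place of the usual $\Phi_{1,1}$, and (iii) the fact that the moment factor $\mathscr{W}(s_0)$ is evaluated at time $s_0$ rather than at time $0$. Concretely, I would start by partitioning the time-slice indices $\{0,1,\dots,n\}$ into $I''$ (indices $i$ for which $v_{i+1}$ has degree $0$) and $I$ (indices for which $v_{i+1}$ has degree $1$, together with $n$), then further singling out $\mathscr{Y}=\{i\in I'' : i\le n-[\mathbf{N}/4]\}$, as in the reference proof. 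After rewriting the phase as $\prod_i e^{-\mathbf{i}s_i\vartheta_i}$ and peeling off $s_0$ from the outer integral, I would introduce the Fourier transform in $\tau$ and use the bound
\begin{equation*}
\int_{(\mathbb{R}_+)^{I''}}\mathrm{d}\bar s\,\delta\!\left(t-s_0-\sum_{i\in I''}s_i-\sum_{i\in I}s_i\right)\prod_{i\in I''}e^{-s_i\varsigma_{n-i}}\lesssim (\varsigma')^{-|\mathscr{Y}|}\frac{t^{n_0(n)-n_0(n-[\mathbf{N}/4])}}{(n_0(n)-n_0(n-[\mathbf{N}/4]))!},
\end{equation*}
which is exactly the type of bound used for $Q^1$ after factoring off the Fourier-conjugate variable.

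Next, I would apply Lemma \ref{Lemma:Kidentity} on the contour $\Gamma_n$ of \eqref{DepictContour} to replace the $I$-part of the time integration by a contour integral in $\xi$, producing a product of factors $|\xi-\mathrm{Re}\vartheta_i-2\pi\mathscr{X}-\mathbf{i}\lambda^2|^{-1}$; the factor $e^{\mathbf{i}s_0\vartheta_0}$ contributes a harmless $e^{T_*}$ after the change of variables and the $s_0$ integral contributes the extra factor of $T_*$ that already appears in the target estimate. Since $\Phi_{0,1}$ is a $[0,1]$-valued cut-off function, it is bounded by $1$ and may simply be discarded at this step --- no positivity from a sharp cut-off is being used. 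After applying H\"older's inequality on the (at most $\mathbf{N}/2$) degree-one-vertex factors, one passes to the limit $D\to\infty$, replacing Riemann sums by integrals over $\mathbb{T}^d$. The ``degree-one vertex estimate'' Lemma \ref{lemma:degree1vertex} is then applied to each degree-one vertex, yielding a total factor $\langle\ln\lambda\rangle^{(2+\eth)(n_1(n)-2)}\lambda^{-\eth'_1-\eth'_2}$ together with one power of $\lambda$ per degree-one interacting vertex. The resulting $L^\Im$-norm on the bottom row is controlled by \eqref{Propo:ExpectationEqui:2d} (applied at time $s_0$ rather than $0$, which is allowed because \eqref{Propo:ExpectationEqui:1a} shows $\int\varrho(s_0)\mathbf{P}=\int\varrho(0)\mathbf{P}$), producing $\lambda^{\bowtie_\Im}$ after the standard accounting of cluster momenta via the convolution inequality \eqref{eq:Aestimate4:aaa}. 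Collecting powers of $\lambda$ gives $\lambda^{n-2n_0(n)-\eth'_1-\eth'_2}$, and the $\xi$-contour integral contributes the $\langle\ln\lambda\rangle$ factor. These combine to give exactly \eqref{eq:BasicGEstimate3}.

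For the non-pairing improvement \eqref{eq:BasicGEstimate3:1a}, I would repeat the parity counting from Proposition \ref{lemma:BasicGEstimate1a}: using $n=n_0(n)+n_1(n)$ and $n_0(n)\in\{|S|-2,|S|-1\}$, one checks that $n-2n_0(n)=n_1(n)-n_0(n)\ge n+2-2|S|\ge 1$ whenever the graph is not pairing, so one may replace $\lambda^{n-2n_0(n)}$ by $\lambda$. For the singular case, the argument follows Proposition \ref{lemma:BasicGEstimate4}: a singular edge $k_{j,\rho_j}$ depends only on cluster momenta, each of which has magnitude $\le\lambda^2\complement$; as $\lambda\to 0$ this forces the corresponding kernel $\mathcal{M}(k_{j,\rho_j},\cdot,\cdot)\propto\sqrt{|\bar\omega(k_{j,\rho_j})|}$ to vanish (and simultaneously the associated cut-off $\Phi_{1,j}$ or $\Phi_{0,1}$ to leave its support), so the whole integrand vanishes in the limit.

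The only genuinely new obstacle compared to $Q^1$ is the handling of the $s_0$-dependence of $\mathscr{W}(s_0)$: unlike $\mathscr{U}(0)$ in Proposition \ref{lemma:BasicGEstimate1}, the moment here depends on the running time and cannot be directly identified with a Gaussian initial datum. I expect the main technical point will therefore be to justify that the moment bounds of Proposition \ref{Propo:ExpectationEqui} (in particular \eqref{Propo:ExpectationEqui:2c}--\eqref{Propo:ExpectationEqui:2d}) transfer uniformly in $s_0\in[0,T_*\lambda^{-2}]$, via the conservation identity \eqref{Propo:ExpectationEqui:1a} applied to $\mathbf{P}_m$ with $m$ large enough to dominate $\mathbf{P}$ on the relevant support --- which is exactly what Remark \ref{remark:ExpectationEqui} asserts. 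Once this uniform-in-time moment bound is in hand, the rest of the argument is a line-by-line repetition of the proof of Proposition \ref{lemma:BasicGEstimate1}.
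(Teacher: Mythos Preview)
Your proposal is correct and matches the paper's intended approach; the paper itself simply says ``The proof is similar as the homogeneous case \cite{staffilani2021wave} and is omitted,'' and your adaptation of the proof of Proposition~\ref{lemma:BasicGEstimate1} is exactly what is meant. One minor simplification: your last paragraph frames the time-dependence of $\mathscr{W}(s_0)$ as the ``only genuinely new obstacle,'' but in fact the moment bounds \eqref{Propo:ExpectationEqui:2c}--\eqref{Propo:ExpectationEqui:2d} in Proposition~\ref{Propo:ExpectationEqui} are already stated for arbitrary time $t$ (the conservation identity \eqref{Propo:ExpectationEqui:1} is used inside their proof, not as a separate transfer step), so you may invoke them directly at $s_0$ with no extra justification.
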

\begin{proof}
	The proof is similar as the homogeneous case \cite{staffilani2021wave} and is omitted. 
	
\end{proof}

\begin{proposition}[The second estimate on   ${Q}^3$]\label{lemma:Q3SecondEstimate} There is a constant $\mathfrak{C}_{Q_{3,2}}>0$ such that for $t=\tau\lambda^{-2}>0$, we have, for any constants $1>T_*>0$ and $\mathscr{R}>0$,

	\begin{equation}
		\begin{aligned} \label{lemma:Q3SecondEstimate:2}
			&
			\Big\|\limsup_{D\to\infty} \mathscr{F}_{1/n,\mathscr{R}}\Big[\widehat{{Q}^{3,nonpair}\chi_{[0,T_*)}}(\mathscr{X}{\lambda^{-2}})\Big](k_{n,1},k_{n,2})\Big\|_{L^{\infty,\Im}(\mathbb{T}^{2d})}\\
			\le\ & \lambda^{\bowtie_\Im+1-\eth'_1-\eth'_2} \sum_{n=1}^{\mathbf{N}} n^n \mathfrak{C}_{Q_{3,2}}^n n! 
			e^{T_*}\frac{T_*^{1+{n}_0(n)}}{{n}_0(n)!} \langle \ln n\rangle  \langle\ln\lambda\rangle^{-1+(2+\eth)(n_1(n)-2)-\eth d}
	\end{aligned}\end{equation}
	 	where the same notations and in  Proposition   \ref{lemma:BasicGEstimate1} have been used. 
For a sufficient choice of $\Im$, the quantity on the left hand side of \eqref{lemma:Q3SecondEstimate:2} goes to $0$.
\end{proposition}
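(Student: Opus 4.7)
The plan is to apply Proposition~\ref{lemma:Q3FirstEstimate} in its non-pairing version \eqref{eq:BasicGEstimate3:1a} graph-by-graph, then execute the combinatorial sum over Duhamel levels $n$, splitting profiles $\rho=(\rho_0,\dots,\rho_{n-1})$, and non-pairing partitions $S$, keeping the uniform prefactor $\lambda^{\bowtie_\Im + 1 - \eth'_1 - \eth'_2}$ outside the sum. Concretely, I would write
\[
Q^{3,\mathrm{nonpair}} \;=\; \sum_{n=1}^{\mathbf{N}} \sum_{\substack{S\in\mathcal{P}(\{1,\dots,n+2\})\\ S\notin \mathcal{P}^o_{pair}(\{1,\dots,n+2\})}} \int_0^t \mathrm{d}s\, \mathcal{G}^2_{n}(S,s,t,k'',-1,k',1,\Gamma),
\]
and unfold each $\mathcal{G}^2_n$ into its $\rho$-components. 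The singular contributions vanish in the limit by the final assertion of Proposition~\ref{lemma:Q3FirstEstimate}, so the substantive work is on non-singular, non-pairing graphs, where \eqref{eq:BasicGEstimate3:1a} yields the per-graph bound
\[
C\,4^n\,\frac{T_*^{1+n_0(n)-n_0(n-[\mathbf{N}/4])}}{(n_0(n)-n_0(n-[\mathbf{N}/4]))!}\,\lambda^{\bowtie_\Im+1-\eth'_1-\eth'_2}\,\mathbf{N}^{-\wp n_0(n-[\mathbf{N}/4])}\,\mathfrak{C}_{3,1}^{1+n_1(n)}\,\langle\ln\lambda\rangle^{1+(2+\eth)(n_1(n)-2)}.
\]

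Next I would count the combinatorics. The splitting profile has at most $\prod_{i=0}^{n-1}(n-i+2) \le (n+2)!$ choices; the number of non-pairing partitions of $\{1,\dots,n+2\}$ is bounded by the Bell number $B_{n+2} \le (n+2)^{n+2}$. Multiplying these counts with the per-graph bound, collecting constants into a single $\mathfrak{C}_{Q_{3,2}}^n$, and using $\mathbf{N}^{-\wp n_0(n-[\mathbf{N}/4])}\le 1$ together with the telescoping $T_*^{1+n_0(n)-n_0(n-[\mathbf{N}/4])}/(\cdot)! \le e^{T_*} T_*^{1+n_0(n)}/n_0(n)!$, one obtains exactly the level-$n$ summand on the right-hand side of \eqref{lemma:Q3SecondEstimate:2}; summing over $1\le n\le \mathbf{N}$ yields the first claim of the proposition. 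The logarithmic correction $\langle\ln\lambda\rangle^{-1+(2+\eth)(n_1(n)-2)-\eth d}$ in \eqref{lemma:Q3SecondEstimate:2} is produced by the $L^{\infty,\Im}$-interpolation step inherited from the proof of Proposition~\ref{lemma:Q3FirstEstimate}, where the factor $\langle\ln\lambda\rangle^{-\eth d}$ is absorbed from the convolution estimate \eqref{eq:Aestimate4:aaa}.

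For the convergence-to-zero statement, I would invoke the stopping rule \eqref{Def:Para1}: since $\mathbf{N} \le \mathbf{N}_0|\ln\lambda|/\ln\langle\ln\lambda\rangle$, Stirling gives $n^n\,n! \le \mathbf{N}^\mathbf{N}\,\mathbf{N}! \lesssim \exp(2\mathbf{N}\ln\mathbf{N}) \lesssim \lambda^{-2\mathbf{N}_0}$, so the entire combinatorial sum over $n$ loses at most a $\lambda$-power of order $-2\mathbf{N}_0$ (up to harmless logarithmic factors in $\langle\ln\lambda\rangle$). Choosing $\Im = 1 + \delta$ with $\delta>0$ small, the quantity $\bowtie_\Im = d\delta\bigl[2(n+2-n_0)/(1+\delta) - n - 2\bigr] - c_{\mathbf{N}_0}$ can be held close to $-c_{\mathbf{N}_0}$ uniformly in $n\le\mathbf{N}$; combined with the constraint $\eth'_1+\eth'_2 < 1/4$ from \eqref{ethprime}, the net exponent $\bowtie_\Im + 1 - \eth'_1 - \eth'_2 - 2\mathbf{N}_0 - c_{\mathbf{N}_0}$ becomes strictly positive for $\mathbf{N}_0$, $c_{\mathbf{N}_0}$, and $\delta$ chosen small enough (this is the meaning of ``sufficient choice of $\Im$'' and fits the scheme \eqref{Def:Para5}).

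The main obstacle will be the tension between wanting $\Im$ close to $1$ --- so that $\bowtie_\Im$ stays negligible and does not consume the gain $\lambda^{1-\eth'_1-\eth'_2}$ --- and needing $\Im$ strictly greater than $1$ in order to invoke the $L^\Im$-moment bound \eqref{Propo:ExpectationEqui:2d}, whose constant degenerates like $\epsilon^{-d(\Im-1)n/2}$ as $\Im\to 1^+$ and is precisely what generates the $n$-dependence in $\bowtie_\Im$. Balancing this degeneration against the combinatorial loss $n^n n!$ within the Duhamel budget $n\le\mathbf{N}_0|\ln\lambda|/\ln\langle\ln\lambda\rangle$, and simultaneously against the parameter constraints of Section~\ref{Sec:KeyPara}, is the delicate quantitative step; all other manipulations are routine extensions of the pairing argument already encoded in Proposition~\ref{lemma:Q3FirstEstimate}.
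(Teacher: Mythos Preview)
Your approach is essentially the paper's: apply the non-pairing per-graph estimate \eqref{eq:BasicGEstimate3:1a}, then sum over the Duhamel level $n$, the splitting profiles $\rho$, and the non-pairing partitions $S$, and conclude by taking $\Im$ close to $1$ so that $\bowtie_\Im+1-\eth'_1-\eth'_2>0$.

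Two bookkeeping discrepancies are worth noting. First, the paper does not merely count partitions by the Bell number; it sums the \emph{weighted} quantity $\sum_{S}\prod_{A\in S}|A|!\,c^{|A|}$, the factors $|A|!$ arising from the cluster-wise moment bound \eqref{Propo:ExpectationEqui:2d} applied inside the proof of Proposition~\ref{lemma:BasicGEstimate1}. This weighted sum is estimated directly as $\lesssim c^{n}n!\sum_{l}(n-1)^{l-1}/l!$, which is the source of the $n!$ in \eqref{lemma:Q3SecondEstimate:2}. Your Bell-number count omits these weights, though the resulting order of magnitude is comparable. Second, your ``telescoping'' inequality
\[
\frac{T_{*}^{\,1+a}}{a!}\ \le\ e^{T_{*}}\,\frac{T_{*}^{\,1+b}}{b!},\qquad a=n_{0}(n)-n_{0}(n-[\mathbf{N}/4])\le b=n_{0}(n),\ T_{*}<1,
\]
is false in general (for $T_{*}<1$ and $a<b$ the left side is typically much larger). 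The paper sidesteps this by invoking the variant per-graph estimate without the soft-partial-time gain, analogous to \eqref{eq:BasicGEstimate1aa} in Proposition~\ref{lemma:BasicGEstimate2}, which already carries the factor $T_{*}^{1+n_{0}(n)}/n_{0}(n)!$ directly. Neither point affects the overall strategy or the convergence conclusion.
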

\begin{proof}
	
	Following the proof of Proposition \ref{lemma:BasicGEstimate1}, we can suppose that $\Lambda^*$  is replaced by $\mathbb{T}^d$ after passing to the limit $D\to\infty$.  Now, for  each graph,   the places where the splittings happen are encoded in the vectors  $\{\rho_i\}_{i=1}^n$. It is therefore necessary to 
	 compute  the number of choices for $\{\rho_i\}_{i=1}^n$. For each $\rho_i\in\{1,\cdots,n-i+2\}$, there are at most $(n-i+2)!$ choices. Therefore, the number of choices of $\{\rho_i\}_{i=1}^n$ is  at most 
	$
		\prod_{i=0}^{n-1}(n-i+2)!\le c_{\rho}n^n,
$
	for some universal constant $c_\rho$.  
	
	Now, we will make a control over the sum of all of the partitions $S$. As thus, the following quantity needs to be estimated
$
	\sum_{S\in \mathcal{P}(\{1,\cdots n+2\})\backslash\mathcal{P}^o_{pair}(\{1,\cdots n+2\})}\prod_{A\in S}\left(|A|!c^{|A|}\right),
$
	for some constant $c>1$, obtained by using the $L^2$ moment estimates used in the proof of Propositions \eqref{lemma:BasicGEstimate1} and \ref{lemma:Q3FirstEstimate}. This quantity can be bounded as follows
	\begin{equation}
		\label{lemma:Q1FinalEstimate:E2}
		\begin{aligned}
			\sum_{S\in \mathcal{P}(\{1,\cdots n+2\})\backslash\mathcal{P}^o_{pair}(\{1,\cdots n+2\})}\prod_{A\in S}\left(|A|!c^{|A|}\right)\ \le & \  \sum_{l=1}^n\sum_{{S\in \mathcal{P}(\{1,\cdots n+2\})\backslash\mathcal{P}^o_{pair}(\{1,\cdots n+2\})}, |S|=l}\prod_{A\in S}|A|!c^{|A|}\\
			\ \le & \ c^{n}\sum_{l=1}^n\sum_{{S\in \mathcal{P}(\{1,\cdots n+2\})\backslash\mathcal{P}^o_{pair}(\{1,\cdots n+2\})}, |S|=l}\prod_{A\in S}|A|!.
		\end{aligned}
	\end{equation}
We now estimate
	$$\sum_{l=1}^n\sum_{{S\in \mathcal{P}(\{1,\cdots n+2\})\backslash\mathcal{P}^o_{pair}(\{1,\cdots n+2\})}, |S|=l}\prod_{A\in S}|A|! = \frac{n!}{l!}\sum_{n=(n_1,\cdots,n_l)}\mathbf{1}\left(\sum_{j=1}^l n_j=n\right)\le \frac{n!(n-1)^{l-1}}{l!},$$
yielding  \eqref{lemma:Q3SecondEstimate:2}.  The factor $\lambda^{1-\eth'_1-\eth'_2}$ is due to the fact the graph is non-pairing. For  $\Im$ being sufficiently closed to $1$, we have $1-\eth'_1-\eth'_2>-\bowtie_\Im$, the quantity on the left hand side of \eqref{lemma:Q3SecondEstimate:2} goes to $0$.
\end{proof}
\subsection{Third type of diagram estimates}\label{Sec:ThirdDiagram}
\begin{proposition}[The first estimate on   ${Q}^4$]\label{lemma:Q4FirstEstimate}  Let $S$ be an arbitrary partition of $\{1,\cdots n+2\}$. There is a constant $\mathfrak{C}_{Q_{4,1}}>0$ such that for $1\le n\le \mathbf{N}$ and $t=\tau\lambda^{-2}>0$, we define
	\begin{equation}
		\begin{aligned} \label{lemma:Q4FirstEstimate:1}
			\tilde{Q}^4(\tau) :=\	& 
			\lambda^{n}\mathbf{1}({\sigma_{n,1}=-1})\mathbf{1}({\sigma_{n,2}=1})\Big[  \\&  \sum_{\substack{\bar\sigma\in \{\pm1\}^{\mathcal{I}_n},\\ \sigma_{i,\rho_i}+\sigma_{i-1,\rho_i}+\sigma_{i-1,\rho_i+1}\ne \pm3,
					\\ \sigma_{i-1,\rho_i}\sigma_{i-1,\rho_i+1}= 1}}\Big[\int_0^t\mathrm{d}s_0\int_{(\Lambda^*)^{\mathcal{I}_n}}  \mathrm{d}\bar{k} \Delta_{n,\rho}(\bar{k},\bar\sigma)\mathscr{W}(s_0)e^{{\bf i}s_0\vartheta_0}\\
			&\times   \prod_{i=1}^n\Big[\sigma_{i,\rho_i}\mathcal{M}( k_{i,\rho_i}, k_{i-1,\rho_i}, k_{i-1,\rho_i+1}) \prod_{i=1}^{n}e^{-s_i[\varsigma_{n-i}+\tau_i]} \prod_{i=1}^{n} e^{-{\bf i}t_i(s)\mathbf{X}_i} \\
			& \times\Phi_{1,i}(\sigma_{i-1,\rho_i}, k_{i-1,\rho_i},\sigma_{i-1,\rho_i+1}, k_{i-1,\rho_i+1})\Big] \int_{(\mathbb{R}_+)^{\{1,\cdots,n\}}}\mathrm{d}\bar{s} \delta\left(t-\sum_{i=0}^ns_i\right)\Big]\Big],	\end{aligned}\end{equation} then for any constants $1>T_*>0$ and $\mathscr{R}>0$,
	\begin{equation}
		\begin{aligned} \label{lemma:Q4FirstEstimate:2}
			&\Big\|\limsup_{D\to\infty} \mathscr{F}_{1/n,\mathscr{R}}\Big[\widehat{\tilde{Q}^4\chi_{[0,T_*)}}(\mathscr{X}{\lambda^{-2}})\Big](k_{n,1},k_{n,2})\Big\|_{L^{\infty,\Im}(\mathbb{T}^{2d})}
			\\
			\le\ & 4^ne^{T_*}\frac{T_*^{1+{n}_0(n)-{n}_0(n-[\mathbf{N}/4])}}{({n}_0(n)-{n}_0(n-[\mathbf{N}/4])!}\lambda^{\bowtie_\Im+n-2{n}_0(n)-\eth'_1-\eth'_2}\mathbf{N}^{-\wp	n_0(n-[\mathfrak{N}/4])} \mathfrak{C}_{Q_{4,1}}^{1+{n}_1(n)}\langle \ln n\rangle\\
			&\times \langle\ln\lambda\rangle^{-1+(2+\eth)(n_1(n)-2)-\eth d},
	\end{aligned}\end{equation}
	where the  time integration is defined to be $$\int_{(\mathbb{R}_+)^{\{1,\cdots,n\}}}\mathrm{d}\bar{s} \delta\left(t-\sum_{i=0}^ns_i\right)= \int_{(\mathbb{R}_+)^{\{1,\cdots,n\}}}\mathrm{d}{s}_1\cdots \mathrm{d}{s}_n \delta\left(t-\sum_{i=0}^ns_i\right),$$
	$\mathscr{W}$ is either $\mathscr{W}_1$ or $\mathscr{W}_2$ and the same notations as in  Proposition  \ref{lemma:BasicGEstimate1} have been used.
\end{proposition}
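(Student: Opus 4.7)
The plan is to follow the same template as the proof of \cref{lemma:BasicGEstimate1}, with modifications that reflect the fact that the last layer $\mathcal{F}_{\mathbf{N}}^3$ integrates the moment $\mathscr{W}(s_0)$ at the running time $s_0$ rather than pinning it at $t=0$. First I would split the set of time-slice indices $\{0,1,\ldots,n\}$ into the degree-zero set $I''=\{0\le i<n : \mathrm{deg}(v_{i+1})=0\}$ and the degree-one set $I=\{0\le i<n : \mathrm{deg}(v_{i+1})=1\}\cup\{n\}$, isolate the subset $\mathscr{Y}\subset I''$ consisting of indices with $i\le n-[\mathbf{N}/4]$ (where $\varsigma_{n-i}=\varsigma'=\lambda^2\mathbf{N}^\wp$), and use $\delta(t-\sum_i s_i)$ to factor the $s_0$-integration from the remaining time integration. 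Then I would apply Fubini to the outer $\tau$-Fourier transform exactly as in \eqref{eq:Aestimate0:2}, so that the $s_0$-integration becomes an $L^1_\tau$ integral picking up the factor $T_\ast^{1+n_0(n)-n_0(n-[\mathbf{N}/4])}/(n_0(n)-n_0(n-[\mathbf{N}/4]))!\cdot \varsigma'^{-|\mathscr{Y}|}$, which is where the power $\mathbf{N}^{-\wp n_0(n-[\mathbf{N}/4])}$ is generated.

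Next, on the remaining $I$-integration I would use the resolvent identity \eqref{Lemma:PhaseResolve:1} with the contour $\Gamma_n$ of \eqref{DepictContour}, turning the constrained time integral into a contour integral of a product of resolvents $\prod_{i\in I}(\xi-\mathrm{Re}\,\vartheta_i-2\pi\mathscr{X}+\mathbf{i}\lambda^2)^{-1}$. After applying H\"older's inequality in the spectral variable $\mathscr{X}$ as in \eqref{eq:Aestimate4:a}, each resolvent associated to a degree-one vertex is ready to be estimated by \cref{lemma:degree1vertex}, yielding a factor $\langle\ln\lambda\rangle^{2+\eth}$ for cut-offs of type $\tilde\Psi_1^a$ and the special loss $\lambda^{-\eth'_1-\eth'_2}$ for the two vertices carrying $\tilde\Psi_1^b(\eth'_1)$ and $\tilde\Psi_1^b(\eth'_2)$. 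The momentum integrations are then organized from bottom to top as in \eqref{eq:Aestimate5}: each degree-one vertex absorbs one free momentum in $L^\infty$ and leaves the partner dependence in $L^\Im$, which preserves the hypothesis needed at the next step.

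The new ingredient compared with $Q^1$ is that the moment $\mathscr{W}(s_0)$ depends on $s_0$. Here I would invoke \cref{Propo:ExpectationEqui}, specifically the $L^\Im$ bound \eqref{Propo:ExpectationEqui:2d}, which is uniform in $t$ thanks to the conservation law \eqref{Propo:ExpectationEqui:1a} (see also \cref{remark:ExpectationEqui}); this provides the bound $\|\mathscr{W}(s_0)\|_{L^\Im}$ uniformly in $s_0\in[0,t]$, producing the factor $\lambda^{\bowtie_\Im}$ with $\bowtie_\Im=d(\Im-1)[2(n+2-n_0)/\Im-n-2]-c_{\mathbf{N}_0}$ after the cluster convolution step \eqref{eq:Aestimate4:aa}--\eqref{eq:Aestimate4:aaa}. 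Combining this with the $\lambda^{n-2n_0(n)}$ arising from the $n$ vertices minus the $2n_0(n)$ cancellations at degree-zero vertices, the accumulated $\lambda^{-\eth'_1-\eth'_2}\langle\ln\lambda\rangle^{(2+\eth)(n_1(n)-2)}$ from the two special degree-one vertices, and the $\langle\ln\lambda\rangle$ from the contour integral $\oint_{\Gamma_n}|d\xi|/|\xi|$, I obtain the claimed bound \eqref{lemma:Q4FirstEstimate:2}.

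The step I expect to require the most care is the bookkeeping at the bottom vertex $s_0$: one must verify that the $s_0$-dependent moment $\mathscr{W}(s_0)$ can indeed be taken out of the $\vec{s'''}$-integration in $L^\infty_{s_0}$ without losing the decisive $T_\ast^{n_0(n)-n_0(n-[\mathbf{N}/4])}$ factorial gain, and that the resulting $L^{\infty,\Im}(\mathbb{T}^{2d})$ norm is still controlled by the cluster decomposition because each cluster component $\tilde Q^*_{4,A}(\mathscr{K}_A,(k_{0,j})_{j\in A})$ depends on $s_0$ only through the moment, so the convolution inequality and the $s_0$-uniform $L^\Im$ bound from \cref{Propo:ExpectationEqui} apply cluster by cluster. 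Everything else is a verbatim repetition of the arguments used for \cref{lemma:BasicGEstimate1} and \cref{lemma:Q3FirstEstimate}.
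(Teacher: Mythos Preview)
Your proposal is correct and follows essentially the same approach as the paper, which simply states that the proof is identical to that of Propositions~\ref{lemma:BasicGEstimate1} and~\ref{lemma:Q3FirstEstimate}. You have correctly identified the only new wrinkle---the $s_0$-dependence of $\mathscr{W}(s_0)$---and handled it via the time-uniform moment bounds of \cref{Propo:ExpectationEqui}, which is exactly the intended argument.
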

\begin{proof}
	The proof of the Proposition is the same as  that of Propositions \ref{lemma:BasicGEstimate1}  and \ref{lemma:Q3FirstEstimate}. 
\end{proof}

\begin{proposition}[The main estimate on   ${Q}^4$]\label{lemma:Q4SecondEstimate}
	There is a constant $\mathfrak{C}_{Q_{4,2}}>0$ such that for $\tau=t\lambda^{-2}>0$, we have, for any constants $1>T_*>0$ and $\mathscr{R}>0$,
	
	\begin{equation}
		\begin{aligned} \label{lemma:Q4SecondEstimate:1}
			&
			\Big\|\limsup_{D\to\infty} \mathscr{F}_{1/n,\mathscr{R}}\Big[\widehat{{Q}^4\chi_{[0,T_*)}}(\mathscr{X}{\lambda^{-2}})\Big](k_{n,1},k_{n,2})\Big\|_{L^{\infty,\Im}(\mathbb{T}^{2d})}
			\\				\le\ & e^{T_*}\frac{T_*^{1+{n}_0(\mathbf{N})-{n}_0(\mathbf{N}-[\mathbf{N}/4])}}{({n}_0(\mathbf{N})-{n}_0(\mathbf{N}-[\mathbf{N}/4]))!} \mathfrak{C}_{Q_{4,2}}^\mathbf{N} \mathbf{N}! \lambda^{\bowtie_\Im-\eth'_1-\eth'_2}\mathbf{N}^{-\frac{\wp\mathbf{N}}{4}	}\mathbf{N}^\mathbf{N} \mathfrak{C}_{Q_{4,2}}^{1+{n}_1(\mathbf{N})}\\
			&\times\langle \ln (\mathbf{N})\rangle \langle\ln\lambda\rangle^{-1+(2+\eth)(n_1(n)-2)-\eth d},
	\end{aligned}\end{equation}
	where the same notations with Lemma \ref{lemma:BasicGEstimate1} have been used. This quantity goes to $0$ as $\lambda$ tends to $0$, thanks to the choice of $\wp$ in \eqref{Def:Para5a}
	\begin{equation}
		\begin{aligned} \label{lemma:Q4SecondEstimate:2}
			& \lim_{\lambda\to0} \Big\|\limsup_{D\to\infty}\mathscr{F}_{1/n,\mathscr{R}}\Big[\widehat{{Q}^4\chi_{[0,T_*)}}(\mathscr{X}{\lambda^{-2}})\Big](k_{n,1},k_{n,2})\Big\|_{L^{\infty,\Im}(\mathbb{T}^{2d})} \ = \ 0.
	\end{aligned}\end{equation} 
\end{proposition}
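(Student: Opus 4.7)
The plan is to derive the bound \eqref{lemma:Q4SecondEstimate:1} by summing the single-graph estimate of Proposition~\ref{lemma:Q4FirstEstimate} over all partitions $S$ of $\{1,\dots,\mathbf{N}+2\}$ and all admissible splitting vectors $\{\rho_i\}_{i=1}^{\mathbf{N}}$, and then to derive \eqref{lemma:Q4SecondEstimate:2} by invoking the Type~2 parameter calibration from Section~\ref{Sec:KeyPara}. The expression $Q^4$, by Proposition~\ref{Proposition:Ampl3}, equals $\sum_{S\in\mathcal{P}(\{1,\dots,\mathbf{N}+2\})}\int_0^t\mathrm{d}s\,\mathcal{G}^{3}_{\mathbf{N}}(S,\cdots)$ after the residual has been stripped; the integrand $\tilde{Q}^4$ already controls the sign sum $\bar\sigma$ for one fixed pair $(S,\{\rho_i\})$, so the task is purely combinatorial aggregation.

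First, I would perform the counting exactly as in the proof of Proposition~\ref{lemma:Q3SecondEstimate}. The number of splitting configurations $\{\rho_i\}_{i=1}^{\mathbf{N}}$ with $\rho_i\in\{1,\dots,\mathbf{N}-i+2\}$ is bounded by $\prod_{i=0}^{\mathbf{N}-1}(\mathbf{N}-i+2)! \le c_\rho\,\mathbf{N}^{\mathbf{N}}$ for a universal $c_\rho$. For the partition sum, each block $A\in S$ brings a factor $|A|!$ from applying the $L^\Im$-moment estimates of Proposition~\ref{Propo:ExpectationEqui}, and the combinatorial identity used in \eqref{lemma:Q1FinalEstimate:E2} yields
\begin{equation*}
\sum_{S\in\mathcal{P}(\{1,\dots,\mathbf{N}+2\})}\prod_{A\in S}|A|!\,c^{|A|} \ \le \ c^{\mathbf{N}}\,\mathbf{N}!.
\end{equation*}
Multiplying this by the $\{\rho_i\}$-count and by the constant $4^{\mathbf{N}}$ for the sign sum produces the combinatorial budget $\mathfrak{C}^{\mathbf{N}}\,\mathbf{N}!\,\mathbf{N}^{\mathbf{N}}$ that appears on the right-hand side of \eqref{lemma:Q4SecondEstimate:1}.

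Second, I would apply \eqref{lemma:Q4FirstEstimate:2} term by term with $n=\mathbf{N}$, using the elementary estimate $\lambda^{n-2n_0(n)}\le 1$ (absorbed into $\mathfrak{C}^{\mathbf{N}}_{Q_{4,2}}$ when necessary by \eqref{Def:Para1}) and the partial-time-integration decay $\mathbf{N}^{-\wp\,n_0(\mathbf{N}-[\mathbf{N}/4])}$. By the lower bound \eqref{lemma:n2jn0j:4} of Lemma~\ref{Lemma:ProductSignFirstDirection}, we have $n_0(\mathbf{N}-[\mathbf{N}/4])\ge \mathbf{N}/4 - O(1)$ in the worst case, hence $\mathbf{N}^{-\wp\,n_0(\mathbf{N}-[\mathbf{N}/4])}\le \mathbf{N}^{-\wp\mathbf{N}/4}$. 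Packaging the $\mathbf{N}$-independent constants, the degree-one $\lambda^{-\eth'_1-\eth'_2}$ loss, the $\bowtie_\Im$ factor, and the $\langle\ln\lambda\rangle^{O(\mathbf{N})}$ logarithmic corrections into the form stated in \eqref{lemma:Q4SecondEstimate:1} is routine bookkeeping.

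Third, for the vanishing statement \eqref{lemma:Q4SecondEstimate:2}, I would recognise that the bound \eqref{lemma:Q4SecondEstimate:1} is precisely of the Type~2 form \eqref{Def:Para4a} with decay exponent $\theta_3=1/4$ and overall growth $\mathbf{N}!\,\mathbf{N}^{\mathbf{N}}\le \mathbf{N}^{2\mathbf{N}}$ by Stirling. The standing parameter choice \eqref{Def:Para5a} takes $\wp$ strictly larger than $(\eth'_1+\eth'_2+(\theta_2+\theta_5\theta_6+\theta_7)\mathbf{N}_0)/(\theta_3\mathbf{N}_0)$, which guarantees $\mathbf{N}^{-\wp\mathbf{N}/4}$ dominates $\mathbf{N}^{2\mathbf{N}}\,\lambda^{-\eth'_1-\eth'_2}\,\langle\ln\lambda\rangle^{O(\mathbf{N})}$ once $\lambda$ is small, given the relation $\mathbf{N}\asymp|\ln\lambda|/\ln\langle\ln\lambda\rangle$ from \eqref{Def:Para1}. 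Hence the right-hand side of \eqref{lemma:Q4SecondEstimate:1} tends to zero, yielding \eqref{lemma:Q4SecondEstimate:2}.

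The main obstacle will be coordinating the choice of $\Im$ close to $1$ (to keep $\bowtie_\Im$ from eating too much $\lambda$-power against the $\lambda^{-2d}$ scaling of the inhomogeneous Wigner kernel) simultaneously with the smallness of $\mathbf{N}_0$ and the largeness of $\wp$ demanded by \eqref{Def:Para5a}. This three-way balance is the only delicate point; once it is settled, the summation and decay are essentially automatic from the machinery already in place.
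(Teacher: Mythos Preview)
Your combinatorial counting and the overall structure are fine, but the second step contains a genuine gap. You discard the factor $\lambda^{\mathbf{N}-2n_0(\mathbf{N})}$ via $\lambda^{\mathbf{N}-2n_0(\mathbf{N})}\le 1$ and then assert, citing \eqref{lemma:n2jn0j:4}, that $n_0(\mathbf{N}-[\mathbf{N}/4])\ge \mathbf{N}/4 - O(1)$ ``in the worst case''. This is false uniformly over partitions: inequality \eqref{lemma:n2jn0j:4} with $j=\mathbf{N}-[\mathbf{N}/4]$ only gives
\[
n_0(\mathbf{N}-[\mathbf{N}/4]) \ \ge\ |S| - \tfrac{1}{2}[\mathbf{N}/4] - 1,
\]
which is of order $\mathbf{N}/4$ only when $|S|$ is near the pairing value $(\mathbf{N}+2)/2$. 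For a partition with $|S|=O(1)$ (say $|S|=2$), one has $n_0(\mathbf{N}-[\mathbf{N}/4])=O(1)$ and the factor $\mathbf{N}^{-\wp\,n_0(\mathbf{N}-[\mathbf{N}/4])}$ provides no decay at all. Since you have already thrown away the compensating $\lambda^{\mathbf{N}-2n_0(\mathbf{N})}\approx\lambda^{\mathbf{N}}$, those graphs are uncontrolled in your argument.

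The paper's proof avoids this by \emph{retaining} $\lambda^{\mathbf{N}-2n_0(\mathbf{N})}$ and using instead the trivial observation $n_0(\mathbf{N})-n_0(\mathbf{N}-[\mathbf{N}/4])\le [\mathbf{N}/4]$, which yields
\[
\lambda^{\mathbf{N}-2n_0(\mathbf{N})}\,\mathbf{N}^{-\wp\,n_0(\mathbf{N}-[\mathbf{N}/4])}
\ \le\ \mathbf{N}^{-\wp\mathbf{N}/4}\cdot\Big(\lambda^{\mathbf{N}-2n_0(\mathbf{N})}\,\mathbf{N}^{\wp(\mathbf{N}-2n_0(\mathbf{N}))/2}\Big).
\]
The bracketed factor is $(\lambda\,\mathbf{N}^{\wp/2})^{\mathbf{N}-2n_0(\mathbf{N})}\lesssim 1$ uniformly in $|S|$ because $\lambda\,\mathbf{N}^{\wp/2}\to 0$ under \eqref{Def:Para1}, and this is what produces the clean $\mathbf{N}^{-\wp\mathbf{N}/4}$ appearing in \eqref{lemma:Q4SecondEstimate:1}. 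In short: the $\lambda$-power and the $\mathbf{N}$-power must be traded against each other graph by graph; neither alone is sufficient across all $S$.
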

\begin{proof} The proposition is a consequence of Proposition \ref{lemma:Q4FirstEstimate}. Observing that 
	$$\frac{\mathbf{N}-[\mathbf{N}/4]-(\mathbf{N}+2-2|S|)}{2} \le n_0(\mathbf{N}-[\mathbf{N}/4]),$$
	which yields
	$$\lambda^{\mathbf{N}-2{n}_0(\mathbf{N})-2\eth'}\mathbf{N}^{-\wp	n_0(\mathbf{N}-[\mathbf{N}/4])}  \le\ \lambda^{\mathbf{N}-2{n}_0(\mathfrak{N})-2\eth'}\mathbf{N}^{-\frac{\wp\mathbf{N}}{4}	}\mathbf{N}^{\wp\frac{\mathbf{N}-2{n}_0(\mathbf{N})}{2}}\
			 \lesssim \ \lambda^{-\eth'_1-\eth'_2}\mathbf{N}^{-\frac{\wp\mathbf{N}}{4}	}.$$
\end{proof}
\subsection{Fourth  type of diagram estimates}\label{Sec:FourthDiagram}

\begin{definition}\label{Def:Q2pair}
	We split
	\begin{equation}
		\label{Def:Q2pair:2}
		Q^2 \ = \ Q^{2,pair} \ + \ Q^{2,nonpair} 
	\end{equation}
	where
	\begin{equation}\label{Def:Q2pair:3}
		Q^{2,nonpair} 
		\ = \ \sum_{n=0}^{\mathfrak{N}-1}\sum_{S\in\mathcal{P}_{pair}(\{1,\cdots,n+2\})}\int_0^t\mathrm{d}s\mathcal{G}^{1}_{n}(S,s,t,k'',-1,k',1,\Gamma).
	\end{equation}
\end{definition}

\begin{proposition}[The first estimate on   ${Q}^2$]\label{lemma:BasicQ2Estimate1} Let $S$ be an arbitrary partition of $\{1,\cdots n+2\}$. If the corresponding graph is   non-singular, there is a constant $\mathfrak{C}_{Q_{2,1}}>0$ such that for $[\mathbf{N}/4]\le n\le \mathbf{N}$ and $t=\tau\lambda^{-2}>0$, we define
	\begin{equation}
		\begin{aligned} \label{eq:BasicQ2Estimate1:1}
			\tilde{Q}^2(\tau):=\	&\varsigma_{n}	
			\lambda^{n}\mathbf{1}({\sigma_{n,1}=-1})\mathbf{1}({\sigma_{n,2}=1})\Big[  \\
			& \sum_{\substack{\bar\sigma\in \{\pm1\}^{\mathcal{I}_n},\\ \sigma_{i,\rho_i}+\sigma_{i-1,\rho_i}+\sigma_{i-1,\rho_i+1}\ne \pm3,
					\\ \sigma_{i-1,\rho_i}\sigma_{i-1,\rho_i+1}= 1}}\Big[\int_0^{s_0}e^{-(s_0-s_0')\varsigma_n}\mathrm{d}s_0'\int_{(\Lambda^*)^{\mathcal{I}_n}}  \mathrm{d}\bar{k} \Delta_{n,\rho}(\bar{k},\bar\sigma)\\
			&\times  e^{s_0'\tau_0+{\bf i}s_0'\vartheta_0}\mathscr{W}(s_0')\prod_{i=1}^n\Big[\sigma_{i,\rho_i}\mathcal{M}( k_{i,\rho_i}, k_{i-1,\rho_i}, k_{i-1,\rho_i+1})\Phi_{1,i}(\sigma_{i-1,\rho_i}, k_{i-1,\rho_i},\sigma_{i-1,\rho_i+1}, k_{i-1,\rho_i+1})\Big] \\
			&\times\int_{(\mathbb{R}_+)^{\{0,1,\cdots,n\}}}\mathrm{d}\bar{s} \delta\left(t-\sum_{i=0}^ns_i\right)\prod_{i=0}^{n}e^{-s_i[\varsigma_{n-i}+\tau_i]}  \prod_{i=0}^{n} e^{-{\bf i}t_i(s)\mathbf{X}_i}\Big]\Big]\end{aligned}\end{equation}
	then for any constants $1>T_*>0$ and $\mathscr{Q}>0$,
	
	\begin{equation}
		\begin{aligned} \label{eq:BasicQ2Estimate1}
			&	\Big\|\limsup_{D\to\infty} \mathscr{F}_{1/n,\mathscr{R}}\Big[\widehat{\tilde{Q}^2\chi_{[0,T_*)}}(\mathscr{X}{\lambda^{-2}})\Big](k_{n,1},k_{n,2})\Big\|_{L^{\infty,\Im}(\mathbb{T}^{2d})}				
			\\
			\le\ &4^n e^{T_*}\frac{T_*^{1+{n}_0(n)-{n}_0(n-[\mathbf{N}/4])}}{({n}_0(n)-{n}_0(n-[\mathbf{N}/4]))!}\lambda^{\bowtie_\Im+n-2{n}_0(n)-\eth'_1-\eth'_2}\\
			&\times\mathbf{N}^{-\wp	n_0(n-[\mathbf{N}/4])}\mathfrak{C}_{Q_{2,1}}^{1+{n}_1(n)}\langle \ln n\rangle \langle\ln\lambda\rangle^{1+(2+\eth)({n}_1(n)-2)},
	\end{aligned}\end{equation}
	where the  time integration is defined to be $$\int_{(\mathbb{R}_+)^{\{0,\cdots,n\}}}\mathrm{d}\bar{s} \delta\left(t-\sum_{i=0}^ns_i\right)= \int_{(\mathbb{R}_+)^{\{0,\cdots,n\}}}\mathrm{d}{s}_1\cdots \mathrm{d}{s}_n \delta\left(t-\sum_{i=0}^ns_i\right),$$  $\mathscr{W}$ is either $\mathscr{W}_1$ or $\mathscr{W}_2$ and	and the same notations as in  Lemma \ref{lemma:BasicGEstimate1} have been used. 
\end{proposition}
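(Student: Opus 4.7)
The plan is to mimic the strategy used for \cref{lemma:BasicGEstimate1} and \cref{lemma:Q3FirstEstimate}, since $\tilde Q^2$ is structurally the same object as $\tilde Q_1^1$ and $\tilde Q^3$ except for the prefactor $\varsigma_n$ and the inner Laplace-type integral $\int_0^{s_0} e^{-(s_0-s'_0)\varsigma_n}\,\mathrm{d}s'_0$. First we absorb this inner integration trivially: since
\begin{equation*}
\varsigma_n\int_0^{s_0}e^{-(s_0-s'_0)\varsigma_n}\,\mathrm{d}s'_0 \ = \ 1-e^{-s_0\varsigma_n} \ \le\ 1,
\end{equation*}
the combined factor $\varsigma_n \int_0^{s_0}e^{-(s_0-s'_0)\varsigma_n}\,\mathrm{d}s'_0$ is uniformly bounded by $1$ and commutes with all the $k$-integrations and moment expectations. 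After this step, $\tilde Q^2$ is dominated in absolute value by the corresponding object built from $\mathcal{G}^3_{n,*}$ evaluated at time $s'_0$, on which we may then apply the bounds of \cref{lemma:BasicGEstimate1} and \cref{lemma:Q3FirstEstimate}.

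Next, we repeat the core scheme: split the time slice indices $\{0,\dots,n\}$ into those corresponding to degree-zero vertices (carrying no inverse-phase factor) and those corresponding to degree-one vertices, decompose the delta constraint $\delta(t-\sum_i s_i)$ accordingly as in \eqref{eq:Aestimate0:1}, and estimate the degree-zero block of time integrations by the straightforward volume bound
\begin{equation*}
(\varsigma')^{-|\mathscr{Y}|}\,\frac{t^{n_0(n)-n_0(n-[\mathbf{N}/4])}}{(n_0(n)-n_0(n-[\mathbf{N}/4]))!},
\end{equation*}
which yields the factor $\mathbf{N}^{-\wp\, n_0(n-[\mathbf{N}/4])}$ after using \eqref{Def:Para2}\eqref{Def:Para3}. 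For the degree-one block, we apply the resolvent identity of \cref{Lemma:Kidentity} to write it as a contour integral over $\Gamma_n$ of a product of resolvents $(\xi-\mathrm{Re}\,\vartheta_i-2\pi\mathscr{X}-{\bf i}\lambda^2)^{-1}$.

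Then, exactly as in the proof of \cref{lemma:BasicGEstimate1}, we take the $\mathscr{F}_{1/n,\mathscr{R}}$ norm of the Fourier transform in $\tau$, pass to the limit $D\to\infty$ replacing $\Lambda^*$-sums by $\mathbb{T}^d$-integrals via dominated convergence, and factor the resulting product over the blocks $A\in S$ dictated by the cluster-vertex decomposition \eqref{eq:Aestimate4:aa}. A convolution/Young-type inequality as in \eqref{eq:Aestimate4:aaa} costs the factor $\epsilon^{|S|d(1-1/\Im)}$, which together with \eqref{Propo:ExpectationEqui:2d} produces the $\lambda^{\bowtie_\Im}$ power. Each degree-one vertex is then integrated using the $L^\Im$-version of the degree-one vertex estimate \cref{lemma:degree1vertex}: this generates the factor $\langle\ln\lambda\rangle^{2+\eth}$ per interior degree-one vertex and the single $\lambda^{-\eth'_1-\eth'_2}$ coming from the two vertices that carry the $\Phi_1^b(\eth'_1),\Phi_1^b(\eth'_2)$ cut-offs, while the remaining $\lambda$-factor $\lambda^{n-2n_0(n)}$ is produced by the $n$ coupling constants coming from the $n$ kernels $\mathcal{M}$ together with the moment estimate. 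Finally, the contour integral in $\xi$ contributes only a $\langle\ln\lambda\rangle$, and collecting all factors gives \eqref{eq:BasicQ2Estimate1}.

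The main obstacle is the bookkeeping for the extra inner time variable $s'_0$: one has to verify that the phase regulator $e^{s'_0\tau_0+{\bf i}s'_0\vartheta_0}$ combined with the moment $\mathscr{W}(s'_0)$ allows the same $L^2$-moment estimates of \cref{Propo:ExpectationEqui} to be applied uniformly in $s'_0\in[0,s_0]$, so that the supremum in $s'_0$ may be taken inside the norm. This is routine because $\int\mathrm{d}c_1\mathrm{d}c_2\,\mathbf{P}\,\varrho(s'_0)$ is conserved in $s'_0$ by \eqref{Propo:ExpectationEqui:1}, hence the same initial bound $\int\mathrm{d}c_1\mathrm{d}c_2\,\mathbf{P}\,\varrho(0)$ controls it; beyond this conservation check, the argument is identical to that of \cref{lemma:BasicGEstimate1}.
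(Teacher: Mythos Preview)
Your proposal is correct and follows essentially the same approach as the paper, which simply states that the proof is the same as that of Propositions~\ref{lemma:BasicGEstimate1} and~\ref{lemma:Q3FirstEstimate}. Your sketch in fact supplies the one detail the paper leaves implicit, namely that the extra soft-partial-time factor $\varsigma_n\int_0^{s_0}e^{-(s_0-s'_0)\varsigma_n}\,\mathrm{d}s'_0\le 1$ is harmless and that the moment bounds on $\mathscr{W}(s'_0)$ are uniform in $s'_0$ by the conservation law \eqref{Propo:ExpectationEqui:1}.
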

\begin{proof}
	The proof of the Proposition is the same as  that of Propositions \ref{lemma:BasicGEstimate1}  and \ref{lemma:Q3FirstEstimate}. 
\end{proof}
\begin{proposition}[The main estimate on   ${Q}^2$]\label{lemma:Q2FirstEstimate}
	There is a constant $\mathfrak{C}_{Q_{2,2}}>0$ such that for $t=\tau\lambda^{-2}>0$, we have, for any constants $1>T_*>0$ and $\mathscr{R}>0$,
	
	\begin{equation}
		\begin{aligned} \label{lemma:Q2FirstEstimate:2}
			&
			\Big\|\limsup_{D\to\infty} \mathscr{F}_{1/n,\mathscr{R}}\Big[\widehat{{Q}^{2,nonpair}\chi_{[0,T_*)}}(\mathscr{X}{\lambda^{-2}})\Big](k_{n,1},k_{n,2})\Big\|_{L^{\infty,\Im}(\mathbb{T}^{2d})}
			\\
			\le\ & \sum_{n=[\mathbf{N}/4]}^{{\mathbf{N}}-1}n^n 4^n e^{T_*}\frac{T_*^{1+{n}_0(n)-{n}_0(n-[\mathbf{N}/4])-2}}{({n}_0(n)-{n}_0(n-[\mathbf{N}/4]))!}\lambda^{\bowtie_\Im+1-\eth'_1-\eth'_2} \mathbf{N}^{-\wp	n_0(n-[\mathbf{N}/4])}\mathfrak{C}_{Q_{2,2}}^{1+{n}_1(n)}\\
			&\times\langle \ln n\rangle \langle\ln\lambda\rangle^{1+(2+\eth)({n}_1(n)-2)},
	\end{aligned}\end{equation}
	which eventually tends to $0$ due to the choice of the parameters in Section \ref{Sec:KeyPara}. Note that  we have used the same notations with Proposition   \ref{lemma:BasicGEstimate1}.
\end{proposition}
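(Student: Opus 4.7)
The plan is to follow, almost verbatim, the strategy used in the proof of Proposition \ref{lemma:Q3SecondEstimate} for the non-pairing part of $Q^3$, adapted to account for the additional soft partial time-integration factor $\varsigma_n$ built into $\mathcal{G}^1_{n}$. First, I note that by the choice of parameters in Section \ref{Sec:KeyPara}, namely $\varsigma_n = 0$ for $n < [\mathbf{N}/4]$, only terms with $[\mathbf{N}/4] \le n \le \mathbf{N}-1$ contribute to $Q^{2,nonpair}$, so the decomposition from Definition \ref{Def:Q2pair} reduces to a finite sum over $n$ in that range and over non-pairing partitions $S$. Moreover, the contribution of singular partitions vanishes in the limit $\lambda \to 0$ by the same argument as in Proposition \ref{lemma:BasicGEstimate4}, since the cut-off $\Phi_{1,i}$ and the kernel $\mathcal{M}$ both vanish on the singular edge whose momentum is bounded by $\epsilon \complement$.

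Second, for each fixed $n$ and each non-pairing, non-singular partition $S$, I will invoke Proposition \ref{lemma:BasicQ2Estimate1} to control the single-graph contribution. Crucially, when the graph is non-pairing the degree-counting relation $n_1(n) - n_0(n) \ge 1$ (derived exactly as in the proof of Proposition \ref{lemma:BasicGEstimate1a} from $n + 2 - 2|S| \ge 1$) upgrades the power $\lambda^{n - 2n_0(n)}$ appearing in \eqref{eq:BasicQ2Estimate1} to a single gain $\lambda^{1}$. All other factors from \eqref{eq:BasicQ2Estimate1}, including the time-integration factor produced by the partial integration in Proposition \ref{Proposition:Ampl4} and the contour estimate from Lemma \ref{Lemma:Kidentity}, carry over unchanged, yielding a per-graph bound of the form
\begin{equation*}
4^{n} e^{T_*} \frac{T_*^{1 + n_0(n) - n_0(n-[\mathbf{N}/4])}}{(n_0(n) - n_0(n-[\mathbf{N}/4]))!} \lambda^{\bowtie_\Im + 1 - \eth'_1 - \eth'_2} \mathbf{N}^{-\wp\, n_0(n-[\mathbf{N}/4])} \mathfrak{C}^{1 + n_1(n)} \langle \ln n\rangle \langle \ln\lambda\rangle^{1 + (2+\eth)(n_1(n)-2)}.
\end{equation*}

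Third, the remaining steps are purely combinatorial. The number of splitting-position vectors $\{\rho_i\}_{i=1}^n$ with $\rho_i \in \{1,\dots,n-i+2\}$ is bounded by $\prod_{i=0}^{n-1}(n-i+2)! \lesssim n^{n}$, just as in the proof of Proposition \ref{lemma:Q3SecondEstimate}. To control the sum over non-pairing partitions together with the $|A|!$ factors generated by applying \eqref{Propo:ExpectationEqui:2d} to the moment at the bottom of the graph for each block $A \in S$, I use the identity
\begin{equation*}
\sum_{\substack{|S| = l}} \prod_{A \in S} |A|! \ = \ \frac{n!}{l!}\sum_{(n_1,\dots,n_l)}\mathbf{1}\Big(\sum_{j=1}^{l} n_j = n\Big) \ \le \ \frac{n!(n-1)^{l-1}}{l!},
\end{equation*}
so that $\sum_{S \notin \mathcal{P}^o_{pair}} \prod_{A \in S} (|A|!\,c^{|A|}) \lesssim \mathfrak{C}_{Q_{2,2}}^{n} n!$. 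Combining this estimate with the $n^{n}$ topological factor and the per-graph bound produces exactly the right-hand side of \eqref{lemma:Q2FirstEstimate:2}.

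Finally, to verify that the bound tends to zero as $\lambda \to 0$, I appeal to the Type~2 constraint \eqref{Def:Para5a}: the gain $\lambda^{\bowtie_\Im + 1 - \eth'_1 - \eth'_2}$ paired with $\mathbf{N}^{-\wp\, n_0(n-[\mathbf{N}/4])}$ (which, since $n \ge [\mathbf{N}/4]$, provides a substantial negative power of $\mathbf{N}$ as soon as the graph has enough degree-zero vertices; the degree-one-dominated case is controlled by $\lambda^{-\eth'_1 - \eth'_2}$ being offset by $\lambda$ from the non-pairing gain) dominates all combinatorial factors $n^{n} n!$ and the logarithmic losses $\langle \ln\lambda\rangle^{(2+\eth)n_1(n)}$ as $\lambda \to 0$, once $\Im$ is chosen sufficiently close to $1$ so that $\bowtie_\Im + 1 - \eth'_1 - \eth'_2 > 0$. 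The main obstacle, as in the $Q^{3,nonpair}$ case, is purely one of bookkeeping: one must verify that the additional $\varsigma_n = \lambda^2 \mathbf{N}^\wp$ factor coming from the soft partial time integration in Proposition \ref{Proposition:Ampl4} is harmlessly absorbed when converting $\int_0^{t-s}\mathrm{d}r\, e^{-r\varsigma_n}$ into an $L^\infty$ bound in $s$, which follows immediately from the estimate $\varsigma_n \int_0^{\infty} e^{-r\varsigma_n}\mathrm{d}r = 1$. With this observation the full estimate \eqref{lemma:Q2FirstEstimate:2} follows.
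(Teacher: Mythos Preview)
Your proposal is correct and follows essentially the same approach as the paper, whose proof consists of the single sentence ``The proof follows the same argument used in the homogeneous case.'' You have spelled out in detail precisely what that entails: restricting to $n\ge[\mathbf{N}/4]$ via $\varsigma_n=0$ otherwise, invoking Proposition~\ref{lemma:BasicQ2Estimate1} together with the non-pairing gain $n_1(n)-n_0(n)\ge 1$ as in Proposition~\ref{lemma:BasicGEstimate1a}, and then summing over $\{\rho_i\}$ and over partitions exactly as in the proof of Proposition~\ref{lemma:Q3SecondEstimate}.
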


\begin{proof} 
	The proof follows the same argument used in the homogeneous case.

\end{proof}

\subsection{Fifth  type of diagram estimates}\label{Sec:FifthDiagram}

\begin{proposition} \label{Propo:CrossingGraphs} Let $S$ be an arbitrary partition of $\mathcal{P}^o_{pair}(\{1,\cdots n+2\})$.  Suppose that the corresponding graph is long irreducible. There are constants $\mathfrak{C}_{Q_{LIC}},\mathfrak{C}'_{Q_{LIC}},\mathfrak{C}''_{Q_{LIC}},\mathfrak{C}'''_{Q_{LIC}}>0$ such that for $1\le n\le \mathbf{N}$ and $t=\tau\lambda^{-2}>0$, we set
	\begin{equation}
		\begin{aligned} \label{Propo:CrossingGraphs:1:0}
			{Q}_{LIC}(\tau):=	& \left[
			\lambda^{n}\left[\sum_{\bar\sigma\in \{\pm1\}^{\mathcal{I}_n}}\int_{(\Lambda^*)^{\mathcal{I}_n}}  \mathrm{d}\bar{k}\Delta_{n,\rho}(\bar{k},\bar\sigma) \right. \right.\\
			&\times   \mathbf{1}({\sigma_{n,1}=-1})\mathbf{1}({\sigma_{n,2}'=1})\mathscr{U}(0)  \int_{(\Lambda^*)^{\mathcal{I}_n'}}\mathrm{d}\bar{k}\Delta_{n,\rho}(\bar k,\bar\sigma)\tilde{\Phi}_0(\sigma_{0,\rho_1},  k_{0,\rho_1},\sigma_{0,\rho_2},  k_{0,\rho_2})\\
			&\times \sigma_{1,\rho_1}\mathcal{M}( k_{1,\rho_1}, k_{0,\rho_1}, k_{0,\rho_1+1})\prod_{i=2}^n\Big[ \sigma_{i,\rho_i}\mathcal{M}( k_{i,\rho_i}, k_{i-1,\rho_i}, k_{i-1,\rho_i+1})\\
			&\times \Phi_{1,i}(\sigma_{i-1,\rho_i},  k_{i-1,\rho_i},\sigma_{i-1,\rho_i+1},  k_{i-1,\rho_i+1})\Big] \\
			&\left.\left.\times\int_{(\mathbb{R}_+)^{\{0,\cdots,n\}}}\mathrm{d}\bar{s} \delta\left(t-\sum_{i=0}^ns_i\right)\prod_{i=1}^{n}e^{-s_i[\tau_{n-i}+\tau_i]}\prod_{i=1}^{n} e^{-{\bf i}t_i(s)\mathbf{X}_i}\right]\right]\end{aligned}\end{equation}
	then for any constants $1>T_*>0$ and $\mathscr{R}>0$,
	
	\begin{equation}
		\begin{aligned} \label{Propo:CrossingGraphs:1}
			&
			\Big\|\limsup_{D\to\infty} \mathscr{F}_{1/n,\mathscr{R}}\Big[\widehat{{Q}_{LIC}\chi_{[0,T_*)}}(\mathscr{X}{\lambda^{-2}})\Big](k_{n,1},k_{n,2})\Big\|_{L^{\infty,2}(\mathbb{T}^{2d})}
			\\
			\le\ & \mathfrak{C}'''_{Q_{LIC}} e^{T_*}\frac{T_*^{1+{n}_0(n)}}{({n}_0(n))!}\lambda^{\mathfrak{C}'_{Q_{LIC}}}\mathfrak{C}_{Q_{LIC}}^{1+{n}_1(n)}\langle \ln n\rangle \langle\ln\lambda\rangle^{2+2{n}_1(n)+\mathfrak{C}''_{Q_{LIC}}},
	\end{aligned}\end{equation}
	where we have used the same notation  as in Proposition \ref{lemma:BasicGEstimate1}.  The cut-off function $\tilde{\Phi}_0$ can be either $\Phi_{0,1}$ or $\Phi_{1,1}$.  If we replace $\mathscr{U}(0)$ by $\mathscr{W}_1(s_0)$ and  $\varsigma_n\int_0^{s_0}e^{-(s_0-s_0')\varsigma_n}\mathrm{d}s_0'\mathscr{W}_1(s_0')$, the same estimate holds true. 
\end{proposition}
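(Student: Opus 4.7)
The overall scheme follows the template established in Propositions \ref{lemma:BasicGEstimate1} and \ref{lemma:Q3FirstEstimate}, but the long irreducible structure will be exploited at a key step to upgrade the ambient $L^{\infty,\Im}$ estimate to an $L^{\infty,2}$ estimate with a genuine positive gain $\lambda^{\mathfrak{C}'_{Q_{LIC}}}$. First, I would represent the time integral against $\delta\bigl(t-\sum_{i=0}^n s_i\bigr)\prod_i e^{-s_i[\varsigma_{n-i}+\tau_i]}e^{-\mathbf{i} t_i(s)\mathbf{X}_i}$ by the contour formula of Lemma \ref{Lemma:Kidentity}, so that the oscillatory factors become a product of resolvents $(\xi - \mathrm{Re}\,\vartheta_i - 2\pi\mathscr{X}+\mathbf{i}\lambda^2)^{-1}$ running over the interacting vertices. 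The Fourier transform in $\tau$ and the restriction to $\chi_{(-\mathscr{R},\mathscr{R})}(\mathscr{X})$ are absorbed by the same change-of-variables trick $(s_{n+1}=\bar s_{n+1}\lambda^{-2})$ used in \eqref{eq:Aestimate0:1}--\eqref{eq:Aestimate0:2}.

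Next, I would handle the bottom layer. Because $S \in \mathcal{P}^o_{pair}$, the expectation $\mathscr{U}(0)$ factorizes into two-point covariances $B(k_{0,i},k_{0,j})$ that obey the cluster constraint \eqref{Bkk1def}, which lets me trade the cluster integration against the matrix $\mathscr{E}$ in \eqref{Matrix} using $\int_{-\epsilon\complement}^{\epsilon\complement}\mathrm{d}\mathscr{K}_A$ and Assumption (C). This produces the $L^{\infty,2}$-style moment bound with a power of $\epsilon^d$ per cluster, as in the non-pairing bookkeeping \eqref{eq:Aestimate4:aa}--\eqref{eq:Aestimate4:aaa}, except that now $|A|=2$ always so the $\Im$ can be taken equal to $2$. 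After this step, I am reduced to an integral over the free momenta, with one resolvent per interacting vertex, one $\mathcal{M}$-kernel per vertex, and a single $\tilde\Phi_0$ or $\Phi_{1,1}$ at the first vertex.

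The core of the argument, and the place where the long irreducible hypothesis is decisive, is the free-momentum integration. I would proceed from bottom to top as in Proposition \ref{lemma:BasicGEstimate1}, applying Lemma \ref{lemma:degree1vertex} at each degree-one vertex to convert a resolvent-weighted integral into $\langle\ln\lambda\rangle^{2+\eth}\|H\|_{L^\Im}$. At the degree-one vertex $v_\mathscr{M}$ that closes the long irreducible $\mathrm{iC}^i_l$ cycle (with $l\geq 3$), Lemma \ref{Lemma:VerticesLongCollisions}(ii) guarantees that \emph{no} interior pair $\mathbf{X}(v_\mathscr{M})+\mathbf{X}(v_j)$ along the cycle becomes independent of the closing free momentum $k_1$. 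This non-degeneracy is exactly the hypothesis needed so that the oscillatory integral associated with the two resolvents at $v_\mathscr{M}$ and at its cycle-partner $v_{l_0}$ can be estimated by the improved $l^4$ bound of Lemma \ref{Lemm:Bessel3} (or, when the integrand is kernel-weighted, by Lemma \ref{Lemm:Ker:Bessel3}), rather than the generic $l^2$ bound of Lemma \ref{Lemm:Bessel2}. Interpolating between the improved $l^4$ and the standard $l^2$ via H\"older in the spirit of the Fourier averaging crossing estimate, I gain a factor $\lambda^{c_o}$ with some explicit $c_o>0$ depending only on $d$ and on the exponents in Lemmas \ref{Lemm:Bessel3}--\ref{Lemm:Improved:Bessel3}. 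All remaining degree-one vertices are estimated exactly as in Proposition \ref{lemma:BasicGEstimate1}, so that their contribution is at most $\langle\ln\lambda\rangle^{(2+\eth)(n_1(n)-2)}$. Collecting the $\lambda^{n-2n_0(n)}$ coming from the vertex count, the pairing balance $n-2n_0(n)\geq 0$, and the single crossing gain $\lambda^{c_o}$ gives the claimed factor $\lambda^{\mathfrak{C}'_{Q_{LIC}}}$ with $\mathfrak{C}'_{Q_{LIC}}=c_o>0$, while the combinatorial $\frac{T_*^{1+n_0(n)}}{n_0(n)!}$ and the contour length $\langle\ln\lambda\rangle$ are handled verbatim as in \eqref{eq:Aestimate2a}--\eqref{eq:Aestimate4:a}.

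Finally, the case where $\mathscr{U}(0)$ is replaced by $\mathscr{W}_1(s_0)$ or by $\varsigma_n\int_0^{s_0}e^{-(s_0-s_0')\varsigma_n}\mathrm{d}s_0'\,\mathscr{W}_1(s_0')$ is absorbed by the same device used in Proposition \ref{lemma:Q3FirstEstimate} and Proposition \ref{lemma:BasicQ2Estimate1}: one extracts the extra $s_0$-integration using the soft partial-time-integration parameter $\varsigma_n$ and the bound $\int_0^{s_0} e^{-(s_0-s_0')\varsigma_n}\mathrm{d}s_0'\leq 1$, and then applies the bounds of Proposition \ref{Propo:ExpectationEqui} uniformly in $s_0'$ via \cref{remark:ExpectationEqui}. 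The main obstacle I anticipate is the bookkeeping at the crossing step: one must verify that the two resolvents to which the improved Bessel estimate is applied are exactly those whose phases depend non-trivially on the same free momentum, and that the cut-off functions $\Phi_{1,i}$ ensure the non-degeneracy hypotheses \eqref{Lemm:Angle:4}--\eqref{Lemm:Angle:5} of Lemmas \ref{Lemm:Angle}--\ref{Lemm:Bessel3}. This is a direct consequence of Lemma \ref{Lemma:VerticesLongCollisions}(ii) combined with Definition \ref{def:Phi3}, but the case analysis among the four identities \eqref{Lemma:VerticesLongCollisions:1}--\eqref{Lemma:VerticesLongCollisions:4} must be performed carefully to ensure that the crossing gain is produced regardless of which configuration occurs.
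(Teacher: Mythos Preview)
Your overall architecture is correct and matches the paper's: pass to the resolvent representation, split the time-slice index set into degree-zero and degree-one parts, treat all degree-one vertices by Lemma \ref{lemma:degree1vertex} except for the distinguished pair coming from the long irreducible cycle, and extract a positive power of $\lambda$ from the \emph{joint} estimate of the two resolvents attached to that pair via the three-frequency dispersive bound of Lemma \ref{Lemm:Bessel3}. That is indeed the mechanism.

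The genuine gap is that you treat the crossing step as if it were the homogeneous argument from \cite{staffilani2021wave}. In the inhomogeneous setting the paired edges at the bottom are not equal but differ by a cluster momentum of size $\mathcal{O}(\epsilon)$; in the paper's notation $\tilde\varkappa_4-\tilde\varkappa_1=\tilde k\,\lambda^2/|\ln\lambda|$. After the change of variables that produces the two-parameter oscillatory integral in $(r_1,r_2)$, the integrand is not a clean three-frequency kernel $e^{-{\bf i}t_0\omega(\xi)-{\bf i}t_1\omega(\xi+V)-{\bf i}t_2\omega(\xi+W)}$ but carries an additional factor
\[
e^{{\bf i}r_1\alpha_4\bigl[\omega(k_1+\tilde\varkappa_1)-\omega(k_1+\tilde\varkappa_1+\tilde k\,\lambda^2/|\ln\lambda|)\bigr]},
\]
which depends on $k_1$ and therefore obstructs a direct application of Lemma \ref{Lemm:Bessel3}. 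Your sketch does not mention this factor, and invoking the ``case analysis among \eqref{Lemma:VerticesLongCollisions:1}--\eqref{Lemma:VerticesLongCollisions:4}'' does not remove it: those identities are precisely where the offset $\sum_K\sigma_{\cdot,K}\mathscr{K}_K$ originates.

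The paper's fix (equations \eqref{eq:crossingraph12}--\eqref{eq:crossingraph14:1:1}) is to insert a dummy Fourier variable and factor the $k_1$-integral as a convolution $\sum_y \mathfrak{H}_1(m_1-y)\mathfrak{H}_2(m_2+y)$, where $\mathfrak{H}_1$ carries the genuine three-frequency phase and $\mathfrak{H}_2$ carries only the slowly varying offset phase together with a cut-off. One then bounds $\|\mathfrak{H}_1\|_{l^4}$ by Lemma \ref{Lemm:Bessel3} and $\|\mathfrak{H}_2\|_{l^{4/3}}$ by $d$-fold integration by parts in $k_*$, using the mean-value form of the offset phase to get $\|\mathfrak{H}_2\|_{l^{4/3}}\lesssim \langle\ln\lambda\rangle^{C}\langle r_1\lambda^2\rangle^{2d}$. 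The extra polynomial $\langle r_1\lambda^2\rangle^{2d}$ is harmless against the Gaussian weight $e^{-q\lambda^2|r_1|^2/2}$ in \eqref{eq:crossingraph8}, so the homogeneous crossing gain $\lambda^{2\bar\epsilon/q}$ survives. This splitting is the inhomogeneous novelty of the proposition and must be supplied for your argument to close.

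A minor point: the non-degeneracy you want is Definition \ref{Def:ClassificationLongCollisions}(ii), not Lemma \ref{Lemma:VerticesLongCollisions}(ii); the latter concerns the delayed-recollision case where such a $v_{\mathscr{N}}$ does exist and is unique.
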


\begin{proof} The computation is similar to that of the homogeneous case \cite{staffilani2021wave}. We therefore only highlight the key steps and the main differences. 
	As $\tilde{\Phi}_0$ is associated to the first interacting vertex $v_1$, which cannot be a degree-one vertex, otherwise, the edge in $\mathfrak{E}_+(v_1)$  is singular.   Therefore, we can   bound $\tilde{\Phi}_0$  by a constant in our estimates. By Lemma \ref{Lemma:VerticesLongCollisions}, as the graph contains at least one long irreducible cycle,  the set of all degree-one vertices at the top of those long irreducible cycles is denoted  by $\{v_{i}\}_{i\in I}$. We denote $i_1=\min\{i\in I\}$, thus,  in the  long irreducible $\mathrm{iC}_m^i$ cycle of $v_{i_1}$, there  exists at least one vertex  $v_{i}$ such that $\mathfrak{X}_{i}+\mathfrak{X}_{i_1}$ is a function of the free edge  of $v_{i_1}$.  The largest among those indices is then denoted by $i_0$.  For the sake of simplicity, we denote by $\tilde{\Phi}_{1,i}$   the cut-off function associated to the vertex $v_i$. 
	We denote   the momenta of the edges in $\mathfrak{E}(v_{i_1})$ as $k_0,k_1,k_2$, in which $k_1,k_2\in\mathfrak{E}_-(v_{i_1})$  and $k_1$ is the free momenta. The associated  signs are denoted by $\sigma_{k_0},\sigma_{k_1},\sigma_{k_2}$. We then have
$$
		\mathrm{Re}\vartheta(i_0-1)  =  \mathbf{X}_{i_1}  +  \mathbf{X}_{i_0}  +  \sum_{i>i_1}\mathbf{X}_i,
	$$
	for $i_1=i_0+1$. When  $i_1>i_0+1$, we obtain	
$$
		\mathrm{Re}\vartheta(i_0-1) \ = \ \mathbf{X}_{i_1} \ + \ \mathbf{X}_{i_0} \ + \  \sum_{i=i_0+1}^{i_1-1}\mathbf{X}_i \ + \ \sum_{i>i_1}\mathbf{X}_i.$$ 
	The three edges associated to $v_{i_0}$ are denoted by $k_0'\in\mathfrak{E}_+(v_{i_0})$ and $k_1',k_2'\in\mathfrak{E}_-(v_{i_0})$. They are equipped with the signs $\sigma_{k_0'},\sigma_{k_1'},\sigma_{k_2'}$. We define $\mathfrak{I}=\Big\{i\in \{i_0+1,\cdots,i_1-1\} \Big| \mathrm{deg}v_i=1\Big\},$
	and $\mathfrak{I}'=\Big\{i\in \{i_0+1,\cdots,i_1-1\}, i+1\notin \mathfrak{I} \Big| \mathrm{deg}v_i=0\Big\},$
	then 
	$$\vartheta_*=\sum_{i\in \mathbf{I}}[\mathbf{X}_i+\mathbf{X}_{i-1}]+\sum_{i\in \mathfrak{I}'}\mathbf{X}_i,$$
	and
	$$\vartheta_{**}=\sum_{i>i_1}\mathfrak{X}_i,$$
	are both independent of $k_1$.  
	As a result,  
$$
		\mathrm{Re}\vartheta(i_0-1)  = \  \mathfrak{X}_{i_1} + \mathbf{X}_{i_0}+\vartheta_* + \vartheta_{**},\ \  \mathrm{Re}\vartheta(i_1-1)  =   \mathbf{X}_{i_1} + \vartheta_{**}.
$$ We can now rewrite  
 $$
		\mathbf{X}_{i_0} =  \alpha_1\omega(k_1+ \tilde\varkappa_1)+\alpha_2\omega(k_1+ \tilde\varkappa_2) + \omega',
$$
where $\alpha_1,\alpha_2\in\{\pm1\}$,  $\tilde\varkappa_1,\tilde\varkappa_2$, $\omega'$ are independent of $k_1$. In addition,
$$
			\mathbf{X}_{i_1} =  \sigma_{k_{0}}\omega(k_{0})+\sigma_{k_{1}}\omega(k_1)+\sigma_{k_{2}}\omega(k_2)\
			 =  \alpha_3\omega(k_1+ \tilde\varkappa_3)+\alpha_4\omega(k_1+ \tilde\varkappa_4) + \omega'',
	$$
	where $\alpha_3,\alpha_4\in\{\pm1\}$,  $\tilde\varkappa_3,\tilde\varkappa_4$, $\omega''$ are independent of $k_1$. As one of the two momenta $\tilde\varkappa_3,\tilde\varkappa_4$ should be $0$, $k_1+ \tilde\varkappa_1$ or $k_1+ \tilde\varkappa_2$ should  coincide or pair with either the momentum $k_1+ \tilde\varkappa_3$ or the momentum $k_1+ \tilde\varkappa_4$ of the vertex $v_{l_1}$, we suppose  that  $k_1+ \tilde\varkappa_1$  coincides or pairs with $k_1+ \tilde\varkappa_4$. If $k_1+ \tilde\varkappa_1$  coincides with $k_1+ \tilde\varkappa_4$, we have $\tilde\varkappa_4=\tilde\varkappa_1$. Otherwise, we have $\tilde\varkappa_4-\tilde\varkappa_1=\tilde{k}\lambda^2/|\ln|\lambda||$, in which the momentum $\tilde{k}$ is fixed (see Figure \ref{Fig42}).  In both cases, we can always write $\tilde\varkappa_4-\tilde\varkappa_1=\tilde{k}\lambda^2/|\ln|\lambda||$ as the former case corresponds to $\tilde{k}=0$. We now have  $$
			\mathbf{X}_{i_1}
			 = \alpha_3\omega(k_1+ \tilde\varkappa_3)+\alpha_4\omega(k_1+ \tilde\varkappa_1 +\tilde{k}\lambda^2/|\ln|\lambda||) + \omega''.
		$$

	\begin{figure}
	\centering
	\includegraphics[width=.99\linewidth]{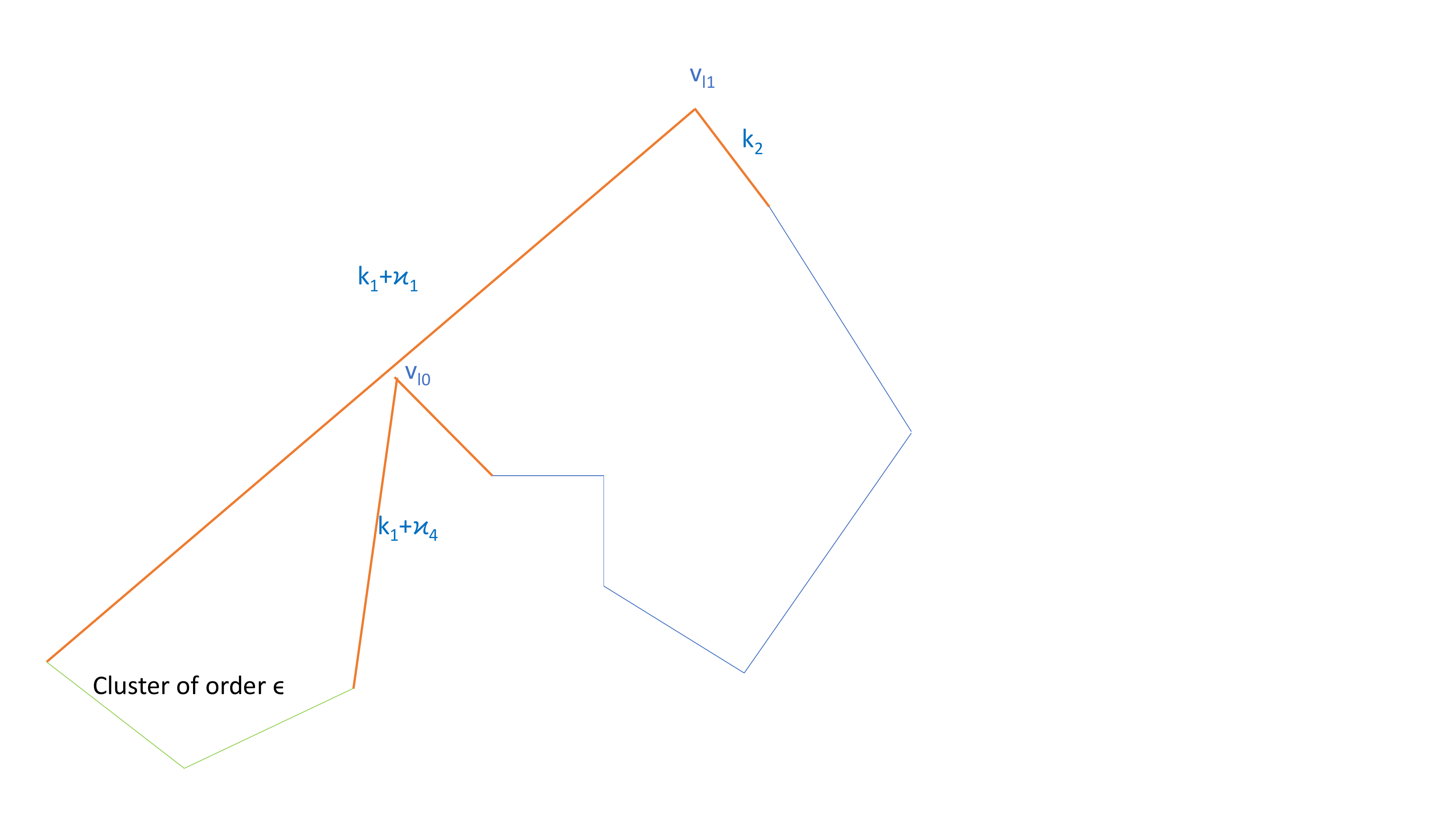}
	\caption{Illustration: $k_1+ \tilde\varkappa_1$ should pair with   $k_1+ \tilde\varkappa_4$.}
	\label{Fig42}
\end{figure}
	
	Now, we follow the strategy of Proposition \ref{lemma:BasicGEstimate1} by splitting the set $\{0,1,\cdots,n\}$ into smaller subsets. One set contains time slice indices $0\le i<n$ such that $\mathrm{deg}(v_{i+1})=0$, excluding $i_0-1$. This set is denoted by $I''$. The other set involves all of  time slice indices $0\le i<n$ such that $\mathrm{deg}(v_{i+1})=1$ and the extra index $n$. This set is called $I=\{l_0-1\}\cup I'$. We  obtain, following the same lines of computations as \eqref{eq:Aestimate1}
	\begin{equation}
		\begin{aligned}\label{eq:crossingraph4}
			& \left|\int_{(\mathbb{R}_+)^{I}}\mathrm{d}\vec{s''}\delta\left(s_{n+1}-\sum_{i\in I}s_i\right)\prod_{i\in I}e^{-{\bf i}s_i \vartheta_i}e^{-{\bf i} s_{n+1}2\pi\mathscr{X}}   \right|\\
			=	&  \oint_{\Gamma_{n}}\frac{|\mathrm{d}\xi|}{2\pi}\left|e^{-{\bf i}s_{n+1}(\xi+{\bf i}\lambda^2)}\right|\frac{1}{\sqrt{\xi^2+\lambda^4}}\prod_{i\in\{l_0-1\}\cup I'\backslash\{n\}}\frac{\tilde\Phi_{1,i+1}}{|\xi-(\mathrm{Re}\vartheta_i+2\pi\mathscr{X}-{\bf i} \lambda^2)|}\\
			\quad \le\ &   e^{\tau}  \oint_{\Gamma_{n}}\frac{|\mathrm{d}\xi|}{2\pi}\frac{1}{|\xi|}\prod_{i\in\{l_0-1\}\cup I'\backslash\{n\}}\frac{\tilde\Phi_{1,i+1}}{|\xi-(\mathrm{Re}\vartheta_i+2\pi \mathscr{X}-{\bf i} \lambda^2)|}. 
	\end{aligned}\end{equation}
	We also have  an inequality similar to \eqref{eq:Aestimate2a}, with $\mathscr{Y}=\emptyset$ 
	\begin{equation}
		\begin{aligned}
			& \int_{(\mathbb{R}_+)^{I''\cup\{n+1\}}}\mathrm{d}\bar{s}'\delta\left(t-\sum_{i\in I''\cup\{n+1\}}s_i\right)\prod_{i\in I''}e^{-\varsigma_{n-i}s_i}\\
			\quad \le\ &   \int_{(\mathbb{R}_+)^{I''\cup\{n+1\}}}\mathrm{d}\bar{s}'\delta\left(t-\sum_{i\in I''\cup\{n+1\}}s_i\right)\ \le\   \frac{t^{\frac{n}{2}-1}}{(\frac{n}{2}-1)!} \ = \ \lambda^{-2(\frac{n}{2}-1)} \frac{\tau^{\frac{n}{2}-1}}{(\frac{n}{2}-1)!}, 
	\end{aligned}\end{equation} 
	where the notation $	\bar{s}'$ denotes the vector whose components are $s_i$ with $i\in I''\cup\{n+1\}$. Thus, 
	 the same strategy as in  the proof of Proposition \ref{lemma:BasicGEstimate1}, can be reused with a modification: the terms that are associated to $v_{i_0},v_{i_1}$  are grouped and estimated separately.  
	We now estimate the quantities that are associated to $v_{i_0}$ and $v_{i_1}$, which are those that contain $\vartheta_i$ with $i=i_0-1, i_1-1$ respectively
	\begin{equation}
		\begin{aligned}\label{eq:crossingraph6}
			&\int_{\Lambda^{*}}\mathrm{d}k_1
			\frac{\tilde{\Phi}_{1,i_0}(v_{i_0})|H(k_1)|}{\Big|\xi-\mathrm{Re}\vartheta_{i_0-1}+2\pi \mathscr{X}-{\bf i} \lambda^2\Big|\Big|\xi-\mathrm{Re}\vartheta_{i_1-1}+2\pi\mathscr{X}-{\bf i} \lambda^2\Big|}\\
			\
			= & \ \int_{\Lambda^{*}}\mathrm{d}k_1
			\frac{\tilde{\Phi}_{1,i_0}(v_{l_0})|H(k_1)|}{\Big|\xi-\mathbf{X}_{i_0}-\mathbf{X}_{i_1}-\vartheta_*-\vartheta_{**}+2\pi \mathscr{X}-{\bf i} \lambda^2\Big|\Big|\xi-\mathbf{X}_{i_1}-\vartheta_{**}+2\pi \mathscr{X}-{\bf i} \lambda^2\Big|},
		\end{aligned}
	\end{equation}
	where $H\in L^2(\mathbb{T}^d)$.
	Similarly as  the proof of Proposition \ref{lemma:BasicGEstimate1},  $\Lambda^{*}$ can be replaced by $\mathbb{T}^d$ in the limit $D\to\infty$ and the cut-off functions on $\mathbb{T}^d$ are then $\tilde{\Phi}_{1,i_0}(v_{i_0})$. 
		We now follow the framework of the homogeneous case and  bound (see \cite{staffilani2021wave}[Proposition 89 ])
	\begin{equation}
		\begin{aligned}\label{eq:crossingraph8:1}
			&\left\|\int_{\mathbb{T}^{d}}\mathrm{d}k_1
			\frac{\tilde{\Phi}_{1,i_0}(v_{i_0})|H(k_1)|}{\Big|\xi-\mathbf{X}_{i_0}-\mathbf{X}_{i_1}-\vartheta_*-\vartheta_{**}+2\pi \mathscr{X}-{\bf i} \lambda^2\Big|\Big|\xi-\mathbf{X}_{i_1}-\vartheta_{**}+2\pi \mathscr{X}-{\bf i} \lambda^2\Big|}\right\|_{L^2(-\mathscr{R},\mathscr{R})}\\
			\lesssim\	&  \|H\|_{L^2}\lambda^{-\frac2p}\left[\iint_{\mathbb{R}^2}\mathrm{d}r_1\mathrm{d}r_2e^{-q\lambda^2|r_1|^2/2}e^{-q\lambda^2|r_2|^2/2}\right.\\
			&\times\left. \left|\int_{\mathbb{T}^{d}}\mathrm{d}k_1 |\tilde{\Phi}_{1,i_0}(v_{i_0})|^2e^{-{\bf i}r_1(\mathbf{X}_{i_1}+\vartheta_{**}) -{\bf i}r_2(\mathbf{X}_{i_0}+\mathbf{X}_{i_1}+\vartheta_{*}+\vartheta_{**})}\right|^\frac{q}{2}\right]^\frac1q,	\end{aligned}
	\end{equation}
	where, $q>1$ is a constant to be determined  and satisfies $\frac1p+\frac1q=1$. Performing the change of variable $r_1+r_2\to r_1$, we bound
	\begin{equation}
		\begin{aligned}\label{eq:crossingraph8}
			&\left\|\int_{\mathbb{T}^{d}}\mathrm{d}k_1
			\frac{\tilde{\Phi}_{1,i_0}(v_{i_0})|H(k_1)|}{\Big|\xi-\mathbf{X}_{i_0}-\mathbf{X}_{i_1}-\vartheta_*-\vartheta_{**}+2\pi \mathscr{X}-{\bf i} \lambda^2\Big|\Big|\xi-\mathbf{X}_{i_1}-\vartheta_{**}+2\pi \mathscr{X}-{\bf i} \lambda^2\Big|}\right\|_{L^2(-\mathscr{R},\mathscr{R})}\\
	 \lesssim  \		&\|H\|_{L^2}\lambda^{-\frac2p} \left[\iint_{\mathbb{R}^2}\mathrm{d}r_1\mathrm{d}r_2e^{-q\lambda^2|r_1-r_2|^2/2}e^{-q\lambda^2|r_2|^2/2}\right.\\
			&\times\left.\left|\int_{\mathbb{T}^{d}}\mathrm{d}k_1| \tilde{\Phi}_{1,i_0}(v_{i_0})|^2e^{-{\bf i}r_1\mathbf{X}_{i_1} -{\bf i}r_2\mathbf{X}_{i_0}-{\bf i}(r_2\vartheta_*+r_1\vartheta_{**})}\right|^\frac{q}{2}\right]^\frac1q.
		\end{aligned}
	\end{equation}
	
We now  bound all the terms that are independent of $k_1$, including $\vartheta^*$, simply by $1$, we obtain 
	\begin{equation}
		\begin{aligned}\label{eq:crossingraph11}
			&\left|\int_{\mathbb{T}^{d}}\mathrm{d}k_1 |\tilde{\Phi}_{1,i_0}(v_{i_0})|^2e^{-{\bf i}r_1\mathbf{X}_{i_1}-{\bf i}r_2\mathbf{X}_{i_0}-{\bf i}\vartheta^*}\right|\\
			= \  & \left|\int_{\mathbb{T}^{d}}\mathrm{d}k_1 |\tilde{\Phi}_{1,i_0}(v_{i_0})|^2e^{-{\bf i}r_1 (\alpha_3\omega(k_1+ \tilde\varkappa_3)+\alpha_4\omega(k_1+ \tilde\varkappa_1+\tilde{k}\lambda^2/|\ln|\lambda||) + \omega'')} e^{-{\bf i}r_2(\alpha_1\omega(k_1+ \tilde\varkappa_1)+\alpha_2\omega(k_1+ \tilde\varkappa_2) + \omega')}e^{-{\bf i}\vartheta^*}\right|\\
			\lesssim \  & \left|\int_{\mathbb{T}^{d}}\mathrm{d}k_1 |\tilde{\Phi}_{1,i_0}(v_{i_0})|^2e^{-{\bf i}r_1 (\alpha_3\omega(k_1+ \tilde\varkappa_3)+\alpha_4\omega(k_1+ \tilde\varkappa_1+\tilde{k}\lambda^2/|\ln|\lambda||))}e^{-{\bf i}r_2(\alpha_1\omega(k_1+ \tilde\varkappa_1)+\alpha_2\omega(k_1+ \tilde\varkappa_2))}\right|\\
			\lesssim \  & \left|\int_{\mathbb{T}^{d}}\mathrm{d}k_1 |\tilde{\Phi}_{1,i_0}(v_{i_0})|^2e^{-{\bf i}r_1 \alpha_3\omega(k_1+ \tilde\varkappa_3)}e^{-{\bf i}(r_1 \alpha_4+r_2\alpha_1)\omega(k_1+ \tilde\varkappa_1)} \right.\\
			&\ \ \ \ \ \ \  \times \left. e^{-{\bf i}r_2\alpha_2\omega(k_1+ \tilde\varkappa_2)}e^{{\bf i}r_1 \alpha_4[\omega(k_1+ \tilde\varkappa_1)-\omega(k_1+ \tilde\varkappa_1+\tilde{k}\lambda^2/|\ln|\lambda||)]}\right|.
		\end{aligned}
	\end{equation}
	The right hand side of \eqref{eq:crossingraph11} can be expressed as
	\begin{equation}
		\begin{aligned}\label{eq:crossingraph12}
			& \Big|\int_{\mathbb{T}^{d}}\mathrm{d}k_1 e^{-{\bf i}r_1 \alpha_3\omega(k_1+ \tilde\varkappa_3)-{\bf i}(r_1 \alpha_4+r_2\alpha_1)\omega(k_1+ \tilde\varkappa_1)-{\bf i}r_2\alpha_2\omega(k_1+ \tilde\varkappa_2)}e^{{\bf i}r_1 \alpha_4[\omega(k_1+ \tilde\varkappa_1)-\omega(k_1+ \tilde\varkappa_1+\tilde{k}\lambda^2/|\ln|\lambda||)]}\\
			&\ \times |\tilde{\Phi}_{1}^a(k_1+ \tilde\varkappa_2)|^2\Big|\\
			= \ & \left|\int_{\mathbb{T}^{d}}\mathrm{d}k_1 e^{-{\bf i}r_1 \alpha_3\omega(k_1+ \tilde\varkappa_3)-{\bf i}(r_1 \alpha_4+r_2\alpha_1)\omega(k_1+ \tilde\varkappa_1)-{\bf i}r_2\alpha_2\omega(k_1+ \tilde\varkappa_2)}e^{{\bf i}r_1 \alpha_4[\omega(k_1+ \tilde\varkappa_1)-\omega(k_1+ \tilde\varkappa_1+\tilde{k}\lambda^2/|\ln|\lambda||)]}\right.
			\\
			&\times \left.\left[\sum_{m_1,m_2\in\mathbb{Z}^d}g(m_1,m_2)e^{{\bf i}2\pi [m_1\cdot k_1+m_2\cdot(k_1+ \tilde\varkappa_2)]}\right]|\tilde{\Phi}_{1}^a(k_1+ \tilde\varkappa_2)|^2\right|,
		\end{aligned}
	\end{equation}
	in which $g$ is the  function $\mathbf{1}_{m_1=0}\mathbf{1}_{m_2=0}$. 
	We now distribute $e^{{\bf i}2\pi [m_1\cdot k_1+m_2\cdot(k_1+ \tilde\varkappa_2)]}$ into the terms inside the integral of $k_1$ and obtain
	\begin{equation}
		\begin{aligned}\label{eq:crossingraph13}
			& \Big|\int_{\mathbb{T}^{d}}\mathrm{d}k_1 e^{-{\bf i}r_1 \alpha_3\omega(k_1+ \tilde\varkappa_3)-{\bf i}(r_1 \alpha_4+r_2\alpha_1)\omega(k_1+ \tilde\varkappa_1)-{\bf i}r_2\alpha_2\omega(k_1+ \tilde\varkappa_2)}|\tilde{\Phi}_{1}^a(k_1+ \tilde\varkappa_2)|^2
			\\
			&\ \ \ \ \ \ \  \times \left. e^{{\bf i}r_1 \alpha_4[\omega(k_1+ \tilde\varkappa_1)-\omega(k_1+ \tilde\varkappa_1+\tilde{k}\lambda^2/|\ln|\lambda||)]}\right|\\
			= \ & \Big|
			\int_{\mathbb{T}^{d}}\mathrm{d}k_1 \sum_{m_1,m_2\in\mathbb{Z}^d}g(m_1,m_2)e^{-{\bf i}r_1 \alpha_3\omega(k_1+ \tilde\varkappa_3)-{\bf i}(r_1 \alpha_4+r_2\alpha_1)\omega(k_1+ \tilde\varkappa_1)-{\bf i}r_2\alpha_2\omega(k_1+ \tilde\varkappa_2)+{\bf i}2\pi m_1\cdot k_1}\\
			&\times |\tilde{\Phi}_{1}^a(k_1+ \tilde\varkappa_2)|^2e^{{\bf i}2\pi m_2\cdot(k_1+ \tilde\varkappa_2)}e^{{\bf i}r_1 \alpha_4[\omega(k_1+ \tilde\varkappa_1)-\omega(k_1+ \tilde\varkappa_1+\tilde{k}\lambda^2/|\ln|\lambda||)]}\Big|\\
			= \ & \Big|
			\int_{\mathbb{T}^{2d}}\mathrm{d}k_1\mathrm{d}k_*\delta(k_*-k-\tilde{\varkappa}_2) \sum_{m_1,m_2\in\mathbb{Z}^d}g(m_1,m_2){\tilde{\Phi}_{1}^a}(k+\tilde\varkappa_2){\tilde{\Phi}_{1}^a}(k_*)e^{{\bf i}2\pi m_2 \cdot k_*}\\
			&\times e^{{\bf i}r_1 \alpha_4[\omega(k_*+ \tilde\varkappa_1-\tilde\varkappa_2)-\omega(k_*+ \tilde\varkappa_1+\tilde{k}\lambda^2/|\ln|\lambda||-\tilde\varkappa_2)]}\\
			&\times  e^{-{\bf i}r_1 \alpha_3\omega(k_1+ \tilde\varkappa_3)-{\bf i}(r_1 \alpha_4+r_2\alpha_1)\omega(k_1+ \tilde\varkappa_1)-{\bf i}r_2\alpha_2\omega(k_1+ \tilde\varkappa_2)+{\bf i}2\pi m_1\cdot k_1}\Big|
			.
		\end{aligned}
	\end{equation}
	Using the identity
	$\sum_{y\in\mathbb{Z}^d}e^{{\bf i}2\pi ({k}_{*}-k_1-\tilde{\varkappa}_2)\cdot y}=\delta({k}_{*}-k_1-\tilde{\varkappa}_2),$
	we rewrite  \eqref{eq:crossingraph13} as 
	\begin{equation}
		\begin{aligned}\label{eq:crossingraph13:1}
			& \Big|\int_{\mathbb{T}^{d}}\mathrm{d}k_1 e^{-{\bf i}r_1 \alpha_3\omega(k_1+ \tilde\varkappa_3)-{\bf i}(r_1 \alpha_4+r_2\alpha_1)\omega(k_1+ \tilde\varkappa_1)-{\bf i}r_2\alpha_2\omega(k_1+ \tilde\varkappa_2)}|\tilde{\Phi}_{1}^a(k_1+ \tilde\varkappa_2)|^2\Big|\\
			\\
			&\ \ \ \ \ \ \  \times \left. e^{{\bf i}r_1 \alpha_4[\omega(k_1+ \tilde\varkappa_1)-\omega(k_1+ \tilde\varkappa_1+\tilde{k}\lambda^2/|\ln|\lambda||)]}\right|\\
			= \ & \Big|
			\int_{\mathbb{T}^{2d}}\mathrm{d}k_1\mathrm{d}k_*\sum_{y\in\mathbb{Z}^d}e^{{\bf i}2\pi ({k}_{*}-k_1-\tilde{\varkappa}_2)\cdot y} \sum_{m_1,m_2\in\mathbb{Z}^d}g(m_1,m_2)\tilde{\Phi}_{1}^a(k_1+ \tilde\varkappa_2)\\
			\\
			&\times e^{{\bf i}r_1 \alpha_4[\omega(k_*+ \tilde\varkappa_1-\tilde\varkappa_2)-\omega(k_*+ \tilde\varkappa_1+\tilde{k}\lambda^2/|\ln|\lambda||-\tilde\varkappa_2)]}\\
			&\times  e^{-{\bf i}r_1 \alpha_3\omega(k_1+ \tilde\varkappa_3)-{\bf i}(r_1 \alpha_4+r_2\alpha_1)\omega(k_1+ \tilde\varkappa_1)-{\bf i}r_2\alpha_2\omega(k_1+ \tilde\varkappa_2)+{\bf i}2\pi m_1\cdot k_1}\tilde{\Phi}_{1}^a(k_*)e^{{\bf i}2\pi m_2\cdot k_*}\Big|\\
			= \ & \Big|
			\int_{\mathbb{T}^{2d}}\mathrm{d}k_1\mathrm{d}k_*\sum_{y\in\mathbb{Z}^d}e^{-{\bf i}2\pi \tilde{\varkappa}_2\cdot y} \sum_{m_1,m_2\in\mathbb{Z}^d}g(m_1,m_2)\tilde{\Phi}_{1}^a(k_1+ \tilde\varkappa_2)\\
			\\
			&\times e^{{\bf i}r_1 \alpha_4[\omega(k_*+ \tilde\varkappa_1-\tilde\varkappa_2)-\omega(k_*+ \tilde\varkappa_1+\tilde{k}\lambda^2/|\ln|\lambda||-\tilde\varkappa_2)]}\\
			&\times  e^{-{\bf i}r_1 \alpha_3\omega(k_1+ \tilde\varkappa_3)-{\bf i}(r_1 \alpha_4+r_2\alpha_1)\omega(k_1+ \tilde\varkappa_1)-{\bf i}r_2\alpha_2\omega(k_1+ \tilde\varkappa_2)+{\bf i}2\pi (m_1-y)\cdot k_1}\tilde{\Phi}_{1}^a(k_*)e^{{\bf i}2\pi (m_2+y)\cdot k_*}\Big|\\
			= \ & \left|
			\sum_{m_1,m_2\in\mathbb{Z}^d}g(m_1,m_2)\sum_{y\in\mathbb{Z}^d}e^{-{\bf i}2\pi \tilde{\varkappa}_2\cdot y}\mathfrak{H}_1(m_1-y)\mathfrak{H}_2(m_2+y)\right|
			,
		\end{aligned}
	\end{equation}	 
	
	in which
	\begin{equation}
		\begin{aligned}\label{eq:crossingraph14}
			\mathfrak{H}_1(m)= \  & \int_{\mathbb{T}^{d}}\mathrm{d}k_1e^{-{\bf i}r_1 \alpha_3\omega(k_1+ \tilde\varkappa_3)-{\bf i}(r_1 \alpha_4+r_2\alpha_1)\omega(k_1+ \tilde\varkappa_1)-{\bf i}r_2\alpha_2\omega(k_1+ \tilde\varkappa_2)+{\bf i}2\pi m\cdot k_1}{\tilde{\Phi}_{1}^a}(k_1+ \tilde\varkappa_2),\\
			\mathfrak{H}_2(m)= \  & \int_{\mathbb{T}^{d}}\mathrm{d}{k}_{*}{\tilde{\Phi}_{1}^a}(k_*)e^{{\bf i}2\pi m\cdot k_*}  e^{{\bf i}r_1 \alpha_4[\omega(k_*+ \tilde\varkappa_1-\tilde\varkappa_2)-\omega(k_*+ \tilde\varkappa_1+\tilde{k}\lambda^2/|\ln|\lambda||-\tilde\varkappa_2)]}.
		\end{aligned}
	\end{equation}
	By H\"oder's inequality, applied to the right hand side of \eqref{eq:crossingraph13}, we bound
	\begin{equation}
		\begin{aligned}\label{eq:crossingraph15}&\Big|\int_{\mathbb{T}^{d}}\mathrm{d}k_1 e^{-{\bf i}r_1 \alpha_3\omega(k_1+ \tilde\varkappa_3)-{\bf i}(r_1 \alpha_4+r_2\alpha_1)\omega(k_1+ \tilde\varkappa_1)-{\bf i}r_2\alpha_2\omega(k_1+ \tilde\varkappa_2)}|\tilde{\Phi}_{1}^a(k_1+ \tilde\varkappa_2)|^2
			\\
			&\ \ \ \ \ \ \  \times \left. e^{{\bf i}r_1 \alpha_4[\omega(k_1+ \tilde\varkappa_1)-\omega(k_1+ \tilde\varkappa_1+\tilde{k}\lambda^2/|\ln|\lambda||)]}\right|\\
			\lesssim \  & \left|\sum_{m_1,m_2\in\mathbb{Z}^d}g(m_1,m_2)\right|\big\|\mathfrak{H}_1\big\|_4\big\|\mathfrak{H}_2\big\|_\frac43\lesssim \   \big\|\mathfrak{H}_1\big\|_4\big\|\mathfrak{H}_2\big\|_\frac43.
		\end{aligned}
	\end{equation}
We  now bound $\|\mathfrak{H}_2\big\|_\frac43$ as follows
\begin{equation}
	\begin{aligned}\label{eq:crossingraph14:1}
&		\|\mathfrak{H}_2\big\|_\frac43\ = \  \\
\ = \  &\left\{\sum_{m\in\mathbb{Z}^d} \left|\int_{\mathbb{T}^{d}}\mathrm{d}{k}_{*}{\tilde{\Phi}_{1}^a}(k_*)e^{{\bf i}2\pi m\cdot k_*}e^{{\bf i}r_1 \alpha_4[\omega(k_*+ \tilde\varkappa_1-\tilde\varkappa_2)-\omega(k_*+ \tilde\varkappa_1+\tilde{k}\lambda^2/|\ln|\lambda||-\tilde\varkappa_2)]}\right|^\frac43\right\}^\frac34\\
		\ = \ & \left\{\sum_{m\in\mathbb{Z}^d} \left|\int_{\mathbb{T}^{d}}\mathrm{d}{k}_{*}\frac{{\tilde{\Phi}_{1}^a}(k_*)e^{{\bf i}r_1 \alpha_4[\omega(k_*+ \tilde\varkappa_1-\tilde\varkappa_2)-\omega(k_*+ \tilde\varkappa_1+\tilde{k}\lambda^2/|\ln|\lambda||-\tilde\varkappa_2)]}}{|{\bf i}2\pi m|^{2d}}\Delta^{d}\Big(e^{{\bf i}2\pi m\cdot k_*}\Big)\right|^\frac43\right\}^\frac34\\
		\ = \ & \left\{\sum_{m\in\mathbb{Z}^d} 
		\frac{1}{|2\pi m|^{8d/3}}\left|\int_{\mathbb{T}^{d}}\mathrm{d}{k}_{*}\Delta^{d}\Big[{{\tilde{\Phi}_{1}^a}(k_*)}e^{{\bf i}r_1 \alpha_4[\omega(k_*+ \tilde\varkappa_1-\tilde\varkappa_2)-\omega(k_*+ \tilde\varkappa_1+\tilde{k}\lambda^2/|\ln|\lambda||-\tilde\varkappa_2)]}\Big]e^{{\bf i}2\pi m\cdot k_*}\right|^\frac43\right\}^\frac34.
	\end{aligned}
\end{equation}
By the mean value theorem, we write $$\omega(k_*+ \tilde\varkappa_1-\tilde\varkappa_2)-\omega(k_*+ \tilde\varkappa_1+\tilde{k}\lambda^2/|\ln\lambda|-\tilde\varkappa_2)=-\tilde{k}\lambda^2/|\ln\lambda|\int_0^1\mathrm{d}s\nabla_{\tilde{k}}\omega(k_*+ \tilde\varkappa_1+s\tilde{k}\lambda^2/|\ln\lambda|-\tilde\varkappa_2),$$ yielding

\begin{equation}
	\begin{aligned}\label{eq:crossingraph14:1:1}
		\|\mathfrak{H}_2\big\|_\frac43  
		\ \lesssim \ & \left\{\sum_{m\in\mathbb{Z}^d} 
		\frac{1}{|2\pi m|^{8d/3}}\right\}^\frac34\left|\int_{\mathbb{T}^{d}}\mathrm{d}{k}_{*}\Delta^{d}\Big[{{{\Phi}_{1}^a}(k_*)}e^{-{\bf i}r_1\lambda^2\alpha_4\tilde{k}/|\ln\lambda|\int_0^1\mathrm{d}s\nabla\omega(k_*+ \tilde\varkappa_1+s\tilde{k}\lambda^2/|\ln\lambda|-\tilde\varkappa_2)}\Big]\right|\\
		 \ \lesssim \ & \langle\ln|\lambda|\rangle^{\mathfrak{C}_{\mathfrak{H}_2}}\big\langle r_1\lambda^2\big\rangle^{2d}.
	\end{aligned}
\end{equation}
with $\mathfrak{C}_{\mathfrak{H}_2}>0$, where $\big\langle r_1\lambda^2\big\rangle^{2d}$ comes out due to the action of $\Delta^{d}$ on the oscillatory integral and the effect of $\Delta^{d}$ on the cut-off function $\Phi_1^a$ simply gives a factor of $\langle\ln|\lambda|\rangle$. 
	
	The same strategy of the homogeneous case can be redone, with an extra factor of $\big\langle r_1\lambda^2\big\rangle^{2d}$ in all estimates, and we finally obtain
	\begin{equation}
		\begin{aligned}\label{eq:crossingraph17:d}
			&\left\|\int_{\mathbb{T}^{d}}\mathrm{d}k_1
			\frac{\tilde{\Phi}_{1,l_0}(v_{l_0})|H(k_1)|}{\Big|\xi-\mathfrak{X}_{l_0}-\mathfrak{X}_{l_1}-\vartheta_*-\vartheta_{**}+2\pi\mathscr{X}-{\bf i} \lambda^2\Big|\Big|\xi-\mathfrak{X}_{l_1}-\vartheta_{**}+2\pi \mathscr{X}-{\bf i} \lambda^2\Big|}\right\|_{L^2(-\mathscr{R},\mathscr{R})}\\
			&	\lesssim  \|H\|_{L^2}\langle\ln\lambda\rangle^{{C}_{\aleph}}\lambda^{-2/q+2\bar{\epsilon}/q}\lambda^{-2/p}\mathfrak{H}_0	\lesssim  \|H\|_{L^2}\langle\ln\lambda\rangle^{{C}_{\aleph}}\lambda^{-2+2\bar{\epsilon}/q}\mathfrak{H}_0,
		\end{aligned}
	\end{equation}
	for some constant  ${C}_{\aleph}$, $\bar{\epsilon}>0$, $p,q>0$, $\frac1p+\frac1q=1$, and $\mathfrak{H}_0$ is a constant depending on the cut-off functions. 
	In this estimate, we obtain a factor of $\lambda^{-2+\bar{\epsilon}/q}$. From \eqref{eq:crossingraph4}, we have a factor of $\lambda^{-2(\frac{n}{2}-1)}$. These two factors give the total factor $\lambda^{-2(\frac{n}{2}-1)}\lambda^{-2+\bar{\epsilon}/q}=\lambda^{-n}\lambda^{2\bar{\epsilon}/q}.$ 
	By the same argument with the homogeneous case \cite{staffilani2021wave},   we gain a factor of $\lambda^{\mathfrak{C}'_{Q_{Long}}}$ with $\mathfrak{C}'_{Q_{Long}}>0$. 
	Thus the total $\lambda$ power   guarantees the convergence of the whole graph to $0$.
\end{proof}


\subsection{Sixth  type of diagram estimates}\label{Sec:SixthDiagram}
\begin{proposition}  \label{Propo:LeadingGraphs} Suppose that the corresponding graph is delayed recollisional and $d\ge 2$. There are constants $\mathfrak{C}_{Q_{Delayed}}$, $\mathfrak{C}'_{Q_{Delayed}}$, $\mathfrak{C}''_{Q_{Delayed}}>0$ such that for $1\le n\le \mathbf{N}$ and $t=\tau\lambda^{-2}>0$, if we set
	\begin{equation}
		\begin{aligned} \label{Propo:CrossingGraphs:1:0}
			{Q}_{Delayed}(\tau):=\	& 
			\lambda^{n}\left[ \sum_{\bar\sigma\in \{\pm1\}^{\mathcal{I}_n}}\int_{(\Lambda^*)^{\mathcal{I}_n}}  \mathrm{d}\bar{k}\Delta_{n,\rho}(\bar{k},\bar\sigma) \right. \mathbf{1}({\sigma_{n,1}=-1})\mathbf{1}({\sigma_{n,2}'=1})\mathscr{U}(0)\\
			&\times  \int_{(\Lambda^*)^{\mathcal{I}_n'}}\mathrm{d}\bar{k}\Delta_{n,\rho}(\bar k,\bar\sigma)\tilde{\Phi}_0(\sigma_{0,\rho_1},  k_{0,\rho_1},\sigma_{1,\rho_2},  k_{0,\rho_2})\\
			&\times \sigma_{1,\rho_1}\mathcal{M}( k_{1,\rho_1}, k_{0,\rho_1}, k_{0,\rho_1+1})\prod_{i=2}^n\Big[ \sigma_{i,\rho_i}\mathcal{M}( k_{i,\rho_i}, k_{i-1,\rho_i}, k_{i-1,\rho_i+1})\\
			&\times \Phi_{1,i}(\sigma_{i-1,\rho_i},  k_{i-1,\rho_i},\sigma_{i-1,\rho_i+1},  k_{i-1,\rho_i+1})\Big] \\
			&\left.\times\int_{(\mathbb{R}_+)^{\{0,\cdots,n\}}}\mathrm{d}\bar{s} \delta\left(t-\sum_{i=0}^ns_i\right)\prod_{i=1}^{n}e^{-s_i[\tau_{n-i}+\tau_i]}\prod_{i=1}^{n} e^{-{\bf i}t_i(s)\mathbf{X}_i}\right],
	\end{aligned}\end{equation}
	then, for any constants $1>T_*>0$ and $\mathscr{R}>0$,
	
	\begin{equation}
		\begin{aligned} \label{Propo:CrossingGraphs:1}
			&
			\Big\|\limsup_{D\to\infty} \mathscr{F}_{2/n,\mathscr{R}}\Big[\widehat{{Q}_{Delayed}\chi_{[0,T_*)}}(\mathscr{X}{\lambda^{-2}})\Big](k_{n,1},k_{n,2})\Big\|_{L^{\infty,2}(\mathbb{T}^{2d})}
			\\
			\le& e^{T_*}\frac{T_*^{1+{n}_0(n)}}{({n}_0(n))!}\lambda^{\mathfrak{C}'_{Q_{Delayed}}}\mathfrak{C}_{Q_{Delayed}}^{1+{n}_1(n)}\langle \ln n\rangle \langle\ln\lambda\rangle^{2+(2{n}_1(n)+\eth)+\mathfrak{C}''_{Q_{Delayed}}},
	\end{aligned}\end{equation}
	where, we have used the same notations as in Propositions \ref{lemma:BasicGEstimate1},   \ref{Propo:CrossingGraphs}.  If we replace $\mathscr{U}(0)$ by $\mathscr{W}_1(s_0)$ and  $\varsigma_n\int_0^{s_0}e^{-(s_0-s_0')\varsigma_n}\mathrm{d}s_0'\mathscr{W}_1(s_0')$, the same estimate holds true. 
\end{proposition}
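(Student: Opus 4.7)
The plan is to adapt the template of Proposition \ref{Propo:CrossingGraphs} for long irreducible graphs to the delayed recollisional setting, since the key structural feature exploited there, namely the existence of two vertices $v_{i_0}, v_{i_1}$ in the same cycle such that $\mathbf{X}(v_{i_0})+\mathbf{X}(v_{i_1})$ is not a function of the free momentum $k_1$ of $v_{i_1}$, is precisely the content of Definition \ref{Def:ClassificationLongCollisions} and Lemma \ref{Lemma:RecollisionXiXi-1} applied to delayed recollisions. Once this pair $(v_{i_0}, v_{i_1})$ with $i_1 - i_0 > 1$ is identified from the first-from-the-bottom degree-one vertex whose cycle is a delayed recollision, Lemma \ref{Lemma:VerticesLongCollisions} forces the momenta at $v_{i_0}$ and $v_{i_1}$ to satisfy one of the four identities \eqref{Lemma:VerticesLongCollisions:1}--\eqref{Lemma:VerticesLongCollisions:4}, meaning that two of the shift vectors $\varkappa_j$ appearing in the phase coincide up to a cluster-momentum shift of size $\lesssim \epsilon \complement = \lambda^2 \complement$.

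First I would partition $\{0,1,\ldots,n\}$ into the set $I''$ of degree-zero indices $i<n$ excluding $i_0 - 1$, and the set $I = \{i_0-1\}\cup I'$ where $I'$ collects the degree-one indices together with the top index $n$, so that the time-simplex integral over $I''\cup\{n+1\}$ produces the factorial $t^{n_0(n)-1}/(n_0(n)-1)!$ exactly as in \eqref{eq:Aestimate2a}. For the remaining time variables I would invoke Lemma \ref{Lemma:Kidentity} with the contour $\Gamma_n$ of \eqref{DepictContour} to rewrite the integral as a product $\prod_{i\in I}|\xi-\mathrm{Re}\vartheta_i+2\pi\mathscr{X}-i\lambda^2|^{-1}$; the free momenta at every degree-one vertex except $v_{i_1}$ are integrated from the bottom to the top using Lemma \ref{lemma:degree1vertex}, each contributing a factor $\langle\ln\lambda\rangle^{2+\eth}$ or $\lambda^{-\eth'_l}$, and leaving us with the joint resolvent at $v_{i_0}$ and $v_{i_1}$ as in \eqref{eq:crossingraph6}.

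The critical step is the pairwise oscillatory estimate at $(v_{i_0}, v_{i_1})$, where we write $\mathbf{X}_{i_0}=\alpha_1\omega(k_1+\tilde\varkappa_1)+\alpha_2\omega(k_1+\tilde\varkappa_2)+\omega'$, $\mathbf{X}_{i_1}=\alpha_3\omega(k_1+\tilde\varkappa_3)+\alpha_4\omega(k_1+\tilde\varkappa_1+\tilde k\lambda^2/|\ln\lambda|)+\omega''$ using Lemma \ref{Lemma:VerticesLongCollisions}. Following \eqref{eq:crossingraph8:1}--\eqref{eq:crossingraph17:d}, after applying H\"older's inequality and a Fourier-series decomposition, the joint resolvent is bounded by $\lambda^{-2/p}$ times $(\|\mathfrak{H}_1\|_{4}\|\mathfrak{H}_2\|_{4/3})^{1/q}$ for appropriate Bessel-type functions, and Lemmas \ref{Lemm:Bessel3}, \ref{Lemm:Improved:Bessel3}, \ref{Lemm:Ker:Bessel3} yield a gain $\lambda^{-2+2\bar\epsilon/q}$ over the naive $\lambda^{-2}$. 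Balancing this against the $\lambda^{-n+2}$ coming from the remaining factorial bound, the prefactor $\lambda^{n}$ of \eqref{Propo:CrossingGraphs:1:0}, and the $\eth,\eth'_1,\eth'_2$ losses from the other degree-one vertices, produces the net gain $\lambda^{\mathfrak{C}'_{Q_{Delayed}}}$ and the polylogarithmic factor $\langle\ln\lambda\rangle^{2+2n_1(n)+\mathfrak{C}''_{Q_{Delayed}}}$ asserted in \eqref{Propo:CrossingGraphs:1}.

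The hard part will be verifying uniformity of the oscillatory gain across all subcases of Lemma \ref{Lemma:VerticesLongCollisions}, and controlling the perturbative shift $\tilde k\lambda^2/|\ln\lambda|$ that replaces exact coincidence in the pairing. This requires a Taylor expansion $\omega(k_1+\tilde\varkappa_1)-\omega(k_1+\tilde\varkappa_1+\tilde k\lambda^2/|\ln\lambda|) = -\tilde k\lambda^2/|\ln\lambda|\int_0^1 \nabla\omega(\cdot)ds$ in the phase, as in \eqref{eq:crossingraph14:1:1}, producing an extra harmless $\langle r_1\lambda^2\rangle^{2d}$ factor absorbed by the Gaussian weight $e^{-q\lambda^2|r_1|^2/2}$. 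A subtle point is that for short delayed recollisions ($\mathrm{iC}_2^d$) the cycle contains only $v_{i_0}$ and $v_{i_1}$ themselves, so we must verify that the intermediate time slices with indices $i_0 < i < i_1$, which contribute to $\vartheta_*$, are genuinely independent of the free momentum $k_1$ and do not interfere with the oscillatory estimate; this follows from the ordering in Lemma \ref{Lemma:VerticeOrderInAcycle} and the way free momenta are assigned in Section \ref{Sec:PropMometum}. Finally the replacement of $\mathscr{U}(0)$ by $\mathscr{W}_1(s_0)$ or its $\varsigma_n$-weighted version is handled exactly as in Propositions \ref{lemma:Q3FirstEstimate} and \ref{lemma:Q4FirstEstimate}, since the extra $s_0$-integration is compactly supported in $[0,T_*]$ and $\mathscr{W}_1$ satisfies the same moment bounds as $\mathscr{U}$ by Proposition \ref{Propo:ExpectationEqui}.
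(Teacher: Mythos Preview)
Your proposal has a genuine gap at the ``critical step.'' You correctly identify, via Lemma \ref{Lemma:RecollisionXiXi-1}, the pair $(v_{i_0},v_{i_1})$ with $\mathbf{X}(v_{i_0})+\mathbf{X}(v_{i_1})$ \emph{not} depending on the free momentum $k_1$. But this is exactly the \emph{opposite} of the hypothesis that drives the crossing-graph estimate of Proposition \ref{Propo:CrossingGraphs}: there $v_{i_0}$ is chosen so that $\mathbf{X}_{i_0}+\mathbf{X}_{i_1}$ \emph{is} a function of $k_1$, which is what makes both resolvents in \eqref{eq:crossingraph6} depend on $k_1$ and permits the joint oscillatory gain \eqref{eq:crossingraph17:d}. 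In the delayed recollisional case, $\mathrm{Re}\,\vartheta_{i_0-1}=\mathbf{X}_{i_0}+\mathbf{X}_{i_1}+\vartheta_*+\vartheta_{**}$ is independent of $k_1$ (up to cluster-momentum shifts), so the joint integral factorizes: the factor $|\xi-\mathrm{Re}\,\vartheta_{i_0-1}+2\pi\mathscr{X}-{\bf i}\lambda^2|^{-1}$ is a constant in $k_1$, and the remaining single resolvent only returns the standard $\langle\ln\lambda\rangle^{2+\eth}$ from Lemma \ref{lemma:degree1vertex}, with no positive power of $\lambda$. The expansion you wrote for $\mathbf{X}_{i_0},\mathbf{X}_{i_1}$ and the subsequent appeal to \eqref{eq:crossingraph8:1}--\eqref{eq:crossingraph17:d} therefore do not yield $\lambda^{\mathfrak{C}'_{Q_{Delayed}}}$.

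The paper's mechanism is different. One does \emph{not} put $i_1-1$ into the resolvent set; instead $v_{i_1}$ is treated separately by isolating the oscillatory phase $\vartheta_{i_1}^{cy}=\mathbf{X}_{i_1}\sum_{i\in\mathfrak{I}^o}s_i$, where $\mathfrak{I}^o\subset\{i_2,\dots,i_1-1\}$ collects the time slices between $v_{i_2}$ and $v_{i_1}$ that are not absorbed by intermediate recollisions. The condition $i_1-i_2>1$ guarantees $|\mathfrak{I}^o|\ge 2$. A dispersive estimate in $k_1$ on the recollision/ladder structure of the cycle of $v_{i_1}$ (using the operators in \eqref{DelayedRecollision:A5b}--\eqref{DelayedRecollision:A5d} together with Lemma \ref{Lemma:Resonance1}) then yields
\[
\Big\|\mathfrak{C}_4^{del}\Big\|_{L^2(-\mathscr{R},\mathscr{R})}\lesssim\lambda^{-\frac{2}{q}(|\mathfrak{I}^o|-1)}\Big[\int_{(\mathbb{R})^{\mathfrak{I}^o}}\!\mathrm{d}\bar s\,\mathbf{1}\Big(\textstyle\sum_{i\in\mathfrak{I}^o}s_i\le t\Big)\Big\langle\textstyle\sum_{i\in\mathfrak{I}^o}s_i\Big\rangle^{-\big(\frac{p(d-1)}{16}-\big)}\Big]^{1/p},
\]
and since the time-simplex integral is $\lesssim t^{|\mathfrak{I}^o|-1-9/16}$ for $p(d-1)\ge 17$, one gains a fractional power of $t=\tau\lambda^{-2}$ over the naive $t^{|\mathfrak{I}^o|-1}$, producing the net $\lambda^{\mathfrak{C}'_{Q_{Delayed}}}$. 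The Taylor expansion of the cluster-shift that you describe does appear, but inside the ladder-operator bounds rather than in a crossing-type joint resolvent.
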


\begin{proof}
	The proof is similar as in the homogeneous case \cite{staffilani2021wave}. We therefore only highlight the key steps and the main differences. 	As in the proof of Proposition \ref{lemma:BasicGEstimate1}, we replace $\Lambda^*$ by $\mathbb{T}^d$.  We denote by $i_1$ the index of the first degree-one vertex $v_{i_1}$, such that it does not correspond to a double-cluster, a single-cluster recollision or a cycle formed by iteratively applying   the recollisions. 
 We denote by $k_0$ the edge in $\mathfrak{E}_+(v_{i_1})$ and $k_1,k_2$ the two edges in $\mathfrak{E}_-(v_{i_1})$, in which $k_1$ is the free edge. The signs of those edges are then set to be $\sigma_{k_0},\sigma_{k_1},\sigma_{k_2}$. By   Lemma \ref{Lemma:VerticesLongCollisions}  there exists a unique vertex $v_{i_2}$, $i_1-i_2>1,$ within the cycle of $v_{i_1}$ such that $\mathbf{X}_{i_2}+\mathbf{X}_{i_1}$ is not a function of $k_1$. 
	We set  $\mathfrak{I}_{v_{i_1}}$ to be the collection of all vertices belonging to the cycle of $v_{i_1}$.   From Lemma \ref{Lemma:VerticesLongCollisions}, we deduce that $v_{i_2}$ is the only degree-zero vertex that does not belong to any recollisions in the cycle of $v_{i_1}$; while the other parts of the cycle contain only recollisions  or no recollision at all - the cycle is then just a short delayed  recollision. The real part of the  total phase of the first $i_1$ time slices is now written as $$
		\sum_{i=0}^{i_1-1}s_i\mathrm{Re}\vartheta_i \ = \ \tilde\xi\ + \ \sum_{i=0}^{i_1-1}s_i\sum_{j=i+1}^{i_1}\mathbf{X}_j,
$$
	in which the quantity $\tilde\xi$ is independent of all of the free edges of the first $i_1$ time slices. We set  $\mathfrak{D}_i=\{0\le j<i_1 \ | \ \mathrm{deg}v_j=i\} $
	for $i=0,1$ and
	$\mathfrak{D}_0' \ = \ \{i\in \mathfrak{D}_0\backslash\{i_2\} \ | \ i+1 \in	 \mathfrak{D}_0\}. $
	We compute
$$ \sum_{i=0}^{i_1-1}s_i\sum_{j=i+1}^{i_1}\mathbf{X}_j   =    \sum_{j\in \mathfrak{D}_1} \left[\mathbf{X}_j\sum_{i=0}^{j-1}s_i   +\mathbf{X}_{j-1}\sum_{i=0}^{j-2}s_i \right] +  \mathbf{X}_{i_1}\sum_{i=i_2}^{i_1-1}s_i
		  +  (\mathbf{X}_{i_2}+\mathbf{X}_{i_1})\sum_{i=0}^{i_2-1}s_i    + \sum_{j\in\mathfrak{D}_0'}\mathbf{X}_j\sum_{i=0}^{j-1}s_i, 
	$$
	and set 
$$
			\tilde\xi_0  =  \tilde\xi +  (\mathbf{X}_{i_2}+\mathbf{X}_{i_1})\sum_{i=0}^{i_2-1}s_i   +  \sum_{j\in\mathfrak{D}_0'}\mathbf{X}_j\sum_{i=0}^{j-1}s_i. 
	$$
	 It follows that  $ \tilde\xi_0$
	is independent of the free edge of $v_{i_1}$. Next, we split $\mathfrak{D}_1=\mathfrak{D}_1'\cup\mathfrak{D}_1''$, in which $\mathfrak{D}_1'$ is the set of the indices of the degree-one vertices belonging to the cycle of $v_{i_1}$ excluding $i_1$, and  $\mathfrak{D}_1''$ is the set of the indices of the degree-one vertices outside of the cycle of $v_{i_1}$. The total phase of the first $i_1$ time slices can be now re-expressed as
	$$ \sum_{i=0}^{i_1-1}s_i\mathrm{Re}\vartheta_i  =   
		  \tilde\xi_1 \ + \ \sum_{j\in \mathfrak{D}_1'} \left[\mathbf{X}_j\sum_{i=0}^{j-1}s_i \ +\mathbf{X}_{j-1}\sum_{i=0}^{j-2}s_i \right] +  \mathbf{X}_{i_1}\sum_{i=l_2}^{i_1-1}s_i,$$	where  $\tilde\xi_1$ is  independent of the free edge of $v_{i_1}$.	
	We set
	$$
			\vartheta_{i_1}^{cyc} :
			 =   \sum_{j\in \mathfrak{D}_1'} \mathbf{X}_js_{j-1}  +  \mathbf{X}_{i_1}\sum_{i=i_2}^{i_1-1}s_i,
	$$
	and 
$$
			\vartheta_{i_1}^{cy}  :=     \mathbf{X}_{i_1}\sum_{i\in\mathfrak{I}^o}s_i,
$$ where $\mathfrak{I}^o$ is  the set of indices $i$ going frmo $i_2$ to $i_1-1$ such that  $ i+1\notin \mathfrak{D}_1'.$ Next, we set $\mathfrak{I}_0=\{0\le  i< n |\mathrm{deg}v_{i+1}=0\}$, $\mathfrak{I}_1=\{0\le  i< n |\mathrm{deg}v_{i+1}=1\}$, 
	$\mathfrak{I}'=\{n+1\}\cup \{i_1-1\} \cup \mathfrak{I}_0$, and follow   the standard estimate of Proposition \ref{lemma:BasicGEstimate1} to write
	\begin{equation}\label{DelayedRecollision:A2}
		\begin{aligned} 
			& \left|-\oint_{\Gamma_{n}}\frac{\mathrm{d}\xi}{2\pi}
			\frac{{\bf i}}{\xi }
			\prod_{{ i}\in \mathfrak{I}_1' } \frac{{\bf i}}{\xi-(\mathrm{Re}\vartheta_i+2\pi\mathscr{X}-{\bf i} \lambda^2)}\int_{(\mathbb{R}_+)^{\mathfrak{I}'}}\! \mathrm{d} \bar{s} \,  \delta\left(t-\sum_{i\in \mathfrak{I}'} s_i\right) \right.\\
			&\left. \ \ \ \ \ \ \ \ \  \ \ \times e^{ -{\bf i} \vartheta_{i_1}^{cy}} e^{-{\bf i} s_{n+1} \xi}e^{-{\bf i}2\pi\mathscr{X}\sum_{i\in\mathfrak{I}^o}s_i}\prod_{i\in \mathfrak{I}'}e^{-{\bf i}s_i\vartheta_i}\right|\\ 
			\le & \left|-\oint_{\Gamma_{n}}\frac{\mathrm{d}\xi}{2\pi}
			\frac{{\bf i}}{\xi }
			\prod_{{ i}\in \mathfrak{I}_1' } \frac{{\bf i}}{\xi-(\mathrm{Re}\vartheta_i+2\pi\mathscr{X}-{\bf i} \lambda^2)}\right.\left.  \int_{(\mathbb{R}_+)^{\mathfrak{I}'}}\! \mathrm{d} \bar{s} \,  \delta\left(t-\sum_{i\in \mathfrak{I}'} s_i\right)e^{ -{\bf i} \vartheta_{i_1}^{cy}}e^{-{\bf i}2\pi\mathscr{X}\sum_{i\in\mathfrak{I}^o}s_i} e^{-{\bf i} s_{n+1}  \xi }\right|,
	\end{aligned}\end{equation}
	where $\mathfrak{I}_1'$ denotes all time slices $i$ such that $v_{i+1}$ are degree-one vertices satisfying  $i+1\ne i_1$. The same estimate was also used in the homogeneous case (see \cite{staffilani2021wave}[Proposition 91]).   Let us now explain the meaning of inequality \eqref{DelayedRecollision:A2}: we modify the  standard strategy of Proposition \ref{lemma:BasicGEstimate1} by removing $i_1-1$ from $\mathfrak{I}_1'$ and perform a special treatment for the degree-one vertex $v_{i_1}$, instead of using Lemma \ref{lemma:degree1vertex}. This is the main difference between our estimates  of delayed recollisional graphs and the basic graph estimates of Proposition \ref{lemma:BasicGEstimate1}. 	We will now explain inequality \eqref{DelayedRecollision:A2} as follows. In this estimate, that we will need to estimate all time slices belonging  to $ \mathfrak{I}_1'$ using the basic estimate of Proposition \ref{lemma:BasicGEstimate1}, from the bottom of the graph to the top, until  $v_{i_1}$ is reached. In this procedure, we will  change the form of $\vartheta_{i_1}^{cyc}$. When integrating the vertices from the bottom to the top, whenever we meet a degree one-vertex $v_j$, with $j\in \mathfrak{D}_1'$, we will  use Lemma \ref{Lemma:Kidentity} to remove $e^{{\bf i}  \mathbf{X}_js_{j-1}}$   from the total phases and put it into the fraction  $\frac{{\bf i}}{\xi-(\mathrm{Re}\vartheta_{j-1}+2\pi\mathscr{X}-{\bf i} \lambda^2)}$. However, when using  Lemma \ref{Lemma:Kidentity}, we will also remove the whole time slice $s_{j-1}$, as thus, $ \mathbf{X}_{i_1}s_{j-1}$  is also put into $\frac{{\bf i}}{\xi-(\mathrm{Re}\vartheta_{j-1}+2\pi\mathscr{X})}$. Next, using the  basic estimate of Proposition \ref{lemma:BasicGEstimate1} also implies that we apply Lemma \ref{lemma:degree1vertex} to bound  $\frac{{\bf i}}{\xi-(\mathrm{Re}\vartheta_{j-1}+2\pi\mathscr{X})}$ and then $ \mathbf{X}_{i_1}s_{j-1}$ is now removed from $\vartheta_{i_1}^{cyc}$. As a result, after the procedure of integrating all the degree-one vertices below $v_{i_1}$, we reduce $\vartheta_{i_1}^{cyc}$ to the  new quantity $	\vartheta_{i_1}^{cy}$.

	We then obtain a more refined bound concerning  the left hand side of \eqref{Propo:CrossingGraphs:1}, which takes into account the role of the full graph, namely
	\begin{equation}\label{DelayedRecollision:A3}
		\begin{aligned}
			&  \Big|\int_{(\mathbb{T}^{d})^{\mathcal{I}_n'}}\mathrm{d}\bar{k}\Delta_{n,\rho}(\bar k,\bar\sigma) \sigma_{1,\rho_1}\mathcal{M}( k_{1,\rho_1}, k_{0,\rho_1}, k_{0,\rho_1+1})\\
			&\times\prod_{i=2}^n\Big[\sigma_{i,\rho_i} \mathcal{M}( k_{i,\rho_i}, k_{i-1,\rho_i}, k_{i-1,\rho_i+1}) \tilde{\Phi}_{1,i}(\sigma_{i-1,\rho_i},  k_{i-1,\rho_i},\sigma_{i-1,\rho_i+1},  k_{i-1,\rho_i+1})\Big] \\
			&\times \oint_{\Gamma_{n}}\frac{\mathrm{d}\xi}{2\pi}
			\frac{-{\bf i}}{\xi+{\bf i}\lambda^2}
			\prod_{{ i}\in \mathfrak{I}_1'} \frac{{\bf i}}{\xi-(\mathrm{Re}\vartheta_i+2\pi\mathscr{X}-{\bf i} \lambda^2)}\int_{(\mathbb{R}_+)^{\mathfrak{I}'}}\! \mathrm{d} \bar{s} \,  \delta\left(t-\sum_{i\in \mathfrak{I}'} s_i\right)
			\\ & \qquad \left. \times e^{-{\bf i} s_{n+1}  \xi}
			e^{-{\bf i} s_i\mathrm{Re}\vartheta_i}
			e^{ -{\bf i} \vartheta_{i_1}^{cy}}e^{-{\bf i}2\pi\mathscr{X}\sum_{i\in\mathfrak{I}^o}s_i}
			\,\right.\mathbf{1}({\sigma_{n,1}=-1})\mathbf{1}({\sigma_{n,2}=1})\mathscr{U}(0)\Big| \ =: \ |\mathfrak{C}_0^{del}|, 
	\end{aligned}\end{equation}
	in which we bound $\tilde{\Phi}_0(\sigma_{0,\rho_1},  k_{0,\rho_1},\sigma_{1,\rho_2},  k_{0,\rho_2})$  by $1$. 
	For the sake of simplicity, we will denote $\tilde{\Phi}_{1,i}(\sigma_{i-1,\rho_i},  k_{i-1,\rho_i},\sigma_{i-1,\rho_i+1},  k_{i-1,\rho_i+1})=\tilde{\Phi}_{1,i}(v_i) .$  If we replace $\mathscr{U}(0)$ by $\mathscr{W}_1(s_0)$ and  $\varsigma_n\int_0^{s_0}e^{-(s_0-s_0')\varsigma_n}$ $\mathrm{d}s_0'\mathscr{W}_1(s_0')$, the same estimate holds true, as thus, we only consider the case of $\mathscr{U}(0)$. By integrating the free edge of $v_{i_1}$, using the strategy described below instead of using again  Lemma \ref{lemma:degree1vertex}, we will gain  an extra factor of $\lambda^{c}$ for some $c>0$, guaranteeing the convergence of the whole graph to zero. 
	To integrate the free edge of $v_{i_1}$, only the time slices from $i_2$ to $i_1-1$ that appear in the sum $\vartheta_{i_1}^{cy}$ are then isolated. We set
	\begin{equation}\label{DelayedRecollision:A3:1}
		\begin{aligned}
			\mathfrak{C}_1^{del} :=	&\ \int_{(\mathbb{R}_+)^{\mathfrak{I}'}}\! \mathrm{d} \bar{s} \,  \delta\left(t-\sum_{i\in \mathfrak{I}'} s_i\right) \int_{(\mathbb{T}^{d})^{\mathcal{I}_n^o}}\mathrm{d}\bar{k}\Delta^o_{n,\rho}(\bar k,\bar\sigma)\sigma_{1,\rho_1} \mathcal{M}( k_{1,\rho_1}, k_{0,\rho_1}, k_{0,\rho_1+1})\\
			&\times\prod_{i=2}^{i_1}\Big[ \sigma_{i,\rho_i}\mathcal{M}( k_{i,\rho_i}, k_{i-1,\rho_i}, k_{i-1,\rho_i+1})\\
			&\times \tilde{\Phi}_{1,i}(\sigma_{i-1,\rho_i},  k_{i-1,\rho_i},\sigma_{i-1,\rho_i+1},  k_{i-1,\rho_i+1})\Big] 
			e^{-{\bf i} s_i\mathrm{Re}\vartheta_i}
			e^{ -{\bf i} \vartheta_{i_1}^{cy}}e^{-{\bf i}2\pi\mathscr{X}\sum_{i\in\mathfrak{I}^o}s_i}\mathscr{U}(0).
	\end{aligned}\end{equation}
	This quantity is a portion of $\mathfrak{C}_0^{del}$ that involves the time slices from $0$ to $i_1-1$ as well as the  set $\mathfrak{I}^o$ of the cycle of $v_{i_1}$. The quantity ${(\mathbb{T}^{d})^{\mathcal{I}_n^o}}$ is the part of ${(\mathbb{T}^{d})^{\mathcal{I}_n'}}$ containing all the domain of all momenta $k_{i,j}$ of all  the time slices from $0$ to $i_1-1$ that involve the index set $\mathfrak{I}^o$ of the cycle of $v_{i_1}$. Similarly, $\Delta^o_{n,\rho}(\bar k,\bar\sigma)$ is the part of $\Delta_{n,\rho}(\bar k,\bar\sigma)$ involving all momenta connecting to the index set $\mathfrak{I}^o$ of the cycle of $v_{i_1}$. Following precise the same argument of the homogeneous case \cite{staffilani2021wave}[Proposition 91], if we set $\mathfrak{C}_0^{del}=\mathfrak{C}_1^{del}\mathfrak{C}_2^{del}$, then the quantity $\mathfrak{C}_2^{del}$ can be estimated using the standard method of Proposition \ref{lemma:BasicGEstimate1}. We now devote the rest of the proof to estimate $\mathfrak{C}_1^{del}$. We define
	\begin{equation}\label{DelayedRecollision:A3:2}
		\begin{aligned}
			\mathfrak{C}_3^{del} :=	&\  \int_{(\mathbb{T}^{d})^{\mathcal{I}_n^o}}\mathrm{d}\bar{k}\Delta^o_{n,\rho}(\bar k,\bar\sigma)\sigma_{1,\rho_1} \mathcal{M}( k_{1,\rho_1}, k_{0,\rho_1}, k_{0,\rho_1+1}) \prod_{i=2}^{l_1}\Big[ \sigma_{i,\rho_i}\mathcal{M}( k_{i,\rho_i}, k_{i-1,\rho_i}, k_{i-1,\rho_i+1})\\
			&\times \tilde{\Phi}_{1,i}(\sigma_{i-1,\rho_i},  k_{i-1,\rho_i},\sigma_{i-1,\rho_i+1},  k_{i-1,\rho_i+1})\Big] 
			e^{-{\bf i} s_i\mathrm{Re}\vartheta_i}
			e^{ -{\bf i} \vartheta_{l_1}^{cy}}e^{-{\bf i}2\pi\mathscr{X}\sum_{i\in\mathfrak{I}^o}s_i}\mathscr{U}(0). 
	\end{aligned}\end{equation}
and bound (see \cite{staffilani2021wave}[Proposition 91])
$$
			|\mathfrak{C}_1^{del}| 
			\lesssim 	{(\tau\lambda^{-2})^{\bar{n}}}\mathfrak{C}^{\bar{n}} \left|\int_{(\mathbb{R}_+)^{\mathfrak{I}^o}}\! \mathrm{d} \bar{s} \mathfrak{C}_3^{del}{\bf \mathbf{1}}\left(\sum_{i\in \mathfrak{I}^o} s_i\le t\right)\right|.$$
	 We then set 
	$$
			\mathfrak{C}_4^{del}
			:=   \Big|\int_{(\mathbb{R}_+)^{\mathfrak{I}^o}}\! \mathrm{d} \bar{s} \mathfrak{C}_3^{del}{\bf \mathbf{1}}\left(\sum_{i\in \mathfrak{I}^o} s_i\le t\right)\Big|.$$
Note that the graph is pairing, $\bar{n}+(|\mathfrak{I}^o|-1)=n/2$, we will  need to show that we can obtain from $\mathfrak{C}_4^{del}$ a factor of $(\lambda^{-2})^{|\mathfrak{I}^o|-1-c'}$ for some constant $0<c'$ to ensure the convergence to $0$ of the whole graph in the limit that $\lambda$ goes to $0$.

The quantity $\mathfrak{C}_4^{del}$ contains a delayed recollision in its form. 	As a result, we will explain below the structure of the recollisions and the delayed recollision formed by the cycle of $v_{i_1}$ and the  recollisions below this cycle, that is different from the homogeneous case \cite{staffilani2021wave}.
To this end, we consider a  recollision consisting of two vertices $v_j,v_{j-1}$, where $\mathrm{deg}v_j=1$, $\mathrm{deg}v_{j-1}=0$. We compute the total phase of the two time slices associated to $v_j,v_{j-1}$  $$\mathbf{X}_j\sum_{i=0}^{j-1}s_i\  + \ \mathbf{X}_{j-1}\sum_{i=0}^{j-2}s_i \ =\ \mathbf{X}_js_{j-1} \ + \ [\mathbf{X}_{j-1}+\mathbf{X}_{j}]\sum_{i=0}^{j-2}s_i.$$ The cycle contains the following product of delta functions
\begin{equation}\begin{aligned}
		&		\delta(\sigma_{j,\rho_j}k_{j,\rho_j}+\sigma_{j-1,\rho_j} k_{j-1,\rho_j}+\sigma_{j-1,\rho_j+1}k_{j-1,\rho_j+1})\\
	&\times \delta(\sigma_{j-1,\rho_{j-1}}k_{j-1,\rho_{j-1}}+\sigma_{j-2,\rho_{j-1} k_{j-2,\rho_{j-1}}}+\sigma_{j-2,\rho_{j-1}+1}k_{j-2,\rho_{j-1}+1})\end{aligned}
\end{equation}
that is either 
\begin{equation}\begin{aligned}
	& 	\delta(\sigma_{j,\rho_j}k_{j,\rho_j}+\sigma_{j-1,\rho_j} k_{j-1,\rho_j}+\sigma_{j-1,\rho_j+1}k_{j-1,\rho_j+1})\delta(\sigma_{j-1,\rho_j} k_{j-1,\rho_j}+\sigma_{j-1,\rho_{j-1}}k_{j-1,\rho_{j-1}})\\
&\times \delta(\sigma_{j-1,\rho_{j-1}}k_{j-1,\rho_{j-1}}+\sigma_{j-2,\rho_{j-1} k_{j-2,\rho_{j-1}}}+\sigma_{j-2,\rho_{j-1}+1}k_{j-2,\rho_{j-1}+1})\\
&\times\mathbf{1}(\sigma_{j,\rho_j}+\sigma_{j-2,\rho_{j-1}}=0)\mathbf{1}(\sigma_{j-1,\rho_j+1}+\sigma_{j-2,\rho_{j-1}+1}=0),
\end{aligned}	\end{equation}
and
\begin{equation}\begin{aligned}
		& 	\delta(\sigma_{j,\rho_j}k_{j,\rho_j}+\sigma_{j-1,\rho_j} k_{j-1,\rho_j}+\sigma_{j-1,\rho_j+1}k_{j-1,\rho_j+1})\delta(\sigma_{j-1,\rho_j+1}k_{j-1,\rho_j+1}+\sigma_{j-1,\rho_{j-1}}k_{j-1,\rho_{j-1}})\\
		&\times \delta(\sigma_{j-1,\rho_{j-1}}k_{j-1,\rho_{j-1}}+\sigma_{j-2,\rho_{j-1} k_{j-2,\rho_{j-1}}}+\sigma_{j-2,\rho_{j-1}+1}k_{j-2,\rho_{j-1}+1}),\\
		&\times\mathbf{1}(\sigma_{j,\rho_j}+\sigma_{j-2,\rho_{j-1}}=0)\mathbf{1}(\sigma_{j-1,\rho_j}+\sigma_{j-2,\rho_{j-1}+1}=0)
	\end{aligned}
	\end{equation}
or 
\begin{equation}\begin{aligned}
		& 	\delta(\sigma_{j,\rho_j}k_{j,\rho_j}+\sigma_{j-1,\rho_j} k_{j-1,\rho_j}+\sigma_{j-1,\rho_j+1}k_{j-1,\rho_j+1})\\
		&\times \delta(\sigma_{j,\rho_j}k_{j-1,\rho_{j-1}}+\sigma_{j-1,\rho_j}  k_{j-2,\rho_{j-1}}+\sigma_{j-1,\rho_j+1}k_{j-2,\rho_{j-1}+1}).
	\end{aligned}
\end{equation}
The product $$\mathcal{M}(k_{j,\rho_j}, k_{j-1,\rho_j},k_{j-1,\rho_j+1})\mathcal{M}(k_{j-1,\rho_{j-1}}, k_{j-2,\rho_{j-1}},k_{j-2,\rho_{j-1}+1})$$    is also combined into  $$\mbox{ either }-\sigma_{j,\rho_j}\sigma_{j-1,\rho_j}\mathrm{sign}k_{j,\rho_j}^1\mathrm{sign}k_{j-1,\rho_j}^1|\mathcal{W}(k_{j,\rho_j}, k_{j-1,\rho_j},k_{j-1,\rho_j+1})||\mathcal{W}(k_{j-1,\rho_{j-1}}, k_{j-2,\rho_{j-1}},k_{j-2,\rho_{j-1}+1})|,$$ 
$$ -\sigma_{j,\rho_j}\sigma_{j-1,\rho_j+1}\mathrm{sign}k_{j,\rho_j}^1\mathrm{sign}k_{j-1,\rho_j+1}^1|\mathcal{W}(k_{j,\rho_j}, k_{j-1,\rho_j},k_{j-1,\rho_j+1})||\mathcal{W}(k_{j-1,\rho_{j-1}}, k_{j-2,\rho_{j-1}},k_{j-2,\rho_{j-1}+1})|,$$
$$\mbox{ or }-\sigma_{j,\rho_j}^2(\mathrm{sign}k_{j,\rho_j}^1)^2|\mathcal{W}(k_{j,\rho_j}, k_{j-1,\rho_j},k_{j-1,\rho_j+1})||\mathcal{W}(k_{j-1,\rho_{j-1}}, k_{j-2,\rho_{j-1}},k_{j-2,\rho_{j-1}+1})|,$$ due to the pairings of the momenta.

As a result, if a recollision is of double-cluster type, adding it to a pairing means that we change a factor $\langle a_{k_{j,\rho_j},\sigma_{j,\rho_j}}a_{k_{j-1,\rho_{j-1}},-\sigma_{j,\rho_j}}\rangle_0$ to 

\begin{equation}
	\label{DelayedRecollision:A5b}\begin{aligned}
		&
		-\iint_{(\mathbb{T}^d)^4}\!\! \mathrm{d} k_{j-1,\rho_j+1}  \mathrm{d} k_{j-1,\rho_j}\,\mathrm{d} k_{j-2,\rho_{j-1}+1}  \mathrm{d} k_{j-2,\rho_{j-1}}\,  \\
		&\times \delta(\sigma_{j,\rho_j}k_{j-1,\rho_{j-1}}+\sigma_{j-1,\rho_j}  k_{j-2,\rho_{j-1}}+\sigma_{j-1,\rho_j+1}k_{j-2,\rho_{j-1}+1})\\ & 
		\times|\mathcal{W}(k_{j,\rho_j}, k_{j-1,\rho_j},k_{j-1,\rho_j+1})||\mathcal{W}(k_{j-1,\rho_{j-1}}, k_{j-2,\rho_{j-1}},k_{j-2,\rho_{j-1}+1})|\Phi_{1,j}(v_j) \\
		&\times e^{{\bf i}[\mathbf{X}_{j-1}+\mathbf{X}_{j}]\sum_{i=0}^{j-2}s_i} e^{{\bf i} s_{j-1} \sigma_{j,\rho_j} \omega(k_{j,\rho_j})}e^{{\bf i} s_{j-1} \sigma_{j-1,\rho_j} \omega(k_{j-1,\rho_j})} e^{{\bf i} s_{j-1} \sigma_{j-1,\rho_j+1} \omega(k_{j-1,\rho_j+1})}\\
		&\times\langle a_{k_{j-1,\rho_j},\sigma_{j-1,\rho_j}}a_{k_{j-2,\rho_{j-1}},-\sigma_{j-2,\rho_{j-1}}}\rangle_0 \langle a_{k_{j-1,\rho_j+1},\sigma_{j-1,\rho_j+1}}a_{k_{j-2,\rho_{j-1}+1},-\sigma_{j-2,\rho_{j-1}+1}}\rangle_0.
	\end{aligned}
\end{equation}
Similarly, if a recollision is of single-cluster type, adding it to an edge means that we change a factor $\langle a_{k_{j,\rho_j},\sigma_{j,\rho_j}}a_{k_{j-1,\rho_{j-1}},-\sigma_{j,\rho_j}}\rangle_0$ to   
\begin{equation}
	\label{DelayedRecollision:A5c}\begin{aligned}
		&
		\iint_{(\mathbb{T}^d)^4}\!\! \mathrm{d} k_{j-1,\rho_j+1}  \mathrm{d} k_{j-1,\rho_j}\,\mathrm{d} k_{j-2,\rho_{j-1}+1}  \mathrm{d} k_{j-2,\rho_{j-1}}\,
		\delta(\sigma_{j,\rho_j}k_{j,\rho_j}+\sigma_{j-1,\rho_j} k_{j-1,\rho_j}+\sigma_{j-1,\rho_j+1}k_{j-1,\rho_j+1})\\
		&\times \delta(\sigma_{j-1,\rho_{j-1}}k_{j-1,\rho_{j-1}}+\sigma_{j-2,\rho_{j-1} k_{j-2,\rho_{j-1}}}+\sigma_{j-2,\rho_{j-1}+1}k_{j-2,\rho_{j-1}+1})\\
		&\times \delta(\sigma_{j-1,\rho_j} k_{j-1,\rho_j}+\sigma_{j-1,\rho_{j-1}}k_{j-1,\rho_{j-1}}) \\ & 
		\times|\mathcal{W}(k_{j,\rho_j}, k_{j-1,\rho_j},k_{j-1,\rho_j+1})||\mathcal{W}(k_{j-1,\rho_{j-1}}, k_{j-2,\rho_{j-1}},k_{j-2,\rho_{j-1}+1})|\Phi_{1,j}(v_j)e^{{\bf i}[\mathbf{X}_{j-1}+\mathbf{X}_{j}]\sum_{i=0}^{j-2}s_i}  \\ & 
		\times(-\sigma_{j,\rho_j}\sigma_{j-1,\rho_j}\mathrm{sign}k_{j,\rho_j}^1\mathrm{sign}k_{j-1,\rho_j}^1)  e^{{\bf i} s_{j-1} \sigma_{j,\rho_j} \omega(k_{j,\rho_j})}e^{{\bf i} s_{j-1} \sigma_{j-1,\rho_j} \omega(k_{j-1,\rho_j})}\\
		&\times \mathbf{1}(\sigma_{j,\rho_j}+\sigma_{j-2,\rho_{j-1}}=0)\mathbf{1}(\sigma_{j-1,\rho_j+1}+\sigma_{j-2,\rho_{j-1}+1}=0) \\
		&\times \langle a_{k_{j-1,\rho_j+1},\sigma_{j-1,\rho_j+1}}a_{k_{j-2,\rho_{j-1}+1},\sigma_{j-2,\rho_{j-1}+1}}\rangle_0e^{{\bf i} s_{j-1} \sigma_{j-1,\rho_j+1} \omega(k_{j-1,\rho_j+1})}\\
		&\times \langle a_{k_{j,\rho_j},\sigma_{j,\rho_j}}a_{k_{j-2,\rho_{j-1}},-\sigma_{j,\rho_j}}\rangle_0,
	\end{aligned}
\end{equation}
and
\begin{equation}
	\label{DelayedRecollision:A5d}\begin{aligned}
	&
	\iint_{(\mathbb{T}^d)^4}\!\! \mathrm{d} k_{j-1,\rho_j+1}  \mathrm{d} k_{j-1,\rho_j}\,\mathrm{d} k_{j-2,\rho_{j-1}+1}  \mathrm{d} k_{j-2,\rho_{j-1}}\,
	\delta(\sigma_{j,\rho_j}k_{j,\rho_j}+\sigma_{j-1,\rho_j} k_{j-1,\rho_j}+\sigma_{j-1,\rho_j+1}k_{j-1,\rho_j+1}) \\
	&\times \delta(\sigma_{j-1,\rho_j+1}k_{j-1,\rho_j+1}+\sigma_{j-1,\rho_{j-1}}k_{j-1,\rho_{j-1}}) \mathbf{1}(\sigma_{j,\rho_j}+\sigma_{j-2,\rho_{j-1}}=0)\mathbf{1}(\sigma_{j-1,\rho_j}+\sigma_{j-2,\rho_{j-1}+1}=0)\\
	&\times \delta(\sigma_{j-1,\rho_{j-1}}k_{j-1,\rho_{j-1}}+\sigma_{j-2,\rho_{j-1} k_{j-2,\rho_{j-1}}}+\sigma_{j-2,\rho_{j-1}+1}k_{j-2,\rho_{j-1}+1})\\ & 
	\times|\mathcal{W}(k_{j,\rho_j}, k_{j-1,\rho_j},k_{j-1,\rho_j+1})||\mathcal{W}(k_{j-1,\rho_{j-1}}, k_{j-2,\rho_{j-1}},k_{j-2,\rho_{j-1}+1})|\Phi_{1,j}(v_j) e^{{\bf i}[\mathbf{X}_{j-1}+\mathbf{X}_{j}]\sum_{i=0}^{j-2}s_i} \\ & 
		\times(-\sigma_{j,\rho_j}\sigma_{j-1,\rho_j+1}\mathrm{sign}k_{j,\rho_j}^1\mathrm{sign}k_{j-1,\rho_j+1}^1)  e^{{\bf i} s_{j-1} \sigma_{j,\rho_j} \omega(k_{j,\rho_j})}e^{{\bf i} s_{j-1} \sigma_{j-1,\rho_j} \omega(k_{j-1,\rho_j})} \\
		&\times\mathbf{1}(\sigma_{j,\rho_j}+\sigma_{j-2,\rho_{j-1}}=0)\mathbf{1}(\sigma_{j-1,\rho_j}+\sigma_{j-2,\rho_{j-1}+1}=0)\\
		&\times \langle a_{k_{j-1,\rho_j},\sigma_{j-1,\rho_j}}a_{k_{j-2,\rho_{j-1}},\sigma_{j-2,\rho_{j-1}}}\rangle_0e^{{\bf i} s_{j-1} \sigma_{j-1,\rho_j+1} \omega(k_{j-1,\rho_j+1})} \langle a_{k_{j,\rho_j},\sigma_{j,\rho_j}}a_{k_{j-2,\rho_{j-1}},-\sigma_{j,\rho_j}}\rangle_0.
	\end{aligned}
\end{equation}

 The same estimate in the homogeneous case can be redone \cite{staffilani2021wave}, with the addition of the simple estimate $\Big|e^{{\bf i}[\mathbf{X}_{j-1}+\mathbf{X}_{j}]\sum_{i=0}^{j-2}s_i}\Big|\le1$, and we bound
	
	\begin{equation}\label{DelayedRecollision:A6:b6}
		\begin{aligned}
			\Big\|\mathfrak{C}_4^{del}\Big\|_{L^2(-\mathscr{R},\mathscr{R})}
			\lesssim\ &   	&\lambda^{-\frac2q(|\mathfrak{I}^o|-1)} \left[\int_{(\mathbb{R})^{\mathfrak{I}^o}}\! \mathrm{d} \bar{s}	{\mathbf{1}}\left( \sum_{i\in \mathfrak{I}^o} s_i\le t\right) \Big\langle \sum_{i\in \mathfrak{I}^o} s_i\Big\rangle^{-\big( \frac{p(d-1)}{16}-\big)}\right]^\frac{1}{p},
	\end{aligned}\end{equation}
	for some constants $p,q>0$ and $\frac1p+\frac1q=1.$ The same strategy of the proof of Proposition \ref{lemma:BasicGEstimate1}, yielding another estimate (see \cite{staffilani2021wave}[Proposition 91])
	\begin{equation}\label{DelayedRecollision:A4}\begin{aligned}
			&		\limsup_{D\to\infty} 
			\Big\|\mathscr{F}_{1/n,\mathscr{R}}\Big[\widehat{{Q}_{Delayed}(\tau)\chi_{[0,T_*)}(\tau)}(\mathscr{X}{\lambda^{-2}})\Big]\Big\|_{L^2} 4^n e^{T_*}\frac{T_*^{1+\bar{n}}}{(\bar{n})!}\lambda^{n-2\bar{n}-\eth'_1-\eth'_2}\\ \le\  & \times\mathfrak{C}_{Q_{1,1}}^{1+{n}_1'}\langle \ln n\rangle \langle\ln\lambda\rangle^{1+(2+\eth)({n}_1'-2
				)+\mathfrak{C}''_{Q_{Delayed}}} 
			\left[\int_{(\mathbb{R})^{\mathfrak{I}^o}}\! \mathrm{d} \bar{s}	{\bf \mathbf{1}}\left( \sum_{i\in \mathfrak{I}^o} s_i\le t\right) \Big\langle \sum_{i\in \mathfrak{I}^o} s_i\Big\rangle^{-\big(\frac{p(d-1)}{16}-\big)}\right]^\frac{1}{p},\end{aligned}
	\end{equation}
	where $\bar{n}+(|\mathfrak{I}^o|-1)=n/2$ and $n'_1$ is the number of degree-one vertices $v_i$ with $i\ne i_1$. As
		$$\int_{(\mathbb{R})^{\mathfrak{I}^o}}\! \mathrm{d} \bar{s}	{\bf \mathbf{1}}\left(\sum_{i\in \mathfrak{I}^o} s_i\le t\right) {\left\langle \sum_{i\in \mathfrak{I}^o} s_i\right\rangle^{-\big(\frac{p(d-1)}{16}-\big)}} \ \lesssim \ 
			t^{|\mathfrak{I}^o|-1-\frac{9}{16}},$$
	for $p(d-1)\ge 17$, the same argument used in the homogeneous case shows that the graphs go to $0$ in the limit that $\lambda$ goes to $0$.
\end{proof}

\subsection{Seventh  type of diagram estimates}\label{Sec:Res}

\begin{proposition}[The first estimate on   the residual terms]\label{lemma:Residual1} For $1\le n\le \mathbf{N}$ and $t=\tau\lambda^{-2}>0$, we have
	\begin{equation}
		\begin{aligned} \label{eq:Residual1}
			&	\limsup_{D\to\infty} 
			\lambda^{n}\mathbf{1}({\sigma_{n,1}=-1})\mathbf{1}({\sigma_{n,2}=1}) \\ 
			&\sum_{\substack{\bar\sigma\in \{\pm1\}^{\mathcal{I}_n},\\ \sigma_{i,\rho_i}+\sigma_{i-1,\rho_i}+\sigma_{i-1,\rho_i+1}\ne \pm3,
					\\ \sigma_{i-1,\rho_i}\sigma_{i-1,\rho_i+1}= 1}}\Big|\int_{(\Lambda^*)^{\mathcal{I}_n}}  \mathrm{d}\bar{k}\Delta_{n,\rho}(\bar{k},\bar\sigma) \mathfrak{C}^m_{n,res} 	\Phi'(\sigma_{0,\rho_1},  k_{0,\rho_1},\sigma_{1,\rho_2},  k_{0,\rho_2})\sigma_{1,\rho_1}\\
			&\ \times \mathcal{M}( k_{1,\rho_1}, k_{0,\rho_1}, k_{0,\rho_1+1})\prod_{i=2}^n\Big[\sigma_{i,\rho_i}\mathcal{M}( k_{i,\rho_i}, k_{i-1,\rho_i}, k_{i-1,\rho_i+1}) \Phi_{1,i}(\sigma_{i-1,\rho_i}, k_{i-1,\rho_i},\sigma_{i-1,\rho_i+1}, k_{i-1,\rho_i+1})\Big] \\
			&\ \  \times\int_{(\mathbb{R}_+)^{\{0,\cdots,n\}}}\mathrm{d}\bar{s} \delta\left(t-\sum_{i=0}^ns_i\right)\prod_{i=0}^{n}e^{-s_i[\varsigma_{n-i}+\tau_i]} \prod_{i=1}^{n} e^{-{\bf i}t_i(s)\mathbf{X}_i}\Big|\\
			=	& \mathcal{O}(\lambda^{\min\{(d-1)\eth'_2/3,n-(n-1)\theta_r-2\}}),
	\end{aligned}\end{equation}
	when $c_r>0$, where $c_r$   appears in the phase regulators $\tau_i$ in \eqref{Def:TauGen}. In the above expression, the constant $m$ can be either $0,2$ or $3$ and  $\mathfrak{C}^m_{n,res}$ can be either $\mathfrak{C}^0_{n,res}$, $\mathfrak{C}^2_{n,res}$ and $\mathfrak{C}^3_{n,res}$ defined in Propositions \ref{Proposition:Ampl1}, \ref{Proposition:Ampl2}, \ref{Proposition:Ampl3}.   The function $\Phi'$ is $\Phi_{1,1}$ when $\mathfrak{C}^m_{n,res}=\mathfrak{C}^0_{n,res}$. The function $\Phi'$ is $\Phi_{0,1}$  or $\Phi_{1,1}$ when $\mathfrak{C}^m_{n,res}=\mathfrak{C}^2_{n,res}$ and $\mathfrak{C}^3_{n,res}$, consistently with  Propositions \ref{Proposition:Ampl1}, \ref{Proposition:Ampl2}, \ref{Proposition:Ampl3}. The quantity
	$\mathbf{N}$ is the number of Duhamel expansions we need,   defined in Section \ref{Sec:KeyPara}.	
	Above we have used the notation
	$
		\mathbf{X}_i = \mathbf{X}(v_i)   =  \mathbf{X}(\sigma_{i,\rho_i},  k_{i,\rho_i},\sigma_{i-1,\rho_i}, k_{i-1,\rho_i},\sigma_{i-1,\rho_i+1}, k_{i-1,\rho_i+1}).
$
	and the usual  three notations $\bar{k}$, $\bar\sigma$ and $\bar{s}$ for the vectors that are composed of all of the possible $k_{i,j}$, $\sigma_{i,j}$ and $s_i$. 
\end{proposition}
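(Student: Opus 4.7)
The plan is to exploit the fact that the residual quantities $\mathfrak{C}^m_{n,res}$ isolate precisely those configurations at the bottom time slice in which the momenta are \emph{not} balanced within any of the boxes $\mathcal{B}_{\underline{n}}$ of the partition \eqref{BOXes}, up to the tolerance $|\sum_{l}k_{0,J_l}\sigma_{0,J_l}|\le\lambda^2\complement$ enforced by the $\square$-function in \eqref{SquareFunction}. On these configurations the noise generator acts with a strictly negative coefficient, and this is the only mechanism of smallness that we need beyond the estimates already assembled in Section \ref{Sec:GeneralGraph}.

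First, I would rewrite the residual by conjugating with the modified average \eqref{Def:NewAverA}, so that the bottom moment $\langle\prod_i a(k_{0,i},\sigma_{0,i})\rangle_{s_0}$ carries the explicit damping factor $e^{s_0 c_r C_{\mathbf R}(\mathcal V_1,\mathcal V_2)}$. By Lemma \ref{Lemma:NoiseAction}, every multi-index $(\mathcal V_1,\mathcal V_2)$ that fails the balance condition \eqref{CR=0} in at least one box satisfies $C_{\mathbf R}\le -1$. Since the residual subtracts off precisely the subset $S\in\mathcal{P}(\{1,\dots,n+2\})$ on which $\square$ does not vanish, the surviving integrand is supported on configurations where some box is unbalanced by more than $\lambda^2\complement$. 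A short bookkeeping argument, together with the symmetry properties of the Gaussian initial density \eqref{InitialDensity2}, then shows that on the residual one may pull out an extra damping factor of at least $e^{-c_r s_0}$ with $c_r=\mathfrak C_r\lambda^{\theta_r}$.

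Second, I would feed the thus-damped integrand into the same Duhamel/graph machinery used in Propositions \ref{lemma:BasicGEstimate1}, \ref{lemma:Q3FirstEstimate}, \ref{lemma:Q4FirstEstimate}, but with one modification: every time integration that can be made to carry a factor $e^{-c_r s_i}$ is estimated by $\int_0^{\infty}e^{-c_r s_i}\,\mathrm ds_i\lesssim c_r^{-1}=\mathfrak C_r^{-1}\lambda^{-\theta_r}$ instead of by the crude $t$-length of the time simplex. Because of the structure of the Feynman diagram at most $n-1$ of the intermediate time integrations can be traded this way (the very last one absorbs the overall time constraint, which produces the $\lambda^{-2}$ loss), while the overall $\lambda^n$ prefactor from the $n$ copies of $\lambda\mathcal M$ at the interacting vertices remains. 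This yields the bound
\begin{equation*}
\lambda^{n}\cdot (c_r^{-1})^{n-1}\cdot \lambda^{-2}\ \lesssim\ \lambda^{n-(n-1)\theta_r-2},
\end{equation*}
which is one of the two terms in the $\min$.

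Alternatively, if the combinatorics of the residual produce a box that is \emph{almost} balanced (so that the noise damping above is too weak), then that configuration necessarily falls inside the support of the cut-off $\Phi_1^b(\eth'_2)$ associated to a vertex at level $i$ close to the bottom; here the distance-to-singular-manifold estimate of Proposition \ref{Propo:Phi}(ii)(C') combined with the $l^\Im$ moment bound \eqref{Propo:ExpectationEqui:2c} and the degree-one vertex estimate \eqref{eq:degree1vertex:a} of Lemma \ref{lemma:degree1vertex} gives, after tensorizing over the $d-1$ transverse directions, a gain of $\lambda^{(d-1)\eth'_2/3}$. Taking the weaker of the two gains produces the stated minimum.

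The main obstacle, and what requires the most care in execution, is the second step: verifying that for every $S$ appearing in the residual there is a concrete unbalanced box whose $C_{\mathbf R}$ coefficient is actually negative and uniformly so. Because $\square$ identifies configurations only up to $\lambda^2\complement$, one has to check that a uniform quantitative version of Lemma \ref{Lemma:NoiseAction} still holds on the complement of $\square$, i.e., that the missing $\lambda^2\complement$ neighborhood is negligible against the damping one extracts. This is handled by splitting the momentum integration into a core piece (where $|C_{\mathbf R}|\ge 1$, giving genuine exponential decay) and a boundary piece of measure $\mathcal O(\lambda^{2d})$, which is absorbed by the kernel bounds on $\mathcal M$ and the $L^\Im$ moment estimates. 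Once this split is in place, the remaining estimates are parallel to those of Proposition \ref{lemma:BasicGEstimate1} and \ref{lemma:Q3FirstEstimate}, and the passage to the limit $D\to\infty$ is identical (dominated convergence, replacing $\int_{\Lambda^*}$ by $\int_{\mathbb T^d}$).
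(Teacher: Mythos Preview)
Your core mechanism---extracting the noise damping $e^{-c_r s_i}$ on residual configurations and trading $n-1$ time integrals for factors of $c_r^{-1}=\mathfrak C_r^{-1}\lambda^{-\theta_r}$ to obtain $\lambda^{n-(n-1)\theta_r-2}$---is exactly what the paper does in its Case~1 ($n\ge 3$). However, several aspects of your plan are either misdirected or leave genuine gaps.

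\textbf{Where the damping actually sits.} You describe conjugating with the modified average to pull out $e^{-c_r s_0}$ from the bottom moment, and then vaguely invoke damping on the other $s_i$. But the damping factors $e^{-s_i\tau_i}$ are \emph{already explicit} in the integrand: they are the phase regulators \eqref{Def:TauGen} built into the starred average at every time slice, not just slice~$0$. The real work is to show that $\tau_i\ge c_r$ for every $i=1,\dots,n-1$ on a residual graph. The paper does this by a clean propagation argument: if some $\tau_i=0$, then by Lemma~\ref{Lemma:NoiseAction} the slice-$i$ momenta are box-balanced, and this balance propagates down to slice~$0$, contradicting residuality. Your ``core/boundary splitting'' is unnecessary, because Lemma~\ref{Lemma:NoiseAction} already gives the clean dichotomy $C_{\mathbf R}\in\{0\}\cup(-\infty,-1]$; there is no ``almost balanced'' regime to worry about.

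\textbf{The $(d-1)\eth_2'/3$ gain is not an alternative for weak damping.} You present this as a fallback when ``the noise damping above is too weak,'' but that scenario does not occur for $n\ge 3$. In the paper this gain is used \emph{only} for $n=2$, where the noise argument alone yields the useless $\lambda^{-\theta_r}$. There the relevant cutoff is $\Phi_{0,1}=\Phi_0^b(\eth_2')$ (not $\Phi_1^b$), and the gain $\lambda^{(d-1)\eth_2'/3}$ comes from the \emph{measure} of its support via Proposition~\ref{Propo:Phi}(ii)(C$'$), not from Lemma~\ref{lemma:degree1vertex}. You also omit the $n=1$ case entirely; the noise bound gives only $\lambda^{-1}$ there, and the paper disposes of it by observing that the initial data are Gaussian and paired, so the three-point residual moments vanish identically.

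In short: keep your Case~1 computation, but replace the ``conjugation'' story by the direct observation that all intermediate $\tau_i$ are $\ge c_r$ via the propagation argument; drop the core/boundary split and the ``almost balanced'' dichotomy; and treat $n=1,2$ as separate cases exactly as the paper does.
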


\begin{proof} We divide the proof into several parts.

	{\bf Case 1: $n\ge 3$.} We estimate
	\begin{equation}
		\begin{aligned}\label{eq:Residual1:E1}
			& \lambda^{n}\left|\int_{(\mathbb{R}_+)^{\{0,\cdots,n\}}}\mathrm{d}\bar{s} \delta\left(t-\sum_{i=0}^{n-1}s_i\right)\prod_{i=1}^{n}e^{-s_i[\varsigma_{n-i}+\tau_i]} \prod_{i=1}^{n-1} e^{-{\bf i}t_i(s)\mathbf{X}_i}\right|\\
			\le & \lambda^{n}\int_{(\mathbb{R}_+)^{\{0,\cdots,n\}}}\mathrm{d}\bar{s} \delta\left(t-\sum_{i=0}^ns_i\right)\prod_{i=1}^{n}e^{-s_i\tau_i}. 
	\end{aligned}\end{equation}
	We  first observe that, in the corresponding residual graph, $\tau_i\lambda^{-2}\to \infty$ when $\lambda\to 0$ for $i=1,\cdots,n-1$. This can be seen by contradiction. We suppose that there exists $\tau_i\lambda^{-2}\to 0$ when $\lambda\to 0$, for some $i$ from $1$ to $n-1$. Thus, by Lemma \ref{Lemma:NoiseAction} 
	{and choice of $\mathbf{N}$ \eqref{Def:Para1} } all momenta of time slice $i$ are paired, that means each vertex of the time slice $i$ belongs to a cluster.
	 A  consequence of this fact is that each of the vertex of the time slice $i-1$ also belongs to a cluster, and as thus,  each of the vertices of the time slice $0$ also belongs to a cluster. As a result, the graph is not a residual diagram, leading to a contradiction. 
	For $t'\le \tau\lambda^{-2}$, we have 
$$
		\int_0^{t'}\mathrm{d}se^{-\tau_i s}  = \frac{1-e^{-\tau_i {t'}}}{\tau_i } \ \le \ \frac{1}{\tau_i }  =  \frac{1}{\mathfrak{C}_r\lambda^{\theta_r} }.
	$$
	Integrating from $s_0$ to $s_n$  gives the following bound $$
			 \lambda^{n}\left|\int_{(\mathbb{R}_+)^{\{0,\cdots,n\}}}\mathrm{d}\bar{s} \delta\left(t-\sum_{i=0}^ns_i\right)\prod_{i=1}^{n}e^{-s_i[\varsigma_{n-i}+\tau_i]} \right| \lesssim   \lambda^{n}t \lambda^{-\theta_r(n-1)}  =  \mathcal{O}(\lambda^{n-(n-1)\theta_r-2}).$$
	By our consideration $n\ge 3$, then $n-(n-1)\theta_r-2>0$. Next, we replace $\Lambda^*$   by $\mathbb{T}^d$ .   The quantity $\mathfrak{C}_{n,res}^m$ can be  bounded by the precisely the same method used to estimate the moments of $Q^1$. Finally, we get a bound $\mathcal{O}(\lambda^{n-(n-1)\theta_r-2})$ for \eqref{eq:Residual1}.
	
	{\bf Case 2:  $n= 1$.} By our choice of the parameters in Section \ref{Sec:KeyPara}, $\varsigma_0=\varsigma_1=0$, we only need to estimate   $\mathfrak{C}^0_{1,res}$ and $\mathfrak{C}^2_{1,res}$. In this case, we need to estimate  either the following quantity, associated to $\mathfrak{C}^0_{1,res}$ 
	\begin{equation}
		\begin{aligned}
			\label{eq:Residual1:E5}
			\mathfrak{A}_1\ = \ 	&\lambda\mathbf{1}({\sigma_{1,1}=-1})\mathbf{1}({\sigma_{1,2}=1})\sum_{\substack{\bar\sigma\in \{\pm1\}^{\mathcal{I}_1},\\ \sigma_{i,\rho_i}+\sigma_{i-1,\rho_i}+\sigma_{i-1,\rho_i+1}\ne \pm3,
					\\ \sigma_{i-1,\rho_i}\sigma_{i-1,\rho_i+1}= 1}}\left|\int_{(\Lambda^*)^{\mathcal{I}_1}}  \mathrm{d}\bar{k}\Delta_{1,\rho}(\bar{k},\bar\sigma)\right. \\
			&\times\mathfrak{C}^0_{1,res}\Big( a(k_{0,1},\sigma_{0,1}), a(k_{0,2},\sigma_{0,2}), a(k_{0,3},\sigma_{0,3}) \Big)(s_0)\\
			&\left.\times	\Phi_{0,1}(\sigma_{0,\rho_1},  k_{0,\rho_1},\sigma_{1,\rho_2},  k_{0,\rho_2})\sigma_{1,\rho_1}\mathcal{M}( k_{1,\rho_1}, k_{0,\rho_1}, k_{0,\rho_1+1})\int_{0}^t\mathrm{d}s_0  e^{-{\bf i}s_0\mathfrak{X}_1}\right|
		\end{aligned}
	\end{equation}
	or the following quantity, associated to $\mathfrak{C}^2_{1,res}$ 
	\begin{equation}
		\begin{aligned}
			\label{eq:Residual1:E6}
			\mathfrak{A}_2\ = \ 	&\lambda\mathbf{1}({\sigma_{1,1}=-1})\mathbf{1}({\sigma_{1,2}=1})\sum_{\substack{\bar\sigma\in \{\pm1\}^{\mathcal{I}_1},\\ \sigma_{i,\rho_i}+\sigma_{i-1,\rho_i}+\sigma_{i-1,\rho_i+1}\ne \pm3,
					\\ \sigma_{i-1,\rho_i}\sigma_{i-1,\rho_i+1}= 1}}\left|\int_{(\Lambda^*)^{\mathcal{I}_1}}  \mathrm{d}\bar{k}\Delta_{1,\rho}(\bar{k},\bar\sigma)\right. \\
			&\times \mathfrak{C}^2_{1,res}\Big(a(k_{0,1},\sigma_{0,1}) a(k_{0,2},\sigma_{0,2}) a(k_{0,3},\sigma_{0,3}) \Big)(0)\\
			&\left.\times	\Phi_{1,1}(\sigma_{0,\rho_1},  k_{0,\rho_1},\sigma_{1,\rho_2},  k_{0,\rho_2})\sigma_{1,\rho_1}\mathcal{M}( k_{1,\rho_1}, k_{0,\rho_1}, k_{0,\rho_1+1})
			\int_{0}^t\mathrm{d}s_0  e^{-{\bf i}s_0\mathbf{X}_1}\right|.
		\end{aligned}
	\end{equation} 
Both quantities are $0$ as initially all momenta are paired.
	
\
	
	{\bf Case 3:  $n= 2$.} As $\varsigma_0=\varsigma_1=\varsigma_2=0$, we only need to estimate $\mathfrak{C}^0_{2,res}$ and $\mathfrak{C}^2_{2,res}$. As the quantity associated to $\mathfrak{C}_{2,res}^0$ is $0$, we only need to estimate
	the following quantity, associated to $\mathfrak{C}^2_{2,res}$ 
	\begin{equation}
		\begin{aligned}
			\label{eq:Residual1:E8}
			\mathfrak{A}_3\ = & \ 	\lambda^{2}\mathbf{1}({\sigma_{2,1}=-1})\mathbf{1}({\sigma_{2,2}=1})\sum_{\substack{\bar\sigma\in \{\pm1\}^{\mathcal{I}_2},\\ \sigma_{i,\rho_i}+\sigma_{i-1,\rho_i}+\sigma_{i-1,\rho_i+1}\ne \pm3,
					\\ \sigma_{i-1,\rho_i}\sigma_{i-1,\rho_i+1}= 1}}\left|\int_{(\Lambda^*)^{\mathcal{I}_2}}  \mathrm{d}\bar{k}\Delta_{2,\rho}(\bar{k},\bar\sigma)\right. \\
			&\times \Phi_{0,1}(\sigma_{0,\rho_1},  k_{0,\rho_1},\sigma_{1,\rho_2},  k_{0,\rho_2})\sigma_{1,\rho_1}\mathcal{M}( k_{1,\rho_1}, k_{0,\rho_1}, k_{0,\rho_1+1}) \Big[\sigma_{2,\rho_2}\mathcal{M}( k_{2,\rho_2}, k_{1,\rho_2}, k_{1,\rho_2+1})\\
			&\times \Phi_{1,2}(\sigma_{1,\rho_2}, k_{1,\rho_2},\sigma_{1,\rho_2+1}, k_{1,\rho_2+1})\Big] 
			\mathfrak{C}^2_{2,res}\Big(a(k_{0,1},\sigma_{0,1}), a(k_{0,2},\sigma_{0,2}), a(k_{0,3},\sigma_{0,3}), a(k_{0,4},\sigma_{0,3}) \Big)(s_0)\\
			&\left.\times\int_{(\mathbb{R}_+)^{\{0,1,2\}}}\mathrm{d}\bar{s} \delta\left(t-\sum_{i=0}^2s_i\right)\prod_{i=1}^{2}e^{-s_i[\varsigma_{n-i}+\tau_i]} \prod_{i=1}^{2} e^{-{\bf i}t_i(s)\mathbf{X}_i}\right|.
		\end{aligned}
	\end{equation} 
	To evaluate $\mathfrak{A}_3$, we will again make use of the cut-off function $\Phi_{0,1}(\sigma_{0,\rho_1},  k_{0,\rho_1},\sigma_{1,\rho_2},  k_{0,\rho_2}) 
	= \Phi_{0}^b(\eth'_2,\sigma_{0,\rho_1},  k_{0,\rho_1},\sigma_{1,\rho_2},  k_{0,\rho_2}),$ which
	 leads to a factor of $\lambda^{(d-1)\eth'_2/3}$. The time integration $$\lambda^2\int_{(\mathbb{R}_+)^{\{0,1,2\}}}\mathrm{d}\bar{s} \delta\left(t-\sum_{i=0}^2s_i\right)e^{-s_1\tau_1},$$
	yields a factor of $\lambda^2\lambda^{-2-\theta_r}=\lambda^{-\theta_r}$.  Finally, a total factor of $\lambda^{(d-1)\eth'_2/3-\theta_r}$ is obtained. Since $(d-1)\eth'_2/3-\theta_r>0$, the factor  $\lambda^{(d-1)\eth'_2/3-2\theta_r}$   has a positive power of $\lambda$.

\end{proof}
\begin{proposition}[The final estimate on   the residual graphs]\label{lemma:Residual2} Denote by $ Q_{res}$ the sum of all the terms $Q^1_{res}+Q^2_{res}+Q^3_{res}+Q^4_{res}$ containing  $\mathfrak{C}^0_n$,  $\mathfrak{C}^2_n$ and   $\mathfrak{C}^3_n$, defined in   Propositions \ref{Proposition:Ampl1}, \ref{Proposition:Ampl2}, \ref{Proposition:Ampl3}
	\begin{equation}
		Q_{res} \ = \ Q^1_*+Q^2_*+Q^3_*+Q^4_*- Q^1-Q^2-Q^3-Q^4.
	\end{equation}
	
	For $t=\tau\lambda^{-2}>0$, we have, for any constants $1>T_*>0$ and $\mathscr{R}>0$,
	
	\begin{equation}
		\begin{aligned} \label{lemma:Residual2:1}
			& 
			\lim_{\lambda\to 0}\limsup_{D\to\infty} 
			\Big\|\mathscr{F}_{1/n,\mathscr{R}}\Big[\widehat{{Q}_{res}(\tau)\chi_{[0,T_*)}(\tau)}(\mathscr{X}{\lambda^{-2}})\Big]\Big\|_{L^{\infty,2}}
			= 0.
	\end{aligned}\end{equation}

\end{proposition}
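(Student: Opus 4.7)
The plan is to reduce Proposition \ref{lemma:Residual2} to a graph-by-graph application of Proposition \ref{lemma:Residual1} combined with the combinatorial accounting already developed for the non-residual parts in Propositions \ref{lemma:BasicGEstimate1}--\ref{lemma:Q4SecondEstimate}. First, using Propositions \ref{Proposition:Ampl1}--\ref{Proposition:Ampl4}, I would write
\begin{equation*}
Q_{res} \;=\; \sum_{n=1}^{\mathbf{N}-1}R^{1}_n \;+\; \sum_{n=0}^{\mathbf{N}-1}R^{2}_n \;+\; \sum_{n=1}^{\mathbf{N}}R^{3}_n \;+\; R^{4}_{\mathbf{N}},
\end{equation*}
where each $R^{j}_n$ is the contribution associated to $\mathfrak{C}^{m}_{n,res}$ (with $m=0,2,3$ according to $j$) and each of these decomposes into a sum indexed by partitions $S\in\mathcal{P}(\{1,\ldots,n+2\})$, signs $\bar\sigma$, and splitting positions $\{\rho_i\}$. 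Proposition \ref{lemma:Residual1} provides a pointwise-in-$\mathscr{X}$ bound
\begin{equation*}
\Big|\cdots\Big|\;=\;\mathcal{O}\!\left(\lambda^{\min\{(d-1)\eth'_2/3,\,n-(n-1)\theta_r-2\}}\right)
\end{equation*}
for each such graph, uniformly in $D$. To convert this into the $L^{\infty,2}$-type norm in $\mathscr{F}_{1/n,\mathscr{R}}$, I would simply repeat the contour and H\"older steps already used in Propositions \ref{lemma:BasicGEstimate1} and \ref{lemma:Q3FirstEstimate}: the only new ingredient is that the decay factor from the residual estimate replaces the usual $\lambda^{\bowtie_\Im+n-2n_0(n)}$ factor.

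Next, the summation is organized in three regimes. For $n\ge 3$, Proposition \ref{lemma:Residual1} yields $\lambda^{n-(n-1)\theta_r-2}$; since $\theta_r$ is small by Assumption (A), this is $\lambda^{c_0 n}$ with $c_0>0$ once $n\ge 3$. The combinatorial factor from summing over partitions, over $\{\rho_i\}$, and over $\bar\sigma$ is at most $(C\mathbf{N})^n\,n!$ (same bookkeeping as in Proposition \ref{lemma:Q3SecondEstimate}), and with $\mathbf{N}\sim |\ln\lambda|/\ln\langle\ln\lambda\rangle$ the product $(C\mathbf{N})^n n!\,\lambda^{c_0 n}$ is summable in $n$ and vanishes as $\lambda\to 0$, after absorbing a tiny power of $\lambda^{-c_{\mathbf{N}_0}}$ by choosing $\mathbf{N}_0$ appropriately. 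For $n=1$ and the $\mathfrak{C}^{0}_{1,res}$ contribution of $n=2$, Case 2 of the proof of Proposition \ref{lemma:Residual1} shows that the graph vanishes identically because all bottom momenta are forced to pair. For $n=2$ with $\mathfrak{C}^{2}_{2,res}$, one exploits the cut-off $\Phi^{b}_{0}(\eth'_2)$ attached to $v_1$ to gain a volume factor $\lambda^{(d-1)\eth'_2/3}$, which dominates the $\lambda^{-\theta_r}$ coming from the time integration $\int_{(\mathbb{R}_+)^3}\delta(t-\sum s_i)e^{-s_1\tau_1}$; the constraint $\eth'_2>\theta_r/d$ from \eqref{ethprime} then makes the total power of $\lambda$ strictly positive.

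Finally, the top-layer term $R^{4}_{\mathbf{N}}$ is handled by combining Proposition \ref{lemma:Residual1} with the $\mathbf{N}^{-\wp\mathbf{N}/4}$ saving used in Proposition \ref{lemma:Q4SecondEstimate}: the residual estimate loses at most a factor $\lambda^{-2}\mathbf{N}^{\mathbf{N}}\mathbf{N}!$ relative to the non-residual bound, and this is defeated by the subfactorial $\mathbf{N}^{-\wp\mathbf{N}/4}$ together with the choice of $\wp$ in \eqref{Def:Para5a}. Having these estimates uniform in $D$, the $\limsup_{D\to\infty}$ can be taken term-by-term by the dominated convergence argument indicated in the proof of Proposition \ref{lemma:BasicGEstimate1} (replacing $\int_{\Lambda^*}$ by $\int_{\mathbb{T}^d}$), and the final $\lim_{\lambda\to 0}$ yields zero.

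The main obstacle I anticipate is the $n=2$ case: one has to verify that the localization produced by $\Phi^{b}_{0}(\eth'_2)$ is indeed compatible with the delta-function and kernel structure of the diagram, so that the volume gain $\lambda^{(d-1)\eth'_2/3}$ is genuinely realized after the momentum integrations (and not lost because the other momenta have already been restricted by the pairings and the $\square(\cdot)$ cluster cut-offs in \eqref{SquareFunction}). Once this tight balance is checked, the rest of the proof is a routine summation against the combinatorial estimates already recorded earlier in the paper.
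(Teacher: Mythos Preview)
Your proposal is correct and follows exactly the approach the paper intends: the paper's own proof is a single sentence stating that the result ``is a combination of all of the previous estimates and is the same with \cite{staffilani2021wave}[Proposition 96],'' and you have simply spelled out what that combination is---applying Proposition~\ref{lemma:Residual1} graph-by-graph, treating the three regimes $n\ge 3$, $n=1$, $n=2$ separately (precisely mirroring Cases 1--3 of that proposition), summing with the combinatorics of Proposition~\ref{lemma:Q3SecondEstimate}, and handling the top layer via the $\mathbf{N}^{-\wp\mathbf{N}/4}$ gain from Proposition~\ref{lemma:Q4SecondEstimate}. The ``obstacle'' you flag at $n=2$ is exactly the content of Case~3 in the proof of Proposition~\ref{lemma:Residual1}, so it has already been dealt with and needs no further verification.
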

\begin{proof} The proof is a combination of all of the previous estimates and is the same with  \cite{staffilani2021wave}[Proposition 96].
\end{proof}
\subsection{Ladder diagrams}
\label{Sec:SecondLadderDiagram}

\begin{proposition}[The dominance of ladder diagrams]\label{Propo:ReduceLadderGraph}
	Suppose that $t>0$ and $t=\mathcal{O}(\lambda^{-2})$. There  are universal constants $0<\lambda_0''<1$, $\mathfrak{C}^1_{ladder},\mathfrak{C}^2_{ladder}>0$ such that for $0<\lambda<\lambda_0''$, and for any constants $1>T_*>0$ and $\mathscr{R}>0$,
	
	\begin{equation}\label{Propo:ReduceLadderGraph:1}
		\begin{aligned}
			&\Big\|\limsup_{D\to\infty}\mathscr{F}_{1/n,\mathscr{R}}\Big[\widehat{Q_*\chi_{[0,T_*]}}(\mathscr{X}{\lambda^{-2}})\Big](k_{n,1},k_{n,2})\Big\|_{L^{\infty,2}(\mathbb{T}^{2d})}
			\le \  \mathfrak{C}^2_{ladder}\lambda^{\mathfrak{C}^1_{ladder}},\end{aligned}
	\end{equation}
	where  $Q_0=Q^1+Q^2+Q^3+Q^4$, $Q_*={Q}_0-\mathfrak{Q}_1-\mathfrak{Q}_2-\mathfrak{Q}_3$ and
	\begin{equation}\label{Propo:ReduceLadderGraph:2}
		\mathfrak{Q}_1\ = \ \sum_{n=0}^{\mathbf{N}-1}\sum_{S\in\mathcal{P}^1_{pair}(I_{n+2})}\mathfrak{G}_{1,n}'(S,t,k'',-1,k',1,\Gamma),
	\end{equation}
	\begin{equation}\label{Propo:ReduceLadderGraph:3}
		\mathfrak Q_{2} 
		\ = \ \sum_{n=0}^{\mathbf{N}-1}\sum_{S\in\mathcal{P}^1_{pair}(\{1,\cdots,n+2\})}\mathfrak{G}_{2,n}'(S,t,k'',-1,k',1,\Gamma),
	\end{equation}
	and
	\begin{equation}\label{Propo:ReduceLadderGraph:3}
		\mathfrak Q_{3} 
		\ = \ \sum_{n=1}^{\mathbf{N}}\sum_{S\in\mathcal{P}_{pair}^1(\{1,\cdots,n+2\})}\int_{0}^t\mathrm{d}s_0\mathfrak{G}_{3,n}'(S,s_0,t,k'',-1,k',1,\Gamma),
	\end{equation}
	in which $\mathcal{P}^1_{pair}(\{1,\cdots,n+2\})$ denotes the subset of $\mathcal{P}^0_{pair}(\{1,\cdots,n+2\})$, where we only count the terms that correspond to $\mathrm{iCL}_2$ ladder graphs. We denote  terms whose graphs are $\mathrm{iCL}_2$ ladder graphs by $\mathfrak{G}_{1,n}'$, $\mathfrak{G}_{2,n}'$, $\mathfrak{G}_{3,n}'$. In other words,  $\mathfrak{Q}_1$, $\mathfrak{Q}_2$, $\mathfrak{Q}_3$ are sum of terms coming from $Q^2,Q^3,Q^4$ that only correspond to ladder graphs.
	
\end{proposition}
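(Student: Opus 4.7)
The plan is to decompose $Q_*=Q_0-\mathfrak Q_1-\mathfrak Q_2-\mathfrak Q_3$ into graph classes according to the structural dichotomies developed in Sections \ref{Duhamel} and \ref{Sec:GeneralGraph}, bound each class by the corresponding proposition from Sections \ref{Sec:FirstDiagram}--\ref{Sec:Res}, and finally sum over the number $n$ of interacting vertices using the parameter choices in Section \ref{Sec:KeyPara}. Recall that $Q_0=Q^1+Q^2+Q^3+Q^4$, where each $Q^i$ is further split (by Propositions \ref{Proposition:Ampl1}--\ref{Proposition:Ampl4}) into its residual part and its ``clean'' part. First I would absorb the residuals $Q^i_{res}$ into Proposition \ref{lemma:Residual2}, which already shows they vanish in the limit. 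Next, the clean parts split by Wick's theorem into pairing and non-pairing pieces: the non-pairing pieces of $Q^1,Q^2,Q^3,Q^4$ are bounded by Propositions \ref{lemma:BasicGEstimate1a}, \ref{lemma:Q2FirstEstimate}, \ref{lemma:Q3SecondEstimate}, \ref{lemma:Q4SecondEstimate}, each of which yields a bound of the form $\lambda^{\bowtie_\Im+1-\eth'_1-\eth'_2}$ times combinatorial factors, giving a positive power of $\lambda$ as $\Im\to 1^+$.

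It remains to control the pairing contributions from $Q^{1,pair},Q^{3,pair}$ and the corresponding pairing parts of $Q^2$ and $Q^4$. Within pairing graphs, I would further split into singular and non-singular pieces. Singular pairing graphs vanish in the limit $\lambda\to 0$ by Proposition \ref{lemma:BasicGEstimate4} together with the analogous estimates proved in Propositions \ref{lemma:Q3FirstEstimate} and \ref{lemma:Q4FirstEstimate} (singular case). For non-singular pairing graphs, the key structural observation is \cref{Lemma:Recollisions}: a non-singular pairing graph \emph{every} cycle of which is an $\mathrm{iC}_2^r$ recollision must be an $\mathrm{iCL}_2$ ladder, hence is already accounted for in $\mathfrak Q_1,\mathfrak Q_2,\mathfrak Q_3$. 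Therefore any non-singular pairing graph appearing in $Q_*$ must contain at least one cycle that is either a long irreducible collision ($\mathrm{iC}_m^i$, $m\ge 3$) or a delayed recollision ($\mathrm{iC}_m^d$, $m\ge 2$). Scanning cycles from the bottom of the graph upward and identifying the \emph{first} cycle of non-recollisional type, I would then classify such graphs as either long irreducible or delayed recollisional in the sense of the definitions in Section \ref{Sec:FifthDiagram}--\ref{Sec:SixthDiagram}.

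For long irreducible graphs the bound is provided by \cref{Propo:CrossingGraphs}, yielding a factor $\lambda^{\mathfrak{C}'_{Q_{LIC}}}$ times logarithmic and combinatorial factors; for delayed recollisional graphs the bound is \cref{Propo:LeadingGraphs}, yielding $\lambda^{\mathfrak{C}'_{Q_{Delayed}}}$. Both estimates contribute a strictly positive power of $\lambda$. The $\mathfrak Q_2,\mathfrak Q_3$ pieces, which arise from $Q^2,Q^3,Q^4$ via the extra $s_0$ time integration and the substitution $\mathscr{U}(0)\to\mathscr{W}_1(s_0)$ or the variant with $\varsigma_n\int_0^{s_0}e^{-(s_0-s_0')\varsigma_n}\mathrm{d}s_0'\mathscr{W}_1(s_0')$, are handled by the same propositions, since the closing remark in each of \cref{Propo:CrossingGraphs,Propo:LeadingGraphs} explicitly states that the corresponding bound persists under these substitutions.

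The final step is to sum the per-graph estimates over $n\in\{1,\dots,\mathbf N\}$, over the position vector $\rho=(\rho_i)$ (at most $c_\rho n^n$ choices), over the sign assignments $\bar\sigma$ (at most $4^n$), and over the pairing partitions $S$ (at most $(n+2)!$). The total combinatorial loss is of order $(n!)^{O(1)}\mathfrak{C}^n$ for some universal $\mathfrak{C}>0$, and this is absorbed by $\lambda^{\mathfrak{C}'}$ using the stopping rule $\mathbf N\sim |\ln\lambda|/\ln\langle\ln\lambda\rangle$ from \eqref{Def:Para1} together with the choice of $\wp$ in \eqref{Def:Para5a} and of $\eth'_1,\eth'_2$ in \eqref{ethprime}, exactly as in the proof of \cref{lemma:Q4SecondEstimate}. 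After taking $\Im$ sufficiently close to $1$ so that $\bowtie_\Im+1-\eth'_1-\eth'_2>0$, the whole sum is bounded by $\mathfrak{C}^2_{ladder}\lambda^{\mathfrak{C}^1_{ladder}}$ for a small universal constant $\mathfrak{C}^1_{ladder}>0$. The principal obstacle is the last step: turning the per-diagram gain into a uniform gain after summing the super-exponentially many diagrams produced by the $\mathbf N$-fold Duhamel expansion. This is where the delicate tuning of the parameters $\mathbf N_0,\wp,\eth'_1,\eth'_2,\theta_r$ becomes essential; the residual contributions controlled by \cref{lemma:Residual1} are the most delicate since the noise-induced damping $\tau_i$ scales only as $\lambda^{\theta_r}$, forcing the inequality $(d-1)\eth'_2/3>\theta_r$ from \eqref{ethprime}.
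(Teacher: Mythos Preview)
Your proposal is correct and matches the paper's approach; the paper's own proof is the single sentence ``The proof follows directly from the previous Propositions of the section,'' and you have spelled out precisely the decomposition (non-pairing, singular pairing, long irreducible, delayed recollisional) and the corresponding propositions that make this work. One small correction: by the conventions of Propositions~\ref{Proposition:Ampl1}--\ref{Proposition:Ampl4}, the quantities $Q^1,Q^2,Q^3,Q^4$ are already the \emph{clean} parts (one has $Q^i_*=Q^i+Q^i_{res}$ with $Q^1_{res}=0$), so the residuals are not part of $Q_0$ and need not be absorbed here---your first step is simply unnecessary rather than wrong.
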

\begin{proof}The proof follows directly from the previous Propositions of the section.
\end{proof}

\section{The proof of the main theorem}

In Proposition \ref{Propo:ReduceLadderGraph},  the whole Duhamel expansions have been reduced to  three ``ladder'' terms $\mathfrak Q_1$, $\mathfrak Q_2$ and $\mathfrak Q_3$.  
We first analyze  $\mathfrak{Q}_1$. Following the proof of the main theorem in \cite{staffilani2021wave}[Section 8], the ladder terms, summing from $[\mathbf{N}/4]$ to $\mathbf{N}-1$, are bounded by

\begin{equation}
	\begin{aligned} \label{FinalProof:E1}
		\sum_{n=[\mathbf{N}/4]}^{\mathbf{N}-1}  (4n)^n e^{T_*}\mathfrak{C}_{Q_{1,4}}^n n!\mathbf{N}^{-\wp	n_0(n-\mathbf{N}/2)}\frac{T_*^{1+{n}_0(n)}}{({n}_0(n))!}\mathfrak{C}_{Q1,4}^{1+{n}_1(n)}\langle \ln n\rangle \langle\ln\lambda\rangle^{1+(2+\eth){n}_1(n)},
\end{aligned}\end{equation}
where $[\mathbf{N}/4]$ denotes the integer that satisfies $[\mathbf{N}/4]\le \mathbf{N}/4<[\mathbf{N}/4]+1$ and $\mathfrak{C}_{Q1,4}$ � some universal constant. This   vanishes in the limit $\lambda\to 0$ by \eqref{Def:Para5}. 
As thus, we only need to estimate
\begin{equation}
	\begin{aligned} \label{FinalProof:E1}
		& \ \sum_{n=0}^{[\mathbf{N}/4]-1}\sum_{S\in\mathcal{P}^1_{pair}(\{1,\cdots,n+2\})}\mathfrak{G}_{1,n}'(S,t,k'',-1,k',1,\Gamma).
\end{aligned}\end{equation}

Let us consider the analytic expression of an $\mathrm{iCL}_2$ ladder graph,  with $n=2q$ vertices. This graph is formed by iteratively applying  the $\mathrm{iC}_2^r$ recollisions. Since $n=2q\le [\mathbf{N}/4]-1$, we deduce $\varsigma_{2 q-i}=0$ for all $i\in\{0,\cdots,2q\}$, due to \eqref{Def:Para3}. 
We will also  suppose that $\Phi_{1,i}=1$ since the difference $|\Phi_{1,i}-1|$   vanishes as $\lambda$ tends to $0$. A term in $\mathfrak{Q}_1$ then has the general form
\begin{equation}
	\begin{aligned}\label{FinalProof:E1}
		&(-1)^q \lambda^{2 q}
		\int_{(\Lambda^*)^{\mathcal{I}_{2q}}}\mathrm{d}\bar{k} \Delta_{2q,\rho}(\bar k,\bar\sigma) \prod_{i=1}^{2q}\Big[\sigma_{i,\rho_i}\mathcal{M}( k_{i,\rho_i}, k_{i-1,\rho_i}, k_{i-1,\rho_i+1})\Big] \\
		&\times 
	\prod_{A=\{i,j\}\in S}\left\langle  a(k_{0,i},\sigma_{0,i})a(k_{0,j},\sigma_{0,j})\right\rangle_{0}
		\int_{(\mathbb{R}_+)^{{\{0,\cdots,2q\}}}}\! \mathrm{d} \bar{s} \,
		\delta\left(t-\sum_{i=0}^{2 q}
		s_i\right) 
		\prod_{i=1}^{2 q} e^{-s_i \tau_i}
		\prod_{i=1}^{q} e^{-{\bf i} s_{2 i-1} \mathbf{X}_{2 i}}\\
		&\times\prod_{i=1}^{q} e^{-{\bf i} s_{2 i-2} [\mathbf{X}_{2 i-2}+\mathbf{X}_{2 i-1}]}e^{{-{\bf i}s_{2q}[\sigma_{n,2}\omega(k_{n,2})+\sigma_{n,1}\omega(k_{n,1})]}}.
	\end{aligned}
\end{equation}
Now, we need to sum over all possible ladders and take the limits. Let us   denote this sum by $\mathfrak{C}^\lambda_q(t)$.  To express  the explicit form of this sum, we define an input ``$q+1$-correlation function'', which represents all of the $q+1$ pairings at the bottom of the graph \begin{equation}
	\mathfrak{L}_{q+1}(k_{1}',\cdots,k_{q+1}',k_{1}'',\cdots,k_{q+1}''):=\prod_{A=\{i,j\}\in S}\left\langle  a(k_{0,i},1)a(k_{0,j},-1)\right\rangle_{0}, \end{equation}
in which $\{k_{1}',\cdots,k_{q+1}'\}$ and $\{k_{1}'',\cdots,k_{q+1}''\}$ are the momenta $\{k_{0,i}\}_{\{i,j\}\in S}$ and  $\{k_{0,j}\}_{\{i,j\}\in S}$ with a different way of indexing. In the above expression, we assume that $\sigma_{0,i}=1$ and $\sigma_{0,j}=-1$.
We  define
\begin{equation}
	\begin{aligned}\label{FinalProof:E9}
		&\mathcal{C}^\lambda_{i,q+1}(s',s,k_1',\cdots,k'_i,\cdots,k'_{q},k_{1}'',\cdots,k_i'',\cdots,k_{q}'')=   \int_{\Lambda^*}\int_{\Lambda^*}
		\mathrm{d}k'\mathrm{d}k_{q+1}'\int_{\Lambda^*}\int_{\Lambda^*}
		\mathrm{d}k''\mathrm{d}k_{q+1}''\\
		&\times\mathcal{M}(k_{i}',k',k_{q+1}')\mathcal{M}(k_{i}'',k'',k_{q+1}'')  \Big[e^{{\bf i}s(\omega(k')+\omega(k_{q+1}')-\omega(k_{i}'))-s\tau_{2i-1}} +e^{-{\bf i}s(\omega(k')+\omega(k_{q+1}')-\omega(k_{i}'))-s\tau_{2i-1}}\Big]\\
		&\times e^{-{\bf i}s(\omega(k')+\omega(k_{q+1}')+\omega(k_{i}')-\omega(k'')-\omega(k_{q+1}'')-\omega(k_{i}''))-s\tau_{2i-2}}\delta(k''+k_{q+1}''-k_{i}'')\\
		&\times\delta(k'+k_{q+1}'-k_{i}')\Big( \mathfrak{L}_{q+1}(k_1',\cdots,k',\cdots,k'_{q+1},k_1'',\cdots,k'',\cdots,k''_{q+1})\\
		&\ \ \ \ -\mathfrak{L}_{q+1}(k_1',\cdots,k'_i,\cdots,k'_{q+1},k_1'',\cdots,k'',\cdots,k''_{q+1})\mathrm{sign}(k_{i}')\mathrm{sign}(k_{q+1}')\\
		&\ \ \ \ -\mathfrak{L}_{q	+1}(k_1',\cdots,k'_i,\cdots,k',k_1'',\cdots,k'',\cdots,k''_{q+1})\mathrm{sign}(k_{i}')\mathrm{sign}(k')\Big),
\end{aligned}\end{equation}
in which $k',k''$ takes the positions of $k'_i,k_i''$ in $\mathfrak{L}_{q+1}(k_1',\cdots,k',$ $\cdots,k'_{q+1},k_1'',\cdots,k'',$ $\cdots,k''_{q+1})$ and of $k'_{q+1},k''_{q+1}$ in $\mathfrak{L}_{q+1}(k_1',\cdots,k'_i,\cdots,k',k_1'',\cdots,k''_i,\cdots,k'')$, and set 

\begin{equation}
	\begin{aligned}\label{FinalProof:E10:a}
		&\mathcal{C}^\lambda_{q+1}(s',s,k_1',\cdots,k'_i,\cdots,k'_{q},k_{1}'',\cdots,k_i'',\cdots,k_{q}'')\\
		= &\  \sum_{i=1}^{q}\mathcal{C}^\lambda_{i,q+1}(s',s,k_1',\cdots,k'_i,\cdots,k'_{q},k_{1}'',\cdots,k_i'',\cdots,k_{q}'').
\end{aligned}\end{equation}

This  procedure will be iterated all the way to  the original momenta denoted, without loss of generality, by $k_1',k_1''$, whose signs are $\sigma_1'$, $\sigma_1''$.  As a consequence, the explicit form of $\mathfrak{C}^\lambda_q(t)$ can be written as 
\begin{equation}
	\begin{aligned}\label{FinalProof:E10}
		\mathfrak{C}^\lambda_q(t)\ = \ &(-1)^q \lambda^{2 q}
		\int_{(\mathbb{R}_+)^{{\{0,\cdots,2q\}}}}\! \mathrm{d} \bar{s} \,
		\delta\left(t-\sum_{i=0}^{2 q}
		s_i\right) 
		[\mathcal{C}^\lambda_{2}(s_0,s_1)\cdots\mathcal{C}^\lambda_{q+1}(s_{2q-2},s_{2q-1})]\\
		&\times
	e^{-{{\bf i}s_{2q}[\sigma_{1}'\omega(k_1')+\sigma_{1}''\omega(k_1'')]}},
	\end{aligned}
\end{equation}
where $\mathcal{C}^\lambda_{2}(s_0,s_1),\cdots,\mathcal{C}^\lambda_{q}(s_{2q-4},s_{2q-3})$ are defined in the same manner as \eqref{FinalProof:E10:a}.

By the change of variables $s_{i}\to \lambda^{-2} s_{i}$ and $t\to \tau\lambda^{-2}$, we write
\begin{equation}
	\begin{aligned}\label{FinalProof:E10a}
		\mathfrak{C}^\lambda_q(\tau)\ = \ &
		(-1)^q \lambda^{-2q}\int_{(\mathbb{R}_+)^{{\{0,\cdots,2q\}}}}\! \mathrm{d} \bar{s} \delta\left(t-\sum_{i=0}^{2 q}
		s_i\right)
		[\mathcal{C}^\lambda_{2}(\lambda^{-2 }s_0,\lambda^{-2 }s_1)\cdots\mathcal{C}^\lambda_{q+1}(\lambda^{-2 }s_{2q-2},\lambda^{-2 }s_{2q-1})]\\
		&\times
		e^{-{{\bf i}s_{2q}[\sigma_{1}'\omega(k_1')+\sigma_{1}''\omega(k_1'')]}}.
	\end{aligned}
\end{equation}

As in Proposition \ref{lemma:BasicGEstimate1}, we can pass to the limit $D\to\infty$ and obtain

\begin{equation}
	\begin{aligned}\label{FinalProof:E10a:1}
		\mathfrak{C}^\lambda_{q,\infty}(\tau)\ = \ &
		(-1)^q \lambda^{-2q}\int_{(\mathbb{R}_+)^{{\{0,\cdots,2q\}}}}\! \mathrm{d} \bar{s} \delta\left(t-\sum_{i=0}^{2 q}
		s_i\right)
		[\mathcal{C}^\lambda_{2,\infty}(\lambda^{-2 }s_0,\lambda^{-2 }s_1)\cdots\mathcal{C}^\lambda_{q+1,\infty}(\lambda^{-2 }s_{2q-2},\lambda^{-2 }s_{2q-1})]\\
		&\times
		e^{-{{\bf i}s_{2q}[\sigma_{1}'\omega(k_1')+\sigma_{1}''\omega(k_1'')]}},
	\end{aligned}
\end{equation}
where
\begin{equation}
	\begin{aligned}\label{FinalProof:E10:a:1}
		&\mathcal{C}^\lambda_{m+1,\infty}(s',s,k_1',\cdots,k'_i,\cdots,k'_{m},k_{1}'',\cdots,k_i'',\cdots,k_{m}'')\\ 
		= &  \sum_{i=1}^{m}\mathcal{C}^\lambda_{i,m+1,\infty}(s',s,k_1',\cdots,k'_i,\cdots,k'_{m},k_{1}'',\cdots,k_i'',\cdots,k_{m}'')
\end{aligned}\end{equation}
with
\begin{equation}
	\begin{aligned}\label{FinalProof:E9:1}
		&\mathcal{C}^\lambda_{i,m+1,\infty}(s',s,k_1',\cdots,k'_i,\cdots,k'_{m},k_{1}'',\cdots,k_i'',\cdots,k_{m}'')=   \int_{\mathbb{T}^d}\int_{\mathbb{T}^d}
		\mathrm{d}k'\mathrm{d}k_{m+1}'\int_{\mathbb{T}^d}\int_{\mathbb{T}^d}
		\mathrm{d}k''\mathrm{d}k_{m+1}''\\
		&\times\mathcal{M}(k_{i}',k',k_{m+1}')\mathcal{M}(k_{i}'',k'',k_{m+1}'')  \Big[e^{{\bf i}s(\omega(k')+\omega(k_{m+1}')-\omega(k_{i}'))-s\tau_{2i-1}} +e^{-{\bf i}s(\omega(k')+\omega(k_{m+1}')-\omega(k_{i}'))-s\tau_{2i-1}}\Big]\\
		&\times e^{-{\bf i}s(\omega(k')+\omega(k_{m+1}')+\omega(k_{i}')-\omega(k'')-\omega(k_{m+1}'')-\omega(k_{i}''))-s\tau_{2i-2}}\delta(k''+k_{m+1}''-k_{i}'')\\
		&\times\delta(k'+k_{m+1}'-k_{i}')\Big( \mathfrak{L}_{m+1}(k_1',\cdots,k',\cdots,k'_{m+1},k_1'',\cdots,k'',\cdots,k''_{m+1})\\
		& \ \ \ \  -\mathfrak{L}_{m+1}(k_1',\cdots,k'_i,\cdots,k'_{q+1},k_1'',\cdots,k'',\cdots,k''_{m+1})\mathrm{sign}(k_{i}')\mathrm{sign}(k_{m+1}')\\
		& \ \ \ \  -\mathfrak{L}_{m	+1}(k_1',\cdots,k'_i,\cdots,k',k_1'',\cdots,k'',\cdots,k''_{m+1})\mathrm{sign}(k_{i}')\mathrm{sign}(k')\Big),
\end{aligned}\end{equation}
in which $k'$ takes the position of $k'_i$ in $\mathfrak{L}_{m+1}(k_1',\cdots,k',\cdots,k'_{m+1})$   and of $k'_{m+1}$ in $\mathfrak{L}_{m+1}(k_1',\cdots,$ $ k'_i,\cdots,k')$. We also have
\begin{equation}	\begin{aligned}
&	\mathfrak{L}_{m+1}^\infty(k_1',\cdots,k',\cdots,k'_{m+1},k_1'',\cdots,k'',\cdots,k''_{m+1})\\
	 \ = \ &\mathcal{C}^\lambda_{m+2,\infty}(k_1',\cdots,k',\cdots,k'_{m+1},k_1'',\cdots,k'',\cdots,k''_{m+1}),\end{aligned}
\end{equation}
and the $q+1$-correlation function of the first time slice is now defined to be
\begin{equation}
	\mathfrak{L}_{q+1}^\infty(k_{1}',\cdots,k_{q+1}'):=\prod_{A=\{i,j\}\in S}\left\langle  a(k_{0,i},1)a(k_{0,j},-1)\right\rangle_{0},\end{equation}
where $S$ is a pairing partition 
 that corresponds to the current ladder graph (see \eqref{Propo:ReduceLadderGraph:2}).
Similar as in the homogeneous case \cite{staffilani2021wave}, we  need to  take the limit $\lambda\to 0$ by iteratively applying
Lemma \ref{Lemma:Resonance1}. 
We thus set 
\begin{equation}
	\begin{aligned}
		\mathfrak{C}^\lambda_{q,\infty,a}(\tau) \ := \ &\lambda^{-2q}
\int_{(\mathbb{R}_+)^{{\{0,\cdots,2q\}}}}\! \mathrm{d} \bar{s} \delta\left(t-\sum_{i=0}^{2 q}
s_i\right)
	[\mathcal{C}^\lambda_{2,\infty}(\lambda^{-2 }s_0,\lambda^{-2 }s_1)\cdots\mathcal{C}^\lambda_{q+1,\infty}(\lambda^{-2 }s_{2q-2},\lambda^{-2 }s_{2q-1})]\\
	&\times
	e^{-{{\bf i}s_{2q}[\sigma_{1}'\omega(k_1')+\sigma_{1}''\omega(k_1'')]}},
	\end{aligned}
\end{equation}
and
\begin{equation}
	\begin{aligned}\label{FinalProof:E10a:1:1:B}
		\mathfrak{C}^\lambda_{q,\infty,a'}(\lambda^{-2}\tau')\
		:= \ &\lambda^{-2q}
		\int_{(\mathbb{R}_+)^{\{1,\cdots,2q-1\}}}\!\mathrm{d} s_{1} \cdots \mathrm{d} s_{2 q-1} 
		[\mathcal{C}^\lambda_{2,\infty}(\lambda^{-2 }s_0,\lambda^{-2 }s_1)\cdots\mathcal{C}^\lambda_{q+1,\infty}(\lambda^{-2 }s_{2q-2},\lambda^{-2 }s_{2q-1})]\\
		&\times	\delta\left(\tau'=\sum_{i=0}^{2 q-1}
		s_i\right),
	\end{aligned}
\end{equation}

Let us consider the functional $\mathscr{F}_{1/n,\mathscr{R}}$, defined in \eqref{eq:BasicGEstimate1:A}, acting on   $\widehat{\mathfrak{C}^\lambda_{q,\infty,a'}\chi_{[0,T_*)}}(\mathscr{X}{\lambda^{-2}})$, with $n=2q$
\begin{equation}
	\begin{aligned}\label{FinalProof:E10a:1:1:D}
		&	\mathscr{F}_{1/n,\mathscr{R}}[\widehat{\mathfrak{C}^\lambda_{q,\infty,a}\chi_{[0,T_*)}}(\mathscr{X}{\lambda^{-2}})]
		\\
		&	\ := \  \mathscr{F}_{1/n,\mathscr{R}}\left[\int_{\mathbb{R}}\mathrm{d}\tau \chi_{[0,T_*)}(\tau)	\int_{\mathbb{R}}\mathrm{d}\tau'\lambda^{-2} \mathfrak{C}^\lambda_{q,\infty,a'}(\lambda^{-2}\tau')\chi_{[0,\infty)}(\tau') \right.\\
		&\times\left. e^{-{{\bf i}(\tau-\tau')\lambda^{-2} [\sigma_{1}'\omega(k_1')+\sigma_{1}''\omega(k_1'')]}}\chi_{[0,\infty)}(\tau-\tau') e^{-{\bf i}2\pi\tau\lambda^{-2}\mathscr{X}}\right]
		\\
		&	\ = \   \mathscr{F}_{1/n,\mathscr{R}}\left[\int_{0}^\infty\mathrm{d}\tau''	\int_{0}^\infty\mathrm{d}\tau'\lambda^{-2} \mathfrak{C}^\lambda_{q,\infty,a'}(\lambda^{-2}\tau')e^{-{\bf i}2\pi\tau'\lambda^{-2}\mathscr{X}} e^{{-{\bf i}\tau''\lambda^{-2} [\sigma_{1}'\omega(k_1')+\sigma_{1}''\omega(k_1'')]}}\right.\\
		&\ \ \ \ \ \ \ \ \ \ \ \times\left.e^{-{\bf i}2\pi\tau''\mathscr{X}}\mathbf{1}\big(\tau'+\tau''\le T_*\big)\right]\\
		&	\ = \   \mathscr{F}_{1/n,\mathscr{R}}\left[\int_{0}^{T_*}\mathrm{d}\tau''	\int_{0}^{T_*-\tau''}\mathrm{d}\tau'\lambda^{-2} \mathfrak{C}^\lambda_{q,\infty,a'}(\lambda^{-2}\tau')\right.\\
		&\ \ \ \ \ \ \ \ \ \times\left.e^{-{\bf i}2\pi\tau'\lambda^{-2}\mathscr{X}} e^{{-{\bf i}\tau''\lambda^{-2} [\sigma_{1}'\omega(k_1')+\sigma_{1}''\omega(k_1'')]}}e^{-{\bf i}2\pi\tau''\lambda^{-2}\mathscr{X}} \right]
		\\
		&	\ = \  \mathscr{F}_{1/n,\mathscr{R}}\left[\int_{0}^{T_*}\mathrm{d}\tau''	\int_{0}^{(T_*-\tau'')\lambda^{-2}}\mathrm{d}\bar\tau \mathfrak{C}^\lambda_{q,\infty,a'}(\bar\tau)e^{-{\bf i}2\pi\bar\tau\mathscr{X}} \right.
	\\
	&\ \ \ \ \ \ \ \ \ \times\left.	e^{-{{\bf i}\tau''\lambda^{-2} [\sigma_{1}'\omega(k_1')+\sigma_{1}''\omega(k_1'')]}}e^{-{\bf i}2\pi \tau''\lambda^{-2}\mathscr{X}} \right]
		\\
		&	\ \lesssim \  \left[\int_{(-\mathscr{R},\mathscr{R})}\mathrm{d}\mathscr{X}\left|\int_{0}^{T_*}\mathrm{d}\tau''	\left|\int_{0}^{(T_*-\tau'')\lambda^{-2}}\mathrm{d}\bar\tau \mathfrak{C}^\lambda_{q,\infty,a'}(\bar\tau)e^{-{\bf i}2\pi\bar\tau\mathscr{X}}\right| \right|^{1/n}\right]^{{n}},
	\end{aligned}
\end{equation}
where we have used the change of variables $\bar\tau=\lambda^{-2}\tau'$ and $\tau''=\tau-\tau'$. Using H\"older's inequality, we bound

\begin{equation}
	\begin{aligned}\label{FinalProof:E10a:1:1c}
		&	\mathscr{F}_{1/n,\mathscr{R}}[\widehat{\mathfrak{C}^\lambda_{q,\infty,a}\chi_{[0,T_*)}}(\mathscr{X}{\lambda^{-2}})]
		\\
		&	\lesssim\ \mathfrak{C}_{final,1}^{q}|\mathscr{R}|^{\frac{q}{2}}
		\sup_{\tau'\in[0,T_*]}\left[
		\int_{(\mathbb{R}_+)^{\{1,\cdots,2q-1\}}}\!\mathrm{d} s_{1} \cdots s_{2i-1}\cdots \mathrm{d} s_{2 q-1} 
		\delta\left(\tau'=\sum_{i=1}^{q}
		s_{2i-1}\right)\right.\\
		&\times\left(\int_{(-\mathscr{R},\mathscr{R})}\mathrm{d}\mathscr{X}\left|\int_{0}^{s_1\lambda^{-2}}\mathrm{d}s\int_{0}^{s}\mathrm{d}s'\mathcal{C}^\lambda_{2,\infty}(s',s-s')e^{-{\bf i}2\pi s\mathscr{X}}\right|^2\right)^\frac12\\
		&\cdots\left.\left(\int_{(-\mathscr{R},\mathscr{R})}\mathrm{d}\mathscr{X}\left|\int_{0}^{s_{2q-1}\lambda^{-2}}\mathrm{d}s\int_{0}^{s}\mathrm{d}s'\mathcal{C}^\lambda_{q+1,\infty}(s',s-s')e^{-{\bf i}2\pi s\mathscr{X}}\right|^2\right)^\frac12\right],
	\end{aligned}
\end{equation}
for some constant $\mathfrak{C}_{final,1}>0$ and $n=2q$. We now  estimate the quantity $$\left(\int_{(-\mathscr{R},\mathscr{R})}\mathrm{d}\mathscr{X}\left|\int_{0}^{s_{2i-1}\lambda^{-2}}\mathrm{d}s\int_{0}^{s}\mathrm{d}s'\mathcal{C}^\lambda_{i+1,\infty}(s',s-s')e^{-{\bf i}2\pi s\mathscr{X}}\right|^2\right)^\frac12,$$ for $i=1,\cdots, q$. Let $\chi_{[0,\mathscr{C})}(s)$ be the cut-off function of the variable $s$ on $[0,\mathscr{C})$, for a  small enough constant $\mathscr{C}>0$, we estimate
\begin{equation}
	\begin{aligned}\label{FinalProof:E10a:1:1c:a}
		&\left(\int_{(-\mathscr{R},\mathscr{R})}\mathrm{d}\mathscr{X}\left|\int_{0}^{s_{2i-1}\lambda^{-2}}\mathrm{d}s\int_{0}^{s}\mathrm{d}s'\mathcal{C}^\lambda_{i+1,\infty}(s',s-s')e^{-{\bf i}2\pi s\mathscr{X}}\right|^2\right)^\frac12 \\
		\lesssim \ & \left(\int_{(-\mathscr{R},\mathscr{R})}\mathrm{d}\mathscr{X}\left|\int_{0}^{\mathscr{C}}\mathrm{d}s\int_{0}^{s}\mathrm{d}s'\mathcal{C}^\lambda_{i+1,\infty}(s',s-s')e^{-{\bf i}2\pi s\mathscr{X}}\right|^2\right)^\frac12\\
		&\ \ \ \  + \left(\int_{(-\mathscr{R},\mathscr{R})}\mathrm{d}\mathscr{X}\left|\int_{\mathscr{C}}^{s_{2i-1}\lambda^{-2}}\mathrm{d}s\int_{0}^{s}\mathrm{d}s'\mathcal{C}^\lambda_{i+1,\infty}(s',s-s')e^{-{\bf i}2\pi s\mathscr{X}}\right|^2\right)^\frac12 \\
		\lesssim \ & \left|\int_{0}^{\mathscr{C}}\mathrm{d}s\Big|\int_{0}^{s}\mathrm{d}s'\mathcal{C}^\lambda_{i+1,\infty}(s')\Big|^2\right|^\frac12 + \left(\int_{(-\mathscr{R},\mathscr{R})}\mathrm{d}\mathscr{X}\left|\int_{\mathscr{C}}^{s_{2i-1}\lambda^{-2}}\mathrm{d}s\int_{0}^{s}\mathrm{d}s'\mathcal{C}^\lambda_{i+1,\infty}(s',s-s')e^{-{\bf i}2\pi s\mathscr{X}}\right|^2\right)^\frac12 \\
		\lesssim \ & \left|\int_{0}^{s_{2i-1}\lambda^{-2}}\mathrm{d}s\Big|\int_{0}^{s}\mathrm{d}s'\mathcal{C}^\lambda_{i+1,\infty}(s',s-s')\Big|^\mathscr{M}\right|^\frac{1}{\mathscr{M}}\\
		& \ \ \ \ + \left(\int_{(-\mathscr{R},\mathscr{R})}\mathrm{d}\mathscr{X}\left|\int_{\mathscr{C}}^{s_{2i-1}\lambda^{-2}}\mathrm{d}s\int_{0}^{s}\mathrm{d}s'\mathcal{C}^\lambda_{i+1,\infty}(s',s-s')e^{-{\bf i}2\pi s\mathscr{X}}\right|^2\right)^\frac12,
	\end{aligned}
\end{equation}
where $\mathscr{M}$ is computed in Lemma \ref{Lemma:Resonance1}.

We define
\begin{equation}
	\begin{aligned}\label{FinalProof:E10a:1:1c:b}
		& \mathscr{B}_{0,\mathscr{R}}(\mathscr{X})\ :=\ \int_{\mathscr{C}}^{s_{2i-1}\lambda^{-2}}\mathrm{d}s\int_{0}^{s}\mathrm{d}s'\mathcal{C}^\lambda_{i+1,\infty}(s',s-s')e^{-{\bf i}2\pi s\mathscr{X}},\\
		& \mathscr{B}_{-1,\mathscr{R}}(\mathscr{X})\ :=\ \int_{\mathscr{C}}^{s_{2i-1}\lambda^{-2}}\mathrm{d}s\int_{0}^{s}\mathrm{d}s'\mathcal{C}^\lambda_{i+1,\infty}(s',s-s')e^{-{\bf i}2\pi s\mathscr{X}}[-{\bf i}2\pi s]^{-1},\\
		& \mathscr{B}_{1,\mathscr{R}}(\mathscr{X})\ :=\ \int_{\mathscr{C}}^{s_{2i-1}\lambda^{-2}}\mathrm{d}s\int_{0}^{s}\mathrm{d}s'\mathcal{C}^\lambda_{i+1,\infty}(s',s-s')e^{-{\bf i}2\pi s\mathscr{X}}[-{\bf i}2\pi s].
	\end{aligned}
\end{equation}
Observing that $$\frac{\mathrm{d}^2}{\mathrm{d}\mathscr{X}^2 }\mathscr{B}_{-1,\mathscr{R}}=\frac{\mathrm{d}}{\mathrm{d}\mathscr{X} }\mathscr{B}_{0,\mathscr{R}}=\mathscr{B}_{1,\mathscr{R}}$$ and letting	 $\iota>0$ be a small constant, $\mathscr{R}'>>\mathscr{R}$ be a sufficiently large constant, and $\mathscr{T}(\mathscr{X})$ be a smooth cut-off function satisfying $\mathscr{T}(\mathscr{X})=1$ for $-\mathscr{R}'+2\iota<\mathscr{X}<\mathscr{R}'-2\iota$, $\mathscr{T}(\mathscr{X})=0$ for $\mathscr{X}<-\mathscr{R}'+\iota$ or $\mathscr{X}>\mathscr{R}'-\iota$ and $0\le \mathscr{T}(\mathscr{X})\le 1$ elsewhere, we construct   
$$\widetilde{\mathscr{B}_{-1,\mathscr{R}}}(\mathscr{X}):=  \int_{\mathscr{C}}^{\mathscr{J}}\mathrm{d}s\mathcal{C}^\lambda_{i+1,\infty}(s)e^{-{\bf i}2\pi s\mathscr{X}}[-{\bf i}2\pi s]^{-1}\mathscr{T}(\mathscr{X}), $$ $$\frac{\mathrm{d}^2}{\mathrm{d}\mathscr{X}^2 }\widetilde{\mathscr{B}_{-1,\mathscr{R}}}=\frac{\mathrm{d}}{\mathrm{d}\mathscr{X} }\widetilde{\mathscr{B}_{0,\mathscr{R}}}=\widetilde{\mathscr{B}_{1,\mathscr{R}}},$$
for a large enough constant $\mathscr{J}>0.$
We deduce that $\widetilde{\mathscr{B}_{1,\mathscr{R}}}\in H_0^1(-\mathscr{R}',\mathscr{R}')$.
Suppose that $\mathscr{H}(\mathscr{X})$ be the solution of the  equation $\frac{\mathrm{d}^2}{\mathrm{d}\mathscr{X}^2 }\mathscr{H}(\mathscr{X})=\widetilde{\mathscr{B}_{1,\mathscr{R}}},$ in the domain $(-\mathscr{R}',\mathscr{R}')$, with the Dirichlet boundary condition $\mathscr{H}(-\mathscr{R}')=\mathscr{H}(\mathscr{R}')=0.$ Let $\phi$ be an arbitrary function in $L^2(-\mathscr{R},\mathscr{R})$ and $\bar\phi$ be a solution of the elliptic equation $\bar\phi''=\phi$ in $(-\mathscr{R},\mathscr{R})$, with the Dirichlet boundary condition $\bar\phi(-\mathscr{R})=\bar\phi(\mathscr{R})=0.$ Observing that
$$\int_{-\mathscr{R}}^{\mathscr{R}}\mathrm{d}\mathscr{X}\mathscr{H}''\bar\phi'=\int_{-\mathscr{R}}^{\mathscr{R}}\mathrm{d}\mathscr{X}\widetilde{\mathscr{B}_{1,\mathscr{R}}}\bar\phi',$$
and
$$\int_{-\mathscr{R}}^{\mathscr{R}}\mathrm{d}\mathscr{X}\mathscr{H}'\phi=\int_{-\mathscr{R}}^{\mathscr{R}}\mathrm{d}\mathscr{X}\widetilde{\mathscr{B}_{0,\mathscr{R}}}\phi,$$ we find $\|\mathscr{H}'\|_{L^2(-\mathscr{R},\mathscr{R})}=\|\widetilde{\mathscr{B}_{0,\mathscr{R}}}\|_{L^2(-\mathscr{R},\mathscr{R})}$. Since $\widetilde{\mathscr{B}_{1,\mathscr{R}}}\in H_0^1(-\mathscr{R}',\mathscr{R}')$, we obtain $
	\label{FinalProof:E10a:1:1c:d}\|\mathscr{H}'\|_{L^2(-\mathscr{R},\mathscr{R})}\le \|\mathscr{H}\|_{L^2(-\mathscr{R}',\mathscr{R}')}.$
For any $\phi_o\in H_0^1(-\mathscr{R}',\mathscr{R}')$, we observe 
$$\int_{-\mathscr{R}'}^{\mathscr{R}'}\mathrm{d}\mathscr{X}\mathscr{H}'\phi_o=\int_{-\mathscr{R'}}^{\mathscr{R}'}\mathrm{d}\mathscr{X}\widetilde{\mathscr{B}_{0,\mathscr{R}}}\phi_o,$$
which implies
$$\left|\int_{-\mathscr{R}'}^{\mathscr{R}'}\mathrm{d}\mathscr{X}\mathscr{H}\phi_o'\right|=\left|\int_{-\mathscr{R}'}^{\mathscr{R}'}\mathrm{d}\mathscr{X}\widetilde{\mathscr{B}_{-1,\mathscr{R}}}\phi_o'\right|\le \|\widetilde{\mathscr{B}_{-1,\mathscr{R}}}\|_{L^2(-\mathscr{R}',\mathscr{R}')}\|\phi_o\|_{H_0^1(-\mathscr{R}',\mathscr{R}')}.$$
We thus deduce  \begin{equation}\begin{aligned}
	\label{FinalProof:E10a:1:1c:e} &\|\mathscr{H}\|_{H^{-1}(-\mathscr{R}',\mathscr{R}')}=\|\mathscr{H}\|_{L^2(-\mathscr{R}',\mathscr{R}')} \le \|\widetilde{\mathscr{B}_{-1,\mathscr{R}}}\|_{L^2(-\mathscr{R}',\mathscr{R}')} \\ \lesssim & \left|\int_{\mathscr{C}}^{\mathscr{J}}\mathrm{d}s \Big|\frac{1}{s}\int_{0}^{s}\mathrm{d}s'\mathcal{C}^\lambda_{i+1,\infty}(s',s-s')\Big|^2\right|^\frac{1}{2}\\ \lesssim &\left|\int_{\mathscr{C}}^{\mathscr{J}}\mathrm{d}s \Big|\frac{1}{s}\Big|^\frac{2\mathscr{M}}{\mathscr{M}-2}\right|^\frac{\mathscr{M}-2}{2\mathscr{M}} \left|\int_{\mathscr{C}}^{\mathscr{J}}\mathrm{d}s \Big|\int_{0}^{s}\mathrm{d}s'\mathcal{C}^\lambda_{i+1,\infty}(s',s-s')\Big|^\mathscr{M}\right|^\frac{1}{\mathscr{M}} \\ \lesssim &\ \left|\int_{\mathscr{C}}^{\mathscr{J}}\mathrm{d}s \Big|\int_{0}^{s}\mathrm{d}s'\mathcal{C}^\lambda_{i+1,\infty}(s',s-s')\Big|^\mathscr{M}\right|^\frac{1}{\mathscr{M}} \\ \lesssim & \left|\int_{0}^{s_{2i-1}\lambda^{-2}}\mathrm{d}s\Big|\int_{0}^{s}\mathrm{d}s'\mathcal{C}^\lambda_{i+1,\infty}(s',s-s')\Big|^\mathscr{M}\right|^\frac{1}{\mathscr{M}} ,\end{aligned}\end{equation}
which leads to 
$$\|\widetilde{\mathscr{B}_{0,\mathscr{R}}}\|_{L^2(-\mathscr{R},\mathscr{R})}\ = \ \|\mathscr{H}'\|_{L^2(-\mathscr{R},\mathscr{R})}\ \lesssim \ \left|\int_{0}^{s_{2i-1}\lambda^{-2}}\mathrm{d}s\Big|\int_{0}^{s}\mathrm{d}s'\mathcal{C}^\lambda_{i+1,\infty}(s',s-s')\Big|^\mathscr{M}\right|^\frac{1}{\mathscr{M}}.$$
Thus, we have
\begin{equation}\begin{aligned}
		\label{FinalProof:E10a:1:1c:e:1}  
		&	\left(\int_{(-\mathscr{R},\mathscr{R})}\mathrm{d}\mathscr{X}\left|\int_{\mathscr{C}}^{\mathscr{J}}\mathrm{d}s\int_{0}^{s}\mathrm{d}s'\mathcal{C}^\lambda_{i+1,\infty}(s',s-s')e^{-{\bf i}2\pi s\mathscr{X}}\right|^2\right)^\frac12 \\
	\lesssim \ & \left|\int_{0}^{s_{2i-1}\lambda^{-2}}\mathrm{d}s\Big|\int_{0}^{s}\mathrm{d}s'\mathcal{C}^\lambda_{i+1,\infty}(s',s-s')\Big|^\mathscr{M}\right|^\frac{1}{\mathscr{M}},\end{aligned}\end{equation}
which holds for all $\mathscr{J}>0$. As a result, we can set  $\mathscr{J}\to \lambda^{-2}s_{2i-1}$ while \eqref{FinalProof:E10a:1:1c:f} still holds true, yielding
\begin{equation}
	\begin{aligned}\label{FinalProof:E10a:1:1c:f}
	&	\left(\int_{(-\mathscr{R},\mathscr{R})}\mathrm{d}\mathscr{X}\left|\int_{0}^{s_{2i-1}\lambda^{-2}}\mathrm{d}s\int_{0}^{s}\mathrm{d}s'\mathcal{C}^\lambda_{i+1,\infty}(s',s-s')e^{-{\bf i}2\pi s\mathscr{X}}\right|^2\right)^\frac12\\
		\lesssim \ & \left|\int_{0}^{s_{2i-1}\lambda^{-2}}\mathrm{d}s\Big|\int_{0}^{s}\mathrm{d}s'\mathcal{C}^\lambda_{i+1,\infty}(s',s-s')\Big|^\mathscr{M}\right|^\frac{1}{\mathscr{M}},
	\end{aligned}
\end{equation}
which implies

\begin{equation}
	\begin{aligned}\label{FinalProof:E10a:1:1c:g}
		&	\|\mathscr{F}_{1/q,\mathscr{R}}[\widehat{ \mathfrak{C}^\lambda_{q,\infty,a}\chi_{[0,T_*)}}(\mathscr{X}{\lambda^{-2}})]\|_{L^2(\mathbb{T}^{2d})}
		\\
		\lesssim\ &\mathfrak{C}_{final,1}^{q}|\mathscr{R}|^{\frac{q}{2}}
		\sup_{\tau'\in[0,T_*]}\left[
		\int_{(\mathbb{R}_+)^{\{1,\cdots,q\}}}\!\mathrm{d} s_{1} \cdots \mathrm{d} s_{2 q-1} 
		\lambda^{-2q}\right.\\
		&\times\Big\|\int_{0}^{\lambda^{-2 }s_1}\mathrm{d}s_1'\mathcal{C}^\lambda_{2,\infty}(s'_1,\lambda^{-2 }s_1-s'_1)\cdots\int_{0}^{\lambda^{-2 }s_{2q-1}}\mathrm{d}s_{2q-1}'\mathcal{C}^\lambda_{q+1,\infty}(s_{2q-1}',\lambda^{-2 }s_{2q-1}-s_{2q-1}')\Big\|_{L^{\infty,2}(\mathbb{T}^{2d})}^\mathscr{M}(\tau')\\
		&\ \ \ \ \ \ \left. \times
		\mathbf{1}\left(\sum_{i=1}^{q} s_{2i-1} \le \tau'  \right)\right]^\frac{1}{\mathscr{M}},
	\end{aligned}
\end{equation}
We define 
\begin{equation}
	\begin{aligned}\label{FinalProof:E10a:1:3}
		\mathfrak{C}^\lambda_{q,\infty,b}(\tau)\ := \ & 
		\int_{0}^{\lambda^{-2 }s_1}\mathrm{d}s_1'\mathcal{C}^\lambda_{2,\infty}(s'_1,\lambda^{-2 }s_1-s'_1)\cdots\int_{0}^{\lambda^{-2 }s_{2q-1}}\mathrm{d}s_{2q-1}'\mathcal{C}^\lambda_{q+1,\infty}(s_{2q-1}',\lambda^{-2 }s_{2q-1}-s_{2q-1}'),
	\end{aligned}
\end{equation}
and
\begin{equation}
	\begin{aligned}\label{FinalProof:E10a:1:4}
		\mathfrak{C}^\lambda_{q,\infty,c}(\tau)\ := \ & \lambda^{-2q}
		\int_{(\mathbb{R}_+)^{\{1,\cdots,q\}}}\!\mathrm{d} s_{1} \cdots \mathrm{d} s_{2 q-1} 
		\Big\|\mathfrak{C}^\lambda_{q,\infty,b}\Big\|_{L^{\infty,2}(\mathbb{T}^{2d})}^\mathscr{M}
		\mathbf{1}\left(\sum_{i=1}^{q} s_{2i-1} \le \tau \right).
	\end{aligned}
\end{equation}
As discussed above, the operator $\mathfrak{C}^\lambda_{q,\infty,b}(\tau)$   is an iterative application of the $\mathrm{iC}_2^r$ recollisions. Thus,  $\mathfrak{C}^\lambda_{q,\infty,b}(\tau)$  can be iterated by applying ladder operators of the form (see also \eqref{DelayedRecollision:A5b}-\eqref{DelayedRecollision:A5c}-\eqref{DelayedRecollision:A5d})
\begin{equation}\label{FinalProof:E10a:1:5}
	\begin{aligned} 
		& 
		\mathscr{Q}^{ladder}_{s',s,\sigma_0,\sigma_1,\sigma_2}[\mathscr F_0,\mathscr F_1,\mathscr F_2](k_0,k_0')\\ := \ &\iint_{(\mathbb{T}^d)^4}\!\! \mathrm{d} k_1  \mathrm{d} k_2\mathrm{d} k_1'  \mathrm{d} k_2' \,
		\delta(\sigma_0k_0+\sigma_1k_1+\sigma_2k_2) \delta(\sigma_0k_0'+\sigma_1k_1'+\sigma_2k_2')\\
		&\times [e^{{\bf i}s\sigma_0 \omega(k_0)+{\bf i}s\sigma_1 \omega(k_1)+{\bf i} s \sigma_2 \omega(k_2)}+e^{-{\bf i}s\sigma_0 \omega(k_0)-{\bf i}s\sigma_1 \omega(k_1)-{\bf i} s \sigma_2 \omega(k_2)}]\\
	&\times 	e^{-{\bf i}s' (\omega(k_0)-\omega(k_0'))-{\bf i}s' (\omega(k_1)-\omega(k_1'))-{\bf i} s' (\omega(k_2)-\omega(k_2))}\\
		&\times  \sqrt{|\sin(2\pi k_0^1)||\sin(2\pi k_1^1)||\sin(2\pi k_2^1)|}\sqrt{|\sin(2\pi k_0'^1)||\sin(2\pi k_1'^1)||\sin(2\pi k_2'^1)|}\\
		&\times\mathscr{F}_0(k_0,k_0')\mathscr {F}_1(k_1,k_1')\mathscr F_2(k_2,k_2') ,
	\end{aligned}
\end{equation}
where $\mathscr F_0,\mathscr F_1,\mathscr F_2$ can be $1, \langle a_{k_0,1}a_{k_0',-1}\rangle_0$  or $\mathscr{Q}^{ladder}_{s''',s'',\sigma_0',\sigma_1',\sigma_2'}$, where the parameters $s'',s''',\sigma_0',\sigma_1',\sigma_2'$ come from the previous iterations. We notice that $k_0-k_0'=\mathcal{O}(\epsilon)$, $k_1-k_1'=\mathcal{O}(\epsilon)$, $k_2-k_2'=\mathcal{O}(\epsilon)$ by Assumption (B) on the initial condition.  The operators \eqref{FinalProof:E10a:1:5} are classified into two main types. 

\smallskip

{\bf Type I:} If the time slice $s$ corresponds to a single-cluster recollision, the operator has the form
\begin{equation}\label{FinalProof:single-cluster}
	\begin{aligned} 
		& 
		\mathscr{Q}^{ladder}_{s',s,\sigma_0,\sigma_1,\sigma_2}[\mathscr F_0,1,\mathscr F_2](k_0,k_0'), \qquad \mbox{ or } \qquad  \mathscr{Q}^{ladder}_{s',s,\sigma_0,\sigma_1,\sigma_2}[\mathscr F_0,\mathscr F_1,1](k_0,k_0')
	\end{aligned}
\end{equation}
where  the function $\mathscr F_0$ is either $\langle a_{k_0,\sigma_0}a_{k_0',-\sigma_0}\rangle_0$ or $1$, and the last function is the product of several operators of the type $\mathscr{Q}^{ladder}_{s''',s'',\sigma_0',\sigma_1',\sigma_2'}$ and $\langle a_{k_0,\sigma_0}a_{k_0',-\sigma_0}\rangle_0$ or $1$, which  come from the previous time slices. The last function is assumed  to be  $\mathscr F_2(k)$ has two explicit representations. 
\begin{itemize}
	\item[(i)] The first representation of $\mathscr F_2(k,k')$ (or $\mathscr F_1(k,k')$) has the form
	
	\begin{equation}\label{FinalProof:product1}
		\begin{aligned} 
			\mathscr P_1(k,k') \ =	\ & 
			\mathscr	P_1^0(k,k') \prod_{i=1}^{m} \mathscr{Q}^{ladder}_{s'^i,s^i,\sigma_0^i,\sigma_1^i,\sigma_2^i}\Big[1,1,\mathscr P_1^i\Big](k,k'),
		\end{aligned}
	\end{equation}
	where $\mathscr	P_1^0(k,k')$ is $\langle a_{k,\sigma}a_{k',-\sigma}\rangle_0$, with $m\in\mathbb{N}$, $m\ge 0$ and   the functions $\mathscr	P_1^i(k,k')$  are either  $\langle a_{k,\sigma}a_{k',-\sigma}\rangle_0$ or ladder operators obtained from the previous iterations, also of the types \eqref{FinalProof:product1}-\eqref{FinalProof:product2}, with $i=1,\cdots, m$. 
	\item[(ii)] The second representation of $\mathscr F_2(k)$ (or $\mathscr F_1(k,k')$) has the form
	\begin{equation}\label{FinalProof:product2}
		\begin{aligned} 
			\mathscr P_2(k,k') \ =	\ & 
			\mathscr{Q}^{ladder}_{s'^i,s^1,\sigma_0^1,\sigma_1^l,\sigma_2^l}\Big[1,\mathscr P_1^0,\mathscr P_2^0\Big](k,k')\prod_{i=1}^{m} \mathscr{Q}^{ladder}_{s'^i,s^i,\sigma_0^i,\sigma_1^i,\sigma_2^i}\Big[1,1,\mathscr P_2^i\Big](k,k'),
		\end{aligned}
	\end{equation}
\end{itemize}
where $\mathscr{Q}^{ladder}_{s'^i,s^1,\sigma_0^1,\sigma_1^i,\sigma_2^i}[1,\mathscr P_1^0,\mathscr P_2^0]$ corresponds to a double-cluster recollision, which we will describe  below and  $m\in\mathbb{N}$, $m\ge 0$. The functions  $\mathscr	P_1^0(k,k')$, $\mathscr	P_2^i(k,k')$ are $\langle a_{k,\sigma}a_{k',-\sigma}\rangle_0$ or ladder operators obtained from the previous iterations, also of the types \eqref{FinalProof:product1}-\eqref{FinalProof:product2}, $i=0,\cdots, m$.  

\smallskip
{\bf Type II:} If the time slice $s$ corresponds to a double-cluster recollision, the operator has the form
\begin{equation}\label{FinalProof:double-cluster}
	\begin{aligned} 
		& 
		\mathscr{Q}^{ladder}_{s',s,\sigma_0,\sigma_1,\sigma_2}[1,\mathscr F_1,\mathscr F_2](k_0,k_0'),
	\end{aligned}
\end{equation}
in which the function $\mathscr F_0$ has to be $1$ and the two functions $\mathscr F_1,\mathscr F_2$ also have the forms \eqref{FinalProof:product1} and \eqref{FinalProof:product2}.

The above strategy is applied iteratively from the top to the bottom of the graph to obtain an explicit presentation  of $\mathfrak{C}^\lambda_{q,\infty,c}$.  We will also need to obtain  the bound for $\mathfrak{C}^\lambda_{q,\infty,c}$ (defined in \eqref{FinalProof:E10a:1:4}), that can be done by  interatively applying Lemma \ref{Lemma:Resonance1} to all of the ladder operator  from the top to the bottom of the graph. The procedure is described as follow.

{\bf Strategy  (A).} If we encounter a ladder operator of the type \eqref{FinalProof:double-cluster},   which is not included in a product of the type \eqref{FinalProof:product2} with $m\ge1$, our general strategy is to apply \eqref{Lemma:Resonance1:2:bis1} for $F_1 =\mathscr F_1$ and $F_2 =\mathscr F_2$, with the notice that   $k_0-k_0'=\mathcal{O}(\epsilon)$, $k_1-k_1'=\mathcal{O}(\epsilon)$, $k_2-k_2'=\mathcal{O}(\epsilon)$ for all of the momenta presented in \eqref{FinalProof:E10a:1:5} by Assumption (B) on the initial condition. Note that $\epsilon$ is defined by \eqref{Epsilon}.

{\bf Strategy (B).}  If we encounter a ladder operator of the type \eqref{FinalProof:single-cluster}, which is not included in a product of the type \eqref{FinalProof:product1} with $m\ge1$, and if  $\mathscr F_2$ is the function that is different from $1$, our general strategy is to apply \eqref{Lemma:Resonance1:2:bis2} for $F_2=\mathscr F_2$, with the notice that   $k_0-k_0'=\mathcal{O}(\epsilon)$, $k_1-k_1'=\mathcal{O}(\epsilon)$, $k_2-k_2'=\mathcal{O}(\epsilon)$ for all of the momenta appearing in \eqref{FinalProof:E10a:1:5}.
\smallskip

{\bf Strategy	(C).} After applying (A) and (B), we will encounter products of the types \eqref{FinalProof:product1} and \eqref{FinalProof:product2}, whose $L^2$-norms must be controlled.  Our strategies will be discussed below.

\smallskip
{\it Strategy	(C.1).} 	 First, we provide a treatment for the product \eqref{FinalProof:product1}. We bound

\begin{equation}\label{FinalProof:E10a:1:6:3}
	\begin{aligned} 
		\|	\mathscr P_1(k,k') \|_{L^2(\mathbb{T}^{2d})} \ \le	\ & 
		\|\langle a_{k_0,\sigma_0}a_{k_0',-\sigma_0}\rangle_0 \|_{L^{\infty,2}(\mathbb{T}^d)}\prod_{l=1}^{m}\Big\| \mathscr{Q}^{ladder}_{s^l,\sigma_0^l,\sigma_1^l,\sigma_2^l}\Big[1,1,\mathscr P_2^l\Big]\Big\|_{L^{\infty}(\mathbb{T}^{2d})}.
	\end{aligned}
\end{equation}
Next, we apply \eqref{Lemma:Resonance1:9} to bound $\Big\| \mathscr{Q}^{ladder}_{s^l,\sigma_0^l,\sigma_1^l,\sigma_2^l}\Big[1,1,\mathscr P_2^l\Big](k,k')\Big\|_{L^{\infty}(\mathbb{T}^{2d})}$ by $\|\mathscr P_2^l\|_{L^{\infty,2}(\mathbb{T}^{2d})}$, so that in the next iteration, (A), (B) or (C) can be reapplied.

\smallskip
{\it Strategy	(C.2).}  Now, we provide a treatment for the product \eqref{FinalProof:product2}. We bound

\begin{equation}\label{FinalProof:E10a:1:6:4}
	\begin{aligned} 
		\|	\mathscr P_2(k,k') \|_{L^{\infty,2}(\mathbb{T}^{2d})} \ \le	\ & 
		\Big\| \mathscr{Q}^{ladder}_{s^1,\sigma_0^1,\sigma_1^l,\sigma_2^l}\Big[1,\mathscr P_1^0,\mathscr P_2^0\Big](k,k')  \Big\|_{L^{\infty,2}(\mathbb{T}^{2d})}\\
		&\times\prod_{i=1}^{m}\Big\| \mathscr{Q}^{ladder}_{s'^i,s^i,\sigma_0^i,\sigma_1^i,\sigma_2^i}\Big[1,1,\mathscr P_2^l\Big](k,k') )\Big\|_{L^{\infty}(\mathbb{T}^{2d})}.
	\end{aligned}
\end{equation}
Next, we apply \eqref{Lemma:Resonance1:9} to bound $\Big\| \mathscr{Q}^{ladder}_{s'^i,s^i,\sigma_0^i,\sigma_1^i,\sigma_2^i}\Big[1,1,\mathscr P_2^l\Big](k,k') \Big\|_{L^{\infty}(\mathbb{T}^{2d})}$ by $\|\mathscr P_2^l\|_{L^{\infty,2}(\mathbb{T}^{2d})}$, so that in the next iteration,  (A), (B) or (C) can be reapplied. We  bound $	\Big\| \mathscr{Q}^{ladder}_{s'^1,s^1,\sigma_0^1,\sigma_1^l,\sigma_2^l}$ $\Big[1,\mathscr P_1^0,$ $\mathscr P_2^0\Big](k,k')  \Big\|_{L^{\infty,2}(\mathbb{T}^{2d})}$ using strategy (B).

At the end of the procedure, after taking into account all of the possible graph combination of the ladder operators, by Assumption (B), we obtain the  bound
\begin{equation}
	\begin{aligned}\label{FinalProof:E10b:1}
		\|\mathscr{F}_{1/{q},\mathscr{R}}[\widehat{\mathfrak{C}^\lambda_{q,\infty,a}\chi_{[0,T_*)}}(\mathscr{X}{\lambda^{-2}})]\|_{L^{\infty,2}(\mathbb{T}^{2d})} \ \le \ & |\mathscr{R}|^{\frac{q}{2}}|\mathfrak{C}_{final,2}|^qT_*^{q+1},
	\end{aligned}
\end{equation}
for some constants $\mathfrak{C}_{final,2}>0$ independent of $\lambda$.

The  same argument also gives
\begin{equation}
	\begin{aligned}\label{FinalProof:E10b:1:a}
	&\	\left\|\mathscr{F}_{1/{\mathbf{N}},\mathscr{R}}\left[\sum_{q=\mathbf{N}_0}^{\lfloor\mathbf{N}/2\rfloor}\widehat{\mathfrak{C}^\lambda_{q,\infty,a}\chi_{[0,T_*)}}(\mathscr{X}{\lambda^{-2}})\right]\right\|_{L^{\infty,2}(\mathbb{T}^{2d})}^{\frac{1}{{\mathbf{N}}}} \\
 \ \lesssim \	&\int_{-\mathscr{R}}^\mathscr{R}\mathrm{d}\mathscr{X}\sum_{q=\mathbf{N}_0}^{\lfloor\mathbf{N}/2\rfloor}\left\|\widehat{\mathfrak{C}^\lambda_{q,\infty,a}\chi_{[0,T_*)}}(\mathscr{X}{\lambda^{-2}})\right\|_{L^{\infty,2}(\mathbb{T}^{2d})} ^{1/{\mathbf{N}}}
 \\
 \ \lesssim \	&\sum_{q=\mathbf{N}_0}^{\lfloor\mathbf{N}/2\rfloor}\left[\int_{-\mathscr{R}}^\mathscr{R}\mathrm{d}\mathscr{X}\left\|\widehat{\mathfrak{C}^\lambda_{q,\infty,a}\chi_{[0,T_*)}}(\mathscr{X}{\lambda^{-2}})\right\|_{L^{\infty,2}(\mathbb{T}^{2d})} ^{1/q}\right]^\frac{q}{{\mathbf{N}}}\mathscr{R}^{\frac{{\mathbf{N}}-q}{{\mathbf{N}}}},
	\end{aligned}
\end{equation}
yielding
\begin{equation}
	\begin{aligned}\label{FinalProof:E10b:1:b}
&	\	\mathscr{F}_{1/{\mathbf{N}},\mathscr{R}}\left[\sum_{q=\mathbf{N}_0}^{\lfloor\mathbf{N}/2\rfloor}\|\widehat{\mathfrak{C}^\lambda_{q,\infty,a}\chi_{[0,T_*)}}(\mathscr{X}{\lambda^{-2}})\|_{L^{\infty,2}(\mathbb{T}^{2d})} \right]\\ \lesssim \ & \sum_{q=\mathbf{N}_0}^{\lfloor\mathbf{N}/2\rfloor}|\mathscr{R}|^{\frac{q}{2}}|\mathfrak{C}_{final,2}|^qT_*^{q}\mathscr{R}^{{{\mathbf{N}}-q}}\\ \ \lesssim \ & |\mathscr{R}|^{\mathbf{N}}\left[\sum_{q=\mathbf{N}_0}^{\lfloor\mathbf{N}/2\rfloor}|\mathscr{R}|^{-\frac{q}{2}}\right]|\mathfrak{C}_{final,2}|^{\mathbf{N}_0}T_*^{\mathbf{N}_0}\\
	\ \lesssim \ & |\mathscr{R}|^{\mathbf{N}}|\mathfrak{C}_{final,2}|^{\mathbf{N}_0}T_*^{\mathbf{N}_0}.
	\end{aligned}
\end{equation}
The above inequality leads to
\begin{equation}
	\begin{aligned}\label{FinalProof:E10b:1:c}
	&\	\left[\frac{1}{2\mathscr{R}}\int_{-\mathscr{R}}^\mathscr{R}\mathrm{d}\mathscr{X}\left(\sum_{q=\mathbf{N}_0}^{\lfloor\mathbf{N}/2\rfloor}\|\widehat{\mathfrak{C}^\lambda_{q,\infty,a}\chi_{[0,T_*)}}(\mathscr{X}{\lambda^{-2}})\|_{L^{\infty,2}(\mathbb{T}^{2d})} \right)^{\frac{1}{\mathbf{N}}}\right]^{\mathbf{N}}\ \lesssim \  |\mathfrak{C}_{final,2}|^{\mathbf{N}_0}T_*^{\mathbf{N}_0}.
	\end{aligned}
\end{equation}
Using the inequality
\begin{equation}
	\begin{aligned}\label{FinalProof:E10b:1:d}
	&\	\frac{1}{2\mathscr{R}}\int_{-\mathscr{R}}^\mathscr{R}\mathrm{d}\mathscr{X}\log\left(\sum_{q=\mathbf{N}_0}^{\lfloor\mathbf{N}/2\rfloor}\|\widehat{\mathfrak{C}^\lambda_{q,\infty,a}\chi_{[0,T_*)}}(\mathscr{X}{\lambda^{-2}})\|_{L^{\infty,2}(\mathbb{T}^{2d})} \right)\\
	 \le \ & \log\left[\frac{1}{2\mathscr{R}}\int_{-\mathscr{R}}^\mathscr{R}\mathrm{d}\mathscr{X}\left(\sum_{q=\mathbf{N}_0}^{\lfloor\mathbf{N}/2\rfloor}\|\widehat{\mathfrak{C}^\lambda_{q,\infty,a}\chi_{[0,T_*)}}(\mathscr{X}{\lambda^{-2}})\|_{L^{\infty,2}(\mathbb{T}^{2d})} \right)^{\frac{1}{\mathbf{N}}}\right]^{\mathbf{N}},
	\end{aligned}
\end{equation}
which, in combination with \eqref{FinalProof:E10b:1:c}, implies
\begin{equation}
	\begin{aligned}\label{FinalProof:E10b:1:e}
		\frac{1}{2\mathscr{R}}\int_{-\mathscr{R}}^\mathscr{R}\mathrm{d}\mathscr{X}\log\left(\sum_{q=\mathbf{N}_0}^{\infty}\|\widehat{\mathfrak{C}^\lambda_{q,\infty,a}\chi_{[0,T_*)}}(\mathscr{X}{\lambda^{-2}})\|_{L^{\infty,2}(\mathbb{T}^{2d})} \right)\
	 \le \ & \log\left[|\mathfrak{C}_{final,2}|^{\mathbf{N}_0}T_*^{\mathbf{N}_0}\right].
	\end{aligned}
\end{equation}

We then deduced that 
\begin{equation}
	\begin{aligned}\label{FinalProof:E10b:1:f}
		\exp\left[\frac{1}{2\mathscr{R}}\int_{-\mathscr{R}}^\mathscr{R}\mathrm{d}\mathscr{X}\log\left(\sum_{q=\mathbf{N}_0}^{\infty}\|\widehat{\mathfrak{C}^\lambda_{q,\infty,a}\chi_{[0,T_*)}}(\mathscr{X}{\lambda^{-2}})\|_{L^{\infty,2}(\mathbb{T}^{2d})} \right)\right]\
	 \le \ & |\mathfrak{C}_{final,2}|^{\mathbf{N}_0}T_*^{\mathbf{N}_0}\to 0,
	\end{aligned}
\end{equation}
when $\mathbf{N}_0\to \infty,$ leading to
\begin{equation}
\label{FinalProof:E10b:1:g}
\mathscr{W}\left(\lim_{\mathbf{N}_0\to \infty} \sum_{q=\mathbf{N}_0}^{\infty}\|\widehat{\mathfrak{C}^\lambda_{q,\infty,a}\chi_{[0,T_*)}}(\mathscr{X}{\lambda^{-2}})\|_{L^{\infty,2}(\mathbb{T}^{2d})} >\theta\right)= 0,
\end{equation}
for any $\theta>0$.

We now define the operators

\begin{equation}
	\begin{aligned}\label{FinalProof:E10b:1:A}
		&				\widehat{	\diamondsuit_{\ell,q}[\mathfrak{C}^\lambda_{q,\infty,a}]	}(
		\lambda^{-2}\mathscr{X})	\ := \ \int_{0}^{T_*}\mathrm{d}\tau''e^{-{\bf i}2\pi \tau''\lambda^{-2}\mathscr{X}} e^{{-{\bf i}\tau''\lambda^{-2}[\sigma_{1}'\omega(k_1')+\sigma_{1}''\omega(k_1'')]}}\\
		&\times\left[
		\int_{(\mathbb{R}_+)^{\{1,\cdots,q\}}}\!\mathrm{d} s_{1} \cdots \mathrm{d} s_{2 q-1} 
		\delta\left(\sum_{i=1}^{q} s_{2i-1} \le( T_*- \tau'') \lambda^{-2} \right)\right.\\
		&\times\mathscr{A}_\ell\left[\int_{0}^{s_1\lambda^{-2}}\mathrm{d}s\int_{0}^{s}\mathrm{d}s'\mathcal{C}^\lambda_{2,\infty}(s',s-s')e^{-{\bf i}2\pi s\mathscr{X}}\right](\mathscr{X})\\
		&\left.\cdots\mathscr{A}_\ell\left[\int_{0}^{s_{2q-1}\lambda^{-2}}\mathrm{d}s\int_{0}^{s}\mathrm{d}s'\mathcal{C}^\lambda_{q+1,\infty}(s',s-s')e^{-{\bf i}2\pi s\mathscr{X}}\right](\mathscr{X})\right],\\
		&	\diamondsuit_{\ell}^\lambda[f] \ :=\ 	 	\sum^{[([\mathbf{N}/4]-1)/2]}_{q=0}\frac{1}{q!}\diamondsuit_{\ell,q}[\mathfrak{C}^\lambda_{q,\infty,a}],
	\end{aligned}
\end{equation}
where $\mathscr{A}_\ell$ is the averaging operator
\begin{equation}
	\begin{aligned}\label{FinalProof:E10b:1:B}
		\mathscr{A}_\ell[g](\mathscr{X})	\ := \  &\frac{1}{2\ell}\int_{[\mathscr{X}-\ell,\mathscr{X}+\ell]}\mathrm{d}\mathscr{X}' g(\mathscr{X}'),
	\end{aligned}
\end{equation}
for any integrable function $g$ on $\mathbb{R}$. 
It then follows 

\begin{equation}
	\begin{aligned}\label{FinalProof:E10b:1:C}
		&	\Big\|{	\mathscr{F}_{2/\mathbf{N},\mathscr{R}}[\widehat{\diamondsuit_{\ell,q}[\mathfrak{C}^\lambda_{q,\infty,a}]	}}(
		\lambda^{-2}\mathscr{X})]	\Big\|_{L^{\infty,2}(\mathbb{T}^{2d})}\\
		\ \lesssim \ 	& \Big\|\int_{0}^{T_*}\mathrm{d}\tau''e^{-{\bf i}2\pi \tau''\lambda^{-2}\mathscr{X}} e^{-{{\bf i}\tau''\lambda^{-2}[\sigma_{1}'\omega(k_1')+\sigma_{1}''\omega(k_1'')]}}\Big\|_{L^\infty(\mathbb{R}\times \mathbb{T}^{2d})}\\
		&\times\sup_{\tau''\in[0,T_*]} \Big\|\mathscr{F}_{2/\mathbf{N},\mathscr{R}}\Big[	\int_{0}^{(T_*-\tau'')\lambda^{-2}}\mathrm{d}\bar\tau \mathfrak{C}^\lambda_{q,\infty,a'}(\bar\tau)e^{-{\bf i}2\pi\bar\tau\mathscr{X}}\Big]\Big\|_{L^{\infty,2}(\mathbb{T}^{2d})}\\
		\ \lesssim \  &   (T_*)^{q+1}|\mathfrak{C}_{final,2}|^q.
	\end{aligned} 
\end{equation}
Moreover, we also have 	\begin{equation}
	\begin{aligned}\label{FinalProof:E10b:1:D}
		&\lim_{\ell\to 0}\Big\|	\mathscr{F}_{2/\mathbf{N},\mathscr{R}}\Big[	\widehat{	\diamondsuit_{\ell,q}[\mathfrak{C}^\lambda_{q,\infty,a}]	}	 -   &\int_{0}^{T_*}\mathrm{d}\tau''e^{-{\bf i}2\pi \tau''\lambda^{-2}\mathscr{X}}e^{-{{\bf i}\tau''\lambda^{-2} [\sigma_{1}'\omega(k_1')+\sigma_{1}''\omega(k_1'')]}} 	\int_{0}^{(T_*-\tau'')\lambda^{-2}}\mathrm{d}\bar\tau\\
	&\times	 \mathfrak{C}^\lambda_{q,\infty,a'}(\bar\tau)e^{-{\bf i}2\pi\bar\tau\mathscr{X}}\Big]\Big\|_{L^{\infty,2}(\mathbb{T}^{2d})} =  0,
	\end{aligned}
\end{equation}
which, by Chebychev's inequality, implies 	\begin{equation}
	\begin{aligned}\label{FinalProof:E10b:1:E}
	&	\lim_{\ell\to 0}	\mathscr{W}\Big(\Big|\widehat{	\diamondsuit_{\ell,q}[\mathfrak{C}^\lambda_{q,\infty,a}]	}	\ - \  &\int_{0}^{T_*}\mathrm{d}\tau''e^{-{\bf i}2\pi \tau''\lambda^{-2}\mathscr{X}}e^{-{{\bf i}\tau''\lambda^{-2} [\sigma_{1}'\omega(k_1')+\sigma_{1}''\omega(k_1'')]}} \\
		&	\int_{0}^{(T_*-\tau'')\lambda^{-2}}\mathrm{d}\bar\tau \mathfrak{C}^\lambda_{q,\infty,a'}(\bar\tau)e^{-{\bf i}2\pi\bar\tau\mathscr{X}}\Big|>\theta\Big) =0,
	\end{aligned}
\end{equation}
for all $\theta>0$. 

Observing  that
$$\lim_{\lambda\to0}	\mathscr{A}_\ell\left[\int_{0}^{s_{2i-1}\lambda^{-2}}\mathrm{d}s\int_{0}^{s}\mathrm{d}s'\mathcal{C}^\lambda_{i+1,\infty}(s',s-s')e^{-{\bf i}2\pi s\mathscr{X}}\right](\lambda^2\mathscr{X})$$
$$=	\mathscr{A}_\ell\left[\int_{0}^{\infty}\mathrm{d}s\int_{0}^{s}\mathrm{d}s'\mathcal{C}_{i+1,\infty}(s',s-s')e^{-{\bf i}2\pi s\mathscr{X}}\right](0),$$
in $L^2((-\mathscr{R},\mathscr{R});L^{\infty,2}(\mathbb{T}^{2d}))$, for $i=1,\cdots,q+1$, we can pass to the limit $\lambda\to\infty$   and obtain a limit for $\mathfrak{C}_q$ defined below in \eqref{FinalProof:E10a:2}, for all test function $\varphi\in C_c^\infty(\mathbb{R})$ and using the change of variables $k'+k''=2k^o$, $k'-k''=-2\epsilon \mho'$,

\begin{equation}
	\begin{aligned}\label{FinalProof:E10a:1:1:1}
		\		&\lim_{\lambda\to 0}\Big[	 \int_{\mathbb{R}} \mathrm{d}\mathscr{X}\widehat{	\diamondsuit_{\ell,q}[\mathfrak{C}^\lambda_{q,\infty,a}]	}(
		\lambda^{-2}\mathscr{X})\varphi(\lambda^{-2}\mathscr{X})\lambda^{-2} \\
		&- \int_{\mathbb{R}} \mathrm{d}\mathscr{X}\Big[\int_{0}^{T_*}\mathrm{d}\tau e^{-{\bf i}2\pi \tau\mathscr{X}} e^{-{{\bf i}\tau\lambda^{-2} [\sigma_{1}'\omega(k_1')+\sigma_{1}''\omega(k_1'')]}} \Big]	\mathfrak{C}_{q}\varphi(\mathscr{X})
		\Big]\\
		\
		=\	&\lim_{\lambda\to 0}\Big[	  \int_{\mathbb{R}} \mathrm{d}\mathscr{X}\varphi(\mathscr{X})\Big[\int_{0}^{T_*}\mathrm{d}\tau e^{-{\bf i}2\pi \tau\mathscr{X}} e^{-{{\bf i}\tau\lambda^{-2} [\sigma_{1}'\omega(k_1')+\sigma_{1}''\omega(k_1'')]}}  \Big]\\
		&\times	\mathscr{A}_\ell\left[\int_{0}^{s_1\lambda^{-2}}\mathrm{d}s\mathcal{C}^\lambda_{2,\infty}(s)e^{-{\bf i}2\pi s\mathscr{X}}\right](\lambda^2\mathscr{X}) \cdots\mathscr{A}_\ell\left[\int_{0}^{s_{2q-1}\lambda^{-2}}\mathrm{d}s\mathcal{C}^\lambda_{q+1,\infty}(s)e^{-{\bf i}2\pi s\mathscr{X}}\right](\lambda^2	\mathscr{X})\\
		&- \int_{\mathbb{R}} \mathrm{d}\mathscr{X}\Big[\int_{0}^{T_*}\mathrm{d}\tau e^{-{\bf i}2\pi \tau\mathscr{X}}e^{-{{\bf i}\tau\lambda^{-2} [\sigma_{1}'\omega(k_1')+\sigma_{1}''\omega(k_1'')]}} \Big]	\mathfrak{C}_{q} \varphi(\mathscr{X})\Big]
		\\
		=\	&	\Big[  \int_{\mathbb{R}} \mathrm{d}\mathscr{X}\varphi(\mathscr{X})\Big[\int_{0}^{T_*}\mathrm{d}\tau e^{-{\bf i}2\pi \tau\mathscr{X}}e^{{-{\bf i}\tau\lambda^{-2}[\sigma_{1}'\omega(k_1')+\sigma_{1}''\omega(k_1'')]}} \Big]\\
		&\times	\mathscr{A}_\ell\left[\int_{0}^{\infty}\mathrm{d}s\mathrm{d}s'\mathcal{C}^\lambda_{2,\infty}(s',s-s')e^{-{\bf i}2\pi s\mathscr{X}}\right](0)
		 \cdots\mathscr{A}_\ell\left[\int_{0}^{\infty}\mathrm{d}s\mathrm{d}s'\mathcal{C}^\lambda_{q+1,\infty}(s',s-s')e^{-{\bf i}2\pi s\mathscr{X}}\right](0)\\	&- \int_{\mathbb{R}} \mathrm{d}\mathscr{X}\Big[\int_{0}^{T_*}\mathrm{d}\tau e^{-{\bf i}2\pi \tau\mathscr{X}} e^{{-{\bf i}\tau}\mho_1'\cdot\nabla\omega(k_1^o)}  \Big]	\mathfrak{C}_{q} \varphi(\mathscr{X})\Big]\ = \ 0,
	\end{aligned}
\end{equation}
with
$\sigma_1'=1, \sigma_1''=-1$, the limit is in the ${L^\infty(\mathbb{R}^{d};L^2( \mathbb{T}^{d}))}$ norm, and
\begin{equation}
\begin{aligned}\label{FinalProof:E10a:2}
\mathfrak{C}_{q}(\tau,k_1^o,\mho_1')\ = \ &
\int_0^\tau \mathrm{d}se^{-{\bf i}s\nabla\omega(k_1^o)\mho_1'}\mathcal{C}_{2}\cdots\mathcal{C}_{q+1}(\tau-s,k_1^o,\mho_1'),
\end{aligned}
\end{equation}
where
\begin{equation}
\begin{aligned}\label{FinalProof:E10:a:1}
&\mathcal{C}_{m+1}(s,k_1^o,\cdots,k^o_i,\cdots,k^o_{m},\mho_1^o,\cdots,\mho_i^o,\cdots,\mho_m^o)\\
= &  \sum_{i=1}^{m}\mathcal{C}_{i,m+1}(s,k_1^o,\cdots,k^o_i,\cdots,k^o_{m},\mho_1^o,\cdots,\mho_i^o,\cdots,\mho_m^o)
\end{aligned}\end{equation}
\begin{equation}
\begin{aligned}\label{FinalProof:E9:1}
&\mathcal{C}_{i,m+1}(s,k_1^o,\cdots,k^o_i,\cdots,k^o_{m},\mho_1^o,\cdots,\mho_i^o,\cdots,\mho_m^o)\\
= &\   \frac{\pi}{2^{d-2}}\int_0^s\mathrm{d}s'\iint_{(\mathbb{T}^d)^2}
\mathrm{d}k^o\mathrm{d}k_{m+1}^o\iint_{(\mathbb{R}^d)^2}
\mathrm{d}\mho^o\mathrm{d}\mho_{m+1}^o\\
&\times|\mathcal{M}(k_{i}^o,k^o,k_{m+1}^o)|^2\delta_{\ell}\Big(\omega(k^o)+\omega(k_{m+1}^o)-\omega(k_{i}^o)\Big)\delta(k^o+k_{m+1}^o-k_{i}^o)\delta(\mho^o+\mho_{m+1}^o-\mho_{i}^o)\\
&\times e^{-{\bf i}s'[\nabla\omega(k^o)\mho'+\nabla\omega(k^o_{m+1})\mho^o_{m+1}+\nabla\omega(k^o_i)\mho'_i]}\Big( \mathfrak{L}^{m+1}(s',k_1^o,\cdots,k^o,\cdots,k^o_{m+1},\mho_1^o,\cdots,\mho^o,\cdots,\mho^o_{m+1})\\
&-\mathfrak{L}^{m+1}(s',k_1^o,\cdots,k^o_i,\cdots,k^o_{m+1},\mho_1^o,\cdots,\mho^o,\cdots,\mho^o_{m+1})\mathrm{sign}(k_{i}^o)\mathrm{sign}(k_{m+1}^o)\\
& -\mathfrak{L}^{m+1}(s',k_1^o,\cdots,k^o_i,\cdots,k^o,\mho_1^o,\cdots,\mho^o,\cdots,\mho^o_{m+1})\mathrm{sign}(k_{i}^o)\mathrm{sign}(k^o)\Big).
\end{aligned}\end{equation}
In the above expression, $k^o$ takes the position of $k^o_i$ in $\mathfrak{L}^{m+1}(k_1^o,\cdots,k^o,\cdots,k^o_{m+1},\mho_1^o,$ $\cdots,\mho^o,$ $\cdots,\mho_{m+1}^o)$ and of $k^o_{m+1}$ in $\mathfrak{L}^{m+1}(k_1^o,\cdots,$ $ k^o_i,\cdots,k^o,\mho_1^o,\cdots,\mho^o,\cdots,\mho^o_{m+1})$. Moreover,
\begin{equation}\begin{aligned}
& \mathfrak{L}^{m+1}(s,k_1^o,\cdots,k^o_i,\cdots,k^o_{m+1},\mho_1^o,\cdots,\mho_i^o,\cdots,\mho_{m+1}^o)\\ \ = \ & \mathcal{C}_{m+2}(s,k_1^o,\cdots,k^o_i,\cdots,k^o_{m+1},\mho_1^o,\cdots,\mho_i^o,\cdots,\mho_{m+1}^o),\end{aligned}
\end{equation}
and
\begin{equation}
\mathfrak{L}^{q+1}(s,k_1^o,\cdots,k^o_i,\cdots,k^o_{q+1},\mho_1^o,\cdots,\mho_i^o,\cdots,\mho_{q+1}^o):=\prod_{\{i,j\}\in S} {f}^o_\ell(\mho_{0,i}^o,k_{0,i}^o).\end{equation}
The above expression is a consequence of the limits $$\lim_{\lambda\to 0} e^{-{\bf i}s' (\omega(k')-\omega(k''))-{\bf i}s' (\omega(k_{m+1})-\omega(k_{m+1}'))-{\bf i} s' (\omega(k_i')-\omega(k_i''))}$$ $$ = e^{-{\bf i}s'[\nabla\omega(k^o)\mho^o+\nabla\omega(k^o_{m+1})\mho^o_{m+1}+\nabla\omega(k^o_i)\mho^o_i]},$$ 
$$\lim_{\lambda\to 0}\mathcal{M}(k_{i}',k',k_{m+1}')\mathcal{M}(k_{i}'',k'',k_{m+1}'')=|\mathcal{M}(k_{i}^o,k^o,k_{m+1}^o)|^2,$$
with  $$k_{m+1}-k_{m+1}'=2\epsilon \mho'_{m+1},\ \  k_i'-k_i''=2\epsilon\mho'_i, \ \  k_{m+1}+k_{m+1}'=2k_{m+1}^o,\ \ k_i'+k_i''=2k_i^o.$$
The product of the delta functions  is also replaced by $$\delta_{\ell}\Big(\omega_\infty(k^o)+\omega(k_{m+1}^o)-\omega(k_{i}^o)\Big)\delta(k^o+k_{m+1}^o-k_{i}^o)\delta(\mho^o+\mho_{m+1}^o-\mho_{i}^o).$$

It is straightforward from \eqref{FinalProof:E10b:1} that
$$\|\mathcal{C}_{2}\cdots\mathcal{C}_{q+1}(\mho',k^o)\|_{{L^\infty(\mathbb{R}^{d};L^2( \mathbb{T}^{d}))}} \le \mathcal{C}_o^q,$$
for some universal constant $\mathcal{C}_o>0$, we then find
\begin{equation}\label{FinalProof:E11}\begin{aligned}
		&	\sum_{q=0}^\infty \left\| \mathscr{F}_{2/\mathbf{N},\mathscr{R}}\Big[\int_{\mathbb{R}} \mathrm{d}\mathscr{X}\Big[\int_{0}^{T_*}\mathrm{d}\tau e^{-{\bf i}2\pi \tau\mathscr{X}}\mathfrak{C}_{q}(\mho',k^o) 	\Big]\right\|_{L^\infty(\mathbb{R}^{d};L^2( \mathbb{T}^{d}))}\lesssim \sum_{q=0}^\infty T_*(T_* \mathcal{C}_o)^q,\end{aligned}
\end{equation}
which converges for $T_*<\frac{1}{\mathcal{C}_o}$.

Now, let us estimate $\mathfrak Q_{2} $, which we write as \begin{equation}\label{FinalProof:E16}
	\mathfrak Q_{2} 
	\ = \ \sum_{n=[\mathbf{N}/4]}^{\mathbf{N}-1}\sum_{S\in\mathcal{P}^1_{pair}(\{1,\cdots,n+2\})}\mathfrak{G}_{2,n}'(S,t,k'',-1,k',1,\Gamma).
\end{equation}
The same estimates used for $\mathfrak{Q}_1$ can be redone for $\mathfrak{Q}_2$, yielding a sum of the type
\begin{equation}\label{FinalProof:E17}
	\ \lim_{\lambda\to0}\sum_{2q=[\mathbf{N}/4]}^{\mathbf{N}}\varsigma' (\mathcal{C}_oT_*)^q,
\end{equation}
where $\varsigma'$ is defined in \eqref{Def:Para3}, 
which tends to $0$ as $\mathbf{N}$ tends to infinity and $T_*$ sufficiently small.
The last quantities $\mathfrak Q_3$ also goes to $0$ as $\Phi_{0,i}$ goes to $0$ in the limit of $\lambda\to 0$.

As a result, we obtain the limit
\begin{equation}\label{FinalProof:E18}
	\lim_{\lambda\to0}\diamondsuit_{\ell}^\lambda[f]	\ = \ 	f^\infty_\ell(\mho',k^o,\tau)  \ = \    \sum_{q=0}^\infty \mathfrak{C}_{q},
\end{equation}
and the convergence \eqref{TheoremMain:1}-\eqref{TheoremMain:2} by a standard Chebyshev's inequality. This series is  a solution of
\eqref{Eq:WT1:a}.

\bibliographystyle{alpha}

\bibliography{WaveTurbulence}

\end{document}